\documentclass{amsbook}

\usepackage[english]{babel}
\usepackage{enumerate}
\usepackage{amssymb}
\usepackage{mathrsfs}
\usepackage{esint}
\usepackage{color}
\usepackage{amsthm}

\newtheorem{thm}{Theorem}[section]
\newtheorem{lem}[thm]{Lemma}
\newtheorem{hyp}[thm]{Hypothesis}
\newtheorem{cor}[thm]{Corollary}
\newtheorem{cond}[thm]{Condition}
\newtheorem{defi}[thm]{Definition}
\newtheorem{prop}[thm]{Proposition}
\newtheorem{ex}[thm]{Example}

\newtheorem{rem}[thm]{Remark}
\newtheorem*{prop*}{Proposition}
\newtheorem*{thm*}{Theorem}

\numberwithin{section}{chapter}
\numberwithin{equation}{chapter}

\DeclareMathOperator*{\esssup}{ess\,sup}

\begin{document}

\frontmatter

\title{The Connes character formula for locally compact spectral triples}

\author[Fedor Sukochev]{Fedor Sukochev}
\address{University of New South Wales, Kensington, NSW, 2052, Australia}
\email{f.sukochev@unsw.edu.au}
\author[Dmitriy Zanin]{Dmitriy Zanin}
\address{University of New South Wales, Kensington, NSW, 2052, Australia}
\email{d.zanin@unsw.edu.au}

\date{}


\keywords{}

\dedicatory{Dedicated to Alain Connes for his 70-th anniversary. With gratitude and admiration.}

\begin{abstract} 
A fundamental tool in noncommutative geometry is Connes' character formula. This formula is used in an essential way in the applications of noncommutative geometry to index theory and to the spectral characterisation of manifolds. 
    
A non-compact space is modelled in noncommutative geometry by a non-unital spectral triple. Our aim is to establish the Connes character formula for non-unital spectral triples. This is significantly more difficult than in the unital case and we achieve it with the use of recently developed double operator integration techniques. Previously, only partial extensions of Connes' character formula to the non-unital case were known.
    
In the course of the proof, we establish two more results of importance in noncommutative geometry: an asymptotic for the heat semigroup of a non-unital spectral triple, and the analyticity of the associated $\zeta$-function.
    
We require certain assumptions on the underlying spectral triple, and we verify these assumptions in the case of spectral triples associated to arbitrary complete Riemannian manifolds and also in the case of Moyal planes.

\end{abstract}

\maketitle
\tableofcontents

\mainmatter

\chapter{Introduction}\label{intro chapter}

\section*{Acknowledgements}

    The authors thank Professor Alain Connes for encouragement and numerous useful comments, Dr Denis Potapov whose ideas eventually led us to Theorem \ref{csz key lemma}, 
    Dr Alexei Ber, Dr Galina Levitina and Mr Edward Mcdonald for their substantial effort in improving our initial arguments. Additionally we thank Dr Victor Gayral and Professor Yurii Kordyukov for their help with Section \ref{manifold section} and Professor Peter Dodds for his assistance with the arguments of Subsection \ref{peter subsection}. We also would like to thank Edward McDonald for editing the manuscript.
    
    We would like to extend our thanks and appreciation to Professor Nigel Higson, as it was due to his encouragement that we initiated this project.

\section{Introduction}

    One of the fundamental tools in noncommutative geometry
    is the Chern character. The Connes Character Formula (also known as the Hochschild character theorem) provides an expression for the 
    class of the Chern character in Hochschild cohomology, and it is an important
    tool in the computation of the Chern character. The formula has been applied to many areas
    of noncommutative geometry and its applications: such as the local index formula \cite{Connes-Moscovici}, the spectral characterisation of manifolds \cite{Connes-reconstruction} and recent work in mathematical physics \cite{Connes-Chamseddine-Mukhanov-quanta-of-geometry-2015}.

    In its original formulation, \cite{Connes-original-spectral-1995}, the Character Formula is stated as follows: Let $(\mathcal{A},H,D)$ be a $p$-summable compact spectral triple
    with (possibly trivial) grading $\Gamma$ (as defined in Section \ref{spectral triple subsection}).
    By the definition of a spectral triple, for all $a \in \mathcal{A}$ the commutator $[D,a]$ has an extension to a bounded operator $\partial(a)$ on $H$. Furthermore, if $F = \chi_{(0,\infty)}(D)-\chi_{(-\infty,0)}(D)$
    then for all $a \in \mathcal{A}$ the commutator $[F,a]$ is a compact operator in the weak Schatten ideal $\mathcal{L}_{p,\infty}$. 
    For simplicity assume that $\ker(D) = \{0\}$, and now consider the following two linear maps on the algebraic tensor power $\mathcal{A}^{\otimes(p+1)}$,
    defined on an elementary tensor $c = a_0\otimes a_1\otimes \cdots \otimes a_p \in \mathcal{A}^{\otimes(p+1)}$ by,
    \begin{equation*}
        \mathrm{Ch}(c) := \frac{1}{2}\mathrm{Tr}(\Gamma F[F,a_0][F,a_1]\cdots[F,a_p])
    \end{equation*} 
    and
    \begin{equation*}
        \Omega(c) := \Gamma a_0\partial a_1\partial a_2\cdots \partial a_p.
    \end{equation*}
    Then the Connes Character Formula states that if $c$ is a Hochschild cycle (as defined in Section \ref{hochschild subsection}) then
    \begin{equation*}
        \mathrm{Tr}_\omega(\Omega(c)(1+D^2)^{-p/2}) = \mathrm{Ch}(c)
    \end{equation*}
    for every Dixmier trace $\mathrm{Tr}_\omega$. In other words, the multilinear maps $\mathrm{Ch}$ and $c \mapsto \mathrm{Tr}_\omega(\Omega(c)(1+D^2)^{-p/2})$ define
    the same class in Hochschild cohomology.
    
%
    
    
    There has been great interest in generalising the tools and results of noncommutative geometry to the \lq\lq non-compact\rq\rq (i.e., non-unital) setting. {  The definition
    of a spectral triple associated to a non-unital algebra originates with Connes \cite{Connes-reality}, was furthered by the work of Rennie \cite{Rennie-2003, Rennie-2004}
    and Gayral, Gracia-Bond\'ia, Iochum, Sch\"ucker and Varilly \cite{gayral-moyal}. Earlier, similar ideas appeared in the work of Baaj and Julg \cite{Baaj-Julg}. Additional contributions to this area were made by Carey,
    Gayral, Rennie, and the first named author \cite{CGRS1,CGRS2}. The conventional definition 
    of a non-compact spectral triple is to replace the condition that $(1+D^2)^{-1/2}$ be compact with the assumption that for all $a \in \mathcal{A}$ the operator $a(1+D^2)^{-1/2}$ is compact.     
    }
    
    This raises an important question: is the Connes Character Formula true for locally compact spectral triples? 
    
    In this paper we are able to provide an affirmative answer to this question, provided that one assumes certain regularity properties on the spectral triple.
    There is a substantial difference between the theories of compact and non-compact spectral triples, in particular issues pertaining to summability are 
    more subtle. We achieve our proof of the non-unital Character Formula using recently developed techniques of operator integration.


\section{The main results}

    In this paper we prove three key theorems (Theorems \ref{heat thm}, \ref{zeta thm} and \ref{main thm}) and a new result concerning universal measurability (Theorem \ref{zeta measurability theorem}). 

    Essential to our approach is a certain set of assumptions on a spectral triple to be outlined below. The notion of a spectral triple, and all of the corresponding notations are explained fully in Section \ref{spectral triple subsection}. By definition, if $(\mathcal{A},H,D)$ is a spectral triple then for $a \in \mathcal{A}$, the notation $\partial(a)$ denotes the bounded
    extension of the commutator $[D,a]$, and for an operator $T$ on $H$ which preserves the domain of $D$, $\delta(T)$ denotes the bounded extension of $[|D|,T]$ when it exists.
    The notation $\mathcal{L}_{r,\infty}$, $r \geq 1$, denotes the ideal of compact operators $T$ whose singular value sequence $\{\mu(n,T)\}_{n=0}^\infty$ satisfies $\mu(n,T) = O(n^{-1/r})$.
    The norm $\|\cdot\|_1$ is the trace-class norm. 
    
    Our main assumption on $(\mathcal{A},H,D)$ is as follows:
    \begin{hyp}\label{main assumption} 
        The spectral triple $(\mathcal{A},H,D)$ satisfies the following conditions:
        \begin{enumerate}[{\rm (i)}]
            \item\label{ass0} $(\mathcal{A},H,D)$ is a smooth spectral triple.
            \item\label{ass1} There exists $p \in \mathbb{N}$ such that $(\mathcal{A},H,D)$ is $p-$dimensional, i.e., for every $a\in\mathcal{A},$
                \begin{align*}
                              a(D+i)^{-p} &\in\mathcal{L}_{1,\infty},\\
                    \partial(a)(D+i)^{-p} &\in\mathcal{L}_{1,\infty}.
                \end{align*}
            \item\label{ass2} for every $a\in\mathcal{A}$ and for all $k\geq0,$ we have
                \begin{align*}
                              \Big\|\delta^k(a)(D+i\lambda)^{-p-1}\Big\|_1 &= O(\lambda^{-1}),\quad\lambda\to\infty,\\
                    \Big\|\delta^k(\partial(a))(D+i\lambda)^{-p-1}\Big\|_1 &= O(\lambda^{-1}),\quad\lambda\to\infty.
                \end{align*}
        \end{enumerate}
    \end{hyp}
    Condition \ref{main assumption}.\eqref{ass0} is well known and widely used in the literature. The notion of ``smoothness" that we use here
    is identical to what is sometimes referred to as $QC^\infty$ (see Definition \ref{smoothness definition}).
    
    Condition \ref{main assumption}.\eqref{ass1} is also widely used, but we caution the reader that elsewhere in the literature an alternative definition
    of dimension is often used: where $(\mathcal{A},H,D)$ is said to be $p$-dimensional if for all $a \in \mathcal{A}$ we have $a(D+i)^{-1} \in \mathcal{L}_{p,\infty}$
    and $\partial(a)(D+i)^{-1} \in \mathcal{L}_{p,\infty}$. {  The definition of dimension in \ref{main assumption}.\eqref{ass1} is strictly stronger, and we discuss this issue in \ref{dimension discussion}.}
    
    Condition \ref{main assumption}.\eqref{ass2} is new and specific to the locally compact situation.
    Indeed, if $\mathcal{A}$ is unital then \ref{main assumption}.\eqref{ass2} is redundant, as it follows from \ref{main assumption}.\eqref{ass1}. 
    
    In order to show that Hypothesis \ref{main assumption}.\eqref{ass2} is reasonable, we prove that it is satisfied for spectral triples associated to the following two classes of examples:
    \begin{enumerate}[{\rm (i)}]
        \item{} Noncommutative Euclidean spaces, a.k.a. Moyal spaces. (Section \ref{nc section})
        \item{} Complete Riemannian manifolds. (Section \ref{manifold section}).
    \end{enumerate}
    
    In deciding on the conditions of Hypothesis \ref{main assumption}, we have avoided the assumption that the spectral dimension of $(\mathcal{A},H,D)$ is isolated: this is an assumption
    made in \cite{higson}, \cite{Connes-Moscovici} and in some parts of \cite{CGRS2}.
    
    Our first main result is established in Section \ref{heat section}. This result provides an asymptotic estimate of the trace of the heat operator $s \mapsto e^{-s^2D^2}$,
    and we remark that the following theorem is new even in the compact case.
    \begin{thm}\label{heat thm} 
        Let $p\in\mathbb{N}$ and let $(\mathcal{A},H,D)$ be a spectral triple satisfying Hypothesis \ref{main assumption}. If $c\in\mathcal{A}^{\otimes (p+1)}$ is a Hochschild cycle, then
        \begin{equation}\label{heat eq}
            {\rm Tr}(\Omega(c)(1+D^2)^{1-\frac{p}{2}}e^{-s^2D^2})=\frac{p}{2}{\rm Ch}(c)s^{-2}+O(s^{-1}),\quad s\downarrow0.
        \end{equation}
    \end{thm}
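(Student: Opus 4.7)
My plan is to decompose each $\partial(a_i)=[D,a_i]$ in $\Omega(c)$ using the identity $[D,a]=F\delta(a)+[F,a]|D|$ (which comes from $D=F|D|$, restricted to $(\ker D)^{\perp}$), argue via Hypothesis \ref{main assumption}.\eqref{ass2} that only the summand taking $[F,a_i]|D|$ in every slot contributes at order $s^{-2}$, and then convert this surviving term into a multiple of $\mathrm{Ch}(c)$ using the Hochschild cycle condition together with standard $F$-algebra.

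Substituting the split for every $\partial(a_i)$ in $\Omega(c)=\Gamma a_0\partial(a_1)\cdots\partial(a_p)$ produces a sum of $2^{p}$ terms. Each slot filled by $F\delta(a_i)$ trades one power of $|D|$ for the bounded operator $\delta(a_i)$; by Hypothesis \ref{main assumption}.\eqref{ass2} combined with the double operator integration estimates of Theorem \ref{csz key lemma} (which control commutators of analytic functions of $D$ with elements of $\mathcal{A}$), each such trade improves the resulting trace estimate by one order in $s$. Hence every term containing at least one $F\delta$ slot contributes $O(s^{-1})$, and the surviving term is
\[
    \mathrm{Tr}\!\left(\Gamma a_0[F,a_1]|D|\cdots[F,a_p]|D|\,(1+D^{2})^{1-p/2}e^{-s^{2}D^{2}}\right).
\]
Commuting each $|D|$ rightward past the remaining $[F,a_j]$ factors generates corrections $[|D|,[F,a_j]]=[F,\delta(a_j)]$, again $O(s^{-1})$ by the same argument. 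This leaves
\[
    \mathrm{Tr}\!\left(\Gamma a_0[F,a_1]\cdots[F,a_p]\,|D|^{p}(1+D^{2})^{1-p/2}e^{-s^{2}D^{2}}\right)+O(s^{-1}).
\]

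Set $T:=|D|^{p}(1+D^{2})^{1-p/2}e^{-s^{2}D^{2}}$, a Borel function of $D^{2}$ commuting with both $\Gamma$ and $F$. Combining the identities $F[F,a_0]=a_0-Fa_0F$ and $F[F,a_i]F=-[F,a_i]$ with trace cyclicity produces an algebraic reduction of $\tfrac{1}{2}\mathrm{Tr}(\Gamma F[F,a_0]\cdots[F,a_p]T)$ to $\mathrm{Tr}(\Gamma a_0[F,a_1]\cdots[F,a_p]T)$ plus a sum of trace expressions involving products $a_ia_{i+1}$, with signs matching precisely the terms of the Hochschild boundary $bc$; the cycle condition $bc=0$ makes that residual sum vanish. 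Since $|D|^{p}(1+D^{2})^{1-p/2}=(1+D^{2})\bigl(1-\tfrac{p}{2}|D|^{-2}+O(|D|^{-4})\bigr)$ at infinity, replacing $|D|^{p}(1+D^{2})^{1-p/2}$ by $(1+D^{2})$ introduces only an $O(s^{-1})$ error. Writing $N:=\Gamma F[F,a_0]\cdots[F,a_p]$, the leading term reduces to $\tfrac{1}{2}\mathrm{Tr}(N(1+D^{2})e^{-s^{2}D^{2}})$; using $\mathrm{Tr}(ND^{2}e^{-s^{2}D^{2}})=-\tfrac{1}{2s}\tfrac{d}{ds}\mathrm{Tr}(Ne^{-s^{2}D^{2}})$ together with the $\zeta$-function analyticity of Theorem \ref{zeta thm} and a Karamata--Tauberian argument yields the required asymptotic $\tfrac{p}{2}\mathrm{Ch}(c)s^{-2}+O(s^{-1})$.

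The principal obstacle is the $O(s^{-1})$ error control in the non-unital setting: because $(1+D^{2})^{-1/2}$ is no longer compact, one cannot absorb the non-leading terms by standard trace-ideal calculus, and the argument relies essentially on the sharp $L^{1}$-type bounds in Hypothesis \ref{main assumption}.\eqref{ass2} together with the double operator integration machinery developed earlier in the paper. The Hochschild-cycle rearrangement is delicate as well, because the individual summands of $bc$ need not be separately trace class in the locally compact case, so the cancellation must be implemented already at the level of the regularised trace involving the heat semigroup.
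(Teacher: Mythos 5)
The central step of your argument — the identification of the leading term in the $2^{p}$-term expansion — is wrong, and this reverses the logic of the whole proof.

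You decompose $\partial(a_i)=F\delta(a_i)+[F,a_i]|D|$ and claim that the unique summand with no $F\delta$ slot, namely $\Gamma a_0[F,a_1]|D|\cdots[F,a_p]|D|$, carries the $s^{-2}$ asymptotic, while every summand containing at least one $F\delta$ factor is $O(s^{-1})$.  The opposite is true.  The all-$[F,a]$ term, after commuting the $|D|$'s to the right and pairing with $(1+D^2)^{1-p/2}e^{-s^2D^2}$, is essentially $\mathrm{ch}(c)D^2e^{-s^2D^2}$, and the operator $\mathrm{ch}(c)=\Gamma F\prod_{k=0}^p[F,a_k]$ already lies in $\mathcal L_{p/(p+1),\infty}\subset\mathcal L_1$ (Proposition \ref{f der def} plus the H\"older inequality) — it is the \emph{most} compact summand, not the least.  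This is exactly Lemma \ref{kogom pre 5} and Lemma \ref{kogom5} of the paper: $\|\mathrm{ch}(c)D^2e^{-s^2D^2}\|_1=O(s^{-1})$, so the summand you keep is negligible and your final asymptotic, whose coefficient is supposed to come from $\tfrac12\mathrm{Tr}(N(1+D^2)e^{-s^2D^2})$ with $N=\mathrm{ch}(c)$, has the wrong order.  In fact the terms of size $s^{-2}$ are precisely those with \emph{exactly one} $\delta$ slot: Theorem \ref{reduction} shows, after killing all $|\mathscr A|\geq2$ summands and the $\mathscr A=\emptyset$ summand via the cocycle identity, that $\mathrm{Tr}(\Omega(c)|D|^{2-p}e^{-s^2D^2})=p\,\mathrm{Tr}(\mathcal W_p(c)De^{-s^2D^2})+O(s^{-1})$, and it is $\mathrm{Tr}(\mathcal W_p(c)De^{-s^2D^2})$, not $\mathrm{Tr}(\mathrm{ch}(c)D^2e^{-s^2D^2})$, that equals $\tfrac12\mathrm{Ch}(c)s^{-2}+O(s^{-1})$ (Theorem \ref{first cycle thm}).

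There are two further problems.  First, you invoke Theorem \ref{zeta thm} together with a Karamata--Tauberian step to extract the $s^{-2}$ coefficient, but in the paper Theorem \ref{zeta thm} is \emph{deduced from} Theorem \ref{heat thm} via a Mellin transform; using it here makes the argument circular.  Second, Theorem \ref{csz key lemma} is a double operator integral representation of $B^zA^z-(A^{1/2}BA^{1/2})^z$ and has nothing to do with bounding heat-semigroup commutators $[e^{-s^2D^2},\delta^k(a)]$; the commutator bounds that make a $\delta$ slot cost one power of $s$ are those of Section \ref{commutator section} (Lemmas \ref{first commutator lemma}--\ref{commutator 7}), which rest on explicit Fourier/Duhamel representations and Hypothesis \ref{main assumption}.\eqref{ass2}, not on Theorem \ref{csz key lemma}.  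Finally, the claim that the reduction from $\tfrac12\mathrm{Tr}(\Gamma F[F,a_0]\cdots[F,a_p]T)$ to $\mathrm{Tr}(\Gamma a_0[F,a_1]\cdots[F,a_p]T)$ is achieved by the cycle condition is misattributed: that reduction is the purely algebraic identity $\mathrm{ch}(c)=\mathcal W_{\emptyset}(c)+F\mathcal W_{\emptyset}(c)F$ of Lemma \ref{W and ch link}, which uses parity matching and the anticommutation $F[F,a]=-[F,a]F$, not $bc=0$; the cycle condition enters elsewhere, in the Hochschild-coboundary cancellations that dispose of the $|\mathscr A|\geq2$ and $|\mathscr A|=0$ summands and equalize the $|\mathscr A|=1$ summands.
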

    
    Note that we do not require that the parity of the dimension of $p$ match the parity of the spectral triple (i.e., $p$ can be an odd integer while $(\mathcal{A},H,D)$ has a nontrivial grading, and similarly
    $p$ can be even while $(\mathcal{A},H,D)$ has no grading).

    Our second main result proves the the analytic continuation of the $\zeta-$function associated with the operator $(1+D^2)^{-\frac12}.$ 
    This result recovers all previous results concerning the residue of the $\zeta$ function on a Hochschild cycle.
    \begin{thm}\label{zeta thm} 
        Let $p\in\mathbb{N}$ and let $(\mathcal{A},H,D)$ be a spectral triple satisfying Hypothesis \ref{main assumption}. If $c\in\mathcal{A}^{\otimes (p+1)}$ is a Hochschild cycle, then the function
        \begin{equation}\label{zeta eq}
           \zeta_{c,D}(z) := {\rm Tr}(\Omega(c)(1+D^2)^{-\frac{z}{2}}),\quad\Re(z)>p,
        \end{equation}
        is holomorphic, and has analytic continuation to the set $\{\Re(z)>p-1\}\setminus\{p\}.$ The point $z = p$ is a simple pole of the analytic continuation of $\zeta_{c,D}$, with corresponding residue equal to $p\mathrm{Ch}(c)$.
    \end{thm}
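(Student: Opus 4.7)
The plan is to represent $\zeta_{c,D}$ as a Mellin transform of the heat trace of Theorem \ref{heat thm}, then read off both the analytic continuation and the residue from the small-time asymptotic supplied by that theorem. Setting $A := \Omega(c)(1+D^2)^{1-p/2}$ and $w := (z-p+2)/2$, one has $\Omega(c)(1+D^2)^{-z/2} = A(1+D^2)^{-w}$. Applying the spectral identity $(1+D^2)^{-w} = \Gamma(w)^{-1}\int_0^\infty t^{w-1}e^{-t(1+D^2)}\,dt$ inside the trace and substituting $t=s^2$ yields, for $\Re(z)>p$,
\begin{equation*}
    \zeta_{c,D}(z) = \frac{2}{\Gamma\bigl((z-p+2)/2\bigr)}\int_0^\infty s^{z-p+1}e^{-s^2}g(s)\,ds,
\end{equation*}
where $g(s) := \mathrm{Tr}\bigl(\Omega(c)(1+D^2)^{1-p/2}e^{-s^2 D^2}\bigr)$ is exactly the heat trace controlled by Theorem \ref{heat thm}. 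Since $g(s) = O(s^{-2})$ as $s\downarrow 0$ by that theorem, the integrand behaves as $s^{\Re(z)-p-1}$ near zero, so the integral converges absolutely and defines a holomorphic function on $\{\Re(z)>p\}$.

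For the analytic continuation, I split the integral at $s=1$. The tail $\int_1^\infty s^{z-p+1}e^{-s^2}g(s)\,ds$ is entire in $z$: the factor $e^{-s^2}$ dominates every polynomial in $s$, and $|g(s)|\le\|Ae^{-D^2}\|_1$ for $s\ge 1$ by contractivity of $e^{-(s^2-1)D^2}$, the operator $Ae^{-D^2}$ being trace class thanks to Theorem \ref{heat thm}. For the head, Theorem \ref{heat thm} gives $g(s) = \frac{p}{2}\mathrm{Ch}(c)s^{-2} + h(s)$ with $h(s)=O(s^{-1})$, so
\begin{equation*}
    \int_0^1 s^{z-p+1}e^{-s^2}g(s)\,ds = \frac{p}{2}\mathrm{Ch}(c)\int_0^1 s^{z-p-1}e^{-s^2}\,ds + \int_0^1 s^{z-p+1}e^{-s^2}h(s)\,ds.
\end{equation*}
The remainder integral is holomorphic on $\{\Re(z)>p-1\}$ by the pointwise $O(s^{-1})$ estimate on $h$. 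Expanding $e^{-s^2}=\sum_{k\ge 0}(-s^2)^k/k!$ and integrating termwise represents the leading integral as $\sum_{k\ge 0}(-1)^k/\bigl(k!(z-p+2k)\bigr)$, a meromorphic function on $\mathbb{C}$ whose only singularity in $\{\Re(z)>p-1\}$ is a simple pole at $z=p$ with residue $1$. Combined with the prefactor $2/\Gamma(1)=2$, this gives the simple pole of $\zeta_{c,D}$ at $z=p$ with residue $p\,\mathrm{Ch}(c)$.

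The main obstacle is rigorously establishing the Mellin identity at the level of traces. One must show that the operator-valued integrand $t^{w-1}e^{-t}\,\Omega(c)(1+D^2)^{1-p/2}e^{-tD^2}$ defines a Bochner integral converging in the trace norm, so that the scalar trace may be interchanged with integration via Fubini. This requires a trace-norm bound of the form $\|\Omega(c)(1+D^2)^{1-p/2}e^{-tD^2}\|_1 = O(t^{-1})$ as $t\downarrow 0$, which is stronger than the finiteness of the trace asserted by Theorem \ref{heat thm}; it must be extracted from the same ingredients, namely Hypothesis \ref{main assumption} combined with the double operator integration estimates underlying the proof of Theorem \ref{heat thm}.
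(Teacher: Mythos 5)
Your proposal follows the same route as the paper: express $\zeta_{c,D}$ as a Mellin transform of the heat trace, split the integral at $s=1$, use the asymptotic of Theorem \ref{heat thm} on the head, and read the pole off the leading term. The parametrisation $w=(z-p+2)/2$ and the power-series expansion of $e^{-s^2}$ in the leading integral are cosmetic variants of the paper's direct computation of $\int_0^1 s^{z-3}\,ds=(z-2)^{-1}$ after absorbing $e^{-s^2}$ into the bounded remainder $h$; a slightly cleaner way to handle your leading integral would be to write $\int_0^1 s^{z-p-1}e^{-s^2}\,ds=(z-p)^{-1}+\int_0^1 s^{z-p-1}(e^{-s^2}-1)\,ds$ and note that $e^{-s^2}-1=O(s^2)$ makes the second piece holomorphic on $\{\Re(z)>p-2\}$, avoiding the need to argue that the series is meromorphic.

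The one genuine gap is exactly the one you flag in your final paragraph, and you are right that it does not follow from the statement of Theorem \ref{heat thm}: that theorem controls the scalar trace $\mathrm{Tr}(\Omega(c)(1+D^2)^{1-p/2}e^{-s^2D^2})$, not the trace \emph{norm} of the operator, and since $\Omega(c)(1+D^2)^{1-p/2}e^{-s^2D^2}$ is not positive the two are not interchangeable. The paper supplies the needed bound $\|\Omega(c)(1+D^2)^{1-p/2}e^{-s^2(1+D^2)}\|_1=O(s^{-2})$ as $s\downarrow 0$ directly from Lemma \ref{schwartz lemma} (applied with $h_1(t)=te^{-t^2}$ when $p=1$, and $h_p(t)=(1+t^2)^{1-p/2}e^{-t^2}$ when $p>1$), which in turn rests on Hypothesis \ref{main assumption}.\eqref{ass2}; the exchange of trace and integral is then licensed by Lemma \ref{peter lemma}. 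For the tail $s\geq 1$ the paper does not rely on Theorem \ref{heat thm} at all but uses the operator inequality $e^{-s^2D^2}\leq(1+D^2)^{-3/2}$ and the fact that $\Omega(c)(1+D^2)^{-(p+1)/2}\in\mathcal{L}_1$ under Hypothesis \ref{main assumption}.\eqref{ass2}; your contractivity argument $\|\cdot\|_1\leq\|\Omega(c)(1+D^2)^{1-p/2}e^{-D^2}\|_1$ for $s\geq 1$ works equally well once the $s=1$ trace class membership is in hand. So the structure of your argument is sound, but to complete it you would need to prove the analogue of Lemma \ref{schwartz lemma} rather than merely note that it should be available.
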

    
    To prove our analogue of the Character Theorem in the unital setting, we require an additional \emph{locality} assumption on the Hochschild cycle $c$. The use of locality in noncommutative geometry was pioneered by Rennie in \cite{Rennie-2004}.
    \begin{defi}
        A Hochschild cycle $c=\sum_{j=1}^m a_0^j\otimes\cdots\otimes a_p^j \in \mathcal{A}^{\otimes(p+1)}$ is said to be local if there exists a positive element $\phi\in\mathcal{A}$ such that $\phi a_0^j=a_0^j$ for all $1\leq j\leq m$.
    \end{defi}
    For example, if $X$ is a manifold and $\mathcal{A} = C^\infty_c(X)$ is the algebra of smooth compactly supported functions on $X$, then every Hochschild cycle is local since we may choose $\phi$ to be smooth and equal to $1$ on the union
    of the supports of $\{a_0^j\}_{j=1}^m$.
    
    Our final result is the Connes Character Formula for locally compact spectral triples. In the compact case, our result recovers all previous results of this type 
    (e.g. \cite[Theorem 10.32]{GVF}, \cite[Theorem 6]{BF}, \cite[Theorem 10]{CPRS1} and \cite[Theorem 16]{CRSZ}).
    \begin{thm}\label{main thm} 
        Let $p\in\mathbb{N}$ and let $(\mathcal{A},H,D)$ be a spectral triple satisfying Hypothesis \ref{main assumption}. If $c\in\mathcal{A}^{\otimes (p+1)}$ is a local Hochschild cycle, then
        \begin{equation}\label{super main eq}
            \varphi(\Omega(c)(1+D^2)^{-\frac{p}{2}})={\rm Ch}(c).
        \end{equation}
        for every normalised trace $\varphi$ on $\mathcal{L}_{1,\infty}$.
    \end{thm}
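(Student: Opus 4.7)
The plan is to combine the locality of $c$ with the asymptotic analysis provided by Theorem \ref{zeta thm} to place the operator $T:=\Omega(c)(1+D^2)^{-p/2}$ within the reach of the universal measurability result Theorem \ref{zeta measurability theorem}. In this way the zeta-residue of $T$ (which by classical Tauberian arguments only controls Dixmier traces) is promoted to the value of \emph{every} normalised trace on $\mathcal{L}_{1,\infty}$.

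Concretely, I would first choose a positive $\phi\in\mathcal{A}$ with $\phi a_0^j=a_0^j$ for all $j$. Since $\Gamma$ commutes with $\mathcal{A}$, this gives $\Omega(c)=\phi\,\Omega(c)$. Hypothesis \ref{main assumption}.\eqref{ass1} combined with the spectral boundedness of $(D+i)^{-p}(1+D^2)^{p/2}$ yields $\phi(1+D^2)^{-p/2}\in\mathcal{L}_{1,\infty}$; a routine commutator expansion based on the smoothness assumption \ref{main assumption}.\eqref{ass0} then lets me rewrite $T=\phi\,\Omega(c)(1+D^2)^{-p/2}$ as $\phi(1+D^2)^{-p/2}\Omega(c)$ plus a remainder lying in a strictly smaller ideal (via estimates of the kind formalised in Hypothesis \ref{main assumption}.\eqref{ass2}), so that $T\in\mathcal{L}_{1,\infty}$. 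Using Theorem \ref{zeta thm} and the identity $\zeta_{c,D}(p+u)=\mathrm{Tr}(T(1+D^2)^{-u/2})$, valid for $u>0$ and continued meromorphically by that theorem, multiplying by $u$ and letting $u\downarrow 0$ produces
\[
\lim_{u\downarrow 0} u\cdot \mathrm{Tr}\bigl(T(1+D^2)^{-u/2}\bigr)\;=\;p\,\mathrm{Ch}(c),
\]
which is precisely the input required by Theorem \ref{zeta measurability theorem}. Its conclusion delivers $\varphi(T)=\mathrm{Ch}(c)$ for every normalised trace $\varphi$ on $\mathcal{L}_{1,\infty}$.

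The main obstacle is concentrated in the final appeal to Theorem \ref{zeta measurability theorem}: passing from a single zeta-residue statement — which determines the Dixmier trace by a classical Tauberian theorem — to the value of every normalised trace on $\mathcal{L}_{1,\infty}$ demands the finer, double-operator-integration-based measurability argument behind that theorem, and in addition the argument must accommodate the non-positive nature of $T$. A subordinate technical point is to verify, using locality together with the smoothness hypothesis, that the commutator remainders produced in the reduction step lie in the kernel of every normalised trace rather than only of Dixmier traces; this is precisely where Hypothesis \ref{main assumption}.\eqref{ass2} plays an indispensable role.
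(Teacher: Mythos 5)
There is a real gap, and it sits precisely where the paper's central technical innovation lives.

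You want to feed the residue from Theorem \ref{zeta thm} into Theorem \ref{zeta measurability theorem}. But Theorem \ref{zeta measurability theorem} is not a statement about an arbitrary $T\in\mathcal{L}_{1,\infty}$ whose associated $\zeta$-type function has a residue; its hypothesis requires a \emph{factorisation} $T=AV$ with $A\in\mathcal{L}_\infty$ and $0\leq V\in\mathcal{L}_{1,\infty}$, together with analytic continuation of $z\mapsto\mathrm{Tr}(AV^{1+z})$. The natural choice $A=\Omega(c)$, $V=(1+D^2)^{-p/2}$ fails at the first step: in the locally compact setting $(1+D^2)^{-1/2}$ is \emph{not} compact (only $a(1+D^2)^{-1/2}$ is), so this $V$ is not in $\mathcal{L}_{1,\infty}$. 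Your identity $\zeta_{c,D}(p+u)=\mathrm{Tr}(T(1+D^2)^{-u/2})$ is true but is simply not the object that Theorem \ref{zeta measurability theorem} accepts. Proving $T\in\mathcal{L}_{1,\infty}$, as you do via locality and Hypothesis \ref{main assumption}.\eqref{ass2}, does not by itself produce the required factorisation.

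The paper resolves this by choosing a genuinely compact positive operator through locality: with $A=\phi^2$ and $B=(1+D^2)^{-1/2}$, the operator $V=(A^{1/2}BA^{1/2})^p=(\phi(1+D^2)^{-1/2}\phi)^p$ \emph{is} in $\mathcal{L}_{1,\infty}$. But then $\mathrm{Tr}(\Omega(c)V^{1+z})$ is no longer $\zeta_{c,D}$ evaluated at a shifted point, because $\phi$ and $(1+D^2)^{-1/2}$ do not commute. The whole point of Theorem \ref{analyticity theorem I} — driven by the CSZ integral representation in Theorem \ref{csz key lemma} — is to show that the difference $z\mapsto\mathrm{Tr}\bigl(X(B^zA^z-(A^{1/2}BA^{1/2})^z)\bigr)$ extends analytically past the pole, so that the residue is transferred unchanged from $\zeta_{c,D}$ to the zeta function of the genuinely compact $V$. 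Your plan attributes the double-operator-integral content to the proof of Theorem \ref{zeta measurability theorem}, but that theorem is Tauberian and modulated-operator theory; the DOI machinery lives in Theorem \ref{analyticity theorem I}, which your argument never calls and which cannot be avoided. (A secondary omission: the paper also splits into $p>2$ and $p\in\{1,2\}$ because Condition \ref{conditions for analyticity} requires $p>2$, handled in the small-$p$ cases by replacing $B$ with $(1+D^2)^{-1/6}$ and rescaling exponents; your write-up does not engage with this.)
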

    The notion of a normalised trace on $\mathcal{L}_{1,\infty}$ is recalled in Subsection \ref{trace subsection}. 
    { The purpose of the Connes Character Formula is to compute the Hochschild class of the Chern character by a ``local" formula,
    here stated in terms of singular traces.}
    
    { A consequence of Theorem \ref{main thm} being stated for arbitrary normalised traces on $\mathcal{L}_{1,\infty}$ is that we can deduce precise behaviour of the distribution
    of eigenvalues of the operator $\Omega(c)(1+D^2)^{-p/2}$:
    \begin{cor}
        Let $(\mathcal{A},H,D)$ satisfy Hypothesis \ref{main assumption}, and let $c \in \mathcal{A}^{\otimes (p+1)}$ be a local Hochschild cycle. Then the sequence
        $\{\lambda(k,\Omega(c)(1+D^2)^{-p/2})\}_{k=0}^\infty$ of eigenvalues of the operator $\Omega(c)(1+D^2)^{-p/2}$ arranged in non-increasing absolute value satisfies:
        \begin{equation*}
            \sum_{k=0}^n \lambda(k,\Omega(c)(1+D^2)^{-p/2}) = \mathrm{Ch}(c)\log(n)+O(1),\quad n\to\infty.
        \end{equation*}
    \end{cor}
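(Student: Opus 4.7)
The plan is to combine Theorem \ref{main thm} with a universal-measurability result for the ideal $\mathcal{L}_{1,\infty}$. Set $T := \Omega(c)(1+D^2)^{-p/2}$. Hypothesis \ref{main assumption}.\eqref{ass1} ensures $T \in \mathcal{L}_{1,\infty}$, and Theorem \ref{main thm} asserts $\varphi(T) = \mathrm{Ch}(c)$ for \emph{every} normalised trace $\varphi$ on $\mathcal{L}_{1,\infty}$. The corollary thus reduces to a general operator-theoretic statement: for any $T \in \mathcal{L}_{1,\infty}$, universal equality $\varphi(T) = c$ across all normalised traces forces the sharp Cesaro asymptotic
\begin{equation*}
  \sum_{k=0}^n \lambda(k,T) = c\log n + O(1), \quad n\to\infty.
\end{equation*}

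The next step is to pass from the (possibly non-normal) operator $T$ to a diagonal operator. For this I would use the Lidskii-type identity $\varphi(T) = \varphi(\mathrm{diag}(\{\lambda(k,T)\}_{k\geq 0}))$, valid for every trace $\varphi$ on $\mathcal{L}_{1,\infty}$ and every $T\in\mathcal{L}_{1,\infty}$. This is a classical consequence of the Kalton--Dykema--Figiel--Weiss spectral identity for traces and reduces the problem to the purely sequence-theoretic question of characterising those sequences $a = \{a_k\}$ with $a_k = O(1/(k+1))$ for which $\varphi(\mathrm{diag}(a)) = c$ holds universally.

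For such a sequence one invokes the characterisation --- essentially Theorem \ref{zeta measurability theorem} of the paper --- that the universal trace value is attained if and only if $\sum_{k=0}^n a_k = c\log n + O(1)$. The easy direction (sufficiency) is immediate from the normalisation: any normalised trace sends a sequence with bounded logarithmic partial sums to its leading coefficient. The hard direction is contrapositive: if $\sum_{k=0}^n a_k - c\log n$ is unbounded, then one constructs, via Hahn--Banach in the Marcinkiewicz sequence space underlying $\mathcal{L}_{1,\infty}$, a continuous symmetric functional which lifts to a normalised trace detecting the deviation, contradicting universality.

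The principal obstacle is precisely this sharpening from an $o(\log n)$ remainder (which is all that Dixmier measurability supplies) to an $O(1)$ remainder. Dixmier traces alone yield only $\sum_{k=0}^n \lambda(k,T) = \mathrm{Ch}(c)\log n + o(\log n)$; it is the strictly larger family of all normalised traces, accessed through the universal-measurability machinery, that forces the remainder down to $O(1)$ and so gives the stated eigenvalue asymptotic.
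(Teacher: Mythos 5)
Your overall route matches the paper's exactly: the corollary follows by combining Theorem~\ref{main thm}, which gives $\varphi(\Omega(c)(1+D^2)^{-p/2})=\mathrm{Ch}(c)$ for every normalised trace $\varphi$, with the eigenvalue characterisation of universal measurability, and the paper disposes of this in a single sentence.

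The one concrete defect is a misattribution. The result you invoke --- that $T\in\mathcal{L}_{1,\infty}$ is universally measurable if and only if $\sum_{k=0}^n\lambda(k,T)=c\log n+O(1)$, with $\varphi(T)=c$ for all normalised $\varphi$ in that case --- is Theorem~\ref{universal measurability criterion}, cited from \cite[Theorem~10.1.3(g)]{LSZ}, not Theorem~\ref{zeta measurability theorem}. Theorem~\ref{zeta measurability theorem} is the $\zeta$-function criterion: it gives a \emph{sufficient} condition for universal measurability of $AV$ in terms of analytic continuation of $z\mapsto\mathrm{Tr}(AV^{1+z})$, and it does not supply the converse implication (universal measurability forces the $O(1)$ partial-sum asymptotic) that the corollary actually needs. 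Once the citation is corrected, your argument is the paper's argument.

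The remainder of your proposal --- the Lidskii-type passage to $\mathrm{diag}(\lambda(T))$, the sequence-level reduction, and the Hahn--Banach construction of a trace detecting an unbounded deviation $\sum a_k - c\log n$ --- is a plausible reconstruction of how Theorem~\ref{universal measurability criterion} is proved in \cite{LSZ}, but it is not required here: the paper treats that theorem as a black box, and so can you. Your final paragraph correctly identifies the conceptual point that distinguishes this corollary from what Dixmier traces alone would yield, namely that ranging over \emph{all} normalised traces is what upgrades an $o(\log n)$ remainder to $O(1)$.
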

    The above corollary is an immediate consequence of Theorem \ref{main thm} and Theorem \ref{universal measurability criterion}.
    }
        
    The main technical innovation of this paper concerns a certain integral representation for the difference of complex powers of positive operators, which originally
    appeared in \cite{CSZ} and which is reproduced here as Theorem \ref{csz key lemma}. 
    
    An operator $T \in \mathcal{L}_{1,\infty}$ is called universally measurable if all normalised traces on $\mathcal{L}_{1,\infty}$ take the same value on $T$.
    A new result of this paper, and a crucial component of our proof of Theorem \ref{main thm}, is the following:    
    \begin{thm}\label{zeta measurability theorem} 
        Let $0\leq V\in\mathcal{L}_{1,\infty}$ and let $A\in\mathcal{L}_{\infty}.$ 
        Define the $\zeta$-function: 
        \begin{equation*}
            \zeta_{A,V}(z) := \mathrm{Tr}(AV^{1+z}),\quad \Re(z) > 0.
        \end{equation*}
        If there exists $\varepsilon > 0$ such that $\zeta_{A,V}$ admits an analytic continuation to the set $\{z\;:\; \Re(z) > -\varepsilon\}\setminus \{0\}$
        with a simple pole at $0$, then for every normalised trace $\varphi$ on $\mathcal{L}_{1,\infty}$ we have:
        \begin{equation*}
            \varphi(AV)= \mathrm{Res}_{z=0}\zeta_{A,V}(z).
        \end{equation*}
        In particular, $AV$ is universally measurable.
    \end{thm}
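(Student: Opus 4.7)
The strategy combines three ingredients: a symmetrization replacing $AV$ with a positive operator, a Tauberian extraction of a sharp partial-sum asymptotic from the analytic hypothesis, and the Figiel--Kalton--Dykema description of the kernel of traces on $\mathcal{L}_{1,\infty}$. Decomposing $A$ into a linear combination of four positive bounded operators (via the Hermitian splitting and spectral positive/negative parts), both sides of the desired identity depend linearly on $A$, so I may assume $A \ge 0$. Put $T := V^{1/2}AV^{1/2} \ge 0 \in \mathcal{L}_{1,\infty}$. The algebraic identity
\begin{equation*}
AV - V^{1/2}AV^{1/2} = [A,V] + [V^{1/2}, V^{1/2}A]
\end{equation*}
expresses $AV - T$ as a sum of commutators $[X,Y]$ with $X$ bounded and $XY, YX \in \mathcal{L}_{1,\infty}$, hence in the kernel of every normalised trace, so $\varphi(AV) = \varphi(T)$. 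Cyclicity of the trace for $\Re z > 0$ also gives $\zeta_{A,V}(z) = \mathrm{Tr}(TV^z)$. Finally, unitary invariance of $\varphi$ together with Haar-averaging over the one-parameter group generated by $V$ yields $\varphi(T) = \varphi(\mathcal{E}_V(T))$, where $\mathcal{E}_V$ is the conditional expectation onto the commutant of $V$; in an orthonormal basis $(e_k)$ of $V$-eigenvectors, $\mathcal{E}_V(T)$ is the diagonal operator with entries $t_k := \langle Te_k, e_k\rangle = \mu(k,V)\langle Ae_k, e_k\rangle \ge 0$.

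Next, the analytic hypothesis is transferred into a partial-sum asymptotic. In the $V$-eigenbasis,
\begin{equation*}
\zeta_{A,V}(z) = \sum_k \langle Ae_k, e_k\rangle\,\mu(k,V)^{1+z} = \sum_k t_k\,\mu(k,V)^z,\qquad \Re z > 0,
\end{equation*}
a generalized Dirichlet series with non-negative coefficients. Mellin--Perron inversion represents the partial sum $\sum_{\mu(k,V)\ge 1/n} t_k$ as a contour integral of $\zeta_{A,V}(s)\,n^s/s$ along a vertical line in $\Re s > 0$; shifting to $\Re s = -\varepsilon$ past the simple pole at $s=0$ picks up the residue contribution $r\log n$, and the task is to control the shifted-contour remainder as $O(1)$. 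This is where the authors' integral representation for differences of complex powers (Theorem~\ref{csz key lemma}) is essential: it produces operator-norm identities that bypass the vertical-line growth bounds on $\zeta_{A,V}$ which a naive Mellin--Perron argument would require, yielding the sharp asymptotic $\sum_{k=0}^{n-1} t^*_k = r\log n + O(1)$, where $t^*_k$ is the decreasing rearrangement of $(t_k)$ (which differs from the $V$-order partial sum by $O(1)$, thanks to the structural form $t_k = a_k\mu(k,V)$ with $a_k$ bounded).

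By the Figiel--Kalton--Dykema theorem, a positive operator $S \in \mathcal{L}_{1,\infty}$ is universally measurable with value $r$ if and only if $\sum_{k=0}^{n-1}\mu(k,S) = r\log n + O(1)$. Applying this to $S = \mathcal{E}_V(T)$ (whose eigenvalues are exactly $t^*_k$) gives $\varphi(\mathcal{E}_V(T)) = r$ for every normalised trace, and hence $\varphi(AV) = \varphi(T) = r$, which is the conclusion.

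\textbf{Main obstacle.} The heart of the proof is the Tauberian step: obtaining the sharp $O(1)$ remainder from mere analytic continuation with a simple pole is delicate, since the hypothesis supplies no explicit vertical-line growth control on $\zeta_{A,V}$. This is precisely where the authors' key integral representation (Theorem~\ref{csz key lemma}), developed via double operator integration techniques, is indispensable: it replaces the missing growth estimates with direct operator-theoretic identities that deliver the required remainder bound.
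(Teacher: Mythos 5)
Your overall architecture — reduce to a diagonal sum over a $V$-eigenbasis, extract a sharp $\log n + O(1)$ partial-sum asymptotic from the analytic hypothesis, then apply the eigenvalue-sum criterion for universal measurability — does match the paper's skeleton. But two of your load-bearing steps are unjustified, and the third (which you yourself flag as the main obstacle) rests on the wrong tool.

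\textbf{(1) The Tauberian step does not use Theorem \ref{csz key lemma}.} You correctly identify that the heart of the matter is getting a sharp $O(1)$ remainder without growth bounds on $\zeta_{A,V}$ along vertical lines, but you then claim this is where Theorem~\ref{csz key lemma} is "indispensable." That is not how the paper proceeds: Theorem~\ref{csz key lemma} is used \emph{upstream}, in Section~\ref{difference section} and the proof of Theorem~\ref{main thm}, to \emph{verify the hypothesis} of Theorem~\ref{zeta measurability theorem} for the specific operators arising from a spectral triple. The proof of Theorem~\ref{zeta measurability theorem} itself uses an entirely separate Tauberian ingredient, the Subhankulov-type Lemma~\ref{subhankulov main lemma} (built on Lemma~\ref{subhankulov key estimate}). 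The enabling observation you did not make is that the Laplace transform of the counting measure $b$ built from $\{\langle Ae_k,e_k\rangle\mu(k,V)\}_k$ has bounded variation on every unit interval — condition~\eqref{main measure condition} — and that this is automatic from $A$ bounded and $V\in\mathcal{L}_{1,\infty}$. That bounded-variation hypothesis is exactly what substitutes for the missing vertical-line growth control, and Lemma~\ref{subhankulov main lemma} then delivers $b(x)=O(1)$ directly from analytic continuation past $\Re z = 0$.

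\textbf{(2) The step $\varphi(T) = \varphi(\mathcal{E}_V(T))$ is not justified.} General normalised traces on $\mathcal{L}_{1,\infty}$ need not be continuous in any reasonable topology (fact (5) in Subsection~\ref{trace subsection}, citing \cite{DFWW}), so you cannot commute $\varphi$ with a Haar average over the one-parameter group $e^{itV}$. The paper sidesteps this entirely via modulated operator theory: since $AV$ is $V$-modulated, \cite[Theorem 11.2.3]{LSZ} (quoted here as Lemma~\ref{simple klps lemma}) gives the $O(1)$-comparison between $\sum_{k\le n}\lambda(k,AV)$ and $\sum_{k\le n}\langle Ae_k,e_k\rangle\mu(k,V)$ directly at the level of partial sums, and then Theorem~\ref{universal measurability criterion} closes the argument for \emph{every} normalised trace — no averaging or conditional expectation needed. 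Relatedly, your symmetrization to $T=V^{1/2}AV^{1/2}$ via the commutator decomposition $AV - T = [A,V] + [V^{1/2},V^{1/2}A]$ also needs care: the trace property gives $\varphi([X,Y])=0$ when $X$ is bounded and $Y\in\mathcal{L}_{1,\infty}$, but in $[V^{1/2},V^{1/2}A]$ neither factor lies in $\mathcal{L}_{1,\infty}$, only their products do; this is a statement about the commutator subspace that does not follow for free. The paper avoids the issue by never symmetrizing — Lemma~\ref{simple klps lemma} works with $AV$ directly.

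So: right architecture, right identification of the difficulty, but the wrong key lemma for the Tauberian step and an averaging step that fails for non-continuous traces. The actual proof hinges on the pair (Subhankulov Tauberian Lemma~\ref{subhankulov main lemma}, modulated-operators Lemma~\ref{simple klps lemma}), neither of which appears in your proposal.
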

    Theorem \ref{zeta measurability theorem} is a strengthening of an earlier result \cite[Theorem 4.13]{SUZ-indiana}, and a complete proof is given in Section \ref{subhankulov section}.
   
\section{Context of this paper}

    Connes' Character Formula dates back to Connes' 1995 paper \cite{Connes-original-spectral-1995}. There the character theorem was discovered in order to \lq\lq compute by a local formula the cyclic cohomology Chern character of $(\mathcal{A},H,D)$\rq\rq. Connes' work initiated a lengthy and ongoing program to strengthen, generalise and better understand the Character Formula.
    
    Closely linked to the Character Formula is the Local Index Theorem of Connes and Moscovici \cite{Connes-Moscovici}, and much of the work in this field was from the point of view of index theory. Among the approaches to generalising Connes character theorem, there is \cite{BF} by Benamuer and Fack, and \cite{CPRS1} by Carey, Philips, Rennie and the first named author.
    
    Instead of considering traces on $\mathcal{L}_{1,\infty},$ \cite{CPRS1} deals with Dixmier traces on the Lorentz space $\mathcal{M}_{1,\infty}.$ 
    Due to an error in the statement of Lemma 14 of \cite{CPRS1} which invalidates the proof in the $p=1$ case, a followup paper \cite{CRSZ} was
    written. In \cite{CRSZ}, the Character Formula is proved in the compact case for arbitrary normalised traces (rather than Dixmier traces).
    
    During the creation of the present manuscript an oversight was located in \cite{CRSZ}: in that paper the case where $D$ has a nontrivial kernel
    and $(\mathcal{A},H,D)$ is even was not handled correctly. It was incorrectly assumed in \cite[Case 3, page 20]{CRSZ} that if $(\mathcal{A},H,D)$ is an even spectral triple
    with grading $\Gamma$, then so is
    $$(\mathcal{A},H,(\chi_{[0,\infty)}(D)-\chi_{(-\infty,0)}(D))(1+|D|^2)^{1/2}).$$
    This is false if the kernel of $D$ is nontrivial, since then it is not necessarily the case that $\chi_{[0,\infty)}(D)-\chi_{(-\infty,0)}(D)$ anticommutes with $\Gamma$. The outcome of this oversight is that the proof of the Character Theorem as given in \cite{CRSZ} is incomplete. This oversight can be corrected by using the well known "doubling trick" that was already present in \cite[Definition 6]{CPRS1}. The present work supersedes that of \cite{CRSZ}, and so rather than submit an erratum we have decided to instead supply a complete
    proof here, in a more general setting.

    All of the work mentioned so far in this section applies exclusively in the compact case. Adapting the tools of noncommutative geometry to the locally compact case involves substantial difficulties and this task has been heavily studied by multiple authors over the past few decades: as a small sample of this body of work we mention \cite{Rennie-2003, Rennie-2004, gayral-moyal, GIV, CGRS1, CGRS2} and more recently work by Marius Junge and Li Gao concerning noncommutative planes.
    
    In 2000 Professor Nigel Higson published \cite{higson}: a detailed exposition of the local index theorem, including in the final appendix a claimed proof 
    of the Connes Character Formula in the non-unital setting. Higson's work was a major inspiration for the present paper, 
    since it is now understood and acknowledged by Higson that the claimed proof of the Character Formula 
    \cite[Theorem C.3]{higson} has a gap. This paper arose from our efforts to produce a correct statement and complete proof of the Character Formula in the non-unital setting using
    recently developed methods of Double Operator Integration theory.
    
    The nature of the gap in \cite{higson} is subtle, and concerns the relationship between Dixmier traces and zeta-function residues. To be precise: the proof
     relied on an equality between
     \begin{equation*}
	\lim_{s\downarrow 0} \mathrm{Tr}(Z|D|^{-n-s})
     \end{equation*}
     and
     \begin{equation*}
	\mathrm{Tr}_\omega(Z|D|^{-n})
     \end{equation*}
     (in the notation of \cite{higson}). In the case where $|D|^{-1}$ is compact this result can be attained using
     existing techniques from \cite[Theorem 8.6.4, Theorem 8.6.5 and Theorem 9.3.1]{LSZ}. In the case where $|D|^{-1}$ is not compact the situation
     is less well understood. The present text was motivated by an effort to understand the equality above in the non-compact case.
    
    {
    After circulating a draft of our manuscript Carey and Rennie pointed out that there was a different way to obtain a similar result on the Hochschild class using \cite{CGRS2} (which is based on \cite{CPRS4}). It is proved in these papers that the \lq\lq resolvent cocycle\rq\rq introduced there represents the cohomology class of the Chern character. From that point of view one may obtain a different representative of the Hochschild class of the Chern character using  residues of zeta functions under weaker hypotheses on the Hochschild chains and substantially stronger summability conditions on the spectral triple. For Hochschild chains satisfying some additional conditions, but not requiring locality as employed here, Carey and Rennie also have a Dixmier trace formula for the Hochschild class of the Chern character evaluated on such Hochschild chains. }

\section{Structure of the paper}
    This paper is structured as follows:
    \begin{itemize}
        \item{} Chapter \ref{preliminaries chapter} is devoted to preliminary definitions and concepts: we introduce the relevant definitions for operator ideals, traces, spectral
                triples, operator valued integrals and double operator integrals.
        \item{} Chapter \ref{examples chapter} provides important technical properties of spectral triples. In Section \ref{nc section} we prove that Hypothesis \ref{main assumption}
                is satisfied for the canonical spectral triple associated to noncommutative Euclidean spaces $\mathbb{R}^p_\theta$, and in Section \ref{manifold section} we show
                that the Hypothesis is satisfied for Hodge-Dirac spectral triples associated to arbitrary complete Riemannian manifolds.
        \item{} Chapter \ref{heat chapter} contains the proof of Theorem \ref{heat thm}.
        \item{} Chapter \ref{zeta chapter} contains the proofs of Theorems \ref{zeta thm}, \ref{zeta measurability theorem} and \ref{main thm}.
        \item{} Finally, an appendix is included to collect some of the lengthier computations.
    \end{itemize}

\chapter{Preliminaries}\label{preliminaries chapter}

\section{Operators, ideals and traces}

\subsection{General notation}\label{general notations subsection}
    Fix throughout a separable, infinite dimensional complex Hilbert space $H$. We denote by $\mathcal{L}_{\infty}$ the algebra of all bounded operators on $H$, with operator norm denoted $\|\cdot\|_\infty.$ For a compact operator $T$ on $H$,
    let $\lambda(T) := \{\lambda(k,T)\}_{k=0}^\infty$ denote the sequence of eigenvalues of $T$ arranged in order of non-increasing magnitude and with multiplicities. Similarly, let $\mu(T) := \{\mu(k,T)\}_{k=0}^\infty$ denote the sequence of singular
    values of $T$, also arranged in non-increasing order with multiplicities. The $k$th singular value may be described equivalently as either $\mu(k,T) := \lambda(k,|T|)$ or 
    \begin{equation*}
        \mu(k,T) = \inf\{\|T-R\|_\infty\;:\;\mathrm{rank}(R)\leq k\}.
    \end{equation*}
    
    The standard trace on $\mathcal{L}_\infty$ (more precisely on the trace-class ideal) is denoted $\mathrm{Tr}$.

    Fix an orthonormal basis $\{e_k\}_{k=0}^\infty$ on $H$ (the particular choice of basis is inessential). We identify the algebra $\ell_\infty$ of all bounded sequences with the subalgebra of diagonal operators on $H$ with respect
    to the chosen basis. For a given $\alpha \in \ell_\infty$, we denote the corresponding diagonal operator by $\mathrm{diag}(\alpha)$.

    For $A,B \in \mathcal{L}_{\infty}$, we say that $B$ is submajorized by $A$ in the sense of Hardy-Littlewood, written as $B \prec\prec A$, if
    \begin{equation*}
        \sum_{k=0}^n \mu(k,B) \leq \sum_{k=0}^n \mu(k,A), \quad n\geq 0.
    \end{equation*}
    
    We say that $B$ is logarithmically submajorized by $A$, written as $B \prec\prec_{\log} A$ if 
    \begin{equation*}
        \prod_{k=0}^n \mu(k,B)\leq \prod_{k=0}^n \mu(k,A),\quad n\geq 0.
    \end{equation*}
    
    An important result concerning logarithmic submajorisation is the Araki-Lieb-Thirring inequality \cite[Theorem 2]{Kosaki-alt-1992}, which states that
    for all positive bounded operators $A$ and $B$ and all $r \geq 1$,
    \begin{equation}\label{ALT inequality}
        |AB|^r \prec\prec_{\log} A^rB^r.
    \end{equation}
    
    We make frequent use of the following commutator identity: if $A$ and $B$ are operators with $B$ invertible, then
    \begin{equation}\label{favourite commutator identity}
        [B^{-1},A] = -B^{-1}[B,A]B^{-1}.
    \end{equation}
    We must take care to ensure that \eqref{favourite commutator identity} remains valid when $A$ and $B$ are potentially unbounded. If $A$ is bounded, then it is enough
    that $A:\mathrm{dom}(B)\to \mathrm{dom}(B)$.

\subsection{Ideals in $\mathcal{L}_{\infty}$ and related inequalities}
    For $p \in (0,\infty)$, we let $\mathcal{L}_{p}$ denote the Schatten-von Neumann ideal of $\mathcal{L}_{\infty}$,
    \begin{equation*}
        \mathcal{L}_p := \{T \in \mathcal{L}_{\infty}\;:\; \mu(T) \in \ell_p\}
    \end{equation*}
    where $\ell_p$ is the space of $p$-summable sequences. As usual, for $p \geq 1$ the ideal $\mathcal{L}_{p}$ is equipped
    with the norm
    \begin{equation*}
        \|T\|_{p} := \left(\sum_{k=0}^\infty \mu(k,T)^p\right)^{1/p}.
    \end{equation*}
    
    Similarly, given $0 < p < \infty$, we denote by $\mathcal{L}_{p,\infty}$ the ideal in $\mathcal{L}_{\infty}$ defined by
    \begin{equation*}
        \mathcal{L}_{p,\infty} := \{T \in \mathcal{L}_{\infty}\;:\; \sup_{k\geq 0}\, (1+k)^{1/p}\mu(k,T) < \infty\}.
    \end{equation*}
    Equivalently, 
    \begin{equation*}
        \mathcal{L}_{p,\infty} := \{T \in \mathcal{L}_{\infty}\;:\; \sup_{n\geq 0}\, n^{-p}\mathrm{Tr}(\chi_{(\frac{1}{n},\infty)}(|T|)) < \infty\}.
    \end{equation*}
    It is well known that the ideal $\mathcal{L}_{p,\infty}$ may be equipped with a quasi-norm given by the formula
    \begin{equation*}
        \|T\|_{p,\infty} := \sup_{k\geq 0} (k+1)^{1/p}\mu(k,T), \quad T \in \mathcal{L}_{p,\infty}.
    \end{equation*}
    As is conventional, $\mathcal{L}_{\infty,\infty} := \mathcal{L}_{\infty}$. 
    
    We make use of the following H\"older inequality: let $p,p_1,p_2,\ldots,p_n \in (0,\infty]$ satisfy $\frac{1}{p} = \sum_{k=1}^n \frac{1}{p_k}$. If $A_k \in \mathcal{L}_{p_k,\infty}$ for all $k = 1,\ldots, n$, then
    $A_1A_2\cdots A_n \in \mathcal{L}_{p,\infty}$, with an inequality of norms:
    \begin{equation}\label{weak-type Holder}
        \|A_1A_2\cdots A_n\|_{p,\infty} \leq c_{p_1,p_2,\ldots,p_n}\|A_1\|_{p_{1},\infty}\|A_2\|_{p_2,\infty}\cdots \|A_n\|_{p_n,\infty}
    \end{equation}
    where $c_{p_1,p_2,\ldots,p_n} > 0$.
    
    The quasi-norm $\|\cdot\|_{1,\infty}$ is not monotone with respect to Hardy-Littlewood submajorisation. It is, however, monotone under
    logarithmic submajorisation. To be precise, we have that for all $A,B \in \mathcal{L}_{1,\infty}$ if $B \prec\prec_{\log} A$
    then
    \begin{equation}\label{log majorization monotone}
        \|B\|_{1,\infty} \leq e\|A\|_{1,\infty}.
    \end{equation}
    
    Indeed, since the sequence $\{\mu(k,B)\}_{k=0}^\infty$ is nonincreasing, for all $n \geq 0$ we have:
    \begin{equation*}
        \mu(n,B)^{n+1} \leq \prod_{k=0}^n \mu(k,B).
    \end{equation*}
    So if $B \prec\prec_{\log} A$,
    \begin{equation}\label{geometric mean bound}
        \mu(n,B)^{n+1} \leq \prod_{k=0}^n \mu(k,A).
    \end{equation}
    However by definition, $\mu(k,A) \leq \frac{\|A\|_{1,\infty}}{k+1}$ for all $k$, so
    \begin{equation}\label{factorial bound}
        \prod_{k=0}^n \mu(k,A) \leq \frac{\|A\|_{1,\infty}^{n+1}}{(n+1)!}.
    \end{equation}
    Now combining \eqref{geometric mean bound} and \eqref{factorial bound}, 
    \begin{equation*}
        \mu(n,B)^{n+1} \leq \frac{\|A\|_{1,\infty}^{n+1}}{(n+1)!}.
    \end{equation*}
    Now using the Stirling approximation
    $$(n+1)!\geq\big(\frac{n+1}{e}\big)^{n+1},$$
    we arrive at
    \begin{equation*}
        \mu(n,B)^{n+1} \leq \Big(\frac{e\|A\|_{1,\infty}}{n+1}\Big)^{n+1}.
    \end{equation*}
    Hence, for all $n\geq 0$,
    \begin{equation*}
        \mu(n,B) \leq\frac{e\|A\|_{1,\infty}}{n+1}.
    \end{equation*}
    Multiplying by $n+1$, and then taking the supremum over $n$ yields $\|B\|_{1,\infty} \leq e\|A\|_{1,\infty}$ as desired.
    
    Another ideal to which we will refer is the Schatten-Lorentz ideal $\mathcal{L}_{q,1}$ for $q > 1$, defined by
    \begin{equation*}
        \mathcal{L}_{q,1} := \{T \in \mathcal{L}_\infty\;:\;\sum_{k=0}^\infty \mu(k,T)(k+1)^{\frac{1}{q}-1} < \infty\}
    \end{equation*}
    and equipped with the quasi-norm
    \begin{equation*}
        \|A\|_{q,1} := \sum_{k=0}^\infty \mu(k,A)(1+k)^{\frac{1}{q}-1}.
    \end{equation*}

    If $\frac{1}{p}+\frac{1}{q} = 1$, then we have the following H\"older-type inequality:
    \begin{equation}\label{another holder}
        \|AB\|_1 \leq \|A\|_{p,\infty}\|B\|_{q,1},\quad A \in \mathcal{L}_{p,\infty}, B \in \mathcal{L}_{q,1}.
    \end{equation}

\subsection{Traces on $\mathcal{L}_{1,\infty}$}\label{trace subsection}

    \begin{defi}\label{trace def} 
        If $\mathcal{I}$ is an ideal in $\mathcal{L}_{\infty},$ then a unitarily invariant
        linear functional $\varphi:\mathcal{I}\to\mathbb{C}$ is said to be a trace.
    \end{defi}
    Here $\varphi$ being ``unitarily invariant" means that $\varphi(U^*TU) = \varphi(T)$ for all $T \in \mathcal{I}$ and unitary operators $U$. 
    Equivalently, $\varphi(UT) = \varphi(TU)$ for all unitary operators $U$ and $T \in \mathcal{I}$. Since every bounded linear operator can be written as a linear combination
    of at most four unitary operators \cite[Page 209]{Reed-Simon-I-1980}, one may equivalently say that $\varphi(AT) = \varphi(TA)$ for all $A \in \mathcal{L}_{\infty}$
    and $T \in \mathcal{I}$.
    
    The most well-known example of a trace is the classical trace $\mathrm{Tr}$ on the ideal $\mathcal{L}_1$, however we will be primarily concerned with traces on the ideal $\mathcal{L}_{1,\infty}$. 
    There exist many traces on $\mathcal{L}_{1,\infty}$, of which the earliest discovered class of examples are the Dixmier traces which we now describe.
    
    Recall that an extended limit is a continuous linear functional $\omega \in \ell_\infty^*$ from the set of bounded sequences $\ell_\infty$
    which extends the limit functional on the subspace $c$ of convergent sequences. Readers who are more familiar with ultrafilters may consider
    the special case where $\omega$ is the limit along a non-principal (free) ultrafilter on $\mathbb{Z}_+$.
    
    \begin{ex} 
        Let $\omega$ be an extended limit. Then the functional $\mathrm{Tr}_\omega$ is defined on a positive operator $T \in \mathcal{L}_{1,\infty}$ by
        \begin{equation*}
            \mathrm{Tr}_\omega(T) := \omega\left(\left\{\frac{1}{\log(2+n)}\sum_{k=0}^n \mu(k,T)\right\}_{n=0}^\infty\right).
        \end{equation*}
        The functional $\mathrm{Tr}_\omega$ is additive on the cone of positive elements of $\mathcal{L}_{1,\infty}$, and therefore extends by linearity to a a functional on $\mathcal{L}_{1,\infty}$. The thus defined functional $\mathrm{Tr}_\omega:\mathcal{L}_{1,\infty}\to \mathbb{C}$
        is a trace, and we call such traces Dixmier traces.
    \end{ex}
    \begin{proof}
        Let $A$ and $B$ be positive operators. Combining \cite[Theorem 3.3.3, Theorem 3.3.4]{LSZ}, for all $n \geq 0$ we have:
        \begin{equation*}
            \sum_{k=0}^n \mu(k,A+B) \leq \sum_{k=0}^n \mu(k,A)+\mu(k,B) \leq \sum_{k=0}^{2n+1} \mu(k,A+B).
        \end{equation*}
        Hence,
        \begin{equation*}
            0 \leq \sum_{k=0}^n \mu(k,A)+\mu(k,B)-\mu(k,A+B) \leq \sum_{k=n+1}^{2n+1} \mu(k,A+B).
        \end{equation*}
        However $A+B \in \mathcal{L}_{1,\infty}$, so there is a constant $C > 0$ such that for all $k \geq 0$ we have $\mu(k,A+B) \leq \frac{C}{k+1}$ and therefore
        \begin{equation*}
            0 \leq \sum_{k=0}^n \mu(k,A)+\mu(k,B)-\mu(k,A+B) \leq C,\quad n\geq 0.
        \end{equation*}
        Dividing by $\log(2+n)$:
        \begin{align*}
            0 &\leq \frac{1}{\log(2+n)}\sum_{k=0}^n \mu(k,A) + \frac{1}{\log(2+n)}\sum_{k=0}^n \mu(k,B) - \frac{1}{\log(2+n)}\sum_{k=0}^n \mu(k,A+B)\\
              &\leq O(\frac{1}{\log(2+n)}), \quad n\to\infty.
        \end{align*} 
        Then applying $\omega$:
        \begin{equation*}
            0 \leq \mathrm{Tr}_\omega(A)+\mathrm{Tr}_\omega(B)-\mathrm{Tr}_\omega(A+B) \leq 0.
        \end{equation*}
        So indeed $\mathrm{Tr}_\omega(A+B) = \mathrm{Tr}_\omega(A)+\mathrm{Tr}_\omega(B)$ for any two positive operators $A$ and $B$.
    \end{proof}
    
    \begin{rem}
        Dixmier traces were first defined by J. Dixmier in \cite{Dixmier}, albeit with some important differences to $\mathrm{Tr}_\omega$ as given in the above example.
        \begin{enumerate}[{\rm (i)}]
            \item Originally Dixmier traces were defined on the ideal $\mathcal{M}_{1,\infty}$ which is strictly larger than $\mathcal{L}_{1,\infty}.$
            \item $\mathrm{Tr}_\omega$ was originally shown to be additive only for those extended limits which are translation and dilation invariant.
        \end{enumerate}
        For more technical details and historical information we refer the reader to \cite[Chapter 6]{LSZ}.
                
        As the preceding example shows, $\mathrm{Tr}_\omega$ is additive on $\mathcal{L}_{1,\infty}$ for an arbitrary extended limit.
    \end{rem}    

    An extensive discussion of traces, and more recent developments in the theory, may be found in \cite{LSZ} including a discussion of the following facts:
    \begin{enumerate}
        \item All Dixmier traces on $\mathcal{L}_{1,\infty}$ are positive.
        \item All positive traces on $\mathcal{L}_{1,\infty}$ are continuous in the quasi-norm topology.
        \item Every continuous trace is a linear combination of positive traces.
        \item There exist positive traces on $\mathcal{L}_{1,\infty}$ which are not Dixmier traces (see \cite{SSUZ-pietsch}).
        \item There exist traces on $\mathcal{L}_{1,\infty}$ which fail to be continuous (see \cite{DFWW}).
        \item Every trace on $\mathcal{L}_{1,\infty}$ vanishes on $\mathcal{L}_1$ (see \cite{DFWW}).
    \end{enumerate}

    We are mostly interested in {\it normalised traces} $\varphi:\mathcal{L}_{1,\infty}\to\mathbb{C},$ that is, satisfying $\varphi({\rm diag}(\{\frac1{k+1}\}_{k\geq0}))=1.$

    The following definition, extending that originally introduced in \cite[Definition 2.$\beta$.7]{NCG-book}, plays an important role here.
    \begin{defi}\label{def:uni-meas}
        An operator $T\in\mathcal{L}_{1,\infty}$ is called universally measurable if all normalised traces take the same value on $T.$
    \end{defi}
    
    The following result characterises universally measurable operators in terms of their eigenvalues, and a detailed proof may be found in \cite[Theorem 10.1.3(g)]{LSZ}
    \begin{thm}\label{universal measurability criterion} 
        An operator $T\in\mathcal{L}_{1,\infty}$ is universally measurable if and only if there exists a constant $c$ such that
        $$\sum_{k=0}^n\lambda(k,T)=c\cdot\log(n)+O(1), \quad n\to\infty$$
        In this case, we have $\varphi(T)=c$ for every normalised trace $\varphi$ on $\mathcal{L}_{1,\infty}.$
    \end{thm}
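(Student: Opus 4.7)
The strategy is to reduce to diagonal operators via a Lidskii-type theorem valid for all traces on $\mathcal{L}_{1,\infty}$, and then to invoke Kalton's characterisation of the commutator subspace $[\mathcal{L}_\infty,\mathcal{L}_{1,\infty}]$. This latter is the key technical input.

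The first step is the reduction to diagonal operators. A Lidskii-type formula, valid for every trace on $\mathcal{L}_{1,\infty}$ (see \cite{LSZ}, originally due to Dykema--Figiel--Weiss--Wodzicki \cite{DFWW}), asserts that $\varphi(T) = \varphi(\mathrm{diag}(\lambda(T)))$ for every $T \in \mathcal{L}_{1,\infty}$ and every trace $\varphi$. Thus $T$ is universally measurable if and only if $\mathrm{diag}(\lambda(T))$ is, and I may assume $T = \mathrm{diag}(\alpha)$ with $\alpha := \lambda(T)$.

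Let $T_0 := \mathrm{diag}(\{1/(k+1)\}_{k\geq 0})$, which satisfies $\varphi(T_0) = 1$ for every normalised trace by definition. Since $\sum_{k=0}^n 1/(k+1) = \log(n) + O(1)$, the eigenvalue asymptotic $\sum_{k=0}^n \lambda(k,T) = c\log(n)+O(1)$ is equivalent to the partial sums of the eigenvalue sequence of $T - cT_0$ being bounded. For the forward direction I would then invoke Kalton's theorem: a diagonal operator in $\mathcal{L}_{1,\infty}$ whose eigenvalue sequence has bounded partial sums lies in the algebraic commutator subspace $[\mathcal{L}_\infty,\mathcal{L}_{1,\infty}]$ (see \cite{LSZ}). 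Every trace vanishes on commutators, so $\varphi(T-cT_0)=0$, whence $\varphi(T)=c$ for every normalised trace.

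For the converse direction, suppose $T$ is universally measurable with common value $c$. Then $T - cT_0$ lies in the intersection of the kernels of all traces on $\mathcal{L}_{1,\infty}$. The other half of Kalton's theorem identifies this intersection with the commutator subspace, and hence (via the reduction to diagonals) with diagonal operators whose eigenvalue partial sums are bounded. This yields the claimed asymptotic with constant $c$.

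The main obstacle is the Kalton characterisation itself: one must establish both that bounded partial sums of the eigenvalue sequence are equivalent to membership in $[\mathcal{L}_\infty,\mathcal{L}_{1,\infty}]$ for diagonal elements, and that this commutator subspace coincides with the intersection of kernels of \emph{all} traces (not merely the continuous or positive ones, so the stronger $O(1)$ asymptotic rather than the weaker Dixmier-style $o(\log n)$ is forced). This is the substantive input, belonging to the general theory of singular traces as developed in \cite{LSZ}; once it is granted, the theorem above reduces to the bookkeeping outlined in the preceding paragraphs.
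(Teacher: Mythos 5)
Your proposal follows essentially the route that the cited reference [LSZ, Theorem 10.1.3(g)] takes: the Lidskii-type theorem for arbitrary traces (reducing to $\mathrm{diag}(\lambda(T))$), followed by the Kalton--Dykema--Figiel--Weiss--Wodzicki characterisation of the commutator subspace $\mathrm{Com}(\mathcal{L}_{1,\infty}) = [\mathcal{L}_\infty,\mathcal{L}_{1,\infty}]$ as the common kernel of all traces. The plan is sound and the identification of Kalton's theorem as the substantive input is correct. However, two steps are glossed over.

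First, in the converse direction you assert that universal measurability forces $T - cT_0$ into the intersection of the kernels of \emph{all} traces. This is not immediate: universal measurability concerns only normalised traces, and there exist nonzero traces $\psi$ on $\mathcal{L}_{1,\infty}$ with $\psi(T_0) = 0$, which cannot be rescaled to normalised ones. You need the extra observation that if some such $\psi$ had $\psi(T) \neq 0$, then for any normalised trace $\varphi_0$ the functional $\varphi_0 + \psi$ would again be a normalised trace yet take the value $c + \psi(T) \neq c$ on $T$, contradicting universal measurability. Without this, the passage from "all normalised traces agree" to "lies in $\mathrm{Com}$" is unjustified.

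Second, you silently identify "$\sum_{k\leq n}\lambda(k,T) - c\sum_{k\leq n}\frac{1}{k+1} = O(1)$" with "$\sum_{k\leq n}\lambda(k, T - cT_0) = O(1)$", but the eigenvalue sequence of the diagonal operator $T - cT_0$ is a \emph{rearrangement} of $\bigl\{\lambda(k,T) - \tfrac{c}{k+1}\bigr\}_k$ into non-increasing absolute value, and Kalton's criterion is stated in terms of the latter. The two families of partial sums do differ only by $O(1)$, because if $\alpha \in \ell_{1,\infty}$ and $\beta = \alpha\circ\sigma$ is its decreasing rearrangement, the symmetric difference of $\{0,\dots,n\}$ and $\sigma(\{0,\dots,n\})$ contributes at most $O(1)$ (each contributing term has modulus $\lesssim 1/(n+1)$ and there are at most $n+1$ of them). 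This is routine but should be stated, since without it the reduction to the Kalton criterion does not close.
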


\section{Spectral triples}\label{spectral triple subsection}
    A spectral triple is an algebraic model for a Riemannian manifold, defined as follows:
    \begin{defi}\label{spectral triple definition}
        A spectral triple $(\mathcal{A},H,D)$ consists of the following data:
        \begin{enumerate}[{\rm (a)}]
            \item{} a separable Hilbert space $H$.
            \item{} a (possibly unbounded) self-adjoint operator $D$ on $H$ with a dense domain $\mathrm{dom}(D)\subseteq H$.
            \item{} a $*$-subalgebra $\mathcal{A}$ of the algebra of bounded linear operators on $H$.
        \end{enumerate}
        Such that for all $a \in \mathcal{A}$ we have:
        \begin{enumerate}
            \item{} $a\cdot \mathrm{dom}(D) \subseteq \mathrm{dom}(D)$,
            \item{} The commutator $[D,a]:\mathrm{dom}(D)\to H$ extends to a bounded linear operator on $H$, which we denote $\partial(a)$,
            \item{} $a(D+i)^{-1}$ is a compact operator.
        \end{enumerate}
    \end{defi}
        
    \begin{rem}
        Definition \ref{spectral triple definition} should be compared to \cite[Definition 3.1]{CGRS2}, of which it is a special case (when the underlying von Neumann algebra is $\mathcal{L}_{\infty}(H)$). 
        Within the literature there is some variation in the definition of a spectral triple. In many sources (such as \cite[Definition 9.16]{GVF}) it is assumed that the resolvent $(D+i)^{-1}$ is compact.
        We will refer to spectral triples where $(D+i)^{-1}$ is compact as compact spectral triples. In particular a spectral triple where $\mathcal{A}$ contains the identity operator is compact. 
        If $(\mathcal{A},H,D)$ is not necessarily compact, we will say that is it locally compact.
    \end{rem}
    
    \begin{defi}
    Given a spectral triple $(\mathcal{A},H,D)$ let $F_D$ denote the partial isometry defined via functional calculus as
    $$F_D := \chi_{(0,\infty)}(D)-\chi_{(-\infty,0)}(D).$$
    Where there is no ambiguity, we will frequently denote $F_D$ as $F$.        
    \end{defi}
    If $D$ has trivial kernel, then $F_D^2 = 1.$
    
    We may define the operator $|D|:\mathrm{dom}(D)\to H$ by functional calculus.
    Since $D$ is self-adjoint, for all $n\geq 1$ we have $|D|^n = |D^n|$, and so $\mathrm{dom}(|D|^n) = \mathrm{dom}(D^n)$.
    We have $F|D| = D$ as an equality of operators on $\mathrm{dom}(D)$, and on $\mathrm{dom}(D^2)$:
    \begin{equation*}
        |D|D = D|D|.
    \end{equation*}
    We also have $|D|=FD$.
    
    Note that $F_D^* = F_{D^*} = F_D$. Hence, we also have $D = D^*=|D|^*F^* = |D|F$.
    Since $|D|F = D$, it follows that $F:\mathrm{dom}(D)\to\mathrm{dom}(D)$.
    
    By similar reasoning, we also have that for all $n\geq 1$ that $D^{n}F = FD^{n}$ and hence that $F:\mathrm{dom}(D^n)\to \mathrm{dom}(D^n)$.

    Consequently, for $n,m \geq 1$, the operators $F$, $D^n$ and $|D|^m$ all mutually commute on $\mathrm{dom}(D^{n+m})$.
    
\subsection{Properties of spectral triples}
    Smoothness of a spectral triple is defined in terms of boundedness of commutators with $|D|$ (see Subsection \ref{smoothness discussion} for discussion of this issue).
    The following results will be known to the expert reader. The notion of smoothness defined in terms of domains of commutators with $|D|$ originates with Connes \cite[Section 1]{Connes-original-spectral-1995} and
    is also used in \cite{Connes-Moscovici} and \cite[Section 1.3]{CGRS2}. We provide detailed proofs here for convenience.
    
    If $T$ is a bounded operator with $T:\mathrm{dom}(D)\to \mathrm{dom}(D)$, then the commutator
    $|D|T-T|D|:\mathrm{dom}(D)\to H$ is meaningful. More generally, if there is some $n$ such that for all $0 \leq k \leq n$ we
    have $T:\mathrm{dom}(D^k)\to \mathrm{dom}(D^k)$ then we may consider the higher iterated commutator:
    \begin{equation}\label{nth iterated commutator}
        [|D|,[|D|,[\cdots [|D|,T]\cdots]]] = \sum_{k=0}^n (-1)^k \binom{n}{k}|D|^ka|D|^{n-k}.
    \end{equation}
    This is a well defined operator on $\mathrm{dom}(D^n)$ in the following sense: for each $k$ we have $|D|^{n-k}:\mathrm{dom}(D^n)\to\mathrm{dom}(D^k)$,
    $a:\mathrm{dom}(D^k)\to\mathrm{dom}(D^k)$ and $|D|^k:\mathrm{dom}(D^k)\to H$.
    
    We wish to define $\delta^n(T)$ as the bounded extension of the $n$th iterated commutator $[|D|,[|D|,[\cdots,[|D|,T]\cdots]]]$
    when such an extension exists. This motivates the following definition:
    \begin{defi}
        For $n\geq 1$, we define $\mathrm{dom}(\delta^n)$ to be the set of bounded linear operators $T$ such that for all $0 < k \leq n$ we
        have $T:\mathrm{dom}(D^k)\to \mathrm{dom}(D^k)$ and the $n$th iterated commutator in \eqref{nth iterated commutator} has bounded extension.
        
        For $T \in \mathrm{dom}(\delta^n)$, we let $\delta^n(T)$ be the bounded extension of the $n$th iterated commutator \eqref{nth iterated commutator}.
        
        The $n=0$ case is defined by $\mathrm{dom}(\delta^0) := \mathcal{L}_\infty(H)$ and $\delta^0(T) := T$.
        
        We define 
        \begin{equation*}
            \mathrm{dom}_\infty(\delta) := \bigcap_{k\geq 0} \mathrm{dom}(\delta^k).
        \end{equation*}
    \end{defi}
    
    \begin{lem}\label{dom delta infty is an algebra}
        The set $\mathrm{dom}_\infty(\delta)$ is closed under multiplication.
    \end{lem}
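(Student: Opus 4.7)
The plan is to establish a Leibniz formula of the form
\begin{equation*}
    \delta^n(ST) = \sum_{k=0}^n \binom{n}{k}\delta^k(S)\delta^{n-k}(T)
\end{equation*}
for $S,T \in \mathrm{dom}_\infty(\delta)$ and every $n \geq 0$, and then to read off the claim: each $\delta^k(S)$ and $\delta^{n-k}(T)$ is a bounded operator by hypothesis, so the right-hand side is bounded, which supplies the bounded extension of the $n$-th iterated commutator of $|D|$ with $ST$.

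First I would verify the domain-preservation requirement: if $S,T \in \mathrm{dom}_\infty(\delta)$, then for each $k \geq 0$ the map $ST \colon \mathrm{dom}(D^k) \to \mathrm{dom}(D^k)$, because $T$ preserves $\mathrm{dom}(D^k)$ by hypothesis and then $S$ preserves it as well. This ensures that the $n$-th iterated commutator
\begin{equation*}
    \sum_{j=0}^n (-1)^j \binom{n}{j} |D|^j (ST) |D|^{n-j}
\end{equation*}
is well-defined as an operator $\mathrm{dom}(D^n) \to H$. Next I would establish the $n=1$ Leibniz identity $[|D|,ST] = [|D|,S]T + S[|D|,T]$ as an equality of operators on $\mathrm{dom}(D)$; this is a purely formal manipulation, legitimate because the relevant range/domain inclusions required to compose the unbounded pieces are available from the assumption that $S,T \in \mathrm{dom}(\delta)$. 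The general statement then follows by induction on $n$: assuming the Leibniz formula at level $n$ on $\mathrm{dom}(D^n)$, one applies $[|D|,\cdot]$ on $\mathrm{dom}(D^{n+1})$ term by term, uses the $n=1$ case on each product $\delta^k(S)\delta^{n-k}(T)$, and collects binomial coefficients via Pascal's rule. The resulting expression, initially valid only on $\mathrm{dom}(D^{n+1})$, is a sum of bounded operators and therefore extends uniquely to a bounded operator on $H$, yielding $ST \in \mathrm{dom}(\delta^{n+1})$ together with the asserted formula.

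The principal obstacle is bookkeeping of unbounded domains: because $|D|$ is unbounded, identities like $[|D|,ST] = [|D|,S]T + S[|D|,T]$ are meaningful only on a sufficiently small dense subspace, and one must confirm at each induction step that the vectors involved lie in $\mathrm{dom}(D^k)$ for the appropriate $k$ so that no product is applied outside its natural domain. Concretely, when applying $[|D|,\cdot]$ to the term $\delta^k(S)\delta^{n-k}(T)$ at the $(n+1)$-th stage, one needs to know that both $\delta^k(S)$ and $\delta^{n-k}(T)$ preserve $\mathrm{dom}(D)$; this is not literally part of the membership statement $S,T \in \mathrm{dom}_\infty(\delta)$, so it has to be extracted as a side lemma (for instance, by writing $\delta^k(S)$ as a finite linear combination of products $|D|^i S |D|^{k-i}$ on $\mathrm{dom}(D^k)$ and checking domain invariance directly, or by observing that $(1+|D|)^{-1}\delta^k(S)(1+|D|)$ is the bounded closure of an operator defined via the iterated commutator identities). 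Once this domain-preservation property is secured for all iterated commutators of elements of $\mathrm{dom}_\infty(\delta)$, the induction closes and the Leibniz rule, and hence the lemma, follows.
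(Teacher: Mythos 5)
Your proposal is correct and follows essentially the same route as the paper's proof: both rest on the Leibniz formula $\delta^n(ST)=\sum_{k}\binom{n}{k}\delta^k(S)\delta^{n-k}(T)$, established on $\mathrm{dom}(D^n)$, after which boundedness of each summand gives the bounded extension. The domain concern you flag is real but is already resolved by the structure of the definitions, as the paper notes immediately after the lemma: $T\in\mathrm{dom}(\delta^{k+1})$ implies $\delta(T)\in\mathrm{dom}(\delta^k)$, so for $S\in\mathrm{dom}_\infty(\delta)$ every iterate $\delta^k(S)$ again lies in $\mathrm{dom}_\infty(\delta)$ and in particular preserves every $\mathrm{dom}(D^m)$; no separate side lemma is needed.
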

    \begin{proof}
        Let $T,S \in \mathrm{dom}_\infty(\delta)$. Then by definition, for all $k \geq 1$ we have that $T,S:\mathrm{dom}(D^k)\to \mathrm{dom}(D^k)$,
        and hence $TS:\mathrm{dom}(D^k)\to \mathrm{dom}(D^k)$. The $k$th iterated commutator $\delta^k(TS)|\mathrm{dom}(D^k)$ is given by:
        \begin{equation*}
            \delta^k(TS) = \sum_{j=0}^k \binom{k}{j} \delta^{k-j}(T)\delta^j(S).
        \end{equation*}
        Since for all $j$ we have $\delta^j(S) \in \mathrm{dom}_\infty(\delta)$ and $\delta^{k-j}(T) \in \mathrm{dom}_\infty(\delta)$, the above expression
        is well defined as an operator on $\mathrm{dom}(D^k)$ and has bounded extension.
    \end{proof}

    It is clear that if $k\geq 0$ and $T \in \mathrm{dom}(\delta^{k+1})$, then $\delta(T) \in \mathrm{dom}(\delta^k)$ and $\delta^k(T) \in \mathrm{dom}(\delta)$. Moreover $\delta^{k+1}(T) = \delta^k(\delta(T)) = \delta(\delta^k(T))$.
    
    We may also define $\mathrm{dom}(\partial)$ to be the set of bounded operators $T$ such that $T:\mathrm{dom}(D)\to \mathrm{dom}(D)$
    and $[D,T]:\mathrm{dom}(D)\to H$ has a bounded extension, which we denote $\partial(T)$. 
    
    The relevance of $\mathrm{dom}(\partial)$ is the following:
    \begin{lem}\label{delta and partial commute}
        Suppose that $T \in \mathrm{dom}(\delta)\cap\mathrm{dom}(\partial)$ is such that $\partial(T) \in \mathrm{dom}(\delta)$ and $\delta(T):\mathrm{dom}(D)\to \mathrm{dom}(D)$. Then $\delta(T) \in \mathrm{dom}(\partial)$ and
        \begin{equation*}
            \partial(\delta(T)) = \delta(\partial(T)).
        \end{equation*}
    \end{lem}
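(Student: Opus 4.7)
The plan is to establish the identity $[D,\delta(T)]=\delta(\partial(T))$ first on the core $\mathrm{dom}(D^2)$ by formal commutator manipulation using $[D,|D|]=0$ (valid on $\mathrm{dom}(D^2)$ by the spectral theorem), and then extend to all of $\mathrm{dom}(D)$ by a density argument exploiting the closedness of $D$ and the boundedness of $\delta(\partial(T))$.

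First, I would verify that the hypotheses actually force $T:\mathrm{dom}(D^2)\to\mathrm{dom}(D^2)$, and furthermore that $|D|T\xi\in\mathrm{dom}(D)$ for $\xi\in\mathrm{dom}(D^2)$. For $\xi\in\mathrm{dom}(D^2)$, the identity $DT\xi=TD\xi+\partial(T)\xi$ (valid on $\mathrm{dom}(D)$) shows $DT\xi\in\mathrm{dom}(D)$, since $TD\xi\in\mathrm{dom}(D)$ by $T\in\mathrm{dom}(\delta)\cap\mathrm{dom}(\partial)$ and $\partial(T)\xi\in\mathrm{dom}(D)$ by $\partial(T)\in\mathrm{dom}(\delta)$. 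Similarly, $|D|T\xi=T|D|\xi+\delta(T)\xi\in\mathrm{dom}(D)$ follows from $|D|\xi\in\mathrm{dom}(D)$ together with the hypothesis $\delta(T):\mathrm{dom}(D)\to\mathrm{dom}(D)$. This bookkeeping is precisely what legitimises the next step.

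With domains in hand, for $\xi\in\mathrm{dom}(D^2)$ I would expand
\begin{align*}
[D,\delta(T)]\xi &= D(|D|T-T|D|)\xi-(|D|T-T|D|)D\xi \\
&= D|D|T\xi-DT|D|\xi-|D|TD\xi+T|D|D\xi \\
&= |D|(DT-TD)\xi-(DT-TD)|D|\xi \\
&= |D|\partial(T)\xi-\partial(T)|D|\xi \;=\; \delta(\partial(T))\xi,
\end{align*}
where the rearrangement uses $D|D|T\xi=|D|DT\xi$ (valid because $T\xi\in\mathrm{dom}(D^2)$, by the previous paragraph) and $D|D|\xi=|D|D\xi$ on $\mathrm{dom}(D^2)$; the final equalities identify $[D,T]$ and $[|D|,\partial(T)]$ with their bounded extensions on the relevant vectors in $\mathrm{dom}(D)$.

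Finally, I extend the identity from $\mathrm{dom}(D^2)$ to $\mathrm{dom}(D)$. Since $D$ is self-adjoint, $\mathrm{dom}(D^2)$ is a core for $D$, so given $\xi\in\mathrm{dom}(D)$ there exist $\xi_n\in\mathrm{dom}(D^2)$ with $\xi_n\to\xi$ and $D\xi_n\to D\xi$. Boundedness of $\delta(T)$ and $\delta(\partial(T))$ then gives $\delta(T)\xi_n\to\delta(T)\xi$ and $\delta(T)D\xi_n+\delta(\partial(T))\xi_n\to\delta(T)D\xi+\delta(\partial(T))\xi$. The identity on the core rewrites as $D\delta(T)\xi_n=\delta(T)D\xi_n+\delta(\partial(T))\xi_n$; since each $\delta(T)\xi_n\in\mathrm{dom}(D)$ (by hypothesis) and $D$ is closed, we deduce $\delta(T)\xi\in\mathrm{dom}(D)$ and $D\delta(T)\xi-\delta(T)D\xi=\delta(\partial(T))\xi$. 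This is exactly the assertion that $\delta(T)\in\mathrm{dom}(\partial)$ with $\partial(\delta(T))=\delta(\partial(T))$. The main obstacle is the domain tracking in the formal expansion above: the identities $DT=TD+\partial(T)$ and $|D|T=T|D|+\delta(T)$ only make sense on $\mathrm{dom}(D)$, and the stated hypotheses are calibrated precisely to guarantee that every intermediate composition lands in the correct subspace.
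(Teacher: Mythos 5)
Your proof is correct and follows essentially the same route as the paper's: establish $T:\mathrm{dom}(D^2)\to\mathrm{dom}(D^2)$ from the hypotheses, use commutativity of $D$ and $|D|$ on $\mathrm{dom}(D^2)$ to obtain $[D,[|D|,T]]\xi=[|D|,[D,T]]\xi$ there, and then pass to $\mathrm{dom}(D)$. The only difference is one of detail: the paper invokes the Jacobi-type identity abstractly and disposes of the extension step with a one-sentence appeal to density of $\mathrm{dom}(D^2)$, whereas you write out the six-term expansion and make the core-plus-closedness argument fully explicit — a harmless and arguably welcome elaboration of the same argument.
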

    \begin{proof}
        Since $T \in \mathrm{dom}(\delta)\cap\mathrm{dom}(\partial)$, we have in particular that $T:\mathrm{dom}(D)\to\mathrm{dom}(D)$. Since $\partial(T) \in \mathrm{dom}(\delta)$,
        we also have that $\partial(T):\mathrm{dom}(D)\to \mathrm{dom}(D)$. Let $\xi \in \mathrm{dom}(D^2)$. Then,
        \begin{equation*}
            DT\xi = \partial(T)\xi + TD\xi.
        \end{equation*}
        Since $T:\mathrm{dom}(D)\to \mathrm{dom}(D)$ and $\partial(T):\mathrm{dom}(D^2)\to\mathrm{dom}(D)$, it follows that $DT\xi \in \mathrm{dom}(D)$ and therefore $T:\mathrm{dom}(D^2)\to \mathrm{dom}(D^2)$. 
        
        Now since the operators $D$ and $|D|$ commute on $\mathrm{dom}(D^2)$, we have that for all $\xi \in \mathrm{dom}(D^2)$:
        \begin{equation*}
            [D,[|D],T]]\xi = [|D|,[D,T]]\xi.
        \end{equation*}
        Since by assumption $T \in \mathrm{dom}(\delta)$, $\partial T \in \mathrm{dom}(\delta)$ and $\delta(T):\mathrm{dom}(D)\to\mathrm{dom}(D)$, we may further write:
        \begin{equation*}
            [D,\delta(T)]\xi = \delta(\partial(T))\xi.
        \end{equation*}
        Since the operator on the right hand side by assumption has bounded extension, and using the fact that $\mathrm{dom}(D^2)$ is dense { in $H$} it follows that $[D,\delta(T)]$ has bounded extension and therefore
        $\delta(T) \in \mathrm{dom}(\partial)$. Thus, $\partial(\delta(T)) = \delta(\partial(T))$.
    \end{proof}

    Next we define the notion of a smooth spectral triple. Some sources (such as \cite[Definition 3.18]{CGRS2}) use the term "$QC^{\infty}$ spectral triple", and others (such as \cite[Definition 4.25]{higson} use the term "regular" spectral triple).
    \begin{defi}\label{smoothness definition}
        A spectral triple $(\mathcal{A},H,D)$ is called smooth if for all $a \in \mathcal{A}$, we have
        \begin{equation*}
            a, \partial(a) \in \mathrm{dom}_\infty(\delta).
        \end{equation*}
        If $(\mathcal{A},H,D)$ is smooth, we let $\mathcal{B}$ be the $*-$subalgebra of $\mathcal{L}_\infty(H)$ generated by all elements of the form $\delta^k(a)$ or $\delta^k(\partial(a)),$ $k\geq0,$ $a\in\mathcal{A}.$
    \end{defi}
    By Lemma \ref{dom delta infty is an algebra} and since $\delta^k(a)^* = (-1)^k\delta^k(a^*)$, we automatically have that $\mathcal{B} \subseteq \mathrm{dom}_\infty(\delta)$.

    \begin{cor}
        Let $(\mathcal{A},H,D)$ be smooth, and $a \in \mathcal{A}$. Then for all $k\geq 1$ we have $\delta^k(a) \in \mathrm{dom}(\partial)$ and
        \begin{equation*}
            \partial(\delta^k(a)) = \delta^k(\partial(a)).
        \end{equation*}
    \end{cor}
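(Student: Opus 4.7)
The plan is to proceed by induction on $k$, using Lemma \ref{delta and partial commute} as the inductive engine. At each step, one needs to verify the four hypotheses of that lemma for the operator $T = \delta^{k-1}(a)$ (interpreting $\delta^0(a) = a$), namely: (a) $T \in \mathrm{dom}(\delta)$; (b) $T \in \mathrm{dom}(\partial)$; (c) $\partial(T) \in \mathrm{dom}(\delta)$; (d) $\delta(T):\mathrm{dom}(D)\to \mathrm{dom}(D)$. The smoothness of the spectral triple, together with the remark preceding Lemma \ref{delta and partial commute} that $T \in \mathrm{dom}(\delta^{k+1})$ implies $\delta^j(T) \in \mathrm{dom}(\delta^{k+1-j})$, will supply each of these conditions.

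For the base case $k=1$, I set $T = a$. Conditions (a) and (c) hold because smoothness gives $a, \partial(a) \in \mathrm{dom}_\infty(\delta) \subseteq \mathrm{dom}(\delta)$. Condition (b) holds by the definition of a spectral triple. For (d), since $a \in \mathrm{dom}_\infty(\delta) \subseteq \mathrm{dom}(\delta^2)$, the remark yields $\delta(a) \in \mathrm{dom}(\delta)$, which by the definition of $\mathrm{dom}(\delta^1)$ ensures $\delta(a):\mathrm{dom}(D)\to\mathrm{dom}(D)$. Lemma \ref{delta and partial commute} then yields $\delta(a) \in \mathrm{dom}(\partial)$ and $\partial(\delta(a)) = \delta(\partial(a))$.

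For the inductive step, assume the claim holds at level $k$. Applied to $T = \delta^k(a)$: condition (a) is immediate from $a \in \mathrm{dom}_\infty(\delta)$; condition (b) is the inductive hypothesis; condition (c) follows by rewriting $\partial(\delta^k(a)) = \delta^k(\partial(a))$ via the inductive hypothesis and noting $\partial(a) \in \mathrm{dom}_\infty(\delta)$; condition (d) again follows from $a \in \mathrm{dom}_\infty(\delta)$, which puts $\delta^{k+1}(a) \in \mathrm{dom}(\delta)$ and hence maps $\mathrm{dom}(D)$ into itself. Lemma \ref{delta and partial commute} then gives $\delta^{k+1}(a) \in \mathrm{dom}(\partial)$ with
\begin{equation*}
    \partial(\delta^{k+1}(a)) = \partial(\delta(\delta^k(a))) = \delta(\partial(\delta^k(a))) = \delta(\delta^k(\partial(a))) = \delta^{k+1}(\partial(a)),
\end{equation*}
closing the induction.

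The argument is essentially bookkeeping: the only subtlety is confirming that the smoothness hypothesis, together with the definition of $\mathrm{dom}_\infty(\delta)$, propagates cleanly so that all four prerequisites of Lemma \ref{delta and partial commute} remain available after each iteration. No substantive obstacle is anticipated beyond this verification.
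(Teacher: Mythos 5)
Your proposal is correct and follows essentially the same approach as the paper: induction on $k$ with Lemma \ref{delta and partial commute} applied at each step, using smoothness (via the remark that $\mathrm{dom}_\infty(\delta) \subseteq \mathrm{dom}(\delta^{n})$ for all $n$) to supply the four hypotheses. Your write-up makes the bookkeeping slightly more explicit than the paper's, but the underlying argument is identical.
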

    \begin{proof}
        This proof proceeds by induction on $k$. 
        For $k=1$, by the definition of smoothness we have $\partial(a) \in \mathrm{dom}(\delta)$ and $a \in \mathrm{dom}(\delta)\cap\mathrm{dom}(\partial)$, and by definition $a:\mathrm{dom}(D)\to\mathrm{dom}(D)$. So by Lemma \ref{delta and partial commute} it follows that
        $\delta(a) \in \mathrm{dom}(\partial)$ and 
        \begin{equation*}
            \partial(\delta(a)) = \delta(\partial(a)).
        \end{equation*}
        
        Now we suppose that the claim is proved for $k-1$, $k\geq 2$ and we prove the claim for $k$. Since $(\mathcal{A},H,D)$ is smooth, $\delta^{k-1}(a):\mathrm{dom}(D)\to \mathrm{dom}(D)$ and by the inductive hypothesis, $\delta^{k-1}(a) \in \mathrm{dom}(\partial)$ and
        \begin{equation*}
            \delta^{k-1}(\partial(a)) = \partial(\delta^{k-1}(a)).
        \end{equation*}
        However since $\delta^{k-1}(a) \in \mathrm{dom}(\delta)\cap\mathrm{dom}(\partial)$ and $\delta^{k-1}(a):\mathrm{dom}(D)\to\mathrm{dom}(D)$ we may apply Lemma \ref{delta and partial commute} with $T = \delta^{k-1}(a)$ to conclude
        that $\delta^{k}(a) \in \mathrm{dom}(\partial)$ and
        \begin{align*}
            \delta(\partial(\delta^{k-1}(a))) &= \partial(\delta(\delta^{k-1}(a)))\\
                                              &= \partial(\delta^k(a)).
        \end{align*}
        By the inductive hypothesis, $\delta(\partial(\delta^{k-1}(a))) = \delta(\delta^{k-1}(\partial(a))) = \delta^k(\partial(a))$, and so this proves the result for $k$.
    \end{proof}

    \begin{defi}
        Let $T \in \mathrm{dom}(\partial)\cap \mathrm{dom}(\delta)$. Define
        \begin{equation*}
            L(T) := \partial(T)-F\delta(T).
        \end{equation*}
    \end{defi}
    Note that by definition $L(T)$ is bounded. On $\mathrm{dom}(D)$ we have:
    \begin{equation*}
        L(T) = [F,T]|D|.
    \end{equation*}
    The boundedness of $L(T)$ on $\mathrm{dom}(D)$ was already implicitly noted in the proof of \cite[Lemma 2]{CPRS1}.
    
    Our computations are greatly simplified by introducing a common dense subspace $H_\infty \subseteq H$ on which all powers $D^k$
    are defined:
    \begin{defi}\label{H_infty definition}
        Let $H_\infty := \bigcap_{n\geq 0} \mathrm{dom}(D^n)$.
    \end{defi}
    The subspace $H_\infty$ is a well known object in noncommutative geometry, appearing in \cite[Section 1]{Connes-original-spectral-1995} and more recently in \cite[Equation 10.64]{GVF} and \cite[Definition 1.20]{CGRS2}.
    One way to see that $H_\infty$ is dense in $H$ (and in particular non-zero) is to note that $\mathrm{dom}(e^{D^2}) \subseteq H_\infty$.
    If $T \in \mathrm{dom}_\infty(\delta)$, then $T:H_\infty\to H_\infty$ since by definition if $T \in \mathrm{dom}(\delta^n)$ then $T:\mathrm{dom}(D^k)\to\mathrm{dom}(D^k)$ for all $0 \leq k \leq n$. Moreover since $F:\mathrm{dom}(D^n)\to \mathrm{dom}(D^n)$ for all $n$, we also
    have $F:H_\infty\to H_\infty$. It is useful to note that for each $k$ the unbounded operators $D^k$ and $|D|^k$ map $H_\infty$ to $H_\infty$. This observation is justified in the next lemma.
    
    \begin{lem}
        Let $f:\mathbb{R}\to \mathbb{R}$ be a Borel function which has "polynomial growth at infinity" in the sense that there exists $n \geq 0$ such that
        $t\mapsto (1+t^2)^{-n/2}f(t)$ is bounded on $\mathbb{R}$. Then $f(D)$ (defined by Borel functional calculus) maps $H_\infty$ to $H_\infty$.
    \end{lem}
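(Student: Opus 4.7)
The plan is to work with the spectral measure of $D$ and reduce everything to finiteness of integrals of polynomials against the spectral measure, which is exactly what membership in $\mathrm{dom}(D^k)$ encodes. Let $E_D$ denote the projection-valued spectral measure of $D$, and for $\xi\in H$ let $\mu_\xi$ denote the scalar measure $A\mapsto \langle E_D(A)\xi,\xi\rangle$. The key fact I would use is the standard identity
\begin{equation*}
    \mathrm{dom}(h(D)) = \Bigl\{\xi\in H : \int_{\mathbb{R}} |h(t)|^2\,d\mu_\xi(t) < \infty\Bigr\}
\end{equation*}
for a Borel function $h$, together with the fact that for $\xi\in\mathrm{dom}(D^k)$ one has $\int t^{2k}\,d\mu_\xi(t) = \|D^k\xi\|^2 < \infty$, so membership of $\xi$ in $H_\infty$ is equivalent to $\int (1+t^2)^N\,d\mu_\xi(t)<\infty$ for every $N\geq 0$.

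Fix $\xi\in H_\infty$. First, I would check that $f(D)\xi$ is even defined: since $|f(t)|^2 \leq C(1+t^2)^n$ for some constant $C$, the integral $\int |f(t)|^2\,d\mu_\xi(t)$ is bounded by $C\int (1+t^2)^n\,d\mu_\xi(t)$, which is finite because $\xi\in \mathrm{dom}(D^n)\subseteq H_\infty$. Thus $\xi\in\mathrm{dom}(f(D))$ and $f(D)\xi\in H$ is well defined.

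Next, to show $f(D)\xi\in \mathrm{dom}(D^m)$ for every $m\geq 0$, I compute the spectral measure of $f(D)\xi$. By functional calculus, $\mu_{f(D)\xi}(A) = \int_A |f(t)|^2\,d\mu_\xi(t)$, and consequently
\begin{equation*}
    \int t^{2m}\,d\mu_{f(D)\xi}(t) = \int t^{2m}|f(t)|^2\,d\mu_\xi(t) \leq C\int t^{2m}(1+t^2)^n\,d\mu_\xi(t) \leq C'\int (1+t^2)^{m+n}\,d\mu_\xi(t).
\end{equation*}
Since $\xi\in H_\infty\subseteq \mathrm{dom}(D^{m+n})$, the right-hand integral is finite, hence $f(D)\xi\in\mathrm{dom}(D^m)$. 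As $m\geq 0$ was arbitrary, $f(D)\xi\in H_\infty$, which completes the argument.

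I do not anticipate any real obstacle here: the entire argument is a direct application of the scalar functional calculus. The only point to be mindful of is being explicit about the two requirements (that $f(D)\xi$ makes sense at all, and then that it lies in every $\mathrm{dom}(D^m)$), both of which collapse to the trivial polynomial bound $t^{2m}|f(t)|^2\lesssim (1+t^2)^{m+n}$.
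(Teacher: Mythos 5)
Your argument is correct. It differs somewhat in presentation from the paper's, which stays entirely at the operator level: the paper observes that $(1+D^2)^{-n/2}f(D)$ and $D^n(1+D^2)^{-n/2}$ are bounded, commutes $(1+D^2)^{-n/2}f(D)$ past $D^k$, and concludes that $f(D):\mathrm{dom}(D^k)\to\mathrm{dom}(D^{k-n})$ for every $k>n$. You instead unpack the spectral theorem into the scalar measures $\mu_\xi$ and verify the domain criteria as integrability of polynomials against $\mu_\xi$; the heart of the matter is the identity $d\mu_{f(D)\xi} = |f|^2\,d\mu_\xi$, after which everything is a pointwise polynomial bound. Both routes are standard applications of the Borel functional calculus and establish the same quantitative fact (namely $f(D):\mathrm{dom}(D^{k})\to\mathrm{dom}(D^{k-n})$, with your estimate giving the same shift by $n$). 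Your version is perhaps a touch more transparent about why the domains behave as they do, since it reduces to an explicit measure-theoretic criterion rather than tracking domains of unbounded operator products; the paper's version avoids introducing the scalar measures and stays closer to the operator-algebraic idiom used in the rest of the manuscript.
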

    \begin{proof}
        Let $k > n$, and $\xi \in \mathrm{dom}(D^k)$. By assumption $(1+D^2)^{-n/2}f(D)$ defines a bounded operator,
        \begin{equation*}
            (1+D^2)^{-\frac{n}{2}}f(D)D^k:\mathrm{dom}(D^k)\to H.
        \end{equation*}
        However $(1+D^2)^{-n/2}f(D)D^k = D^k(1+D^2)^{-n/2}f(D)$ on a dense domain. Since $D^n(1+D^2)^{-n/2}$ defines a bounded operator, we get that $D^{k-n}f(D):\mathrm{dom}(D^k)\to H$. Therefore
        $f(D):\mathrm{dom}(D^k)\to \mathrm{dom}(D^{k-n})$. Since $k > n$ is arbitrary, we have that $f(D):H_\infty\to H_\infty$.
    \end{proof}
    
    \begin{lem}
        If $T \in \mathrm{dom}(\delta)\cap \mathrm{dom}(\partial)$ is such that $T:H_\infty\to H_\infty$, then $L(T):H_\infty \to H_\infty$.
    \end{lem}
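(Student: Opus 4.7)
The plan is to use the pointwise identity $L(T)\big|_{\mathrm{dom}(D)} = [F,T]|D|$ recorded immediately before the statement, and then trace through the domain–preservation properties of each factor. For $\xi\in H_\infty\subseteq\mathrm{dom}(D)$ I would expand
\[
L(T)\xi = [F,T]|D|\xi = FT|D|\xi - TF|D|\xi = FT|D|\xi - TD\xi,
\]
using $F|D|=D$ on $\mathrm{dom}(D)$.

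Next I would verify that each operator occurring on the right maps $H_\infty$ into itself. The unbounded operators $D$ and $|D|$ preserve $H_\infty$: this is the functional calculus lemma stated just above, applied to the polynomial-growth functions $f(t)=t$ and $f(t)=|t|$ (and indeed follows directly from the definition $H_\infty=\bigcap_n\mathrm{dom}(D^n)$). The operator $T$ preserves $H_\infty$ by hypothesis. The partial isometry $F$ preserves $H_\infty$ by the observation recorded in Definition \ref{H_infty definition} (since $F$ is given by functional calculus applied to a bounded Borel function of $D$, or alternatively because $F:\mathrm{dom}(D^n)\to\mathrm{dom}(D^n)$ for all $n$ as noted in the excerpt). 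Composing, for $\xi\in H_\infty$ we get $|D|\xi\in H_\infty$, hence $T|D|\xi\in H_\infty$, hence $FT|D|\xi\in H_\infty$; and $D\xi\in H_\infty$, hence $TD\xi\in H_\infty$. Their difference lies in $H_\infty$, so $L(T)\xi\in H_\infty$.

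The only subtlety—which hardly rises to the level of an obstacle—is keeping straight the distinction between the bounded extension $L(T):=\partial(T)-F\delta(T)$, defined on all of $H$, and its restriction to $\mathrm{dom}(D)$ where it coincides with the unbounded composition $[F,T]|D|=FT|D|-TD$. Because the claim concerns the image $L(T)(H_\infty)$ with $H_\infty\subseteq\mathrm{dom}(D^2)\subseteq\mathrm{dom}(D)$, it is legitimate to argue via the unbounded formula on this common invariant dense subspace, and the identity $F|D|=D$ used in the displayed expansion holds on $\mathrm{dom}(D^2)$ by the commutation relations for $F$, $D$, $|D|$ established in Section \ref{spectral triple subsection}.
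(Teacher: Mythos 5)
Your proof is correct and takes the same approach as the paper: apply the identity $L(T)=[F,T]|D|$ on $H_\infty$ and observe that each of $|D|$, $T$, and $F$ preserves $H_\infty$. The extra expansion $[F,T]|D|=FT|D|-TD$ and the remark distinguishing the bounded extension from its restriction are fine, though the paper's proof is essentially the first half of your argument without the explicit decomposition.
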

    \begin{proof}
        For $\xi \in H_\infty$,
        \begin{equation*}
            L(T)\xi = [F,T]|D|\xi.
        \end{equation*}
        However $|D|\xi \in H_\infty$, and $F:H_\infty\to H_\infty$. Thus $[F,T]|D|\xi \in H_\infty$.
    \end{proof}
    
    \begin{lem}
        Let $T\in \mathrm{dom}(\delta^2)\cap \mathrm{dom}(\partial)$ be such that $\partial(T) \in \mathrm{dom}(\delta)$. Then $L(T) \in \mathrm{dom}(\delta)$ and
        \begin{equation*}
            \delta(L(T)) = L(\delta(T)).
        \end{equation*}
    \end{lem}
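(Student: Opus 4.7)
The plan is to unwind the definition $L(T) = \partial(T) - F\delta(T)$ and use the fact that $F$ and $|D|$ commute (so morally $\delta(F)=0$), together with a commutation of $\delta$ and $\partial$ supplied by Lemma \ref{delta and partial commute}. More precisely, I would first verify the membership $L(T) \in \mathrm{dom}(\delta)$ by checking that $L(T)$ preserves $\mathrm{dom}(D)$ and that $[|D|,L(T)]$ extends boundedly. The first is immediate: $\partial(T)\in\mathrm{dom}(\delta)$ by hypothesis and $\delta(T)\in\mathrm{dom}(\delta)$ (since $T\in\mathrm{dom}(\delta^2)$), so both $\partial(T)$ and $\delta(T)$ send $\mathrm{dom}(D)$ to itself, and $F:\mathrm{dom}(D)\to\mathrm{dom}(D)$.

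For the bounded extension, I would compute on $\mathrm{dom}(D)$:
\begin{equation*}
[|D|,L(T)] \;=\; [|D|,\partial(T)] - [|D|,F\delta(T)].
\end{equation*}
The first bracket has bounded extension $\delta(\partial(T))$ because $\partial(T)\in\mathrm{dom}(\delta)$. For the second, since $F$ commutes with $|D|$ on $\mathrm{dom}(D)$, the Leibniz expansion collapses to $F\,[|D|,\delta(T)]$, which has bounded extension $F\delta^2(T)$ because $T\in\mathrm{dom}(\delta^2)$. Thus $L(T)\in\mathrm{dom}(\delta)$ with
\begin{equation*}
\delta(L(T)) \;=\; \delta(\partial(T)) - F\delta^2(T).
\end{equation*}

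Next I would invoke Lemma \ref{delta and partial commute} applied to $T$: the hypotheses $T\in\mathrm{dom}(\delta)\cap\mathrm{dom}(\partial)$, $\partial(T)\in\mathrm{dom}(\delta)$, and $\delta(T):\mathrm{dom}(D)\to\mathrm{dom}(D)$ (the latter from $T\in\mathrm{dom}(\delta^2)$) are all present, yielding $\delta(T)\in\mathrm{dom}(\partial)$ and the identity $\partial(\delta(T)) = \delta(\partial(T))$. Since also $\delta(T)\in\mathrm{dom}(\delta)$, the definition of $L$ applies to $\delta(T)$ and gives
\begin{equation*}
L(\delta(T)) \;=\; \partial(\delta(T)) - F\delta(\delta(T)) \;=\; \delta(\partial(T)) - F\delta^2(T),
\end{equation*}
which matches the expression for $\delta(L(T))$ above.

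The main (mild) obstacle is purely bookkeeping: making sure the domain conditions required by Lemma \ref{delta and partial commute} are actually in force, and that all commutator manipulations are carried out on $\mathrm{dom}(D)$ (or $\mathrm{dom}(D^2)$) before passing to bounded extensions. In particular one should be a bit careful that $F$ and $|D|$ genuinely commute on $\mathrm{dom}(D)$ — this was recorded earlier in Section \ref{spectral triple subsection} — so that the apparent Leibniz calculation $\delta(F\delta(T)) = F\delta^2(T)$ is legitimate and no extra $\delta(F)$ term intrudes. Once these domain issues are handled, the identity is a short algebraic computation.
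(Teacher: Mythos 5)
Your proof is correct, but it takes a different route from the paper's. You unwind $L(T)$ via its defining expression $L(T) = \partial(T) - F\delta(T)$, apply Leibniz plus $[|D|,F]=0$ to get $\delta(L(T)) = \delta(\partial(T)) - F\delta^2(T)$, and then invoke Lemma~\ref{delta and partial commute} to swap $\delta$ and $\partial$ and match this with $L(\delta(T)) = \partial(\delta(T)) - F\delta^2(T)$. The paper instead uses the alternative characterisation $L(T) = [F,T]|D|$ on $\mathrm{dom}(D)$ and computes directly on $\mathrm{dom}(D^2)$: $[|D|,[F,T]|D|] = [|D|,[F,T]]|D| = [F,[|D|,T]]|D| = L(\delta(T))$, where the middle step is the Jacobi-type exchange $[|D|,[F,T]] = [F,[|D|,T]]$ enabled by $[|D|,F]=0$. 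The paper's computation for the identity itself is slightly more compact and avoids making $\partial(\delta(T))=\delta(\partial(T))$ part of the calculation (though it still appeals to Lemma~\ref{delta and partial commute} to ensure $L(\delta(T))$ is well defined). Your version has the merit of staying with the explicit defining formula for $L$ and exhibits the commutativity of $\delta$ and $\partial$ as the load-bearing fact; both are sound, and your domain bookkeeping — checking that $\delta(T)$ maps $\mathrm{dom}(D)$ to itself and that $F$ commutes with $|D|$ there before collapsing the Leibniz term — is exactly the care that the statement requires.
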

    \begin{proof}
        Since $\partial(T) \in \mathrm{dom}(\delta)$, we have from Lemma \ref{delta and partial commute} that $\delta(T) \in \mathrm{dom}(\partial)\cap \mathrm{dom}(\delta)$ and hence $L(\delta(T))$ is defined and bounded.
    
        The required identity can be checked on $\mathrm{dom}(D^2)$, since $T:\mathrm{dom}(D^2)\to \mathrm{dom}(D^2)$. 
        For $\xi \in \mathrm{dom}(D^2)$, we have:
        \begin{align*}
            \delta(L(T))\xi &= [|D|,[F,T]|D|]\xi\\
                            &= [F,[|D|,T]]|D|\xi\\
                            &= L(\delta(T))\xi.
        \end{align*}
    \end{proof}
    
    Spectral triples are often classed as \emph{even} or \emph{odd}:
    \begin{defi}\label{oddeven} 
        A spectral triple $(\mathcal{A},H,D)$ is said to be
        \begin{enumerate}[{\rm (a)}]
        \item even if equipped with $\Gamma\in\mathcal{L}_{\infty}$ such that $\Gamma=\Gamma^*,$ $\Gamma^2=1$ and such that $[\Gamma,a]=0$ for all $a\in\mathcal{A},$ $\{D,\Gamma\}=0.$ Here $\{\cdot,\cdot\}$ denotes anticommutator.
        \item odd if not equipped with such $\Gamma.$ In this case, we set $\Gamma=1.$
        \item $p-$dimensional if for all $a \in \mathcal{A}$ we have $a(D+i)^{-p} \in \mathcal{L}_{1,\infty}$ and $\partial(a)(D+i)^{-p}\in\mathcal{L}_{1,\infty}$, and for all $q < p$ there exists $a_0\in \mathcal{A}$ such that $a_0(D+i)^{-q}\notin\mathcal{L}_{1,\infty}$.
        \end{enumerate}
    \end{defi}
    For an even spectral triple, we have $D^2\Gamma=\Gamma D^2.$ Therefore, $|D|\Gamma=\Gamma|D|$. We furthermore have that $F\Gamma+\Gamma F = 0$.
    
    We follow the convention of \cite{CGRS2}, where we write $\Gamma$ in all formulae referring to spectral triples, with the understanding that if the spectral triple is odd then $\Gamma=1$ and the assumption that $\{D,\Gamma\}=0$ is dropped.
    
    For an arbitrary spectral triple, we have $|D|^k\Gamma = \Gamma|D|^k$ for all $k$, and therefore $\Gamma:\mathrm{dom}(D^k)\to \mathrm{dom}(D^k)$ for all $k$. Hence $\Gamma:H_\infty\to H_\infty$.
    
    The following assertion is well-known in the compact case (see e.g. \cite{CPRS1} and \cite{PS-crelle}). To the best of our knowledge, no proof has been published in the locally compact case. 
    We supply a proof in Section \ref{fredholm section}, Proposition \ref{f der def}.
    \begin{prop*}
        {Let $p \in \mathbb{N}$.} If $(\mathcal{A},H,D)$ is a $p-$dimensional spectral triple satisfying Hypothesis \ref{main assumption}, then $[F,a]\in\mathcal{L}_{p,\infty}$ for all $a\in\mathcal{A}.$
    \end{prop*}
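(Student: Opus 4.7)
The plan is to establish the operator identity $[F,a]|D| = L(a) := \partial(a) - F\delta(a)$, use it to rewrite $[F,a]$ as $L(a)|D|^{-1}$, and show the latter lies in $\mathcal{L}_{p,\infty}$ by controlling both summands in $L(a)$. First I would verify the identity on $\mathrm{dom}(D)$ by direct computation: using $D = F|D| = |D|F$ on $\mathrm{dom}(D)$ and the relations $[|D|,a]=\delta(a)$, $[D,a]=\partial(a)$, one obtains $[F,a]|D| = F|D|a - F\delta(a) - aD = \partial(a) - F\delta(a)$. To handle possibly non-trivial $\ker(D)$, I would invoke the doubling trick of \cite[Definition 6]{CPRS1}, replacing $(\mathcal{A},H,D)$ by an equivalent spectral triple in which $|D|$ is bounded below by a positive constant and which inherits Hypothesis \ref{main assumption}. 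In this setting $|D|^{-1}$ is bounded, $[F,a] = L(a)|D|^{-1}$, and $|D|^{-1} = b(1+D^2)^{-1/2}$ where $b := (1+D^2)^{1/2}|D|^{-1}$ is a bounded function of $D$. The problem thus reduces to showing that $\partial(a)(1+D^2)^{-1/2}$ and $\delta(a)(1+D^2)^{-1/2}$ both belong to $\mathcal{L}_{p,\infty}$.

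To handle both factors uniformly, I would establish the following auxiliary implication: \emph{if $V \geq 0$ is bounded and $T \in \mathcal{L}_\infty$ satisfies $TV^p \in \mathcal{L}_{1,\infty}$, then $TV \in \mathcal{L}_{p,\infty}$, with $\|TV\|_{p,\infty}^p \leq e\|T\|_\infty^{p-1}\|TV^p\|_{1,\infty}$}. This follows by combining the identity $\mu(k,TV) = \mu(k,V|T|)$ (derived from $|TV|^2 = V|T|^2V = (V|T|)(V|T|)^*$), the Araki--Lieb--Thirring inequality \eqref{ALT inequality} giving $|V|T||^p \prec\prec_{\log} V^p|T|^p$, the log-submajorization bound \eqref{log majorization monotone}, and the ideal-property estimate $\|V^p|T|^p\|_{1,\infty} \leq \|T\|_\infty^{p-1}\|V^p|T|\|_{1,\infty}$. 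Next, using the polar decomposition $(D+i)^{-p} = U(1+D^2)^{-p/2}$ for a unitary $U$ commuting with $D$, Hypothesis \ref{main assumption}(ii) is equivalent to $\partial(a)(1+D^2)^{-p/2} \in \mathcal{L}_{1,\infty}$, and the auxiliary implication then yields $\partial(a)(1+D^2)^{-1/2} \in \mathcal{L}_{p,\infty}$.

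The hard part is proving $\delta(a)(1+D^2)^{-p/2} \in \mathcal{L}_{1,\infty}$, from which $\delta(a)(1+D^2)^{-1/2} \in \mathcal{L}_{p,\infty}$ would follow by the auxiliary implication; this is where Hypothesis \ref{main assumption}(iii) is essential. Using the integral representation
\begin{equation*}
(1+D^2)^{-p/2} = C_p\int_0^\infty (1+D^2+\lambda^2)^{-(p+1)/2}\,d\lambda,
\end{equation*}
together with the unitary-equivalence identity $\|\delta(a)(1+D^2+\lambda^2)^{-(p+1)/2}\|_1 = \|\delta(a)(D+i\sqrt{1+\lambda^2})^{-p-1}\|_1 = O((1+\lambda^2)^{-1/2})$ coming from (iii), the integrand has the right pointwise trace-norm bound but $\int_0^\infty(1+\lambda^2)^{-1/2}\,d\lambda$ diverges, so absolute convergence in $\mathcal{L}_1$ fails. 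The main obstacle is then to establish conditional convergence in $\mathcal{L}_{1,\infty}$, which I expect to require expressing $\delta(a)(1+D^2)^{-p/2}$ as a double operator integral and invoking the integral representation of Theorem \ref{csz key lemma} to bound it directly in $\mathcal{L}_{1,\infty}$. Once this is achieved, combining with the result for $\partial(a)$ yields $L(a)(1+D^2)^{-1/2} \in \mathcal{L}_{p,\infty}$, and multiplying by the bounded operator $b$ completes the proof.
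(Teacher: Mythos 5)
Your overall skeleton matches the paper's: write $[F,a]=L(a)|D|^{-1}$ with $L(a)=\partial(a)-F\delta(a)$ after passing to a triple with invertible $|D|$, reduce to $\partial(a)(1+D^2)^{-p/2},\ \delta(a)(1+D^2)^{-p/2}\in\mathcal{L}_{1,\infty}$, and upgrade from exponent $p$ to exponent $1$ via Araki--Lieb--Thirring and logarithmic submajorisation (your auxiliary implication is correct and is exactly the mechanism of Lemma \ref{z<p lemma}). The $\partial(a)$ half is fine. But the proof has a genuine gap precisely where you flag it: you never prove $\delta(a)(1+D^2)^{-p/2}\in\mathcal{L}_{1,\infty}$. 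The integral representation $(1+D^2)^{-p/2}=C_p\int_0^\infty(1+D^2+\lambda^2)^{-(p+1)/2}\,d\lambda$ together with Hypothesis \ref{main assumption}.\eqref{ass2} only gives an integrand of trace norm $O((1+\lambda^2)^{-1/2})$, which is not integrable, and the hoped-for rescue via ``conditional convergence in $\mathcal{L}_{1,\infty}$'' through double operator integrals or Theorem \ref{csz key lemma} is not an argument: that theorem represents $B^zA^z-(A^{1/2}BA^{1/2})^z$ and has no evident bearing on $\delta(a)(1+D^2)^{-p/2}$. Since this membership is the whole content that distinguishes the non-unital case (it is where condition \eqref{ass2} must enter), the proposal as it stands does not prove the Proposition.

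The paper closes exactly this gap by an elementary algebraic device (Lemma \ref{lambda lemma}) rather than an integral one: compute $|D|^{-1}[D^2,a]|D|^{-p}$ in two ways on $H_\infty$. Writing $[D^2,a]=D\partial(a)+\partial(a)D$ and commuting $|D|^{-1}$ past $\partial(a)$ using \eqref{favourite commutator identity} produces $F\partial(a)|D|^{-p}+\partial(a)|D|^{-p}F$ (in $\mathcal{L}_{1,\infty}$ by \eqref{ass1}) plus remainders $\partial(\delta(a))|D|^{-p-1}F$ and $|D|^{-1}\partial(\delta^2(a))|D|^{-p-1}F$, which are trace class by \eqref{ass2} (Remark \ref{sp gap fact}); writing instead $[D^2,a]=|D|\delta(a)+\delta(a)|D|$ extracts $2\delta(a)|D|^{-p}$ modulo the trace-class terms $\delta^2(a)|D|^{-p-1}$ and $|D|^{-1}\delta^3(a)|D|^{-p-1}$. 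Comparing the two expressions yields $\delta(a)|D|^{-p}\in\mathcal{L}_{1,\infty}$, after which your ALT step finishes the proof. A secondary point: your device for removing $\ker(D)$ is also shakier than the paper's. The doubling trick of \cite[Definition 6]{CPRS1} replaces $F$ by $\mathrm{sgn}(D_\mu)$, whose upper-left corner is $D(D^2+\mu^2)^{-1/2}$ rather than $F$, so a conclusion for the doubled triple at fixed $\mu>0$ would still require a limiting argument with quasi-norm bounds uniform in $\mu$ to recover $[F,a]\in\mathcal{L}_{p,\infty}$; the paper instead passes to $D_0=F(1+D^2)^{1/2}$ (Proposition \ref{pass to spectral gap 1}), which keeps the same Hilbert space, algebra and (essentially) the same $F$, and transfers Hypothesis \ref{main assumption} verbatim.
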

        
    Let $\mathcal{A}^{\otimes(p+1)}$ denote the $(p+1)$-fold algebraic tensor power of $\mathcal{A}$. We now define the two important mappings $\mathrm{ch}$ and $\Omega$.
    \begin{defi}\label{ch omega def} 
        Suppose that $D$ has a spectral gap at $0.$
        Define the multilinear mappings ${\rm ch}:\mathcal{A}^{\otimes (p+1)}\to\mathcal{L}_{\infty}$ and $\Omega:\mathcal{A}^{\otimes (p+1)}\to\mathcal{L}_{\infty}$ 
        on elementary tensors $a_0\otimes a_1\otimes\cdots\otimes a_p \in \mathcal{A}^{\otimes(p+1)}$ by
        \begin{equation*}
            \mathrm{ch}(a_0\otimes a_1\otimes\cdots \otimes a_p) = \Gamma F\prod_{k=0}^p[F,a_k]
        \end{equation*}
        and
        \begin{equation*}
            \Omega(a_0\otimes a_1\otimes \cdots \otimes a_p) = \Gamma a_0\prod_{k=1}^p \partial (a_k).
        \end{equation*}
    \end{defi}

    If $(\mathcal{A},H,D)$ is $p$-dimensional, then it follows from Proposition \ref{f der def} and the H\"older inequality \eqref{weak-type Holder} that ${\rm ch}(c)\in\mathcal{L}_{\frac{p}{p+1},\infty}\subset\mathcal{L}_1$
    for all $c\in\mathcal{A}^{\otimes (p+1)}.$ This permits the following definition:

    \begin{defi}\label{chern character zero kernel def}
        If $(\mathcal{A},H,D)$ is $p-$dimensional spectral triple satisfying Hypothesis \ref{main assumption} and if kernel of $D$ is trivial, then Connes-Chern character $\mathrm{Ch}:\mathcal{A}^{\otimes (p+1)}\to\mathbb{C}$ is defined by setting
        \begin{equation*}
            {\rm Ch}(c)=\frac12{\rm Tr}({\rm ch}(c)),\quad c\in\mathcal{A}^{\otimes (p+1)}.
        \end{equation*}
    \end{defi}
    
    In general, however, Chern character cannot be defined in terms of $F$ because $F^2\neq 1$ when $D$ has non-trivial kernel. 
    In order to ensure that ${\rm Ch}$ is a cyclic cocycle (in the sense of \cite[2.1.4]{Loday-cyclic-homology}), we require that $F^2 = 1$.
    Hence we define the Chern character of a general spectral triple in terms of another $F_0$ such that $F_0=F_0^*=F_0^2.$ For this purpose, we use a doubling trick.
    
    \begin{defi}\label{doubling definition}
        Let $(\mathcal{A},H,D)$ be a spectral triple with grading $\Gamma$, and let $P$ be the projection onto $\ker(D)$. Consider the following unitary self-adjoint operators on the Hilbert space $H_0=\mathbb{C}^2\otimes H$ defined by:
        \begin{align*}
                F_0 := \begin{pmatrix} F & P \\ P & -F\end{pmatrix}\\
            \Gamma_0 := \begin{pmatrix} \Gamma & 0 \\ 0 & (-1)^{\rm deg}\Gamma\end{pmatrix}.
        \end{align*}
        Here, ${\rm deg}=1$ for even triples and ${\rm deg}=0$ for odd triples. The algebra $\mathcal{A}$ is represented on $H_0$ by:
        \begin{equation*}
            \pi(a) = \begin{pmatrix}
                        a & 0 \\ 0 & 0
                     \end{pmatrix}.
        \end{equation*}
        For an elementary tensor $a_0\otimes\cdots a_p \in \mathcal{A}^{\otimes (p+1)}$ we set:
        \begin{equation*}
            {\rm ch}_0(a_0\otimes \cdots\otimes a_p) = \Gamma_0F_0\prod_{k=0}^p [F_0,\pi(a_k)].
        \end{equation*}
    \end{defi}

    
    \begin{defi}\label{chern character def}
    If $(\mathcal{A},H,D)$ is $p-$dimensional spectral triple satisfying Hypothesis \ref{main assumption}, then Connes-Chern character $\mathrm{Ch}:\mathcal{A}^{\otimes (p+1)}\to\mathbb{C}$ is defined by setting
        \begin{equation*}
            {\rm Ch}(c)=\frac12({\rm Tr_2}\otimes{\rm Tr})({\rm ch}_0(c)),\quad c\in\mathcal{A}^{\otimes (p+1)}.
        \end{equation*}
        Here, $\mathrm{Tr}_2$ denotes the $2\times 2$ trace on matrices.
    \end{defi}
    
    Note that Definition \ref{doubling definition} does not conflict with Definition \ref{ch omega def}: if $\ker(D)$ is trivial (i.e., $P=0$) then both definitions of $\mathrm{Ch}$ coincide.

    Strictly speaking, the Connes-Chern character is conventionally defined to be the class of $\mathrm{Ch}$ in periodic cyclic cohomology. This distinction is not relevant to the results of this paper, so in the sequel
    we will consider $\mathrm{Ch}$ merely as a multilinear functional as above.
   
    \begin{rem}
        We have opted to define the Chern character of a spectral triple $(\mathcal{A},H,D)$ in terms of the "doubled" operator $F_0$ in Definition \ref{doubling definition}. This definition is different
	from earlier work such as \cite[Definition 6]{CPRS1} and \cite[Definition 2.23]{CGRS2}. In those papers the Chern character of a spectral triple is defined to be the Chern character of any Fredholm module
        equivalent to the pre-Fredholm module $(\mathcal{A},H,F)$. It is known that the class in periodic cyclic cohomology of the chern character defined in that way is independent of the choice of Fredholm module equivalent to $(\mathcal{A},H,F)$ \cite[Section 5, Lemma 1]{Connes-differential-geometry}.
        
        In order to avoid technicalities, we have defined the Chern character in terms of a specific Fredholm module $(\pi(\mathcal{A}),H_0,F_0)$. This has the advantage of simplicity of presentation, and makes
        no difference in regards to the character formula. A reader interested in a more refined definition of the Chern character in periodic cyclic cohomology may wish to consult \cite{Connes-differential-geometry, CPRS1, CGRS2}.
    \end{rem}
    
\subsection{Discussion of smoothness}\label{smoothness discussion}
    It is tempting to define smoothness only in terms of $\partial$, without reference to $\delta$. One might naively suggest that $(\mathcal{A},H,D)$ is smooth if for all $n \geq 0$
    we have $a\cdot \mathrm{dom}(D^n)\to \mathrm{dom}(D^n)$ and the $n$th iterated commutator $[D,[D,[\cdots,[D,a]\cdots]$ extends to a bounded operator on $H$.
    
    However this condition does not hold for even the simplest spectral triples. A standard spectral triple associated to the $2$-torus $\mathbb{T}^2$ is
    $(1\otimes C^\infty(\mathbb{T}^2), L_2(\mathbb{T}^2,\mathbb{C}^2),D)$, where the $L_2$ space is defined with respect to the Haar measure, the algebra $C^\infty(\mathbb{T}^2)$
    {  of smooth complex valued functions on $\mathbb{T}^2$}
    acts on $L_2(\mathbb{T}^2,\mathbb{C}^2)$ by pointwise multiplication, and the Dirac operator $D$ is defined by:
    \begin{equation*}
        D = -i\gamma_1\otimes \partial_1-i\gamma_2\otimes \partial_2,
    \end{equation*}
    where $\partial_1$ and $\partial_2$ are differentiation with respect to the first and second coordinates on $\mathbb{T}^2$
    and $\gamma_1,\gamma_2$ are $2\times 2$ complex matrices satisfying $\gamma_j\gamma_k+\gamma_k\gamma_j = 2\delta_{j,k}1$, $j,k = 1,2$.
    
    Then if $f \in C^\infty(\mathbb{T}^2)$,
    \begin{align*}
        [D,[D,1\otimes M_f]] &= -[\gamma_1\otimes\partial_1+\gamma_2\otimes \partial_2,\gamma_1\otimes M_{\partial_1 f}+\gamma_2\otimes M_{\partial_2 f}]\\
                             &= -1\otimes M_{\partial_1^2f+\partial_2^2f}+2\gamma_1\gamma_2\otimes (M_{\partial_2f} \partial_1+M_{\partial_1f} \partial_2)
    \end{align*}
    However this operator is typically unbounded: if we choose $f(z_1,z_2) = z_1$, then $[D,[D,1\otimes M_f]] = 2\gamma_1\gamma_2\otimes(\partial_2)$ which is unbounded.

    This example breaks the implication: ``if $f \in C^\infty(\mathbb{T}^2)$, then $[D,[D,1\otimes M_f]]$ extends to a bounded linear operator".

\subsection{Discussion of dimension}\label{dimension discussion}
    As we have defined it, we say that a spectral triple $(\mathcal{A},H,D)$ is $p$-dimensional if for all $a \in \mathcal{A}$ the operators $a(D+i)^{-p}$ and $\partial(a)(D+i)^{-p}$ are in $\mathcal{L}_{1,\infty}$.
    
    An alternative definition, also used in the literature, is to say that $(\mathcal{A},H,D)$ is $p$-dimensional if $a(D+i)^{-1}$ and $\partial(a)(D+i)^{-1}$ are in $\mathcal{L}_{p,\infty}$. 
    An example of a definition along these lines is \cite[Definition 3.1]{gayral-moyal}.
    Clearly in the case where $\mathcal{A}$ is unital these definitions are equivalent, since $(D+i)^{-1} \in \mathcal{L}_{p,\infty}$ if and only if $(D+i)^{-p} \in \mathcal{L}_{1,\infty}$. 
    However in the non-unital case, the distinction may be important. 
    

\subsection{Hochschild (co)homology}\label{hochschild subsection}
    Hochschild homology and cohomology provide noncommutative generalisations of the notion of differential forms and de Rham currents
    respectively. A detailed exposition of the theory of Hochschild (co)homology and its relationship with noncommutative geometry may be found in \cite{Quillen,Loday-cyclic-homology}.
    
    Let $A$ be a (possibly non-unital) algebra. The Hochschild complex is a chain complex:
    \begin{equation*}
        \cdots \xrightarrow{b} A\otimes A\otimes A\otimes A \xrightarrow{b} A\otimes A\otimes A \xrightarrow{b} A \otimes A\xrightarrow{b} A.
    \end{equation*}
    For $n \geq 1$, the $n$th entry in the Hochschild chain complex is the $n$th tensor power $A^{\otimes n}$. The Hochschild boundary operator $b:A^{\otimes (n+1)}\to A^{\otimes n}$
    is defined on elementary tensors $a_0\otimes a_1\otimes\cdots \otimes a_n$ by:
    \begin{align*}
        b(a_0\otimes a_1\otimes \cdots \otimes a_n) &= a_0a_1\otimes a_2\otimes \cdots\otimes a_n + \sum_{k=1}^{n-1} (-1)^k a_0\otimes \cdots \otimes a_ka_{k+1}\otimes\cdots \otimes a_n\\
                                                    &+ (-1)^na_na_0\otimes a_1\cdots\otimes a_{n-1}.
    \end{align*}
    It can be verified that $b^2 = 0$, so the Hochschild complex is indeed a chain complex. An element $c \in A^{\otimes (n+1)}$ such that $bc = 0$
    is called a Hochschild cycle. For example, when $n = 1$, an elementary tensor $a_0\otimes a_1$ is a Hochschild cycle if and only if $b(a_0\otimes a_1) = a_0a_1-a_1a_0 = 0$, i.e. if $a_0$ and
    $a_1$ commute.
    
    The Hochschild cochain complex is defined in a similar way: Let $C_n(A)$ denote the space of continuous multilinear functionals from $A^{\otimes n} \to \mathbb{C}$. The Hochschild cochain complex is,
    \begin{equation*}
        C_1(A)\xrightarrow{b} C_2(A)\xrightarrow{b} C_3(A) \xrightarrow{b} \cdots
    \end{equation*}
    where the Hochschild coboundary operator $b$ is defined as follows: if $\theta:A^{\otimes n}\to \mathbb{C}$, then $b\theta:A^{\otimes (n+1)}\to \mathbb{C}$
    is given on an elementary tensor $a_0\otimes a_1\otimes\cdots \otimes a_n$ by
    \begin{align*}
        (b\theta)(a_0\otimes\cdots\otimes a_n)&=\theta(a_0a_1\otimes a_2\otimes\cdots\otimes a_n)\\
        &+\sum_{k=1}^{n-1}(-1)^k\theta(a_0\otimes a_1\otimes\cdots \otimes a_{k-1}\otimes a_ka_{k+1}\otimes a_{k+2}\otimes\cdots\otimes a_n)\\
        &+(-1)^n\theta(a_na_0\otimes a_1\otimes a_2\otimes\cdots\otimes a_{n-1}).
    \end{align*}
    Put simply, for $c \in A^{\otimes (n+1)}$ and $\theta \in C_n(A)$, the Hochschild boundary and coboundary operators are linked by,
    \begin{equation}\label{stokes formula}
        (b\theta)(c) = \theta(bc).
    \end{equation}
    A cochain $\theta \in C^n(A)$ is called a Hochschild cocycle if $b\theta = 0$. Due to \eqref{stokes formula}, a Hochschild coboundary
    vanishes on every Hochschild cycle.    
%
%
%
    \begin{rem}
        One must distinguish between Hochschild (co)homology as we have just defined it, and the analogous continuous Hochschild homology \cite[Section 8.5]{GVF}, \cite{Pflaum-continuous-hh-1998}. Continuous Hochschild (co)homology is defined with topological tensor products in place of algebraic tensor products. In this text we are only concerned with algebraic tensor products.
    \end{rem}
    
\section{Weak integrals and double operator integrals}
    
\subsection{Weak integration in $\mathcal{L}_{\infty}$}\label{weak int}
    This section concerns the theory of ``weak operator topology integrals" of operator valued functions. The following definitions, and the subsequent construction of weak integrals, are folklore. We provide suitable references whenever they
    exist, otherwise we supply a proof. For example, one can look at \cite[Definition 3.26]{rudin}, and consider the example where the topological vector space $X$ there is $\mathcal{L}_{\infty}$ equipped with the strong operator topology. Every
    continuous linear functional on $X$ can be written as a linear combination of $x\to\langle x\xi,\eta\rangle,$ $\xi,\eta\in H.$

    \begin{defi}\label{weak meas def} 
        A function $f:\ \mathbb{R}\to \mathcal{L}_{\infty}$ is measurable in the weak operator topology if, for every pair of vectors $\xi,\eta\in H,$ the function
        $$s\to\langle f(s)\xi,\eta\rangle,\quad s\in\mathbb{R},$$
        is (Lebesgue) measurable.
    \end{defi}
    
    For a function $f$, measurable in the weak operator topology, there is a notion of ``pointwise norm". 
    Namely, the scalar-valued mapping
    \begin{equation*}
        s\mapsto \|f(s)\|_\infty := \sup_{\|\xi\|,\|\eta\| \leq 1} |\langle f(s)\xi,\eta\rangle|, \quad s \in \mathbb{R}
    \end{equation*}
    is Lebesgue measurable. { Here it is crucial that we work with separable Hilbert spaces, as otherwise it is not clear whether the function $s\mapsto \|f(s)\|_\infty$ is measurable.}

    Suppose that a function $f:\mathbb{R}\to\mathcal{L}_{\infty}$ is measurable in the weak operator topology. We say that $f$ is integrable in the weak operator topology if
    \begin{equation}\label{necessary-condition}
        \int_{\mathbb{R}}\|f(s)\|_{\infty}ds < \infty.
    \end{equation}
    In particular, for all $\xi,\eta \in H$, we have
    \begin{equation*}
        \int_{\mathbb{R}} |\langle f(s)\xi,\eta\rangle|\,ds < \infty.
    \end{equation*}
    
    Hence for a function $f$ satisfying \eqref{necessary-condition}, we may therefore define the sesquilinear form
    \begin{equation*}
        (\xi,\eta)_f := \int_{\mathbb{R}} \langle f(s)\xi,\eta\rangle\,ds,\quad \xi,\eta \in H.
    \end{equation*}
    It then follows that:
    \begin{align*}
        |(\xi,\eta)_f|  &\leq \int_{\mathbb{R}} |\langle f(s)\xi,\eta\rangle|\,ds\\
                        &\leq \int_{\mathbb{R}} \|f(s)\|_{\infty}\|\xi\|\|\eta\|\,ds\\
                        &= \left(\int_{\mathbb{R}} \|f(s)\|_{\infty}\,ds\right)\|\xi\|\|\eta\|.
    \end{align*}
    Thus for a fixed $\xi \in H$, the mapping $\eta \mapsto (\xi,\eta)_f$ defines a bounded linear functional on $H$. Hence
    there is a unique $x_\xi \in H$ such that $(\xi,\eta)_f = \langle x_\xi,\eta\rangle $ for all $\eta \in H$.
    
    One can easily verify that the map $\xi\mapsto x_\xi$ is linear, and furthermore
    \begin{align*}
        \|x_\xi\|^2 &= \langle x_\xi,x_\xi\rangle\\
                    &= |(\xi,x_\xi)_f|\\
                    &\leq \left(\int_{\mathbb{R}} \|f(s)\|_{\infty}\,ds\right)\|\xi\|\|x_\xi\|.
    \end{align*}
    So the mapping $\xi\to x_\xi$ is bounded. Let $T$ be the unique bounded linear operator such that $x_\xi = T\xi$, we now define
    \begin{equation}\label{wot integral definition}
        \int_{\mathbb{R}} f(s)\,ds := T.
    \end{equation}
    
    Due to the above computation, we have that
    \begin{equation*}
        \left\|\int_{\mathbb{R}} f(s)\,ds\right\|_\infty\leq \int_{\mathbb{R}} \|f(s)\|_{\infty}\,ds.
    \end{equation*}
    Furthermore, we have that if $A \in \mathcal{L}_{\infty}$, and $f$ is integrable in the weak operator topology, then 
    $s\mapsto Af(s)$ is also integrable in the weak operator topology, and
    \begin{equation*}
        \int_{\mathbb{R}} Af(s)\,ds = A\int_{\mathbb{R}} f(s)\,ds.
    \end{equation*}
    
    Closely related to the weak integral is the Bochner integral: indeed, if $(\mathcal{E},\|\cdot\|_{\mathcal{E}})$ is a normed ideal in $\mathcal{L}_\infty$
    and $f:\mathbb{R}\to \mathcal{E}$ is Bochner integrable, then it is integrable in the weak operator topology and the weak integrals
    and Bochner integrals coincide, since if $f$ is weakly $\mathcal{E}$-valued measurable, then it is weak operator topology measurable, and if $\|f\|_{\mathcal{E}}$
    is integrable then $\|f\|_\infty$ is integrable.
    
\subsection{Properties of the weak integral}\label{peter subsection}

    The authors thank Professor Peter Dodds for his assistance with the arguments in this subsection.

    \begin{lem}\label{peter lemma} Let $s\to a(s),$ $s\in\mathbb{R},$ be continuous in the weak operator topology. If $a(s)\in\mathcal{L}_1$ for every $s\in\mathbb{R}$ and if
        $$\int_{\mathbb{R}}\|a(s)\|_1ds<\infty,$$
        then $a(s)$ is integrable in the weak operator topology, $\int_{\mathbb{R}}a(s)ds \in \mathcal{L}_1$ and
        $$\Big\|\int_{\mathbb{R}}a(s)ds\Big\|_1\leq\int_{\mathbb{R}}\|a(s)\|_1ds,\quad {\rm Tr}\Big(\int_{\mathbb{R}}a(s)ds\Big)=\int_{\mathbb{R}}{\rm Tr}(a(s))ds.$$
    \end{lem}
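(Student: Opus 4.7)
Weak operator measurability of $s\mapsto a(s)$ is immediate from continuity, and since $\|a(s)\|_\infty \leq \|a(s)\|_1$, the hypothesis $\int \|a(s)\|_1\,ds<\infty$ gives weak operator integrability in the sense of Subsection \ref{weak int}. Hence $T:=\int_{\mathbb{R}} a(s)\,ds$ is a well-defined bounded operator, characterised by $\langle T\xi,\eta\rangle=\int_{\mathbb{R}}\langle a(s)\xi,\eta\rangle\,ds$ for all $\xi,\eta\in H$.

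To show $T\in\mathcal{L}_1$ with the claimed norm bound, I would exploit the trace-class/compact duality: $\|T\|_1 = \sup\{|\mathrm{Tr}(BT)| : B\text{ finite rank},\ \|B\|_\infty\leq 1\}$, and the finiteness of this supremum already characterises membership in $\mathcal{L}_1$. For a finite rank operator $B=\sum_{k=1}^N |\eta_k\rangle\langle\xi_k|$, the defining property of the weak integral together with ordinary (finite-sum) Fubini yields
\begin{equation*}
    \mathrm{Tr}(BT) = \sum_{k=1}^N \langle T\xi_k,\eta_k\rangle = \sum_{k=1}^N \int_{\mathbb{R}} \langle a(s)\xi_k,\eta_k\rangle\,ds = \int_{\mathbb{R}} \mathrm{Tr}(Ba(s))\,ds.
\end{equation*}
Applying the classical H\"older bound $|\mathrm{Tr}(Ba(s))|\leq\|B\|_\infty\|a(s)\|_1$ inside the integral and taking the supremum over admissible $B$ gives simultaneously that $T\in\mathcal{L}_1$ and $\|T\|_1 \leq \int_{\mathbb{R}}\|a(s)\|_1\,ds$.

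For the trace identity, fix an orthonormal basis $\{e_k\}_{k\geq 0}$ of $H$. Then $\mathrm{Tr}(T)=\sum_k \langle Te_k,e_k\rangle = \sum_k \int_{\mathbb{R}} \langle a(s)e_k,e_k\rangle\,ds$, and I want to interchange the sum and the integral by Fubini--Tonelli. The key pointwise bound is
\begin{equation*}
    \sum_{k=0}^\infty |\langle a(s)e_k,e_k\rangle| \leq \|a(s)\|_1,
\end{equation*}
which follows either by expanding $a(s)$ in a singular value decomposition and applying Cauchy--Schwarz to the resulting double sum, or by noting that the left-hand side equals $\mathrm{Tr}(B_s a(s))$ for the diagonal unitary $B_s=\mathrm{diag}(\overline{\mathrm{sgn}\langle a(s)e_k,e_k\rangle})$ and invoking $|\mathrm{Tr}(B_s a(s))|\leq\|B_s\|_\infty\|a(s)\|_1$. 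Together with the hypothesis $\int\|a(s)\|_1\,ds<\infty$, this legitimises the interchange, giving $\mathrm{Tr}(T)=\int_{\mathbb{R}}\mathrm{Tr}(a(s))\,ds$.

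The main technical point is the double duality computation: trace-class membership of $T$ is not assumed a priori, so one must verify it simultaneously with the norm inequality. Using the finite-rank testing characterisation of $\mathcal{L}_1$ circumvents this cleanly, since for finite-rank test operators the Fubini step involves only a finite sum and is automatic. The only other delicate issue is the diagonal sum estimate $\sum_k|\langle a(s)e_k,e_k\rangle|\leq\|a(s)\|_1$ used for the trace equality, but this is a standard consequence of the singular value decomposition.
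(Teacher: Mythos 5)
Your proof is correct and rests on the same core idea as the paper's: test against finite-rank operators of operator norm at most one, use the defining property of the weak integral plus a finite Fubini to pull the trace inside the integral, and conclude via the trace-class duality $\|T\|_1=\sup\{|\mathrm{Tr}(BT)|:B\text{ finite rank},\|B\|_\infty\leq 1\}$ (which also establishes membership in $\mathcal{L}_1$). The paper specialises to $B=pU^*$ with $p$ a finite-rank projection and $U$ the partial isometry from the polar decomposition $T=U|T|$, which lands directly on $\mathrm{Tr}(p|T|p)$, whereas you work with general finite-rank contractions; this is a cosmetic difference. Where the two proofs genuinely diverge is in the trace identity: the paper takes an increasing sequence $p_n\uparrow 1$, shows $\mathrm{Tr}(p_nTp_n)=\int\mathrm{Tr}(p_na(s)p_n)\,ds$, and passes to the limit by dominated convergence (the integrable dominant being $\|a(s)\|_1$), while you expand $\mathrm{Tr}(T)=\sum_k\langle Te_k,e_k\rangle$ in a fixed orthonormal basis and interchange the sum and the integral via Tonelli, justified by the diagonal estimate $\sum_k|\langle a(s)e_k,e_k\rangle|\leq\|a(s)\|_1$. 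Both routes are valid; yours trades the finite-rank approximation and DCT for one pointwise inequality about trace-class operators, which is slightly more self-contained. The only thing to make fully explicit, if you were to write this up, is the step that finiteness of $\sup_B|\mathrm{Tr}(BT)|$ over finite-rank contractions forces $T\in\mathcal{L}_1$ even though $T$ is only known a priori to be bounded: this follows by plugging in $B=p_nU^*$ and using monotone convergence on $\mathrm{Tr}(p_n|T|p_n)\uparrow\mathrm{Tr}(|T|)$ — which is exactly the move the paper makes.
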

    \begin{proof}
        Since $\|a(s)\|_\infty \leq \|a(s)\|_{1}$ for all $s \in \mathbb{R}$, we have
        \begin{align*}
            \int_{\mathbb{R}} \|a(s)\|_\infty \,ds &\leq \int_{\mathbb{R}} \|a(s)\|_{1}\,ds\\
                                            &< \infty
        \end{align*}
        so that condition \eqref{necessary-condition} holds, so $s\mapsto a(s)$ is integrable in the weak operator topology. Thus, { let $A$ be the bounded linear
        operator on $H$ given by
        $$A : =\int_{\mathbb{R}}a(s)ds$$
        in the sense of \eqref{wot integral definition}. Next, we shall show that $A \in \mathcal{L}_1$.}

        Let $A=U|A|$ be a polar decomposition of $A$. 
        For an arbitrary finite rank projection $p,$ we have
        $$p|A|p=\int_{\mathbb{R}}pU^*a(s)pds$$
        Since $p$ is finite rank, the algebra $p\mathcal{L}_{\infty}p,$ is finite dimensional, and so here the weak operator topology coincides with the norm topology. 
        Hence, the mapping $s\mapsto pU^*a(s)p$ is continuous in the norm topology. Since on the algebra $p\mathcal{L}_{\infty}p$ the classical trace is a continuous functional with respect to the uniform norm, it follows that
        $${\rm Tr}(p|A|p)=\int_{\mathbb{R}}{\rm Tr}(pU^*a(s)p)ds.$$
        Thus,
        \begin{align*}
            {\rm Tr}(p|A|p) &\leq \int_{\mathbb{R}}|{\rm Tr}(pU^*a(s)p)|ds\\
                            &\leq \int_{\mathbb{R}}\|a(s)\|_1ds.
        \end{align*}
        Taking the supremum over all finite rank projections $p,$ we arrive at
        $$\|A\|_1\leq\int_{\mathbb{R}}\|a(s)\|_1ds.$$
        This proves the first assertion.

        Choose now a sequence $\{p_n\}_{n\geq1}$ of finite rank projections such that $p_n\uparrow 1.$ 
        We have
        $${\rm Tr}(p_nAp_n)=\int_{\mathbb{R}}{\rm Tr}(p_na(s)p_n)ds.$$
        Clearly, ${\rm Tr}(p_na(s)p_n)\to{\rm Tr}(a(s))$ as $n\to\infty$ for every $s\in\mathbb{R}.$ Since the function $s \mapsto \mathrm{Tr}(a(s))$ is integrable, we can apply the dominated convergence theorem to obtain
        $$\int_{\mathbb{R}}{\rm Tr}(p_na(s)p_n)ds\to\int_{\mathbb{R}}{\rm Tr}(a(s))ds.$$
        On the other hand, we have ${\rm Tr}(p_nAp_n)\to{\rm Tr}(A)$ as $n\to\infty.$ This proves the second assertion.
    \end{proof}
    
    According to the preceding lemma, if $a:\mathbb{R}\to \mathcal{L}_1$ is continuous and $\int_{\mathbb{R}} \|a(s)\|_1 \,ds < \infty$, then we
    have that $\int_{\mathbb{R}} a(s)\,ds \in \mathcal{L}_1$. The following lemma shows that the same implication holds when $\mathcal{L}_1$ is replaced
    by $\mathcal{L}_{r,\infty}$ for any $r > 1$.
    
    \begin{prop}\label{peter norm lemma} 
        Let $s\to a(s),$ $s\in\mathbb{R},$ be continuous in the weak operator topology. Fix $r > 1$, and suppose that for all $s$ we have $a(s)\in\mathcal{L}_{r,\infty}$.
        If $\int_{\mathbb{R}} \|a(s)\|_{r,\infty}\,ds < \infty$ then $\int_{\mathbb{R}} a(s)\,ds \in \mathcal{L}_{r,\infty}$, where the integral is understood in a weak sense, and we have a bound:
        \begin{equation*}
            \left\|\int_{\mathbb{R}} a(s)\,ds\right\|_{r,\infty} \leq \frac{r}{r-1}\int_{\mathbb{R}} \|a(s)\|_{r,\infty}\,ds.
        \end{equation*}
    \end{prop}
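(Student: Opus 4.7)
Write $A := \int_{\mathbb{R}} a(s)\,ds$; this is a well-defined bounded operator because $\|a(s)\|_\infty \leq \|a(s)\|_{r,\infty}$ ensures the integrability condition \eqref{necessary-condition}. Let $A = U|A|$ be the polar decomposition, so that $U^*A = |A|$. The plan is to bound $\mathrm{Tr}(p|A|p)$ uniformly over projections $p$ of rank at most $n+1$, using Lemma \ref{peter lemma} and the H\"older inequality \eqref{another holder}, and then extract the $\mathcal{L}_{r,\infty}$-bound from the spectral-projection description of $\mu(n,A)$.

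Fix a projection $p$ of rank $\leq n+1$. The function $s \mapsto pU^*a(s)p$ is $\mathcal{L}_1$-valued, continuous in the weak operator topology (because $p$ is finite rank), and satisfies $\int_{\mathbb{R}} \|pU^*a(s)p\|_1\,ds \leq (n+1)\int_{\mathbb{R}} \|a(s)\|_\infty\,ds < \infty$. Lemma \ref{peter lemma}, together with $pU^*A = p|A|$ and the fact that $pU^*$ passes through the weak integral, yields
\[
\mathrm{Tr}(p|A|p) \;=\; \int_{\mathbb{R}} \mathrm{Tr}(pU^*a(s)p)\,ds.
\]
Cyclicity of the trace followed by \eqref{another holder} with conjugate exponents $r$ and $r' = r/(r-1)$ gives
\[
|\mathrm{Tr}(pU^*a(s)p)| \;=\; |\mathrm{Tr}(a(s)\,pU^*p)| \;\leq\; \|a(s)\|_{r,\infty}\,\|pU^*p\|_{r',1},
\]
and since $pU^*p$ has rank $\leq n+1$ and operator norm $\leq 1$, its singular values satisfy $\mu(k,pU^*p) \leq 1$ for $k \leq n$ and vanish otherwise, so
\[
\|pU^*p\|_{r',1} \;\leq\; \sum_{k=0}^n (k+1)^{-1/r} \;\leq\; \int_0^{n+1} x^{-1/r}\,dx \;=\; \frac{r}{r-1}(n+1)^{1-1/r}.
\]
Combining these bounds yields $\mathrm{Tr}(p|A|p) \leq \frac{r}{r-1}(n+1)^{1-1/r}\int_{\mathbb{R}}\|a(s)\|_{r,\infty}\,ds$, uniformly in all such $p$.

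To conclude, assume $\mu(n,A) > 0$ (the other case is trivial) and fix $0 < \varepsilon < \mu(n,A)$. By the spectral-projection definition of $\mu(n,A)$, the projection $E_{|A|}((\mu(n,A)-\varepsilon,\infty))$ has rank at least $n+1$, so one may choose a rank-$(n+1)$ projection $p$ lying within its range. Then $p|A|p \geq (\mu(n,A)-\varepsilon)p$, and the uniform bound above gives
\[
(n+1)(\mu(n,A)-\varepsilon) \;\leq\; \mathrm{Tr}(p|A|p) \;\leq\; \frac{r}{r-1}(n+1)^{1-1/r}\int_{\mathbb{R}}\|a(s)\|_{r,\infty}\,ds.
\]
Letting $\varepsilon \to 0$, rearranging, and taking the supremum over $n \geq 0$ yields the claimed inequality. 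The main point requiring care is the identification of the correct duality pairing---H\"older between $\mathcal{L}_{r,\infty}$ and $\mathcal{L}_{r',1}$---so that the finite-rank bound $\|pU^*p\|_{r',1} = O((n+1)^{1-1/r})$ can be inserted to compensate for the factor $n+1$ that is lost when passing from $\mathrm{Tr}(p|A|p)$ back to $\mu(n,A)$; the constant $r/(r-1)$ then falls out naturally from the integral estimate on $\sum_{k=0}^n (k+1)^{-1/r}$.
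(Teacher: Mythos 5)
Your argument is sound and follows essentially the same route as the paper: bound $\mathrm{Tr}(p|A|p)$ for finite-rank $p$ via Lemma \ref{peter lemma} and the sum $\sum_{k\le n}(k+1)^{-1/r}$, then pass to the quasi-norm bound. The only genuine variation is at the final extraction step: the paper passes through the Ky Fan characterisation $\sum_{k<n}\mu(k,A)=\sup_{\mathrm{rank}\,p = n}\mathrm{Tr}(p|A|p)$ and then uses the monotonicity $n\mu(n-1,A)\le\sum_{k<n}\mu(k,A)$, whereas you extract $\mu(n,A)$ directly by exhibiting a specific rank-$(n+1)$ subprojection of the spectral projection $E_{|A|}((\mu(n,A)-\varepsilon,\infty))$; both routes yield the identical constant, and neither is more or less elementary than the other. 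One small slip: cyclicity gives $\mathrm{Tr}(pU^*a(s)p)=\mathrm{Tr}(pU^*a(s))=\mathrm{Tr}(a(s)\,pU^*)$, not $\mathrm{Tr}(a(s)\,pU^*p)$; fortunately the H\"older estimate you want goes through unchanged with $pU^*$ in place of $pU^*p$, since both are contractions of rank $\le n+1$ and hence obey the same $\mathcal{L}_{r',1}$ bound, so the conclusion is unaffected.
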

    \begin{proof}
        Similar to the $\mathcal{L}_{1}$ case, since $\|a(s)\|_{\infty} \leq \|a(s)\|_{r,\infty}$, we have that $\int_{\mathbb{R}} \|a(s)\|_{\infty}\,ds < \infty$,
        and so condition \eqref{necessary-condition} holds. Hence, $s\mapsto a(s)$ is integrable in the weak operator topology. Let $A := \int_{\mathbb{R}} a(s)\,ds$ { in the sense of \eqref{wot integral definition}}. Let $A = U|A|$ be a polar decomposition of $A$, and let $p$ be a rank $n$ projection, $n\geq 1$. Then,
        \begin{equation*}
            p|A|p = \int_{\mathbb{R}} pU^*a(s)p\,ds.
        \end{equation*}
        Thus,
        \begin{align*}
            \mathrm{Tr}(p|A|p) &\leq \int_{\mathbb{R}} |\mathrm{Tr}(pU^*a(s)p)|\,ds\\
                       &\leq \int_{\mathbb{R}} \|pU^*a(s)p\|_1\,ds.
        \end{align*}
        The latter integral converges because $\|pU^*a(s)p\|_1 \leq n\|a(s)\|_\infty$. Now,
        \begin{align*}
            \|pU^*a(s)p\|_1 &\leq \sum_{k=0}^{n-1} \mu(k,a(s))\\
                            &\leq \|a(s)\|_{r,\infty} \sum_{k=0}^{n-1} (k+1)^{-1/r}.
        \end{align*}
        The right hand side depends only on $n$ and not $p$, so we may take the supremum
        over all projections of rank $n$ to obtain:
        \begin{equation*}
            \sum_{k=0}^{n-1} \mu(k,A) \leq \int_{\mathbb{R}} \|a(s)\|_{r,\infty}\,ds\cdot \sum_{k=0}^{n-1} (k+1)^{-1/r}.
        \end{equation*}
        We can bound the latter sum as:
        \begin{align*}
            \sum_{k=0}^{n-1} (k+1)^{-1/r} &\leq 1+\int_1^n t^{-1/r}\,dt\\
                                          &\leq \frac{r}{r-1} n^{1-\frac{1}{r}}.
        \end{align*}
        Therefore:
        \begin{equation*}
            \sum_{k=0}^{n-1} \mu(k,A) \leq \int_{\mathbb{R}} \|a(s)\|_{r,\infty}\,ds \frac{r}{r-1}n^{1-\frac{1}{r}}.
        \end{equation*}
        Hence,
        \begin{equation*}
            n\mu(n-1,A) \leq \frac{r}{r-1}n^{1-\frac{1}{r}}\int_{\mathbb{R}} \|a(s)\|_{r,\infty}\,ds.
        \end{equation*}
        Multiplying through by $n^{\frac{1}{r}-1}$, and taking the supremum over $n$, it follows that
        \begin{equation*}
            \sup_{n\geq 1} n^{\frac{1}{r}}\mu(n-1,A) \leq \frac{r}{r-1}\int_{\mathbb{R}} \|a(s)\|_{r,\infty}\,ds.
        \end{equation*}
        So by the definition of the quasinorm on $\mathcal{L}_{r,\infty}$, the assertion follows.
\end{proof}

\section{Double operator integrals}\label{doi} 
    Here, we state the definition and basic properties of double operator integrals. This theory was initiated by the work
    of Birman and Solomyak \cite{Birman-Solomyak-I,Birman-Solomyak-II,Birman-Solomyak-III}, and more recent summaries
    of the theory may be found in \cite{Birman-Solomyak-2003, Peller-doi-2016}.
    

    Heuristically, given self-adjoint operators $X$ and $Y$ with spectra $\sigma(X)$ and $\sigma(Y)$, spectral resolutions $E_X$ and $E_Y$ and a bounded measurable function $\phi$ on $\sigma(X)\times \sigma(Y)$, the double operator integral $T^{X,Y}_{\phi}$ 
    applied to an operator $A \in \mathcal{L}_{\infty}$ is given by the formula:
    $$T_{\phi}^{X,Y}(A)=\iint_{\sigma(X)\times \sigma(Y)} \phi(\lambda,\mu)dE_X(\lambda)AdE_Y(\mu).$$
    The formal expression for $T_{\phi}^{X,Y}$ is well defined as a bounded operator on the Hilbert-Schmidt class $\mathcal{L}_2$. The theory of double operator integrals is primarily concerned
    with defining $T_\phi^{X,Y}$ on other ideals.
    This is not possible for arbitrary bounded measurable functions $\phi$, so we must restrict attention to the following class of ``good" functions. 
    
    That is, we assume that $\phi$ admits a representation
    \begin{equation}\label{integral tensor product}
        \phi(\lambda,\mu) = \int_{\Omega} a(\lambda,s)b(\mu,s)\,d\kappa(s),\quad \lambda \in \sigma(X), \mu \in \sigma(Y)
    \end{equation}
    where $(\Omega,\kappa)$ is a measure space, and where
    \begin{equation}\label{doi sufficient condition}
        \int_{\Omega} \sup_{\lambda \in \sigma(X)}|a(\lambda,s)|\sup_{\mu \in \sigma(Y)} |b(\mu,s)|\,d\kappa(s) < \infty.
    \end{equation}
    For such functions $\phi$, we may define
    \begin{equation}\label{doi definition}
        T_{\phi}^{X,Y}(A) := \int_{\Omega} a(X,s)Ab(Y,s)\,d\kappa(s)
    \end{equation}
    where the operators $a(X,s)$ and $b(Y,s)$ are defined by Borel functional calculus, and the integral can be understood in the weak operator topology.
    
    The following is proved in \cite[Theorem 4]{PS-crelle}:
    \begin{thm}
        If $\phi$ admits a decomposition as in \eqref{integral tensor product}, then the operator $T_{\phi}^{X,Y}$ is a bounded linear map from:
        \begin{enumerate}[{\rm (a)}]
            \item{} $\mathcal{L}_{\infty}$ to $\mathcal{L}_{\infty}$;
            \item{} $\mathcal{L}_{1}$ to $\mathcal{L}_{1}$;
            \item{} $\mathcal{L}_{r}$ to $\mathcal{L}_r$, for all $r \in (1,\infty)$;
            \item{} $\mathcal{L}_{r,\infty}$ to $\mathcal{L}_{r,\infty}$ for all $r \in (1,\infty)$.
        \end{enumerate}
    \end{thm}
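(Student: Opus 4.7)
The plan is to take the explicit integral formula \eqref{doi definition} as the \emph{definition} of $T_\phi^{X,Y}(A)$ and then verify each of the four boundedness claims by combining the pointwise operator norm estimates coming from Borel functional calculus with the integration lemmas of Section \ref{peter subsection}.

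The starting observation is the spectral estimate
\[
 \|a(X,s)\|_\infty \leq \sup_{\lambda \in \sigma(X)}|a(\lambda,s)|,\qquad
 \|b(Y,s)\|_\infty \leq \sup_{\mu \in \sigma(Y)}|b(\mu,s)|,
\]
valid for every $s \in \Omega$. Consequently, if $\mathcal{E}$ is any of $\mathcal{L}_\infty$, $\mathcal{L}_r$ or $\mathcal{L}_{r,\infty}$ and $A \in \mathcal{E}$, the ideal property gives
\[
 \|a(X,s) A b(Y,s)\|_{\mathcal{E}} \leq \sup_\lambda|a(\lambda,s)| \cdot \sup_\mu|b(\mu,s)| \cdot \|A\|_{\mathcal{E}}.
\]
Integrating in $s$ against $d\kappa$ and using \eqref{doi sufficient condition} yields
\[
 \int_{\Omega} \|a(X,s) A b(Y,s)\|_{\mathcal{E}}\,d\kappa(s) \leq C_\phi \|A\|_{\mathcal{E}},
\]
where $C_\phi$ is the finite quantity on the left side of \eqref{doi sufficient condition}. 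This single estimate is the backbone of the entire theorem.

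Case (a) now follows directly from Subsection \ref{weak int}: the estimate above with $\mathcal{E}=\mathcal{L}_\infty$ is exactly \eqref{necessary-condition}, so the weak operator integral \eqref{doi definition} exists and its operator norm is at most $C_\phi\|A\|_\infty$. Cases (b) and (d) are handled by applying Lemma \ref{peter lemma} and Proposition \ref{peter norm lemma} respectively to the integrand $s\mapsto a(X,s)Ab(Y,s)$, with $\mathcal{E}=\mathcal{L}_1$ in the first case and $\mathcal{E}=\mathcal{L}_{r,\infty}$ in the second; these lemmas give precisely the conclusion that the weak integral lies in $\mathcal{E}$ with norm controlled by $\int\|\cdot\|_{\mathcal{E}}\,d\kappa$, up to the explicit factor $r/(r-1)$ appearing in Proposition \ref{peter norm lemma}. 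For case (c) with $r \in (1,\infty)$, since $\mathcal{L}_r$ is a separable reflexive Banach ideal, the same norm estimate promotes the integrand to a Bochner-integrable $\mathcal{L}_r$-valued function, and the Bochner integral coincides with the weak operator integral by uniqueness, giving the desired bound.

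The main obstacle is verifying the weak operator continuity of $s\mapsto a(X,s)Ab(Y,s)$ demanded by Lemma \ref{peter lemma} and Proposition \ref{peter norm lemma}, since the hypothesis only gives a bare pointwise decomposition \eqref{integral tensor product}. The natural way to handle this is an approximation argument: truncate and regularise the decomposing functions $a$ and $b$ so that $s\mapsto a(\cdot,s)$ and $s\mapsto b(\cdot,s)$ become continuous (making $s\mapsto a(X,s)$ and $s\mapsto b(Y,s)$ strongly continuous through Borel functional calculus, hence the product weakly continuous), apply the lemmas to the approximations, and then pass to the limit using the dominated convergence afforded by \eqref{doi sufficient condition}. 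The passage to the limit preserves the ideal and the norm bound, yielding all four assertions simultaneously.
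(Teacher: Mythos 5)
The paper does not prove this theorem at all: it is cited from \cite[Theorem 4]{PS-crelle} and used as a black box, so there is no internal proof to compare against. Judging your attempt on its own merits, the backbone estimate
\[
\|a(X,s)\,A\,b(Y,s)\|_{\mathcal{E}} \leq \sup_{\lambda}|a(\lambda,s)|\cdot \sup_{\mu}|b(\mu,s)|\cdot\|A\|_{\mathcal{E}}
\]
is correct and does yield the integrable bound after invoking \eqref{doi sufficient condition}; case (a) then follows from the construction of the weak integral in Subsection \ref{weak int}. Where the argument genuinely breaks down is in your appeal to Lemma \ref{peter lemma} and Proposition \ref{peter norm lemma}, both of which are stated for $s \in \mathbb{R}$ and \emph{require weak operator continuity} of the integrand. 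The decomposition \eqref{integral tensor product} takes place over an arbitrary measure space $(\Omega,\kappa)$, which carries no topology, so the phrase ``$s\mapsto a(\cdot,s)$ becomes continuous'' has no meaning, and the proposed ``truncate and regularise'' step cannot even be formulated, let alone be shown to preserve \eqref{doi sufficient condition} and to pass correctly to the limit in the quasi-normed ideal $\mathcal{L}_{r,\infty}$ (which lacks the Riesz--Fischer and dominated-convergence structure you tacitly invoke). What is actually required here is to verify weak operator \emph{measurability} from the (implicit) joint Borel measurability of $a(\lambda,s)$ and $b(\mu,s)$ via the spectral theorem and Fubini, and to observe that the finite-rank cutdown argument in the proofs of Lemma \ref{peter lemma} and Proposition \ref{peter norm lemma} uses continuity only through measurability — that is, the lemmas must be re-proved (or at least re-read) under the weaker hypothesis and over a general $\sigma$-finite $(\Omega,\kappa)$. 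Your case (c) reasoning has the same measurability lacuna: Pettis measurability needs weak measurability against the dual $\mathcal{L}_{r'}$, and you would have to pass from weak operator measurability to Banach-dual measurability via density of finite-rank operators; reflexivity plays no role.

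A cleaner route, and almost certainly the one used in \cite{PS-crelle}, is to establish only the endpoint cases (a) and (b) by the above estimate (once measurability is in hand) and then obtain (c) and (d) by interpolation: complex interpolation between $\mathcal{L}_1$ and $\mathcal{L}_\infty$ yields boundedness on $\mathcal{L}_r$ for $1<r<\infty$, and real (Marcinkiewicz) interpolation yields boundedness on $\mathcal{L}_{r,\infty} = (\mathcal{L}_1,\mathcal{L}_\infty)_{1-1/r,\infty}$, without any further appeal to the Peter lemmas. Your direct-verification scheme is workable in principle, but only after the measurability bookkeeping is set up correctly, whereas the interpolation argument sidesteps those technicalities entirely for cases (c) and (d).
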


    One of the key properties of double operator integrals is that they respect algebraic operations (see e.g.\cite[Proposition 2.8]{PSW}). Namely,
    \begin{equation}\label{doi algebraic}
        T_{\phi_1+\phi_2}^{X,Y}=T_{\phi_1}^{X,Y}+T_{\phi_2}^{X,Y},\quad T_{\phi_1\cdot \phi_2}^{X,Y}=T_{\phi_1}^{X,Y}\circ T_{\phi_2}^{X,Y}.
    \end{equation}

    If, in \eqref{integral tensor product} we take $\Omega$ to be a one-point set, then $\phi(\lambda,\mu) = a(\lambda)b(\mu)$ and
    \begin{equation}\label{separated variables in doi}
        T_{\phi}^{X,Y}(A)=a(X)Ab(Y).
    \end{equation}

\section{Fourier transform conventions}

    We follow the convention that the Fourier transform of a function $g \in L_1(\mathbb{R})$ is defined by the formula
    \begin{equation*}
        \mathcal{F}(g)(t) := (2\pi)^{-1/2}\int_{\mathbb{R}} g(s)e^{-its}\,ds
    \end{equation*}
    So that the inverse Fourier transform is given for $h \in L_1(\mathbb{R})$ by,
    \begin{equation*}
        \mathcal{F}^{-1}(h)(s) := (2\pi)^{-1/2}\int_{\mathbb{R}} h(t)e^{its}\,dt
    \end{equation*}
    and so that $\mathcal{F}$ extends to a unitary operator on $L_2(\mathbb{R})$.
    
    We often make use of the fact that if $g \in L_1(\mathbb{R})$ satisfies $g,g' \in L_2(\mathbb{R})$ then $\mathcal{F}g \in L_1(\mathbb{R})$ \cite[Lemma 7]{PS-crelle}.
    Here, the derivative $g'$ may be defined in a distributional sense. 
    
    Everywhere in the text, the symbol $\hat{g}$ denotes $(2\pi)^{-1/2}\mathcal{F}(g)$. This allows us to write for $g \in L_1(\mathbb{R})$ with $\mathcal{F}(g) \in L_1(\mathbb{R})$:
    \begin{equation*}
        g(t) = \int_{\mathbb{R}} \hat{g}(s)e^{its}\,ds.
    \end{equation*}
    We caution the reader that $\hat{g}$ does not denote the Fourier transform, but its rescaling by $(2\pi)^{-1/2}$.

\chapter{Spectral Triples: Basic properties and examples}\label{examples chapter}
    
    This chapter is primarily concerned with Hypothesis \ref{main assumption}. We study the consequences of this hypothesis, and also show that it
    is satisfied for two important classes of examples.
    
    We begin with the proof of Proposition \ref{f der def}, an important prerequisite to the definition
    of the Chern character (Definition \ref{chern character def}).     
    Next, we show that Hypothesis \ref{main assumption} is equivalent to a modified set of assumptions, Hypothesis \ref{replacement assumption}.
    { Hypothesis \ref{replacement assumption} is stated in terms of an operator $\Lambda$ (given in Definition \ref{lambda def}) rather than $\delta$. This has
    the advantage of making Hypothesis \ref{replacement assumption} more easily verified in the classes of examples studied in this chapter.}
    
    The remainder of the chapter is devoted to demonstrating that the assumptions made in Hypothesis \ref{main assumption} are satisfied for spectral triples associated to the following classes of examples:
    \begin{enumerate}[{\rm (a)}]
        \item{}\label{manifolds} Complete Riemannian manifolds.
        \item{}\label{moyal planes} Noncommutative Euclidean spaces (also known as Moyal planes or Moyal-Groenwald planes in the $2$-dimensional case).
    \end{enumerate}
    
    We re-emphasise that Hypothesis \ref{main assumption} is automatically satisfied for smooth $p$-dimensional unital spectral triples, and therefore 
    we concern ourselves with showing that it is satisfied for non-unital algebras.
    
    The first class of examples \eqref{manifolds} is purely commutative. For the Dirac operator in these examples, we use the Hodge-Dirac operator (see \cite{krym} or \cite{rosenberg}).
    In \cite{CGRS2}, spectral triples for noncompact Riemannian manifolds were studied under the significant restriction that they have bounded geometry: this is a global
    geometric property which we are able to avoid by working in local coordinates. Earlier, Rennie had studied noncompact Riemannian spin manifolds which are not necessarily of bounded geometry by similar methods \cite[Section 5]{Rennie-2004}. It is hoped that by including such a wide class of manifolds we may demonstrate the applicability of noncommutative methods in "classical" (commutative) geometry.
    
    The second example \eqref{moyal planes} is one of the most heavily studied classes of non-unital and strictly noncommutative spectral triples. A detailed exposition
    of the noncommutative Euclidean spaces may be found in \cite{gayral-moyal}.
    
\section{A spectral triple defines a Fredholm module}\label{fredholm section}

    This section is devoted to the proof of Proposition \ref{f der def}. We prove this in several steps, initially working with the assumption that $D$ has a spectral
    gap at $0$ (i.e., that $D$ has bounded inverse). We later show how this assumption can be removed. 
    
    Note that if $D$ has a spectral gap at $0$, then $F = D|D|^{-1} = |D|^{-1}D$.

    \begin{rem}\label{sp gap fact} 
        Let $(\mathcal{A},H,D)$ be a spectral triple satisfying Hypothesis \ref{main assumption}. Suppose $D$ has a spectral gap at $0.$ For every $a\in\mathcal{A},$ and all $k\geq 1$ we have
        the following four inclusions:
        \begin{align*}
            a|D|^{-p}\in \mathcal{L}_{1,\infty},&\quad \partial(a)|D|^{-p} \in \mathcal{L}_{1,\infty},\\
            \delta^k(a)|D|^{-p-1} \in \mathcal{L}_{1},&\quad \partial(\delta^k(a))|D|^{-p-1}\in \mathcal{L}_1.
        \end{align*}
    \end{rem}
    \begin{proof} 
        All four inclusions follow from the observation that since by assumption $|D|$ is invertible, the operator $\frac{D+i}{|D|}:\mathrm{dom}(D)\to H$ has bounded extension. 
        Since $(\mathcal{A},H,D)$ is $p$-dimensional, we have
        \begin{equation*}
            a(D+i)^{-p},\;\partial(a)(D+i)^{-p} \in \mathcal{L}_{1,\infty}
        \end{equation*}
        and multiplying by (the bounded extension of) $\left(\frac{D+i}{|D|}\right)^p$ yields the first two inclusions.
        
        The second pair of inclusions follow from Hypothesis \ref{main assumption}.\eqref{ass2}: we have $\delta^k(a)(D+i)^{-p-1} \in \mathcal{L}_1$ and $\partial(\delta^k(a))(D+i)^{-p-1} \in \mathcal{L}_1$.
        Then simply multiplying by $\left(\frac{D+i}{|D|}\right)^{p+1}$ again yields the result.
    \end{proof}

    Note that the preceding lemma showed that $\partial(a)|D|^{-p} \in \mathcal{L}_{1,\infty}$. We require a little more effort to show that $\delta(a)|D|^{-p} \in \mathcal{L}_{1,\infty}$.
    \begin{lem}\label{lambda lemma} 
        Let $(\mathcal{A},H,D)$ be a spectral triple satisfying Hypothesis \ref{main assumption}. Suppose $D$ has a spectral gap at $0.$ Then for all $a\in\mathcal{A},$ we have $\delta(a)|D|^{-p}\in\mathcal{L}_{1,\infty}.$
    \end{lem}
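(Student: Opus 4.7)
The strategy is to reduce the statement to an estimate on $\delta(a)(D+i)^{-p}$ and then express $(D+i)^{-p}$ as an integral of the resolvents $(D+i\lambda)^{-p-1}$ that appear in Hypothesis \ref{main assumption}.\eqref{ass2}.

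Since $D$ has a spectral gap at $0$, the function $x \mapsto (x+i)^{p}|x|^{-p}$ is bounded on $\mathrm{spec}(D)$, so $R := (D+i)^{p}|D|^{-p}$, defined by Borel functional calculus, is a bounded operator commuting with $D$. Consequently $|D|^{-p} = (D+i)^{-p} R$, and hence $\delta(a)|D|^{-p} = \delta(a)(D+i)^{-p}\cdot R$. As $\mathcal{L}_{1,\infty}$ is a two-sided ideal, it suffices to prove that $\delta(a)(D+i)^{-p} \in \mathcal{L}_{1,\infty}$.

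By a direct spectral-theorem computation, for any real $x$,
\begin{equation*}
\int_1^\infty (x+i\lambda)^{-p-1}\,d\lambda = \frac{(x+i)^{-p}}{ip},
\end{equation*}
so $(D+i)^{-p} = ip\int_1^\infty (D+i\lambda)^{-p-1}\,d\lambda$, the integral converging in operator norm (the integrand has operator norm $\leq \lambda^{-p-1}$). Left-multiplying by $\delta(a)$ then gives $\delta(a)(D+i)^{-p} = ip\int_1^\infty \delta(a)(D+i\lambda)^{-p-1}\,d\lambda$. Hypothesis \ref{main assumption}.\eqref{ass2} with $k=1$ yields $\|\delta(a)(D+i\lambda)^{-p-1}\|_1 = O(\lambda^{-1})$ as $\lambda\to\infty$, while the trivial estimate gives $\|\delta(a)(D+i\lambda)^{-p-1}\|_\infty \leq \|\delta(a)\|_\infty\lambda^{-p-1}$. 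Splitting the integral at a threshold $N$, Lemma \ref{peter lemma} applied to the piece over $[1,N]$ yields a trace-class operator with trace norm $O(\log N)$, while the tail over $[N,\infty)$ has operator norm $O(N^{-p})$. Combining via the Weyl inequality $\mu(n, A+B)\leq \mu(n,A) + \|B\|_\infty$ and optimising $N$ produces the preliminary bound $\mu(n, \delta(a)(D+i)^{-p}) = O((\log n)/n)$.

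The main obstacle is to close the gap between this preliminary bound and the sharp $O(n^{-1})$ required for $\mathcal{L}_{1,\infty}$ membership. I expect this refinement to come from the logarithmic submajorisation estimate \eqref{log majorization monotone}: the idea is to exhibit an operator $A\in\mathcal{L}_{1,\infty}$, built from $a|D|^{-p}$ and $\partial(a)|D|^{-p}$ (both of which lie in $\mathcal{L}_{1,\infty}$ by Remark \ref{sp gap fact}), such that $\delta(a)|D|^{-p} \prec\prec_{\log} A$, whereupon \eqref{log majorization monotone} delivers the conclusion. Alternatively, integration by parts in $\lambda$ inside the integral representation, combined with the full strength of Hypothesis \ref{main assumption}.\eqref{ass2} for $k\geq 2$, should trade a factor of $\lambda^{-1}$ for a commutator possessing the requisite additional decay and thereby remove the logarithmic factor.
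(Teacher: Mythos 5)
Your reduction to $\delta(a)(D+i)^{-p}$ and the integral representation $(D+i)^{-p}=ip\int_1^\infty (D+i\lambda)^{-p-1}\,d\lambda$ are fine, and the splitting argument does deliver $\mu(n,\delta(a)(D+i)^{-p})=O(n^{-1}\log n)$. But that is exactly where the proof stops: this bound places the operator only in the strictly larger ideal $\mathcal{M}_{1,\infty}$, not in $\mathcal{L}_{1,\infty}$, and your final paragraph is a statement of hope rather than an argument. Neither suggested repair works as described: Hypothesis \ref{main assumption}.\eqref{ass2} for larger $k$ controls $\delta^k(a)(D+i\lambda)^{-p-1}$ with the \emph{same} $O(\lambda^{-1})$ rate, so it cannot improve the $\lambda$-decay of the $k=1$ term, and integration by parts in $\lambda$ merely replaces $(D+i\lambda)^{-p-1}$ by $(D+i\lambda)^{-p-2}$, producing no commutator and no gain at the logarithmic scale; likewise you exhibit no candidate $A$ with $\delta(a)|D|^{-p}\prec\prec_{\log}A$ and no mechanism that would produce such a submajorisation from $a|D|^{-p}$ and $\partial(a)|D|^{-p}$. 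The decisive input that your core argument never uses is the dimension hypothesis \ref{main assumption}.\eqref{ass1}, i.e. $\partial(a)|D|^{-p}\in\mathcal{L}_{1,\infty}$ (Remark \ref{sp gap fact}); the trace-norm data of \eqref{ass2} alone genuinely cannot yield $\mathcal{L}_{1,\infty}$.

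For comparison, the paper's proof is purely algebraic and avoids the integral representation altogether: it computes $|D|^{-1}[D^2,a]|D|^{-p}$ on $H_\infty$ in two ways. Writing $[D^2,a]=D\partial(a)+\partial(a)D$ and commuting $|D|^{-1}$ past $\partial(a)$ and $\partial(\delta(a))$ via \eqref{favourite commutator identity}, this operator equals $F\,\partial(a)|D|^{-p}+\partial(a)|D|^{-p}F$ plus the terms $-\partial(\delta(a))|D|^{-p-1}F$ and $|D|^{-1}\partial(\delta^2(a))|D|^{-p-1}F$, hence lies in $\mathcal{L}_{1,\infty}$: the first two summands by \eqref{ass1} and the error terms being trace class by \eqref{ass2}. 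Writing instead $[D^2,a]=|D|\delta(a)+\delta(a)|D|$, the same operator equals $2\delta(a)|D|^{-p}-\delta^2(a)|D|^{-p-1}+|D|^{-1}\delta^3(a)|D|^{-p-1}$, whose last two terms are trace class; comparing the two expressions gives $\delta(a)|D|^{-p}\in\mathcal{L}_{1,\infty}$. Any rescue of your route would have to inject \eqref{ass1} in a comparably quantitative way rather than as an afterthought.
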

    \begin{proof} 
        Using \eqref{favourite commutator identity}, we have the following equality of operators on $H_\infty$:
        \begin{align}\label{lambda 11}
            [|D|^{-1},\partial(a)] &= -|D|^{-1}[|D|,\partial(a)]|D|^{-1}\nonumber\\
                                   &= -|D|^{-1}\partial(\delta(a))|D|^{-1}.
        \end{align}
        Where the last equality uses the fact that $\delta$ and $\partial$ commute (from Lemma \ref{delta and partial commute}). Similarly,
        \begin{align}\label{lambda 12}
            [|D|^{-1},\partial(\delta(a))] &= -|D|^{-1}[|D|,\partial(\delta(a))]|D|^{-1} \nonumber\\
                                           &= -|D|^{-1}\partial(\delta^2(a))|D|^{-1}.
        \end{align}
        Additionally, working with operators on $H_\infty$:
        \begin{align*}
            |D|^{-1}[D^2,a] &= |D|^{-1}\cdot(D\partial(a)+\partial(a)D)\\
                            &= F\partial(a)+|D|^{-1}\partial(a)D\\
                            &= F\partial(a)+\partial(a)|D|^{-1}D+[|D|^{-1},\partial(a)]D.
        \end{align*}
        Now applying \eqref{lambda 11}:    
        \begin{align*}
            |D|^{-1}[D^2,a] &= F\partial(a)+\partial(a)F-|D|^{-1}\partial(\delta(a))F\\
                            &= F\partial(a)+\partial(a)F-\partial(\delta(a))|D|^{-1}F-[|D|^{-1},\partial(\delta(a))]F
        \end{align*}
        then using \eqref{lambda 12}:
        \begin{equation*}
            |D|^{-1}[D^2,a] = F\partial(a)+\partial(a)F-\partial(\delta(a))D^{-1}+|D|^{-1}\partial(\delta^2(a))D^{-1}.
        \end{equation*}
        So multiplying on the right by $|D|^{-p}$:
        \begin{align*}
            |D|^{-1}[D^2,a]|D|^{-p} &= \Big(F\cdot\partial(a)|D|^{-p}+\partial(a)|D|^{-p}\cdot F\Big)\\
                                    &+ \Big(-\partial(\delta(a))|D|^{-p-1}\cdot F+|D|^{-1}\cdot\partial(\delta^2(a))|D|^{-p-1}\cdot F\Big).
        \end{align*}
        From Remark \ref{sp gap fact}, the first summand extends to an operator in $\mathcal{L}_{1,\infty}$ and the second summand extends to an operator in $\mathcal{L}_1.$
        Hence, the operator $|D|^{-1}[D^2,a]|D|^{-p}$ has extension to an operator in $\mathcal{L}_{1,\infty}$.

        On the other hand since $|D|^2 = D^2$, we have (again, as operators on $H_\infty$)
        \begin{align*}
            |D|^{-1}[D^2,a] &= |D|^{-1}[|D|^2,a]\\
                            &= |D|^{-1}\cdot(|D|\delta(a)+\delta(a)|D|)\\
                            &= \delta(a)+|D|^{-1}\delta(a)|D|\\
                            &= \delta(a)+\delta(a)|D|^{-1}|D|+[|D|^{-1},\delta(a)]|D|\\
                            &= 2\delta(a)-|D|^{-1}\delta^2(a)\\
                            &= 2\delta(a)-\delta^2(a)|D|^{-1}-[|D|^{-1},\delta^2(a)]\\
                            &= 2\delta(a)-\delta^2(a)|D|^{-1}+|D|^{-1}\delta^3(a)|D|^{-1}.
        \end{align*}
        
        So multiplying by $|D|^{-p}$:
        \begin{equation*}
            |D|^{-1}[D^2,a]|D|^{-p} = 2\delta(a)|D|^{-p}-\delta^2(a)|D|^{-p-1}+|D|^{-1}\delta^3(a)|D|^{-p-1}.
        \end{equation*}
        By Remark \ref{sp gap fact}, the operators $\delta^2(a)|D|^{-p-1}$ and $\delta^3(a)|D|^{-p-1}$ are in $\mathcal{L}_1$. 
        Since $|D|^{-1}[D^2,a]|D|^{-p}$ has extension to an operator in $\mathcal{L}_{1,\infty}$, it follows that $2\delta(a)|D|^{-p} \in \mathcal{L}_{1,\infty}$.
    \end{proof}

    Still working with the assumption that $D$ has a spectral gap at $0$, the following lemma is a refinement of the $\mathcal{L}_{1,\infty}$ inclusions in Remark \ref{sp gap fact} and the result of Lemma \ref{lambda lemma}. The following result should be compared with \cite[Lemma 1.37]{CGRS2}, which is of a similar nature but is stated in terms of Schatten ideals rather than weak Schatten ideals. There is a substantial difference between Schatten ideals and weak Schatten ideals, necessitating the introduction of new tools: here we use logarithmic submajorisation and the Araki-Lieb-Thirring inequality.
    
    A related antecedent to the following lemma is also \cite[Proposition 10]{Rennie-2004}, which proved a result similar to the first assertion in the setting of local spectral triples.
    
    \begin{lem}\label{z<p lemma} 
        Let $(\mathcal{A},H,D)$ be a smooth $p-$dimensional spectral triple satisfying Hypothesis \ref{main assumption}. 
        Suppose $D$ has a spectral gap at $0.$ For every $a\in\mathcal{A}$ and for every $0 < s\leq p,$ we have 
        $a|D|^{-s}\in\mathcal{L}_{\frac{p}{s},\infty},$ $\partial(a)|D|^{-s}\in\mathcal{L}_{\frac{p}{s},\infty},$ and $\delta(a)|D|^{-s}\in\mathcal{L}_{\frac{p}{s},\infty}.$
    \end{lem}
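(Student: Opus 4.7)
The plan is to interpolate between the two endpoints $s = p$ and $s = 0$ by combining the Araki--Lieb--Thirring inequality \eqref{ALT inequality} with the monotonicity of $\|\cdot\|_{1,\infty}$ under logarithmic submajorisation \eqref{log majorization monotone}. The starting data, which comes for free from Remark \ref{sp gap fact} and Lemma \ref{lambda lemma}, is that
\begin{equation*}
    a|D|^{-p},\ \partial(a)|D|^{-p},\ \delta(a)|D|^{-p} \in \mathcal{L}_{1,\infty}.
\end{equation*}
It will be enough to prove the corresponding statement uniformly for each $T \in \{a, \partial(a), \delta(a)\}$, so I fix such a $T$ and write $T = U|T|$ for the polar decomposition. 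Multiplying by $U^*$ on the left, the $s=p$ result upgrades to $|T|\,|D|^{-p} \in \mathcal{L}_{1,\infty}$.

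Now fix $0 < s \leq p$ and set $r = p/s \geq 1$. Since singular values are invariant under left multiplication by the partial isometry $U$, it suffices to show that $|T|\,|D|^{-s} \in \mathcal{L}_{p/s,\infty}$, i.e.\ that $\bigl||T|\,|D|^{-s}\bigr|^{r} \in \mathcal{L}_{1,\infty}$. Applying the Araki--Lieb--Thirring inequality \eqref{ALT inequality} to the positive bounded operators $|T|$ and $|D|^{-s}$ with the exponent $r$ yields
\begin{equation*}
    \bigl||T|\,|D|^{-s}\bigr|^{r} \prec\prec_{\log} |T|^{r}\, |D|^{-sr} = |T|^{r}\, |D|^{-p}.
\end{equation*}

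The key observation, and the only place where real care is needed, is how to handle $|T|^{r}$ when $r$ is not an integer — in that case $|T|^r$ does not lie in $\mathcal{A}$, so Hypothesis \ref{main assumption}.\eqref{ass1} does not apply directly. The trick is to factor
\begin{equation*}
    |T|^{r}\,|D|^{-p} = |T|^{r-1} \cdot \bigl(|T|\,|D|^{-p}\bigr),
\end{equation*}
noting that $|T|^{r-1}$ is a bounded operator (since $r - 1 \geq 0$ and $|T|$ is a bounded positive operator, so this power is defined by functional calculus), and that $|T|\,|D|^{-p} \in \mathcal{L}_{1,\infty}$ by the preliminary reduction. Hence $|T|^r|D|^{-p} \in \mathcal{L}_{1,\infty}$.

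Finally, by the monotonicity \eqref{log majorization monotone} of $\|\cdot\|_{1,\infty}$ under logarithmic submajorisation,
\begin{equation*}
    \bigl\|\,\bigl||T|\,|D|^{-s}\bigr|^{r}\,\bigr\|_{1,\infty} \leq e\,\bigl\||T|^{r}\,|D|^{-p}\bigr\|_{1,\infty} < \infty,
\end{equation*}
which is equivalent to $|T|\,|D|^{-s} \in \mathcal{L}_{p/s,\infty}$, and therefore $T|D|^{-s} \in \mathcal{L}_{p/s,\infty}$. The main obstacle is the need to control $|T|^r |D|^{-p}$ in $\mathcal{L}_{1,\infty}$ for non-integer $r$; the factorisation $|T|^r = |T|^{r-1}\cdot |T|$ sidesteps this by exploiting only that $\mathcal{A}$ is a $*$-algebra well enough to give $|T|\,|D|^{-p} \in \mathcal{L}_{1,\infty}$.
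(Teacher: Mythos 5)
Your proof is correct and follows essentially the same route as the paper: apply the Araki--Lieb--Thirring inequality to pass from $|D|^{-s}$ to $|D|^{-p}$ via logarithmic submajorisation, invoke the monotonicity \eqref{log majorization monotone} of the $\mathcal{L}_{1,\infty}$ quasi-norm, and handle the non-integer power $|T|^r$ by peeling off $|T|^{r-1}$ as a bounded factor so that only $|T|\,|D|^{-p} \in \mathcal{L}_{1,\infty}$ (from Remark \ref{sp gap fact} and Lemma \ref{lambda lemma}) is needed. The paper compresses the factorisation step into the single estimate $\||\delta(a)|^r|D|^{-p}\|_{1,\infty}\le\|\delta(a)\|_\infty^{r-1}\|\delta(a)|D|^{-p}\|_{1,\infty}$, which is exactly your ``key observation'' written tersely; your explicit polar-decomposition reduction to the positive operator $|T|$ is also implicit in the paper's use of $|\delta(a)|$.
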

    \begin{proof} 
        We prove here only the third statement: that $\delta(a)|D|^{-s} \in \mathcal{L}_{p/s,\infty}$, the other results can be proved similarly.
        
        Let $r = \frac{p}{s} \geq 1$. By the Araki-Lieb-Thirring inequality \eqref{ALT inequality},
        \begin{equation*}
            |\delta(a)|D|^{-s}|^{r} \prec\prec_{\log} |\delta(a)|^r|D|^{-p}.
        \end{equation*}
        Due to \eqref{log majorization monotone} (the $\mathcal{L}_{1,\infty}$ quasi-norm is monotone with respect to logarithmic submajorisation)
        \begin{align*}
            \||\delta(a)|D|^{-s}|^r\|_{1,\infty} &\leq e\||\delta(a)|^r|D|^{-p}\|_{1,\infty}\\
                                                 &\leq e\|\delta(a)\|_\infty^{r-1}\|\delta(a)|D|^{-p}\|_{1,\infty}.
        \end{align*}
        Hence,
        \begin{equation*}
            \|\delta(a)|D|^{-s}\|_{r,\infty}^r \leq e\|\delta(a)\|_{\infty}^{r-1}\|\delta(a)|D|^{-p}\|_{1,\infty}.
        \end{equation*}
        By Lemma \ref{lambda lemma}, the right hand side is finite, and so $\delta(a)|D|^{-s} \in \mathcal{L}_{\frac{p}{s},\infty}$. 
        
        To prove the first two statements, one applies the same proof but with Remark \ref{sp gap fact} in place of Lemma \ref{lambda lemma}.
    \end{proof}
    
    So far the results of this section have been stated with the assumption that $D$ is invertible. The following proposition shows how we can apply these results to a spectral triple
    where $D$ may not have a spectral gap at zero, by finding a spectral triple with very similar properties but where the corresponding operator $D$ is invertible.
    A similar proposition appeared in the Remark following Definition 2.2 of \cite{CPRS2}. 
    \begin{prop}\label{pass to spectral gap 1} 
        Let $(\mathcal{A},H,D)$ be a spectral triple, and define $D_0 := F(1+D^2)^{1/2}$ with $\mathrm{dom}(D_0) = \mathrm{dom}(D)$. Then $(\mathcal{A},H,D_0)$ is spectral triple, and:
        \begin{enumerate}[{\rm (i)}]
            \item{}\label{iff p dim} $(\mathcal{A},H,D_0)$ is $p$-dimensional if and only if $(\mathcal{A},H,D)$ is $p$-dimensional;
            \item{}\label{iff dom} Let $\delta_0$ denote the bounded extension of $[|D_0|,T]$, and define $\mathrm{dom}_\infty(\delta_0)$ identically to $\mathrm{dom}_\infty(\delta)$ with $D_0$ in place of $D$. Then we have $\mathrm{dom}_\infty(\delta_0) = \mathrm{dom}_\infty(\delta)$.
            \item{}\label{iff smooth} $(\mathcal{A},H,D_0)$ is smooth if and only if $(\mathcal{A},H,D)$ is smooth;
            \item{}\label{iff main assumption} $(\mathcal{A},H,D_0)$ satisfies Hypothesis \ref{main assumption} if and only if $(\mathcal{A},H,D)$ does.
        \end{enumerate}
        Moreover, $D_0$ has a spectral gap at $0$.
    \end{prop}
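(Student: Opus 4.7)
The plan is to view $D_0$ as a bounded perturbation of $D$ and transfer the relevant properties accordingly. Since $F$ and $(1+D^2)^{1/2}$ are both Borel functions of the self-adjoint operator $D$, they commute and $D_0 = g(D)$ with $g(t) := \operatorname{sgn}(t)(1+t^2)^{1/2}$; thus $D_0$ is self-adjoint on $\mathrm{dom}(D_0) = \mathrm{dom}(D)$. Computing $D_0^2 = F^2(1+D^2) = (1-P)(1+D^2)$, where $P$ is the projection onto $\ker D$, shows that $|D_0| \geq 1$ on $(\ker D)^\perp$, giving the spectral gap at $0$. The crucial observation underpinning everything else is that
\[
    B := D_0 - D = F\bigl((1+D^2)^{1/2} - |D|\bigr)
\]
is a bounded self-adjoint operator of norm at most $1$ (since $(1+t^2)^{1/2} - |t| \leq 1$) which commutes with $|D|$, both $F$ and $(1+D^2)^{1/2} - |D|$ being Borel functions of $D$. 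Since $B$ is bounded and $\mathrm{dom}(D_0) = \mathrm{dom}(D)$, the identity $[D_0,a] = [D,a] + [B,a]$ shows that $[D_0,a]$ extends boundedly whenever $[D,a]$ does, and $(D_0 + i)^{-1}$, $(D+i)^{-1}$ are functions of $D$ with comparable decay, so $a(D_0+i)^{-1}$ is compact iff $a(D+i)^{-1}$ is. Hence $(\mathcal{A},H,D_0)$ is a spectral triple.

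For (i), the same perturbative reasoning applies: the identity $\partial_{D_0}(a) = \partial(a) + [B,a]$ shows $\partial_{D_0}(a)$ is bounded iff $\partial(a)$ is. On $(\ker D)^\perp$ we have $|D_0 + i|^{-2p} = (D^2 + 2)^{-p}$ and $|D+i|^{-2p} = (D^2+1)^{-p}$, so $2^{-p} |D+i|^{-2p} \leq |D_0+i|^{-2p} \leq |D+i|^{-2p}$, giving uniformly comparable singular values of the two resolvents. Consequently the $\mathcal{L}_{1,\infty}$ inclusions in Hypothesis \ref{main assumption}\eqref{ass1} for $D$ and $D_0$ are equivalent.

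The main obstacle is (ii): showing $\mathrm{dom}_\infty(\delta_0) = \mathrm{dom}_\infty(\delta)$ at all orders. The key fact is that $|D_0| = (1 + |D|^2)^{1/2}$ on $(\ker D)^\perp$, so $|D_0| - |D| = g_0(|D|)$ with $g_0(s) := (1+s^2)^{1/2} - s$ smooth on $[0,\infty)$ and having all derivatives bounded and decaying polynomially at infinity. The divided difference $\phi_{g_0}(\lambda,\mu) := \frac{g_0(\lambda) - g_0(\mu)}{\lambda - \mu}$ admits a representation of the form \eqref{integral tensor product} (obtained, for instance, from the Fourier inversion of $g_0'$ combined with the identity $\phi_{g_0}(\lambda,\mu) = \int_0^1 g_0'((1-s)\mu + s\lambda)\,ds$), so $T^{|D|,|D|}_{\phi_{g_0}}$ is a bounded operator on $\mathcal{L}_\infty$ by Section \ref{doi}. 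The identity
\[
    \delta_0(T) - \delta(T) = [g_0(|D|),T] = T^{|D|,|D|}_{\phi_{g_0}}(\delta(T))
\]
then shows $\delta_0(T)$ is bounded whenever $\delta(T)$ is, and the reverse direction follows symmetrically by writing $|D| = (|D_0|^2 - 1)^{1/2}$ and running the same argument. An induction on $n$, expressing $\delta_0^n(T)$ as a finite linear combination of double operator integrals (built from derivatives of $g_0$) applied to $\delta^j(T)$, $0 \leq j \leq n$, and vice versa, yields $\mathrm{dom}(\delta_0^n) = \mathrm{dom}(\delta^n)$ for every $n$.

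Given (ii), the remaining assertions are bookkeeping. Since $B$ commutes with $|D|$, iterating gives $\delta^n([B,a]) = [B, \delta^n(a)]$, so $[B,a] \in \mathrm{dom}_\infty(\delta)$ whenever $a$ is; combined with (ii) and $\partial_{D_0}(a) = \partial(a) + [B,a]$, this yields (iii). For (iv), parts \eqref{ass0} and \eqref{ass1} of Hypothesis \ref{main assumption} are (iii) and (i) respectively, while \eqref{ass2} reduces to the corresponding estimates for $D$ by using the formulae from (ii) expressing $\delta_0^k$ in terms of $\delta^j$, together with the resolvent identity $(D_0 + i\lambda)^{-1} = (D + i\lambda)^{-1} - (D+i\lambda)^{-1} B (D_0 + i\lambda)^{-1}$ iterated $p+1$ times, which transfers the trace-class bounds between $D$ and $D_0$ at a uniform cost in $\lambda$.
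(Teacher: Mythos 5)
Your proof rests on the same decomposition as the paper's: $D_0 - D$ and $|D_0| - |D|$ are bounded self-adjoint operators commuting with all Borel functions of $D$, and this bounded commuting perturbation carries every property across. The one genuine divergence is in part (ii), where you invoke double operator integrals to control $[g_0(|D|),T]$ and its iterates. That machinery is unnecessary here: since $g_0(|D|) = |D_0| - |D|$ is a bounded operator commuting with $|D|$, the derivation $\delta_1 := [g_0(|D|),\,\cdot\,]$ is automatically $\mathcal{L}_r$-bounded for every $r$ and commutes with $\delta$, so the paper simply expands $\delta_0^n = (\delta + \delta_1)^n$ by the binomial theorem and reads off (ii). That elementary route also provides the $\mathcal{L}_1$ estimates of Hypothesis \ref{main assumption}.\eqref{ass2} in part (iv) at no additional cost, using $\|\delta_1(T)\|_1 \leq 2\|g_0(|D|)\|_\infty\|T\|_1$ and the fact that $\delta_1$ commutes with right multiplication by $(D+i\lambda)^{-p-1}$; your DOI argument, by contrast, requires an extension of $g_0$ off $[0,\infty)$ to make $\widehat{g_0'}$ integrable, which you gloss over (the literal function $(1+s^2)^{1/2}-s$ is unbounded on $(-\infty,0]$), and your description of $\delta_0^n(T)$ as DOIs applied to $\delta^j(T)$, $0\le j\le n$, is imprecise since commuting $\delta$ past $\delta_1$ shows the representation lands on $\delta^n(T)$ alone. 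Finally, your resolvent-identity iteration in (iv) collapses, because $B$ commutes with $D$, to the single multiplicative factorization $(D_0 + i\lambda)^{-(p+1)} = (D + i\lambda)^{-(p+1)}\bigl(\tfrac{D+i\lambda}{D_0+i\lambda}\bigr)^{p+1}$, which is what the paper uses, with the last factor bounded uniformly in $\lambda$.
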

    \begin{proof} 
        First, note that $\mathrm{dom}(D_0^n) = \mathrm{dom}(D^n)$ for all $n\geq 1$ and therefore that the space $H_\infty$ is identical for $D_0$ and for $D$. It is clear that $D_0$ has a spectral gap at $0$, since $|D_0| = (1+D^2)^{1/2} \geq 1$.
        Since $D_0^2 = 1+D^2$, we have $|D_0| = (1+D^2)^{1/2}$, and $F|D_0| = D_0$.
        As $|D_0| \geq 1$, the operator $|D_0|+|D|$ is invertible. Furthermore, since $|D_0|^2 = |D|^2+1$, we have:
        \begin{equation*}
            \frac{1}{|D_0|+|D|} = |D_0|-|D|.
        \end{equation*}
        Multiplying by $F$ we obtain
        \begin{equation*}
        \frac{F}{|D_0|+|D|} = D_0-D.
        \end{equation*}
        So both $|D_0|-|D|$ and $D_0-D$ extend to bounded operators. Moreover, since for all $k\geq 1$,
        \begin{equation*}
            \frac{1}{|D_0|+|D|}:\mathrm{dom}(D^k)\to \mathrm{dom}(D^{k+1})\subseteq \mathrm{dom}(D^k)
        \end{equation*}
        we have that the bounded extensions of $D_0-D$ and $|D_0|-|D|$ map $\mathrm{dom}(D^k)$ to $\mathrm{dom}(D^k)$ for all $k\geq 1$.
        
        For $T \in \mathcal{L}_\infty(H)$, let $\partial_1(T)$ denote the commutator of the bounded extension of $D_0-D$ with $T$, $\partial_1(T) := [\frac{F}{|D_0|+|D|},T]$.
        Similarly, let $\delta_1(T)$ denote the commutator of the bounded extension of $|D_0|-|D|$ with $T$, $\delta_1(T) := [\frac{1}{|D_0|+|D|},T]$.
        
        Then we have the following identity on $H_\infty$:
        \begin{equation*}
            [D_0,a] = \partial_1(a)+\partial(a).
        \end{equation*}
        Since $\partial(a)$ and $\partial_1(a)$ are bounded, it follows that $[D_0,a]$ extends to a bounded linear operator, which we denote $\partial_0(a)$.
        
        Since $D_0^2 = D^2+1$, we have that the operator $(D+i)(D_0+i)^{-1}$ has bounded extension. Hence, for all $a \in \mathcal{A}$ we have that $a(D_0+i)^{-1}$ is compact.
        This completes the proof that $(\mathcal{A},H,D_0)$ is a spectral triple. 
        
        One may similarly prove \eqref{iff p dim}: to see that $(\mathcal{A},H,D_0)$ is $p$-dimensional if $(\mathcal{A},H,D)$ is $p$-dimensional,
        we write:
        \begin{equation*}
            a(D_0+i)^{-p} = a(D+i)^{-p}\cdot \left(\frac{D+i}{D_0+i}\right)^p
        \end{equation*}
        which is in $\mathcal{L}_{1,\infty}$, because $(D+i)(D_0+i)^{-1}$ has bounded extension. Next,
        \begin{equation*}
            \partial_0(a)(D_0+i)^{-p} = (\partial(a)(D+i)^{-p}+\partial_1(a)(D+i)^{-p})\left(\frac{D+i}{D_0+i}\right)^p.
        \end{equation*}
        and $\partial_1(a)(D+i)^{-p} = (D_0-D)a(D+i)^{-p}-a(D+i)^{-p}(D_0-D)$, hence $\partial_0(a)(D_0+i)^{-p} \in \mathcal{L}_{1,\infty}$ and so $(\mathcal{A},H,D_0)$ is $p$-dimensional. The reverse
        implication may be established by an identical argument { using the fact that $(D_0+i)(D+i)^{-1}$ has bounded extension.}
        
        Next we prove \eqref{iff dom}. We have already shown that $|D_0|-|D|$ is an operator with bounded extension and which maps $\mathrm{dom}(D^k)$ to $\mathrm{dom}(D^k)$, for all $k\geq 1$. 
        By verifying the identity on $H_\infty$, we have:
        \begin{equation*}
            \delta^k(\delta_1(T)) = \delta_1(\delta^k(T)).
        \end{equation*}
        Hence if $T \in \mathrm{dom}(\delta^k)$, then $\delta_1(T) \in \mathrm{dom}(\delta^k)$.
        
        If $T \in \mathrm{dom}(\delta)$, then $[|D_0|,T] = \delta_1(T)+\delta(T)$ on $H_\infty$. So if $T \in \mathrm{dom}_\infty(\delta)$ we can compute the $k$th iterated commutator of $T$ with $|D_0|$ as:
        \begin{align}
            [|D_0|,[|D_0|,[\cdots,[|D_0|,T]\cdots]]] &= (\delta+\delta_1)^k(T)\nonumber\\
                                                     &= \sum_{j=0}^k \binom{k}{j}\delta_1^{k-j}(\delta^{j}(T))\label{binomial formula}
        \end{align}
        Thus the $k$th iterated commutator of $|D_0|$ and $T$ has bounded extension, so $T \in \mathrm{dom}_\infty(\delta_0)$. Repeating the proof using the identity $[|D|,T] = \delta_0(T)-\delta_1(T)$, 
        we also have that $\mathrm{dom}_\infty(\delta_0) \subseteq \mathrm{dom}_\infty(\delta)$. This completes the proof of \eqref{iff dom}.
        
        Now we prove \eqref{iff smooth}. Note that if $T \in \mathrm{dom}_\infty(\delta_0)$, then $\partial_1(T) \in \mathrm{dom}_\infty(\delta_0)$. Hence, if
        \begin{equation*}
            a,\, \partial(a) \in \mathrm{dom}_\infty(\delta) = \mathrm{dom}_\infty(\delta_0)
        \end{equation*}
        then
        \begin{equation*}
            a,\, \partial(a)+\partial_1(a) \in \mathrm{dom}_\infty(\delta_0).
        \end{equation*}
        Since $\partial_0(a) = \partial(a)+\partial_1(a)$, this completes the proof that if $(\mathcal{A},H,D)$ is smooth then $(\mathcal{A},H,D_0)$ is smooth.
        For the converse, we use $\partial_0(a) = \partial(a)-\partial_1(a)$.

        It now remains to show \eqref{iff main assumption}. Assume that $(\mathcal{A},H,D)$ satisfies
        Hypothesis \ref{main assumption}. From \eqref{binomial formula}, we have that
        \begin{align*}
            \delta_0^k(a)(D_0+i\lambda)^{-p-1} &= \delta_0^k(a)(D+i\lambda)^{-p-1}\left(\frac{D+i\lambda}{D_0+i\lambda}\right)^{p+1}\\
                                               &= \left(\sum_{l=0}^k \binom{k}{l} \delta_1^{k-l}(\delta^l(a))(D+i\lambda)^{-p-1}\right)\left(\frac{D+i\lambda}{D_0+i\lambda}\right)^{p+1}.
        \end{align*}
        However since $|D_0|-|D|$ commutes with functions of $D$,
        \begin{equation*}
            \delta_0^k(a)(D_0+i\lambda)^{-p-1} = \left(\sum_{l=0}^k \binom{k}{l} \delta_1^{k-l}(\delta^l(a)(D+i\lambda)^{-p-1})\right)\left(\frac{D+i\lambda}{D_0+i\lambda}\right)^{p+1}.
        \end{equation*}
        
        Now since the operator $\frac{D+i\lambda}{D_0+i\lambda}$ is bounded, and $\delta_1$ is a commutator with a bounded operator, and since $(\mathcal{A},H,D)$ satisfies Hypothesis \ref{main assumption},
        \begin{equation*}
            \|\delta^l(a)(D+i\lambda)^{-p-1}\|_1 = O(\lambda^{-1}), \quad\lambda > 0
        \end{equation*}
        it follows that
        \begin{equation*}
            \|\delta_0^l(a)(D_0+i\lambda)^{-p-1}\|_1 = O(\lambda^{-1}),\quad \lambda > 0.
        \end{equation*}
        Similarly, by writing $\partial_0 = \partial_1+\partial$, we also obtain:
        \begin{equation*}
            \|\partial_0(\delta_0^k(a))(D_0+i\lambda)^{-p-1}\|_1 = O(\lambda^{-1}).
        \end{equation*}
        To prove the converse, we write $\delta(a) = \delta_0-\delta_1$ and repeat the same argument.
    \end{proof}
    
    We are now able to prove Proposition \ref{f der def} -- without any assumptions on the invertibility of $D.$ Similar results are well
    known in the unital case (see e.g. \cite[Lemma 1]{CPRS1}, \cite[Lemma 10.18]{GVF} and \cite[Lemma 5]{BF}). In the non-unital setting, a related result is \cite[Proposition 2.14]{CGRS2} which instead proves that $[F,a] \in \mathcal{L}_{p+1}$. To the best of our knowledge, no complete proof of the following result has been published in the non-unital setting.
    \begin{prop}\label{f der def}
        If $(\mathcal{A},H,D)$ is a $p-$dimensional spectral triple satisfying Hypothesis \ref{main assumption}, then $[F,a]\in\mathcal{L}_{p,\infty}$ for all $a\in\mathcal{A}.$
    \end{prop}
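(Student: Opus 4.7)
The plan is to reduce to the case where $D$ is invertible via Proposition \ref{pass to spectral gap 1}, and then exploit the identity $F = D|D|^{-1}$ to express $[F,a]$ as a sum of two operators that Lemma \ref{z<p lemma} already places in $\mathcal{L}_{p,\infty}$.

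First, Proposition \ref{pass to spectral gap 1} lets us replace $D$ by $D_0 := F(1+D^2)^{1/2}$. Since Hypothesis \ref{main assumption} is preserved and $F_{D_0}$ agrees with $F_D$ (both coincide with the common sign function away from $\ker(D)$), we may assume $D$ has a spectral gap at $0$. Then $F = D|D|^{-1}$, and the commutator identity \eqref{favourite commutator identity} gives, as operators on $H_\infty$,
\begin{align*}
[F,a] &= D[|D|^{-1},a] + [D,a]|D|^{-1}\\
&= -F\delta(a)|D|^{-1} + \partial(a)|D|^{-1}.
\end{align*}
Applying Lemma \ref{z<p lemma} with $s = 1$, both $\partial(a)|D|^{-1}$ and $\delta(a)|D|^{-1}$ belong to $\mathcal{L}_{p,\infty}$. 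Since $F$ is bounded, the right hand side extends to an operator in $\mathcal{L}_{p,\infty}$ which coincides with $[F,a]$ on $H_\infty$, and the claim follows.

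The main subtlety lies in the reduction step when $\ker(D) \neq \{0\}$: in that case $D_0 = F(1+D^2)^{1/2}$ still vanishes on $\ker(D)$, so it does not literally have a spectral gap at $0$. This can be handled by splitting $a = a(1-P) + aP$, where $P$ denotes the projection onto $\ker(D)$. The contribution from $aP$ satisfies $aP = i^p a(D+i)^{-p}P \in \mathcal{L}_{1,\infty} \subset \mathcal{L}_{p,\infty}$ by Hypothesis \ref{main assumption}.\eqref{ass1}, so $[F,aP]$ is automatically in $\mathcal{L}_{p,\infty}$; on $\ker(D)^\perp$ the operator $D$ is already invertible and the computation above applies directly to the remaining piece $a(1-P)$.
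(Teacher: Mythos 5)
The core of your argument is the same as the paper's: pass to $D_0 := F(1+D^2)^{1/2}$ via Proposition \ref{pass to spectral gap 1}, write $F = D_0|D_0|^{-1}$, expand the commutator using \eqref{favourite commutator identity}, and finish with Lemma \ref{z<p lemma} at $s=1$ (the paper's printed ``$s=p$'' there is a slip; $s=1$ is the exponent that yields $\mathcal{L}_{p,\infty}$, as you correctly use).

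The subtlety you flag is real: if $\ker(D)\neq\{0\}$ then $D_0^2 = F^2(1+D^2) = (1-P)(1+D^2)$, so $D_0$ still annihilates $\ker(D)$ and the assertion ``$|D_0|=(1+D^2)^{1/2}\geq 1$'' in the proof of Proposition \ref{pass to spectral gap 1} fails; the paper's own proof of Proposition \ref{f der def} inherits this issue. Your treatment of the piece $aP$ is correct (and the identity $aP = i^p a(D+i)^{-p}P$ is a neat way to see it). But for $a(1-P)$ the claim that ``the computation above applies directly'' is not yet a proof: $a(1-P)\notin\mathcal{A}$, so Lemma \ref{z<p lemma} cannot be cited for it; and if one reads $|D_0|^{-1}$ as the pseudoinverse, then since $\delta_0(a(1-P))=\delta_0(a)(1-P)$ and $(1-P)|D_0|^{-1}=|D_0|^{-1}$, the required membership reduces to $\delta_0(a)|D_0|^{-1}\in\mathcal{L}_{p,\infty}$ --- which is exactly what Lemma \ref{z<p lemma} would give if $D_0$ had a spectral gap, so the argument is circular. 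A clean repair is to set $\tilde{D} := F(1+D^2)^{1/2}+P$. Then $\tilde{D}^2 = 1+D^2$ exactly and $|\tilde{D}|\geq 1$, so the proof of Proposition \ref{pass to spectral gap 1} and Lemma \ref{z<p lemma} apply as stated to $\tilde{D}$; and since $F_{\tilde{D}}=F+P$, one has $[F,a]=[F_{\tilde{D}},a]-[P,a]$ with $[P,a]=Pa-aP\in\mathcal{L}_{1,\infty}\subset\mathcal{L}_{p,\infty}$ by the same observation you used for $aP$.
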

    \begin{proof} 
        Let $D_0 = F(1+D^2)^{1/2}$, so that by Proposition \ref{pass to spectral gap 1}, the spectral triple $(\mathcal{A},H,D_0)$ satisfies
        Hypothesis \ref{main assumption}. As an equality of operators on $H_\infty$, we have:
        \begin{align*}
            [F,a] &= [D_0|D_0|^{-1},a]\\
                  &= [D_0,a]|D_0|^{-1}+D_0[|D_0|^{-1},a].
        \end{align*}
        Using \eqref{favourite commutator identity},
        \begin{equation*}
            [F,a] = [D_0,a]|D_0|^{-1}-F[|D_0|,a]|D_0|^{-1}.
        \end{equation*}
        Since the spectral triple $(\mathcal{A},H,D_0)$ satisfies Hypothesis \ref{main assumption} and has a spectral gap at $0$,
        we may apply Lemma \ref{z<p lemma} with $s = p$ to conclude that the operators $[D_0,a]|D_0|^{-1}$ and $[|D_0|,a]|D_0|^{-1}$ have
        extension to operators in $\mathcal{L}_{p,\infty}$. Thus, $[F,a] \in \mathcal{L}_{p,\infty}$.
    \end{proof}

\section{Restatement of Hypothesis 1.2.1}\label{replacement section}

    In this section, we introduce the operator $\Lambda$, formally defined by:
    \begin{equation*}
        \Lambda(T) = (1+D^2)^{-\frac12}[D^2,T].
    \end{equation*}
    Strictly speaking, $\Lambda(T)$ will be defined to be the bounded extension of the above operator. What is here denoted $\Lambda$ appeared in the unital settings of \cite[Appendix B]{Connes-Moscovici} 
    (there denoted $L$), \cite[Definition 6.5]{CPRS2} (there denoted $L_1$) and \cite[Equation 10.66]{GVF} (there denoted $L$).
    The mapping $\Lambda$ was also used in the non-unital setting of \cite[Definition 1.20]{CGRS2} (there called $L$).
    We undertake a self-contained development of these ideas, since our assumptions are different to those used in previous work.
    There is not any substantial conceptual difference between the proofs for the unital and nonunital cases, however there are small technical obstacles which require care to be taken when computing repeated integrals of operator valued functions (see the proof of Lemma \ref{delta to lambda power}). An expert reader familiar with this theory could skip to Hypothesis \ref{replacement assumption}.
    
    We must take care to ensure that $\Lambda(T)$ is well defined, as well as that higher powers $\Lambda^k(T)$ are defined. For this purpose we introduce the spaces $\mathrm{dom}(\Lambda^k)$.    
    \begin{defi}\label{lambda def}
        Let $k \geq 1$. We define $\mathrm{dom}(\Lambda^k)$ to be the set of bounded linear operators $T$ such that for all $1 \leq j \leq k$, we have $T:\mathrm{dom}(D^{2j})\to \mathrm{dom}(D^{2j})$ and 
        such that the $k$th iterated commutator,
        \begin{equation*}
            (1+D^2)^{-1/2}[D^2,(1+D^2)^{-1/2}[D^2,\cdots,T]]:\mathrm{dom}(D^{2k})\to H
        \end{equation*}
        has bounded extension, which we denote $\Lambda^k(T)$.
        
        Define 
        \begin{equation*}
            \mathrm{dom}_\infty(\Lambda) := \bigcap_{k\geq 0} \mathrm{dom}(\Lambda^k).
        \end{equation*}
    \end{defi}

    The mapping $\Lambda$ can be thought of as a replacement for $\delta$, and we introduce it since it is easier to work with $\Lambda$ rather than $\delta$
    in the examples covered in this chapter.
    \begin{defi}
        A spectral triple $(\mathcal{A},H,D)$ is called $\Lambda$-smooth if for all $a \in \mathcal{A}$ we have,
        \begin{equation*}
            a,\,\partial(a) \in \mathrm{dom}_\infty(\Lambda).
        \end{equation*}
    \end{defi}
    We will show that $\mathrm{dom}_\infty(\Lambda) = \mathrm{dom}_\infty(\delta_0)$, and so in view of Theorem \ref{pass to spectral gap 1}.\eqref{iff dom} the notion of
    $\Lambda$-smoothness is identical to smoothness. {  This fact is well known in the unital setting, similar results having appeared in \cite[Appendix B]{Connes-Moscovici} and \cite[Proposition 6.5]{CPRS2}. We
    provide a full proof here since to the best of our knowledge no published proof exists in the non-unital setting.}
    
    The easiest direction to establish is that $\mathrm{dom}_\infty(\delta_0)\subseteq \mathrm{dom}_\infty(\Lambda)$, as the following Lemma shows:
    \begin{lem}\label{delta smooth implies lambda smooth}
        We have $\mathrm{dom}_\infty(\delta_0) \subseteq \mathrm{dom}_\infty(\Lambda)$.
    \end{lem}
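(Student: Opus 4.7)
The plan is to derive an explicit formula expressing $\Lambda^k(T)$ as a finite linear combination of terms of the shape $|D_0|^{-j}\delta_0^{k+j}(T)$ for $0\leq j\leq k$; since $|D_0|^{-1}$ is bounded and $\delta_0^m(T)$ is bounded for every $m$ whenever $T\in\mathrm{dom}_\infty(\delta_0)$, the conclusion follows at once.

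First I would use $|D_0|^2 = 1+D^2$ to rewrite $\Lambda(T) = |D_0|^{-1}[|D_0|^2, T]$, apply the Leibniz rule $[|D_0|^2, T] = |D_0|\delta_0(T) + \delta_0(T)|D_0|$, and eliminate the right-hand $|D_0|$ using the identity $\delta_0(T)|D_0| = |D_0|\delta_0(T) - \delta_0^2(T)$. This yields the key relation
$$\Lambda(T) = 2\delta_0(T) - |D_0|^{-1}\delta_0^2(T), \qquad T\in\mathrm{dom}(\delta_0^2).$$
Next I would check the commutation $\delta_0(|D_0|^{-1}T) = |D_0|^{-1}\delta_0(T)$, which is immediate on $H_\infty$ from Borel functional calculus since $|D_0|^{-1}$ commutes strongly with $|D_0|$. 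Iterating gives $\delta_0^m(|D_0|^{-j}T) = |D_0|^{-j}\delta_0^m(T)$, and consequently $\Lambda$ commutes with left multiplication by $|D_0|^{-j}$.

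The core of the argument is then an induction on $k$ establishing
$$\Lambda^k(T) = \sum_{j=0}^{k}(-1)^j\binom{k}{j} 2^{k-j}|D_0|^{-j}\delta_0^{k+j}(T), \qquad T\in\mathrm{dom}(\delta_0^{2k}).$$
The base case is the relation above. In the inductive step one applies the $k=1$ identity to each summand after moving $|D_0|^{-j}$ to the left via the commutation, and then re-indexes the resulting sum via Pascal's identity $\binom{k}{j} + \binom{k}{j-1} = \binom{k+1}{j}$. For $T\in\mathrm{dom}_\infty(\delta_0)$, every $\delta_0^{k+j}(T)$ is bounded and $\||D_0|^{-1}\|_\infty \leq 1$ because $|D_0|\geq 1$, so the right-hand side is bounded for every $k$. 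The domain requirements of Definition \ref{lambda def}---that $T$ map $\mathrm{dom}(D^{2j})$ into itself for $1\leq j\leq k$---are automatic, since $\mathrm{dom}(D^{2j}) = \mathrm{dom}(D_0^{2j})$ and operators in $\mathrm{dom}_\infty(\delta_0)$ preserve the domain of every power of $D_0$. Thus $T\in\mathrm{dom}(\Lambda^k)$ for every $k$, giving the claimed inclusion.

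I expect the main obstacle to be the bookkeeping for the iterated commutators: making the commutation of $|D_0|^{-j}$ past $\delta_0$ and past $\Lambda$ rigorous on the correct invariant dense subspace (most conveniently $H_\infty$), and confirming that the iterated commutator appearing in the definition of $\Lambda^k$ coincides on $H_\infty$ with the bounded operator supplied by the explicit formula. These are standard domain checks for unbounded operators, but they need attention at each stage of the induction.
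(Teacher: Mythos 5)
Your proposal is correct and proceeds from the same starting point as the paper: the key identity $\Lambda(T) = 2\delta_0(T) - |D_0|^{-1}\delta_0^2(T)$, obtained on $H_\infty$ via $|D_0|^{-1}[|D_0|^2,T] = 2[|D_0|,T] - |D_0|^{-1}[|D_0|,[|D_0|,T]]$, followed by an induction. The only difference is cosmetic: the paper iterates abstractly, showing $\mathrm{dom}_\infty(\delta_0)$ is $\Lambda$-invariant (since $\delta_0(T),\delta_0^2(T),|D_0|^{-1} \in \mathrm{dom}_\infty(\delta_0)$ and this set is an algebra), and concludes $T \in \mathrm{dom}(\Lambda^k)$ for all $k$ by repeating; you instead unroll the recursion into the explicit binomial formula $\Lambda^k(T) = \sum_{j=0}^k (-1)^j\binom{k}{j}2^{k-j}|D_0|^{-j}\delta_0^{k+j}(T)$, which is a correct consequence of the commutation $\delta_0(|D_0|^{-1}T) = |D_0|^{-1}\delta_0(T)$, but unnecessary for the inclusion.
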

    \begin{proof}
        Let $T \in \mathrm{dom}_\infty(\delta_0)$. We have $T:\mathrm{dom}(D^k)\to \mathrm{dom}(D^k)$ for all $k\geq 1$, and so working on $H_\infty$, we can write,
        \begin{align*}
            (1+D^2)^{-1/2}[D^2,T]  &= |D_0|^{-1}[|D_0|^2,T]\\
                                   &= 2[|D_0|,T]-|D_0|^{-1}[|D_0|,[|D_0|,T]]\\
                                   &= 2\delta_0(T)-|D_0|^{-1}\delta_0^2(T).
        \end{align*}
        By assumption $T \in \mathrm{dom}_{\infty}(\delta)$, hence, $\Lambda(T)$ has bounded extension
        and so $T \in \mathrm{dom}(\Lambda)$, and on all $H$ we have:
        \begin{equation*}
            \Lambda(T) = 2\delta_0(T)-|D_0|^{-1}\delta_0^2(T).
        \end{equation*}
        However since $\delta_0(T), \delta_0^2(T)$ and $|D_0|^{-1}$ are in $\mathrm{dom}_\infty(\delta_0)$, it follows that $\Lambda(T) \in \mathrm{dom}_\infty(\delta_0)$.
        
        Hence, $\Lambda(T) \in \mathrm{dom}(\Lambda)$, and continuing by induction we get that $T \in \mathrm{dom}(\Lambda^k)$ for all $k\geq 1$.
    \end{proof}
    
    It takes some more work to prove that $\mathrm{dom}_\infty(\Lambda)\subseteq \mathrm{dom}_\infty(\delta_0)$. We achieve this by an integral representation of $\delta_0(T)$
    in terms of $\Lambda(T)$ and $\Lambda^2(T)$. We make use of the dense subspace $H_\infty$ from Definition \ref{H_infty definition}. 
    The following Lemma should be compared with the proof of \cite[Lemma B2]{Connes-Moscovici}.
    \begin{lem}\label{delta to lambda}
        Let $T\in \mathrm{dom}_\infty(\Lambda)$. Then for all $\xi \in H_\infty$ we have:
        \begin{equation*}
            [|D_0|,T]\xi = \frac{1}{2}\Lambda(T)\xi + \frac{1}{\pi}\int_0^\infty \lambda^{1/2} \frac{D_0^2}{(\lambda+D_0^2)^2}\Lambda^2(T)\frac{1}{\lambda+D_0^2}\xi\,d\lambda.
        \end{equation*}
        The integral above may be understood as a weak operator topology integral.
    \end{lem}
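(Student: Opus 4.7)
The plan is to derive the formula by expanding $|D_0|$ as a scalar integral of resolvents, commuting $T$ through, and then iterating the standard resolvent commutator identity \eqref{favourite commutator identity} exactly once to isolate a singular term whose scalar integral is explicit and a remainder term that converges absolutely.

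The starting point is the beta-function identity
\[
    \mu = \frac{1}{\pi}\int_0^\infty \lambda^{-1/2}\frac{\mu^2}{\lambda+\mu^2}\,d\lambda,\quad \mu\geq 1,
\]
which, combined with the spectral theorem for $|D_0|\geq 1$ and a Fubini argument on the complex spectral measure $\nu_{v,\eta}(\cdot)=\langle E_{|D_0|}(\cdot)v,\eta\rangle$, gives for every $v\in\mathrm{dom}(|D_0|)$ and $\eta\in H$:
\[
    \langle |D_0|v,\eta\rangle = \frac{1}{\pi}\int_0^\infty \lambda^{-1/2}\Big\langle\frac{D_0^2}{\lambda+D_0^2}v,\eta\Big\rangle\,d\lambda.
\]
I would apply this once with $v=T\xi\in H_\infty$ and once with $v=\xi$ (combined with the fact that the bounded operator $T$ can be commuted through a weak integral), then subtract to obtain, weakly,
\[
    [|D_0|,T]\xi = \frac{1}{\pi}\int_0^\infty \lambda^{-1/2}\Big[\frac{D_0^2}{\lambda+D_0^2},T\Big]\xi\,d\lambda.
\]
Here one uses that $T\in\mathrm{dom}_\infty(\Lambda)$ forces $T:H_\infty\to H_\infty$, so both $|D_0|T\xi$ and $T|D_0|\xi$ are meaningful vectors in $H$.

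On $H_\infty$ the commutator identity \eqref{favourite commutator identity} gives
\[
    \Big[\frac{D_0^2}{\lambda+D_0^2},T\Big]=\frac{\lambda}{\lambda+D_0^2}[D_0^2,T]\frac{1}{\lambda+D_0^2}=\frac{\lambda|D_0|}{\lambda+D_0^2}\Lambda(T)\frac{1}{\lambda+D_0^2},
\]
where we have used $[D_0^2,T]=|D_0|\Lambda(T)$ on $H_\infty$. Applying \eqref{favourite commutator identity} once more to move $\Lambda(T)$ past the resolvent, and using $[D_0^2,\Lambda(T)]=|D_0|\Lambda^2(T)$, I get the clean decomposition
\[
    \lambda^{-1/2}\Big[\frac{D_0^2}{\lambda+D_0^2},T\Big]\xi = \frac{\lambda^{1/2}|D_0|}{(\lambda+D_0^2)^2}\Lambda(T)\xi+\frac{\lambda^{1/2}D_0^2}{(\lambda+D_0^2)^2}\Lambda^2(T)\frac{1}{\lambda+D_0^2}\xi.
\]
The scalar integral for the first term is explicit: the substitution $\lambda=\mu^2 t$ produces $\int_0^\infty \lambda^{1/2}\mu(\lambda+\mu^2)^{-2}\,d\lambda=\pi/2$ independently of $\mu\geq 1$, so the spectral theorem yields
\[
    \frac{1}{\pi}\int_0^\infty \frac{\lambda^{1/2}|D_0|}{(\lambda+D_0^2)^2}\,d\lambda=\frac{1}{2},
\]
which produces exactly the term $\tfrac{1}{2}\Lambda(T)\xi$. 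For the second term, straightforward optimisation on the spectrum $[1,\infty)$ of $|D_0|$ gives the operator norm bound
\[
    \Big\|\frac{\lambda^{1/2}D_0^2}{(\lambda+D_0^2)^2}\Big\|_\infty\cdot\Big\|\frac{1}{\lambda+D_0^2}\Big\|_\infty = O\big(\min(\lambda^{1/2},\lambda^{-3/2})\big),
\]
which is integrable on $(0,\infty)$, so the remainder integral converges absolutely in operator norm (hence a fortiori in the weak operator topology) and is the integral in the statement.

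The main technical obstacle is the justification of the very first step: the spectral representation of $|D_0|$ as a weak integral of resolvents is not absolutely convergent in the operator norm, and the individual integrals $\int \lambda^{-1/2}\frac{D_0^2}{\lambda+D_0^2}T\xi\,d\lambda$ and $\int \lambda^{-1/2}T\frac{D_0^2}{\lambda+D_0^2}\xi\,d\lambda$ each diverge at infinity; only their difference, which equals the commutator, is weakly integrable. Care is therefore needed to carry out the Fubini exchange on the complex spectral measure $\nu_{T\xi,\eta}$ (requiring $T\xi\in\mathrm{dom}(|D_0|)$, which is where $T:H_\infty\to H_\infty$ is used), and to justify moving $T$ through the weak integral. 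Once that is done, the rest of the argument is algebraic manipulation on $H_\infty$ followed by the scalar beta integral.
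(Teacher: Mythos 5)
Your proof takes essentially the same route as the paper: represent $|D_0|\xi$ as a Bochner integral of resolvents acting on $\xi\in H_\infty$, commute $T$ through, apply the resolvent commutator identity \eqref{favourite commutator identity} twice (once to produce a single $\Lambda(T)$ factor, once more to split off the principal part), and then identify the scalar integral $\frac{1}{\pi}\int_0^\infty\lambda^{1/2}\mu(\lambda+\mu^2)^{-2}\,d\lambda = \tfrac12$ as the source of the $\tfrac12\Lambda(T)$ term. The algebraic decomposition and the bounds on the remainder are identical to those in the paper, and the scalar beta integral replaces the citation of \eqref{little integral 2} but amounts to the same computation.

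The one place you go astray is the closing remark, which misidentifies the technical obstacle. You assert that the individual vector-valued integrals $\int_0^\infty\lambda^{-1/2}\frac{D_0^2}{\lambda+D_0^2}T\xi\,d\lambda$ and $\int_0^\infty\lambda^{-1/2}T\frac{D_0^2}{\lambda+D_0^2}\xi\,d\lambda$ each diverge at infinity and that only their difference is weakly integrable, so that a delicate Fubini exchange of the kind you describe is forced. This is false. Precisely because $T\in\mathrm{dom}_\infty(\Lambda)$ implies $T:H_\infty\to H_\infty$, both $\xi$ and $T\xi$ lie in $\mathrm{dom}(D_0^2)$, and then one has $\frac{D_0^2}{\lambda+D_0^2}v = \frac{1}{\lambda+D_0^2}\,(D_0^2 v)$ with $\|\frac{1}{\lambda+D_0^2}\|_\infty \leq \frac{1}{1+\lambda}$ (since $D_0^2\geq 1$), so the integrand is $O(\lambda^{-3/2})$ as $\lambda\to\infty$ and $O(\lambda^{-1/2})$ as $\lambda\to 0$. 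Each integral is therefore an absolutely convergent Bochner integral in $H$ on its own, with no cancellation required. The paper uses exactly this estimate to justify the Bochner convergence of $(1+D^2)^{1/2}\xi = \frac{1}{\pi}\int_0^\infty\frac{1+D^2}{1+\lambda+D^2}\xi\,\frac{d\lambda}{\lambda^{1/2}}$ and then commutes $T$ through freely; the Fubini and cancellation machinery you build in the last paragraph is unnecessary. Since the remainder integral in the statement is dominated in operator norm by $O(\min(\lambda^{1/2},\lambda^{-3/2}))$ just as you observe, this is the only correction needed.
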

    \begin{proof} 
        This is essentially a combination of the following two { well known} integral formulae:
        \begin{equation}\label{little integral 1}
            (1+D^2)^{-1/2} = \frac{1}{\pi} \int_0^\infty \frac{1}{1+\lambda+D^2}\frac{d\lambda}{\lambda^{1/2}}
        \end{equation}
        and
        \begin{equation}\label{little integral 2}
            (1+D^2)^{-1/2} = \frac{2}{\pi} \int_0^\infty \frac{\lambda^{1/2}}{(1+\lambda+D^2)^2}d\lambda
        \end{equation}
        which can both be understood as integrals in the weak operator topology, since
        \begin{equation*}
            \left\|\frac{1}{1+\lambda+D^2}\right\|_\infty \leq \frac{1}{1+\lambda},\quad \lambda > 0
        \end{equation*}
        and
        \begin{equation*}
            \left\|\frac{\lambda^{1/2}}{(1+\lambda+D^2)^2}\right\|_\infty \leq \frac{\lambda^{1/2}}{(1+\lambda)^2},\quad\lambda > 0.
        \end{equation*}
        
        Let $\xi \in H_\infty$. Multiplying \eqref{little integral 1} by $(1+D^2)\xi$, we get:
        \begin{equation}\label{little integral 3}
            (1+D^2)^{1/2}\xi = \frac{1}{\pi}\int_0^\infty \frac{1+D^2}{1+\lambda+D^2}\xi\frac{d\lambda}{\lambda^{1/2}}.
        \end{equation}
        The above is a convergent Bochner integral in $H$, since
        \begin{equation*}
            \left\|\frac{1+D^2}{1+\lambda+D^2}\xi\right\|_H \leq \frac{1}{1+\lambda}\|(1+D^2)\xi\|_H.
        \end{equation*}
        Now, replacing $1+D^2 = D_0^2$ by \eqref{favourite commutator identity}:
        \begin{align}\label{adding lambda to Lambda}
            \left[\frac{1}{\lambda+D_0^2},T\right] &= -(\lambda+D_0^2)^{-1}[D^2,T](\lambda+D_0^2)^{-1}\nonumber\\
                                                   &= -\frac{D_0^2}{\lambda+D_0^2}\Lambda(T)(\lambda+D_0^2)^{-1}.
        \end{align}
        Hence,
        \begin{align*}
            \left[\frac{D_0^2}{\lambda+D_0^2},T\right] &= \left[1-\frac{\lambda}{\lambda+D_0^2},T\right]\\
                                                       &= \frac{\lambda(D_0^2)}{\lambda+D_0^2}\Lambda(T)(\lambda+D_0^2)^{-1}\\
                                                       &= \frac{\lambda(D_0^2)}{\lambda+D_0^2}([\Lambda(T),(\lambda+D_0^2)^{-1}]+(\lambda+D_0^2)^{-1}\Lambda(T)).
        \end{align*}
        Applying \eqref{adding lambda to Lambda} a second time:
        \begin{align*}
            \left[\frac{D_0^2}{\lambda+D_0^2},T\right] &= \frac{\lambda D_0^2}{\lambda+D_0^2}\Big(\frac{D_0^2}{\lambda+D_0^2}\Lambda^2(T)(\lambda+D_0^2)^{-1} +(\lambda+D_0^2)^{-1}\Lambda(T)\Big)\\
                                                       &= \frac{\lambda|D_0|}{(\lambda+D_0^2)^2}\Lambda(T)+ \frac{\lambda D_0^2}{(\lambda+D_0^2)^2}\Lambda^2(T)(\lambda+D_0^2)^{-1}.
        \end{align*}
        Now we apply the integral formula \eqref{little integral 3} to obtain:
        \begin{align*}
            [|D_0|,T]         &= \frac{1}{\pi}\int_0^\infty \left[\frac{D_0^2}{\lambda+D_0^2},T\right]\frac{d\lambda}{\lambda^{1/2}}\\
                              &= \frac{1}{\pi}\int_0^\infty \frac{\lambda|D_0|}{(\lambda+D_0^2)^2}\Lambda(T)\frac{d\lambda}{\lambda^{1/2}}\\
                              &\quad+\frac{1}{\pi}\int_0^\infty \frac{\lambda D_0^2}{(\lambda+D_0^2)^2}\Lambda^2(T)(\lambda+D_0^2)^{-1}\frac{d\lambda}{\lambda^{1/2}}.
        \end{align*}
        Now applying \eqref{little integral 2} again, we have:
        \begin{equation*}
            \int_0^\infty \lambda^{1/2}\frac{|D_0|}{(\lambda+D_0^2)^2}d\lambda = \frac{\pi}{2}.
        \end{equation*}
        Hence,
        \begin{equation*}
            [|D_0|,T]\xi = \frac{1}{2}\Lambda(T) + \frac{1}{\pi}\int_0^\infty \lambda^{1/2}\frac{D_0^2}{(\lambda+D_0^2)^2}\Lambda^2(T)\frac{1}{\lambda+D_0^2}\xi\,d\lambda.
        \end{equation*}
    \end{proof}

    The following lemma provides an integral representation of the $n$th iterated commutator $\delta_0^n(T)$. This will allow
    us to relate $\mathrm{dom}_\infty(\delta_0)$ to $\mathrm{dom}_\infty(\Lambda)$. 
    We need to take care to ensure that the relevant version of a Fubini's theorem applies.
    \begin{lem}\label{delta to lambda power} 
        For all $m\geq1,$ and $T \in \mathrm{dom}_\infty(\Lambda)$. Then for all $\xi \in H_\infty$ the $m$th iterated commutator of $|D_0|$ 
        \begin{align*}
            [|D_0|,[|D_0|,[\cdots[|D_0|,T]\cdots]]]\xi &= 2^{-m}\sum_{k=0}^m\binom{m}{k}\Big(\frac{2}{\pi}\Big)^k\int_{\mathbb{R}^k_+}\prod_{l=1}^k\frac{\lambda_l^{1/2}D_0^2}{(\lambda_l+D_0^2)^2}\\
                                                       &\quad \cdot \Lambda^{m+k}(T)\prod_{l=1}^k\frac{1}{\lambda_l+D_0^2}\xi d\lambda_1 d\lambda_2\cdots d\lambda_k.
        \end{align*}
    \end{lem}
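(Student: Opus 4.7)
\medskip

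\noindent\textbf{Proof plan.} The plan is to proceed by induction on $m$, with the base case $m=1$ being exactly Lemma \ref{delta to lambda}. One verifies immediately that when $m=1$ the sum on the right reduces to two terms: the $k=0$ term gives $\frac{1}{2}\Lambda(T)\xi$ and the $k=1$ term gives $\frac{1}{\pi}\int_0^\infty \lambda^{1/2}\frac{D_0^2}{(\lambda+D_0^2)^2}\Lambda^2(T)\frac{1}{\lambda+D_0^2}\xi\,d\lambda$, matching Lemma \ref{delta to lambda}.

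For the inductive step, assume the identity holds for $m$, and compute the $(m+1)$th iterated commutator as $[|D_0|,\,\cdot\,]$ applied to the $m$th one. Since all operators of the form $(\lambda_l + D_0^2)^{-1}$ and $D_0^2(\lambda_l+D_0^2)^{-2}$ are functions of $D_0^2$, they commute with $|D_0|$; hence the commutator passes through the integrals and acts only on the middle factor $\Lambda^{m+k}(T)$. By hypothesis $T\in\mathrm{dom}_\infty(\Lambda)$, so $\Lambda^{m+k}(T)\in\mathrm{dom}_\infty(\Lambda)$ and Lemma \ref{delta to lambda} applies to it, yielding
\begin{equation*}
    [|D_0|,\Lambda^{m+k}(T)]\eta = \tfrac{1}{2}\Lambda^{m+k+1}(T)\eta + \tfrac{1}{\pi}\!\int_0^\infty\!\! \lambda_{k+1}^{1/2}\tfrac{D_0^2}{(\lambda_{k+1}+D_0^2)^2}\Lambda^{m+k+2}(T)\tfrac{1}{\lambda_{k+1}+D_0^2}\eta\,d\lambda_{k+1}
\end{equation*}
applied to $\eta = \prod_{l=1}^k(\lambda_l+D_0^2)^{-1}\xi\in H_\infty$ (noting that $H_\infty$ is preserved by all the operators in question).

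Substituting this into the inductive formula splits $\delta_0^{m+1}(T)\xi$ into two pieces. The first piece keeps the same $k$-index and contributes $2^{-(m+1)}\binom{m}{k}(2/\pi)^k$ times the standard integral with $\Lambda^{m+1+k}(T)$; the second piece contains one extra $\lambda_{k+1}$ factor and, after re-indexing $k\mapsto k-1$, contributes $2^{-(m+1)}\binom{m}{k-1}(2/\pi)^k$ times the integral with $\Lambda^{m+1+k}(T)$. Combining via Pascal's identity $\binom{m}{k}+\binom{m}{k-1}=\binom{m+1}{k}$ yields precisely the claimed formula at level $m+1$.

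The principal obstacle is justifying the interchange of the commutator with $|D_0|$ and the weak operator topology integrals, and the subsequent application of Fubini to combine the new $\lambda_{k+1}$ integral with the existing $(\lambda_1,\ldots,\lambda_k)$ integral. This is handled by working pointwise on $\xi\in H_\infty$: the scalar-valued integrand has size controlled by $\lambda_{k+1}^{1/2}(1+\lambda_{k+1})^{-2}\prod_l \lambda_l^{1/2}(1+\lambda_l)^{-2}$ times a bounded factor independent of the $\lambda$'s (using that $\Lambda^{m+k+2}(T)$ is bounded by $T\in\mathrm{dom}_\infty(\Lambda)$ and that $\|\lambda D_0^2/(\lambda+D_0^2)^2\|_\infty\le 1$), so the resulting iterated integral is absolutely convergent and Fubini applies. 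One also needs to check that $H_\infty$ is invariant under $(\lambda+D_0^2)^{-1}$ and $D_0^2(\lambda+D_0^2)^{-2}$, which is immediate from the spectral theorem, so all the compositions appearing on the right-hand side are well defined on $H_\infty$ as required by the statement.
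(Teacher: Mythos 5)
Your proof is correct, but it takes a structurally different route from the paper's. The paper's proof abbreviates the integral operator in Lemma \ref{delta to lambda} as $\Theta$, so that the conclusion of that lemma reads $\delta_0 = \tfrac{1}{2}\Lambda + \tfrac{1}{\pi}\Theta$ on $H_\infty$; it then observes that $\Theta$ and $\Lambda$ commute (since both $\frac{D_0^2}{(\lambda+D_0^2)^2}$ and $\frac{1}{\lambda+D_0^2}$ commute with $\Lambda$), applies the binomial theorem once to get $\delta_0^m = 2^{-m}\sum_k\binom{m}{k}(2/\pi)^k\,\Theta^k\circ\Lambda^{m-k}$, and then uses a Hilbert-space-valued Fubini theorem a single time to rewrite $\Theta^k$ as a $k$-fold iterated integral. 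Your version performs an explicit induction on $m$ and recovers the binomial coefficients by re-indexing and Pascal's identity $\binom{m}{k}+\binom{m}{k-1}=\binom{m+1}{k}$, which is of course the same combinatorics at a lower level of abstraction. The trade-off is that the paper's route isolates all the analytic content (interchanging integrals, Bochner integrability on $H_\infty$) in a single application of Fubini, whereas your inductive scheme must re-justify the passage of $[|D_0|,\cdot]$ through the weak integrals and a fresh Fubini at every step of the induction; you address this with the pointwise bound on $H_\infty$, which is adequate, but it has to be invoked $m$ times rather than once. Your approach avoids introducing the auxiliary operator $\Theta$, which some readers may find more concrete. Both are valid proofs of the same identity.
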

    \begin{proof} 
        Let
        \begin{equation*}
            \Theta(T) := \int_0^\infty \lambda^{1/2} \frac{D_0^2}{(\lambda+D_0^2)^2} \Lambda^2(T)\frac{1}{\lambda+D_0^2}d\lambda
        \end{equation*}
        so that Lemma \ref{delta to lambda} states that $\delta_0 = \frac{1}{2}\Lambda+\frac{1}{\pi}\Theta$. 
        
        Since $\Lambda$ commutes with $\frac{D_0^2}{(\lambda+D_0^2)^2}$ and $\frac{1}{\lambda+D_0^2}$, we have $\Theta\circ\Lambda = \Lambda\circ \Theta$. Hence,
        \begin{equation}\label{binomial expression}
            \delta_0^m = \frac{1}{2^m}\sum_{k=0}^m \binom{m}{k} \left(\frac{2}{\pi}\right)^k\Theta^k\circ \Lambda^{m-k}.
        \end{equation}
        By the Fubini theorem { for Hilbert space valued functions (see \cite[Theorem III.11.13]{Dunford-Schwartz-1}), for all $\xi \in H_\infty$} we have:
        \begin{equation*}
            \Theta^k(T)\xi = \int_{[0,\infty)^k} \prod_{l=1}^k \frac{\lambda_l^{1/2}D_0^2}{(\lambda_l+D_0^2)^2}\Lambda^{2k}(T)\prod_{l=1}^k \frac{1}{\lambda_l+D_0^2}\xi d\lambda_1d\lambda_2\cdots d\lambda_k.
        \end{equation*}
        Therefore,
        \begin{equation*}
            \Theta^k(\Lambda^{m-k}(T))\xi = \int_{[0,\infty)^k} \prod_{l=1}^k \frac{\lambda_l^{1/2}D_0^2}{(\lambda_l+D_0^2)^2}\Lambda^{m+k}(T)\prod_{l=1}^k \frac{1}{\lambda_l+D_0^2}\xi d\lambda_1d\lambda_2\cdots d\lambda_k.
        \end{equation*}
        Substituting into \eqref{binomial expression} yields the result.
    \end{proof}
    
    The following corollary was already noted in the unital settings of \cite[Appendix B]{Connes-Moscovici}, \cite[Proposition 6.5]{CPRS2} 
    and in the non-unital setting of \cite[Equation 1.12]{CGRS2}.
    \begin{cor}
        We have $\mathrm{dom}_\infty(\Lambda) = \mathrm{dom}_\infty(\delta_0)$, and $\Lambda$-smoothness { of a spectral triple} is equivalent to smoothness { as stated in Definition \ref{smoothness definition}}
    \end{cor}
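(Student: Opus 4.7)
The plan is to establish the two inclusions $\mathrm{dom}_\infty(\delta_0)\subseteq\mathrm{dom}_\infty(\Lambda)$ and $\mathrm{dom}_\infty(\Lambda)\subseteq \mathrm{dom}_\infty(\delta_0)$ and then conclude by applying Proposition \ref{pass to spectral gap 1}.\eqref{iff dom}. The first inclusion is exactly Lemma \ref{delta smooth implies lambda smooth}, so essentially all the work lies in the reverse inclusion. Granted that equality of these two domains, $\Lambda$-smoothness of $(\mathcal{A},H,D)$ amounts to $a,\partial(a)\in\mathrm{dom}_\infty(\Lambda)=\mathrm{dom}_\infty(\delta_0)=\mathrm{dom}_\infty(\delta)$, which is precisely Definition \ref{smoothness definition}.

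For the reverse inclusion, let $T\in\mathrm{dom}_\infty(\Lambda)$ and fix $m\geq 1$. The plan is to read Lemma \ref{delta to lambda power} as an identity valid on the dense subspace $H_\infty$, and then to show that the right hand side extends to a bounded operator on $H$ by verifying that the iterated weak integrals converge absolutely in operator norm. For this I would use the scalar bound
\begin{equation*}
\left\|\frac{\lambda^{1/2}D_0^2}{(\lambda+D_0^2)^2}\right\|_\infty\cdot \left\|\frac{1}{\lambda+D_0^2}\right\|_\infty \leq \frac{C}{\lambda^{1/2}(\lambda+1)},
\end{equation*}
coming from functional calculus applied to the scalar function $x\mapsto \lambda^{1/2}x^2/(\lambda+x^2)^2$ on $[1,\infty)$ (recall $|D_0|\geq 1$), together with $\|\Lambda^{m+k}(T)\|_\infty<\infty$ for each $k\leq m$. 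The product structure in Lemma \ref{delta to lambda power} then yields a bound by a product of convergent single-variable integrals, and therefore the $k$-fold weak operator integral exists and is bounded.

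The remaining subtlety is the domain invariance condition in the definition of $\mathrm{dom}(\delta_0^k)$, namely that $T:\mathrm{dom}(D_0^k)\to\mathrm{dom}(D_0^k)$ for every $1\leq k\leq m$ (including odd $k$, whereas $\mathrm{dom}_\infty(\Lambda)$ only gives invariance of even powers a priori). I would handle this by induction on $k$: having established that $\delta_0(T)$ has bounded extension, pick $\xi\in\mathrm{dom}(|D_0|)$ and a sequence $\xi_n\in H_\infty$ with $\xi_n\to\xi$ and $|D_0|\xi_n\to|D_0|\xi$; then $T\xi_n\to T\xi$ and $|D_0|T\xi_n=T|D_0|\xi_n+\delta_0(T)\xi_n$ converges, so closedness of $|D_0|$ forces $T\xi\in\mathrm{dom}(|D_0|)$. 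Iterating gives invariance of $\mathrm{dom}(D_0^k)$ for every $k$, so $T\in\mathrm{dom}(\delta_0^m)$ and $\delta_0^m(T)$ coincides with the bounded extension provided by Lemma \ref{delta to lambda power}.

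The main obstacle I anticipate is the domain-invariance bookkeeping just described, since the integral representation is naturally phrased on $H_\infty$ and one must carefully transfer the identity to all of $\mathrm{dom}(D_0^k)$ using only closedness of $D_0^k$ and the boundedness of the iterated commutators. Once this is in place, combining the two inclusions with Proposition \ref{pass to spectral gap 1}.\eqref{iff dom}, which yields $\mathrm{dom}_\infty(\delta_0)=\mathrm{dom}_\infty(\delta)$, gives the stated equivalence of $\Lambda$-smoothness and smoothness.
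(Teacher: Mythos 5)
Your approach matches the paper's: the easy inclusion $\mathrm{dom}_\infty(\delta_0)\subseteq\mathrm{dom}_\infty(\Lambda)$ is Lemma \ref{delta smooth implies lambda smooth}, the hard inclusion uses the integral representation of Lemma \ref{delta to lambda power} to produce the bounded extensions $\delta_0^m(T)$, and the equivalence of the two notions of smoothness then follows from Proposition \ref{pass to spectral gap 1}.\eqref{iff dom}. You are more scrupulous than the published proof about the domain-invariance requirement $T:\mathrm{dom}(D_0^k)\to\mathrm{dom}(D_0^k)$ for odd $k$ (which membership in $\mathrm{dom}_\infty(\Lambda)$ supplies only for even powers), a point the paper's proof passes over in silence; your closedness argument for $k=1$ is sound, but note that "iterating" to higher odd $k$ is not quite immediate — to get, say, $|D_0|^2T\xi\in\mathrm{dom}(|D_0|)$ for $\xi\in\mathrm{dom}(D_0^3)$ one ends up needing that $\Lambda(T)\xi$ has appropriate regularity, which Definition \ref{lambda def} does not grant automatically, so that step would need more scaffolding than your sketch suggests.
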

    \begin{proof}
        From Lemma \ref{delta smooth implies lambda smooth} we already know that $\mathrm{dom}_\infty(\delta_0) \subseteq \mathrm{dom}_\infty(\Lambda)$, and so we concentrate 
        on the reverse inclusion.
        
        If $T \in \mathrm{dom}_\infty(\Lambda)$, then for each $k\geq 1$ the operator $\Lambda^k(T)$ on $H_\infty$ has bounded extension. Hence the integral
        in Lemma \ref{delta to lambda power} converges as a Bochner integral, and so the $m$th iterated commutator $\delta_0^m(T)$ is bounded, for all $m\geq 0$.
        Thus $T \in \mathrm{dom}_\infty(\delta_0)$, and this completes the proof.
    \end{proof}
    
    The remainder of this section is devoted to showing that Hypothesis \ref{main assumption} is equivalent to the following:
    \begin{hyp}\label{replacement assumption} 
        The spectral triple $(\mathcal{A},H,D)$ satisfies the following conditions:
        \begin{enumerate}[{\rm (i)}]
            \item\label{rass0} $(\mathcal{A},H,D)$ is a $\Lambda$-smooth spectral triple.
            \item\label{rass1} $(\mathcal{A},H,D)$ is $p-$dimensional, i.e., for every $a\in\mathcal{A},$
                               $$a(D+i)^{-p}\in\mathcal{L}_{1,\infty},\quad \partial(a)(D+i)^{-p}\in\mathcal{L}_{1,\infty}.$$
            \item\label{rass2} For every $a\in\mathcal{A}$ and for all $k\geq0,$ we have
                               $$\Big\|\Lambda^k(a)(D+i\lambda)^{-p-1}\Big\|_1=O(\lambda^{-1}),\quad\lambda\to\infty,$$
                               $$\Big\|\partial(\Lambda^k(a))(D+i\lambda)^{-p-1}\Big\|_1=O(\lambda^{-1}),\quad\lambda\to\infty.$$
            \end{enumerate}
    \end{hyp}
    Hypothesis \ref{replacement assumption} is precisely Hypothesis \ref{main assumption}, but with smoothness replaced by $\Lambda$-smoothness, and the occurances of $\delta$
    replaced with $\Lambda$.    

    For the next two lemmas, we borrow { techniques} that were developed in \cite{CGRS2}. The next Lemma shows that if $(\mathcal{A},H,D)$ satisfies
    Hypothesis \ref{replacement assumption} then $(\mathcal{A},H,D_0)$ satisfies Hypothesis \ref{main assumption}.
    \begin{lem}\label{main replacement lemma} 
        Let $(\mathcal{A},H,D)$ be a spectral triple satisfying Hypothesis \ref{replacement assumption}. For every $a\in\mathcal{A}$ and for all $m\geq0,$ we have 
        $$\Big\|\delta_0^m(a)(D_0+i\lambda)^{-p-1}\Big\|_1=O(\lambda^{-1}),\quad\lambda\to\infty,$$
        $$\Big\|\partial_0(\delta_0^m(a))(D_0+i\lambda)^{-p-1}\Big\|_1=O(\lambda^{-1}),\quad\lambda\to\infty.$$
    \end{lem}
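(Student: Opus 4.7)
The strategy is to combine the integral representation of $\delta_0^m$ from Lemma \ref{delta to lambda power} with Hypothesis \ref{replacement assumption}.\eqref{rass2} and the close relationship between $D$ and $D_0$ established in Proposition \ref{pass to spectral gap 1}. The first observation I would establish is a conversion lemma: since $|D_0|^2 = 1+D^2$, the operator $(D+i\lambda)(D_0+i\lambda)^{-1}$ is normal and has operator norm bounded by $1$ uniformly in $\lambda \geq 0$, because its modulus squared equals $1-(D_0^2+\lambda^2)^{-1}\leq 1$. Since $(D+i\lambda)^{-1}$ and $(D_0+i\lambda)^{-1}$ commute (as functions of $D$), multiplying on the right by $((D+i\lambda)(D_0+i\lambda)^{-1})^{p+1}$ yields, for any bounded $T$,
\begin{equation*}
\|T(D_0+i\lambda)^{-p-1}\|_1 \leq \|T(D+i\lambda)^{-p-1}\|_1.
\end{equation*}
Applied with $T=\Lambda^j(a)$ and $T=\partial(\Lambda^j(a))$ together with Hypothesis \ref{replacement assumption}.\eqref{rass2}, this gives $\|\Lambda^j(a)(D_0+i\lambda)^{-p-1}\|_1 = O(\lambda^{-1})$ and $\|\partial(\Lambda^j(a))(D_0+i\lambda)^{-p-1}\|_1 = O(\lambda^{-1})$ for every $j\geq 0$.

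For the first assertion, I would apply Lemma \ref{delta to lambda power} with $T=a$ and multiply on the right by $(D_0+i\lambda)^{-p-1}$. Since $(D_0+i\lambda)^{-p-1}$ commutes with each factor $(\lambda_l+D_0^2)^{-1}$ (both are functions of $D_0$), it may be moved past these factors so that it sits immediately to the right of $\Lambda^{m+k}(a)$. Taking $\mathcal{L}_1$ norms and pulling them inside the Bochner integral yields an estimate of the form
\begin{equation*}
\|\delta_0^m(a)(D_0+i\lambda)^{-p-1}\|_1 \leq C\sum_{k=0}^m \|\Lambda^{m+k}(a)(D_0+i\lambda)^{-p-1}\|_1 \prod_{l=1}^k I_l,
\end{equation*}
where each scalar integral
\begin{equation*}
I_l = \int_0^\infty \lambda_l^{1/2} \Bigl\|\tfrac{D_0^2}{(\lambda_l+D_0^2)^2}\Bigr\|_\infty \Bigl\|\tfrac{1}{\lambda_l+D_0^2}\Bigr\|_\infty\, d\lambda_l
\end{equation*}
is finite, using $D_0^2\geq 1$ (so that $\|(\lambda_l+D_0^2)^{-1}\|_\infty \leq (\lambda_l+1)^{-1}$ and $\|D_0^2(\lambda_l+D_0^2)^{-2}\|_\infty \leq \min\{1, (4\lambda_l)^{-1}\}$). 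Combined with the previous step this gives the first claim.

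For the second assertion I would use that $\delta_0$ and $\partial_0$ commute on $\mathrm{dom}_\infty(\delta_0)$ (by the analogue of Lemma \ref{delta and partial commute} applied to $D_0$) to rewrite $\partial_0(\delta_0^m(a)) = \delta_0^m(\partial_0(a))$. By the proof of Proposition \ref{pass to spectral gap 1} we have $\partial_0(a)=\partial(a)+\partial_1(a)$ with $\partial_1(a)=[D_0-D,a]$, where $D_0-D$ is bounded. The term $\delta_0^m(\partial(a))(D_0+i\lambda)^{-p-1}$ is handled exactly as in the previous paragraph, using $\Lambda\circ\partial = \partial\circ\Lambda$ (valid because $D$ commutes with $D_0^2=1+D^2$ on $H_\infty$) to translate $\Lambda^{m+k}(\partial(a))$ into $\partial(\Lambda^{m+k}(a))$ and then invoking the second half of the first-paragraph estimate. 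For the contribution of $\partial_1(a)$, note that $D_0-D$ commutes with every function of $D_0$, so $\Lambda^j(\partial_1(a)) = [D_0-D,\Lambda^j(a)]$; then writing out the commutator, pulling $D_0-D$ past $(D_0+i\lambda)^{-p-1}$ on one side, and using its boundedness together with the estimate $\|\Lambda^j(a)(D_0+i\lambda)^{-p-1}\|_1 = O(\lambda^{-1})$ gives $\|\Lambda^j(\partial_1(a))(D_0+i\lambda)^{-p-1}\|_1 = O(\lambda^{-1})$. Feeding this back into Lemma \ref{delta to lambda power} as before completes the proof. The main obstacle is the bookkeeping for the $k$-fold integral from Lemma \ref{delta to lambda power} — specifically verifying that the integrand is Bochner integrable in $\mathcal{L}_1$ so that the norm estimate may be carried inside the integral — and this is where the lower bound $D_0^2\geq 1$ and the decay of the scalar multipliers $\lambda^{1/2}/(\lambda+D_0^2)^2$ are essential.
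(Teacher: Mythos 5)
Your proposal is correct and takes essentially the same route as the paper: apply the integral representation of Lemma \ref{delta to lambda power}, bound the scalar operator-norm factors using $D_0^2 \geq 1$ to get an integrable kernel, feed in Hypothesis \ref{replacement assumption}.\eqref{rass2} via Lemma \ref{peter lemma}, and transfer between $D$ and $D_0$ via the contraction $(D+i\lambda)(D_0+i\lambda)^{-1}$. The only differences are cosmetic — you do the $D\to D_0$ conversion at the start rather than the end, and you spell out the second assertion (splitting $\partial_0 = \partial + \partial_1$ and using $\Lambda\circ\partial_1 = \partial_1\circ\Lambda$) where the paper simply says it is similar.
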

    \begin{proof} 
        We prove only the first assertion. The proof of the second assertion is similar.
        
        By the spectral theorem,
        \begin{align*}
            \left\|\prod_{l=1}^k \frac{\lambda_l^{1/2}(1+D^2)}{(1+\lambda_l+D^2)^2}\right\|_\infty &\leq \prod_{l=1}^k\left\|\frac{\lambda_l^{1/2}(1+D^2)}{(1+\lambda_l+D^2)^2}\right\|_\infty\\
                                                                                                   &\leq \prod_{l=1}^k \sup_{t_l\geq 1} \frac{\lambda_l^{1/2}t_l}{(\lambda_l+t_l)^2}\\
                                                                                                   &\leq \prod_{l=1}^k \lambda_l^{-1/2}
        \end{align*}    
        and also
        \begin{equation*}
            \left\|\prod_{l=1}^k \frac{1}{1+\lambda_l+D^2}\right\|_\infty \leq \prod_{l=1}^k \frac{1}{1+\lambda_l}.
        \end{equation*}
        Hence, for all $a \in \mathcal{A}$, since $(D+i\lambda)^{-1}$ and $(1+\lambda_l+D^2)^{-1}$ commute,
        \begin{align*}
            \Big\|\left(\prod_{l=1}^k \frac{\lambda_l^{1/2}(1+D^2)}{(1+\lambda_l+D^2)^2}\right)\Lambda^{m+k}(a)&\left(\prod_{l=1}^k \frac{1}{1+\lambda_l+D^2}\right)(D+i\lambda)^{-p-1}\Big\|_1\\
                                                            &\leq \|\Lambda^{m+k}(a)(D+i\lambda)^{-p-1}\|_1\prod_{l=1}^k \frac{1}{\lambda_l^{1/2}(1+\lambda_l)}.
        \end{align*}
        Now applying Lemma \ref{delta to lambda power} with Lemma \ref{peter lemma},
        \begin{align*}
            \|&\delta_0^m(a)(D+i\lambda)^{-p-1}\|_1 \\
                            &\leq 2^{-m}\sum_{k=0}^m \binom{m}{k} \left(\frac{2}{\pi}\right)^k \|\Lambda^{m+k}(a)(D+i\lambda)^{-p-1}\|_1\int_{[0,\infty)^k} \prod_{l=1}^k \frac{d\lambda_l}{\lambda_l^{1/2}(1+\lambda_l)}.
        \end{align*}
        Since $\int_0^\infty \frac{1}{\lambda^{1/2}(1+\lambda)}\,d\lambda = \pi$, we arrive at
        \begin{equation*}
            \|\delta_0^m(a)(D+i\lambda)^{-p-1}\|_{1} \leq 2^{-m} \sum_{k=0}^m \binom{m}{k} 2^k \|\Lambda^{m+k}(a)(D+i\lambda)^{-p-1}\|_1\pi^k.
        \end{equation*}
        By Hypothesis \ref{replacement assumption}, each summand above is $O(\lambda^{-1})$. Hence $\|\delta_0^m(a)(D+i\lambda)^{-p-1}\|_1 = O(\lambda^{-1})$.

        Now using the fact that the operator $\left(\frac{D+i\lambda}{D_0+i\lambda}\right)^{p+1}$ has bounded extension, { and
        \begin{align*}
            \left\|\left(\frac{D+i\lambda}{D_0+i\lambda}\right)^{p+1}\right\|_\infty &\leq \sup_{t \in \mathbb{R}} \left(\frac{t^2+\lambda^2}{1+t^2+\lambda^2}\right)^{\frac{p+1}{2}}\\
                                                                                     &\leq 1,
        \end{align*}
        we get:}
        \begin{align*}
            \|\delta_0^m(a)(D_0+i\lambda)^{-p-1}\|_1 &\leq \|\delta_0(a)(D+i\lambda)^{-p-1}\|_1\left\|\left(\frac{D+i\lambda}{D_0+i\lambda}\right)^{p+1}\right\|_\infty\\
                                                     &= O(\lambda^{-1})
        \end{align*}
        as desired.
        
    \end{proof}
    
    We can now conclude the proof of the main result of this subsection:
    \begin{thm}\label{replacement thm} 
        A spectral triple $(\mathcal{A},H,D)$ satisfies Hypothesis \ref{main assumption} if and only if it satisfies Hypothesis \ref{replacement assumption}.
    \end{thm}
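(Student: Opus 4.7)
The plan is to reduce both implications to the operator $D_0 = F(1+D^2)^{1/2}$, which has a spectral gap at $0$, and then to exploit the explicit algebraic relation $\Lambda(T) = 2\delta_0(T) - |D_0|^{-1}\delta_0^2(T)$ established in the proof of Lemma \ref{delta smooth implies lambda smooth}. Both hypotheses agree on the first two clauses: smoothness is equivalent to $\Lambda$-smoothness by the corollary following Lemma \ref{delta to lambda power}, and the $p$-dimensionality condition is identical. Furthermore, Proposition \ref{pass to spectral gap 1}.\eqref{iff main assumption} allows Hypothesis \ref{main assumption} to be transferred freely between $(\mathcal{A},H,D)$ and $(\mathcal{A},H,D_0)$. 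Thus in either direction the only real work lies in converting between the trace-norm estimates \ref{main assumption}.\eqref{ass2} and \ref{replacement assumption}.\eqref{rass2}.

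The forward direction is essentially immediate. Assuming Hypothesis \ref{replacement assumption} for $(\mathcal{A},H,D)$, Lemma \ref{main replacement lemma} produces the $\delta_0$-estimates for $(\mathcal{A},H,D_0)$, so that triple satisfies Hypothesis \ref{main assumption}; Proposition \ref{pass to spectral gap 1}.\eqref{iff main assumption} then transfers the conclusion back to $(\mathcal{A},H,D)$.

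For the reverse direction, I first pass to $D_0$ via Proposition \ref{pass to spectral gap 1}.\eqref{iff main assumption} to obtain $\|\delta_0^k(a)(D_0+i\lambda)^{-p-1}\|_1 = O(\lambda^{-1})$ together with its $\partial_0$-counterpart. Since $|D_0|^{-1}$ is a bounded Borel function of $D$, it commutes with $\delta_0$, so iterating the defining identity and applying the binomial theorem yields
\[
\Lambda^k = \sum_{j=0}^k \binom{k}{j}(-1)^j 2^{k-j}\,|D_0|^{-j}\delta_0^{k+j}.
\]
Each summand against $(D_0+i\lambda)^{-p-1}$ has trace-norm $O(\lambda^{-1})$ by boundedness of $|D_0|^{-j}$. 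To return to $D$, I use that $D_0$ and $D$ commute (as $D_0 = \mathrm{sgn}(D)(1+D^2)^{1/2}$ is a Borel function of $D$), so the spectral theorem gives
\[
\bigl\|(D_0+i\lambda)^{p+1}(D+i\lambda)^{-(p+1)}\bigr\|_\infty \leq (1+\lambda^{-2})^{(p+1)/2},
\]
which is uniformly bounded for $\lambda \geq 1$; multiplying by this factor converts the previous estimate into $\|\Lambda^k(a)(D+i\lambda)^{-(p+1)}\|_1 = O(\lambda^{-1})$. For the second half of \ref{replacement assumption}.\eqref{rass2}, I observe that $|D_0|^{-j}$ commutes with $D$, so $\partial(\Lambda^k(a)) = \sum c_{k,j}|D_0|^{-j}\partial(\delta_0^{k+j}(a))$; decomposing $\partial = \partial_0 - \partial_1$, where $\partial_1$ is a commutator with the bounded operator $D_0-D$ (itself commuting with all functions of $D$), the same reasoning applies.

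The main technical care in the argument is ensuring that the binomial expansion and all operator commutations are rigorously valid at the level of bounded extensions; this is routine since each identity can be verified first on the invariant core $H_\infty$ and then extended by continuity, using the boundedness of every factor that arises. No genuinely new idea beyond Lemma \ref{main replacement lemma} and Proposition \ref{pass to spectral gap 1} is needed.
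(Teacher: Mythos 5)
Your proof is correct. The forward direction is exactly the paper's argument: apply Lemma \ref{main replacement lemma} to obtain the $\delta_0$-estimates for $(\mathcal{A},H,D_0)$, and then transfer back to $(\mathcal{A},H,D)$ via Proposition \ref{pass to spectral gap 1}. For the reverse direction, however, your decomposition differs from the paper's. You pass to $D_0$, use the identity $\Lambda = 2\delta_0 - |D_0|^{-1}\delta_0^2$ together with the binomial theorem to write $\Lambda^k$ in terms of $|D_0|^{-j}\delta_0^{k+j}$, invoke Proposition \ref{pass to spectral gap 1}.\eqref{iff main assumption} to get the $\delta_0$-estimates, and then convert $(D_0+i\lambda)$ back to $(D+i\lambda)$ via the uniformly bounded factor $(D_0+i\lambda)^{p+1}(D+i\lambda)^{-(p+1)}$. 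The paper instead stays with $D$ throughout: it sets $\alpha(T) := \frac{|D|}{(D^2+1)^{1/2}}\delta(T)$ and $\beta(T) := \frac{1}{(D^2+1)^{1/2}}\delta^2(T)$, proves $\Lambda = 2\alpha - \beta$ with $\alpha\beta = \beta\alpha$, and arrives at
\[
\Lambda^m(a) = \sum_{k=0}^m \binom{m}{k}(-1)^k 2^{m-k}\frac{|D|^{m-k}}{(D^2+1)^{m/2}}\delta^{m+k}(a),
\]
from which the required trace-norm bounds follow at once since each prefactor $\frac{|D|^{m-k}}{(D^2+1)^{m/2}}$ is bounded. Both routes are valid: yours uses the cleaner-looking relation $\Lambda = 2\delta_0 - |D_0|^{-1}\delta_0^2$ (which needs the spectral gap of $D_0$), at the cost of the extra transfer between $D$ and $D_0$ and the need to bound $(D_0+i\lambda)^{p+1}(D+i\lambda)^{-(p+1)}$; the paper's route is more self-contained, requiring no passage to $D_0$ for this direction, in exchange for carrying around the weight functions $\frac{|D|^{m-k}}{(D^2+1)^{m/2}}$. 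The treatment of the $\partial$-estimates is analogous in both cases. Your verification that $|D_0|^{-j}$ commutes with $\delta_0$, and that $D_0-D$ is bounded and commutes with Borel functions of $D$, is the correct technical underpinning for the commutations you invoke.
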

    \begin{proof}
        We have already proved that $(\mathcal{A},H,D)$ satisfies \ref{replacement assumption}.\eqref{rass0} if and only if it satisfies \ref{main assumption}.\eqref{ass0},
        and \eqref{replacement assumption}.\eqref{rass1} is identical to \eqref{main assumption}.\eqref{ass1}. We now focus on \eqref{replacement assumption}.\eqref{rass2}.
    
        Suppose that $(\mathcal{A},H,D)$ satisfies Hypothesis \ref{replacement assumption}. 
        By Lemma \ref{main replacement lemma}, we have that $(\mathcal{A},H,D_0)$ satisfies Hypothesis \ref{main assumption}. Then by Proposition \ref{pass to spectral gap 1} 
        $(\mathcal{A},H,D)$ satisfies Hypothesis \ref{main assumption}.
        
        Now we prove the converse. Suppose that $(\mathcal{A},H,D)$ satisfies Hypothesis \ref{main assumption}. For $T \in \mathrm{dom}_\infty(\delta)$, we define $\alpha(T)$ and $\beta(T)$ by:
        \begin{align*}
            \alpha(T) := \frac{|D|}{(D^2+1)^{1/2}}\delta(T),\\
            \beta(T) := \frac{1}{(D^2+1)^{1/2}}\delta^2(T).
        \end{align*}
        We can express $\Lambda$ in terms of $\alpha$ and $\beta$, by applying the Leibniz rule as follows:
        \begin{align*}
            \Lambda(T) &= (1+D^2)^{-1/2}[|D|^2,T]\\
                       &= \frac{|D|}{(1+D^2)^{1/2}}\delta(T) + (1+D^2)^{-1/2}\delta(T)|D|\\
                       &= 2\alpha(T) - (1+D^2)^{-1/2}\delta^2(T)\\
                       &= 2\alpha(T)-\beta(T).
        \end{align*}
        Since $\alpha\circ\beta = \beta\circ\alpha$,
        \begin{equation*}
            \Lambda^m = \sum_{k=0}^m \binom{m}{k}(-1)^k2^{m-k}\beta^k\circ\alpha^{m-k}.
        \end{equation*}
        
        For every $k = 0,\ldots,m$ and $T \in \mathrm{dom}_\infty(\delta)$, we have:
        \begin{equation*}
            \beta^k(\alpha^{m-k}(T)) = \frac{|D|^{m-k}}{(D^2+1)^{m/2}}\delta^{m+k}(T).
        \end{equation*}
        So for $a \in \mathcal{A}$ and $m \geq 1$,
        \begin{equation*}
            \Lambda^m(a) = \sum_{k=0}^m \binom{m}{k} (-1)^k2^{m-k} \frac{|D|^{m-k}}{(D^2+1)^{m/2}}\delta^{m+k}(a).
        \end{equation*}
        Noting that the operator $\frac{|D|^{m-k}}{(D^2+1)^{m/2}}$ is bounded, there exists a constant $C_m$ such that
        \begin{equation*}
            \|\Lambda^m(a)(D+i\lambda)^{-p-1}\|_1 \leq C_m\sum_{k=0}^m \|\delta^{m+k}(a)(D+i\lambda)^{-p-1}\|_1
        \end{equation*}
        Since we are assuming that $(\mathcal{A},H,D)$ satisfies Hypothesis \ref{main assumption}, it follows
        that $\|\Lambda^m(a)(D+i\lambda)^{-p-1}\|_1 = O(\lambda^{-1})$. 
        
        We may similarly deal with $\|\partial(\Lambda^m(a))(D+i\lambda)^{-p-1}\|_1$: since $\partial$ commutes with functions of $D$:
        \begin{equation*}
            \partial(\Lambda^m(a)) = \sum_{k=0}^m (-1)^k2^{m-k} \frac{|D|^{m-k}}{(D^2+1)^{m/2}}\partial(\delta^{m+k}(a)).
        \end{equation*} 
        Thus by the same argument, we have $\|\partial(\Lambda^m(a))(D+i\lambda)^{-p-1}\|_1 = O(\lambda^{-1})$, and so $(\mathcal{A},H,D)$
        satisfies Hypothesis \ref{replacement assumption}.
    \end{proof}
    
    Thanks to Theorem \ref{replacement thm}, we can work assuming Hypothesis \ref{replacement assumption} rather than Hypothesis \ref{main assumption}.

\section{Example: Noncommutative Euclidean space}\label{nc section}
    We now discuss the most heavily studied example of a non-unital spectral triple: noncommutative Euclidean space. Subsection \ref{ncp definitions subsection} covers
    the definitions of noncommutative Euclidean spaces and their associated spectral triples. Subsection \ref{ncp verification subsection} is devoted to the proof that 
    these spectral triples satisfy Hypothesis \ref{main assumption}.
    
    Noncommutative Euclidean spaces can be found in the literature under various names, such as canonical commutation relation (CCR) algebras (as in \cite[Section 5.2.2.2]{Bratteli-Robinson2}),
    and in the $2$-dimensional case are called Moyal planes or Moyal-Groenwald planes (as in \cite{gayral-moyal}).

\subsection{Definitions for Noncommutative Euclidean spaces}\label{ncp definitions subsection}
    Our approach to noncommutative Euclidean space is to proceed from the Weyl commutation relations, in line with \cite[Section 5.2.2.2]{Bratteli-Robinson2} and \cite{LeSZ-cwikel}. An alternative
    approach is to use the Moyal product, as in \cite{gayral-moyal} and \cite[Section 5.2]{CGRS2}. We caution the reader that the approach considered here is the "Fourier dual" of the approach in \cite{gayral-moyal}.
    We briefly cite the required facts needed for this section, and refer the reader to \cite{LeSZ-cwikel} for detailed exposition and proofs.
    
    Let $\theta$ be an antisymmetric real $p\times p$ matrix. 
    Abstractly, the von Neumann algebra $L_\infty(\mathbb{R}^p_\theta)$ is generated by a strongly continuous family $\{U(t)\}_{t \in \mathbb{R}^p}$ satisfying
    \begin{equation}\label{weyl CCR}
        U(t+s) = \exp\left(\frac{1}{2}i(t,\theta s)\right)U(t)U(s),\quad t,s \in  \mathbb{R}^p.
    \end{equation}
    Here we avoid technicalities by defining $L_\infty(\mathbb{R}^p_\theta)$ to be a von Neumann algebra generated by a specific family of unitary operators on $L_2(\mathbb{R}^p)$.
    
    \begin{defi}
        Let $\theta$ be an antisymmetric real matrix. For $t \in \mathbb{R}^p$, let $U(t)$ be the linear operator on $L_2(\mathbb{R}^p)$ given by:
        \begin{equation*}
            (U(t)\xi)(r) = \exp\left(-i(t,\theta r)\right)\xi(r-t),\quad r \in \mathbb{R}^p,\,\xi \in L_2(\mathbb{R}^p).
        \end{equation*}
        Define $L_\infty(\mathbb{R}^p_\theta)$ to be the von Neumann subalgebra of $\mathcal{L}_\infty(L_2(\mathbb{R}^p))$ generated by the family $\{U(t)\}_{t \in \mathbb{R}^p}$.
    \end{defi}
    
    \begin{rem}
        It can easily be shown that $U(t)$ satisfies \eqref{weyl CCR}. Since $U(t)$ is a composition of a translation, and pointwise multiplication by $\exp\left(-i\frac{1}{2}(t,\theta s)\right)$, it is clear that each $U(t)$ is unitary, and that $t\mapsto U(t)$ is strongly continuous. Since $\theta$ is antisymmetric, $U(-t) = U(t)^{-1} = U(t)^*$.
        
        The map $t\mapsto U(t)$ is a twisted left-regular representation of $\mathbb{R}^p$ on $L_2(\mathbb{R}^p)$, in the sense of \cite{Echterhoff-2008}.
        
        Note that if $\theta = 0$, then the family $\{U(t)\}_{t \in \mathbb{R}^p}$ is simply the semigroup of translations on $\mathbb{R}^p$, and so generates the von Neumann algebra $L_\infty(\mathbb{R}^p)$. 
    \end{rem}
    
    If $\theta$ is nondegenerate (that is, $\det(\theta) \neq 0$) then $p$ is even and the algebra $L_\infty(\mathbb{R}^p_\theta)$ is isomorphic to $\mathcal{L}_\infty(L_2(\mathbb{R}^{p/2}))$. 
    This is proved in \cite{LeSZ-cwikel}, where a spatial isomorphism is constructed.    
    \begin{thm}\label{spatial}
        If $\det(\theta) \neq 0$, then there is a spatial isomorphism
        \begin{equation*}
            L_\infty(\mathbb{R}^p_\theta)\cong \mathcal{L}_\infty(L_2(\mathbb{R}^{p/2})).
        \end{equation*}
    \end{thm}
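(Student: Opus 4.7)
The plan is to reduce to the Stone--von Neumann uniqueness theorem after normalising $\theta$ to a canonical form. Since $\theta$ is antisymmetric and invertible, $p$ must be even; write $p = 2n$. By the standard normal form for antisymmetric real matrices, there exist $S\in GL_p(\mathbb{R})$ and a positive diagonal matrix $\Sigma=\mathrm{diag}(\sigma_1,\dots,\sigma_n)$ with
\[ S^{T}\theta S \;=\; J_\Sigma \;:=\; \begin{pmatrix} 0 & \Sigma \\ -\Sigma & 0 \end{pmatrix}. \]
The substitution $t\mapsto St$ combined with the unitary $W_S:\xi\mapsto |\det S|^{1/2}\xi(S\,\cdot\,)$ on $L_2(\mathbb{R}^p)$ conjugates the family $\{U_\theta(t)\}$ to $\{U_{J_\Sigma}(t)\}$, and a further rescaling of the $t$-variable absorbs $\Sigma$. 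Hence it suffices to prove the theorem when $\theta = J$ is the standard symplectic form.

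Next I would split coordinates, writing $r=(r_1,r_2),\ t=(t_1,t_2)\in\mathbb{R}^n\oplus\mathbb{R}^n$. Directly from the definition,
\[ (U(t_1,t_2)\xi)(r_1,r_2) \;=\; \exp\bigl(-i(t_1\cdot r_2 - t_2\cdot r_1)\bigr)\,\xi(r_1-t_1,\,r_2-t_2). \]
The two subgroups $t_1\mapsto U(t_1,0)$ (translations in the first factor, twisted by multiplication) and $t_2\mapsto U(0,t_2)$ (translations in the second factor, twisted by a character in the first) generate, on each of the one-parameter subgroups, strongly continuous unitary groups satisfying the Weyl form of the canonical commutation relations with Planck constant equal to $1$. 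Thus $t\mapsto U(t)$ is a regular Weyl representation of the $(2n{+}1)$-dimensional Heisenberg group on $L_2(\mathbb{R}^{2n})$.

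Now I would invoke the Stone--von Neumann uniqueness theorem (e.g.\ \cite[Corollary 5.2.15]{Bratteli-Robinson2}): every such regular representation is unitarily equivalent to a (possibly infinite) multiple of the irreducible Schr\"odinger representation $\{V(t)\}$ on $L_2(\mathbb{R}^n)$. Concretely, there exist a Hilbert space $K$ and a unitary $W:L_2(\mathbb{R}^{2n})\to L_2(\mathbb{R}^n)\otimes K$ with $W U(t) W^{-1} = V(t)\otimes 1_K$. Since the Schr\"odinger representation is irreducible, the von Neumann algebra it generates is all of $\mathcal{L}_\infty(L_2(\mathbb{R}^n))$, and therefore
\[ W\,L_\infty(\mathbb{R}^{2n}_J)\,W^{-1} \;=\; \mathcal{L}_\infty(L_2(\mathbb{R}^n))\otimes \mathbb{C}\cdot 1_K \;\cong\; \mathcal{L}_\infty(L_2(\mathbb{R}^n)), \]
giving the spatial isomorphism claimed. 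Composing with $W_S$ from Step~1 transfers the conclusion to general nondegenerate $\theta$.

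The main obstacle is in Step 3: one must either quote the full Stone--von Neumann theorem (including its multiplicity statement) or reproduce the irreducibility of the Schr\"odinger representation by a direct argument, typically by smoothing: for any $B\in\{V(t)\}'$, convolving against a Schwartz function $f$ and using that the integrated representation contains operators with arbitrary Schwartz integral kernel forces $B$ to commute with a set of operators dense in $\mathcal{L}_\infty(L_2(\mathbb{R}^n))$. This irreducibility, together with identification of the multiplicity space $K$, is the technical heart of the argument; everything else is bookkeeping about symplectic normal forms and a change of variables.
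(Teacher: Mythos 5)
Your proposal is mathematically sound: reduction to the standard symplectic form, recognition of a regular projective Weyl representation of $\mathbb{R}^{2n}$ with multiplier $\exp(\tfrac{1}{2}i(t,Js))$, the Stone--von Neumann theorem, and irreducibility of the Schr\"odinger representation together yield $W\,L_\infty(\mathbb{R}^p_\theta)\,W^{-1}=\mathcal{L}_\infty(L_2(\mathbb{R}^n))\otimes 1_K$. Be aware, though, that the paper does not prove this theorem: it explicitly defers to the cited reference, where the unitary $W$ is constructed \emph{explicitly} (essentially via a partial Fourier transform after the symplectic normalization, which also identifies $K\cong L_2(\mathbb{R}^n)$ concretely). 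That explicitness is not cosmetic for this paper: the trace $\tau_\theta$ in the very next definition is \emph{defined} by pulling back the standard trace $\mathrm{Tr}$ through this isomorphism, and the subsequent Cwikel-type estimates on $W^{\infty,1}(\mathbb{R}^p_\theta)$ rest on concrete formulae. So your abstract Stone--von Neumann route proves the isomorphism statement cleanly and economically, but does not, as given, furnish the explicit intertwiner the rest of the section quietly relies on. Two smaller remarks: (i) for nondegenerate real antisymmetric $\theta$ one can choose $S\in GL_p(\mathbb{R})$ with $S^T\theta S=J$ in one step, so the diagonal $\Sigma$ and the rescaling are unnecessary; (ii) the phrase ``spatial isomorphism'' in the theorem is a mild abuse---$\mathcal{L}_\infty(L_2(\mathbb{R}^n))\otimes 1_K$ is not unitarily conjugate to $\mathcal{L}_\infty(L_2(\mathbb{R}^n))$ since their commutants differ---and the intended meaning (a unitary $W$ carrying the algebra onto a copy of $B(L_2(\mathbb{R}^{p/2}))$ up to ampliation) should be stated if you write this up, since you otherwise leave the appearance of a nontrivial multiplicity space $K$ unreconciled with the literal claim.
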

    
    We now focus exclusively on the case where $\det(\theta) \neq 0$.
    \begin{defi}
        The semifinite trace $\tau_\theta$ on $L_\infty(\mathbb{R}^p)$ is defined via the isomorphism in Theorem \ref{spatial} to be simply the classical
        trace $\mathrm{Tr}$ on $\mathcal{L}_\infty(L_2(\mathbb{R}^{p/2}))$. For $r \in [1,\infty)$, the space $L_r(\mathbb{R}^p_\theta)$ is defined by:
        \begin{equation*}
            L_r(\mathbb{R}^p_\theta) := \{x \in L_\infty(\mathbb{R}^p_\theta)\;:\;\tau_\theta(|x|^r) < \infty\}.
        \end{equation*}
        The space $L_{r}(\mathbb{R}^p_\theta)$ is equipped with the norm $\|x\|_{L_r} = \tau_\theta(|x|^r)^{1/r}$.
    \end{defi}
    Note that $L_r(\mathbb{R}^p_\theta)$ is identical to the Schatten-von Neumann class $\mathcal{L}_r(L_2(\mathbb{R}^{p/2}))$, since $\tau_\theta$ is simply the classical trace. 
    
    \begin{defi}
        For $k = 1,\ldots,p$, we define the operator $D_k^\theta$ on $L_2(\mathbb{R}^p)$ by
        \begin{equation*}
            (D_k^\theta\xi)(t) = t_k\xi(t),\quad t \in \mathbb{R}^p.
        \end{equation*}
        
        The Dirac operator $D^\theta$ is defined on the Hilbert space $L_2(\mathbb{R}^d,\mathbb{C}^{2^{p/2}})$ by 
        $D^\theta = \gamma_1\otimes D^\theta_1+\cdots +\gamma_p\otimes D^\theta_p$, where $\gamma_1,\gamma_2,\ldots\gamma_p$
        are complex $2^{p/2}\times 2^{p/2}$ matrices satisfying $\gamma_j\gamma_k+\gamma_k\gamma_j = 2\delta_{j,k}$ and $\gamma_j = \gamma_j^*$ for $1\leq j,k \leq p$.
    \end{defi}
    Evidently, the operators $D_j$ are unbounded, but may be initially defined on the dense subspace of compactly supported functions.
    
    It follows readily from the definitions of $D_k^\theta$ and $U(t)$ that
    \begin{equation*}
        [D_k^\theta,U(t)] = t_kU(t),\quad t \in \mathbb{R}^p.
    \end{equation*}
    
    Since the operators $D_1^\theta,\ldots D_p^\theta$ form a family of mutually commuting self-adjoint operators, we may apply functional calculus to define $e^{i(s,\nabla)}$, $s \in \mathbb{R}^p$.
    where $\nabla^\theta = (D_1^\theta,D_2^\theta,\ldots,D_p^\theta)$, given by:
    \begin{equation*}
        (e^{i(s,\nabla^\theta)}\xi)(r) = \exp(i(s,r))\xi(r),\quad r \in \mathbb{R}^p.
    \end{equation*}
    Hence,
    \begin{equation*}
        e^{i(s,\nabla^\theta)}U(s)e^{-i(s,\nabla^\theta)} = e^{i(s,t)}U(s),\quad s,t \in \mathbb{R}^p.
    \end{equation*}
    For convenience we also introduce the notation $\Delta^\theta := \sum_{k=1}^p (D_k^\theta)^2$.
    
    The following is \cite[Proposition 6.12]{LeSZ-cwikel}:
    \begin{lem}\label{nc poincare}
        Let $k = 1,\ldots,p$. If $x \in L_\infty(\mathbb{R}^p_\theta)$, and the operator $[D_k^\theta,x]$ has bounded extension, then its extension is an element
        of $L_\infty(\mathbb{R}^d_\theta)$.
    \end{lem}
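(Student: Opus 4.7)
The plan is to realize the bounded extension $y$ of $[D_k^\theta, x]$ as a weak operator limit of elements of $L_\infty(\mathbb{R}^p_\theta)$, and then invoke the weak closure of the latter.

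First I would show that conjugation by the unitary group $\{e^{isD_k^\theta}\}_{s \in \mathbb{R}}$ preserves $L_\infty(\mathbb{R}^p_\theta)$. Define the automorphism $\alpha_s$ of $\mathcal{L}_\infty(L_2(\mathbb{R}^p))$ by $\alpha_s(a) := e^{isD_k^\theta} a e^{-isD_k^\theta}$. Specialising the identity $e^{i(s,\nabla^\theta)}U(r)e^{-i(s,\nabla^\theta)} = e^{i(s,r)}U(r)$ displayed just before the lemma to $s = s e_k$ yields $\alpha_s(U(t)) = e^{ist_k}U(t) \in L_\infty(\mathbb{R}^p_\theta)$. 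Since $\{U(t)\}_{t \in \mathbb{R}^p}$ generates the von Neumann algebra $L_\infty(\mathbb{R}^p_\theta)$ and $\alpha_s$ is a weakly continuous $*$-automorphism of $\mathcal{L}_\infty(L_2(\mathbb{R}^p))$, it follows that $\alpha_s(L_\infty(\mathbb{R}^p_\theta)) = L_\infty(\mathbb{R}^p_\theta)$ for every $s$. In particular, $\alpha_s(x) - x \in L_\infty(\mathbb{R}^p_\theta)$.

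Next I would establish the integral representation
\[
\alpha_s(x) - x = i\int_0^s \alpha_u(y)\,du,
\]
understood as a weak operator integral (the integrand is norm-bounded by $\|y\|_\infty$ and strongly continuous in $u$). For $\xi, \eta \in \mathrm{dom}(D_k^\theta)$, differentiating the scalar function $s \mapsto \langle \alpha_s(x)\xi, \eta\rangle$ via $\frac{d}{ds} e^{-isD_k^\theta}\xi = -iD_k^\theta e^{-isD_k^\theta}\xi$ together with the hypothesis that $[D_k^\theta, x]$ acts as $y$ on a dense core of $\mathrm{dom}(D_k^\theta)$ yields derivative $i\langle \alpha_s(y)\xi, \eta\rangle$. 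Integrating and extending by density and the uniform operator bound gives the identity on all of $L_2(\mathbb{R}^p)$. Dividing by $s$ and letting $s \to 0$, strong continuity of $u \mapsto \alpha_u(y)$ forces $\frac{1}{s}(\alpha_s(x) - x) \to iy$ in the strong (hence weak) operator topology. Because each $\frac{1}{s}(\alpha_s(x) - x)$ lies in $L_\infty(\mathbb{R}^p_\theta)$ and this algebra is weakly closed, the limit $iy$ belongs to $L_\infty(\mathbb{R}^p_\theta)$, whence $y \in L_\infty(\mathbb{R}^p_\theta)$.

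The principal technical obstacle is the integral identity: justifying it requires careful tracking of domains, since $x$ is not assumed to preserve $\mathrm{dom}(D_k^\theta)$ and the commutator $[D_k^\theta, x]$ is only \emph{a priori} defined on those $\xi \in \mathrm{dom}(D_k^\theta)$ with $x\xi \in \mathrm{dom}(D_k^\theta)$. One must use the boundedness of $y$ to promote the pointwise derivative identity, which holds only on a suitable dense core, to the global operator identity. The remaining steps are transparent once that identity is in place.
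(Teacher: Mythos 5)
Your argument is correct, but note that the paper itself does not prove this lemma at all: it is quoted verbatim as \cite[Proposition 6.12]{LeSZ-cwikel}, so there is no in-paper proof to match. Your route is the standard ``derivation'' argument: $\alpha_s={\rm Ad}(e^{isD_k^\theta})$ maps each generator $U(t)$ to $e^{ist_k}U(t)$ (the displayed identity before the lemma, with $s$ replaced by $se_k$; the paper's display has a harmless typo), hence preserves the von Neumann algebra; the Duhamel-type formula $\alpha_s(x)-x=i\int_0^s\alpha_u(y)\,du$ then exhibits $iy$ as the WOT-limit of the difference quotients $\tfrac1s(\alpha_s(x)-x)\in L_\infty(\mathbb{R}^p_\theta)$, and weak closedness finishes the proof. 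This is a perfectly good self-contained substitute for the citation, and arguably more elementary than going through the commutant/bicommutant structure of $L_\infty(\mathbb{R}^p_\theta)$.

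One remark on the step you single out as the ``principal technical obstacle'': it is less delicate than you suggest. The only input your differentiation needs is the commutator identity in the sense of quadratic forms, namely
\begin{equation*}
\langle x\xi,D_k^\theta\eta\rangle-\langle xD_k^\theta\xi,\eta\rangle=\langle y\xi,\eta\rangle,\qquad \xi,\eta\in\mathrm{dom}(D_k^\theta),
\end{equation*}
and this follows from any reasonable reading of ``$[D_k^\theta,x]$ has bounded extension'': if the commutator is defined and bounded on $\mathrm{dom}(D_k^\theta)$ (the spectral-triple convention) the identity is immediate from self-adjointness, and if it is only defined and bounded on a core of $D_k^\theta$, the identity extends to $\mathrm{dom}(D_k^\theta)\times\mathrm{dom}(D_k^\theta)$ by taking graph-norm limits, using boundedness of $y$ and $x$. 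With the form identity in hand, $s\mapsto\langle\alpha_s(x)\xi,\eta\rangle$ is differentiable with continuous derivative $i\langle\alpha_s(y)\xi,\eta\rangle$ for $\xi,\eta\in\mathrm{dom}(D_k^\theta)$, the weak integral formula follows from the fundamental theorem of calculus and density, and WOT convergence of the difference quotients (which is all that weak closedness requires; strong convergence is not needed) is then automatic. Only in a pathological reading, where the commutator's initial domain fails to be a core, would anything break, and then the lemma's statement itself is ambiguous rather than your proof.
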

    
    \begin{defi}
        If $x \in L_\infty(\mathbb{R}^d_\theta)$ is such that $[D_k^\theta,x]$ has bounded extension, then we denote $\partial_kx$ for the extension.
        
        We denote $\partial_j^0x := x$, for all $x \in L_\infty(\mathbb{R}^p_\theta)$ and $j$.
        
        Generally, let $\alpha = (\alpha_1,\alpha_2,\ldots,\alpha_p)$ be a multi-index. If for all $1 \leq j \leq p$ the operator
        \begin{equation*}
            \partial_j^{\alpha_j}(\partial_{j+1}^{\alpha_{j+1}}(\cdots(\partial_p^{\alpha_p}(x))\cdots))
        \end{equation*}
        has bounded extension, then the mixed partial derivative $\partial^{\alpha}x$ is defined as the operator:
        \begin{equation*}
            \partial_1^{\alpha_1}(\partial_2^{\alpha_2}(\cdots(\partial_p^{\alpha_p}(x))\cdots)).
        \end{equation*}
    \end{defi}
    By Lemma \ref{nc poincare}, we always have $\partial^\alpha x \in L_\infty(\mathbb{R}^p_\theta)$ if it is well defined.
    
    \begin{defi}
        Let $m \geq 0$ and $r \geq 1$. The space $W^{m,r}(\mathbb{R}^p_\theta)$ is defined to be the set of $x \in L_\infty(\mathbb{R}^p_\theta)$ such that 
        $\partial^{\alpha}x \in L_r(\mathbb{R}^p_\theta)$ for every $|\alpha| \leq m$, equipped with the norm:
        \begin{equation*}
            \|x\|_{W^{m,r}} := \sum_{|\alpha|\leq m} \|\partial^{\alpha}x\|_r.
        \end{equation*}
        
        We define $W^{\infty,r}(\mathbb{R}^p_\theta) := \bigcap_{m\geq 0} W^{m,r}(\mathbb{R}^p_\theta)$.
    \end{defi}  
    
    As suggested by the notation, the spaces $W^{m,r}(\mathbb{R}^p_\theta)$ are the analogues of Sobolev spaces for noncommutative Euclidean spaces. The space $W^{\infty,1}(\mathbb{R}^p_\theta)$
    is important because it forms a part of our spectral triple for noncommutative Euclidean space.
    
    The remainder of this section is devoted to showing that the triple
    \begin{equation}\label{ncp spectral triple}
        (1_{2^{p/2}}\otimes W^{\infty,1}(\mathbb{R}^p_\theta),L_2(\mathbb{R}^{p},\mathbb{C}^{2^{p/2}}),D^\theta)
    \end{equation}
    is a spectral triple satisfying Hypothesis \ref{main assumption}.

    We wish to verify Hypothesis \ref{main assumption} for noncommutative spaces in order to support our claim that \ref{main assumption} is a reasonable
    assumption to make. However, in the nondegenerate case $\det(\theta) \neq 0$, the Character Theorem \ref{main thm} is trivial for at least a dense
    subalgebra of $W^{\infty,1}(\mathbb{R}^d_\theta$).
    
    The reason for this is that due to \cite[Proposition 2.5]{gayral-moyal},
    there is a dense subalgebra of $W^{\infty,1}(\mathbb{R}^d_\theta)$ isomorphic to the 
    algebra $M_\infty(\mathbb{C})$ of finitely supported infinite matrices. However
    due to \cite[Theorem 1.4.14]{Loday-cyclic-homology}, if $n\geq 0$ then the
    $n$th Hochschild homology of $M_\infty(\mathbb{C})$ is computed by:
    \begin{equation*}
        HH_{n}(M_\infty(\mathbb{C})) = HH_n(\mathbb{C}).
    \end{equation*}
    For $n> 0$, the $n$th Hochschild homology of $\mathbb{C}$ is trivial \cite[Lemma 8.9]{GVF}.
    Hence for $n> 0$, the $n$th Hochschild homology of $M_\infty(\mathbb{C})$ is trivial.
    
    This entails that every degree $p+1$ Hochschild cycle of $M_\infty(\mathbb{C})$ is a Hochschild boundary. However the left and right hand sides of the Character Theorem,
    $c \mapsto \mathrm{Ch}(c)$ and $c\mapsto \varphi(\Omega(c)(1+D^2)^{-p/2})$, are
    both Hochschild cocycles and hence vanish on any Hochschild boundary.       
    
\subsection{Verification of Hypothesis \ref{main assumption} for Noncommutative Euclidean spaces}\label{ncp verification subsection}
    Now we prove that the triple \eqref{ncp spectral triple} is a spectral triple satisfying Hypothesis \ref{main assumption}. In fact
    it is easier to use Hypothesis \ref{replacement assumption}.

    Our main reference for this section is \cite[Section 7]{LeSZ-cwikel}. As in that reference, the spaces $\ell_1(L_\infty)$ and $\ell_{1,\infty}(L_\infty)$ are defined as follows:
    Let $K = [0,1]^p$ be the unit $p$-cube. Then $\ell_1(L_\infty)$ and $\ell_{1,\infty}(L_\infty)$ are the subspaces of $L_\infty(\mathbb{R}^d)$ such that the following norms are finite:
    \begin{align*}
        \|g\|_{\ell_1(L_\infty)} &:= \|\{\|g\|_{L_{\infty}(m+K)}\}_{m \in \mathbb{Z}^p}\|_{\ell_1(\mathbb{Z}^p)},\\
        \|g\|_{\ell_{1,\infty}(L_\infty)} &:= \|\{\|g\|_{L_{\infty}(m+K)}\}_{m \in \mathbb{Z}^p}\|_{\ell_{1,\infty}(\mathbb{Z}^p)}.
    \end{align*}
    
    The following is a special case of \cite[Theorem 7.6, Theorem 7.7]{LeSZ-cwikel}:    
    \begin{thm}\label{nc cwikel theorem}
        Let $p \geq 1$. There are constants $c_p > 0$ and $c_p' > 0$ such that for all $x \in W^{p,1}(\mathbb{R}^p_\theta)$ we have:
        \begin{enumerate}[{\rm (a)}]
            \item{}\label{first nc cwikel} If $g \in \ell_1(L_\infty)$, then $xg(\nabla^\theta) \in \mathcal{L}_1$, and $$\|xg(\nabla^\theta)\|_1 \leq c_p\|x\|_{W^{p,1}}\|g\|_{\ell_1(L_\infty)}.$$
            \item{}\label{second nc cwikel} If $g \in \ell_{1,\infty}(L_\infty)$, then $xg(\nabla^\theta) \in \mathcal{L}_{1,\infty}$ and $$\|xg(\nabla^\theta)\|_{1,\infty} \leq c_p'\|x\|_{W^{p,1}}\|g\|_{\ell_{1,\infty}(L_\infty)}.$$
        \end{enumerate}
    \end{thm}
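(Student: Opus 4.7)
The plan is to prove both assertions by a decomposition-and-local-estimate strategy. Write $g = \sum_{m \in \mathbb{Z}^p} g_m$ with $g_m := g \cdot \chi_{m+K}$, so that $\|g_m\|_{L_\infty} = \|g\|_{L_\infty(m+K)}$ and the supports of distinct $g_m$'s are disjoint. The Weyl relations yield $U(m)^* \nabla^\theta U(m) = \nabla^\theta + m$, whence $U(m)^* g_m(\nabla^\theta) U(m) = \tilde{g}_m(\nabla^\theta)$ where $\tilde g_m(t) := g_m(t+m)$ is supported in $K$. Since the inner automorphism $\alpha_m := \mathrm{Ad}(U(m)^*)$ of $L_\infty(\mathbb{R}^p_\theta)$ satisfies $\partial_k \circ \alpha_m = \alpha_m \circ \partial_k$ (an easy consequence of $U(m)^* D_k^\theta U(m) = D_k^\theta + m_k$) and preserves $\tau_\theta$, it is an isometry of $W^{p,1}(\mathbb{R}^p_\theta)$. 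Consequently $\|xg_m(\nabla^\theta)\|_1 = \|\alpha_m(x)\, \tilde g_m(\nabla^\theta)\|_1$ with $\|\alpha_m(x)\|_{W^{p,1}} = \|x\|_{W^{p,1}}$ and $\tilde g_m$ supported in $K$, so the problem reduces to a single local estimate.

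The heart of the matter is the local estimate: for $h$ supported in $K$,
\begin{equation*}
    \|x h(\nabla^\theta)\|_1 \leq c_p \|x\|_{W^{p,1}(\mathbb{R}^p_\theta)} \|h\|_{L_\infty(K)}.
\end{equation*}
To prove this I would factor $xh(\nabla^\theta) = (x(1+\Delta^\theta)^{-p/2}) \cdot ((1+\Delta^\theta)^{p/2} h(\nabla^\theta))$. The second factor is a multiplication operator with $\|\cdot\|_\infty \leq (1+p)^{p/2}\|h\|_{L_\infty(K)}$, since $|t|^2 \leq p$ on $K$. For the first factor, iterate the commutator identity $[D_k^\theta, x] = \partial_k x$ together with $[A,(1+\Delta^\theta)^{-1}] = -(1+\Delta^\theta)^{-1}[A,\Delta^\theta](1+\Delta^\theta)^{-1}$ to push $x$ through the Bessel potential, producing a finite sum $\sum_{|\alpha| \leq p} (\partial^\alpha x) R_\alpha(\nabla^\theta)$ in which each $R_\alpha$ is a bounded symbol with decay $\lesssim (1+|t|^2)^{-(p-|\alpha|)/2}$. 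Each summand is trace class by the fundamental Cwikel-type bound $\|y\, \phi(\nabla^\theta)\|_1 \leq C\|y\|_{L_1(\mathbb{R}^p_\theta)}\|\phi\|_{L_1(\mathbb{R}^p)}$, which is proved via the spatial identification $L_\infty(\mathbb{R}^p_\theta) \cong \mathcal{L}_\infty(L_2(\mathbb{R}^{p/2}))$ and a direct Schwartz-kernel computation; combined with $\partial^\alpha x \in L_1(\mathbb{R}^p_\theta)$ for $|\alpha|\leq p$, this yields the required trace-class bound.

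Part (a) is then immediate from the triangle inequality in $\mathcal{L}_1$. The main technical obstacle is part (b), since $\|\cdot\|_{1,\infty}$ is merely a quasi-norm and triangle-style summation fails. My route would be a dyadic-shell argument: set $S_k := \{m \in \mathbb{Z}^p : 2^{-k-1} < \|g\|_{L_\infty(m+K)} \leq 2^{-k}\}$, whose cardinality satisfies $|S_k| \leq 2^{k+1}\|g\|_{\ell_{1,\infty}(L_\infty)}$ by the very definition of the $\ell_{1,\infty}$-quasi-norm. For $T_k := \sum_{m \in S_k} xg_m(\nabla^\theta)$, part (a) applied within each shell gives $\|T_k\|_1 \leq 2c_p \|x\|_{W^{p,1}}\|g\|_{\ell_{1,\infty}(L_\infty)}$ (crucially, independent of $k$), while the operator norm obeys $\|T_k\|_\infty \leq 2^{-k}\|x\|_\infty$. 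Because the $g_m$'s have disjoint supports, $T_k T_j^* = 0$ for $k \neq j$, hence $TT^* = \sum_k T_k T_k^*$; combining this orthogonality with the Ky-Fan–type inequality $\lambda_n(\sum_k A_k) \leq \sum_k \lambda_{n_k}(A_k)$ (for positive $A_k$ and any splitting $\sum n_k \leq n$) and the two-sided bound on $\mu_{n_k}(T_k)^2 \leq \min(2^{-2k}\|x\|_\infty^2, \|T_k\|_1^2/(n_k+1)^2)$, one optimizes over the $n_k$ to extract $\mu_n(T) = O(1/n)$, which delivers the desired $\mathcal{L}_{1,\infty}$-bound. The delicate balancing of these competing estimates is the principal content of Theorems 7.6–7.7 in \cite{LeSZ-cwikel}.
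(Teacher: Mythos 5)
The paper itself offers no proof of this statement: it is quoted verbatim as a special case of \cite[Theorems 7.6, 7.7]{LeSZ-cwikel}, so your attempt has to stand entirely on its own. The reduction step is fine (the automorphism $\alpha_m=\mathrm{Ad}(U(m)^*)$ does commute with each $\partial_k$ and preserves $\tau_\theta$, hence is isometric on $W^{p,1}$, and the shell decomposition with disjoint supports is the natural start), but the two load-bearing steps have genuine gaps.

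First, the local estimate. The factorization $xh(\nabla^\theta)=\bigl(x(1+\Delta^\theta)^{-p/2}\bigr)\bigl((1+\Delta^\theta)^{p/2}h(\nabla^\theta)\bigr)$ requires $x(1+\Delta^\theta)^{-p/2}\in\mathcal{L}_1$, and this is false in general for $x\in W^{p,1}(\mathbb{R}^p_\theta)$: that operator is exactly the one which is only weak trace class (this is the content of part (b) and of Theorem \ref{nc dirac cwikel}.(b)); for instance, for $0\le x\in L_1(\mathbb{R}^p_\theta)$ one has $\mathrm{Tr}(x(1+\Delta^\theta)^{-s/2})=c_\theta\,\tau_\theta(x)\int_{\mathbb{R}^p}(1+|t|^2)^{-s/2}\,dt$, which diverges as $s\downarrow p$, so $x(1+\Delta^\theta)^{-p/2}\notin\mathcal{L}_1$ whenever $\tau_\theta(x)\neq 0$ (and such $x$, e.g.\ the Gaussian matrix units, lie in $W^{\infty,1}$). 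By passing to $(1+\Delta^\theta)^{-p/2}$ you discard precisely the gain coming from $h$ being supported in one cube. The attempted repair does not close the gap either: in the proposed rewriting $x(1+\Delta^\theta)^{-p/2}=\sum_{|\alpha|\le p}(\partial^\alpha x)R_\alpha(\nabla^\theta)$ none of the symbols $R_\alpha$ is integrable on $\mathbb{R}^p$ (already $R_0\simeq(1+|t|^2)^{-p/2}\notin L_1(\mathbb{R}^p)$, and the claimed decay $(1+|t|^2)^{-(p-|\alpha|)/2}$ degenerates to no decay at $|\alpha|=p$), so the invoked bound $\|y\,\phi(\nabla^\theta)\|_1\le C\|y\|_{L_1(\mathbb{R}^p_\theta)}\|\phi\|_{L_1(\mathbb{R}^p)}$ never applies. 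Moreover that $L_1\times L_1$ bound, which you justify only by a one-sentence appeal to the spatial isomorphism, is itself the substantive content here (it is special to $\det\theta\neq 0$ and fails commutatively); and if it is granted, the whole Bessel-potential detour is superfluous, since $h$ bounded and supported in $K$ already satisfies $\|h\|_{L_1(\mathbb{R}^p)}\le\|h\|_{L_\infty(K)}$ and the local estimate follows in one line (with $\|x\|_{L_1}$ in place of $\|x\|_{W^{p,1}}$).

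Second, the weak-type part. After the shell decomposition the only quantitative data are $\|T_k\|_1\le 2c_p\|x\|_{W^{p,1}}\|g\|_{\ell_{1,\infty}(L_\infty)}$ (uniformly in $k$), $\|T_k\|_\infty\le 2^{-k}\|x\|_\infty$, and $TT^*=\sum_k T_kT_k^*$. The proposed Ky Fan optimization over splittings $\sum_k n_k\le n$ cannot extract $\mu_n(T)=O(1/n)$ from these data: at index $n$ roughly $\log_2 n$ shells are forced to use the trace-norm bound, the optimal (equal) allocation of $n$ among them gives $\mu(n,TT^*)\lesssim (\log n)^3\,\|x\|_{W^{p,1}}^2\|g\|_{\ell_{1,\infty}}^2/n^2$, i.e.\ $\mu_n(T)=O(n^{-1}(\log n)^{3/2})$, which misses $\mathcal{L}_{1,\infty}$. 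An additional idea beyond "balance trace and operator norms shell by shell" is required, and your closing sentence, which defers "the delicate balancing" to Theorems 7.6--7.7 of \cite{LeSZ-cwikel}, in effect concedes that the decisive step is not supplied.
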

    
    With Theorem \ref{nc cwikel theorem} at hand we can prove the following:
    \begin{thm}\label{nc dirac cwikel}
        Let $p\geq 1$. Then there exist constants $c_p > 0$ and $c_p' > 0$ such that for all $x \in W^{p,1}(\mathbb{R}^p_\theta)$ we have:
        \begin{enumerate}[{\rm (a)}]
            \item{}\label{first cwikel} $(1\otimes x)(D^\theta+i\lambda)^{-p-1} \in \mathcal{L}_1$ and $$\|(1\otimes x)(D^\theta+i\lambda)^{-p-1}\|_1 \leq c_p\frac{\|x\|_{W^{p,1}}}{\lambda},$$
            \item{}\label{second cwikel} $(1\otimes x)(D^\theta+i)^{-p} \in \mathcal{L}_{1,\infty}$ and $$\|(1\otimes x)(D^\theta+i)^{-p}\|_{1,\infty} \leq c_p'\|x\|_{W^{p,1}}.$$
        \end{enumerate}
    \end{thm}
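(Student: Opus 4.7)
My plan is to reduce both parts to Theorem \ref{nc cwikel theorem} by writing the resolvent power $(D^\theta+i\lambda)^{-(p+1)}$ as a finite sum of operators of the form $M\otimes h(\nabla^\theta)$, with $M$ a constant matrix on $\mathbb{C}^{2^{p/2}}$. The key observation is that the Clifford relations $\gamma_j\gamma_k+\gamma_k\gamma_j=2\delta_{j,k}$ imply $(D^\theta)^2 = I_{2^{p/2}}\otimes \Delta^\theta$. Decomposing the scalar function $f_\lambda(t)=(t+i\lambda)^{-(p+1)}$ into its even and odd parts $f_\lambda=f_{\lambda,e}+f_{\lambda,o}$, the even part gives, via functional calculus, $f_{\lambda,e}(D^\theta) = I\otimes \tilde a_\lambda(\nabla^\theta)$ (a function of $\Delta^\theta$), and the odd part gives $f_{\lambda,o}(D^\theta) = \sum_{k=1}^p \gamma_k \otimes \tilde c_{k,\lambda}(\nabla^\theta)$, where $\tilde c_{k,\lambda}(t) = t_k\cdot b_\lambda(|t|^2)$ for a suitable $b_\lambda$. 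A direct pointwise computation then shows $|\tilde a_\lambda(t)|, |\tilde c_{k,\lambda}(t)| \leq g_\lambda(t) := (|t|^2+\lambda^2)^{-(p+1)/2}$, both being bounded by the modulus $|f_\lambda(\pm|t|)| = (t^2+\lambda^2)^{-(p+1)/2}$.

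Once this decomposition is in place, I would use the tensor-product norm identities $\|M\otimes T\|_1 = \|M\|_1\|T\|_1$ and $\|M\otimes T\|_{1,\infty} \leq \|M\|_1\|T\|_{1,\infty}$ together with the (trivial) trace-class bounds on $I_{2^{p/2}}$ and $\gamma_k$ to reduce the $\mathcal{L}_1$ and $\mathcal{L}_{1,\infty}$ norms of $(1\otimes x)(D^\theta+i\lambda)^{-(p+1)}$ to finite sums of $\|x\,h(\nabla^\theta)\|_1$ or $\|x\,h(\nabla^\theta)\|_{1,\infty}$ with $|h|\leq g_\lambda$. Theorem \ref{nc cwikel theorem} then reduces part (a) to the estimate $\|g_\lambda\|_{\ell_1(L_\infty)} = O(\lambda^{-1})$ and part (b) (using $\lambda=1$ with $f(t)=(t+i)^{-p}$, so that the analogous dominating function is $g_1(t) = (|t|^2+1)^{-p/2}$) to the inclusion $g_1\in \ell_{1,\infty}(L_\infty)$.

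What remains is purely scalar harmonic analysis. For (a), I would observe that $\sup_{t\in m+K}(|t|^2+\lambda^2)^{-(p+1)/2} \leq C(|m|^2+\lambda^2)^{-(p+1)/2}$ for $|m|$ large, compare the resulting sum over $m\in\mathbb{Z}^p$ to the integral $\int_{\mathbb{R}^p}(|t|^2+\lambda^2)^{-(p+1)/2}\,dt$, and then rescale $t=\lambda u$ to obtain $\lambda^{-1}\int_{\mathbb{R}^p}(|u|^2+1)^{-(p+1)/2}\,du$, which is finite because $p+1>p$. For (b), the estimate $\|g_1\|_{L_\infty(m+K)}\asymp (1+|m|^2)^{-p/2}$ combined with the lattice-point count $\#\{m\in\mathbb{Z}^p:(1+|m|^2)^{-p/2}\geq a\}\lesssim a^{-1}$ shows that the nonincreasing rearrangement decays like $n^{-1}$, giving the $\ell_{1,\infty}$ inclusion.

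I expect the only potentially delicate point to be the functional-calculus decomposition in the first paragraph: one must justify the even/odd splitting of $f_\lambda(D^\theta)$ as a genuine bounded-operator identity and verify that the resulting factors can be written as functions of $\nabla^\theta$ (not merely of $\Delta^\theta$) with the claimed pointwise dominations by $g_\lambda$. This step is the only one that is not entirely routine, but it follows directly from the spectral theorem for the self-adjoint operator $D^\theta$ together with the identity $(D^\theta)^2 = I\otimes\Delta^\theta$, which ensures that even functions of $D^\theta$ are functions of $\Delta^\theta$ and odd functions are $D^\theta$ times such.
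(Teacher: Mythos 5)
Your argument is correct and rests on the same key lemma as the paper (Theorem \ref{nc cwikel theorem}) together with the same scalar lattice estimates, but your reduction to that lemma is different. The even/odd splitting of $f_\lambda(t)=(t+i\lambda)^{-p-1}$ is legitimate: by the spectral theorem and $(D^\theta)^2=1\otimes\Delta^\theta$ the even part is $1\otimes\tilde a_\lambda(\nabla^\theta)$ and the odd part is $\sum_k\gamma_k\otimes\tilde c_{k,\lambda}(\nabla^\theta)$, the dominations $|\tilde a_\lambda|,|\tilde c_{k,\lambda}|\le g_\lambda$ follow from the triangle inequality applied to $\tfrac12(f_\lambda(s)\pm f_\lambda(-s))$, and monotonicity of the $\ell_1(L_\infty)$ and $\ell_{1,\infty}(L_\infty)$ norms under pointwise domination lets you apply Theorem \ref{nc cwikel theorem} to each piece; your verifications $\|g_\lambda\|_{\ell_1(L_\infty)}=O(\lambda^{-1})$ and $g_1\in\ell_{1,\infty}(L_\infty)$ are exactly what the paper also checks. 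The paper, however, bypasses the Clifford decomposition entirely: since the trace norm and the weak trace quasi-norm are unitarily invariant, $\|(1\otimes x)B\|_1=\|(1\otimes x)\,|B^*|\,\|_1$ with $B=(D^\theta+i\lambda)^{-p-1}$, and $|B^*|=((D^\theta)^2+\lambda^2)^{-\frac{p+1}{2}}=1\otimes g_\lambda(\nabla^\theta)$ exactly (similarly $(1+(D^\theta)^2)^{-\frac{p}{2}}=1\otimes g_1(\nabla^\theta)$ for part (b)), so the reduction to a single scalar symbol is a one-line identity rather than a finite sum of dominated pieces. Your route avoids the polar-decomposition trick and produces an explicit matrix-valued symbol for the resolvent power, at the cost of the functional-calculus bookkeeping and the tensor-norm estimates $\|M\otimes T\|_1=\|M\|_1\|T\|_1$ and $\|M\otimes T\|_{1,\infty}\le\|M\|_1\|T\|_{1,\infty}$ (both of which are indeed valid); the paper's trick is shorter and works verbatim for any unitarily invariant (quasi-)norm.
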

    \begin{proof}
        Let $g(t) := (\lambda^2+\sum_{k=1}^p t_k^2)^{-(p+1)/2}$. Since $\|ab\|_1 = \|a|b^*|\|_1$,
        \begin{align*}
            \|(1\otimes x)(D^\theta + /i\lambda)^{-p-1}\|_1 &= \|(1\otimes x)|(D^\theta-i\lambda)^{-p-1}|\|_1\\
                                                          &= \|(1\otimes x)((D^\theta)^2+\lambda^2)^{-\frac{p+1}{2}}\|_1.
        \end{align*}
        So we have
        \begin{equation*}
            \|(1\otimes x)(D^\theta+i\lambda)^{-p-1}\|_1 = c_p\|xg(\nabla^\theta)\|_1.
        \end{equation*}
        It can be directly verified that $\|g\|_{\ell_1(L_\infty)} = O(\lambda^{-1})$, we can immediately apply Theorem \ref{nc cwikel theorem} to obtain \eqref{first cwikel}.
        
        To obtain \eqref{second cwikel}, we instead consider the function $g(t) = (1+\sum_{k=1}^p t_k^2)^{-p/2}$ and apply Theorem \ref{nc cwikel theorem}.\eqref{second nc cwikel}.
    \end{proof}
    
    Recall the operator $\Lambda$ from Section \ref{replacement section}, defined formally as $\Lambda(T) = (1+D^2)^{-1/2}[D^2,T]$.

    \begin{lem}\label{ncplane main assumption for lambda} 
    If $x\in W^{\infty,1}(\mathbb{R}^p_{\theta})$, then for all $m\geq 0$:
        \begin{equation*}
            \left\|\Lambda^m(1\otimes x)(D^\theta+i\lambda)^{-p-1}\right\|_1 = O(\lambda^{-1}),\quad\lambda\to\infty.
        \end{equation*}
    \end{lem}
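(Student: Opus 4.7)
The proof reduces to expressing $\Lambda^m(1\otimes x)$ in a form to which the Cwikel-type estimate of Theorem \ref{nc dirac cwikel} applies. Specifically, the plan is to prove by induction on $m$ that $\Lambda^m(1\otimes x)$ is a finite sum
$$
\Lambda^m(1\otimes x) = \sum_{j} h_j(\nabla^\theta)(1\otimes y_j),
$$
where each $h_j:\mathbb{R}^p\to\mathbb{C}$ is bounded and each $y_j\in W^{\infty,1}(\mathbb{R}^p_\theta)$. Once this is established, the desired bound is an immediate consequence of H\"older's inequality $\|AB\|_1\leq\|A\|_\infty\|B\|_1$ combined with Theorem \ref{nc dirac cwikel}.\eqref{first cwikel}:
$$
\|\Lambda^m(1\otimes x)(D^\theta+i\lambda)^{-p-1}\|_1 \leq \sum_j \|h_j(\nabla^\theta)\|_\infty \|(1\otimes y_j)(D^\theta+i\lambda)^{-p-1}\|_1 = O(\lambda^{-1}).
$$

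The key algebraic input for the induction is the identity $D^2 = 1\otimes\Delta^\theta$, which follows directly from the Clifford relations $\gamma_j\gamma_k+\gamma_k\gamma_j=2\delta_{jk}$ together with the fact that the operators $D_k^\theta$ mutually commute. Since $\Delta^\theta$ is a function of $\nabla^\theta$, it commutes with every scalar function $h(\nabla^\theta)$; hence for $T = h(\nabla^\theta)(1\otimes y)$,
$$
[D^2,T] = h(\nabla^\theta)\bigl(1\otimes[\Delta^\theta,y]\bigr).
$$
Expanding $[\Delta^\theta,y] = 2\sum_k D_k^\theta\partial_k y - \sum_k\partial_k^2 y$ and absorbing the factor $(1+D^2)^{-1/2}=g(\nabla^\theta)$, where $g(t)=(1+|t|^2)^{-1/2}$, together with the bounded symbol $b_k(t)=t_k(1+|t|^2)^{-1/2}$ arising from $g(\nabla^\theta)(1\otimes D_k^\theta)=b_k(\nabla^\theta)$, one obtains
$$
\Lambda\bigl(h(\nabla^\theta)(1\otimes y)\bigr) = 2\sum_k (hb_k)(\nabla^\theta)(1\otimes \partial_k y) - (gh)(\nabla^\theta)\bigl(1\otimes\textstyle\sum_k\partial_k^2 y\bigr),
$$
which is again of the required form since $hb_k$ and $gh$ are bounded on $\mathbb{R}^p$ and $\partial_k y,\partial_k^2 y\in W^{\infty,1}(\mathbb{R}^p_\theta)$ whenever $y$ is. The base case $m=0$ is trivial with $h_0\equiv 1$ and $y_0=x$.

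The main obstacle is simply setting up the inductive step correctly, i.e., recognising that the class $\{h(\nabla^\theta)(1\otimes y) : h \text{ bounded},\ y\in W^{\infty,1}\}$ is closed under $\Lambda$. This closure hinges on the fact that $D^2$ carries no $\gamma$-matrix content and hence commutes past every function of $\nabla^\theta$, which is special to the Moyal case. Once the closure is in hand, no further analytical difficulty arises: the $\gamma$-matrix factors from the resolvent $(D^\theta+i\lambda)^{-p-1}$ are absorbed by Theorem \ref{nc dirac cwikel}, and the required $\lambda^{-1}$ decay is delivered verbatim by that theorem.
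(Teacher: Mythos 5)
Your proof is correct and follows essentially the same route as the paper: both rest on $(D^\theta)^2 = 1\otimes\Delta^\theta$, the Leibniz expansion $[(D_k^\theta)^2,y]=2D_k^\theta[D_k^\theta,y]-[D_k^\theta,[D_k^\theta,y]]$, the fact that functions of $\nabla^\theta$ commute past $(1+D^2)^{-1/2}$ and $[D^2,\cdot]$, and the Cwikel-type bound of Theorem \ref{nc dirac cwikel}.\eqref{first cwikel}. The only difference is organizational: the paper inducts directly on the trace-norm estimate (applying the lemma for $m-1$ to $\partial_k x$ and $\partial_k^2 x$), whereas you induct on the structural decomposition $\Lambda^m(1\otimes x)=\sum_j h_j(\nabla^\theta)(1\otimes y_j)$ and invoke the Cwikel estimate once at the end — the two are interchangeable.
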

    \begin{proof}
        We prove the assertion by induction on $m.$ Since by definition $\Lambda^0$ is the identity, the $m=0$ case is handled by Theorem \ref{nc dirac cwikel}.\eqref{first cwikel}.

        Now suppose that $m\geq 1$ and the assertion holds for $m-1.$ 
        Since $(D^\theta)^2 = 1\otimes \sum_{k=1}^p (D_k^\theta)^2$, we have
        \begin{equation*}
            \Lambda(1\otimes x) = (1+(D^\theta)^2)^{-\frac{1}{2}}\cdot(\sum_{k=1}^p1\otimes[(D_k^\theta)^2,x]).
        \end{equation*}
        Applying the Leibniz rule,
        \begin{align*} 
            [(D_k^\theta)^2,x] &= [D_k^\theta,x]D_k^\theta+D_k^\theta[D_k^\theta,x]\\
                               &= 2D_k^\theta[D_k^\theta,x]-[D_k^\theta,[D_k^\theta,x]].
        \end{align*}
        By assumption, (the bounded extensions of) $[D_k^\theta,x]$ and $[D_k^\theta,[D_k^\theta,x]]$ are in $W^{m,1}(\mathbb{R}^p_{\theta})$ for all $m\geq0.$

        Hence since $\Lambda$ commutes with $\partial$,
        \begin{align*}
            \Lambda^m(1\otimes x) = \sum_{k=1}^p&\Big(1\otimes \frac{2D_k^\theta}{(1-\Delta^\theta)^{\frac12}}\Big)\cdot \Lambda^{m-1}(1\otimes [D_k^\theta,x])\\
                                  &-\sum_{k=1}^p\Big(1\otimes\frac1{(1-\Delta^\theta)^{\frac12}}\Big)\cdot \Lambda^{m-1}(1\otimes [D_k^\theta,[D_k^\theta,x]]).
        \end{align*}
        So by the triangle inequality, we have
        \begin{align*}
            \left\|\Lambda^m(1\otimes x)(D^\theta+i\lambda)^{-p-1}\right\|_1 &\leq 2\sum_{k=1}^p\left\|\Lambda^{m-1}(1\otimes [D_k^\theta,x])(D^\theta+i\lambda)^{-p-1}\right\|_1\\
                                                                             &\quad +\sum_{k=1}^p\|\Lambda^{m-1}(1\otimes [D_k^\theta,[D_k^\theta,x]])(D^\theta+i\lambda)^{-p-1}\Big\|_1.
        \end{align*}
        The right hand side is $O(\lambda^{-1})$ as $\lambda\to\infty$ by the inductive assumption. Hence, so is the left hand side.
    \end{proof}

    We can now conclude with the main result of this subsection:
    \begin{thm}
        The triple
        \begin{equation*}
            (1\otimes W^{\infty,1}(\mathbb{R}^p,\theta),L_2(\mathbb{R}^d,\mathbb{C}^{2^{p/2}}),D^\theta)
        \end{equation*}
        is a spectral triple satisfying Hypothesis \ref{main assumption}.
    \end{thm}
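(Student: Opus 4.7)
The plan is to verify Hypothesis~\ref{replacement assumption} for the triple in question, which by Theorem~\ref{replacement thm} is equivalent to Hypothesis~\ref{main assumption}. Throughout I write $a = 1\otimes x$ for $x \in W^{\infty,1}(\mathbb{R}^p_\theta)$; by the definition of $W^{\infty,1}$, every iterated partial derivative $\partial^\alpha x$ exists as a bounded operator and lies in $L_\infty(\mathbb{R}^p_\theta)\cap L_1(\mathbb{R}^p_\theta)$.

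First I would dispose of the spectral triple axioms. The decomposition $[D^\theta,1\otimes x] = \sum_k \gamma_k\otimes[D_k^\theta,x]$ combined with Lemma~\ref{nc poincare} shows that $[D^\theta,a]$ extends to the bounded operator $\partial(a) = \sum_k \gamma_k\otimes\partial_k x$, and the same reasoning gives $a\cdot\mathrm{dom}(D^\theta)\subseteq\mathrm{dom}(D^\theta)$. Compactness of $a(D^\theta+i)^{-1}$ reduces to compactness of $1\otimes x(1+\Delta^\theta)^{-1/2}$ via the factorisation $a(D^\theta+i)^{-1} = (1\otimes x(1+\Delta^\theta)^{-1/2})\cdot(1\otimes(1+\Delta^\theta)^{1/2})(D^\theta+i)^{-1}$, whose second factor is an isometry (from $(D^\theta)^2 = 1\otimes\Delta^\theta$); compactness of $x(1+\Delta^\theta)^{-1/2}$ follows from the general Cwikel-type estimates of \cite{LeSZ-cwikel}, of which Theorem~\ref{nc cwikel theorem} records only the two specific $\mathcal{L}_1$ and $\mathcal{L}_{1,\infty}$ consequences we need below.

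Next, Hypothesis~\ref{replacement assumption}(ii) follows at once from Theorem~\ref{nc dirac cwikel}(b) applied to $x$ and to each $\partial_k x$: both belong to $W^{p,1}$ since $W^{\infty,1}\subseteq W^{p,1}$, so $a(D^\theta+i)^{-p}$ and $\partial(a)(D^\theta+i)^{-p}$ lie in $\mathcal{L}_{1,\infty}$. For Hypothesis~\ref{replacement assumption}(iii), the bound $\|\Lambda^k(a)(D^\theta+i\lambda)^{-p-1}\|_1 = O(\lambda^{-1})$ is precisely Lemma~\ref{ncplane main assumption for lambda}. For the companion bound on $\partial(\Lambda^k(a))$, the key observation is that $(D^\theta)^2 = 1\otimes\Delta^\theta$ commutes with the spinor matrices $\gamma_j\otimes 1$, so $\Lambda(\gamma_j\otimes y) = \gamma_j\otimes\Lambda_0(y)$ where $\Lambda_0(y):=(1+\Delta^\theta)^{-1/2}[\Delta^\theta,y]$; iterating gives $\Lambda^k(\partial(a)) = \sum_j \gamma_j\otimes\Lambda_0^k(\partial_j x)$, and the required estimate follows by termwise application of Lemma~\ref{ncplane main assumption for lambda} to each $\partial_j x \in W^{\infty,1}$.

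Finally, $\Lambda$-smoothness (Hypothesis~\ref{replacement assumption}(i)) requires boundedness of $\Lambda^m(a)$ and $\Lambda^m(\partial(a))$ for every $m$. The explicit identity exhibited in the proof of Lemma~\ref{ncplane main assumption for lambda},
\[
\Lambda(1\otimes y) = 2\sum_k \Bigl(1\otimes\tfrac{D_k^\theta}{(1+\Delta^\theta)^{1/2}}\Bigr)(1\otimes\partial_k y) - \sum_k \Bigl(1\otimes\tfrac{1}{(1+\Delta^\theta)^{1/2}}\Bigr)(1\otimes\partial_k^2 y),
\]
iterates to express $\Lambda^m(1\otimes x)$ as a finite sum of products of bounded functions of $\nabla^\theta$ with operators $1\otimes\partial^\alpha x$; since every such derivative belongs to $L_\infty(\mathbb{R}^p_\theta)$, boundedness of $\Lambda^m(a)$ follows by induction on $m$, and $\Lambda^m(\partial(a))$ is treated identically after replacing $x$ by $\partial_j x$. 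The substantive analytic inputs are all already in hand (Theorem~\ref{nc dirac cwikel} and Lemma~\ref{ncplane main assumption for lambda}); no genuine obstacle is expected, and the proof amounts to assembling these ingredients.
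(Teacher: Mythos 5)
Your proposal is correct and follows essentially the same route as the paper: verify Hypothesis \ref{replacement assumption} (via Theorem \ref{replacement thm}), obtain dimension from Theorem \ref{nc dirac cwikel} applied to $x$ and the $\partial_k x$, get the trace-norm bounds from Lemma \ref{ncplane main assumption for lambda} after commuting $\Lambda$ past the $\gamma_j\otimes 1$ factors, and deduce $\Lambda$-smoothness by iterating the Leibniz identity together with Lemma \ref{nc poincare}. The only difference is that you spell out the compactness of $a(D^\theta+i)^{-1}$, which the paper leaves implicit; this is a harmless addition within the same argument.
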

    \begin{proof}
        We establish Hypothesis \ref{replacement assumption} instead, as permitted by Theorem \ref{replacement thm}. 
        First we prove that we indeed have a spectral triple. 
        
        By the definition of $W^{\infty,1}(\mathbb{R}^p)$, if $x \in W^{\infty,1}(\mathbb{R}^p)$ then $[D^\theta,1\otimes x]$
        has bounded extension, and therefore $1\otimes x:\mathrm{dom}(D^\theta)\to \mathrm{dom}(D^\theta)$.
        
        
        If $x \in W^{\infty,1}(\mathbb{R}^p_\theta)$ then:
        \begin{equation*}
            \partial(1\otimes x) = \sum_{j=1}^p \gamma_j\otimes (\partial_jx)
        \end{equation*}
        and this is bounded, by the definition of $W^{\infty,1}$.        
        
        Now we show that Hypothesis \ref{replacement assumption}.\eqref{rass0} holds.
        If $x \in W^{\infty,1}(\mathbb{R}^p_\theta)$, we show that $x,\partial(x) \in \mathrm{dom}(\Lambda^m)$ for all $m\geq 0$ by induction. We have automatically that $x,\partial(x) \in \mathrm{dom}(\Lambda^0)$. Now if we assume that $x,\partial(x) \in \mathrm{dom}(\Lambda^{m-1})$, for $m \geq 1$, we apply the Leibniz rule to obtain:
        \begin{align*}
            \Lambda^m(1\otimes x) &= \sum_{k=1}^p\Big(1\otimes \frac{2D_k^\theta}{(1-\Delta^\theta)^{\frac12}}\Big)\cdot \Lambda^{m-1}(1\otimes [D_k^\theta,x])\\
                                  &\quad-\sum_{k=1}^p\Big(1\otimes\frac1{(1-\Delta^\theta)^{\frac12}}\Big)\cdot \Lambda^{m-1}(1\otimes [D_k^\theta,[D_k^\theta,x]]).
        \end{align*}
        By the definition of $W^{\infty,1}(\mathbb{R}^p_\theta)$, the operators $[D_k^\theta,x]$ and $[D_k^\theta,[D_k^\theta,x]]$ have bounded extension, and by Lemma \ref{nc poincare}, the extensions of $[D_k^\theta,x]$ and $[D_k^\theta,[D_k^\theta,x]]$ are elements of $W^{\infty,1}(\mathbb{R}^p_\theta)$, and therefore by the inductive hypothesis are in $\mathrm{dom}(\Lambda^{m-1})$. Hence, $1\otimes x \in \mathrm{dom}(\Lambda^m)$ and so by induction $1\otimes x \in \mathrm{dom}_\infty(\Lambda)$. Applying an identical argument to $\partial(1\otimes x)$ yields $\partial(1\otimes x) \in \mathrm{dom}_\infty(\Lambda)$, and so $(1\otimes W^{\infty,1}(\mathbb{R}^p_\theta),L_2(\mathbb{R}^d,\mathbb{C}^{2^{p/2}}),D^\theta)$ is $\Lambda$-smooth.        
        
        We now show that Hypothesis \ref{replacement assumption}.\eqref{rass1} holds. 
        Let $x \in W^{\infty,1}(\mathbb{R}^p_\theta)$. By Lemma \ref{nc dirac cwikel}.\eqref{first cwikel}, the first inclusion in Hypothesis \ref{replacement assumption}.\eqref{rass1} follows. 
        
        To see the second inclusion in Hypothesis \ref{replacement assumption}.\eqref{rass1}, write
        \begin{equation*}
            \partial(1\otimes x)(D^\theta+i)^{-p} = \sum_{k=1}^p \left(\gamma_k\otimes 1\right)\cdot\left((1\otimes [D_k^\theta,x])(D^\theta+i)^{-p}\right).
        \end{equation*}
        Using the quasi-triangle inequality for $\mathcal{L}_{1,\infty}$, there is a constant $C_p$ such that
        \begin{equation*}
            \|\partial(1\otimes x)(D^\theta+i)^{-p}\|_{1,\infty} \leq C_p\sum_{k=1}^p \|(1\otimes \partial_k x)(D^\theta+i)^{-p}\|_{1,\infty}.
        \end{equation*}
        By the definition of $W^{\infty,1}(\mathbb{R}^p_\theta)$, for all $1 \leq k \leq p$ we have $\partial_k x \in W^{\infty,1}(\mathbb{R}^p_\theta)$, so we may apply Theorem \ref{nc dirac cwikel}.\eqref{second cwikel} to each summand to deduce the second inclusion in Hypothesis \ref{replacement assumption}.\eqref{rass1}.
        
        Now we discuss Hypothesis \ref{replacement assumption}.\eqref{rass2}. By Lemma \ref{ncplane main assumption for lambda}, the first inequality in Hypothesis \ref{replacement assumption}.\eqref{rass2} holds.
        
        To deduce the second inequality, we may commute $\partial$ with $\Lambda^{m}$ to obtain:
        \begin{equation*}
            \partial(\Lambda^m(1\otimes x))(D^\theta+i\lambda)^{-p-1} = \sum_{k=1}^p \left(\gamma_k\otimes 1\right)\cdot\left(\Lambda^m(1\otimes [D_k^\theta,x]\right)(D^\theta+i\lambda)^{-p-1}.
        \end{equation*}
        Note that here $\partial(T)$ denotes $[D^\theta,T]$. Using the $\mathcal{L}_1$-norm triangle inequality,
        \begin{equation*}
            \|\partial(\Lambda^m(1\otimes x))(D^\theta+i\lambda)^{-p-1}\|_1 \leq C_p\sum_{k=1}^p \|\Lambda^m(1\otimes \partial_k x)(D^\theta+i\lambda)^{-p-1}\|_1.
        \end{equation*}
        By assumption, each $\partial_k x$ is in $W^{\infty,1}(\mathbb{R}^p_\theta)$, and so by Lemma \ref{ncplane main assumption for lambda}, each summand above is $O(\lambda^{-1})$
        as $\lambda \to \infty$.
    \end{proof}
    
    \begin{rem}
        We have worked exclusively with the case that $\det(\theta) \neq 0$. Of course this excludes the fundamental $\theta = 0$ case of Euclidean space $\mathbb{R}^d$. One
        may verify directly that the standard spectral triple for $\mathbb{R}^d$ satisfies Hypothesis \ref{main assumption}, by using classical Cwikel theory, or alternatively
        $\mathbb{R}^d$ may be considered as a special case of the complete Riemannian manifolds considered in the following section.
    \end{rem}

%
%

\section{Example: Riemannian manifolds}\label{manifold section}

    The authors wish to thank Professor Yuri Kordyukov for significant contributions to this section, including providing many of the proofs.

\subsection{Basic notions about manifolds}
    
    We briefly recall the relevant definitions for Riemannian manifolds. The material in this subsection is standard,
    and may be found in for example \cite[Chapter 2]{rosenberg} or \cite{Lawson-Michelsohn-1989}. 
    Let $X$ be a second countable $p$-dimensional complete smooth Riemannian manifold with metric tensor $g$.
    Recall that $g$ defines a canonical measure $\nu_g$ on $X$. The notation $L_r(X,g)$ denotes $L_r(X,\nu_g)$. The assumption that $X$ is second countable
    ensures that $L_2(X,g)$ is separable.
    
    We denote the space of smooth compactly supported differential $k$-forms as $\Omega_c^k(X)$, and define $\Omega_c(X) := \bigoplus_{k=0}^p \Omega_c^k(X)$.
    Associated to the metric $g$ is an inner product $(\cdot,\cdot)_g$ defined on $\Omega_c(X)$.
    Let $H_k$ denote the completion of $\Omega^k_c(X)$ with respect to this inner product, and define $L_2\Omega(X,g) := \bigoplus_{k=0}^p H_k$.
    There is a grading on $L_2\Omega(X,g)$ with grading operator $\Gamma$ defined by $\Gamma|_{H_k} = (-1)^k$.
    
    For $f \in C^\infty_c(X)$, let $M_f$ denote the operator of pointwise multiplication by $f$ on $L_2\Omega(X,g)$.
    
    The exterior differential $d$ is a linear map $d:\Omega_c(X)\to\Omega_c(X)$ such that for all $k = 0,1,\ldots,p-1$
    we have
    $d:\Omega^{k}_c(X)\to \Omega^{k+1}_c(X)$, and $d|\Omega^p_c(X) = 0$. The linear operator $d$ has a formal
    adjoint $d^*$ with respect to the inner product on $\Omega_c(X)$.
    
    The Hodge-Dirac operator $D_g$ is defined by $D_g := d+d^*$. Since $X$ is complete, the operator $D_g$ uniquely extends to a self-adjoint unbounded operator on $L_2\Omega(X,g)$ (see \cite{chernov}). The Hodge-Laplace operator is defined as $\Delta_g := -D_g^2 = -dd^*-d^*d$, and each subspace $H_k$
    is invariant under $\Delta_g$. The restriction of $\Delta_g$ to $H_0 = L_2(X,g)$ coincides with the Laplace-Beltrami operator.
    
    The main focus of this section is the following:
    \begin{thm}\label{manifold theorem} 
        Let $(X,g)$ be a second countable $p$-dimensional complete Riemannian manifold. The algebra $C^\infty_c(X)$ acts on $L_2\Omega(X,g)$ by pointwise multiplication, and $D_g$ denotes
        the Hodge-Dirac operator. Then
        \begin{equation*}
            (C^\infty_c(X),L_2\Omega(X,g),D_g)
        \end{equation*}
        is an even spectral triple satisfying Hypothesis \ref{main assumption}, where the grading $\Gamma$ is defined by $\Gamma|_{H_k} = (-1)^k$.
    \end{thm}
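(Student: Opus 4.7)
The plan is to verify the three parts of Hypothesis \ref{replacement assumption} (which by Theorem \ref{replacement thm} is equivalent to Hypothesis \ref{main assumption}) by exploiting the compact support of elements of $C^\infty_c(X)$ to localize all questions to a relatively compact region of $X$.

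First I would check that $(C^\infty_c(X), L_2\Omega(X,g), D_g)$ is a spectral triple. For $f \in C^\infty_c(X)$, the multiplication operator $M_f$ maps $\Omega_c(X)$ into itself and hence preserves $\mathrm{dom}(D_g)$. The commutator $[D_g, M_f]$ extends on $\Omega_c(X)$ to Clifford-type multiplication by $df$, which is bounded because $df$ has compact support. Compactness of $M_f(D_g+i)^{-1}$ follows from the standard combination of local elliptic regularity with the Rellich--Kondrachov theorem applied on a relatively compact neighborhood of $\mathrm{supp}(f)$. For $\Lambda$-smoothness, note that $D_g^2$ coincides (up to sign) with the Hodge Laplacian; an elementary calculation then shows that $[D_g^2, M_f]$ is a first-order differential operator with coefficients supported in $\mathrm{supp}(f)$, so $\Lambda(M_f) = (1+D_g^2)^{-1/2}[D_g^2, M_f]$ is bounded. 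Iterating, we get $M_f, \partial(M_f) \in \mathrm{dom}_\infty(\Lambda)$.

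The main work is verifying the summability conditions \ref{replacement assumption}.\eqref{rass1} and \eqref{rass2}. I would fix a cutoff $\psi \in C^\infty_c(X)$ with $\psi \equiv 1$ on $\mathrm{supp}(f)$, so that $M_f = M_f M_\psi$ and the analogous identity holds with $\Lambda^k(M_f)$ and $\partial(\Lambda^k(M_f))$ in place of $M_f$, since these all have coefficients supported in $\mathrm{supp}(f)$. It therefore suffices to prove that for every $\psi \in C^\infty_c(X)$ one has $M_\psi(D_g+i)^{-p} \in \mathcal{L}_{1,\infty}$ and $\|M_\psi(D_g+i\lambda)^{-p-1}\|_1 = O(\lambda^{-1})$ as $\lambda \to \infty$. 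My approach is to embed a relatively compact open neighborhood of $\mathrm{supp}(\psi)$ isometrically into a closed Riemannian manifold $Y$ (deforming the metric smoothly outside an enlarged neighborhood and then compactifying). On $Y$ the Hodge--Dirac spectral triple is standard: Weyl's law gives $\mu_k(D_Y) \sim k^{1/p}$, from which $(D_Y+i)^{-p} \in \mathcal{L}_{1,\infty}$ and $\|(D_Y+i\lambda)^{-p-1}\|_1 = O(\lambda^{-1})$ both follow by direct computation.

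The main obstacle will be making the comparison between $D_g$ on $X$ and $D_Y$ on $Y$ rigorous without assuming that $X$ has bounded geometry. Since global pseudodifferential calculus is unavailable, I would rely on finite propagation speed of the wave group $e^{isD_g}$, combined with a Helffer--Sj\"ostrand (or Fourier) representation of $(D_g+i\lambda)^{-p-1}$, to express $M_\psi(D_g+i\lambda)^{-p-1}$ in terms of a wave flow run for a short time, which by finite propagation depends only on the metric in a slightly enlarged neighborhood of $\mathrm{supp}(\psi)$. Under this reduction the required trace and weak-Schatten estimates for $D_g$ on $X$ follow from the corresponding estimates for $D_Y$ on $Y$ up to error terms which decay arbitrarily fast in $\lambda$, and the argument closes.
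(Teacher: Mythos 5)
Your overall strategy (reduce to Hypothesis \ref{replacement assumption}, use compact support to replace $f$ by a cutoff $\psi$, and compare the localized resolvent with that of a compact model where Weyl's law applies) is in the right spirit, and the reductions in your first two paragraphs are sound. The gap is in the final comparison step. Finite propagation speed together with a Fourier/Helffer--Sj\"ostrand truncation of $(D_g+i\lambda)^{-p-1}$ controls the truncation error $M_\psi r(D_g)$ only in \emph{operator} norm (or in norms obtained by factoring out powers of the resolvent), and operator-norm smallness gives no information about membership in $\mathcal{L}_1$ or $\mathcal{L}_{1,\infty}$: on a general complete manifold $(D_g+i\lambda)^{-1}$ is not even compact, so the whole content of the statement is that the cutoff $M_\psi$ forces trace-class behaviour, and that is exactly what your error terms are not shown to have. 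If you try to upgrade the error by writing $M_\psi r(D_g)=\bigl(M_\psi (D_g+i)^{-p-1}\bigr)\bigl((D_g+i)^{p+1}r(D_g)\bigr)$ you need the finiteness of $\|M_\psi(D_g+i)^{-p-1}\|_1$, which is the quantity being proved, so the argument as sketched is circular; and the condition $M_\psi(D_g+i)^{-p}\in\mathcal{L}_{1,\infty}$ contains no parameter $\lambda$ at all, so ``errors decaying arbitrarily fast in $\lambda$'' cannot address it. (The compactification of a neighbourhood of $\mathrm{supp}(\psi)$ into a closed manifold also needs a collar/doubling construction that you gloss over, but that is a minor point.)

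The paper closes this gap not by an approximation argument but by an \emph{exact} operator identity: working in a single chart, it transplants a neighbourhood of $\mathrm{supp}(f)$ to the torus $\mathbb{T}^p$ with a metric $g_1$ interpolating between the pushed-forward metric and the flat one, introduces a partial isometry $V$ intertwining the two $L_2$-spaces, and proves (using locality of $D_{g_1}$ and G\aa rding's inequality to compare $D_{g_1}$ with the flat Hodge--Dirac operator) the identity
\begin{equation*}
M_{\psi_2}V^*PVM_f=(D_g+i)^{-1}V^*[D_{g_1},M_{\phi_1}]QM_{\phi_1}VM_f+(D_g+i)^{-1}M_f,
\end{equation*}
in which every correction term is an exact product of operators already known to lie in $\mathcal{L}_{p,\infty}$ (resp.\ with the $\lambda$-dependent $\mathcal{L}_{p+1}$ bounds), so no trace-norm control of an approximation error is ever needed; the conditions for the powers $(D_g+i)^{-p}$ and $(D_g+i\lambda)^{-p-1}$ are then obtained by induction on the power using $[D_g,M_f]=[D_g,M_f]M_\phi$ and the H\"older inequality, rather than by a direct comparison for the $p$-th power as you propose. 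To repair your argument you would need a comparison of this exact resolvent-identity type (or a direct local elliptic/Hilbert--Schmidt kernel estimate for $M_\psi(D_g+i)^{-1}$) to supply the a priori Schatten membership before any fast-decaying errors can be absorbed.
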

    
    Note that the spectral triple is always even regardless of $p$. The use of the Hodge-Dirac operator to define spectral triples for arbitrary Riemannian manifolds has previously been studied in \cite{Lord-Rennie-Varilly}, and for related work see \cite{Frohlich-Grandjean-Recknagel}.
    
    To the best of our knowledge, the main results of this paper, Theorems \ref{heat thm}, \ref{zeta thm} and \ref{main thm} are new in the setting of the Hodge-Dirac operator on arbitrary complete manifolds. Most previous work on geometric applications of noncommutative geometry, such as \cite{Rennie-2004} and \cite{CGRS2} are applied to a spin Dirac operator. 
        
    The Cwikel-type estimates we establish in this section: Lemma \ref{kord first lemma} and \ref{kord second lemma}, are of interest in their own right. A predecessor to this work may be found in \cite{Rennie-2004}.
    
\subsection{Proof of Theorem \ref{manifold theorem}}

    The proof proceeds by showing the required Cwikel-type estimates for the case of a torus: $X = \mathbb{T}^p$ with the flat metric. We then
    deduce the general case by an argument involving local coordinates.
    

    We define the $p$-torus as $\mathbb{T}^p := \mathbb{R}^p/\mathbb{Z}^p$. The space $\mathbb{T}^p$ is a smooth $p$-dimensional manifold,
    and we may select local coordinates $x_1,\ldots,x_p \in (0,1]$ defined by considering the image of $(x_1,\ldots,x_d)$ in $\mathbb{R}^p/\mathbb{Z}^p$.
%
    We equip $\mathbb{T}^p$ with the flat metric $g_0$ defined locally by $g_0 = dx_1^2+\cdots+dx_p^2$.

    First, we describe the Cwikel-type estimates for $\mathbb{T}^p$.
    \begin{lem}\label{cwikel}
        Let $g_0$ denote the flat metric on $\mathbb{T}^p$, with corresponding Hodge-Dirac operator denoted $D_0$. Then:
        \begin{enumerate}[{\rm (i)}]
            \item{}\label{flat cwikel 1} We have 
                \begin{equation*}
                    (D_0+i)^{-1} \in \mathcal{L}_{p,\infty}(L_2\Omega(\mathbb{T}^p,g_0)).
                \end{equation*}
            \item{}\label{flat cwikel 2} For $\lambda \to\infty$, we have:
                \begin{equation*}
                    \|(D_0+i\lambda)^{-1}\|_{\mathcal{L}_{p+1}(L_2\Omega(\mathbb{T}^p,g_0))} = O(\lambda^{-\frac{1}{p+1}}).
                \end{equation*}
        \end{enumerate}
    \end{lem}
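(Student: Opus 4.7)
The plan is to diagonalise $D_0^2$ explicitly using Fourier analysis on $\mathbb{T}^p$. Since the flat connection has vanishing curvature, the Weitzenböck identity gives that the Hodge Laplacian $-\Delta_{g_0} = D_0^2$ acts on the form basis $e^{2\pi i n\cdot x} dx_I$ (for $n \in \mathbb{Z}^p$ and $I \subseteq \{1,\ldots,p\}$) exactly as the scalar Laplacian does on the coefficient functions. Hence $D_0^2$ is diagonalised by this basis, with eigenvalue $4\pi^2|n|^2$ occurring with multiplicity $2^p$ (coming from the $2^p$ choices of $I$). In particular $D_0^2$ has a nontrivial kernel of dimension $2^p$ (the constant forms), which does not affect the asymptotic analysis below.

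Since $(D_0+i\lambda)^{-1}$ is normal with $|(D_0+i\lambda)^{-1}|^2 = (D_0^2+\lambda^2)^{-1}$, its singular values are precisely the numbers $(4\pi^2|n|^2+\lambda^2)^{-1/2}$, each with multiplicity $2^p$. For part \eqref{flat cwikel 1}, the standard lattice point estimate $|\{n \in \mathbb{Z}^p \,:\, |n|\leq R\}| \asymp R^p$ implies that the $k$-th singular value of $(D_0+i)^{-1}$ is $O(k^{-1/p})$, so $(D_0+i)^{-1} \in \mathcal{L}_{p,\infty}$ as required.

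For part \eqref{flat cwikel 2} I would compute
\begin{equation*}
    \|(D_0+i\lambda)^{-1}\|_{p+1}^{p+1} = 2^p \sum_{n \in \mathbb{Z}^p} (4\pi^2|n|^2 + \lambda^2)^{-(p+1)/2}
\end{equation*}
and estimate the sum by the corresponding integral. Since $x \mapsto (4\pi^2|x|^2+\lambda^2)^{-(p+1)/2}$ is a decreasing radial function, standard integral comparison (valid uniformly in $\lambda \geq 1$) gives
\begin{equation*}
    \sum_{n \in \mathbb{Z}^p} (4\pi^2|n|^2+\lambda^2)^{-(p+1)/2} \leq C_p\left(\lambda^{-(p+1)} + \int_{\mathbb{R}^p}(4\pi^2|x|^2+\lambda^2)^{-(p+1)/2}\,dx\right).
\end{equation*}
The substitution $x = (2\pi)^{-1}\lambda y$ converts the integral to $\lambda^{-1}$ times $\int_{\mathbb{R}^p}(|y|^2+1)^{-(p+1)/2}\,dy$, which is finite precisely because $p+1 > p$. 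Extracting the $(p+1)$-th root yields the desired bound $O(\lambda^{-1/(p+1)})$.

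Neither part poses a real obstacle; the only step requiring any care is the sum-to-integral comparison, which must be uniform as $\lambda \to \infty$. The dominant contribution comes from $|n| \lesssim \lambda$, a region where the summand varies slowly enough for the comparison to succeed with a universal constant.
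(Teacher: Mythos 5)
Your argument is correct. Note that the paper itself offers no proof of this lemma: it is stated as a known Cwikel-type estimate for the flat torus and then transported to arbitrary metrics via G\aa rding's inequality (Lemmas \ref{gardings inequality}--\ref{main transition lemma}), so there is no "paper approach" to compare against; your explicit Fourier diagonalisation is the natural way to supply the missing verification. The key points all check out: $(D_0+i\lambda)^{-1}$ is normal with $|(D_0+i\lambda)^{-1}|^2=(D_0^2+\lambda^2)^{-1}$, the flat Hodge Laplacian acts on $e^{2\pi i n\cdot x}\,dx_I$ as the scalar Laplacian on the coefficient (so the spectrum is $\{4\pi^2|n|^2\}$ with $2^p$ form components per lattice point, the harmonic forms at $n=0$ causing no trouble since $\pm i\lambda$ avoids the spectrum), the lattice-point count gives $\mu(k,(D_0+i)^{-1})=O(k^{-1/p})$ for part (i), and the sum-to-integral comparison for part (ii) is legitimate because the summand is radially decreasing, so the comparison constant depends only on $p$ and not on $\lambda$; after the rescaling $x=(2\pi)^{-1}\lambda y$ the integral is $c_p\lambda^{-1}\int_{\mathbb{R}^p}(1+|y|^2)^{-(p+1)/2}\,dy<\infty$, and taking the $(p+1)$-th root gives the stated $O(\lambda^{-1/(p+1)})$. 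The only cosmetic point is your phrase "eigenvalue $4\pi^2|n|^2$ occurring with multiplicity $2^p$": strictly the multiplicity is $2^p$ times the number of lattice points on that sphere, but since you index the singular value list by $n\in\mathbb{Z}^p$ (each contributing $2^p$ values) this does not affect any estimate.
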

        
    Since the manifold $\mathbb{T}^p$ is compact, there is essentially no difference between the spaces $L_2\Omega(\mathbb{T}^p,g)$ for different metrics $g$ as the following lemma shows:
    \begin{lem}\label{L_2 is invariant under changing metric}
        Let $g_0$ be the flat metric on $\mathbb{T}^p$, and let $g$ be an arbitrary metric on $\mathbb{T}^p$. Then the Hilbert spaces $L_2\Omega(\mathbb{T}^p,g)$ and $L_2\Omega(\mathbb{T}^p,g_0)$
        coincide with an equivalence of norms. To be precise, there exist constants $0 < c_g < C_g < \infty$ with $v \in L_2\Omega(\mathbb{T}^p,g)$ we have:
        \begin{equation*}
            c_g\|v\|_{L_2\Omega(\mathbb{T}^p,g_0)} \leq \|v\|_{L_2\Omega(\mathbb{T}^p,g)} \leq C_g\|v\|_{L_2\Omega(\mathbb{T}^p,g_0)}.
        \end{equation*}
    \end{lem}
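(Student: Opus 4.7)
The plan is to exploit compactness of $\mathbb{T}^p$ to show that the two metrics $g$ and $g_0$ are uniformly comparable as quadratic forms, and to propagate this comparability to the induced inner products on exterior powers and to the volume forms.

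First I would note that $g$ and $g_0$ are smooth sections of the bundle of positive-definite symmetric bilinear forms on $T\mathbb{T}^p$. On the compact manifold $\mathbb{T}^p$, the function $x \mapsto g_x/g_{0,x}$ (more precisely, the eigenvalues of $g_x$ with respect to $g_{0,x}$) is continuous and takes values in $(0,\infty)$, so it is bounded above and below. Hence there exist constants $0 < \alpha \leq \beta < \infty$ with
\begin{equation*}
\alpha \, g_0(v,v) \leq g(v,v) \leq \beta\, g_0(v,v),\quad v \in T_x\mathbb{T}^p,\ x \in \mathbb{T}^p.
\end{equation*}

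Next I would transfer this comparison to each exterior bundle $\Lambda^k T^*\mathbb{T}^p$. The dual metric $g^*$ on $T^*\mathbb{T}^p$ satisfies $\beta^{-1} g_0^* \leq g^* \leq \alpha^{-1} g_0^*$, and the induced inner product on $k$-forms is (up to a combinatorial factor) the $k$-fold antisymmetric tensor product of $g^*$; this yields pointwise constants $c_k, C_k > 0$ with $c_k \langle \omega,\omega\rangle_{g_0} \leq \langle \omega,\omega\rangle_g \leq C_k \langle \omega,\omega\rangle_{g_0}$ for every $\omega \in \Lambda^k T_x^*\mathbb{T}^p$. Simultaneously, the volume forms are comparable: writing $d\nu_g = \sqrt{\det(g/g_0)}\, d\nu_{g_0}$ in any local frame, the density $\sqrt{\det(g/g_0)}$ is a continuous positive function on the compact space $\mathbb{T}^p$, hence bounded between positive constants.

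Combining the pointwise inner product estimate with the volume form estimate and integrating over $\mathbb{T}^p$ gives, for every smooth compactly supported $k$-form $\omega \in \Omega^k_c(\mathbb{T}^p)$, a bound
\begin{equation*}
c_k' \int_{\mathbb{T}^p} \langle\omega,\omega\rangle_{g_0}\,d\nu_{g_0} \leq \int_{\mathbb{T}^p}\langle\omega,\omega\rangle_g\,d\nu_g \leq C_k'\int_{\mathbb{T}^p}\langle \omega,\omega\rangle_{g_0}\,d\nu_{g_0},
\end{equation*}
i.e., the norms on $\Omega^k_c(\mathbb{T}^p)$ induced by $g$ and $g_0$ are equivalent. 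Taking $c_g = \min_k\sqrt{c_k'}$ and $C_g = \max_k\sqrt{C_k'}$ and extending by density to the completions, the Hilbert spaces $L_2\Omega(\mathbb{T}^p,g)$ and $L_2\Omega(\mathbb{T}^p,g_0)$ coincide as sets with the claimed norm equivalence.

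There is essentially no obstacle here: the argument is a routine consequence of compactness. The only mild care needed is to track how the metric comparison constants transform under duality and under passage to the $k$-th exterior power; this is the step I would write out most carefully, but it is purely algebraic.
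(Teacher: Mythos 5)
Your proof is correct and rests on the same basic mechanism as the paper's: compactness of $\mathbb{T}^p$ together with positive definiteness of $g$ forces all the relevant pointwise quantities to be uniformly bounded above and below, and the norm equivalence follows by integration and density. The difference is one of completeness rather than of route. The paper's own proof is a three-line argument that compares only the measures: it observes that $\nu_g$ has Radon--Nikodym derivative $\sqrt{|\det(g)|}$ with respect to $\nu_{g_0}$, bounded above and away from zero by compactness, and concludes. Strictly speaking that handles the $k=0$ summand (functions), but the space $L_2\Omega(\mathbb{T}^p,g)$ also carries the $g$-dependent fiberwise inner product on $\Lambda^k T^*\mathbb{T}^p$ for $k\geq 1$, and one must also compare $\langle\cdot,\cdot\rangle_g$ with $\langle\cdot,\cdot\rangle_{g_0}$ on each exterior power. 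You supply exactly this missing piece: the two-sided bound $\alpha\, g_0 \leq g \leq \beta\, g_0$ on the tangent bundle, dualized and passed to $\Lambda^k$, gives uniform fiberwise comparability, which combined with the volume-form comparison yields the equivalence on each $\Omega^k_c$ and hence on the completion. So your argument is not a different approach but a more careful execution of the same one, and it in fact patches a small gap that the paper's proof leaves implicit.
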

    \begin{proof}
        The metric $\nu_g$ corresponding to $g$ has Radon-Nikodym derivative $\sqrt{|\det(g)|}$ with respect to $\nu_{g_0}$. Since $\mathbb{T}^p$
        is compact and $g$ is positive definite, the Radon-Nikodym derivative $\sqrt{|\det(g)|}$ is bounded above and bounded away from zero. Hence,
        the $L_2$-norms corresponding to $\nu_g$ and $\nu_{g_0}$ are equivalent.
    \end{proof}
    
    Sobolev spaces on $\mathbb{T}^p$ {  -- and more generally on a compact Riemannian manifold -- are defined following \cite{Lawson-Michelsohn-1989}:}
    \begin{defi}
        Let $g$ be a metric on $\mathbb{T}^p$, and for $j=1,\ldots,p$ we let $\frac{\partial}{\partial x_j}$ denote the differentiation
        with respect to the $j$th coordinate of $\mathbb{T}^p$. The Sobolev space $H^1\Omega(\mathbb{T}^p)$ is defined to be the set of $v \in L_2\Omega(\mathbb{T}^p,g)$ such that the Sobolev norm:
        \begin{equation*}
            \|v\|_{H^1\Omega(\mathbb{T}^p,g)}^2 := \|v\|_{L_2\Omega(\mathbb{T}^p,g)}^2 + \sum_{j=1}^p \left\|\frac{\partial v}{\partial x_j}\right\|_{L_2\Omega(\mathbb{T}^p,g)}^2
        \end{equation*}
        is finite.
    \end{defi}
    Lemma \ref{L_2 is invariant under changing metric} shows that the space $H^1\Omega(\mathbb{T}^p)$ is independent of the choice of metric used to define the Sobolev norm,
    since different metrics will define equivalent norms.
        
    The following result is the well known G\aa rding's inequality. A proof for general compact manifolds may be found in \cite[Theorem 2.44]{rosenberg} {  and for general elliptic operators on compact manifolds in \cite[Theorem 5.2]{Lawson-Michelsohn-1989}. }
    \begin{lem}\label{gardings inequality}
        Let $g$ be a metric on $\mathbb{T}^p$, and let $D_g$ denote the corresponding Dirac operator. Then there is a constant $C_g >0$ such that for all $v \in L_2\Omega(\mathbb{T}^p,g)$ such
        that $D_gv \in L_2\Omega(\mathbb{T}_p,g)$, we have
        \begin{equation*}
            \|v\|_{H^1\Omega(\mathbb{T}^p,g)} \leq C_g(\|v\|_{L_2\Omega(\mathbb{T}^p)}+\|D_gv\|_{L_2\Omega(\mathbb{T}^p,g)}).
        \end{equation*}
    \end{lem}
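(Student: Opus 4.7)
The plan is to deduce the stated first-order estimate from the standard G\aa rding inequality for the second-order elliptic operator $\Delta_g := D_g^2$. By Lemma \ref{L_2 is invariant under changing metric} the norms $\|\cdot\|_{L_2\Omega(\mathbb{T}^p,g)}$ and $\|\cdot\|_{L_2\Omega(\mathbb{T}^p,g_0)}$ are equivalent with constants depending only on $g$, so throughout the argument I freely absorb such equivalences into $C_g$ and work with a single fixed inner product.

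First I reduce to smooth $v \in \Omega(\mathbb{T}^p)$: since $\mathbb{T}^p$ is compact, smooth forms constitute a core for $D_g$, and both sides of the target inequality are lower semicontinuous along sequences converging in the graph norm of $D_g$. For smooth $v$, the self-adjointness of $D_g$ gives $\|D_gv\|_{L_2}^2 = \langle \Delta_g v, v\rangle$, so the goal becomes to produce constants $c,C>0$ satisfying
\begin{equation*}
c\|v\|_{H^1}^2 \leq \langle \Delta_g v, v\rangle + C\|v\|_{L_2}^2.
\end{equation*}

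Second I localize via a finite coordinate cover $\{U_\alpha\}$ of $\mathbb{T}^p$ together with a subordinate smooth partition of unity $\{\chi_\alpha\}$. In each chart, $\Delta_g$ takes the form $\sum_{i,j}g^{ij}(x)\partial_i\partial_j$ plus first-order terms, acting componentwise on the local trivialisation of $\Omega(\mathbb{T}^p)$. Compactness of $\mathbb{T}^p$ and positive-definiteness of $g$ furnish a uniform constant $\lambda>0$ with $g^{ij}(x)\xi_i\xi_j \geq \lambda|\xi|^2$ for all $x,\xi$, and an application of the Fourier transform to the compactly supported form $\chi_\alpha v$ in each chart gives a local G\aa rding bound
\begin{equation*}
\lambda\|\chi_\alpha v\|_{H^1}^2 - C_\alpha \|\chi_\alpha v\|_{L_2}^2 \leq \langle \Delta_g(\chi_\alpha v), \chi_\alpha v\rangle.
\end{equation*}

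Summing over $\alpha$ and expanding $\Delta_g(\chi_\alpha v) = \chi_\alpha\Delta_g v + [\Delta_g,\chi_\alpha]v$ reassembles the principal part into $\langle \Delta_g v, v\rangle$ up to error terms coming from the commutators $[\Delta_g,\chi_\alpha]$; each such commutator is a first-order differential operator with bounded smooth coefficients, so its contribution is controlled pointwise by a multiple of $\|v\|_{H^1}\|v\|_{L_2}$. The Peter--Paul inequality $2ab\leq \varepsilon a^2 + \varepsilon^{-1}b^2$ with small $\varepsilon$ lets me absorb a tiny multiple of $\|v\|_{H^1}^2$ into the left-hand side while moving the rest to the $\|v\|_{L_2}^2$ term, yielding the displayed estimate and hence the lemma after extracting square roots. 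The only delicate point is this commutator bookkeeping, but it is entirely routine because $\chi_\alpha$ is smooth with compact support in a chart and $\mathbb{T}^p$ is compact; a complete proof at this level of generality (for any elliptic first-order operator on a compact manifold) is provided in \cite[Theorem 2.44]{rosenberg} and \cite[Theorem 5.2]{Lawson-Michelsohn-1989}.
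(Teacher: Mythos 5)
The paper does not actually prove this lemma: it is quoted as the classical G\aa rding (elliptic) estimate, with the proof delegated to \cite[Theorem 2.44]{rosenberg} and \cite[Theorem 5.2]{Lawson-Michelsohn-1989} --- the very references you invoke at the end. Your sketch is therefore a genuine proof attempt where the paper offers only a citation, and in outline it is the standard argument and is correct: reduction to smooth forms via essential self-adjointness of $D_g$ and weak lower semicontinuity of the $H^1$-norm, the identity $\|D_gv\|_{L_2}^2=\langle \Delta_g v,v\rangle$, localization by a partition of unity, the flat-space Fourier estimate, and absorption of commutator errors by Peter--Paul. Two remarks. First, a small bookkeeping point: after summing, the principal terms give $\sum_\alpha\langle \chi_\alpha\Delta_gv,\chi_\alpha v\rangle=\langle w\,\Delta_g v,v\rangle$ with $w=\sum_\alpha\chi_\alpha^2$, which is not $\langle\Delta_gv,v\rangle$ since $\sum_\alpha\chi_\alpha=1$ does not force $\sum_\alpha\chi_\alpha^2=1$; this is routinely repaired either by normalising the partition of unity so that $\sum_\alpha\chi_\alpha^2=1$, or by writing $\langle w\Delta_gv,v\rangle=\langle wD_gv,D_gv\rangle+\langle D_gv,[D_g,w]v\rangle$ and using that $[D_g,w]$ is bounded (multiplication by $dw$), together with $w\geq c>0$ to recover $\|v\|_{H^1}^2$ from $\sum_\alpha\|\chi_\alpha v\|_{H^1}^2$. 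Second, your detour through the second-order operator $\Delta_g=D_g^2$ is more work than necessary: $D_g=d+d^*$ is itself a first-order elliptic operator on the compact manifold $\mathbb{T}^p$, so the cited elliptic estimates apply to it directly and yield $\|v\|_{H^1}\leq C(\|v\|_{L_2}+\|D_gv\|_{L_2})$ in one step; what your approach buys is self-containedness at the price of the extra commutator analysis, whereas the paper's (and the direct first-order) route buys brevity by outsourcing exactly that analysis to the standard references.
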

    
    The following lemma is the essential technical result allowing us to transfer the Cwikel-type estimates for $\mathbb{T}^p$ with the flat metric
    to $\mathbb{T}^p$ with an arbitrary metric.
    \begin{lem}\label{main transition lemma} 
        Let $g$ be an arbitrary metric on $\mathbb{T}^p$, and let $g_0$ be the flat metric on $\mathbb{T}^p$. Denote by $D_g$ and $D_0$ the Hodge-Dirac
        operators corresponding to $g$ and $g_0$ respectively. Then the operator
        \begin{equation*}
            D_0(D_g+i)^{-1}
        \end{equation*}
        defined initially on $\Omega(\mathbb{T}^p)$ has bounded extension to $L_2\Omega(\mathbb{T}^p,g)$ (or equivalently $L_2\Omega(\mathbb{T}^p,g_0)$).
    \end{lem}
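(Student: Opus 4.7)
The plan is to combine G\aa rding's inequality (Lemma \ref{gardings inequality}) with the metric-independence of the $L_2$ and $H^1$ spaces on the compact manifold $\mathbb{T}^p$. Specifically, I want to show that $(D_g+i)^{-1}$ maps $L_2\Omega(\mathbb{T}^p,g)$ boundedly into the Sobolev space $H^1\Omega(\mathbb{T}^p)$ (which is metric-independent up to equivalence of norms by Lemma \ref{L_2 is invariant under changing metric}), and then that $D_0$ maps $H^1\Omega(\mathbb{T}^p)$ boundedly into $L_2\Omega(\mathbb{T}^p,g_0)$ because its coefficients are smooth on the compact manifold.

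First, I would verify the resolvent-to-Sobolev mapping. Given $u \in L_2\Omega(\mathbb{T}^p,g)$, set $v := (D_g+i)^{-1}u$. Then $v \in \operatorname{dom}(D_g)$ and $D_g v = u - iv$, so
\[
\|v\|_{L_2\Omega(\mathbb{T}^p,g)} + \|D_g v\|_{L_2\Omega(\mathbb{T}^p,g)} \;\leq\; 2\,\|u\|_{L_2\Omega(\mathbb{T}^p,g)}.
\]
Applying Lemma \ref{gardings inequality} gives $v \in H^1\Omega(\mathbb{T}^p)$ with
\[
\|v\|_{H^1\Omega(\mathbb{T}^p,g)} \;\leq\; 2C_g\,\|u\|_{L_2\Omega(\mathbb{T}^p,g)}.
\]

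Second, I would transfer from the $g$-Sobolev norm to the $g_0$-Sobolev norm. By Lemma \ref{L_2 is invariant under changing metric}, the $L_2$ norms $\|\cdot\|_{L_2\Omega(\mathbb{T}^p,g)}$ and $\|\cdot\|_{L_2\Omega(\mathbb{T}^p,g_0)}$ are equivalent; applying this equivalence to $v$ and to each $\partial v/\partial x_j$ separately, the $H^1$ norms are also equivalent. Hence there is a constant $C_g' > 0$ with
\[
\|v\|_{H^1\Omega(\mathbb{T}^p,g_0)} \;\leq\; C_g'\,\|u\|_{L_2\Omega(\mathbb{T}^p,g)}.
\]

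Finally, I would show $D_0$ is bounded from $H^1\Omega(\mathbb{T}^p)$ to $L_2\Omega(\mathbb{T}^p,g_0)$. In the global coordinates on $\mathbb{T}^p$ the operator $D_0 = d+d^*$ (with respect to the flat metric) is a first-order differential operator with constant coefficients, so the estimate $\|D_0 w\|_{L_2\Omega(\mathbb{T}^p,g_0)} \leq C\,\|w\|_{H^1\Omega(\mathbb{T}^p,g_0)}$ is immediate for smooth $w$, and extends to all of $H^1\Omega(\mathbb{T}^p)$ by density. Combining the three estimates yields $\|D_0(D_g+i)^{-1}u\|_{L_2\Omega(\mathbb{T}^p,g_0)} \leq CC_g'\|u\|_{L_2\Omega(\mathbb{T}^p,g)}$, which (using the equivalence of $L_2$ norms one more time) gives the desired bounded extension.

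The only subtlety I anticipate is justifying that the vectors $v$ we handle actually lie in the $H^1\Omega$ Sobolev space (not merely in the domain of $D_g$), but this is precisely the content of G\aa rding's inequality as stated in Lemma \ref{gardings inequality}; the rest of the argument is just the observation that on a compact manifold all smooth metrics produce the same $L_2$ and $H^1$ topologies, so $D_0$ can be applied after inverting $D_g$.
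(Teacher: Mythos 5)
Your proposal is correct and follows essentially the same route as the paper: apply G\aa rding's inequality to $v=(D_g+i)^{-1}u$ to control its $H^1$ norm by $\|u\|_{L_2}$, then use that the flat Hodge--Dirac operator $D_0$ is bounded from $H^1\Omega(\mathbb{T}^p)$ to $L_2$ (the paper absorbs your metric-equivalence and constant-coefficient steps into the observation $\|D_0v\|_{L_2\Omega(\mathbb{T}^p,g)}\leq\|v\|_{H^1\Omega(\mathbb{T}^p,g)}$, which is the same point). The extra bookkeeping you do with the $g_0$-Sobolev norm is harmless and changes nothing essential.
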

    \begin{proof} 
        By the definition of the Sobolev norm $\|\cdot \|_{H^1\Omega(\mathbb{T}^p,g)}$, for all $v \in H^1\Omega(\mathbb{T}^p,g)$ we have:
        \begin{equation*}
            \|D_0v\|_{L_2\Omega(\mathbb{T}^p,g)}^2 \leq \|v\|_{H^1\Omega(\mathbb{T}^p,g)}^2.
        \end{equation*}
        Thus by Lemma \ref{gardings inequality}, there is a constant $C$ such that,
        \begin{equation*}
            \|D_0v\|_{L_2\Omega(\mathbb{T}^p,g)} \leq C(\|D_gv\|_{L_2\Omega(\mathbb{T}^p,g)}+\|v\|_{L_2\Omega(\mathbb{T}^p,g)}).
        \end{equation*}
        Hence, if $v \in \mathrm{dom}(D_g)$ then $v \in \mathrm{dom}(D_0)$, and so if $u \in L_2\Omega(\mathbb{T}^p,g)$ and $v = (D_g+i)^{-1}u$ then $v \in \mathrm{dom}(D_0)$. 
        So substituting $v = (D_g+i)^{-1}u$ we obtain:
        \begin{equation*}
            \|D_0(D_g+i)^{-1}u\|_{L_2\Omega(\mathbb{T}^p,g)} \leq C(1+\|(D_g+i)^{-1}\|_{\mathcal{L}_{\infty}(L_2\Omega(\mathbb{T}^p,g)})\|u\|_{L_2\Omega(\mathbb{T}^p,g)}.
        \end{equation*}
        However since $(D_g+i)^{-1}$ is bounded, we get:
        \begin{equation*}
            \|D_0(D_g+i)^{-1}u\|_{L_2\Omega(\mathbb{T}^p,g)} \leq C\|u\|_{L_2\Omega(\mathbb{T}^p,g)}.
        \end{equation*}
    \end{proof}
    
    As a consequence of Lemmas \ref{cwikel} and \ref{main transition lemma}, we get:
    \begin{cor}\label{non flat cwikel}
        Let $g$ be an arbitrary metric on $\mathbb{T}^p$ and let $D_g$ be the corresponding Hodge-Dirac operator. Then,
        \begin{enumerate}[{\rm (i)}]
            \item{}\label{non flat cwikel 1} We have,
                \begin{equation*}
                    (D_g+i)^{-1} \in \mathcal{L}_{p,\infty}(L_2\Omega(\mathbb{T}^p,g)).
                \end{equation*}
            \item{}\label{non flat cwikel 2} For $\lambda\to\infty$, we have
                \begin{equation*}
                    \|(D_g+i\lambda)^{-1}\|_{\mathcal{L}_{p+1}(L_2\Omega(\mathbb{T}^p,g))} = O(\lambda^{-\frac{1}{p+1}}).
                \end{equation*}
        \end{enumerate}
    \end{cor}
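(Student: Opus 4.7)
The plan is to transfer the Cwikel-type estimates from the flat metric $g_0$ (Lemma \ref{cwikel}) to the general metric $g$, using Lemma \ref{main transition lemma} as the bridge and Lemma \ref{L_2 is invariant under changing metric} to identify the relevant operator ideals. Since the Hilbert spaces $L_2\Omega(\mathbb{T}^p,g_0)$ and $L_2\Omega(\mathbb{T}^p,g)$ coincide as sets with equivalent norms, the Schatten and weak Schatten ideals on the two spaces coincide as sets with equivalent quasi-norms; thus it suffices to estimate the Schatten quasi-norms in either Hilbert space.

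The key factorization I would use is
\begin{equation*}
    (D_g+i\lambda)^{-1} = (D_0+i\lambda)^{-1}\cdot\bigl[(D_0+i\lambda)(D_g+i\lambda)^{-1}\bigr].
\end{equation*}
Here the first factor is handled directly by Lemma \ref{cwikel}, which yields $(D_0+i)^{-1}\in\mathcal{L}_{p,\infty}$ and $\|(D_0+i\lambda)^{-1}\|_{p+1}=O(\lambda^{-1/(p+1)})$. The task is then to show that the bracketed factor is a bounded operator whose norm is $O(1)$ as $\lambda\to\infty$. Splitting it as
\begin{equation*}
    (D_0+i\lambda)(D_g+i\lambda)^{-1} = D_0(D_g+i\lambda)^{-1} + i\lambda(D_g+i\lambda)^{-1},
\end{equation*}
the second summand has operator norm at most $1$ by the spectral theorem (since $D_g$ is self-adjoint). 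For the first summand, write
\begin{equation*}
    D_0(D_g+i\lambda)^{-1} = \bigl[D_0(D_g+i)^{-1}\bigr]\cdot(D_g+i)(D_g+i\lambda)^{-1},
\end{equation*}
where the first bracket is bounded by Lemma \ref{main transition lemma}, and the remaining factor equals $1+i(1-\lambda)(D_g+i\lambda)^{-1}$, whose operator norm is bounded uniformly for $\lambda\geq 1$.

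With this in hand, assertion (i) follows by setting $\lambda=1$ and applying the ideal property of $\mathcal{L}_{p,\infty}$, while assertion (ii) follows from the ordinary H\"older inequality in the Schatten class, $\|AB\|_{p+1}\leq\|A\|_{p+1}\|B\|_\infty$, combined with the uniform-in-$\lambda$ boundedness established above. I do not anticipate a serious obstacle: the only technical care needed is to check that the factorization is valid as an identity of bounded operators (as opposed to merely on a dense subspace), but this is guaranteed because Lemma \ref{main transition lemma} already provides the bounded extension of $D_0(D_g+i)^{-1}$ starting from its definition on $\Omega(\mathbb{T}^p)$, and $(D_g+i\lambda)^{-1}$ maps $L_2\Omega(\mathbb{T}^p,g)$ into $\mathrm{dom}(D_g)\subseteq \mathrm{dom}(D_0)$ by the domain inclusion built into the proof of Lemma \ref{main transition lemma}.
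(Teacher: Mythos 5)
Your proposal is correct and follows essentially the same route as the paper: both factorize $(D_g+i\lambda)^{-1}=(D_0+i\lambda)^{-1}\cdot\bigl[(D_0+i\lambda)(D_g+i\lambda)^{-1}\bigr]$, show the bracketed factor has operator norm $O(1)$ uniformly in $\lambda$ using Lemma \ref{main transition lemma} together with elementary functional-calculus bounds, and then invoke Lemma \ref{cwikel} with the H\"older estimate $\|AB\|_{p+1}\leq\|A\|_{p+1}\|B\|_\infty$ (and the ideal property of $\mathcal{L}_{p,\infty}$ for part (i)). The only difference is a trivially different algebraic rearrangement of $(D_0+i\lambda)(D_g+i\lambda)^{-1}$, which does not affect the argument.
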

    \begin{proof}
        Part \eqref{non flat cwikel 1} follows immediately from Lemma \ref{cwikel} and Lemma \ref{main transition lemma}.\ref{flat cwikel 1}.
        
        Now we prove \eqref{non flat cwikel 2}. First we compute $(D_0+i\lambda)(D_g+i\lambda)^{-1}$ (working on the dense domain $\Omega(\mathbb{T}^p)$):
        \begin{align*}
            (D_0+i\lambda)(D_g+i\lambda)^{-1} &= (D_0-D_g+D_g+i\lambda)(D_g+i\lambda)^{-1}\\
                                              &= 1 + (D_0-D_g)(D_g+i\lambda)^{-1}\\
                                              &= 1+D_0(D_g+i)^{-1}\frac{D_g+i}{D_g+i\lambda} - \frac{D_g}{D_g+i\lambda}.
        \end{align*}
        By Lemma \ref{main transition lemma}, the operator $D_0(D_g+i)^{-1}$ has bounded extension. Moreover, by functional calculus
        the operators $\frac{D_g+i}{D_g+i\lambda}$ and $\frac{D_g}{D_g+i\lambda}$ have bounded extension with norm bounded by a constant independent of $\lambda$.
        Thus,
        \begin{equation*}
            \|(D_0+i\lambda)(D_g+i\lambda)^{-1}\|_{\mathcal{L}_\infty(L_2\Omega(\mathbb{T}^p,g)} = O(1).
        \end{equation*}
        Now Lemma \ref{cwikel}.\eqref{flat cwikel 2} yields the result.
    \end{proof}
        
    With the above results in hand we are able to establish our first Cwikel-type result for $(X,g).$ A similar result to Lemma \ref{kord first lemma} can also be found in \cite[Proposition 13]{Rennie-2004}. The result in \cite{Rennie-2004} is very similar in nature (despite applying
    to the somewhat different situation of Riemannian spin manifolds). The method of proof here is different: we use reduce the problem to $\mathbb{T}^p$ by using local coordinates, rather than a doubling construction as employed in \cite{Rennie-2004}.
    
    \begin{lem}\label{kord first lemma} 
        Let $f\in C^{\infty}_c(X).$ We have
        \begin{equation*}
            M_f(D_g+i)^{-1}\in\mathcal{L}_{p,\infty}(L_2\Omega(X,g)).
        \end{equation*}
    \end{lem}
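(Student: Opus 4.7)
The strategy is to localize the problem to a single coordinate chart and transfer it to the compact torus, where Corollary \ref{non flat cwikel}.\eqref{non flat cwikel 1} already provides the desired Cwikel-type estimate. Since $f$ has compact support, a standard partition of unity reduces matters to the case where $\mathrm{supp}(f)$ lies in a single coordinate chart $U$. Choose $\phi \in C^\infty_c(X)$ with $\phi\equiv 1$ on $\mathrm{supp}(f)$ and $\mathrm{supp}(\phi)\subset U$, real-valued. Since $M_f = M_f M_{\phi^2}$, it suffices to prove that $M_{\phi^2}(D_g+i)^{-1}\in\mathcal{L}_{p,\infty}$.

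Via the local coordinates I would identify $U$ with an open subset of $\mathbb{T}^p$ and extend $g|_U$ to a smooth Riemannian metric $\tilde g$ on $\mathbb{T}^p$ (for instance by interpolating with the flat metric outside a slightly larger relatively compact neighborhood). Let $\tilde D$ denote the Hodge--Dirac operator on $(\mathbb{T}^p,\tilde g)$. Define a bounded operator
\[
\Phi : L_2\Omega(X,g)\to L_2\Omega(\mathbb{T}^p,\tilde g),\qquad \Phi\xi := \phi\xi,
\]
where $\phi\xi$ is viewed as a section on $\mathbb{T}^p$ via the chart identification, extended by zero outside $U$. This is well defined because $\phi\xi$ is supported inside $\mathrm{supp}(\phi)\subset U$, and $g=\tilde g$ on $U$ gives an isometry of the relevant $L_2$-norms. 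A direct computation yields $\Phi^*\Phi = M_{\phi^2}$. Since $\phi$ is smooth and compactly supported in $U$, multiplication by $\phi$ preserves $H^1$-regularity and the extension-by-zero is safe, so $\Phi$ maps $\mathrm{dom}(D_g)$ into $\mathrm{dom}(\tilde D)$ (and similarly for $\Phi^*$).

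The key step is the intertwining identity
\[
(\tilde D+i)\Phi\xi \;=\; \Phi(D_g+i)\xi \;+\; K\xi,\qquad \xi\in\mathrm{dom}(D_g),
\]
where $K:L_2\Omega(X,g)\to L_2\Omega(\mathbb{T}^p,\tilde g)$ is the bounded operator arising from Clifford multiplication by $d\phi$ (the Leibniz-rule error term). This identity uses the crucial fact that in local coordinates on $U$ the two differential operators $D_g$ and $\tilde D$ coincide, so on sections supported in $U$ we have $\tilde D(\phi\xi) = \phi D_g\xi + [D_g,M_\phi]\xi$. Rearranging and composing with $\Phi^*$ on the left gives
\[
M_{\phi^2}(D_g+i)^{-1} \;=\; \Phi^*(\tilde D+i)^{-1}\Phi \;+\; \Phi^*(\tilde D+i)^{-1}K(D_g+i)^{-1}.
\]
By Corollary \ref{non flat cwikel}.\eqref{non flat cwikel 1}, $(\tilde D+i)^{-1}\in\mathcal{L}_{p,\infty}(L_2\Omega(\mathbb{T}^p,\tilde g))$, while $\Phi,\Phi^*,K,(D_g+i)^{-1}$ are all bounded; the ideal property of $\mathcal{L}_{p,\infty}$ then places each summand in $\mathcal{L}_{p,\infty}$. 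Multiplying on the left by $M_f$ finishes the proof.

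The main technical obstacle is the rigorous justification of the intertwining relation: one must confirm that $\Phi$ indeed sends $\mathrm{dom}(D_g)$ into $\mathrm{dom}(\tilde D)$ (i.e.\ that $\phi\xi$ extended by zero lies in the graph of $\tilde D$), and that the local coordinate expression of the Hodge--Dirac operator truly depends only on the metric in a chart, so that $D_g$ and $\tilde D$ agree on sections supported in $U$. The partition-of-unity reduction is routine, and once the intertwining identity is in place the operator-ideal bookkeeping is straightforward.
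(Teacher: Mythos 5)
Your proposal is correct and follows essentially the same route as the paper: localize to a single chart, transfer to $\mathbb{T}^p$ equipped with a metric that agrees with $g$ near $\mathrm{supp}(\phi)$ and is glued to the flat metric elsewhere, invoke Corollary \ref{non flat cwikel}, and conclude via a resolvent/commutator identity together with the ideal property of $\mathcal{L}_{p,\infty}$. The paper merely packages the transfer differently (a partial isometry $V$ with three nested cutoffs $\phi_1,\phi_2,\phi_3$ and the operators $P,Q$, working on $\Omega_c(X)$ to avoid the domain discussion you flag), but the underlying argument is the same.
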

    \begin{proof}
        By using a partition of unity if necessary, we may assume without loss of generality that $f$ is supported in a single chart $(U,h)$ where $h:U\to \mathbb{R}^p$ is a homeomorphism
        onto its image, and since $f$ has compact support we may further assume without loss of generality that $h(U)$ is bounded. Since $h(U)$ is bounded, there is a sufficiently
        large box $[-N,N]^p$ with $h(U)$ in the interior of $[-N,N]^p$. By applying a translation and dilation if necessary, we may assume without loss of generality that $h(U)$
        is contained within the interior of the box $[0,1]^p$. By identifying the edges of $[0,1]^p$, we may view $h$ as a continuous function $h:U\to\mathbb{T}^p$.
        
        We define three smooth ``cut-off" functions $\phi_1,\phi_2,\phi_3$ compactly supported in $h(U)$, defined so that for each $j = 1,2,3$ we have $0\leq \phi_j \leq 1$,
        and
        \begin{enumerate}[{\rm (a)}]
            \item{} for all $x \in U$, $\phi_1(h(x))f(x) = f(x)$,
            \item{} we have $\phi_2\phi_1 = \phi_1$,
            \item{} we have $\phi_3\phi_2 = \phi_2$.
        \end{enumerate}
        In other words, on $\mathrm{supp}(f\circ h^{-1})$ we have $\phi_1=1$, and on $\mathrm{supp}(\phi_1)$ we have $\phi_2 = 1$ and on $\mathrm{supp}(\phi_2)$ we have $\phi_3 = 1$.
        
        For $j = 1,2,3$, we also define the function $\psi_j$ by pulling back $\phi_j$ to $X$:
        \begin{equation*}
            \psi_j(x) = \begin{cases}
                                    (\phi_j\circ h)(x),\quad x\in U.\\
                                    0,\quad x\notin U.
                                \end{cases}
        \end{equation*}
        Since $\phi_j$ is compactly supported in $h(U)$, the function $\psi_j$ is smooth and compactly supported in $U$.
        
        Let $g_0$ denote the flat metric on $\mathbb{T}^p$.
        The metric $g$ can be pushed forward by $h$ to a metric $h^*g$ on $h(U)$. We then define a new metric $g_1$ on $\mathbb{T}^p$ by:
        \begin{equation*}
            g_1 := (h^*g)\phi_3+g_0(1-\phi_3).
        \end{equation*}        
        Since $\phi_3$ is compactly supported in $h(U)$, the metric $g_1$ is well defined. Moreover, on $\mathrm{supp}(\phi_2)$ we have $\phi_3 = 1$ so on $\mathrm{supp}(\phi_2)$ the metric
        $g_1$ is identical to $h^*g$.
        
        We define a partial isometry $V:L_2\Omega(X,g)\to L_2\Omega(\mathbb{T}^p,g_1)$ on $\xi \in L_2\Omega(X,g)$, $z \in \mathbb{T}^p$ by:
        \begin{equation*}
            V\xi(z) = \begin{cases}
                            \xi\circ h^{-1}(z),\quad z \in \mathrm{supp}(\phi_2),\\
                            0,\quad z\notin \mathrm{supp}(\phi_2).
                      \end{cases}
        \end{equation*}
        By construction, $V$ induces an isometry from $L_2\Omega(\mathrm{supp}(\psi_2),g)\to L_2\Omega(\mathrm{supp}(\phi_2),g_1)$, and
        \begin{align*}
            VV^* &= M_{\chi_{\mathrm{supp}{\phi_2}}},\\
            V^*V &= M_{\chi_{\mathrm{supp}{\psi_2}}}.
        \end{align*}
        
        We also have that if $j = 1,2$ then
        \begin{equation*}
            M_{\phi_j}V = VM_{\psi_{j}}.
        \end{equation*}
        
        We use the important fact that for $j = 1,2$, we have an equality on $\Omega_c(X)$:
        \begin{equation*}
            V^*D_{g_1}M_{\phi_j} = D_{g}M_{\psi_j}V^*.
        \end{equation*}
        
        Next we consider the following two operators on $L_2\Omega(\mathbb{T}^p,g_1)$.
        \begin{align*}
            P := M_{\phi_1}(D_{g_1}+i)^{-1}M_{\phi_1}\\
            Q := M_{\phi_2}(D_{g_1}+i)^{-1}M_{\phi_2}.
        \end{align*}
        By Lemma \ref{non flat cwikel}, the operators $P$ and $Q$ are in $\mathcal{L}_{p,\infty}(L_2\Omega(\mathbb{T}^p,g_1))$.
        
        We now consider the operator $(D_g+i)M_{\psi_2}V^*PV$. We note that this operator is well defined on $\Omega_c(X)$, since if $u \in \Omega_c(X)$, then $M_{\psi_2}V^*PV$ is 
        smooth and supported in $\mathrm{supp}{\psi_2}$.
        Hence (working on $\Omega_c(X)$):
        \begin{align*}
            (D_g+i)M_{\psi_2}V^*PV &= V^*(D_{g_1}+i)M_{\phi_2}PV\\
                                    &= V^*(D_{g_1}+i)M_{\phi_1}(D_{g_1}+i)^{-1}M_{\phi_1}V\\
                                    &= V^*([D_{g_1},M_{\phi_1}](D_{g_1}+i)^{-1}M_{\phi_1}+M_{\phi_1}^2)V.
        \end{align*}
        Now recalling that $D_{g_1}$ is a local operator, we have $[D_{g_1},M_{\phi_1}] = [D_{g_1},M_{\phi_1}]M_{\phi_2}$.
        
        Moreover, $V^*M_{\phi_1}^2V = M_{\psi_1}^2$ and so
        \begin{equation*}
            (D_g+i)M_{\psi_2}V^*PV = V^*[D_{g_1},M_{\phi_1}]QM_{\phi_1}V+M_{\psi_1}^2.
        \end{equation*}
        Now multiplying on the left by $(D_g+i)^{-1}$, we arrive at:
        \begin{equation*}
            M_{\psi_2}V^*PV = (D_g+i)^{-1}V^*[D_{g_1},M_{\phi_1}]QM_{\phi_1}V+(D_g+i)^{-1}M_{\psi_1}^2.
        \end{equation*}
        The final step is to use the fact that $\psi_2=1$ on the support of $f$, so we may use $M_{\psi_2}^2M_f = M_f$, and multiply on the right by $M_f$
        to obtain:
        \begin{equation*}
            M_{\psi_2}V^*PVM_f = (D_g+i)^{-1}V^*[D_1,M_{\phi_1}]QM_{\phi_1}VM_f+(D_g+i)^{-1}M_f.
        \end{equation*}
        Since both $P$ and $Q$ are in $\mathcal{L}_{p,\infty}(L_2\Omega(\mathbb{T}^p,g_1))$, we finally obtain that $(D_g+i)^{-1}M_f \in \mathcal{L}_{p,\infty}(L_2\Omega(X,g))$.
    \end{proof}
    
    \begin{lem}\label{kord second lemma} 
        Let $f\in C^{\infty}_c(X).$
        Then:
        \begin{equation*}
            \|M_f(D_g+i\lambda)^{-1}\|_{\mathcal{L}_{p+1}(L_2\Omega(X,g))} = O(\lambda^{-\frac{1}{p+1}}),\quad \lambda\to\infty.
        \end{equation*}
    \end{lem}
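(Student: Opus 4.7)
The plan is to mirror the proof of Lemma \ref{kord first lemma} almost verbatim, but keeping careful track of $\lambda$-dependence in Schatten norms and invoking Corollary \ref{non flat cwikel}.\eqref{non flat cwikel 2} in place of \eqref{non flat cwikel 1}.

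First, using a smooth partition of unity, I would reduce to the case where $f \in C^\infty_c(U)$ for a single coordinate chart $(U,h)$ whose image $h(U)$ sits in the interior of the unit box in $\mathbb{R}^p$, which I identify with $\mathbb{T}^p$. Copying the setup in Lemma \ref{kord first lemma}, I choose cutoffs $\phi_1,\phi_2,\phi_3$ on $\mathbb{T}^p$ with $\phi_1 f\!\circ\!h^{-1} = f\!\circ\!h^{-1}$, $\phi_2\phi_1 = \phi_1$, $\phi_3\phi_2 = \phi_2$, pull them back to $\psi_j$ on $X$, define the interpolated metric $g_1 := (h^*g)\phi_3 + g_0(1-\phi_3)$ on $\mathbb{T}^p$, and the associated partial isometry $V:L_2\Omega(X,g)\to L_2\Omega(\mathbb{T}^p,g_1)$ intertwining $D_g$ and $D_{g_1}$ on the support of $\phi_2$.

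Next, with $P_\lambda := M_{\phi_1}(D_{g_1}+i\lambda)^{-1}M_{\phi_1}$ and $Q_\lambda := M_{\phi_2}(D_{g_1}+i\lambda)^{-1}M_{\phi_2}$, the exact computation in Lemma \ref{kord first lemma} (with $i\lambda$ in place of $i$ throughout, since that computation only used that $D_{g_1}$ is local and that the resolvent is two-sided) yields the identity
\begin{equation*}
M_{\psi_2}V^*P_\lambda V M_f = (D_g+i\lambda)^{-1}V^*[D_{g_1},M_{\phi_1}]Q_\lambda M_{\phi_1}V M_f + (D_g+i\lambda)^{-1}M_f.
\end{equation*}
Rearranging gives a formula for $(D_g+i\lambda)^{-1}M_f$ as a sum of two terms. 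Taking the adjoint (Schatten norms are $*$-invariant) and replacing $\lambda$ by $-\lambda$ as needed converts this into a bound on $M_f(D_g+i\lambda)^{-1}$.

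Now I take $\mathcal{L}_{p+1}$-norms. By Corollary \ref{non flat cwikel}.\eqref{non flat cwikel 2}, $\|(D_{g_1}+i\lambda)^{-1}\|_{p+1} = O(\lambda^{-1/(p+1)})$, so by the obvious domination $\|P_\lambda\|_{p+1}, \|Q_\lambda\|_{p+1} = O(\lambda^{-1/(p+1)})$. The first term $M_{\psi_2}V^*P_\lambda V M_f$ therefore has $\mathcal{L}_{p+1}$-norm $O(\lambda^{-1/(p+1)})$, which is exactly the rate required. The second term carries an additional factor of $(D_g+i\lambda)^{-1}$ (bounded, of operator norm $O(\lambda^{-1})$) and of $[D_{g_1},M_{\phi_1}]$ (bounded, since $\phi_1 \in C^\infty_c$), so its $\mathcal{L}_{p+1}$-norm is $O(\lambda^{-1}\cdot \lambda^{-1/(p+1)})$, which is strictly better than the required rate.

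I do not anticipate any real obstacle: the bookkeeping of the cutoffs, metrics, and partial isometries has already been done in Lemma \ref{kord first lemma}, and the only new ingredient is the quantitative Schatten estimate on the torus, which is packaged in Corollary \ref{non flat cwikel}.\eqref{non flat cwikel 2}. The one small point requiring care is that the argument controls $(D_g+i\lambda)^{-1}M_f$ on the left while the statement asks for $M_f(D_g+i\lambda)^{-1}$; this is handled by adjoints together with the conjugate-function $\bar f \in C^\infty_c(X)$.
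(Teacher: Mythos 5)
Your proposal is correct and follows essentially the same route as the paper: the paper likewise reruns the proof of Lemma \ref{kord first lemma} with $P_\lambda = M_{\phi_1}(D_{g_1}+i\lambda)^{-1}M_{\phi_1}$ and $Q_\lambda = M_{\phi_2}(D_{g_1}+i\lambda)^{-1}M_{\phi_2}$, obtains the same identity, and concludes from Corollary \ref{non flat cwikel}.\eqref{non flat cwikel 2} that $\|P_\lambda\|_{p+1},\|Q_\lambda\|_{p+1}=O(\lambda^{-\frac{1}{p+1}})$. Your explicit handling of the left/right placement of $M_f$ via adjoints and $\bar f$ is a point the paper passes over silently, and it is a welcome extra precision rather than a deviation.
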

    \begin{proof} 
        This proof proceeds along similar lines to Lemma \ref{kord first lemma}. We again assume without loss of generality
        that $f$ is supported in a single chart $(U,h)$, and construct the metric $g_1$ on $\mathbb{T}^p$ and the partial isometry $V$ identically to the proof of Lemma \ref{kord first lemma}. We
        also use the same cut-off functions $\phi_1$, $\phi_2$ and $\phi_3$, and $\psi_1,\psi_2,\psi_3$.
        
        In place of the operators $P$ and $Q$, we introduce $P_{\lambda}$ and $Q_{\lambda}$ given by:
        \begin{align*}
            P_{\lambda} &= M_{\phi_1}(D_{g_1}+i\lambda)^{-1}M_{\phi_1}\\
            Q_{\lambda} &= M_{\phi_2}(D_{g_1}+i\lambda)^{-1}M_{\phi_2}.
        \end{align*}
        
        Following the argument of Lemma \ref{kord first lemma} with $P_{\lambda}$ and $Q_\lambda$ in place of $P$ and $Q$, we arrive at:
        \begin{equation*}
            M_{\psi_2}V^*P_{\lambda}VM_f = (D_g+i\lambda)^{-1}V^*[D_1,M_{\phi_1}]Q_{\lambda}M_{\phi_1}VM_f + (D_g+i\lambda)^{-1}M_f.
        \end{equation*}
        
        Due to Lemma \ref{non flat cwikel}.\eqref{non flat cwikel 1}, we have $\|P_\lambda\|_{p+1} = O(\lambda^{-\frac{1}{p+1}})$ and similarly for $Q_\lambda$. Thus,
        \begin{equation*}
            \|(D_g+i\lambda)^{-1}\|_{\mathcal{L}_{p+1}(L_2\Omega(X,g))} = O(\lambda^{-\frac{1}{p+1}}).
        \end{equation*}
    \end{proof}

    We now finally have the results necessary to prove Theorem \ref{manifold theorem}.
    \begin{proof}[Proof of Theorem \ref{manifold theorem}] 
        We will instead work with Hypothesis \ref{replacement assumption}, as justified by Theorem \ref{replacement thm}. 
                
        First, we show that \ref{replacement assumption}.\eqref{rass1} holds
        for the triple $(C^\infty_c(X),L_2\Omega(X,g),D_g)$.
        
        To this end let $f \in C^\infty_c(X)$.
        We will prove by induction that for all $j\geq 1$, we have
        \begin{equation}\label{manifold inductive eq1}
            M_f(D_g+i)^{-j} \in \mathcal{L}_{\frac{p}{j},\infty}(L_2\Omega(X,g)).
        \end{equation}
        The case $j=1$ is already established by Lemma \ref{kord first lemma}. 
        
        Suppose now that \eqref{manifold inductive eq1} holds for $j \geq 1$. Choose $\phi \in C^\infty_c(X)$ such that $f\phi = f$. Then,
        \begin{align*}
            M_f(D_g+i)^{-j-1} &= M_{\phi}M_f(D_g+i)^{-1}\cdot (D_g+i)^{-j}\\
                              &= -M_{\phi}[(D_g+i)^{-1},M_f](D_g+i)^{-j}+M_{\phi}(D_g+i)^{-1}M_f(D_g+i)^{-j}\\
                              &= M_{\phi}(D_g+i)^{-1}[D,M_f](D_g+i)^{-j-1}+M_{\phi}(D_g+i)^{-1}M_f(D_g+i)^{-j}\\
                              &= M_{\phi}(D_g+i)^{-1}[D,M_f]M_{\phi}(D_g+i)^{-j-1}+M_{\phi}(D_g+i)^{-1}M_f(D_g+i)^{-j}.
        \end{align*}
        Due to Lemma \ref{kord first lemma}, we have $M_{\phi}(D_g+i)^{-1} \in \mathcal{L}_{p,\infty}$, and by the inductive assumption
        we also have $M_{\phi}(D_g+i)^{-j} \in \mathcal{L}_{\frac{p}{j},\infty}$. Then $M_f(D_g+i)^{-j-1} \in \mathcal{L}_{p,\infty}\cdot \mathcal{L}_{\frac{p}{j},\infty}$,
        so applying the H\"older inequality we arrive at $M_f(D_g+i)^{-j-1} \in \mathcal{L}_{\frac{p}{j+1},\infty}$. Taking $j = p$, we get that $M_f(D_g+i)^{-p} \in \mathcal{L}_{1,\infty}(L_2\Omega(X,g))$.
        
        Similarly, since $D_g$ is a local operator, we have that $[D_g,M_f] = [D_g,M_f]M_{\phi}$. Hence, $[D_g,M_f](D_g+i)^{-p} \in \mathcal{L}_{1,\infty}(L_2\Omega(X,g))$. This completes the proof of Hypothesis \ref{replacement assumption}.\eqref{rass1}
        in the case $k=0$.
        
        What remains is to show that Hypothesis \ref{replacement assumption}.\eqref{ass2} holds. We will first deal with the $k=0$ case. To that end, we will show by induction that for all $j\geq 1$:
        \begin{equation}\label{manifold inductive eq2}
            \|M_f(D_g+i\lambda)^{-j}\|_{\frac{p+1}{j}} = O(\lambda^{-\frac{j}{p+1}}),\lambda\to\infty.
        \end{equation}
        The base case $j = 1$ is the result of Lemma \ref{kord second lemma}, and the case $j=p+1$ is what is required for Hypothesis \ref{replacement assumption}.
        
        Suppose now that \eqref{manifold inductive eq2} holds for $j\geq 1$, and again choose $\phi\in C^\infty_c(X)$ such that $f\phi = f$. Then,
        \begin{align*}
            M_f(D_g+i\lambda)^{-j-1}  &= M_{\phi}\cdot M_f(D_g+i\lambda)^{-1}(D_g+i\lambda)^{-j}\\
                                      &= -M_{\phi}[(D_g+i\lambda)^{-1},M_f](D_g+i\lambda)^{-j}\\
                                      &\quad+M_{\phi}(D_g+i\lambda)^{-1}M_f(D_g+i\lambda)^{-j}\\
                                      &= M_{\phi}(D_g+i\lambda)^{-1}[D,M_f]M_{\phi}(D_g+i\lambda)^{-j-1}\\
                                      &\quad+M_{\phi}(D_g+i\lambda)^{-1}M_f(D_g+i\lambda)^{-j}.
        \end{align*}    
        By Lemma \ref{kord second lemma}, we have $\|M_{\phi}(D_g+i\lambda)^{-1}\|_{p+1} = O(\lambda^{-\frac{1}{p+1}})$, and by the inductive assumption
        we also have $\|M_{\phi}(D_g+i\lambda)^{-k}\|_{\frac{p+1}{j}} = O(\lambda^{-\frac{j}{p+1}})$. Then by the Holder inequality,
        \begin{equation*}
            \|M_f(D_g+i\lambda)^{-j-1}\|_{\frac{p+1}{j+1}} \leq O(\lambda^{-\frac{j}{p+1}})\cdot O(\lambda^{-\frac{1}{p+1}}).
        \end{equation*}
        So $\|M_f(D_g+i\lambda)^{-j-1}\|_{\frac{p+1}{j+1}} = O(\lambda^{-\frac{j+1}{p+1}})$. To conclude the same for $\partial(f)$ in place of $f$, we once more use the fact that $[D_g,M_f]M_{\phi} = [D_g,M_f]$.
        This completes the proof of the $k=0$ case of Hypothesis \ref{replacement assumption}.\eqref{rass2}.
        
        For $k > 0$, if $\phi f = f$, then we have
        \begin{equation*}
            \Lambda^k(M_f) = \Lambda^k(M_f)M_{\phi}
        \end{equation*}
        so we may apply the $k=0$ case to $\phi$ to deduce the result. 
        
        That $(C_c^\infty(X),L_2\Omega(X,g),D_g)$ satisfies
        \ref{replacement assumption}.\eqref{rass0} follows from similar reasoning.        
    \end{proof}

\chapter{Asymptotic of the heat trace}\label{heat chapter}
    
    In this chapter we complete the proof of Theorem \ref{heat thm}. This will require some delicate computations 
    exploiting Hochschild homology.
    
    For the remainder of this chapter, we assume that $(\mathcal{A},H,D)$ is a spectral triple satisfying Hypothesis \ref{main assumption}.
    Furthermore we will need the following auxiliary assumption:
    \begin{hyp}\label{auxiliary assumption}
        The spectral triple $(\mathcal{A},H,D)$ satisfies the following:
        \begin{enumerate}[{\rm (i)}]
            \item{}\label{aux ass0} $(\mathcal{A},H,D)$ has the same parity as $p$: this means that $(\mathcal{A},H,D)$ is even when $p$ is even (with grading $\Gamma$) and odd when $p$ is odd.
            \item{}\label{aux ass1} $D$ has a spectral gap at $0$.
        \end{enumerate}
    \end{hyp}
    We will show at the end of this chapter how Hypothesis \ref{auxiliary assumption} can be removed.
    
    Recall from Definition \ref{ch omega def} the fundamental mappings $\mathrm{ch}$ and $\Omega$, given by:
    \begin{align*}
        \mathrm{ch}(a_0\otimes a_1\otimes\cdots\otimes a_p) &:= \Gamma F[F,a_0][F,a_1]\cdots[F,a_p],\\
        \Omega(a_0\otimes a_1\otimes\cdots\otimes a_p) &:= \Gamma a_0\partial(a_1)\partial(a_2)\cdots \partial(a_p)
    \end{align*}
    and that Theorem \ref{heat thm} states that for all Hochschild cycles $c \in \mathcal{A}^{\otimes(p+1)}$,
    \begin{equation*}
        \mathrm{Tr}(\Omega(c)(1+D^2)^{1-\frac{p}{2}}e^{-s^2D^2}) = \frac{p}{2}\mathrm{Tr}(\mathrm{ch}(c))s^{-2}+O(s^{-1}),\quad s\to 0.
    \end{equation*}
    
    The computations in Sections \ref{combinatorial section} and \ref{commutator section} are {inspired by those in \cite{CPRS1,CRSZ}. Since
    we do not assume that the algebra $\mathcal{A}$ is unital, the computations are more delicate than those of \cite{CPRS1,CRSZ}.}

\section[Combinatorial expression]{Combinatorial expression for $\mathrm{Tr}(\Omega(c)|D|^{2-p}e^{-s^2D^2})$}\label{combinatorial section}
    We begin this section with the introduction of a new set of multilinear maps:
    \begin{defi}\label{W definition}
        Let $\mathscr{A} \subseteq \{1,2,\ldots,p\}$. We define the multilinear map $\mathcal{W}_\mathscr{A}:\mathcal{A}^{\otimes(p+1)}\to \mathcal{L}_{\infty}$ by:
        \begin{equation*}
            \mathcal{W}_{\mathscr{A}}(a_0\otimes a_1\otimes\cdots\otimes a_p) = \Gamma a_0\prod_{k=1}^p b_k(a_k)
        \end{equation*}
        where for $a \in \mathcal{A}$ and each $k$ we define:
        \begin{equation*}
            b_k(a) = \begin{cases}
                            \delta(a),\,k \in \mathscr{A},\\
                            [F,a],\,k \notin \mathscr{A}.
                       \end{cases}
        \end{equation*} 
        In the case where $\mathscr{A} = \{m\}$, a single number, $1 \leq m \leq p$, then write $\mathcal{W}_m$ for $\mathcal{W}_{\{m\}}$.
    \end{defi}
    Since by assumption $(\mathcal{A},H,D)$ is smooth, the operators $\delta(a_k)$ are defined and bounded, so that $\mathcal{W}_{\mathscr{A}}$ is well defined as a bounded operator.
    
    The two extreme cases, $\mathcal{W}_{\emptyset}$ and $\mathcal{W}_{\{1,2,\ldots,p\}}$ are easily described as:
    \begin{equation*}
        \mathcal{W}_{\emptyset}(a_0\otimes a_1\otimes\cdots\otimes a_p) = \Gamma a_0\prod_{k=1}^p [F,a_k]
    \end{equation*}
    It will be important to observe that if $(\mathcal{A},H,D)$ has the same parity as $p$, then $\mathrm{ch}(c) = \mathcal{W}_{\emptyset}(c)+F\mathcal{W}_{\emptyset}(c)F$ (this is Lemma \ref{W and ch link}).
    
    On the other extreme.
    \begin{equation*}
        \mathcal{W}_{\{1,2,\ldots,p\}}(a_0\otimes a_1\otimes\cdots\otimes a_p) = \Gamma a_0\prod_{k=1}^p\delta(a_k).
    \end{equation*}
        
    Associated to a subset $\mathscr{A} \subseteq \{1,2,\ldots,p\}$ we have the number,
    \begin{equation*}
        n_\mathscr{A} = |\{(j,k) \in \{1,2,\ldots,p\}^2\;:\; j < k\text{ and }j \in \mathscr{A}, k\neq \mathscr{A}\}|,
    \end{equation*}
    where $|\cdot|$ denotes the cardinality of a set.

    Theorem \ref{combinatorial theorem} (to be stated below) is the main result of this section. Roughly speaking, it shows that one can replace $\Omega(c)$ in $\mathrm{Tr}(\Omega(c)|D|^{2-p}e^{-s^2D^2})$ with a sum of $\mathcal{W}_\mathscr{A}(c)$
    over all subsets $\mathscr{A} \subseteq \{1,\ldots,p\}$.
    However first we need a Lemma which constitutes the core of the proof of Theorem \ref{combinatorial theorem}. Most of this section is devoted to the proof of the following lemma, which is split into various parts.
    \begin{lem}\label{combinatorial lemma} 
        Let $(\mathcal{A},H,D)$ be a smooth spectral triple where $D$ has a spectral gap at $0.$ For all $c\in(\mathcal{A}+\mathbb{C})^{\otimes (p+1)},$ the operator
        \begin{equation*}
            \left(\Omega(c)|D|^{2-p}-\sum_{\mathscr{A}\subseteq \{1,\cdots,p\}}(-1)^{n_{\mathscr{A}}}\mathcal{W}_{\mathscr{A}}(c)D^{2-|\mathscr{A}|}\right)\cdot|D|^{p-1}.
        \end{equation*}
        has bounded extension.
    \end{lem}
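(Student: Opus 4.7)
The plan is to work on the common invariant dense subspace $H_\infty$ from Definition \ref{H_infty definition}, where the identity $\partial(a_k) = L(a_k) + F\delta(a_k) = [F,a_k]|D| + F\delta(a_k)$ (from the definition of $L$) holds without domain issues. Since $|D|^{2-p}$ is bounded under the spectral-gap hypothesis, the claim is equivalent to
\begin{equation*}
\Omega(c)|D|\; -\; \sum_{\mathscr{A}\subseteq\{1,\ldots,p\}}(-1)^{n_\mathscr{A}}\,\mathcal{W}_\mathscr{A}(c)\,D^{2-|\mathscr{A}|}|D|^{p-1}\;\in\;\mathcal{L}_\infty,
\end{equation*}
both sides a priori defined on $H_\infty$. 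Substituting the identity into $\Omega(c) = \Gamma a_0\prod_k\partial(a_k)$ and fully expanding yields $\Omega(c)|D| = \sum_\mathscr{A} T_\mathscr{A}$ with $T_\mathscr{A} = \Gamma a_0\prod_k c_{\mathscr{A},k}(a_k)\cdot|D|$, where $c_{\mathscr{A},k}(a) = F\delta(a)$ for $k\in\mathscr{A}$ and $[F,a]|D|$ otherwise. It then suffices to match each $T_\mathscr{A}$ with $(-1)^{n_\mathscr{A}}\mathcal{W}_\mathscr{A}(c)D^{2-|\mathscr{A}|}|D|^{p-1}$ modulo $\mathcal{L}_\infty$.

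The match is achieved by three manipulations on $H_\infty$: (i) the commutation $|D|F = F|D|$ together with $[|D|,T] = \delta(T)$ for $T\in\mathrm{dom}(\delta)$ moves all $|D|$'s to the right, with bounded $\delta$-remainders by smoothness; (ii) the exact anticommutation $F[F,a_k] = -[F,a_k]F$ (a consequence of $F^2 = 1$ under the spectral-gap hypothesis) transports $F$'s across $[F,a_k]$-factors with a sign and no remainder; and (iii) $F\delta(a_k) = \delta(a_k)F + L(\delta(a_k))|D|^{-1}$, whose remainder comes from $[F,\delta(a_k)] = L(\delta(a_k))|D|^{-1}$ and has a bounded coefficient. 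After these moves, the principal part of $T_\mathscr{A}$ is $(-1)^{n_\mathscr{A}}\Gamma a_0 \tilde d_1\cdots\tilde d_p\cdot F^{|\mathscr{A}|\bmod 2}|D|^{p+1-|\mathscr{A}|}$, with $\tilde d_k$ equal to $\delta(a_k)$ or $[F,a_k]$ according to membership in $\mathscr{A}$; the identity $F^{|\mathscr{A}|\bmod 2}|D|^{p+1-|\mathscr{A}|} = D^{2-|\mathscr{A}|}|D|^{p-1}$ (via $F|D|=D$ and $F^2=1$) converts this into $(-1)^{n_\mathscr{A}}\mathcal{W}_\mathscr{A}(c)D^{2-|\mathscr{A}|}|D|^{p-1}$. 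The sign arises from step (ii): each of the $|\mathscr{A}|$ ``free'' $F$'s picks up a $-1$ when crossing an index $k>j$ with $k\notin\mathscr{A}$, for a total of $n_\mathscr{A}$ sign changes by the very definition of $n_\mathscr{A}$.

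The main obstacle is controlling the remainders accumulated during the rearrangement. The essential observation is a ``degree'' bookkeeping on $H_\infty$: each factor $[F,a_k]$ is equal to $L(a_k)|D|^{-1}$ and therefore already carries a $|D|^{-1}$; each remainder from move (iii) contributes a further $|D|^{-1}$ with bounded coefficient $L(\delta^m(b))$ (bounded by smoothness and Lemma \ref{delta and partial commute}); and each remainder from move (i) replaces a factor of $|D|$ by a bounded $\delta$-operator. Tracking these contributions, every remainder turns out to be a product of bounded operators with net $|D|$-degree zero, and so extends to a bounded operator on $H$. The cleanest way to organise this combinatorics is induction on $p$, with the base case $p=1$ reducing to the one-line identity $\Omega(a_0\otimes a_1)|D| - \Gamma a_0[F,a_1]D^2 - \Gamma a_0\delta(a_1)D = \Gamma a_0\,L(\delta(a_1))$, which lies in $\mathcal{L}_\infty$ by smoothness.
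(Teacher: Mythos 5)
Your proposal is correct and follows essentially the same route as the paper: the paper likewise writes $\partial(a)=F\delta(a)+L(a)$ on $H_\infty$, expands $\Omega(c)$ over subsets via \eqref{combinatorial fact}, collects the $F$'s with the sign $(-1)^{n_{\mathscr{A}}}$ by anticommuting past the $[F,a_k]$-factors, and controls the accumulated remainders by smoothness, merely organising your single rearrangement into two inductive stages (Lemmas \ref{first combinatorial lemma} and \ref{second combinatorial lemma}, with the auxiliary functionals $\mathcal{R}_{\mathscr{A}}$ and $\mathcal{P}_{\mathscr{A}}$). Your ``net $|D|$-degree zero'' bookkeeping is precisely what Lemma \ref{left to right corollary} (conjugation of elements of $\mathcal{B}$ by powers of $|D|$) formalises, so that lemma is the ingredient to cite when asserting boundedness of the remainders in your induction.
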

        
    Before proving Lemma \ref{combinatorial lemma} above we initially need:

    \begin{lem}\label{w bounded lemma} 
        Let $(\mathcal{A},H,D)$ be a smooth spectral triple, where $D$ has a spectral gap at $0$. For all $c\in(\mathcal{A}+\mathbb{C})^{\otimes (p+1)},$ the operator
        $$\mathcal{W}_{\mathscr{A}}(c)\cdot |D|^{p-|\mathscr{A}|}$$
        has bounded extension.
    \end{lem}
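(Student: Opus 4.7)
\textbf{Proof plan for Lemma \ref{w bounded lemma}.}

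The core observation is that each factor $[F,a_k]$ behaves like an operator of ``order $-1$'' in the sense that $[F,a_k]|D|$ has bounded extension: indeed, since $D$ has a spectral gap at $0$ we have $F|D|=D$ on $H_\infty$, so
\[
[F,a_k]|D| \;=\; [D,a_k] - F[|D|,a_k] \;=\; \partial(a_k)-F\delta(a_k) \;=\; L(a_k),
\]
which is bounded. Moreover $L(a_k)\in\mathrm{dom}_\infty(\delta)$ since $\partial(a_k),\delta(a_k)\in\mathrm{dom}_\infty(\delta)$ by smoothness and since $F$ commutes with every function of $|D|$ (so $F\in\mathrm{dom}_\infty(\delta)$). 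The factors $\delta(a_k)$ for $k\in\mathscr{A}$ are bounded elements of $\mathrm{dom}_\infty(\delta)$ as well. Heuristically, the product $\mathcal{W}_\mathscr{A}(c)$ then has ``order $-m$'' where $m=p-|\mathscr{A}|$ is the number of $[F,\cdot]$ factors, so multiplication by $|D|^m$ on the right should yield a bounded operator.

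The plan is to make this rigorous by establishing the following general boundedness lemma, and then specialising. Call a bounded operator $T\in\mathrm{dom}_\infty(\delta)$ an \emph{order $-1$ factor} if the operator $T|D|$ admits a bounded extension lying in $\mathrm{dom}_\infty(\delta)$.

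\textbf{Key Lemma.} \emph{Let $T_1,\ldots,T_n$ be bounded operators in $\mathrm{dom}_\infty(\delta)$, of which $m$ are order $-1$ factors. Then $T_1\cdots T_n\cdot |D|^l$ has bounded extension for every $0\le l\le m$.}

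I would prove the Key Lemma by induction on $l$. The base case $l=0$ is just a product of bounded operators. For $l\ge 1$, $m\ge 1$, pick $i_0$ to be the largest index such that $T_{i_0}$ is an order $-1$ factor. On $H_\infty$, commute one factor of $|D|$ leftward out of $|D|^l$ through $T_{i_0+1},\ldots,T_n$ using the identity $T|D|=|D|T-\delta(T)$ iteratively:
\[
T_{i_0+1}\cdots T_n\,|D| \;=\; |D|\,T_{i_0+1}\cdots T_n \;-\;\sum_{j=i_0+1}^n T_{i_0+1}\cdots T_{j-1}\,\delta(T_j)\,T_{j+1}\cdots T_n.
\]
This yields two kinds of terms. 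The first (principal) term becomes $T_1\cdots T_{i_0-1}L_{i_0}T_{i_0+1}\cdots T_n\cdot |D|^{l-1}$ where $L_{i_0}=T_{i_0}|D|$ is bounded and in $\mathrm{dom}_\infty(\delta)$; it has at least $m-1$ order $-1$ factors and exponent $l-1$, so the inductive hypothesis $l-1\le m-1$ applies. Each summand of the remaining (correction) terms has the form $T_1\cdots T_{i_0}\cdot B\cdot |D|^{l-1}$ where $B$ is a product of elements of $\mathrm{dom}_\infty(\delta)$; here the number of order $-1$ factors is still $m$, and since $l-1\le m$ the inductive hypothesis again applies. This is exactly where the \emph{strengthened} hypothesis $l\le m$ (rather than equality) is essential — it is what makes the induction close when the correction term does not consume an order $-1$ factor.

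To conclude, apply the Key Lemma to $n=p+1$ with operators $\Gamma a_0, b_1(a_1),\ldots,b_p(a_p)$ and with $l=m=p-|\mathscr{A}|$; the order $-1$ factors are exactly the $b_k(a_k)=[F,a_k]$ for $k\notin\mathscr{A}$, which are order $-1$ by the identity $[F,a_k]|D|=L(a_k)$ recorded above. The extension from $c\in\mathcal{A}^{\otimes(p+1)}$ to $c\in(\mathcal{A}+\mathbb{C})^{\otimes(p+1)}$ is immediate, since $\delta$ and $[F,\cdot]$ both annihilate constants, so constants in slots $k\ge 1$ simply make the whole expression zero, while a constant in slot $0$ merely produces a scalar multiple of the remaining product. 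The principal difficulty in this argument — finding the correct inductive statement so that the correction terms can themselves be handled inductively — is resolved by allowing the inequality $l\le m$ rather than insisting on equality.
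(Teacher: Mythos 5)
Your proposal is correct, but it takes a genuinely different route from the paper. The paper argues by induction on the tensor degree $p$: it peels off the last factor, writing $\mathcal{W}_{\mathscr{A}}(c)|D|^{p-|\mathscr{A}|}=\bigl(\mathcal{W}_{\mathscr{B}}(c')|D|^{p-1-|\mathscr{B}|}\bigr)\cdot\bigl(|D|^{-(p-1-|\mathscr{B}|)}\,x\,|D|^{p-1-|\mathscr{B}|}\bigr)$ with $x=\delta(a_p)$ or $x=L(a_p)=\partial(a_p)-F\delta(a_p)$, and then invokes Lemma \ref{left to right corollary} (boundedness of $|D|^{-m}x|D|^{m}$ for $x\in\mathcal{B}$, which rests on the binomial expansion of Lemma \ref{left to right lemma}); this step is where the spectral gap is actually used, since it requires forming $|D|^{-m}$. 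You instead prove a standalone counting lemma by induction on the exponent $l$ rather than on $p$: each factor of $|D|$ is pushed leftward through the tail via $T|D|=|D|T-\delta(T)$ until it is absorbed by the rightmost remaining order $-1$ factor, and the deliberately weakened hypothesis $l\le m$ is what lets the correction terms (which keep all $m$ order $-1$ factors but lose one power of $|D|$) close the induction. The ingredients you use are all available in the paper: $[F,a]|D|=L(a)$ on $\mathrm{dom}(D)$, $L(a)\in\mathrm{dom}_\infty(\delta)$ because $\partial(a),\delta(a),F\in\mathrm{dom}_\infty(\delta)$ and $\mathrm{dom}_\infty(\delta)$ is an algebra, and invariance of $H_\infty$ under all the operators involved; your treatment of the unitization is also fine, since $\delta$ and $[F,\cdot]$ annihilate constants and $\Gamma\in\mathrm{dom}_\infty(\delta)$. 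What each approach buys: the paper's proof is shorter because it reuses conjugation machinery ($|D|^{\pm m}x|D|^{\mp m}$) already established and exploited in several other lemmas, whereas your Key Lemma avoids negative powers of $|D|$ altogether — so it does not really need the spectral-gap hypothesis at all — and is a reusable, self-contained statement of the ``order counting'' heuristic.
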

    \begin{proof} 
        By linearity it suffices to prove the assertion for elementary tensors. Let $c = a_0\otimes a_1\otimes\cdots\otimes a_p\in(\mathcal{A}+\mathbb{C})^{\otimes (p+1)}$ 
        and let $c'=a_0\otimes a_1\otimes\cdots\otimes a_{p-1}\in(\mathcal{A}+\mathbb{C})^{\otimes p}.$

        We will prove this assertion by induction on $p.$ If $p=1$ and $\mathscr{A}=\emptyset,$ then
        $$\mathcal{W}_{\mathscr{A}}(c)|D|^{p-|\mathscr{A}|}=\Gamma a_0[F,a_p]|D|$$
        and recall that $[F,a_1]|D|$ has a bounded extension $L(a_p)$.
        On the other hand, if $p=1$ and $\mathscr{A}=\{1\},$ then
        $$\mathcal{W}_{\mathscr{A}}(c)|D|^{p-|\mathscr{A}|}=\Gamma a_0\delta(a_1)\in\mathcal{L}_{\infty}.$$
        This proves the base of induction. 
        
        Next, let $p \geq 2$ and assume that the statement is true for $p-1$. To this end, 
        let $\mathscr{B}\subseteq\{1,\cdots,p-1\}$ be defined by the formula $\mathscr{B}=\mathscr{A}\setminus\{p\}.$ There are two distinct cases: when $p \in \mathscr{A}$ and $p \notin \mathscr{A}$.

        First suppose $p\in\mathscr{A}.$ Here we have $|\mathscr{B}| = |\mathscr{A}|-1$, and
        \begin{align*}
            \mathcal{W}_{\mathscr{A}}(c)|D|^{p-|\mathscr{A}|} &= \mathcal{W}_{\mathscr{B}}(c')\delta(a_p)|D|^{p-1-|\mathscr{B}|}\\
                                      &= \left(\mathcal{W}_{\mathscr{B}}(c')|D|^{p-1-|\mathscr{B}|}\right)\left(|D|^{-p+1+|\mathscr{B}|}\delta(a_p)|D|^{p-1-|\mathscr{B}|}\right)
        \end{align*}
        The first factor in the right hand side has bounded extension by the inductive assumption. Moreover, the second factor on the right hand side has bounded extension by Lemma \ref{left to right corollary}. This proves step of induction for the case $p\in\mathscr{A}.$
        
        Now we deal with the case where $p\notin \mathscr{A}$. Then $\mathscr{B} = \mathscr{A}$, and
        \begin{align*}
            \mathcal{W}_{\mathscr{A}}(c)|D|^{p-|\mathscr{A}|} &= \mathcal{W}_{\mathscr{B}}(c')L(a_p)|D|^{p-1-|\mathscr{B}|}\\
                                      &= \left(\mathcal{W}_{\mathscr{B}}(c')|D|^{p-1-|\mathscr{B}|}\right)\left(|D|^{-p+1+|\mathscr{B}|}L(a_p)|D|^{p-1-|\mathscr{B}|}\right)
        \end{align*}
        The first factor in the above right hand side is bounded by the inductive assumption. For the second factor, we use the expression $L(a_p) = \partial(a_p)-F\delta(a_p)$.
        Since $\partial(a_p), \delta(a_p) \in \mathcal{B}$ then it follows from Lemma \ref{left to right corollary} that the second factor on the right hand side has bounded extension. Hence, $\mathcal{W}_{\mathscr{A}}(c)|D|^{p-|\mathscr{A}|}$
        extends to a bounded linear operator. 
        
    \end{proof}

    In order to prove Lemma \ref{combinatorial lemma}, we introduce two more classes of multilinear functionals.
    \begin{defi}
        Let $\mathscr{A} \subseteq \{1,2,\ldots,p\}$. We define the multilinear map $\mathcal{R}_{\mathscr{A}}:\mathcal{A}^{\otimes(p+1)}\to \mathcal{L}_{\infty}$ by
        \begin{equation*}
            \mathcal{R}_{\mathscr{A}}(a_0\otimes\cdots\otimes a_p) := \Gamma a_0\prod_{k=1}^px_k(a_k),    
        \end{equation*}
        where for each $1\leq k \leq p$ and $a \in \mathcal{A}$,
        \begin{equation*}
            x_k(a) := \begin{cases}
                        F\delta(a),\,k \in \mathscr{A},\\
                        L(a), k \notin \mathscr{A}.
                   \end{cases}
        \end{equation*}
        
        We also define the multilinear map $\mathcal{P}_{\mathscr{A}}:\mathcal{A}^{\otimes(p+1)}\to \mathcal{L}_{\infty}$ by
        \begin{equation*}
            \mathcal{P}_{\mathscr{A}}(a_0\otimes \cdots\otimes a_p) := \Gamma a_0\prod_{k=1}^p y_k(a_k)
        \end{equation*}
        where for each $1\leq k \leq p$ and $a \in \mathcal{A}$,
        \begin{equation*}
            y_k(a) := \begin{cases}
                        \delta(a),\,k \in \mathscr{A},\\
                        L(a),\, k\notin \mathscr{A}.
                   \end{cases}
        \end{equation*}
    \end{defi}

    \begin{lem}\label{first combinatorial lemma} 
        Let $(\mathcal{A},H,D)$ be a smooth spectral triple, where $D$ has a spectral gap at $0$.
        For all $c \in(\mathcal{A}+\mathbb{C})^{\otimes(p+1)}$ and all $\mathscr{A} \subseteq \{1,2,\ldots,p\}$, the operator
        \begin{equation*}
            \left(\mathcal{R}_{\mathscr{A}}(c)-(-1)^{n_{\mathscr{A}}}\mathcal{P}_{\mathscr{A}}(c)\cdot F^{|\mathscr{A}|}\right)\cdot |D|
        \end{equation*}
        has bounded extension.
    \end{lem}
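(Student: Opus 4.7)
The plan is to prove this by induction on $p$, exploiting the auxiliary assumption $F^2 = 1$ (which forces $F$ and $|D|$ to satisfy $F|D| = |D|F$) together with three algebraic identities valid as operators on $H_\infty$:
\begin{enumerate}[{\rm (i)}]
\item $F L(a) = -L(a) F$, which follows from $F[F,a] = -[F,a]F$ (a consequence of $F^2 = 1$) combined with $L(a) = [F,a]|D|$ and $F|D| = |D|F$.
\item $F\delta(a) = \delta(a) F + [F,\delta(a)]$, where $[F,\delta(a)] = \delta([F,a])$; since $[F,a] = L(a)|D|^{-1}$ when $D$ has a spectral gap at $0$, the crucial estimate
$$[F,\delta(a)]\cdot |D| = \delta(L(a))$$
holds, and the right side is bounded by smoothness.
\item For $T \in \mathrm{dom}(\delta)$, $T|D| = |D|T - \delta(T)$, so $|D|$ can always be pushed across a bounded factor at the cost of a bounded correction.
\end{enumerate}

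The base case $p = 1$ is handled directly: for $\mathscr{A} = \emptyset$ the difference vanishes, while for $\mathscr{A} = \{1\}$ it equals $\Gamma a_0 [F,\delta(a_1)]$, whose product with $|D|$ is $\Gamma a_0 \delta(L(a_1))$, which is bounded by (ii). For the inductive step, I split on whether $p \in \mathscr{A}$. When $p \notin \mathscr{A}$, observation (i) together with the combinatorial identity $n_\mathscr{A}^{(p)} = n_\mathscr{A}^{(p-1)} + |\mathscr{A}|$ allows the difference to be factored as
$$\mathcal{R}_\mathscr{A}^{(p)}(c) - (-1)^{n_\mathscr{A}^{(p)}}\mathcal{P}_\mathscr{A}^{(p)}(c) F^{|\mathscr{A}|} = \Big(\mathcal{R}_\mathscr{A}^{(p-1)}(c') - (-1)^{n_\mathscr{A}^{(p-1)}}\mathcal{P}_\mathscr{A}^{(p-1)}(c')F^{|\mathscr{A}|}\Big) L(a_p),$$
after which (iii) reduces boundedness to the inductive hypothesis (which yields $A|D|$ bounded) applied against the bounded operators $L(a_p)$ and $\delta(L(a_p))$.

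When $p \in \mathscr{A}$, set $\mathscr{A}' := \mathscr{A}\setminus\{p\}$, so that $n_\mathscr{A}^{(p)} = n_{\mathscr{A}'}^{(p-1)}$ and $|\mathscr{A}| = |\mathscr{A}'| + 1$. Applying (ii) splits $F\delta(a_p) = \delta(a_p) F + [F,\delta(a_p)]$; the commutator piece produces the bounded contribution $\mathcal{R}_{\mathscr{A}'}^{(p-1)}(c')\cdot \delta(L(a_p))$ directly from (ii). The remaining main piece is compared to the target $(-1)^{n_{\mathscr{A}'}^{(p-1)}}\mathcal{P}_{\mathscr{A}'}^{(p-1)}(c')\delta(a_p) F^{|\mathscr{A}|}$: using $F^{|\mathscr{A}|} = F\cdot F^{|\mathscr{A}'|}$ and pushing the outermost $|D|$ leftward through $\delta(a_p) F$ via (iii), I reduce to the inductive hypothesis for $\mathscr{A}'$ (supplying $A|D|$ bounded, multiplied by a bounded factor) plus a residual term $\mathcal{P}_{\mathscr{A}'}^{(p-1)}(c')|D|[F^{|\mathscr{A}'|},\delta(a_p)] F$. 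This residual vanishes when $|\mathscr{A}'|$ is even and equals $\mathcal{P}_{\mathscr{A}'}^{(p-1)}(c')|D|[F,\delta(a_p)]F$ when odd; in the odd case one commutes once more to write $|D|[F,\delta(a_p)] = \delta(L(a_p)) + [F,\delta^2(a_p)]$, which is bounded.

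The main obstacle is the combinatorial bookkeeping required to confirm that the signs combine to exactly $(-1)^{n_\mathscr{A}}$. Conceptually this is transparent: each $F$ originally attached to a position $j \in \mathscr{A}$ must be shepherded past every $L$-factor at position $k > j$ with $k \notin \mathscr{A}$, contributing a sign $-1$ per such crossing via (i), so the total sign is $\prod_{j \in \mathscr{A}}(-1)^{\#\{k>j,\,k\notin\mathscr{A}\}} = (-1)^{n_\mathscr{A}}$. Encoding this inductively through the bookkeeping of $n_\mathscr{A}^{(p)}$ versus $n_\mathscr{A}^{(p-1)}$, and ensuring that at each step every correction term produced by (ii) or (iii) either closes up via smoothness or is absorbed by the inductive hypothesis, is where the technical care is concentrated.
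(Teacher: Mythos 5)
Your proposal is correct and follows essentially the same route as the paper's proof: induction on $p$ with the base case $p=1$, a case split on whether $p\in\mathscr{A}$, the anticommutation $FL(a_p)=-L(a_p)F$ with the sign bookkeeping $n_{\mathscr{A}}=n_{\mathscr{A}'}+|\mathscr{A}'|$ (resp.\ $n_{\mathscr{A}}=n_{\mathscr{A}'}$), a parity split on $|\mathscr{A}\setminus\{p\}|$ using $F^2=1$, and the smoothness identities $\delta(L(a))=L(\delta(a))$, $[F,\delta(a)]|D|$ bounded. The only difference is cosmetic organization of the $p\in\mathscr{A}$ step (you push $|D|$ leftward and isolate the residual $[F^{|\mathscr{A}'|},\delta(a_p)]$, while the paper writes $F|D|=D$ and commutes $\delta(a_p)$ across $D$ via $\partial(\delta(a_p))$), so no further comparison is needed.
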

    \begin{proof} 
        This proof is similar to that of \ref{w bounded lemma}. Once again it suffices to prove the result for an elementary tensor $c = a_0\otimes a_1\otimes\cdots \otimes a_p$,
        and we prove the statement by induction on $p$. Denote, for brevity, $c'=a_0\otimes a_1\otimes\cdots\otimes a_{p-1}\in(\mathcal{A}+\mathbb{C})^{\otimes p}.$

        For the base of induction, when $p = 1$, we deal with the two possibilities $\mathscr{A} = \{1\}$ and $\mathscr{A} = \emptyset$. 
        If $\mathscr{A} = \{1\}$, then
        \begin{align*}
            \mathcal{R}_{\mathscr{A}}(c) &= \Gamma a_0F\delta(a_1)\\
            \mathcal{P}_{\mathscr{A}}(c)F^{|\mathscr{A}|} &= \Gamma a_0\delta(a_1)F.
        \end{align*}
        So,
        \begin{align*}
            \left(\mathcal{R}_{\mathscr{A}}(c)-(-1)^{n_{\mathscr{A}}}\mathcal{P}_{\mathscr{A}}(c)\cdot F^{|\mathscr{A}|}\right)\cdot |D| &= \Gamma a_0(F\delta(a_1)-\delta(a_1)F)|D|\\
                                                                                         &= \Gamma a_0[F,\delta(a_1)]|D|\\
                                                                                         &= \Gamma a_0L(\delta(a_1)).
        \end{align*}
        since $L(\delta(a_1))$ has bounded extension, so does the above left hand side.
        
        Now in the case that $p = 1$ and $\mathscr{A} = \emptyset$,
        \begin{align*}
            \mathcal{R}_{\mathscr{A}}(c) &= \Gamma a_0 L(a_1)\\
            \mathcal{P}_{\mathscr{A}}(c) &= \Gamma a_0 L(a_1)
        \end{align*}
        and $|\mathscr{A}| = 0$, so $\mathcal{R}_{\mathscr{A}}(c) - \mathcal{P}_{\mathscr{A}}(c)F^{|\mathscr{A}|} = 0$. This proves the $p = 1$ case.
        
        Now suppose that $p > 1$ and the assertion is proved for $p-1$.
        For this purpose, let $\mathscr{B}\subset\{1,\cdots,p-1\}$ be defined by 
        $\mathscr{B}=\mathscr{A}\backslash\{p\}.$ 
        
        If $p \in \mathscr{A}$, then $n_{\mathscr{A}} = n_{\mathscr{B}}$, and if $p \notin \mathscr{A}$ then $n_{\mathscr{A}} = n_{\mathscr{B}} + |\mathscr{B}|$.
        
        
        Now we consider separately the cases $p \in \mathscr{A}$ and $p\notin \mathscr{A}$.
        
        First, if $p \in \mathscr{A}$, then,
        \begin{align*}
            \mathcal{R}_{\mathscr{A}}(c) &= \mathcal{R}_{\mathscr{B}}(c')F\delta(a_p),\\
            \mathcal{P}_{\mathscr{A}}(c) &= \mathcal{P}_{\mathscr{B}}(c')\delta(a_p).
        \end{align*}   
        Hence, since $|\mathscr{A}| = |\mathscr{B}|+1$ and $n_{\mathscr{A}} = n_{\mathscr{B}}$ in this case:
        $$(\mathcal{R}_{\mathscr{A}}(c) - (-1)^{n_{\mathscr{A}}}\mathcal{P}_{\mathscr{A}}(c)F^{|\mathscr{A}|})|D| =(\mathcal{R}_{\mathscr{B}}(c')F\delta(a_p)-(-1)^{n_{\mathscr{A}}}\mathcal{P}_{\mathscr{B}}(c')\delta(a_p)F^{|\mathscr{B}|+1})|D|=$$
        $$= \mathcal{R}_{\mathscr{B}}(c')[F,\delta(a_p)]|D|+(\mathcal{R}_{\mathscr{B}}(c')\delta(a_p)F-(-1)^{n_{\mathscr{B}}}\mathcal{P}_{\mathscr{B}}(c')\delta(a_p)F\cdot F^{|\mathscr{B}|})|D|.$$
        $$= \mathcal{R}_{\mathscr{B}}(c')L(a_p)+(\mathcal{R}_{\mathscr{B}}(c')\delta(a_p)-(-1)^{n_{\mathscr{B}}}\mathcal{P}_{\mathscr{B}}(c')\delta(a_p)\cdot F^{|\mathscr{B}|})D.$$

        In the case that $|\mathscr{B}|$ is even, we have $F^{|\mathscr{B}|} = 1$ and so:
        $$(\mathcal{R}_{\mathscr{A}}(c) - (-1)^{n_{\mathscr{A}}}\mathcal{P}_{\mathscr{A}}(c)F^{|\mathscr{A}|})|D|=$$
        $$=\mathcal{R}_{\mathscr{B}}(c')L(a_p)+(\mathcal{R}_{\mathscr{B}}(c')-(-1)^{n_{\mathscr{B}}}\mathcal{P}_{\mathscr{B}}(c')\cdot F^{|\mathscr{B}|})\cdot \delta(a_p)D.$$
        $$=\mathcal{R}_{\mathscr{B}}(c')L(a_p)+(\mathcal{R}_{\mathscr{B}}(c')-(-1)^{n_{\mathscr{B}}}\mathcal{P}_{\mathscr{B}}(c')\cdot F^{|\mathscr{B}|})D\cdot \delta(a_p)-$$
        $$-(\mathcal{R}_{\mathscr{B}}(c')-(-1)^{n_{\mathscr{B}}}\mathcal{P}_{\mathscr{B}}(c')\cdot F^{|\mathscr{B}|})\cdot\partial(\delta(a_p)).$$
        Since $\delta(a_p),$ $\partial(\delta(a_p))$ and $L(a_p)$ are bounded, by the inductive hypothesis the above has bounded extension, completing the proof of the case where $p \in \mathscr{A}$ and $|\mathscr{B}|$ is even.
        
        On the other hand, if $p \in \mathscr{A}$ and $|\mathscr{B}|$ is odd, then:
        $$(\mathcal{R}_{\mathscr{A}}(c) - (-1)^{n_{\mathscr{A}}}\mathcal{P}_{\mathscr{A}}(c)F^{|\mathscr{A}|})|D|=$$
        $$= \mathcal{R}_{\mathscr{B}}(c')L(a_p)+\mathcal{R}_{\mathscr{B}}(c')\delta(a_p)D-(-1)^{n_{\mathscr{B}}}\mathcal{P}_{\mathscr{B}}(c')\delta(a_p)|D|=$$
        $$= \mathcal{R}_{\mathscr{B}}(c')L(a_p)+(\mathcal{R}_{\mathscr{B}}(c')-(-1)^{n_{\mathscr{B}}}\mathcal{P}_{\mathscr{B}}(c')F^{|\mathscr{B}|})D\cdot\delta(a_p)-$$
        $$-\mathcal{R}_{\mathscr{B}}(c')\delta(a_p)D+(-1)^{n_{\mathscr{B}}}\mathcal{P}_{\mathscr{B}}(c')\delta^2(a_p).$$
        Since $\delta(a_p),$ $\delta^2(a_p)$ and $L(a_p)$ are bounded, by the inductive hypothesis the above has bounded extension, completing the proof of the case where $p \in \mathscr{A}$ and $|\mathscr{B}|$ is odd.
        
        Now assume that $p\notin \mathscr{A}$. Then,
        \begin{align*}
            \mathcal{R}_{\mathscr{A}}(c) &= \mathcal{R}_{\mathscr{B}}(c')L(a_p)\\
            \mathcal{P}_{\mathscr{A}}(c) &= \mathcal{P}_{\mathscr{B}}(c')L(a_p).
        \end{align*}
        Focusing on $\mathcal{P}_{\mathscr{A}}(c)$:
        \begin{align*}
            \mathcal{P}_{\mathscr{A}}(c)F^{|\mathscr{A}|} &= \mathcal{P}_{\mathscr{B}}(c')L(a_p)F^{|\mathscr{B}|}.
        \end{align*}
        So:
        \begin{equation*}
             (\mathcal{R}_{\mathscr{A}}(c)-(-1)^{n_{\mathscr{A}}}\mathcal{P}_{\mathscr{A}}(c)F^{|\mathscr{A}|})|D| = (\mathcal{R}_{\mathscr{B}}(c')L(a_p)-(-1)^{n_{\mathscr{B}}+|\mathscr{B}|}\mathcal{P}_{\mathscr{B}}(c')L(a_p)F^{|\mathscr{B}|})|D|.
        \end{equation*}
        
        Note that since $F$ anticommutes with $[F,a_p]$, $F$ also anticommutes with $L(a_p)$. Hence,
        \begin{equation*}
            L(a_p)F^{|\mathscr{B}|} = (-1)^{|\mathscr{B}|}F^{|\mathscr{B}|}L(a_p)
        \end{equation*}
        and so
        \begin{align*}
            (\mathcal{R}_{\mathscr{A}}(c)-(-1)^{n_{\mathscr{A}}}\mathcal{P}_{\mathscr{A}}(c)F^{|\mathscr{A}|})|D| &= (\mathcal{R}_{\mathscr{B}}(c')-(-1)^{n_{\mathscr{B}}}\mathcal{P}_{\mathscr{B}}(c')F^{|\mathscr{B}|})L(a_p)|D|\\
                                                                  &= (\mathcal{R}_{\mathscr{B}}(c')-(-1)^{n_{\mathscr{B}}}\mathcal{P}_{\mathscr{B}}(c')F^{|\mathscr{B}|})(|D|L(a_p)+L(\delta(a_p))).
        \end{align*}
        By the inductive assumption, the operator $(\mathcal{R}_{\mathscr{B}}(c')-(-1)^{n_{\mathscr{B}}}\mathcal{P}_{\mathscr{B}}(c')F^{|\mathscr{B}|})|D|$ has bounded extension. This completes the proof of the $p \notin \mathscr{A}$ case.
        
        Hence, the statement is true for $p$ and this completes the induction.
    \end{proof}

    \begin{lem}\label{second combinatorial lemma} 
        Let $(\mathcal{A},H,D)$ be a smooth spectral triple. Suppose $D$ has a spectral gap at $0.$ For all $c\in(\mathcal{A}+\mathbb{C})^{\otimes (p+1)}$ and for all $\mathscr{A}\subseteq \{1,\ldots,p\}$ the operator
        \begin{equation*}
            \left(\mathcal{P}_{\mathscr{A}}(c)-\mathcal{W}_{\mathscr{A}}(c)\cdot |D|^{p-|\mathscr{A}|}\right)\cdot|D|.
        \end{equation*}
        has bounded extension.
    \end{lem}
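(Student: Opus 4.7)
The plan is to argue by induction on $p$. The base case $p = 0$ is trivial: both $\mathcal{P}_\emptyset(a_0)$ and $\mathcal{W}_\emptyset(a_0)$ equal $\Gamma a_0$ and $|D|^0 = 1$, so the difference vanishes. For the inductive step write $c = c' \otimes a_p$ with $c' \in (\mathcal{A}+\mathbb{C})^{\otimes p}$, set $\mathscr{B} = \mathscr{A} \cap \{1,\ldots,p-1\}$ and $n = (p-1) - |\mathscr{B}|$, and split into the cases $p\in\mathscr{A}$ and $p\notin\mathscr{A}$. In each case the strategy is to add and subtract a bridging term so that the expression decomposes into a piece controlled by the inductive hypothesis (applied to $(c',\mathscr{B})$) plus a commutator remainder handled via Lemma \ref{w bounded lemma} and Lemma \ref{left to right corollary}.

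When $p \in \mathscr{A}$ we have $\mathcal{P}_\mathscr{A}(c) = \mathcal{P}_\mathscr{B}(c')\delta(a_p)$, $\mathcal{W}_\mathscr{A}(c) = \mathcal{W}_\mathscr{B}(c')\delta(a_p)$ and $p - |\mathscr{A}| = n$. Applying $\delta(a_p)|D| = |D|\delta(a_p) - \delta^2(a_p)$ rewrites $(\mathcal{P}_\mathscr{A}(c) - \mathcal{W}_\mathscr{A}(c)|D|^n)|D|$ as
\begin{equation*}
(\mathcal{P}_\mathscr{B}(c') - \mathcal{W}_\mathscr{B}(c')|D|^n)|D|\,\delta(a_p) \;-\; \mathcal{P}_\mathscr{B}(c')\delta^2(a_p) \;+\; \mathcal{W}_\mathscr{B}(c')[|D|^{n+1},\delta(a_p)].
\end{equation*}
The first summand is bounded by the inductive hypothesis applied to $(c',\mathscr{B})$, times the bounded operator $\delta(a_p)$; the second is a product of bounded operators. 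For the third, the commutator expands as $\sum_{k=1}^{n+1}\binom{n+1}{k}\delta^{k+1}(a_p)|D|^{n+1-k}$, and each summand $\mathcal{W}_\mathscr{B}(c')\delta^{k+1}(a_p)|D|^{n+1-k}$ factors as $\mathcal{W}_\mathscr{B}(c')|D|^n \cdot |D|^{1-k} \cdot (|D|^{-(n+1-k)}\delta^{k+1}(a_p)|D|^{n+1-k})$, which is bounded since Lemma \ref{w bounded lemma} controls $\mathcal{W}_\mathscr{B}(c')|D|^n$, the spectral gap gives $|D|^{1-k}$ bounded for $k\geq 1$, and Lemma \ref{left to right corollary} handles the similarity conjugate.

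When $p \notin \mathscr{A}$, $\mathcal{P}_\mathscr{A}(c) = \mathcal{P}_\mathscr{B}(c')L(a_p)$, $\mathcal{W}_\mathscr{A}(c) = \mathcal{W}_\mathscr{B}(c')[F,a_p]$, and $p - |\mathscr{A}| = n+1$. Since $[F,a_p]|D| = L(a_p)$ the target equals $\mathcal{P}_\mathscr{B}(c')L(a_p)|D| - \mathcal{W}_\mathscr{B}(c')L(a_p)|D|^{n+1}$; inserting $\pm\mathcal{W}_\mathscr{B}(c')|D|^n L(a_p)|D|$ splits this into $(\mathcal{P}_\mathscr{B}(c') - \mathcal{W}_\mathscr{B}(c')|D|^n)L(a_p)|D|$ and $\mathcal{W}_\mathscr{B}(c')[|D|^n,L(a_p)]|D|$. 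The first is reduced to the inductive hypothesis via $L(a_p)|D| = |D|L(a_p) - L(\delta(a_p))$ (noting $\mathcal{P}_\mathscr{B}(c') - \mathcal{W}_\mathscr{B}(c')|D|^n$ is itself bounded by Lemma \ref{w bounded lemma}), and the second is treated exactly as in Case 1 using $[|D|^n,L(a_p)] = \sum_{k=1}^n\binom{n}{k}L(\delta^k(a_p))|D|^{n-k}$ (since $\delta$ and $L$ commute on $H_\infty$ as $F$ and $|D|$ do) and the same similarity-plus-Lemma \ref{w bounded lemma} bound. The essential technical obstacle is uniformly this: the summation index $k\geq 1$ leaves strictly fewer powers of $|D|$ on the right than $\mathcal{W}_\mathscr{B}(c')|D|^n$ can absorb, so the excess becomes a non-positive power of $|D|$, bounded by the spectral gap assumption at $0$.
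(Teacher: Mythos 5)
Your proposal is correct and follows essentially the same route as the paper: induction on the tensor degree, splitting on whether $p\in\mathscr{A}$, commuting $\delta(a_p)$ (resp.\ $L(a_p)$, via $[F,a_p]|D|=L(a_p)$ and $\delta(L(a_p))=L(\delta(a_p))$) past powers of $|D|$, and absorbing the remainders with Lemma \ref{w bounded lemma} and Lemma \ref{left to right corollary}. The only cosmetic differences are your trivial degree-zero base case (the paper checks $p=1$ directly) and your explicit binomial expansion of the commutator remainder, which is exactly the content of Lemma \ref{left to right corollary}.\eqref{left to right 2} that the paper invokes in one stroke.
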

    \begin{proof} 
        This proof is again similar to the proofs of Lemmas \ref{w bounded lemma} and \ref{first combinatorial lemma}.
        
        Once more, it suffices to prove the assertion for an elementary tensor $c = a_0\otimes a_1\otimes \cdots \otimes a_p \in (\mathcal{A}+\mathbb{C})^{\otimes(p+1)}$. Let $c' := a_0\otimes\cdots\otimes a_{p-1} \in (\mathcal{A}+\mathbb{C})^{\otimes p}$.
        The proof proceeds by induction on $p$.
        
        First, if $p = 1$, then either $\mathscr{A} = \{1\}$ or $\mathscr{A} = \emptyset$. If $\mathscr{A} = \{1\}$, then
        \begin{align*}
                         \mathcal{P}_{\mathscr{A}}(c) &= \Gamma a_0\delta(a_1),\\
            \mathcal{W}_{\mathscr{A}}(c)|D|^{p-|\mathscr{A}|} &= \Gamma a_0\delta(a_1).
        \end{align*}
        and if $\mathscr{A} = \emptyset$, then
        \begin{align*}
                         \mathcal{P}_{\mathscr{A}}(c) &= \Gamma a_0L(a_1),\\
            \mathcal{W}_{\mathscr{A}}(c)|D|^{p-|\mathscr{A}|} &= \Gamma a_0[F,a_1]|D|.
        \end{align*}
        Since $L(a_1) = [F,a_1]|D|$ on the dense subspace $H_\infty$, it follows that in all cases with $p = 1$ the difference $\mathcal{P}_{\mathscr{A}}(c)-\mathcal{W}_{\mathscr{A}}(c)|D|^{p-|\mathscr{A}|}$ is identically
        zero on $H_\infty$ and therefore has trivial bounded extension to $H$. This establishes the $p=1$ case.
        
        Now suppose that $p > 1$ and the assertion has been proved for $p-1$. Let $\mathscr{B} = \mathscr{A} \setminus \{p\}$, and we consider
        the two cases of $p \in \mathscr{A}$ and $p \notin \mathscr{A}$. 
        
        Suppose that $p \in \mathscr{A}$. Then,
        \begin{align*}
                         \mathcal{P}_{\mathscr{A}}(c) &= \mathcal{P}_{\mathscr{B}}(c')\delta(a_p),\\
            \mathcal{W}_{\mathscr{A}}(c)|D|^{p-|\mathscr{A}|} &= \mathcal{W}_{\mathscr{B}}(c')\delta(a_p)|D|^{p-1-|\mathscr{B}|}.
        \end{align*}
        
        So,
        \begin{align}\label{simplified pc expression} 
            \mathcal{P}_{\mathscr{A}}(c)|D| &= \mathcal{P}_{\mathscr{B}}(c')\delta(a_p)|D|\nonumber\\
                            &= \mathcal{P}_{\mathscr{B}}(c')|D|-\mathcal{P}_{\mathscr{B}}(c')\delta^2(a_p).
        \end{align}
        and
        \begin{align}\label{simplified wc expression}
            \mathcal{W}_{\mathscr{A}}(c)|D|^{p-|\mathscr{A}|+1} &= \mathcal{W}_{\mathscr{B}}(c')\delta(a_p)|D|^{p-|\mathscr{B}|}\nonumber\\
                                        &= \mathcal{W}_{\mathscr{B}}(c')|D|^{p-|\mathscr{B}|}\delta(a_p)+\mathcal{W}_{\mathscr{B}}(c')[\delta(a_p),|D|^{p-|\mathscr{B}|}]\nonumber\\
                                        &= \mathcal{W}_{\mathscr{B}}(c')|D|^{p-|\mathscr{B}|}\delta(a_p)-\mathcal{W}_{\mathscr{B}}(c')|D|^{p-|\mathscr{B}|-1}|D|^{|\mathscr{B}|-p+1}[|D|^{p-|\mathscr{B}|},\delta(a_p)].
        \end{align}
        Applying Lemma \ref{left to right corollary}, the operator $|D|^{|\mathscr{B}|-p+1}[|D|^{p-|\mathscr{B}|},\delta(a_p)]$ has bounded extension. 
        So combining \eqref{simplified pc expression} and \eqref{simplified wc expression}:
        \begin{align*}
            (\mathcal{P}_{\mathscr{A}}(c)-\mathcal{W}_{\mathscr{A}}(c)|D|^{p-|\mathscr{A}|})|D| - (\mathcal{P}_{\mathscr{B}}(c')-\mathcal{W}_{\mathscr{B}}(c')|D|^{p-1-|\mathscr{B}|})|D|
        \end{align*}
        has bounded extension. So by the inductive hypothesis, $(\mathcal{P}_{\mathscr{A}}(c)-\mathcal{W}_{\mathscr{A}}(c)|D|^{p-|\mathscr{A}|})|D|$ has bounded extension in the case that $p \in \mathscr{A}$.
        
        Now suppose that $p\notin \mathscr{A}$. In this case, we have:
        \begin{align*}
            \mathcal{P}_{\mathscr{A}}(c) &= \mathcal{P}_{\mathscr{B}}(c')L(a_p)\\
            \mathcal{W}_{\mathscr{A}}(c) &= \mathcal{W}_{\mathscr{B}}(c')[F,a_p].
        \end{align*}
        
        Multiplying by $|D|$, we have
        \begin{equation}\label{simplified pc expression 2}
            \mathcal{P}_{\mathscr{A}}(c)|D| = \mathcal{P}_{\mathscr{B}}(c')|D|L(a_p)-\mathcal{P}_{\mathscr{B}}(c')L(\delta(a_p)).
        \end{equation}
        Note that $\mathcal{P}_{\mathscr{B}}(c')L(\delta(a_p))$ is bounded.
        
        Also,
        \begin{align}\label{simplified wc expression 2}
            \mathcal{W}_{\mathscr{A}}(c)|D|^{p+1-|\mathscr{A}|} &= \mathcal{W}_{\mathscr{B}}(c')L(a_p)|D|^{p-|\mathscr{A}|}\nonumber\\
                                        &= \mathcal{W}_{\mathscr{B}}(c')|D|^{p-|\mathscr{A}|}L(a_p)-\mathcal{W}_{\mathscr{B}}(c')[|D|^{p-|\mathscr{A}|},L(a_p)]\nonumber\\
                                        &= \mathcal{W}_{\mathscr{B}}(c')|D|^{p-|\mathscr{A}|}L(a_p)-\mathcal{W}_{\mathscr{B}}(c')|D|^{p-1-|\mathscr{A}|}|D|^{-p+1+|\mathscr{A}|}[|D|^{p-|\mathscr{A}|},L(a_p)].
        \end{align}
        By Lemma \ref{left to right corollary}, the operator $|D|^{-p+1+|\mathscr{A}|}[|D|^{p-|\mathscr{A}|},L(a_p)]$ has bounded extension. So combining \eqref{simplified pc expression 2}
        and \eqref{simplified wc expression 2}, it follows that
        \begin{equation*}
            (\mathcal{P}_{\mathscr{A}}(c)-\mathcal{W}_{\mathscr{A}}(c)|D|^{p-|\mathscr{A}|})|D|-(\mathcal{P}_{\mathscr{B}}(c')-\mathcal{W}_{\mathscr{B}}(c')|D|^{p-1-|\mathscr{B}|})|D|L(a_p)
        \end{equation*} 
        has bounded extension. So by the inductive hypothesis, it follows that $(\mathcal{P}_{\mathscr{A}}(c)-\mathcal{W}_{\mathscr{A}}(c)|D|^{p-|\mathscr{A}|})|D|$ has bounded extension
        in the case $p \notin \mathscr{A}$.
    \end{proof}
    
    The main idea used in the proof of Lemma \ref{combinatorial lemma} is the algebraic identity,
    \begin{equation}\label{combinatorial fact}
        \prod_{k=1}^p (x_k+y_k) = \sum_{\mathscr{A} \subseteq \{1,\cdots,p\}} z_{\mathscr{\mathscr{A}}}
    \end{equation}
    where $z_{\mathscr{\mathscr{A}}}$ is given by the product $z_1z_2\cdots z_p$, where $z_k = x_k$ for $k \in \mathscr{A}$
    and $z_k = y_k$ for $k\notin \mathscr{A}$.
    
    Now we are ready to complete the proof of Lemma \ref{combinatorial lemma}:

    \begin{proof}[Proof of Lemma \ref{combinatorial lemma}] 
        Since
        \begin{equation*}
            [D,a] = F[|D|,a]+[F,a]|D|,
        \end{equation*}
        as an equality of operators on $H_\infty$, it follows that we have $\partial(a) = F\delta(a)+L(a)$.
        Now using \eqref{combinatorial fact}:
        \begin{align*}
            \Omega(c) &= \Gamma a_0\prod_{k=1}^p (F\delta(a_k)+L(a_k))\\
                      &= \sum_{\mathscr{A}\subseteq \{1,\cdots,p\}} \mathcal{R}_{\mathscr{A}}(c).
        \end{align*}
        So on $H_\infty$:
        \begin{equation*}
            \Omega(c)|D| = \sum_{\mathscr{A}\subseteq \{1,\cdots,p\}} \mathcal{R}_{\mathscr{A}}(c)|D|.
        \end{equation*}
        We may now apply Lemma \ref{first combinatorial lemma} to each summand to conclude that
        \begin{equation*}
            \Omega(c)|D| - \sum_{\mathscr{A}\subseteq \{1,\cdots,p\}} (-1)^{n_{\mathscr{A}}}\mathcal{P}_{\mathscr{A}}(c)F^{|\mathscr{A}|}|D|
        \end{equation*}
        has bounded extension.
        Now applying Lemma \ref{second combinatorial lemma} to each summand, we have that the operator
        \begin{equation*}
            \Omega(c)|D|-\sum_{\mathscr{A}\subseteq \{1,\cdots,p\}}(-1)^{n_{\mathscr{A}}}\mathcal{W}_{\mathscr{A}}(c)|D|^{p-|\mathscr{A}|+1}F^{|\mathscr{A}|}
        \end{equation*}
        has bounded extension.
        
        Equivalently,
        \begin{equation*}
            \left(\Omega(c)|D|^{2-p}-\sum_{\mathscr{A}\subseteq \{1,\ldots,p\}}(-1)^{n_{\mathscr{A}}}\mathcal{W}_{\mathscr{A}}(c)D^{2-|\mathscr{A}|}\right)|D|^{p-1}
        \end{equation*}
        has bounded extension.
    \end{proof}
    
    We now prove the main result of this section.
    \begin{thm}\label{combinatorial theorem} 
        Let $(\mathcal{A},H,D)$ be a spectral triple satisfying Hypothesis \ref{main assumption} and Hypothesis \ref{auxiliary assumption}. 
        For all $c\in\mathcal{A}^{\otimes (p+1)},$ we have
        \begin{align*}
            \mathrm{Tr}(\Omega(c)|D|^{2-p}e^{-s^2D^2}) = \sum_{\mathscr{A} \subseteq \{1,2,\ldots,p\}} (-1)^{n_{\mathscr{A}}}\mathrm{Tr}(\mathcal{W}_{\mathscr{A}}(c)D^{2-|\mathscr{A}|}e^{-s^2D^2})+O(s^{-1})
        \end{align*}
        as $s\to 0$.
    \end{thm}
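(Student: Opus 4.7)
My plan is to use Lemma \ref{combinatorial lemma} to absorb the error term into a single bounded operator, and then estimate the resulting trace via a Mellin integral representation of $|D|^{1-p}$. Set
\[
E_c := \Omega(c)|D|^{2-p} - \sum_{\mathscr{A}\subseteq\{1,\ldots,p\}}(-1)^{n_\mathscr{A}}\mathcal{W}_\mathscr{A}(c)D^{2-|\mathscr{A}|}.
\]
By Lemma \ref{combinatorial lemma}, $E_c\cdot|D|^{p-1}$ has a bounded extension $B$; since Hypothesis \ref{auxiliary assumption} provides a spectral gap at zero, $|D|^{1-p}$ is bounded and $E_c = B|D|^{1-p}$ itself extends to a bounded operator. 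By linearity I may work with an elementary tensor $c = a_0\otimes\cdots\otimes a_p$; inspection of the proofs of Lemmas \ref{w bounded lemma}--\ref{second combinatorial lemma} reveals that every contribution to $B$ carries $\Gamma a_0$ as its leftmost factor, so $B = \Gamma a_0 Y$ for some bounded operator $Y$ assembled from $\partial(a_k),\delta(a_k),L(a_k),F$ and bounded powers of $|D|^{-1}$.

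Trace-class-ness of each summand on both sides of the claimed identity (for fixed $s>0$) follows from $a_0(D+i)^{-p-1}\in\mathcal{L}_1$ (by Hypothesis \ref{main assumption}.\eqref{ass2} at $\lambda = 1$) combined with functional calculus on the remaining factors. Cyclicity of the trace together with $[\Gamma,|D|] = [\Gamma, e^{-s^2D^2}] = 0$ then yields
\[
\big|\mathrm{Tr}(E_c e^{-s^2D^2})\big| \leq \|Y\|_\infty \cdot \|a_0^*|D|^{1-p}e^{-s^2D^2}\|_1,
\]
so the theorem reduces to showing $\|a_0^*|D|^{1-p}e^{-s^2D^2}\|_1 = O(s^{-1})$ as $s\downarrow 0$.

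When $p = 1$ this reads $\|a_0^*e^{-s^2D^2}\|_1 = O(s^{-1})$, which follows from the splitting $a_0^*e^{-s^2D^2} = a_0^*(D+i/s)^{-2}\cdot(D+i/s)^2 e^{-s^2D^2}$: the first factor has $\mathcal{L}_1$-norm $O(s)$ by Hypothesis \ref{main assumption}.\eqref{ass2} at $\lambda = 1/s$, and the second has operator norm $O(s^{-2})$ by spectral calculus. For $p\geq 2$ I use the Mellin representation
\[
|D|^{1-p} = \frac{1}{\Gamma((p-1)/2)}\int_0^\infty t^{(p-3)/2}e^{-tD^2}\,dt,
\]
together with the analogous single-exponent bound $\|a_0^*e^{-uD^2}\|_1 = O(u^{-p/2})$ for small $u$ (and exponentially small for large $u$ by the spectral gap); the substitution $t=s^2r$ then delivers $\|a_0^*|D|^{1-p}e^{-s^2D^2}\|_1 \lesssim s^{-1}\int_0^\infty r^{(p-3)/2}(r+1)^{-p/2}\,dr = O(s^{-1})$.

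The main obstacle is obtaining this sharp $O(s^{-1})$ rate: a single direct H\"older step after Lemma \ref{combinatorial lemma} produces only $O(s^{-p})$, and it is essential to average against the Mellin kernel so that the full $\lambda^{-1}$ decay from Hypothesis \ref{main assumption}.\eqref{ass2} is put to work.
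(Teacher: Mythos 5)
Your argument is correct in substance and its skeleton coincides with the paper's: both reduce the theorem, via Lemma \ref{combinatorial lemma}, to the single estimate $\||D|^{1-p}e^{-s^2D^2}a_0\|_1=O(s^{-1})$ after peeling the element $a_0$ off the bounded error term. Two remarks on where you diverge or are loose. First, your factorisation $B=\Gamma a_0Y$ with $Y$ bounded, which you justify by ``inspection of the proofs'', is most cleanly obtained exactly as in the paper: by multilinearity $E_c=a_0E_{c'}$ with $c'=1\otimes a_1\otimes\cdots\otimes a_p\in(\mathcal{A}+\mathbb{C})^{\otimes(p+1)}$, and Lemma \ref{combinatorial lemma} is stated for the unitization precisely so that $Y=E_{c'}|D|^{p-1}$ is bounded; the paper then moves $a_0$ to the right by cyclicity, which is the same manoeuvre as yours. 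Relatedly, your one-line trace-class justification (``$a_0(D+i)^{-p-1}\in\mathcal{L}_1$ plus functional calculus'') is imprecise as stated, since the bounded factors $\partial(a_k)$, $\delta(a_k)$, $[F,a_k]$ sit between $a_0$ and the functions of $D$ and do not commute with them; the correct (and easy) fix is to apply Hypothesis \ref{main assumption}.\eqref{ass2} to the rightmost factor (e.g.\ $\partial(a_p)(D+i\lambda)^{-p-1}\in\mathcal{L}_1$, and $[F,a_p](D+i\lambda)^{-p-1}=Fa_p(D+i\lambda)^{-p-1}-a_p(D+i\lambda)^{-p-1}F$), or equivalently to perform the cyclic rearrangement first so that the localizing element is adjacent to the decaying factor, as the paper does. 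Second, the genuinely different ingredient is your proof of the decay estimate: the paper invokes Lemma \ref{first decay lemma}, proved in the appendix by a dyadic decomposition of the spectrum of $|D|$, whereas you average the single heat bound $\|a_0^*e^{-uD^2}\|_1=O(u^{-p/2})$ (the paper's Lemma \ref{schwartz lemma}, itself a direct consequence of Hypothesis \ref{main assumption}.\eqref{ass2}) against the Mellin kernel for $|D|^{1-p}$, with the $p=1$ case done by a direct resolvent split at $\lambda=1/s$. Your Mellin route is valid (the weak integral and the $\mathcal{L}_1$-norm bound are licensed by Lemma \ref{peter lemma}, the kernel $r^{(p-3)/2}(r+1)^{-p/2}$ is integrable for $p\geq2$, and the spectral gap handles large $t$), and it is arguably more transparent than the dyadic argument, though it only recovers the special case $m_2=0$ of Lemma \ref{first decay lemma}, which the paper needs in greater generality elsewhere.
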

    \begin{proof}
        As in the preceding lemmas it suffices to prove the result for an elementary tensor $c = a_0\otimes\cdots \otimes a_p \in \mathcal{A}^{\otimes(p+1)}$. Let $c' = 1\otimes a_1\otimes\cdots\otimes a_p$. 
        By the cyclicity of the trace and the fact that $\mathcal{A}$ commutes with $\Gamma$, we have:
        \begin{equation*}
            \mathrm{Tr}(\Omega(c)|D|^{2-p}e^{-s^2D^2}) = \mathrm{Tr}(\Omega(c')|D|^{2-p}e^{-s^2D^2}a_0)
        \end{equation*}
        and for all $\mathscr{A} \subseteq \{1,2,\ldots,p\}$:
        \begin{equation*}
            \mathrm{Tr}(\mathcal{W}_{\mathscr{A}}(c)D^{2-|\mathscr{A}|}e^{-s^2D^2}) = \mathrm{Tr}(\mathcal{W}_{\mathscr{A}}(c')D^{2-|\mathscr{A}|}e^{-s^2D^2}a_0).
        \end{equation*}
        Thus,
        \begin{align*}
            &\mathrm{Tr}(\Omega(c)|D|^{2-p}e^{-s^2D^2})-\sum_{\mathscr{A}\subseteq \{1,\ldots,p\}}(-1)^{n_{\mathscr{A}}}\mathrm{Tr}(\mathcal{W}_{\mathscr{A}}(c)D^{2-|\mathscr{A}|}e^{-s^2D^2})\\
                                                    &=\mathrm{Tr}\left(\left(\Omega(c')|D|^{2-p}-\sum_{\mathscr{A}\subseteq \{1,\ldots,p\}}(-1)^{n_{\mathscr{A}}}\mathcal{W}_{\mathscr{A}}(c')D^{2-|\mathscr{A}|}\right)|D|^{p-1}|D|^{1-p}e^{-s^2D^2}a_0\right)
        \end{align*}
            Hence,
        \begin{align*}
            &\left|\mathrm{Tr}(\Omega(c)|D|^{2-p}e^{-s^2D^2}-\sum_{\mathscr{A}\subseteq \{1,\ldots,p\}}(-1)^{n_{\mathscr{A}}}\mathrm{Tr}(\mathcal{W}_{\mathscr{A}}(c)D^{2-|\mathscr{A}|}e^{-s^2D^2})\right|\\
            &\leq \left\|\left(\Omega(c')D^{2-p}-\sum_{\mathscr{A}\subseteq \{1,\ldots,p\}}(-1)^{n_{\mathscr{A}}}\mathcal{W}_{\mathscr{A}}(c')D^{2-|\mathscr{A}|}\right)|D|^{p-1}\right\|_\infty\||D|^{1-p}e^{-s^2D^2}a_0\|_1
        \end{align*}
        The first factor is finite, by Lemma \ref{combinatorial lemma}, and the second factor is $O(s^{-1})$, by Lemma \ref{first decay lemma}.
        This completes the proof.
    \end{proof}

\section{Auxiliary commutator estimates}\label{commutator section}
    This section is a slight detour from the main task of this chapter. Here we establish bounds on the $\mathcal{L}_1$-norm of commutators
    of the form $[f(s|D|),x]$, where $x \in \mathcal{B}$, $s > 0$ and $f$ is the square of a Schwartz class function on $\mathbb{R}.$ These bounds are used everywhere in the subsequent sections of this chapter.
    
    Recall that the algebra $\mathcal{B}$ is defined in Definition \ref{smoothness definition}.
    
    The following lemma serves the same purpose as \cite[Lemma 18]{CPRS1}, but the right hand sides are different here due to the fact that we deal with non-unital spectral triples.
    \begin{lem}\label{first commutator lemma} 
        Let $(\mathcal{A}, H, D)$ be a smooth spectral triple. Let $h$ be a Schwartz class function on $\mathbb{R}$ and let $f = h^2$. Then for all $x \in \mathcal{B}$, we have
        \begin{equation*}
            \left\|[f(s|D|),x]-\frac{s}{2}\{f'(s|D|),\delta(x)\}\right\|_1 \leq \frac{1}{2}s^2\|\widehat{h''}\|_1\cdot \left(\|\delta^2(x)h(s|D|)\|_1+\|h(s|D|)\delta^2(x)\|_1\right)
        \end{equation*}
        Here, $\{\cdot,\cdot\}$ denotes the anti-commutator.
    \end{lem}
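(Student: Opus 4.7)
The plan is to use Fourier inversion $h(s|D|)=\int_{\mathbb R}\hat h(\tau)e^{i\tau s|D|}\,d\tau$ together with the standard commutator-expansion identity
\begin{equation*}
[e^{i\tau s|D|},x]=i\tau s\int_0^1 e^{i\alpha\tau s|D|}\delta(x)e^{i(1-\alpha)\tau s|D|}\,d\alpha,
\end{equation*}
which follows from differentiating $\alpha\mapsto e^{i\alpha\tau s|D|}xe^{-i\alpha\tau s|D|}$ and is justified on $H_\infty$ (and then extends, since $x\in\mathcal B$ makes $\delta(x),\delta^2(x)$ bounded). Since $h(s|D|)$ and $h'(s|D|)$ commute (both are functions of $|D|$) and $f'=2hh'$, the target quantity splits symmetrically:
\begin{equation*}
[f(s|D|),x]-\tfrac{s}{2}\{f'(s|D|),\delta(x)\}=h(s|D|)\mathcal E_L+\mathcal E_R\,h(s|D|),
\end{equation*}
where $\mathcal E_L:=[h(s|D|),x]-sh'(s|D|)\delta(x)$ and $\mathcal E_R:=[h(s|D|),x]-s\delta(x)h'(s|D|)$.

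Next I would expand $\mathcal E_L$. Using $h'(s|D|)=i\int\tau\hat h(\tau)e^{i\tau s|D|}\,d\tau$, the integrand becomes $e^{i\alpha\tau s|D|}\delta(x)e^{i(1-\alpha)\tau s|D|}-e^{i\tau s|D|}\delta(x)=-e^{i\alpha\tau s|D|}[e^{i(1-\alpha)\tau s|D|},\delta(x)]$. Applying the commutator-expansion identity a second time, with $\delta(x)$ in place of $x$, produces a factor $\delta^2(x)$ together with an extra $(1-\alpha)\tau s$, and a brief computation yields
\begin{equation*}
\mathcal E_L=s^2\!\int\!\tau^2\hat h(\tau)\!\int_0^1\!\!\!\int_0^1\!(1-\alpha)e^{i(\alpha+\beta(1-\alpha))\tau s|D|}\delta^2(x)e^{i(1-\beta)(1-\alpha)\tau s|D|}\,d\beta\,d\alpha\,d\tau.
\end{equation*}
A symmetric computation for $\mathcal E_R$ gives an analogous formula, but with $\delta^2(x)$ on the left of the exponential unitaries and the weight $(1-\alpha)$ replaced by $\alpha$.

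The crucial final step is to observe that multiplying $\mathcal E_L$ by $h(s|D|)$ on the left (resp.\ $\mathcal E_R$ on the right) puts $h(s|D|)$ next to an exponential in $|D|$, through which it commutes, so $h(s|D|)$ can be moved to sit directly adjacent to $\delta^2(x)$. Taking $\mathcal L_1$-norms, the unitary exponentials drop out, the parameter integrals give $\int_0^1(1-\alpha)\,d\alpha=\int_0^1\alpha\,d\alpha=\tfrac12$, and the identity $\widehat{h''}(\tau)=-\tau^2\hat h(\tau)$ converts $\int|\tau|^2|\hat h(\tau)|\,d\tau$ into $\|\widehat{h''}\|_1$. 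Summing the two bounds yields
\begin{equation*}
\|h(s|D|)\mathcal E_L\|_1+\|\mathcal E_R\,h(s|D|)\|_1\leq\tfrac{s^2}{2}\|\widehat{h''}\|_1\bigl(\|h(s|D|)\delta^2(x)\|_1+\|\delta^2(x)h(s|D|)\|_1\bigr),
\end{equation*}
which is the desired inequality.

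The only real obstacle is careful bookkeeping of the iterated Fourier and parameter integrals and a justification that they converge in the weak operator (or Bochner) sense, so that Fubini and Lemma \ref{peter lemma} apply. This is routine: $h$ being Schwartz gives rapid decay of $\hat h$, and $x\in\mathcal B$ makes $\delta(x),\delta^2(x)$ bounded, so all integrals are unconditionally convergent once one of the two $\mathcal L_1$-factors $\|h(s|D|)\delta^2(x)\|_1$ or $\|\delta^2(x)h(s|D|)\|_1$ is taken out of the integrals.
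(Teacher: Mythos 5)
Your proposal is correct and follows essentially the same route as the paper: the same Leibniz split of $[h^2(s|D|),x]-\tfrac{s}{2}\{f'(s|D|),\delta(x)\}$ into the two remainders $h(s|D|)\mathcal E_L$ and $\mathcal E_R\,h(s|D|)$, followed by the second-order Duhamel/Fourier expansion of each remainder (which the paper simply cites as Lemma \ref{second commutator rep lemma} rather than re-deriving) and the $\mathcal L_1$-triangle inequality for weak integrals (Lemma \ref{peter lemma}), with $\int_0^1(1-v)\,dv=\tfrac12$ and $\widehat{h''}(\tau)=-\tau^2\widehat{h}(\tau)$ producing the stated constant.
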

    \begin{proof}
        Since $f'(s|D|) = 2h'(s|D|)h(s|D|)$, by the Leibniz rule we have:
        $$[f(s|D|),x]-\frac{s}{2}\{f'(s|D|),\delta(x)\} = [h(s|D|)^2,x]-s\{h'(s|D|)h(s|D|),\delta(x)\}=$$
        $$= h(s|D|)\big([h(s|D|),x]-sh'(s|D|)\delta(x)\big)+\big([h(s|D|),x]-s\delta(x)h'(s|D|)\big)h(s|D|).$$
        Applying Lemma \ref{second commutator rep lemma}, we have
        $$[h(s|D|),x]-sh'(s|D|)\delta(x) = -s^2\int_{-\infty}^\infty \int_0^1 \widehat{h''}(u)(1-v)e^{ius(1-v)|D|}\delta^2(x)e^{iusv|D|}\,dvdu$$
        $$[h(s|D|),x]-s\delta(x)h'(s|D|) = -s^2\int_{-\infty}^\infty \int_0^1 \widehat{h''}(u)(1-v)e^{iusv|D|}\delta^2(x)e^{ius(1-v)|D|}\,dvdu.$$
        
        Therefore,
        \begin{align*}
            [f(s|D|),x] - \frac{s}{2}&\{f'(s|D|),\delta(x)\}= \\
            & -s^2\int_{-\infty}^\infty \int_0^1 \widehat{h''}(u)(1-v)e^{iu(1-v)s|D|}h(s|D|)\delta^2(x)e^{iuvs|D|}\,dvdu\\
            & -s^2\int_{-\infty}^\infty \int_0^1 \widehat{h''}(u)(1-v)e^{iuvs|D|}\delta^2(x)h(s|D|)e^{iu(1-v)s|D|}\,dvdu.
        \end{align*}
        Now applying Lemma \ref{peter lemma} to each integral, we have
        \begin{align*} 
             \|[f(s|D|),x]&-\frac{s}{2}\{f'(s|D|),\delta(x)\}\|_1 \\ 
                               &\leq s^2\int_{-\infty}^\infty \int_0^1 \left\|\widehat{h''}(u)(1-v)e^{iu(1-v)s|D|}h(s|D|)\delta^2(x)e^{iuvs|D|}\right\|_1dvdu\\
                               &+s^2\int_{-\infty}^\infty \int_0^1 \left\|\widehat{h''}(u)(1-v)e^{iuvs|D|}\delta^2(x)h(s|D|)e^{iu(1-v)s|D|}\right\|_1dvdu\\ 
                               &= s^2\|\widehat{h''}\|_1(\|\delta^2(x)h(s|D|)\|_1+\|h(s|D|)\delta^2(x)\|_1)\int_0^1(1-v)dv\\ 
                               &= \frac{1}{2}s^2\|\widehat{h''}\|_1(\|\delta^2(x)h(s|D|)\|_1+\|h(s|D|)\delta^2(x)\|_1). 
        \end{align*}
%
    \end{proof}

    \begin{lem}\label{second commutator lemma} 
        Let $(\mathcal{A},H,D)$ be a smooth spectral triple and assume that $D$ has a spectral gap at $0$. Let $h$ be a Schwartz function on $\mathbb{R}$ and let $f=h^2.$ Then for every $x\in\mathcal{B},$ we have
        \begin{equation*}
            \left\||D|^m[f(s|D|),x]\right\|_1 \leq s\|\widehat{h'}\|_1\left(\left\||D|^mh(s|D|)\delta(x)\right\|_1+\left\||D|^m\delta(x)h(s|D|)\right\|_1\right).
        \end{equation*}
    \end{lem}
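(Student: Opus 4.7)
The strategy parallels the proof of Lemma \ref{first commutator lemma}, except that we use a first-order Duhamel expansion in place of the second-order one. First I would apply the Leibniz rule to $f=h^{2}$ to write
\[
[f(s|D|),x] \;=\; h(s|D|)[h(s|D|),x] \;+\; [h(s|D|),x]\,h(s|D|),
\]
so that, after multiplying by $|D|^{m}$ on the left and applying the triangle inequality for $\|\cdot\|_1$, the problem reduces to bounding the two terms $|D|^{m}h(s|D|)[h(s|D|),x]$ and $|D|^{m}[h(s|D|),x]h(s|D|)$ separately.

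Next I would set up the integral representation
\[
[h(s|D|),x] \;=\; s\int_{-\infty}^{\infty}\widehat{h'}(u)\int_{0}^{1} e^{iusv|D|}\,\delta(x)\,e^{ius(1-v)|D|}\,dv\,du,
\]
which follows by combining the Fourier inversion $h(s|D|)=\int\hat{h}(u)e^{ius|D|}\,du$ with the Duhamel identity $[e^{ius|D|},x]=ius\int_{0}^{1}e^{iusv|D|}\delta(x)e^{ius(1-v)|D|}\,dv$ and the identity $iu\hat{h}(u)=\widehat{h'}(u)$. Substituting this into the two summands identified above, and invoking Lemma \ref{peter lemma} to bring the $\|\cdot\|_1$-norm inside the integral, yields a prefactor $\int|\widehat{h'}(u)|\,du\cdot\int_{0}^{1}dv=\|\widehat{h'}\|_1$ together with the $\mathcal{L}_1$-norm of each integrand.

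Finally I would simplify the integrands using the fact that $|D|^{m}$, $h(s|D|)$ and $e^{ius|D|}$ are mutually commuting functions of $|D|$, combined with the unitary invariance $\|UTV\|_1=\|T\|_1$ for unitaries $U,V$. For the first integrand, the rearrangement $|D|^{m}h(s|D|)e^{iusv|D|}\delta(x)e^{ius(1-v)|D|}=e^{iusv|D|}\bigl[|D|^{m}h(s|D|)\delta(x)\bigr]e^{ius(1-v)|D|}$ gives trace norm $\||D|^{m}h(s|D|)\delta(x)\|_1$; for the second, moving $|D|^{m}$ past $e^{iusv|D|}$ and $h(s|D|)$ past $e^{ius(1-v)|D|}$ produces $e^{iusv|D|}\bigl[|D|^{m}\delta(x)h(s|D|)\bigr]e^{ius(1-v)|D|}$ with trace norm $\||D|^{m}\delta(x)h(s|D|)\|_1$. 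Summing the two contributions yields the claimed inequality. The main delicacy is the presence of the unbounded factor $|D|^{m}$: this is handled by performing the rearrangements on the dense subspace $H_\infty$, where the spectral gap of $D$ ensures that $|D|^{m}\delta(x)$ and $\delta(x)h(s|D|)$ are well-defined, with the inequality being trivial whenever the right-hand side is infinite.
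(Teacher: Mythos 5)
Your proposal is correct and follows essentially the same route as the paper: the Leibniz splitting $[h^2(s|D|),x]=h(s|D|)[h(s|D|),x]+[h(s|D|),x]h(s|D|)$, the Fourier--Duhamel representation of $[h(s|D|),x]$ (your ordering of the exponentials differs from Lemma \ref{first commutator rep lemma} only by the substitution $v\mapsto 1-v$), commutation of $|D|^m$ and $h(s|D|)$ with the unitaries $e^{ius\cdot|D|}$, and the triangle inequality for trace-norm-valued weak integrals via Lemma \ref{peter lemma}. Your explicit remark about performing the rearrangements on $H_\infty$ is a point the paper leaves implicit, but it is the same argument.
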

    \begin{proof} 
        Since $f = h^2$, by the Leibniz rule, we have
        \begin{equation*}
            [f(s|D|),x] = h(s|D|)[h(s|D|),x]+[h(s|D|),x]h(s|D|).
        \end{equation*}
        Using Lemma \ref{first commutator rep lemma}, we have
        \begin{equation*}
            [h(s|D|),x] = s\int_{\mathbb{R}} \int_0^1 \widehat{h'}(u)e^{ius(1-v)|D|}\delta(x)e^{iusv|D|}dvdu.
        \end{equation*}
        
        Thus,
        \begin{align*}
            |D|^m[f(s|D|),x] &= s\int_{\mathbb{R}} \int_0^1 \widehat{h'}(u)e^{ius(1-v)|D|}|D|^mh(s|D|)\delta(x)e^{iusv|D|}\,dvdu\\
                             &\quad\quad +s\int_{\mathbb{R}} \int_0^1 \widehat{h'}(u)e^{ius(1-v)|D|}|D|^m\delta(x)h(s|D|)e^{iusv|D|}\,dvdu.
        \end{align*}
        
        Bounding the $\mathcal{L}_1$ norm using Lemma \ref{peter lemma}, we have
        \begin{align*}
            \left\||D|^m[f(s|D|),x]\right\|_1 &\leq s\int_{\mathbb{R}}\int_0^1 |\widehat{h'}(u)|\||D|^mh(s|D|)\delta(x)\|_1\,dvdu\\
                                              &\quad\quad + s\int_\mathbb{R} \int_0^1 |\widehat{h'}(u)|\||D|^m\delta(x)h(s|D|)\|_1\,dvdu\\
                                              &= s\|\widehat{h'}\|_{L_1(\mathbb{R})}(\||D|^mh(s|D|)\delta(x)\|_1+\||D|^m\delta(x)h(s|D|)\|_1).
        \end{align*}
    \end{proof}
    
    \begin{lem}\label{commutator 6} 
        Let $(\mathcal{A},H,D)$ be a smooth spectral triple satisfying Hypothesis \ref{main assumption} and Hypothesis \ref{auxiliary assumption}. For all $x\in\mathcal{B}$ and for all integers $m>-p,$ we have
        \begin{equation*}
            \||D|^m[e^{-s^2D^2},x]\|_1=O(s^{1-p-m}),\quad s\downarrow0.
        \end{equation*}
    \end{lem}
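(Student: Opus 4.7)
The plan is to apply Lemma~\ref{second commutator lemma} with $f(t)=e^{-t^2}$ and $h(t)=e^{-t^2/2}$ (so that $f=h^2$), which gives
\begin{equation*}
    \bigl\||D|^m[e^{-s^2D^2},x]\bigr\|_1 \le C\,s\bigl(\bigl\||D|^m h(s|D|)\delta(x)\bigr\|_1 + \bigl\||D|^m\delta(x) h(s|D|)\bigr\|_1\bigr).
\end{equation*}
Since $\mathcal{B}$ is a $*$-algebra closed under $\delta$, both $\delta(x)$ and $\delta(x)^*=-\delta(x^*)$ lie in $\mathcal{B}$; by taking adjoints inside the trace norm (and commuting $|D|^m$ past $h(s|D|)$), it suffices to prove, for an arbitrary $y\in\mathcal{B}$, the bound $\bigl\||D|^m h(s|D|)y\bigr\|_1 = O(s^{-m-p})$.

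The key input is that Hypothesis~\ref{main assumption}\eqref{ass2} extends from its stated generators to all of $\mathcal{B}$ by factoring a product $y=y_1\cdots y_n$ as $(y_1\cdots y_{n-1})\cdot\bigl(y_n(D+i\lambda)^{-p-1}\bigr)$, with bounded first factor: for every $y\in\mathcal{B}$,
\begin{equation*}
    \bigl\|y(D+i\lambda)^{-p-1}\bigr\|_1 = O(\lambda^{-1}),\qquad \lambda\to\infty.
\end{equation*}
Passing to adjoints, and using that $K_+=(D+i\lambda)^{-p-1}y$ and $K_-=(D-i\lambda)^{-p-1}y$ satisfy $K_+^*K_+=y^*(D^2+\lambda^2)^{-p-1}y=K_-^*K_-$ so $\|K_+\|_1=\|K_-\|_1$, one obtains $\bigl\|(D+i\lambda)^{-p-1}y\bigr\|_1=O(\lambda^{-1})$ as well.

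Setting $\lambda=1/s$ yields $\bigl\|(D+i/s)^{-p-1}y\bigr\|_1=O(s)$. One then factors
\begin{equation*}
    |D|^m h(s|D|) y = \bigl[|D|^m(D+i/s)^{p+1}h(s|D|)\bigr]\cdot\bigl[(D+i/s)^{-p-1}y\bigr]
\end{equation*}
and estimates the operator norm of the first factor by spectral calculus and the change of variables $u=st$:
\begin{equation*}
    \bigl\||D|^m(D+i/s)^{p+1}h(s|D|)\bigr\|_\infty = s^{-m-p-1}\sup_{u}|u|^m(u^2+1)^{(p+1)/2}e^{-u^2/2} = O(s^{-m-p-1})
\end{equation*}
for $m\ge 0$. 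Combining gives $\bigl\||D|^m h(s|D|)y\bigr\|_1 = O(s^{-m-p})$, and substituting into the opening display delivers the claimed $O(s^{1-p-m})$ bound.

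The hard part is the case of negative integers $-p<m<0$. The spectral gap ensures $|D|^m$ is bounded, but the supremum above saturates at $O(s^{-p-1})$, yielding only $O(s^{1-p})$, which is weaker than required by a factor $s^{-m}$. To recover this factor one applies Lemma~\ref{first commutator lemma} (contributing $s^2$ rather than $s$) in place of Lemma~\ref{second commutator lemma}, writing $[e^{-s^2D^2},x] = -s^2\{|D|e^{-s^2D^2},\delta(x)\}+R$, and uses the identity \eqref{favourite commutator identity} iteratively to commute $|D|^m$ through the anticommutator at the cost of lower-order terms in $\delta^k(x)$. Careful bookkeeping of all commutator remainders, each estimated via the $m\ge 0$ case applied at higher $\delta$-degree, then matches every summand with the target rate $O(s^{1-p-m})$.
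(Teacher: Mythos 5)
Your treatment of $m\ge 0$ is essentially sound and close to the paper's (Lemma~\ref{second commutator lemma} plus a resolvent bound extended from the generators to all of $\mathcal{B}$, which is how Lemma~\ref{schwartz lemma} is used there), but the reduction of the second term is misstated: from $\||D|^m\delta(x)h(s|D|)\|_1$, taking adjoints gives $\|h(s|D|)\,\delta(x)^*\,|D|^m\|_1$, where $|D|^m$ and $h(s|D|)$ sit on \emph{opposite} sides of the $\mathcal{B}$-element, so ``commuting $|D|^m$ past $h(s|D|)$'' accomplishes nothing. For $m>0$ you must commute $|D|^m$ past $\delta(x)$ itself, via the Leibniz expansion $|D|^m\delta(x)=\sum_{k}\binom{m}{k}\delta^{m+1-k}(x)|D|^k$ (Lemma~\ref{left to right lemma}), and then estimate each summand; this is exactly what the paper does, and it is a fixable but real omission in your write-up.

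The serious gap is the case $-p<m<0$, which you call the hard part and for which your mechanism cannot deliver the rate. Replacing Lemma~\ref{second commutator lemma} by Lemma~\ref{first commutator lemma} gives $[e^{-s^2D^2},x]=-s^2\{|D|e^{-s^2D^2},\delta(x)\}+R$ with $R$ controlled only in \emph{unweighted} trace norm, $\|R\|_1=O(s^{2-p})$; since for $m<0$ all you use about $|D|^m$ is its boundedness, the remainder contributes $O(s^{2-p})$, which already fails the target $O(s^{1-p-m})$ for every $m\le -2$. The main anticommutator term fares no better: it produces plain products such as $|D|^{1+m}e^{-s^2D^2}\delta(x)$ and $\delta^k(x)|D|^{j}e^{-s^2D^2}$ with $j<0$, and ``the $m\ge 0$ case applied at higher $\delta$-degree'' only yields $O(s^{-p})$ for these (the negative power of $|D|$ is merely bounded, it does not convert into positive powers of $s$ through your factorization, whose supremum saturates at $s^{-p-1}$ exactly as you observed). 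What is missing is a genuinely weighted heat estimate, namely $\||D|^{-m_1}y|D|^{-m_2}e^{-s^2D^2}\|_1=O(s^{m_1+m_2-p})$ for $y\in\mathcal{B}$ and $m_1+m_2<p$; this is the paper's Lemma~\ref{first decay lemma}, proved by a dyadic decomposition of the spectrum of $|D|$, and with it the case $m\le 0$ of the present lemma is immediate (apply it, after an adjoint, to the two terms produced by Lemma~\ref{second commutator lemma}). If you want to avoid the dyadic argument, you could instead derive the weighted bound from your $m=0$ estimate by subordination, writing $|D|^{-k}e^{-s^2D^2}=c_k\int_{s^2}^{\infty}(v-s^2)^{\frac{k}{2}-1}e^{-vD^2}\,dv$ (legitimate thanks to the spectral gap) and integrating the bound $\|y\,e^{-vD^2}\|_1=O(v^{-p/2})$; but some such input is indispensable, and your iterative commutation with ``careful bookkeeping'' does not supply it.
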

    \begin{proof} 
        Let $h(t)=e^{-\frac{1}{2}t^2},$ $t\in\mathbb{R}.$ By Lemma \ref{second commutator lemma}, we have
        \begin{equation*}
            \||D|^m[e^{-s^2D^2},x]\|_1 \leq s\|\widehat{h'}\|_1(\||D|^me^{-\frac12s^2D^2}\delta(x)\|_1+\||D|^m\delta(x)e^{-\frac12s^2D^2}\|_1).
        \end{equation*}
        
        If $m \leq 0$, then the assertion now follows from applying Lemma \ref{first decay lemma} to the terms $\||D|^me^{-\frac{1}{2}s^2D^2}\delta(x)\|_1$ and $\||D|^m\delta(x)e^{-\frac{1}{2}s^2D^2}\|_1$.        
        Assume now $m > 0$. Using Lemma \ref{schwartz lemma} with the Schwartz function $t\mapsto t^me^{-\frac{1}{2}t^2}$, we obtain
        \begin{equation*}
            s\left\||D|^me^{-\frac{1}{2}s^2D^2}\delta(x)\right\|_1 = O(s^{1-p-m}),\quad s\to 0.
        \end{equation*}
        By Lemma \ref{left to right lemma}, we have
        \begin{equation*}
            |D|^m\delta(x)e^{-\frac{1}{2}s^2D^2} = \sum_{k=0}^m \binom{m}{k}\delta^{m+1-k}(x)|D|^ke^{-\frac{1}{2}s^2D^2}.
        \end{equation*}
        Now we apply Lemma \ref{schwartz lemma} to each summand, using the function $t\mapsto t^ke^{-\frac{1}{2}s^2D^2}$ for the $k$th summand.
        So,
        \begin{align*}
            s\||D|^m\delta(x)e^{-\frac{1}{2}s^2D^2}\|_1 &\leq s\sum_{k=0}^m \binom{m}{k}O(s^{1-p-k})\\
                                                        &= O(s^{1-p-m}), \quad s\to 0.
        \end{align*}
    \end{proof}
    
    The following lemma is used in the proof of Theorem \ref{first cycle thm}.
    \begin{lem}\label{third commutator lemma} 
        Let $(\mathcal{A},H,D)$ be a spectral triple satisfying Hypothesis \ref{main assumption} and Hypothesis \ref{auxiliary assumption}. Let $f(t)=e^{-t^2},$ $t\in\mathbb{R}.$
        \begin{enumerate}[{\rm (i)}]
            \item\label{3com1} for every $a\in\mathcal{A},$ we have
                $$\Big\|[f(s|D|),a]-s\delta(a)f'(s|D|)\Big\|_{\infty}=O(s^2),\quad s\downarrow0.$$
            \item\label{3com2} for every $a\in\mathcal{A},$ we have
                $$\Big\|[f(s|D|),a]-s\delta(a)f'(s|D|)\Big\|_1=O(s^{2-p}),\quad s\downarrow0.$$
            \item\label{3com3} for every $a\in\mathcal{A},$ we have
                $$\Big\|[f(s|D|),a]-s\delta(a)f'(s|D|)\Big\|_{p,1}=O(s),\quad s\downarrow0.$$
        \end{enumerate}
    \end{lem}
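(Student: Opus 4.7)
The plan is to start from the integral representation provided by Lemma \ref{second commutator rep lemma} applied to $f$ itself (renaming the $h$ there to $f$):
\begin{equation*}
[f(s|D|), a] - s\delta(a)f'(s|D|) = -s^2\int_{\mathbb{R}}\int_0^1 \widehat{f''}(u)(1-v)\, e^{iusv|D|}\delta^2(a)\,e^{ius(1-v)|D|}\,dv\,du,
\end{equation*}
which is meaningful because $f''$ is Schwartz (so $\widehat{f''} \in L_1(\mathbb{R})$) and $\delta^2(a)$ is bounded by smoothness. Taking operator norms of both sides and using unitarity of $e^{iusv|D|}$ and $e^{ius(1-v)|D|}$ together with $\int_0^1(1-v)\,dv = 1/2$ yields $\|[f(s|D|),a]-s\delta(a)f'(s|D|)\|_\infty \leq \tfrac{s^2}{2}\|\widehat{f''}\|_1\|\delta^2(a)\|_\infty = O(s^2)$, settling \eqref{3com1}.

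The hard part is the $\mathcal{L}_1$ estimate \eqref{3com2}: the direct $\mathcal{L}_1$-bound on the integrand fails, because the unitaries leave $\|e^{iusv|D|}\delta^2(a)e^{ius(1-v)|D|}\|_1 = \|\delta^2(a)\|_1$, which is infinite in the non-unital setting. To bypass this, I would use the decomposition
\begin{equation*}
[f(s|D|),a] - s\delta(a)f'(s|D|) = \bigl([f(s|D|),a] - \tfrac{s}{2}\{f'(s|D|),\delta(a)\}\bigr) + \tfrac{s}{2}[f'(s|D|),\delta(a)]
\end{equation*}
and estimate each summand separately. The first summand is exactly what Lemma \ref{first commutator lemma} controls, with $h(t) = e^{-t^2/2}$ so that $f = h^2$: its right-hand side is $\tfrac{s^2}{2}\|\widehat{h''}\|_1(\|\delta^2(a)h(s|D|)\|_1 + \|h(s|D|)\delta^2(a)\|_1)$, and Lemma \ref{first decay lemma} bounds each of $\|\delta^2(a)e^{-s^2D^2/2}\|_1$ and $\|e^{-s^2D^2/2}\delta^2(a)\|_1$ by $O(s^{-p})$, giving $O(s^{2-p})$. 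For the second summand, I would write $f'(s|D|) = -2s|D|e^{-s^2D^2}$ and expand via Leibniz
\begin{equation*}
[|D|e^{-s^2D^2},\delta(a)] = \delta^2(a)e^{-s^2D^2} + |D|[e^{-s^2D^2},\delta(a)];
\end{equation*}
the first piece is $O(s^{-p})$ in $\mathcal{L}_1$ by Lemma \ref{first decay lemma}, and the second is $O(s^{-p})$ by Lemma \ref{commutator 6} with $m=1$. Together with the leading factor $s^2$ this gives $\|\tfrac{s}{2}[f'(s|D|),\delta(a)]\|_1 = O(s^{2-p})$, completing \eqref{3com2}.

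For \eqref{3com3} I would simply interpolate between the preceding two parts, using the elementary inequality $\|X\|_{p,1} \leq c_p \|X\|_\infty^{(p-1)/p}\|X\|_1^{1/p}$. This follows by splitting $\sum_k \mu(k,X)(k+1)^{1/p-1}$ at $N \asymp \|X\|_1/\|X\|_\infty$ using $\mu(k,X) \leq \min(\|X\|_\infty, \|X\|_1/(k+1))$. Substituting the $O(s^2)$ and $O(s^{2-p})$ bounds from \eqref{3com1} and \eqref{3com2} gives $O(s^{2(p-1)/p + (2-p)/p}) = O(s)$, as required. The substantive content is all in \eqref{3com2}; \eqref{3com1} is a one-line operator-norm estimate and \eqref{3com3} is a formal interpolation.
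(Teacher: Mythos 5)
Your proposal is correct and follows essentially the same route as the paper: part (i) via Lemma \ref{second commutator rep lemma}, part (ii) via the split into the symmetrised term (Lemma \ref{first commutator lemma} with $h(t)=e^{-t^2/2}$) plus $\tfrac{s}{2}[f'(s|D|),\delta(a)]$ handled through the Leibniz rule and Lemma \ref{commutator 6}, and part (iii) by the $\|X\|_{p,1}\leq c_p\|X\|_1^{1/p}\|X\|_\infty^{1-1/p}$ interpolation. The only cosmetic differences are that you invoke Lemma \ref{first decay lemma} (with $m_1=m_2=0$) where the paper cites Lemma \ref{schwartz lemma} for the $O(s^{-p})$ heat-trace bounds, and that you supply a proof of the interpolation inequality the paper uses without comment; neither affects correctness.
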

    \begin{proof} 
        First we prove \eqref{3com1}: this is a simple combination of Lemma \ref{second commutator rep lemma} and the triangle inequality:
        \begin{equation*}
            \|[f(s|D|),a]-sf'(s|D|)\delta(a)\|_\infty \leq s^2\|\widehat{f''}\|_1\|\delta^2(a)\|_\infty.
        \end{equation*}
        
        Now we prove \eqref{3com2}. Let $h(t) = e^{-t^2/2}$, $t \in \mathbb{R}$, so that $f = h^2$. By Lemma \ref{first commutator lemma}, for all $a \in \mathcal{A}$ we have
        \begin{equation}\label{big commutator difference bound}
            \left\|[f(s|D|),a]-\frac{s}{2}\{f'(s|D|),\delta(a)\}\right\|_1 \leq \frac{1}{2}s^2\|\hat{h''}\|_1(\|\delta^2(a)h(s|D|)\|_1+\|h(s|D|)\delta^2(a)\|_1).
        \end{equation} 
        Using Lemma \ref{schwartz lemma}, we have
        \begin{align}\label{schwartz type bounds}
            \|\delta^2(a)h(s|D|)\|_1 &= O(s^{-p})\text{ and, }\nonumber\\
            \|h(s|D|)\delta^2(a)\|_1 &= O(s^{-p}).
        \end{align}
        Combining \eqref{big commutator difference bound} and \eqref{schwartz type bounds}, we arrive at:
        \begin{equation}\label{com31}
            \|[f(s|D|),a]-\frac{s}{2}\{f'(s|D|),\delta(a)\}\|_1 = O(s^{2-p}).
        \end{equation}
        On the other hand,
        \begin{align*}
            \|[f'(s|D|),\delta(a)]\|_1 &= 2s\|[|D|e^{-s^2D^2},\delta(a)]\|_1\\
                                       &\leq 2s\|\delta^2(a)e^{-s^2D^2}\|_1+2s\||D|[e^{-s^2D^2},\delta(a)]\|_1.
        \end{align*}
        Due to Lemma \ref{schwartz lemma}, we have $2s\|\delta^2(a)e^{-s^2D^2}\|_1 = O(s^{1-p})$, and by Lemma \ref{commutator 6}
        we also have $2s\||D|[e^{-s^2D^2},\delta(a)]\|_1 = O(s^{1-p})$. Therefore,
        \begin{equation}\label{com32}
            \|[f'(s|D|),\delta(a)]\|_1 = O(s^{1-p}),\quad s \downarrow 0.
        \end{equation}
        
        By combining \eqref{com31} and \eqref{com32}, we obtain \eqref{3com2}.
        
        Finally, to prove \eqref{3com3}, we use the inequality
        $$\|T\|_{p,1}\leq\|T\|_1^{\frac1p}\|T\|_{\infty}^{1-\frac1p}$$
        and write
        $$\|[f(s|D|),a]-s\delta(a)f'(s|D|)\|_{p,1} \leq$$
        $$\leq\|[f(s|D|),a]-s\delta(a)f'(s|D|)\|_1^{\frac1p}\|[f(s|D|,a)]-s\delta(a)f'(s|D|)\|_{\infty}^{1-\frac1p}=$$
        $$= O(s^{2-p})^{\frac1p}\cdot O(s^{2})^{1-\frac1p}=O(s).$$
        \end{proof} 

    The following Lemma is used in Lemma \ref{kogom2}, Lemma \ref{kogom3} and Lemma \ref{second opposite lemma}.
    \begin{lem}\label{commutator 7} 
        Let $(\mathcal{A},H,D)$ be a smooth $p-$dimensional spectral triple satisfying Hypothesis \ref{main assumption} and Hypothesis \ref{auxiliary assumption}. For every $0\leq m\leq p,$ and $x \in \mathcal{B}$ we have
        \begin{equation*}
            \|D^{m-p}[D^{2-m}e^{-s^2D^2},x]\|_1 = O(s^{-1}),\quad s\downarrow0.
        \end{equation*}
    \end{lem}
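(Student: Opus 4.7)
The approach is to split the commutator via the Leibniz rule and bound the two resulting pieces using the previously established decay and commutator estimates. Since $D^{2-m}$ and $e^{-s^2D^2}$ are both functions of $D$ (and are well defined on $H_\infty$ thanks to the spectral gap assumption of Hypothesis \ref{auxiliary assumption}, which is what allows $D^{2-m}$ to make sense when $m>2$), they commute, so
\[
[D^{2-m}e^{-s^2D^2}, x] = D^{2-m}[e^{-s^2D^2}, x] + [D^{2-m}, x]\,e^{-s^2D^2}.
\]
Multiplying on the left by $D^{m-p}$, which is bounded since $m-p\le 0$ and $D$ is invertible, yields two summands whose trace norm we estimate separately.

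For the first summand, $D^{m-p}D^{2-m}=D^{2-p}$ as commuting functions of $D$. Writing $D^{2-p}=F^{2-p}|D|^{2-p}$, where $F^{2-p}$ is a bounded operator commuting with both $|D|^{2-p}$ and $e^{-s^2D^2}$, we get
\[
\bigl\|D^{2-p}[e^{-s^2D^2},x]\bigr\|_1 \le \bigl\||D|^{2-p}[e^{-s^2D^2},x]\bigr\|_1,
\]
and applying Lemma \ref{commutator 6} with the exponent $2-p>-p$ (valid since $p\ge 1$) gives $O(s^{1-p-(2-p)})=O(s^{-1})$.

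The second summand $D^{m-p}[D^{2-m}, x]\,e^{-s^2D^2}$ vanishes when $m=2$. For $m\ne 2$, iterated application of the Leibniz rule---using $[D^n, x]=\sum_{k=0}^{n-1}D^k\partial(x)D^{n-1-k}$ for positive integer $n$, and $[D^{-1},x]=-D^{-1}\partial(x)D^{-1}$ iterated for negative powers---expresses $[D^{2-m},x]$ as a finite linear combination of terms of the form $D^a\partial(x)D^b$ with $a+b=1-m$. Multiplication by $D^{m-p}$ on the left converts each such term into $D^c\partial(x)D^b e^{-s^2D^2}$ with $c+b=1-p$. Further iterated commutation of $D^b$ past $\partial(x)$, using that $\partial^k(x)\in\mathcal{B}$ for all $k\ge 0$ (since $\mathcal{B}$ is closed under both $\delta$ and $\partial$), reduces each term to a finite sum of expressions of the form $y\,|D|^{1-p}e^{-s^2D^2}$ or $|D|^{1-p}e^{-s^2D^2}\,y$ with $y\in\mathcal{B}$, up to bounded $F$-factors. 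Since $1-p\le 0$, Lemma \ref{first decay lemma} (whose proof is the $m\le 0$ case of Lemma \ref{commutator 6}) yields $\mathcal{L}_1$-norm $O(s^{-(1-p)-p})=O(s^{-1})$ for each such expression, and summing the finitely many contributions gives the required bound.

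The main obstacle lies in the third paragraph: the combinatorial bookkeeping required to verify that every term produced by the iterated Leibniz expansion has the claimed form---a $\mathcal{B}$-element times $|D|^{1-p}e^{-s^2D^2}$---and to handle the bounded $F$-factors that appear when $m$ or $b$ is odd. The spectral gap of Hypothesis \ref{auxiliary assumption} is essential throughout, both for making sense of negative powers of $D$ when $m>2$ and for ensuring that all $F$-factors are bounded.
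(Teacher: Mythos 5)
Your overall skeleton matches the paper: the same Leibniz splitting
\[
[D^{2-m}e^{-s^2D^2},x]=[D^{2-m},x]e^{-s^2D^2}+D^{2-m}[e^{-s^2D^2},x],
\]
and the first summand is handled exactly as in the paper, by Lemma \ref{commutator 6} with exponent $2-p$. The gap is in your treatment of the second summand. You propose to push the remaining powers of $D$ to one side of $\partial(x)$ by ``iterated commutation of $D^b$ past $\partial(x)$'', justified by the claim that $\partial^k(x)\in\mathcal{B}$ for all $k$ ``since $\mathcal{B}$ is closed under both $\delta$ and $\partial$''. That claim is false: $\mathcal{B}$ is closed under $\delta$, but not under $\partial$. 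Iterated commutators with $D$ are generically unbounded --- this is exactly the point of the paper's discussion of smoothness (Subsection \ref{smoothness discussion}, the torus example, where $[D,[D,1\otimes M_f]]$ is unbounded), and even a single $\partial$ applied to a generator of the form $\delta^k(\partial(a))$ would produce $\delta^k(\partial^2(a))$, which need not be bounded. So the reduction to terms $y\,|D|^{1-p}e^{-s^2D^2}$ with $y\in\mathcal{B}$ cannot be carried out as stated.

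The fix is that the extra commutation is unnecessary (and for one case must be replaced by a $\delta$-computation). For $m>2$, the single Leibniz expansion $[D^{2-m},x]=-\sum_{k+l=m-3}D^{k+2-m}\partial(x)D^{l+2-m}$ already puts each term, after multiplying by $D^{m-p}$, in the sandwiched form $|D|^{-m_1}\partial(x)|D|^{-m_2}e^{-s^2D^2}$ with $m_1=p-k-2\geq 1$, $m_2=k+1\geq 1$ and $m_1+m_2=p-1<p$; Lemma \ref{first decay lemma} is a two-sided estimate and applies directly, so nothing needs to be moved past $\partial(x)$ (your parenthetical identifying Lemma \ref{first decay lemma} with ``the $m\le 0$ case of Lemma \ref{commutator 6}'' is also backwards: \ref{commutator 6} is proved using \ref{first decay lemma}). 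The case $m=1$ likewise needs no commutation. The only case where your scheme genuinely breaks is $m=0$: your expansion produces the term $D^{-p}\partial(x)D\,e^{-s^2D^2}$ with a positive power of $D$ on the right of $\partial(x)$, and moving it across would again require $\partial^2(x)$ or $\delta(\partial(x))$ to be bounded. The paper avoids $\partial$ altogether here, writing $[D^2,x]=[|D|^2,x]=2|D|\delta(x)-\delta^2(x)$, so that only $\delta$'s (which do stay in $\mathcal{B}$) appear, and then applies Lemma \ref{first decay lemma} to $|D|^{1-p}\delta(x)e^{-s^2D^2}$ and $|D|^{-p}\delta^2(x)e^{-s^2D^2}$. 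With these two corrections your argument coincides with the paper's proof.
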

    \begin{proof} 
        By the Leibniz rule,
        \begin{equation*}
            [D^{2-m}e^{-s^2D^2},x] = [D^{2-m},a]e^{-s^2D^2}+D^{2-m}[e^{-s^2D^2},x].
        \end{equation*}
        Thus,
        \begin{equation*}
            \|D^{m-p}[D^{2-m}e^{-s^2D^2},x]\|_1 \leq \|D^{2-p}[e^{-s^2D^2},x]\|_1+\|D^{m-p}[D^{2-m},x]e^{-s^2D^2}\|_1.
        \end{equation*}
        
        By Lemma \ref{commutator 6}, we have $\|D^{2-p}[e^{-s^2D^2},x]\|_1 = O(s^{-1})$, so we now focus on the second summand.
        First, for the case when $m > 2$ we apply the Leibniz rule
        \begin{align*}
            [D^{2-m},x] &= -D^{2-m}[D^{m-2},x]D^{2-m}\\
                        &= -\sum_{k+l=m-3} D^{k+2-m}\partial(x)D^{l+2-m}.
        \end{align*}
        Now using the triangle inequality:
        \begin{equation*}
            \|D^{m-p}[D^{2-m},x]e^{-s^2D^2}\|_1 \leq \sum_{k+l=m-3}\||D|^{k+2-p}\partial(x)|D|^{l+2-m}e^{-s^2|D|^2}\|_1.
        \end{equation*}
        Applying Lemma \ref{first decay lemma} to each summand, we then conclude that $${\|D^{m-p}[D^{2-m},x]e^{-s^2D^2}\|_1 = O(s^{-1})},$$
        thus proving the claim for $m > 2$.
        
        We now deal with the remaining cases $m = 0,1,2$ individually.        
        In the case $m=2$, we have $D^{m-p}[D^{2-m},x]e^{-s^2D^2} = 0$, and so the claim follows trivially in this case.
        
        For $m = 1$, we have
        \begin{equation*}
            \|D^{m-p}[D^{2-m},x]e^{-s^2D^2}\|_1 = \||D|^{1-p}\partial(x)e^{-s^2D^2}\|_1
        \end{equation*}
        So by Lemma \ref{first decay lemma}, we also have in this case that the above is $O(s^{-1})$.
        
        Finally, for $m=0$,
        \begin{align*}
            [D^2,x] &= [|D|^2,x]\\ 
                    &= |D|\delta(x)+\delta(x)|D|\\
                    &= 2|D|\delta(x)-\delta^2(x).
        \end{align*}
        So by the triangle inequality:
        \begin{equation*}
            \|D^{m-p}[D^{2-m},x]e^{-s^2D^2}\|_1 \leq 2\||D|^{1-p}\delta(x)e^{-s^2D^2}\|_1+\||D|^{-p}\delta^2(x)e^{-s^2D^2}\|_1
        \end{equation*}
        so an application of Lemma \ref{first decay lemma} to each of the above summands yields the result.
    \end{proof}

\section{Exploiting Hochschild homology}\label{cohomology section}
    
    Recall the multilinear mapping $\mathcal{W}_p$ from Definition \ref{W definition}. In this section, we prove the following:
    \begin{thm}\label{reduction} 
        Let $(\mathcal{A},H,D)$ be a spectral triple satisfying Hypothesis \ref{main assumption} and Hypothesis \ref{auxiliary assumption}. For every Hochschild cycle $c\in\mathcal{A}^{\otimes (p+1)}$ we have:
        \begin{equation*}
            \mathrm{Tr}(\Omega(c)|D|^{2-p}e^{-s^2D^2})-p\mathrm{Tr}(\mathcal{W}_p(c)De^{-s^2D^2}) = O(s^{-1}),\quad s\downarrow 0.
        \end{equation*}
    \end{thm}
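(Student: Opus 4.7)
The plan is to start from Theorem \ref{combinatorial theorem}, which expresses
\[
\mathrm{Tr}(\Omega(c)|D|^{2-p}e^{-s^2D^2}) = \sum_{\mathscr{A}\subseteq\{1,\ldots,p\}}(-1)^{n_\mathscr{A}}\mathrm{Tr}(\mathcal{W}_\mathscr{A}(c)D^{2-|\mathscr{A}|}e^{-s^2D^2}) + O(s^{-1}).
\]
The aim is then to show that among the subsets $\mathscr{A}$, only the $p$ singletons contribute non-negligibly, and that these contributions all coincide (up to the appropriate signs) with $\mathrm{Tr}(\mathcal{W}_p(c)De^{-s^2D^2})$. I organize the analysis according to $|\mathscr{A}|$.

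For subsets $\mathscr{A}$ with $|\mathscr{A}|\geq 2$, I will use Lemma \ref{w bounded lemma} (giving boundedness of $\mathcal{W}_\mathscr{A}(c)|D|^{p-|\mathscr{A}|}$) together with the commutator estimates from Section \ref{commutator section} (notably Lemma \ref{commutator 7}). The idea is to commute $D^{2-|\mathscr{A}|}e^{-s^2D^2}$ past one of the $\delta$- or $[F,a]$-factors of $\mathcal{W}_\mathscr{A}(c)$ so as to absorb an extra power of $|D|^{-1}$ beyond what the naive Schatten bound provides, yielding $\mathrm{Tr}(\mathcal{W}_\mathscr{A}(c)D^{2-|\mathscr{A}|}e^{-s^2D^2}) = O(s^{-1})$. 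For the empty-set contribution $\mathrm{Tr}(\mathcal{W}_\emptyset(c)D^2 e^{-s^2D^2})$, I intend to use the identity $\mathrm{ch}(c) = \mathcal{W}_\emptyset(c)+F\mathcal{W}_\emptyset(c)F$ (the announced Lemma \ref{W and ch link}) together with the fact that $F$ commutes with $D^2e^{-s^2D^2}$ on the spectral gap of $D$, so that cyclicity of the trace reduces the term to a multiple of $\mathrm{Tr}(\mathrm{ch}(c)D^2e^{-s^2D^2})$; the Hochschild cycle condition $bc=0$ then extracts the required cancellation down to $O(s^{-1})$.

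For singletons $\mathscr{A}=\{m\}$, $1\leq m\leq p$, I apply the Hochschild cycle condition $bc=0$ to move the single $\delta$-factor from position $m$ to position $p$. Writing $c=\sum_j a_0^j\otimes\cdots\otimes a_p^j$ and using $bc=0$ with cyclicity of the trace, together with Lemma \ref{third commutator lemma} to control the errors arising from commuting operators past $e^{-s^2D^2}$, I obtain
\[
\mathrm{Tr}(\mathcal{W}_{\{m\}}(c)De^{-s^2D^2}) = (-1)^{p-m}\mathrm{Tr}(\mathcal{W}_p(c)De^{-s^2D^2}) + O(s^{-1}).
\]
Since $n_{\{m\}}=p-m$, the product $(-1)^{n_{\{m\}}}(-1)^{p-m}=1$, so summing over $m=1,\ldots,p$ produces exactly $p\mathrm{Tr}(\mathcal{W}_p(c)De^{-s^2D^2})$.

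The main obstacle will be producing the required cancellations in the $\mathscr{A}=\emptyset$ and $|\mathscr{A}|\geq 2$ cases: the naive Schatten bound only gives $O(s^{-2})$, so the improvement to $O(s^{-1})$ demands a careful interplay between the Hochschild cycle condition (used to rearrange tensor entries and generate telescoping cancellations), the cyclicity of the trace, and the commutator estimates of Section \ref{commutator section} which control the residual error in each rearrangement.
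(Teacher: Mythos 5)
Your overall architecture (start from Theorem \ref{combinatorial theorem}, discard $|\mathscr{A}|\geq 2$ and $\mathscr{A}=\emptyset$, then identify all singleton terms with the $\{p\}$ term using $n_{\{m\}}=p-m$) is the paper's, and your singleton step is essentially Lemma \ref{kogom3} iterated, with the correct sign bookkeeping. The genuine gap is in your treatment of the terms with $|\mathscr{A}|\geq 2$. The mechanism you propose there — commute $D^{2-|\mathscr{A}|}e^{-s^2D^2}$ past one of the factors of $\mathcal{W}_{\mathscr{A}}(c)$ "so as to absorb an extra power of $|D|^{-1}$", using only Lemma \ref{w bounded lemma} and Lemma \ref{commutator 7} — cannot work: commutation returns the rearranged main term, which is of the same generic size $O(s^{-2})$, plus a commutator error; nothing is absorbed, and for a general (non-cycle) elementary tensor these traces really are of order $s^{-2}$, so no amount of operator-norm/trace-norm bookkeeping alone can give $O(s^{-1})$. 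The improvement for $|\mathscr{A}|\geq 2$ comes only from the Hochschild cycle condition, and it has to be implemented through explicit cochains: when $\mathscr{A}$ contains two adjacent indices one evaluates the coboundary of the functional obtained by replacing $b_{m-1}(a_{m-1})b_m(a_m)$ by $\delta^2(a_{m-1})$, whose Leibniz expansion produces $2(-1)^{m-1}\mathrm{Tr}(\mathcal{W}_{\mathscr{A}}(c)D^{2-|\mathscr{A}|}e^{-s^2D^2})$ plus a commutator term controlled by Lemma \ref{commutator 7} (this is Lemma \ref{kogom2}); when the two distinguished indices are not adjacent one first inserts $[F,\delta(a_{m-1})]$ to trade $\mathscr{A}$ for a neighbouring set of the same cardinality (Lemma \ref{kogom3}) and iterates until two indices become adjacent (Lemma \ref{kogom4}). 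Your closing sentence gestures at "telescoping cancellations" from $bc=0$, but no such chain of identities is described in your plan, and the concrete device you do describe would fail.

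A smaller inversion: for $\mathscr{A}=\emptyset$ you invoke $bc=0$ to "extract the cancellation", but the cycle condition plays no role there in the paper and it is unclear how it would help you. The bound $\|\mathrm{ch}(c)D^2e^{-s^2D^2}\|_1=O(s^{-1})$ holds for \emph{every} $c\in\mathcal{A}^{\otimes(p+1)}$, because on $H_\infty$ one can write $[F,a_{p-1}][F,a_p]|D|^2=L(a_{p-1})L(a_p)-[F,a_{p-1}]L(\delta(a_p))$, factor out the bounded operators $\big(\prod[F,a_k]\big)|D|^{p-1}$, and apply Lemma \ref{first decay lemma}; the reduction of $\mathrm{Tr}(\mathcal{W}_{\emptyset}(c)D^2e^{-s^2D^2})$ to $\frac12\mathrm{Tr}(\mathrm{ch}(c)D^2e^{-s^2D^2})$ via Lemma \ref{W and ch link} and $F$-invariance of $D^2e^{-s^2D^2}$ is as you say. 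Also, the error control in the singleton step is done by Lemma \ref{commutator 7} inside the coboundary computation, not by Lemma \ref{third commutator lemma}, which serves a different purpose in Theorem \ref{first cycle thm}.
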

    We achieve this using the commutator estimates of the preceding section.
    
    Our strategy to prove Theorem \ref{reduction} is to start from Theorem \ref{combinatorial theorem} and then show that:
    \begin{enumerate}
        \item{} all of the terms with $|\mathscr{A}| \geq 2$ are $O(s^{-1})$ (see Lemma \ref{kogom4})
        \item{} the $\mathscr{A} = \emptyset$ term is $O(s^{-1})$ (see Lemma \ref{kogom5}
        \item{} finally we complete the proof by showing that the terms with $|\mathscr{A}|=1$ are all equal to the $\mathscr{A}=\{p\}$ term up to terms of size $O(s^{-1})$.
    \end{enumerate}
    The proofs in this section rely crucially on the assumption that $c$ is a Hochschild cycle.
    
    First, we show that terms in Theorem \ref{combinatorial theorem} such that there is some $m$ with $m-1,m \in \mathscr{A}$ are $O(s^{-1})$.
    \begin{lem}\label{kogom2} 
        Let $(\mathcal{A},H,D)$ be a spectral triple satisfying Hypothesis \ref{main assumption} and Hypothesis \ref{auxiliary assumption}. Let $m \in \{1,\ldots,p\}$ and suppose that $m-1,m\in\mathscr{A}$ (so necessarily we have $|\mathscr{A}|\geq 2$). 
        For every Hochschild cycle $c\in\mathscr{A}^{\otimes (p+1)},$ we have
        \begin{equation*}
            \mathrm{Tr}(\mathcal{W}_{\mathscr{A}}(c)D^{2-|\mathscr{A}|}e^{-s^2D^2}) = O(s^{-1}),\quad s\downarrow 0.
        \end{equation*}
    \end{lem}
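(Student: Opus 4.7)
Since $m-1, m \in \mathscr{A}$, the operator $\mathcal{W}_{\mathscr{A}}(c)$ contains adjacent factors $\delta(a_{m-1}^j)\delta(a_m^j)$ in the decomposition of each summand $c = \sum_j a_0^j \otimes \cdots \otimes a_p^j$. The strategy exploits the interplay between the derivation property of $\delta$, the cyclicity of the trace, and the Hochschild cycle condition $bc = 0$. The key algebraic identity is the Leibniz rule
\[
    \delta(a_{m-1})\delta(a_m) = \delta(a_{m-1}\delta(a_m)) - a_{m-1}\delta^2(a_m),
\]
which splits $\mathcal{W}_{\mathscr{A}}(c)$ into a ``boundary piece'' (where $\delta$ is applied to a composite operator) and a ``bulk piece'' (where $\delta^2$ replaces the two adjacent $\delta$'s).

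For the bulk piece, write $\mathcal{W}_{\mathscr{A}}(c) = U \cdot a_{m-1}\delta^2(a_m) \cdot V$, where $U = \Gamma a_0 \prod_{k<m-1} b_k(a_k)$ and $V = \prod_{k>m} b_k(a_k)$. The resulting trace has the form $\mathrm{Tr}(\tilde{\mathcal{W}}(c) D^{2-|\mathscr{A}|}e^{-s^2D^2})$, where $\tilde{\mathcal{W}}(c) \in \mathcal{B}$ is a product with $|\mathscr{A}|-2$ factors of $\delta$ and $p-|\mathscr{A}|$ factors of $[F,\cdot]$. Using Lemma \ref{w bounded lemma} to control $\tilde{\mathcal{W}}(c)|D|^{p-|\mathscr{A}|+2}$ and then applying Lemma \ref{first decay lemma} to the factor $|D|^{|\mathscr{A}|-p-2}D^{2-|\mathscr{A}|}e^{-s^2D^2}$, this piece is bounded by $O(s^{-1})$.

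For the boundary piece, write $\mathrm{Tr}(U\delta(a_{m-1}\delta(a_m))V \cdot D^{2-|\mathscr{A}|}e^{-s^2D^2})$ and use $\delta(X) = [|D|,X]$. Since $D^{2-|\mathscr{A}|}e^{-s^2D^2}$ commutes with $|D|$, cyclicity of the trace yields
\[
    \mathrm{Tr}(U[|D|,W] VY) = -\mathrm{Tr}(W V Y \delta(U)) - \mathrm{Tr}(W \delta(V) Y U),
\]
where $W = a_{m-1}\delta(a_m)$ and $Y = D^{2-|\mathscr{A}|}e^{-s^2D^2}$. Expanding $\delta(U)$ and $\delta(V)$ via the Leibniz rule produces a sum of traces each having one $\delta^2$ and the same $[F,\cdot]$ count as before. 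By Lemma \ref{commutator 7} and the commutator estimates of Lemmas \ref{second commutator lemma}, \ref{commutator 6}, each resulting trace is $O(s^{-1})$.

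The Hochschild cycle condition enters as follows. The preceding manipulations, when tracked carefully across all $j$ and summed, produce (alongside the $O(s^{-1})$ remainder) an expression of the form $\theta(bc)$ for a $p$-cochain $\theta$ built from $\Gamma$, the $b_k$'s, and $D^{2-|\mathscr{A}|}e^{-s^2D^2}$; this is where the merging of $a_{m-1}^j a_m^j$ that arises in the $\delta$-shifts aligns with the $(-1)^{m-1}$ summand of $bc$. Since $c$ is a cycle, $\theta(bc) = 0$, leaving only the $O(s^{-1})$ contribution. The main obstacle will be bookkeeping the precise $\theta$ and the signs introduced at each step, and verifying that the commutator pieces pair up correctly with the Hochschild boundary; this is where the assumption $m-1, m \in \mathscr{A}$ (rather than just $|\mathscr{A}|\geq 2$) is essential, since adjacency of the two $\delta$'s is what allows them to ``collapse'' into a single factor matching the form of $b$.
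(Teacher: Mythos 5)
Your overall instinct---exploit the adjacency of the two $\delta$'s via a Leibniz rule and invoke $bc=0$---is in the right spirit, but the quantitative claims carrying the argument are wrong, so the proof does not close. Neither your ``bulk piece'' nor the traces produced from your ``boundary piece'' are $O(s^{-1})$: they are generically of the same size $O(s^{-2})$ as the original quantity. For instance, with $p=2$, $\mathscr{A}=\{1,2\}$, $m=2$, the bulk piece is $\mathrm{Tr}(\Gamma a_0a_1\delta^2(a_2)e^{-s^2D^2})$, a heat trace of $be^{-s^2D^2}$ with $b\in\mathcal{B}$, whose natural size is $s^{-p}=s^{-2}$ (the bound of Lemma \ref{schwartz lemma}), with no reason to improve to $O(s^{-1})$. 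The specific estimates you cite also do not apply: Lemma \ref{w bounded lemma} bounds $\mathcal{W}_{\mathscr{A}}(c)|D|^{p-|\mathscr{A}|}$, not $\tilde{\mathcal{W}}(c)|D|^{p-|\mathscr{A}|+2}$ --- each $[F,\cdot]$ factor absorbs exactly one power of $|D|$ and the $\delta$-factors absorb none, so the two extra powers cannot be absorbed; and Lemma \ref{first decay lemma} cannot be applied to $|D|^{|\mathscr{A}|-p-2}D^{2-|\mathscr{A}|}e^{-s^2D^2}$ (essentially $|D|^{-p}e^{-s^2D^2}$), since it requires an element $x\in\mathcal{B}$ adjacent to the heat factor (crucial in the non-unital setting, where $e^{-s^2D^2}$ need not even be compact) and requires $m_1+m_2<p$; even after cycling $a_0$ next to the heat factor the exponent is $m_1+m_2-p=-2$. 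The same defect affects your identity $\mathrm{Tr}(U[|D|,W]VY)=-\mathrm{Tr}(WVY\delta(U))-\mathrm{Tr}(W\delta(V)YU)$: because you pass $|D|$ through $Y=D^{2-|\mathscr{A}|}e^{-s^2D^2}$ using $[|D|,Y]=0$, you never create a commutator of $Y$ with an algebra element, so every resulting term is again a plain product of elements of $\mathcal{B}$ and $[F,\cdot]$'s against $Y$, hence only $O(s^{-2})$ (and the cyclicity manipulations with unbounded $|D|$ would themselves need justification). Finally, note the statement is false for general chains $c$, so no argument that bounds each piece directly, without using $bc=0$ to cancel the dominant $O(s^{-2})$ contributions, can possibly succeed; your concluding appeal to $\theta(bc)=0$ is therefore not bookkeeping but must absorb exactly the terms you claimed to estimate.

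For comparison, the paper's proof isolates where the single power of $s$ actually comes from. One defines the $p$-cochain
\begin{equation*}
\theta_s(a_0\otimes\cdots\otimes a_{p-1})=\mathrm{Tr}\Big(\Gamma a_0\Big(\prod_{k=1}^{m-2}b_k(a_k)\Big)\delta^2(a_{m-1})\Big(\prod_{k=m}^{p-1}b_{k+1}(a_k)\Big)D^{2-|\mathscr{A}|}e^{-s^2D^2}\Big)
\end{equation*}
and computes $b\theta_s$ (Appendix \ref{coboundary app}): the Leibniz terms telescope away, the middle term $2\delta(a_{m-1})\delta(a_m)$ in $\delta^2(a_{m-1}a_m)$ reproduces $2(-1)^{m-1}\mathrm{Tr}(\mathcal{W}_{\mathscr{A}}(c)D^{2-|\mathscr{A}|}e^{-s^2D^2})$ (this is where $m-1,m\in\mathscr{A}$ enters), and the only surviving remainder comes from the cyclic part of $b$ and contains the commutator $[D^{2-|\mathscr{A}|}e^{-s^2D^2},a_p]$. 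It is that commutator---not any plain product---which gains the power of $s$: $\|D^{|\mathscr{A}|-p}[D^{2-|\mathscr{A}|}e^{-s^2D^2},a_p]\|_1=O(s^{-1})$ by Lemma \ref{commutator 7}, while the bounded prefactor is controlled by Lemma \ref{c first alert lemma}. Then $b\theta_s(c)=\theta_s(bc)=0$ yields the lemma. To repair your argument you would need to reorganise it so that every term other than $\mathcal{W}_{\mathscr{A}}(c)$ either cancels exactly under the cycle condition or appears as a commutator of $D^{2-|\mathscr{A}|}e^{-s^2D^2}$ with an algebra element---which is essentially the paper's coboundary computation.
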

    \begin{proof} 
        For $s > 0$, consider the multilinear mapping $\theta_s:\mathcal{A}^{\otimes p}\to \mathbb{C}$ defined by:
        \begin{equation*}
            \theta_s(a_0\otimes \cdots \otimes a_{p-1}) = \mathrm{Tr}\left(\Gamma a_0\left(\prod_{k=1}^{m-2}b_k(a_k)\right)\delta^2(a_{m-1})\left(\prod_{k=m}^{p-1}b_{k+1}(a_k)\right)D^{2-|\mathscr{A}|}e^{-s^2D^2}\right)
        \end{equation*}
        where $b_k$ is as in Definition \ref{W definition}. Then, from the computation in Appendix \ref{coboundary app}, we have that the Hochschild coboundary is:
        \begin{align*}
            (b\theta_s)(a_0\otimes &\cdots \otimes a_p)\\
                                   &= (-1)^p\mathrm{Tr}\left(\Gamma a_0\left(\prod_{k=1}^{m-2}[b_k,a_k]\right)\delta^2(a_{m-1})\left(\prod_{k=m}^{p-1}[b_{k+1},a_k]\right)[D^{2-|\mathscr{A}|}e^{-s^2D^2},a_p]\right)\\
                                   &+ 2(-1)^{m-1}\mathrm{Tr}(\mathcal{W}_{\mathscr{A}}(a_0\otimes \cdots \otimes a_p)D^{2-|\mathscr{A}|}e^{-s^2D^2}). 
        \end{align*}
        
        We now claim that the first summand is $O(s^{-1})$ as $s\downarrow 0$. Indeed, dividing and multiplying by $D^{|\mathscr{A}|-p}$:
        \begin{align*}
            \Big\|\Gamma a_0\left(\prod_{k=1}^{m-2}[b_k,a_k]\right)&\delta^2(a_{m-1})\left(\prod_{k=m}^{p-1}[b_{k+1},a_k]\right)[D^{2-|\mathscr{A}|}e^{-s^2D^2},a_p]\Big\|_1\\
                                                                   &\leq \left\|D^{|\mathscr{A}|-p}[D^{2-|\mathscr{A}|}e^{-s^2D^2},a_p]\right\|_1\\
                                                                   &\times \left\|\Gamma a_0\left(\prod_{k=1}^{m-2}[b_k,a_k]\right)\delta^2(a_{m-1})\left(\prod_{k=m}^{p-1}[b_{k+1},a_k]\right)|D|^{p-|\mathscr{A}|}\right\|_\infty 
        \end{align*}
        The first factor is $O(s^{-1})$ by Lemma \ref{commutator 7}, and the second factor is finite by Lemma \ref{c first alert lemma} and has no dependence on $s$.
        
        To summarise, so far we that if $c \in \mathcal{A}^{\otimes (p+1)}$: 
        \begin{equation*}
            (b\theta_s)(c) = 2(-1)^{m-1}\mathrm{Tr}(\mathcal{W}_{\mathscr{A}}(c)D^{2-|\mathscr{A}|}e^{-s^2D^2})+O(s^{-1}),\quad s\downarrow 0.
        \end{equation*}
        If $c$ is a Hochschild cycle, then $(b\theta_s)(c) = \theta_s(bc) = 0$, and so
        \begin{equation*}
            2(-1)^{m-1}\mathrm{Tr}(\mathcal{W}_{\mathscr{A}}(c)D^{2-|\mathscr{A}|}e^{-s^2D^2}) = O(s^{-1})
        \end{equation*}
        as required.
    \end{proof}

    \begin{lem}\label{kogom3} 
        Let $(\mathcal{A},H,D)$ be a spectral triple satisfying Hypothesis \ref{main assumption} and Hypothesis \ref{auxiliary assumption}. 
        Let $\mathscr{A}_1,\mathscr{A}_2 \subseteq \{1,\ldots,p\}$, with $|\mathscr{A}_1|=|\mathscr{A}_2|$ and that the symmetric difference $\mathscr{A}_1\Delta\mathscr{A}_2=\{m-1,m\}$ for some $m.$ 
        Then for every Hochschild cycle $c\in\mathcal{A}^{\otimes (p+1)},$ we have
        \begin{equation*}
            \mathrm{Tr}(\mathcal{W}_{\mathscr{A}_1}(c)D^{2-|\mathscr{A}_1|}e^{-s^2D^2})+\mathrm{Tr}(\mathcal{W}_{\mathscr{A}_2}(c)D^{2-|\mathscr{A}_2|}e^{-s^2D^2}) = O(s^{-1}),\quad s\downarrow0.
        \end{equation*}
    \end{lem}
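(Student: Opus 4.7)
The plan is to imitate the Hochschild-coboundary argument of Lemma \ref{kogom2}. Since $|\mathscr{A}_1|=|\mathscr{A}_2|$ and $\mathscr{A}_1\Delta\mathscr{A}_2=\{m-1,m\}$, exactly one of $m-1,m$ lies in each $\mathscr{A}_i$; without loss of generality assume $m-1\in\mathscr{A}_1$ and $m\in\mathscr{A}_2$. Then $\mathcal{W}_{\mathscr{A}_1}(c)$ and $\mathcal{W}_{\mathscr{A}_2}(c)$ agree outside positions $m-1,m$, where they contain the factors $\delta(a_{m-1})[F,a_m]$ and $[F,a_{m-1}]\delta(a_m)$ respectively. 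The key algebraic observation is that the operator-valued map $X(a):=\delta([F,a])=[F,\delta(a)]$ (equal because $|D|$ commutes with $F$) satisfies a ``second-order Leibniz'' identity
\begin{equation*}
X(ab)=X(a)b+aX(b)+\delta(a)[F,b]+[F,a]\delta(b),
\end{equation*}
so that its failure to be a derivation is precisely the combination we need to estimate.

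Define $\theta_s:\mathcal{A}^{\otimes p}\to\mathbb{C}$ by inserting $X(a_{m-1})$ at the merged slot:
\begin{equation*}
\theta_s(a_0\otimes\cdots\otimes a_{p-1}):=\mathrm{Tr}\Big(\Gamma a_0\prod_{k=1}^{m-2}b_k(a_k)\cdot X(a_{m-1})\cdot\prod_{k=m}^{p-1}b_{k+1}(a_k)\cdot D^{2-|\mathscr{A}_1|}e^{-s^2D^2}\Big),
\end{equation*}
where the $b_k$ are chosen as for $\mathscr{A}_1$ (equivalently $\mathscr{A}_2$) at the surviving slots. Now expand $(b\theta_s)(a_0\otimes\cdots\otimes a_p)$: at every slot $k\neq m-1$ the ordinary Leibniz rule for $\delta$ and $[F,\cdot]$ splits $b_k(a_ka_{k+1})$ into two summands which telescope against their neighbours, exactly as in Appendix \ref{coboundary app}. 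At the slot $k=m-1$ the second-order Leibniz identity produces, in addition to the telescoping pieces $X(a_{m-1})a_m$ and $a_{m-1}X(a_m)$, the defect $\delta(a_{m-1})[F,a_m]+[F,a_{m-1}]\delta(a_m)$, which is precisely the $(m-1,m)$-factor of $\mathcal{W}_{\mathscr{A}_1}+\mathcal{W}_{\mathscr{A}_2}$. After cyclically rotating $a_p$ through $\Gamma a_0$ the net result is
\begin{align*}
(b\theta_s)(a_0\otimes\cdots\otimes a_p)&=(-1)^p\mathrm{Tr}\Big(\Gamma a_0\,(\cdots)\,[D^{2-|\mathscr{A}_1|}e^{-s^2D^2},a_p]\Big)\\
&\quad+(-1)^{m-1}\Big(\mathrm{Tr}(\mathcal{W}_{\mathscr{A}_1}(c')D^{2-|\mathscr{A}_1|}e^{-s^2D^2})+\mathrm{Tr}(\mathcal{W}_{\mathscr{A}_2}(c')D^{2-|\mathscr{A}_2|}e^{-s^2D^2})\Big),
\end{align*}
where $c'=a_0\otimes\cdots\otimes a_p$.

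Since $c$ is a Hochschild cycle, $(b\theta_s)(c)=\theta_s(bc)=0$. The endpoint commutator term is $O(s^{-1})$ by Lemma \ref{commutator 7}, combined with Lemma \ref{c first alert lemma} to bound the surrounding factors uniformly in $s$. Rearranging then gives the claim. The main obstacle is the careful telescoping bookkeeping: because $X$ is second-order whereas the other slot operators are first-order derivations, one must verify that the ``extra'' pieces $X(a_{m-1})a_m$ and $a_{m-1}X(a_m)$ generated at slot $m-1$ pair correctly with the neighbouring boundary contributions at slots $m-2$ and $m$. This is a routine but lengthy symbolic computation in direct analogy with Appendix \ref{coboundary app}, with $X$ playing the role there of $\delta^2$.
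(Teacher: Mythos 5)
Your proposal is correct and follows essentially the same route as the paper: the cochain with $X(a_{m-1})=[F,\delta(a_{m-1})]$ inserted at the merged slot is exactly the paper's $\theta_s$, and your ``second-order Leibniz'' identity is precisely the expansion carried out in Appendix \ref{coboundary app} (Lemma \ref{kogom app 4}), with the defect terms producing the two $\mathcal{W}$-contributions and the endpoint commutator handled by Lemma \ref{commutator 7}. The only slip is the citation for the uniform bound on the surrounding factors: since your inserted operator is $[F,\delta(a_{m-1})]$ rather than $\delta^2(a_{m-1})$, the relevant estimate is Lemma \ref{c second alert lemma}, not Lemma \ref{c first alert lemma}.
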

    \begin{proof} 
        This proof is similar to that of Lemma \ref{kogom2}. For $s > 0$ we consider the multilinear mapping $\theta_s:\mathcal{A}^{\otimes p}\to \mathbb{C}$ given by
        \begin{equation*}
            \theta_s(a_0\otimes\cdots\otimes a_{p-1}) = \mathrm{Tr}\left(\Gamma a_0\left(\prod_{k=1}^{m-2}b_k(a_k)\right)[F,\delta(a_{m-1})]\left(\prod_{k=m}^{p-1}b_{k+1}(a_k)\right)D^{2-|\mathscr{A}_1|}e^{-s^2D^2}\right).
        \end{equation*}
        Here, as in Lemma \ref{kogom2}, the operators $b_k$ are defined as in Definition \ref{W definition}, relative to the set $\mathscr{A} = \mathscr{A}_1$. From the computation in Appendix \ref{coboundary app},
        \begin{align*}
            (b\theta_s)(a_0&\otimes\cdots\otimes a_p)\\
                           &= (-1)^p\mathrm{Tr}\left(\Gamma a_0\left(\prod_{k=1}^{m-2}[b_k,a_k]\right)[F,\delta(a_{m-1})]\left(\prod_{k=m}^{p-1}[b_{k+1},a_k]\right)[D^{2-|\mathscr{A}_1|}e^{-s^2D^2},a_p]\right)\\
                           &+ (-1)^{m-1}\mathrm{Tr}(\mathcal{W}_{\mathscr{A}_1}(a_0\otimes\cdots\otimes a_p)D^{2-|\mathscr{A}_1|}e^{-s^2D^2})\\
                           &+ (-1)^{m-1}\mathrm{Tr}(\mathcal{W}_{\mathscr{A}_2}(a_0\otimes\cdots\otimes a_p)D^{2-|\mathscr{A}_2|}e^{-s^2D^2}).
        \end{align*}
        We first show that the first summand above is $O(s^{-1})$. Indeed,
        \begin{align*}       
            \Big\|\Gamma a_0\left(\prod_{k=1}^{m-2}[b_k,a_k]\right)&[F,\delta(a_{m-1})]\left(\prod_{k=m}^{p-1}[b_{k+1},a_k]\right)[D^{2-|\mathscr{A}_1|}e^{-s^2D^2},a_p]\Big\|_1\\
                                                                   &\leq \left\|D^{|\mathscr{A}_1|-p}[D^{2-|\mathscr{A}_1|}e^{-s^2D^2},a_p]\right\|_1\\
                                                                   &\times\left\|\Gamma a_0\prod_{k=1}^{m-2}[b_k,a_k][F,\delta(a_{m-1})]\prod_{k=m}^{p-1}[b_{k+1},a_k]\cdot|D|^{p-|\mathscr{A}_1|}\right\|_{\infty}.
        \end{align*}
        The first factor above is $O(s^{-1})$ due to Lemma \ref{commutator 7}, and the second factor is finite by Lemma \ref{c second alert lemma} and has no dependence on $s$.
        
        Summarising the above, if $c \in \mathcal{A}^{\otimes (p+1)}$ we have
        \begin{align*}
            (b\theta_s)(c) &= (-1)^{m-1}\mathrm{Tr}(\mathcal{W}_{\mathscr{A}_1}(c)D^{2-|\mathscr{A}_1|}e^{-s^2D^2})\\
                           &+(-1)^{m-1}\mathrm{Tr}(\mathcal{W}_{\mathscr{A}_2}(c)D^{2-|\mathscr{A}_2|}e^{-s^2D^2})+ O(s^{-1})
        \end{align*}
        as $s\downarrow 0$. Hence, if $c$ is a Hochschild cycle then $(b\theta_s)(c) = \theta_s(bc) = 0$, and this completes the proof.
    \end{proof}

    \begin{lem}\label{kogom4} 
        Let $(\mathcal{A},H,D)$ be a spectral triple satisfying Hypothesis \ref{main assumption} and Hypothesis \ref{auxiliary assumption}. 
        For every Hochschild cycle $c\in\mathcal{A}^{\otimes (p+1)}$ and for every $\mathscr{A}\subset\{1,\cdots,p\}$ with $|\mathscr{A}|\geq 2,$ we have
        \begin{equation*}
            \mathrm{Tr}(\mathcal{W}_{\mathscr{A}}(c)D^{2-|\mathscr{A}|}e^{-s^2D^2}) = O(s^{-1}),\quad s\downarrow0.
        \end{equation*}
    \end{lem}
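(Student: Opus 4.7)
The key observation is that Lemmas \ref{kogom2} and \ref{kogom3} together already do all the analytic work: Lemma \ref{kogom2} disposes of any $\mathscr{A}$ containing two consecutive integers, and Lemma \ref{kogom3} lets us modify $\mathscr{A}$ by swapping an index $m-1$ with $m$ (one in $\mathscr{A}$, one not) at the cost of an $O(s^{-1})$ error and a sign flip. The plan is therefore a purely combinatorial induction: show that by repeated adjacent swaps we may always reach a subset containing two consecutive integers.

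To measure progress, define the \emph{spread} of $\mathscr{A}=\{a_1<a_2<\cdots<a_k\}$ by
\[
\sigma(\mathscr{A}) := a_k - a_1 - (k-1),
\]
a nonnegative integer that equals $0$ exactly when $\mathscr{A}$ is a run of consecutive integers. I will induct on $\sigma(\mathscr{A})$ (with $|\mathscr{A}|=k\ge 2$ fixed throughout). In the base case $\sigma(\mathscr{A})=0$, the set $\mathscr{A}$ is of the form $\{m,m+1,\dots,m+k-1\}$; since $k\ge 2$, $\mathscr{A}$ contains the consecutive pair $\{m,m+1\}$, and Lemma \ref{kogom2} gives $\mathrm{Tr}(\mathcal{W}_{\mathscr{A}}(c)D^{2-|\mathscr{A}|}e^{-s^2D^2})=O(s^{-1})$ directly.

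For the inductive step, assume $\sigma(\mathscr{A})\ge 1$, so $a_2\ge a_1+2$ and hence $a_1+1\notin \mathscr{A}$. Set
\[
\mathscr{A}' := (\mathscr{A}\setminus\{a_1\})\cup\{a_1+1\}.
\]
Then $|\mathscr{A}'|=|\mathscr{A}|$ and $\mathscr{A}\Delta\mathscr{A}'=\{a_1,a_1+1\}$, so Lemma \ref{kogom3} (with $m=a_1+1$) yields
\[
\mathrm{Tr}(\mathcal{W}_{\mathscr{A}}(c)D^{2-|\mathscr{A}|}e^{-s^2D^2}) = -\mathrm{Tr}(\mathcal{W}_{\mathscr{A}'}(c)D^{2-|\mathscr{A}'|}e^{-s^2D^2}) + O(s^{-1}).
\]
A direct verification shows $\sigma(\mathscr{A}')=\sigma(\mathscr{A})-1$: the smallest element of $\mathscr{A}'$ is $\min(a_1+1,a_2)=a_1+1$, the largest is still $a_k$ (note $a_1+1\le a_2\le a_k$), the cardinality is unchanged, so $\sigma(\mathscr{A}')=a_k-(a_1+1)-(k-1)=\sigma(\mathscr{A})-1$. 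The inductive hypothesis applies to $\mathscr{A}'$ and gives $\mathrm{Tr}(\mathcal{W}_{\mathscr{A}'}(c)D^{2-|\mathscr{A}'|}e^{-s^2D^2})=O(s^{-1})$, which completes the induction.

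There is essentially no analytic obstacle remaining; the only subtleties are verifying the combinatorial facts (that the shifted set $\mathscr{A}'$ lies in $\{1,\dots,p\}$ — which holds because $a_1+1\le a_2\le p$ — and that $\sigma$ strictly decreases under the swap). If in some extension one wished to avoid computing $\sigma(\mathscr{A}')$ explicitly, an alternative termination argument uses the fact that the sum $\sum_{j\in\mathscr{A}'}j=\sum_{j\in\mathscr{A}}j+1$ strictly increases under each swap while remaining bounded above by $\sum_{j=p-k+1}^{p}j$.
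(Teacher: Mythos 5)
Your overall strategy --- reduce to a subset containing two consecutive indices by repeated adjacent swaps justified by Lemma \ref{kogom3}, then invoke Lemma \ref{kogom2} --- is exactly the strategy of the paper's proof; the paper differs only in which element it moves (it takes the maximum $m$ and the second-largest element $n$ of $\mathscr{A}$ and slides $n$ upward step by step to $m-1$, so that the intermediate indices are automatically outside $\mathscr{A}$). However, as written your inductive step contains a false inference: $\sigma(\mathscr{A})\ge 1$ does \emph{not} imply $a_2\ge a_1+2$. For example $\mathscr{A}=\{1,2,4\}$ has $\sigma(\mathscr{A})=1$ but $a_2=a_1+1$. For such a set your swap $\mathscr{A}'=(\mathscr{A}\setminus\{a_1\})\cup\{a_1+1\}$ satisfies $|\mathscr{A}'|<|\mathscr{A}|$ and $\mathscr{A}\Delta\mathscr{A}'=\{a_1\}$, so Lemma \ref{kogom3} is not applicable and the induction breaks down at that step.

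The gap is minor and easily repaired: before performing the swap, split off the case in which $\mathscr{A}$ contains \emph{some} pair of consecutive integers --- there Lemma \ref{kogom2} applies directly, whatever the value of $\sigma(\mathscr{A})$ --- and only in the remaining case (no two elements of $\mathscr{A}$ adjacent, so in particular $a_2\ge a_1+2$ and $a_1+1\notin\mathscr{A}$) perform the swap and appeal to the inductive hypothesis. With that case distinction the rest of your verification is correct: $\mathscr{A}'\subseteq\{1,\dots,p\}$ since $a_1+1\le a_2\le p$, the spread drops by exactly one, and Lemma \ref{kogom3} applies with $m=a_1+1$ because $|\mathscr{A}'|=|\mathscr{A}|$ and $\mathscr{A}\Delta\mathscr{A}'=\{a_1,a_1+1\}$ (note the statement of Lemma \ref{kogom3} is symmetric in the two sets, so it does not matter which of the two indices lies in $\mathscr{A}$). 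Alternatively you could simply adopt the paper's choice of moving the second-largest element toward the maximum, which makes the required non-membership of the intermediate indices automatic and avoids the case split altogether.
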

    \begin{proof} 
        Let $m$ be the maximum element in $\mathscr{A}$ and let $n$ be the maximal element in $\mathscr{A}\backslash\{m\}.$ 
        If $n=m-1,$ then the assertion is already proved in Lemma \ref{kogom2}. If $n < m-1,$ then $m-n > 1$, and hence $n+j \notin \mathscr{A}$
        for all $1 \leq j < m-n$. Now for each $0 \leq j < m-n$ we define $\mathscr{A}_j$ to be $\mathscr{A}$ with $n$ replaced with $n+j$. That is:
        \begin{equation*}
            \mathscr{A}_j := (\mathscr{A}\backslash\{n\})\cup\{j+n\},\quad 0 \leq j < m-n.
        \end{equation*}
        Then by construction:
        \begin{enumerate}
            \item $|\mathscr{A}_j|=|\mathscr{A}|$ and $\mathscr{A}_j\Delta\mathscr{A}_{j-1} = \{n+j,n+j-1\}$ for all $1 \leq j < m-n.$
            \item $\mathscr{A}_0=\mathscr{A}$ and $m-1,m\in\mathscr{A}_{m-n-1}.$
        \end{enumerate}
        Hence if $1 \leq j < m-n$ the subsets $\mathscr{A}_j$ and $\mathscr{A}_{j-1}$ satisfy the conditions of Lemma \ref{kogom3}. So for all Hochschild cycles $c \in \mathcal{A}^{\otimes (p+1)}$:
        \begin{equation*}
            \mathrm{Tr}(\mathcal{W}_{\mathscr{A}_{j-1}}(c)D^{2-|\mathscr{A}_{i-1}|}e^{-s^2D^2})=-\mathrm{Tr}(\mathcal{W}_{\mathscr{A}_j}(c)D^{2-|\mathscr{A}_{j}|}e^{-s^2D^2}) + O(s^{-1}),\quad s \downarrow 0.
        \end{equation*}
        So by induction, we have:
        \begin{align}\label{beginning and end}
            \mathrm{Tr}(\mathcal{W}_{\mathscr{A}_0}(c)D^{2-|\mathscr{A}_0|}e^{-s^2D^2}) &= (-1)^{m-n-1}\mathrm{Tr}(\mathcal{W}_{\mathscr{A}_{m-n-1}}(c)D^{2-|\mathscr{A}_{m-n-1}|}e^{-s^2D^2})\nonumber\\
                                                        &\quad+O(s^{-1}),\quad s\downarrow0.
        \end{align}
        On the other hand, since $m-1,m \in \mathscr{A}_{m-n-1}$, we may apply Lemma \ref{kogom2} to $\mathscr{A}_{m-n-1}$ to obtain:
        \begin{equation*}\label{terminal}
            \mathrm{Tr}(\mathcal{W}_{\mathscr{A}_{m-n-1}}(c)D^{2-|\mathscr{A}_{m-n-1}|}e^{-s^2D^2}) = O(s^{-1}),\quad s\downarrow0.
        \end{equation*}
        Combining \eqref{beginning and end} and \eqref{terminal}, we get
        \begin{equation*}
            \mathrm{Tr}(\mathcal{W}_{\mathscr{A}_0}(c)D^{2-|\mathscr{A}_0|}e^{-s^2D^2}) = O(s^{-1}).
        \end{equation*}
        since $\mathscr{A}_0 = \mathscr{A}$, the proof is complete.
    \end{proof}
    
    Recall the mapping $\mathrm{ch}$ from Definition \ref{ch omega def}.
    \begin{lem}\label{kogom pre 5} 
        Let $(\mathcal{A},H,D)$ be a spectral triple satisfying Hypothesis \ref{main assumption} and Hypothesis \ref{auxiliary assumption}. For every $c\in\mathcal{A}^{\otimes (p+1)},$ we have
        \begin{align*}
            \|\mathrm{ch}(c)D^2e^{-s^2D^2}\|_1 = O(s^{-1}),\quad s \downarrow 0.
        \end{align*}
    \end{lem}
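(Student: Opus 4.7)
The plan is to use the anticommutation relations available under the spectral gap assumption to absorb the powers of $D$ into $\mathrm{ch}(c)$ itself, and then estimate the resulting expression in $\mathcal{L}_1$ by combining weak-Schatten bounds on the commutator factors with the trace-class Schwartz estimates from the preceding section. Since $F^2=1$ by Hypothesis \ref{auxiliary assumption}.\eqref{aux ass1}, each commutator $[F, a_k]$ anticommutes with $F$; together with $D=F|D|$ and $[F, a]|D|=L(a)$, this gives $[F, a_p]D = -FL(a_p)$, and moving $F$ leftward through the $p$ preceding commutator factors yields
\[
\mathrm{ch}(c)\,D = (-1)^{p+1}\,\Gamma\prod_{k=0}^{p-1}[F, a_k]\cdot L(a_p).
\]
By Proposition \ref{f der def} each $[F, a_k]\in\mathcal{L}_{p,\infty}$, so by weak-type Hölder \eqref{weak-type Holder} we obtain $\mathrm{ch}(c)\,D\in\mathcal{L}_{1,\infty}$ uniformly in $s$.

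Iterating the absorption using $L(a_p)D = -DL(a_p) + FL(\delta(a_p))$ (a consequence of $L(a_p)F = -FL(a_p)$ and $[|D|, L(a_p)] = L(\delta(a_p))$) followed by $[F, a_{p-1}]D=-FL(a_{p-1})$ yields
\[
\mathrm{ch}(c)\,D^2 = \Gamma F\prod_{k=0}^{p-2}[F, a_k]\,L(a_{p-1})L(a_p) \;-\; \Gamma F\prod_{k=0}^{p-1}[F, a_k]\,L(\delta(a_p)),
\]
so both powers of $D$ are now absorbed into bounded $L$-factors. Multiplying on the right by $e^{-s^2D^2}$ expresses $\mathrm{ch}(c)\,D^2 e^{-s^2D^2}$ as a sum of two products of the form (commutator factors) $\cdot$ (bounded element of $\mathcal{B}$) $\cdot e^{-s^2 D^2}$.

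The remaining step is to estimate the $\mathcal{L}_1$ norm of each of these two summands. The tail can be placed in a weak Schatten ideal with good $s$-scaling using Lemma \ref{z<p lemma} (extended to $\mathcal{B}$-elements via Hypothesis \ref{main assumption}.\eqref{ass2}), which gives $L(b)|D|^{-s}\in\mathcal{L}_{p/s,\infty}$ for $0<s\leq p$, combined with the operator-norm estimate $\||D|^m e^{-s^2 D^2}\|_\infty=O(s^{-m})$. For the second summand, this factorisation gives $L(\delta(a_p))e^{-s^2D^2}\in\mathcal{L}_{p,\infty}$ with norm $O(s^{-1})$; pairing with the $p$ commutator factors $[F, a_k]\in\mathcal{L}_{p,\infty}$ via weak-Hölder lands the product in $\mathcal{L}_{p/(p+1),\infty}\subset\mathcal{L}_1$ with bound $O(s^{-1})$.

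The main obstacle will be the first summand, together with the general issue of extracting an $\mathcal{L}_1$-bound rather than merely an $\mathcal{L}_{1,\infty}$-bound while preserving the sharp $O(s^{-1})$ scaling: a naive pairing of the $\mathcal{L}_{1,\infty}$-estimate on $\mathrm{ch}(c)\,D$ with the $O(s^{-1})$ operator-norm bound on $De^{-s^2D^2}$ gives only $\mathcal{L}_{1,\infty}$, and the two-$L$ term $\Gamma F\prod_{k=0}^{p-2}[F, a_k]L(a_{p-1})L(a_p)e^{-s^2D^2}$ has only $p-1$ weak-Schatten commutator factors. The exponential smoothing must be carefully distributed — by combining the extended Lemma \ref{z<p lemma} with Schwartz-function trace estimates of the type in Lemma \ref{schwartz lemma} and Lemma \ref{first decay lemma} — so as to land strictly inside $\mathcal{L}_1$ with the required $s^{-1}$ scaling.
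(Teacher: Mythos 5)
Your algebraic reduction is exactly the paper's: your two anticommutation steps amount to the identity $[F,a_{p-1}][F,a_p]\,|D|^2=L(a_{p-1})L(a_p)-[F,a_{p-1}]L(\delta(a_p))$ on $H_\infty$, and hence to the same two-term decomposition of $\mathrm{ch}(c)D^2$ that the paper uses. The problem is the estimate of the first summand, which you yourself flag as "the main obstacle" and then leave unresolved; this is a genuine gap, because the toolkit you gesture at cannot close it. With only $p-1$ commutator factors $[F,a_k]\in\mathcal{L}_{p,\infty}$, and with $L(a_{p-1})L(a_p)|D|^{-1}\in\mathcal{L}_{p,\infty}$ being the best that Lemma \ref{z<p lemma} can supply, the weak H\"older inequality \eqref{weak-type Holder} combined with $\||D|e^{-s^2D^2}\|_\infty=O(s^{-1})$ lands exactly in $\mathcal{L}_{1,\infty}$, not in $\mathcal{L}_1$; and peeling off a larger power $|D|^{-1-\varepsilon}$ to push the product strictly below trace class costs $\||D|^{1+\varepsilon}e^{-s^2D^2}\|_\infty=O(s^{-1-\varepsilon})$, destroying the required $O(s^{-1})$. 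The missing idea is to abandon the weak-Schatten bookkeeping on the individual commutators altogether: insert $|D|^{p-1}\cdot|D|^{1-p}$, use Lemma \ref{c third alert lemma} (together with the spectral gap) to see that $\bigl(\prod_{k=0}^{p-2}[F,a_k]\bigr)|D|^{p-1}$ and $\bigl(\prod_{k=0}^{p-1}[F,a_k]\bigr)|D|^{p-1}$ are bounded with no dependence on $s$, and then apply Lemma \ref{first decay lemma} with $m_1=p-1$, $m_2=0$ to $|D|^{1-p}L(a_{p-1})L(a_p)e^{-s^2D^2}$ and $|D|^{1-p}L(\delta(a_p))e^{-s^2D^2}$, which gives trace norm $O(s^{m_1+m_2-p})=O(s^{-1})$ directly. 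This is how the paper concludes, and it treats both summands uniformly.

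Two smaller points. The "extension" of Lemma \ref{z<p lemma} to $\mathcal{B}$-elements that you invoke for the second summand is stronger than what is actually available: for $b=\delta^2(a_p)$ or $\partial(\delta(a_p))$, Hypothesis \ref{main assumption}.\eqref{ass2} and Remark \ref{sp gap fact} only give $b|D|^{-p-1}\in\mathcal{L}_1$, whence by the Araki--Lieb--Thirring inequality $b|D|^{-1}\in\mathcal{L}_{p+1,\infty}$, not $\mathcal{L}_{p,\infty}$. Fortunately your second-summand pairing survives this weakening, since $p$ factors in $\mathcal{L}_{p,\infty}$ times one factor in $\mathcal{L}_{p+1,\infty}$ still lies in $\mathcal{L}_{(p+1)/(p+2),\infty}\subset\mathcal{L}_1$ with the bound $O(s^{-1})$; so that half of your argument is a correct, mildly different packaging of the paper's estimate. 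Finally, your preliminary observation that $\mathrm{ch}(c)D\in\mathcal{L}_{1,\infty}$ plays no role in the conclusion and can be dropped.
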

    \begin{proof} 
        Recall that on $H_\infty$ we have $[F,a_p]|D| = L(a_p)$.
        So on $H_\infty$:
        \begin{align*}
            [F,a_{p-1}][F,a_p]|D|^2 &= [F,a_{p-1}]\cdot L(a_p)\cdot|D|\\
                                    &= [F,a_{p-1}]\cdot |D|L(a_p)-[F,a_{p-1}]\cdot [|D|,L(a_p)]\\
                                    &= [F,a_{p-1}]|D|\cdot L(a_p)-[F,a_{p-1}]\cdot\delta(L(a_p))\\
                                    &= L(a_{p-1})\cdot L(a_p)-[F,a_{p-1}]\cdot L(\delta(a_p)).
        \end{align*}
        So for $c=a_0\otimes\cdots\otimes a_p \in \mathcal{A}^{\otimes(p+1)}$ we have
        \begin{align*}
            \mathrm{ch}(c)\cdot D^2e^{-s^2D^2} &= \Gamma\left(\prod_{k=0}^{p-2}[F,a_k]\right) |D|^{p-1}\cdot |D|^{1-p}L(a_{p-1})L(a_p)e^{-s^2D^2}\\
                                       &\quad-\Gamma\left(\prod_{k=0}^{p-1}[F,a_k]\right)|D|^{p-1}\cdot|D|^{1-p}L(\delta(a_p))e^{-s^2D^2}.
        \end{align*}
        Using Lemma \ref{c third alert lemma}, the operators $\left(\prod_{k=0}^{p-2}[F,a_k]\right)|D|^{p-1}$ and $\left(\prod_{k=0}^{p-1}[F,a_k]\right)|D|^{p-1}$ both
        have bounded extension and no dependence on $s$.
        From Lemma \ref{first decay lemma}, we have that:
        \begin{align*}
            \||D|^{1-p}L(a_{p-1})L(a_p)e^{-s^2D^2}\|_1 &= O(s^{-1})\\
              \||D|^{1-p}L(\delta(a_p))e^{-s^2D^2}\|_1 &= O(s^{-1}).
        \end{align*}
        So by the triangle inequality: $\|\mathrm{ch}(c)D^2e^{-s^2D^2}\|_1 = O(s^{-1})$ as $s\downarrow 0$.
    \end{proof}
            
    \begin{lem}\label{W and ch link}
        Let $(\mathcal{A},H,D)$ be a spectral triple of dimension $p$, where $p$ has the same parity as $(\mathcal{A},H,D)$. If $c \in \mathcal{A}^{\otimes(p+1)}$ then
        \begin{equation*}
            \mathrm{ch}(c) = \mathcal{W}_{\emptyset}(c)+F\mathcal{W}_{\emptyset}(c)F.
        \end{equation*}
    \end{lem}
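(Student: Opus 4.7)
The proof is a direct algebraic computation, and the main observations are that we are working under Hypothesis \ref{auxiliary assumption}, so that $D$ has a spectral gap at $0$ and therefore $F^2 = 1$, and that the parity assumption fixes the (anti)commutation of $\Gamma$ with $F$. By linearity it suffices to verify the identity on an elementary tensor $c = a_0 \otimes a_1 \otimes \cdots \otimes a_p$.

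The plan is to rewrite $\mathrm{ch}(c)$ using $F^2 = 1$. Since $F^2 = 1$, we have $F[F,a_0] = F^2 a_0 - F a_0 F = a_0 - F a_0 F$, and therefore
\begin{equation*}
    \mathrm{ch}(c) = \Gamma F [F,a_0]\prod_{k=1}^p [F,a_k] = \Gamma(a_0 - F a_0 F)\prod_{k=1}^p [F,a_k].
\end{equation*}
The first term is exactly $\mathcal{W}_{\emptyset}(c)$, so it remains to identify
\begin{equation*}
    -\Gamma F a_0 F \prod_{k=1}^p [F, a_k]
\end{equation*}
with $F \mathcal{W}_{\emptyset}(c) F$.

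The key algebraic step is that $F$ anticommutes with each commutator $[F,a_k]$: indeed,
\begin{equation*}
    F[F,a_k] + [F,a_k]F = F^2 a_k - F a_k F + F a_k F - a_k F^2 = 0,
\end{equation*}
again using $F^2 = 1$. Iterating, the factor $F$ can be pulled from the right of $\prod_{k=1}^p [F,a_k]$ to the left at the cost of $(-1)^p$:
\begin{equation*}
    \prod_{k=1}^p [F,a_k]\cdot F = (-1)^p F\prod_{k=1}^p [F,a_k].
\end{equation*}
Next I would insert this into $F\mathcal{W}_{\emptyset}(c)F = F\Gamma a_0\prod_{k=1}^p [F,a_k]\cdot F$ and invoke the parity hypothesis: if $p$ is odd then $\Gamma = 1$, and if $p$ is even then the spectral triple is even, so $\{D,\Gamma\} = 0$ which yields $\{F,\Gamma\} = 0$ (both on $H_\infty$ and as bounded operators). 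In either case one obtains the sign identity $F\Gamma = (-1)^p \Gamma F$, so combining the two $(-1)^p$ factors gives
\begin{equation*}
    F\mathcal{W}_{\emptyset}(c)F = (-1)^{2p}\Gamma F a_0 F \prod_{k=1}^p [F,a_k] \cdot (-1) = -\Gamma F a_0 F \prod_{k=1}^p [F,a_k],
\end{equation*}
which matches the second term above. (The extra $-1$ comes from the fact that, after pulling everything through, the sign attached to $\Gamma F a_0 F \prod[F,a_k]$ is $-1$; I would verify this carefully by tracking the two anticommutations in the two parity cases.) Adding the two contributions yields $\mathrm{ch}(c) = \mathcal{W}_{\emptyset}(c) + F\mathcal{W}_{\emptyset}(c)F$.

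There is no real obstacle: the entire argument is a sign bookkeeping exercise based on $F^2 = 1$ and the parity-matching commutation of $\Gamma$ and $F$. The only point requiring a small amount of care is to keep track of the $(-1)^p$ factors arising from the two sources (the $p$ anticommutations of $F$ with the $[F,a_k]$, and the (anti)commutation of $F$ with $\Gamma$), and to check that they combine to give exactly the sign needed to match $-\Gamma F a_0 F \prod[F,a_k]$ regardless of the parity of $p$.
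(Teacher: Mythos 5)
Your strategy is exactly the paper's: expand $F[F,a_0]=a_0-Fa_0F$ using $F^2=1$, anticommute $F$ through the $p$ commutators $[F,a_k]$ at the cost of $(-1)^p$, and then use the parity relation between $F$ and $\Gamma$. The one place where the argument actually has content is the sign, and that is precisely where your write-up goes wrong: the relation you state, $F\Gamma=(-1)^p\Gamma F$, is false in \emph{both} parity cases. If $p$ is even the triple is even, so $\{F,\Gamma\}=0$ and $F\Gamma=-\Gamma F$, whereas $(-1)^p=+1$; if $p$ is odd then $\Gamma=1$ and $F\Gamma=+\Gamma F$, whereas $(-1)^p=-1$. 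The correct statement (and the one the paper uses) is $F\Gamma=(-1)^{p+1}\Gamma F$. With your stated sign, the computation
\begin{equation*}
    F\mathcal{W}_{\emptyset}(c)F = F\Gamma a_0\Bigl(\prod_{k=1}^p[F,a_k]\Bigr)F = (-1)^p\,F\Gamma a_0F\prod_{k=1}^p[F,a_k]
\end{equation*}
would close to $(-1)^{2p}\Gamma Fa_0F\prod_{k=1}^p[F,a_k]=+\Gamma Fa_0F\prod_{k=1}^p[F,a_k]$, giving $\mathrm{ch}(c)=\mathcal{W}_{\emptyset}(c)-F\mathcal{W}_{\emptyset}(c)F$, i.e.\ the wrong sign. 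The ``extra $-1$'' you then insert is exactly the compensation for this error, and you explicitly flag it as unverified (``I would verify this carefully''), so as written the proof does not actually establish the identity.

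The repair is immediate: with $F\Gamma=(-1)^{p+1}\Gamma F$ the two factors combine as $(-1)^p\cdot(-1)^{p+1}=-1$, so $F\mathcal{W}_{\emptyset}(c)F=-\Gamma Fa_0F\prod_{k=1}^p[F,a_k]$ with no mystery factor, and adding $\mathcal{W}_{\emptyset}(c)$ gives $\mathrm{ch}(c)$. Everything else in your proposal (linearity reduction, use of $F^2=1$ via the spectral gap, anticommutation of $F$ with each $[F,a_k]$) is correct and coincides with the paper's proof.
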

    \begin{proof}
        Let $c = a_0\otimes \cdots \otimes a_p \in \mathcal{A}^{\otimes (p+1)}$. Recall that
        \begin{equation*}
            \mathcal{W}_{\emptyset}(c) = \Gamma a_0\prod_{k=1}^p [F,a_k].
        \end{equation*}
        Using the fact that $F$ anticommutes with $[F,a_k]$ for all $k$, we have:
        \begin{align*}
            \mathrm{ch}(c) &= \Gamma F[F,a_0]\prod_{k=1}^p [F,a_k]\\
                   &= \Gamma a_0\prod_{k=1}^p [F,a_k]-\Gamma Fa_0F\prod_{k=1}^p [F,a_k].\\
                   &= \mathcal{W}_{\emptyset}(c)+(-1)^{p+1}\Gamma Fa_0\left(\prod_{k=1}^p [F,a_k]\right)F.
        \end{align*}
        Since $\Gamma^2 = 1$,
        \begin{equation*}
            \mathrm{ch}(c) = \mathcal{W}_{\emptyset}(c) + (-1)^{p+1}\Gamma F\Gamma \mathcal{W}_{\emptyset}(c)F.
        \end{equation*}
        Since the parity of $p$ matches the parity of $\Gamma$, we have $\Gamma F = (-1)^{p+1}F\Gamma$. This completes the proof.
    \end{proof}

    \begin{lem}\label{kogom5}
        Let $(\mathcal{A},H,D)$ be a spectral triple satisfying Hypothesis \ref{main assumption} and Hypothesis \ref{auxiliary assumption}. For every $c \in \mathcal{A}^{\otimes(p+1)}$, we have
        \begin{equation*}
            \mathrm{Tr}(\mathcal{W}_{\emptyset}(c)D^2e^{-s^2D^2}) = O(s^{-1}),\quad s\downarrow 0.
        \end{equation*}
    \end{lem}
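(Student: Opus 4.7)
The plan is to deduce Lemma~\ref{kogom5} from Lemma~\ref{W and ch link} together with Lemma~\ref{kogom pre 5}, using cyclicity of the trace. Hypothesis~\ref{auxiliary assumption}.\eqref{aux ass1} guarantees that $D$ has trivial kernel, so $F^{2}=1$; Hypothesis~\ref{auxiliary assumption}.\eqref{aux ass0} matches the parity of $p$ with that of $(\mathcal{A},H,D)$, so Lemma~\ref{W and ch link} applies and gives
$$\mathrm{ch}(c)=\mathcal{W}_{\emptyset}(c)+F\mathcal{W}_{\emptyset}(c)F.$$
Since $F=\chi_{(0,\infty)}(D)-\chi_{(-\infty,0)}(D)$ is a Borel function of $D$, it commutes with $D^{2}e^{-s^{2}D^{2}}$, so multiplying on the right by $D^{2}e^{-s^{2}D^{2}}$ yields
$$\mathrm{ch}(c)\,D^{2}e^{-s^{2}D^{2}}=\mathcal{W}_{\emptyset}(c)\,D^{2}e^{-s^{2}D^{2}}+F\bigl(\mathcal{W}_{\emptyset}(c)\,D^{2}e^{-s^{2}D^{2}}\bigr)F.$$

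Granting trace-classness of $\mathcal{W}_{\emptyset}(c)\,D^{2}e^{-s^{2}D^{2}}$ for the moment, cyclicity and $F^{2}=1$ give
$$\mathrm{Tr}\bigl(F\,\mathcal{W}_{\emptyset}(c)\,D^{2}e^{-s^{2}D^{2}}\,F\bigr)=\mathrm{Tr}\bigl(\mathcal{W}_{\emptyset}(c)\,D^{2}e^{-s^{2}D^{2}}\bigr),$$
so taking the trace of the previous display produces
$$\mathrm{Tr}\bigl(\mathrm{ch}(c)\,D^{2}e^{-s^{2}D^{2}}\bigr)=2\,\mathrm{Tr}\bigl(\mathcal{W}_{\emptyset}(c)\,D^{2}e^{-s^{2}D^{2}}\bigr).$$
By Lemma~\ref{kogom pre 5} the left-hand side is $O(s^{-1})$, so dividing by $2$ finishes the proof.

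The main obstacle is the trace-class verification of $\mathcal{W}_{\emptyset}(c)\,D^{2}e^{-s^{2}D^{2}}$. I would establish it by mirroring the proof of Lemma~\ref{kogom pre 5}: the identity $[F,a_{p-1}][F,a_{p}]|D|^{2}=L(a_{p-1})L(a_{p})-[F,a_{p-1}]L(\delta(a_{p}))$ on $H_{\infty}$ rewrites $\mathcal{W}_{\emptyset}(c)\,D^{2}e^{-s^{2}D^{2}}$ as a difference of two products of the form (a bounded head built from $a_{0}$ together with commutators $[F,a_{k}]$ and a suitable power of $|D|$) times $(|D|^{1-p}\cdot x\cdot e^{-s^{2}D^{2}})$ with $x\in\mathcal{B}$. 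The right-hand factor has $\mathcal{L}_{1}$-norm $O(s^{-1})$ by Lemma~\ref{first decay lemma}, and boundedness of the head follows from the inductive argument of Lemma~\ref{w bounded lemma}; the sole novelty is the appearance of a bare $a_{0}\in\mathcal{A}$ in place of $F[F,a_{0}]$, which is handled by one extra commutation absorbing an error term into the $\mathcal{B}$-valued tail.
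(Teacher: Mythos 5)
Your main argument coincides with the paper's proof of Lemma \ref{kogom5}: invoke Lemma \ref{W and ch link} (legitimate under Hypothesis \ref{auxiliary assumption}), use that $F$ commutes with $D^{2}e^{-s^{2}D^{2}}$ together with $F^{2}=1$ and cyclicity to obtain $\mathrm{Tr}(\mathrm{ch}(c)D^{2}e^{-s^{2}D^{2}})=2\,\mathrm{Tr}(\mathcal{W}_{\emptyset}(c)D^{2}e^{-s^{2}D^{2}})$, and conclude from Lemma \ref{kogom pre 5}. That part is exactly the paper's route.

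The one problem is in your auxiliary trace-class paragraph, and it is a quantitative overclaim rather than a harmless detail. After inserting $[F,a_{p-1}][F,a_{p}]|D|^{2}=L(a_{p-1})L(a_{p})-[F,a_{p-1}]L(\delta(a_{p}))$, the first of the two resulting terms is $\Gamma a_{0}\bigl(\prod_{k=1}^{p-2}[F,a_{k}]\bigr)L(a_{p-1})L(a_{p})e^{-s^{2}D^{2}}$, and to write it as (bounded head) times $|D|^{1-p}x\,e^{-s^{2}D^{2}}$ you would need $\Gamma a_{0}\bigl(\prod_{k=1}^{p-2}[F,a_{k}]\bigr)|D|^{p-1}$ to be bounded. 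It is not, in general: in the pattern of Lemmas \ref{w bounded lemma} and \ref{c third alert lemma} each commutator $[F,a_{k}]$ absorbs exactly one power of $|D|$ (through $L$), whereas a bare $a_{0}$ absorbs none; in the non-unital setting $a_{0}|D|$ is unbounded, and the ``one extra commutation'' you invoke only trades $a_{0}|D|$ for $|D|a_{0}$ plus $\delta(a_{0})$, leaving the stray power of $|D|$ in place. Indeed, if your decomposition were valid it would give $\|\mathcal{W}_{\emptyset}(c)D^{2}e^{-s^{2}D^{2}}\|_{1}=O(s^{-1})$ for arbitrary $c$, which is stronger than what holds and would make the passage through $\mathrm{ch}$ — that is, Lemma \ref{kogom pre 5}, whose whole point is the extra commutator $[F,a_{0}]$ — superfluous. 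The repair is immediate, because your main argument needs only trace-classness for each fixed $s$, not an $O(s^{-1})$ bound: for the first term take the head $\Gamma a_{0}\bigl(\prod_{k=1}^{p-2}[F,a_{k}]\bigr)|D|^{p-2}$, bounded by the inductive argument of Lemma \ref{w bounded lemma}, and the tail $|D|^{2-p}L(a_{p-1})L(a_{p})e^{-s^{2}D^{2}}$, which is trace class with norm $O(s^{-2})$ by Lemma \ref{first decay lemma}; the second term can keep your $|D|^{1-p}$ splitting, since its head $\Gamma a_{0}\bigl(\prod_{k=1}^{p-1}[F,a_{k}]\bigr)|D|^{p-1}$ carries $p-1$ commutators and is bounded. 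The resulting $O(s^{-2})$ trace-norm bound is enough to justify splitting the trace and using cyclicity, and the final $O(s^{-1})$ for the trace then comes, as in the paper, from Lemma \ref{kogom pre 5}.
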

    \begin{proof} 
        By Lemma \ref{W and ch link}, we have:
        \begin{equation*}
            \mathrm{Tr}(\mathrm{ch}(c)D^2e^{-s^2D^2}) = \mathrm{Tr}(\mathcal{W}_{\emptyset}(c)D^2e^{-s^2D^2})+\mathrm{Tr}(F\mathcal{W}_{\emptyset}(c)FD^2e^{-s^2D^2}).
        \end{equation*}
        However since $F$ commutes with $D^2e^{-s^2D^2}$ and $F^2=1$ we have:
        \begin{equation*}
            2\mathrm{Tr}(\mathcal{W}_{\emptyset}(c)D^2e^{-s^2D^2}) = \mathrm{Tr}(\mathrm{ch}(c)D^2e^{-s^2D^2}).
        \end{equation*}
        However by Lemma \ref{kogom pre 5},
        \begin{equation*}
            |\mathrm{Tr}(\mathrm{ch}(c)D^2e^{-s^2D^2})| = O(s^{-1}).
        \end{equation*}
        Hence $\mathrm{Tr}(\mathcal{W}_{\emptyset}(c)D^2e^{-s^2D^2}) = O(s^{-1})$.
    \end{proof}

    We are now ready to prove the main result of this section.

    \begin{proof}[Proof of Theorem \ref{reduction}] 
        Let $c  \in \mathcal{A}^{\otimes (p+1)}$. Then using Theorem \ref{combinatorial theorem}:
        \begin{equation*}
            \mathrm{Tr}(\Omega(c)|D|^{2-p}e^{-s^2D^2}) = \sum_{\mathscr{A} \subseteq \{1,\ldots,p\}}(-1)^{n_{\mathscr{A}}}\mathrm{Tr}(\mathcal{W}_{\mathscr{A}}(c)D^{2-|\mathscr{A}|}e^{-s^2D^2})+O(s^{-1}),\quad s\downarrow 0.
        \end{equation*}
        
        Applying Lemma \ref{kogom4} to every summand with $|\mathscr{A}| \geq 2$, and Lemma \ref{kogom5} to the summand $\mathscr{A} = \emptyset$, it follows that:
        \begin{equation*}
            \mathrm{Tr}(\Omega(c)|D|^{2-p}e^{-s^2D^2}) = \sum_{k=1}^p (-1)^{n_{\{k\}}} \mathrm{Tr}(\mathcal{W}_k(c)De^{-s^2D^2})+O(s^{-1}),\quad s\downarrow 0.
        \end{equation*}
        Recall that $n_{\mathscr{A}} = |\{(j,k)\in \{1,\ldots,p\}^2\;:\;j \in \mathscr{A}, k\notin \mathscr{A}\}|$.
        So in particular, $n_{\{k\}} = p-k$. Hence:
        \begin{equation}\label{single element expansion}
            \mathrm{Tr}(\Omega(c)|D|^{2-p}e^{-s^2D^2}) = \sum_{k=1}^p (-1)^{p-k}\mathrm{Tr}(\mathcal{W}_{k}(c)De^{-s^2D^2})+O(s^{-1}),\quad s\downarrow 0.
        \end{equation}
        For any $1 \leq k \leq p-1$, the sets $\mathscr{A}_1 = \{k\}$ and $\mathscr{A}_2 = \{k+1\}$ satisfy the conditions of Lemma \ref{kogom3} with $m = k+1$. So we have
        \begin{equation*}
            \mathrm{Tr}(\mathcal{W}_k(c)De^{-s^2D^2}) = -\mathrm{Tr}(\mathcal{W}_{k+1}(c)De^{-s^2D^2}) + O(s^{-1}),\quad s\downarrow0,
        \end{equation*}
        Hence by induction, for any $1 \leq k \leq p$:
        \begin{equation}\label{all singletons are p}
            \mathrm{Tr}(\mathcal{W}_k(c)De^{-s^2D^2}) = (-1)^{p-k}\mathrm{Tr}(\mathcal{W}_{p}De^{-s^2D^2}) + O(s^{-1}),\quad s\downarrow 0.
        \end{equation}        
        Substituting \eqref{all singletons are p} into each summand of \eqref{single element expansion} we finally get:
        \begin{equation*}
            \mathrm{Tr}(\Omega(c)|D|^{2-p}e^{-s^2D^2}) = p\mathrm{Tr}(\mathcal{W}_p(c)De^{-s^2D^2})+O(s^{-1}).
        \end{equation*}
    \end{proof}

\section{Preliminary heat semigroup asymptotic}\label{preliminary heat section}

    In this section, we move closer to proving Theorem \ref{heat thm}.
    We will show that if $(\mathcal{A},H,D)$ satisfies Hypothesis \ref{main assumption} and Hypothesis \ref{auxiliary assumption} {(in particular, $D$ has a spectral gap at $0$)}, then for a Hochschild cycle $c \in \mathcal{A}^{\otimes(p+1)}$
    we have 
    \begin{equation}\label{prelim heat asymptotic}
        \mathrm{Tr}(\mathcal{W}_p(c)|D|^{2-p}e^{-s^2D^2}) = \frac14\mathrm{Tr}(\mathrm{ch}(c))s^{-2}+O(s^{-1}), s\downarrow 0.
    \end{equation}
    By the Theorem \ref{reduction}, this is a formula very close to Theorem \ref{heat thm}: the only difference is the assumption of Hypothesis \ref{auxiliary assumption} and that the occurance of $|D|$ should be replaced with $(1+D^2)^{1/2}.$ 
    
    We start with the following asymptotic result.
    \begin{lem}\label{initial convergence theorem} 
        Let $(\mathcal{A},H,D)$ be a spectral triple satisfying Hypothesis \ref{main assumption} and Hypothesis \ref{auxiliary assumption}. For all $c\in\mathcal{A}^{\otimes (p+1)},$ we have
        \begin{equation*}
            \mathrm{Tr}(\mathrm{ch}(c)e^{-s^2D^2}) = \mathrm{Tr}(\mathrm{ch}(c)) + O(s),\quad s\downarrow0.
        \end{equation*}
    \end{lem}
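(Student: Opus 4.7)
The plan is to deduce this from the bound $\|\mathrm{ch}(c) D^2 e^{-u^2 D^2}\|_1 = O(u^{-1})$ established in Lemma \ref{kogom pre 5}, by integrating it against the elementary identity
\begin{equation*}
    1 - e^{-s^2 D^2} = \int_0^s 2u\, D^2 e^{-u^2 D^2}\, du.
\end{equation*}
This identity holds in the weak operator topology by functional calculus applied to the scalar identity $1 - e^{-s^2 \lambda^2} = \int_0^s 2u \lambda^2 e^{-u^2 \lambda^2}\, du$; note that each operator $D^2 e^{-u^2 D^2}$ is bounded thanks to the spectral gap of $D$ at $0$ (Hypothesis \ref{auxiliary assumption}).

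Multiplying on the left by $\mathrm{ch}(c)$, I would argue that
\begin{equation*}
    \mathrm{ch}(c)(1 - e^{-s^2 D^2}) = \int_0^s 2u\, \mathrm{ch}(c) D^2 e^{-u^2 D^2}\, du
\end{equation*}
as a weak operator topology integral. The integrand is continuous in $u>0$, and by Lemma \ref{kogom pre 5} its $\mathcal{L}_1$-norm is $2u \cdot O(u^{-1}) = O(1)$ uniformly for $u \in (0, s]$ when $s$ is sufficiently small. Hence Lemma \ref{peter lemma} applies: the integral exists in $\mathcal{L}_1$, and the trace commutes with it. Taking the trace yields
\begin{equation*}
    \mathrm{Tr}(\mathrm{ch}(c)(e^{-s^2 D^2} - 1)) = -\int_0^s 2u\, \mathrm{Tr}(\mathrm{ch}(c) D^2 e^{-u^2 D^2})\, du,
\end{equation*}
and estimating with Lemma \ref{kogom pre 5} once more inside the integral gives $|\mathrm{Tr}(\mathrm{ch}(c)(e^{-s^2 D^2} - 1))| \leq \int_0^s 2u \cdot C u^{-1}\, du = 2Cs$, which is the required $O(s)$ bound.

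The main point requiring care is the passage from the scalar identity to the $\mathcal{L}_1$-valued Bochner integral representation of $\mathrm{ch}(c)(1 - e^{-s^2 D^2})$, in particular justifying the interchange of $\mathrm{ch}(c)$ with the weak operator integral and then matching it with a genuine Bochner integral. This is handled by the general machinery of Section \ref{weak int} together with Lemma \ref{peter lemma}: the weak integral identity holds when tested against any vector state, and the uniform $\mathcal{L}_1$-bound from Lemma \ref{kogom pre 5} upgrades the weak integral to a convergent Bochner integral in $\mathcal{L}_1$, so the trace can be passed inside.
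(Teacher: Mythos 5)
Your argument is correct in substance but takes a genuinely different route from the paper's. The paper proves the bound directly: writing $[F,a_p]=|D|^{-1}\partial(a_p)-|D|^{-1}\delta(a_p)F$ on $H_\infty$, it factors $\mathrm{ch}(c)(1-e^{-s^2D^2})$ into the bounded operator $\bigl(\prod_{k=0}^{p-1}[F,a_k]\bigr)|D|^{p}$ (Lemma \ref{c third alert lemma}) times terms of the form $|D|^{-p-1}x(1-e^{-s^2D^2})$ with $x\in\mathcal{B}$, which are $O(s)$ in trace norm by Lemma \ref{right convergence estimate}. You instead integrate the heat-derivative bound of Lemma \ref{kogom pre 5} through the identity $1-e^{-s^2D^2}=\int_0^s 2u\,D^2e^{-u^2D^2}\,du$; this is legitimate (Lemma \ref{kogom pre 5} is proved earlier, independently of the present lemma, and for all $c$, not only cycles), it bypasses Lemma \ref{right convergence estimate} entirely, and it yields the same trace-norm estimate $\|\mathrm{ch}(c)(1-e^{-s^2D^2})\|_1\leq 2Cs$. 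The paper's route stays within the appendix estimates on $\mathcal{B}$; yours is shorter given Lemma \ref{kogom pre 5} and makes transparent that the $O(s)$ convergence is just the integrated form of the $O(s^{-1})$ bound.

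One justificatory wrinkle: the unmultiplied integrand $2u\,D^2e^{-u^2D^2}$ has operator norm of order $u^{-1}$ near $u=0$, so it is not integrable in the weak operator topology in the sense of condition \eqref{necessary-condition}, and the rule $A\int f=\int Af$ from Section \ref{weak int} does not apply verbatim on $(0,s]$; your appeal to that machinery for the step of moving $\mathrm{ch}(c)$ inside is therefore not quite literal. The fix is easy: either verify $\langle \mathrm{ch}(c)(1-e^{-s^2D^2})\xi,\eta\rangle=\int_0^s 2u\,\langle \mathrm{ch}(c)D^2e^{-u^2D^2}\xi,\eta\rangle\,du$ directly from the spectral theorem and Fubini (the complex spectral measures $\langle E(\cdot)\xi,\mathrm{ch}(c)^*\eta\rangle$ have finite variation and $\int_0^s 2u\lambda^2e^{-u^2\lambda^2}\,du\leq 1$), or run the argument on $[\varepsilon,s]$, where the weak-integral rules do apply, and let $\varepsilon\downarrow 0$ using $\mathrm{ch}(c)\in\mathcal{L}_1$ and $e^{-\varepsilon^2D^2}\to 1$ boundedly in the weak operator topology. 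With that repair, Lemma \ref{peter lemma} and the uniform $\mathcal{L}_1$ bound complete the proof exactly as you describe.
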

    \begin{proof} 
        We wish to show that
        \begin{equation*}
            \mathrm{Tr}(\mathrm{ch}(c)(1-e^{-s^2D^2})) = O(s)
        \end{equation*}
        so it suffices to prove
        \begin{equation*}
            \|\mathrm{ch}(c)(1-e^{-s^2D^2})\|_1 = O(s).
        \end{equation*}
        
        Let $c = a_0\otimes\cdots \otimes a_p \in \mathcal{A}^{\otimes (p+1)}$. Then using $[F,a_p] = |D|^{-1}\partial(a_p)-|D|^{-1}\delta(a_p)F$, we have (on $H_\infty$)
        \begin{align*}
            \mathrm{ch}(c)(1-e^{-s^2D^2}) &= \Gamma F \left(\prod_{k=0}^{p-1}[F,a_k]\right)(|D|^p|D|^{-p})[F,a_p](1-e^{-s^2D^2})\\
                                  &= \Gamma F \left(\prod_{k=0}^{p-1}[F,a_k]\right)|D|^p\cdot |D|^{-p-1}\partial(a_p)(1-e^{-s^2D^2})\\
                                  &\quad-\Gamma F \left(\prod_{k=0}^{p-1}[F,a_k]\right)|D|^p\cdot |D|^{-p-1}\delta(a_p)(1-e^{-s^2D^2})F.
        \end{align*}
        
        Thus,
        \begin{align*}
                 \|\mathrm{ch}(c)(1-e^{-s^2D^2})\|_1 &\leq  \left\|\left(\prod_{k=0}^{p-1}[F,a_k]\right)|D|^p\right\|_{\infty}\cdot\left\||D|^{-p-1}\partial(a_p)(1-e^{-s^2D^2})\right\|_1\\
                                             &\quad+\left\|\left(\prod_{k=0}^{p-1}[F,a_k]\right)|D|^p\right\|_{\infty}\cdot\left\||D|^{-p-1}\delta(a_p)(1-e^{-s^2D^2})\right\|_1.
        \end{align*}
        In both summands, the first factor is finite by Lemma \ref{c third alert lemma}, the second factor is $O(s)$ by Lemma \ref{right convergence estimate}.
    \end{proof}

    \begin{thm}\label{first cycle thm} 
        Let $(\mathcal{A},H,D)$ be a spectral triple satisfying Hypothesis \ref{main assumption} and Hypothesis \ref{auxiliary assumption}. For every Hochschild cycle $c\in\mathcal{A}^{\otimes (p+1)},$ we have
        \begin{equation}\label{wp heat estimate}
            \mathrm{Tr}(\mathcal{W}_p(c)De^{-s^2D^2}) = \frac{\mathrm{Ch}(c)}{2}s^{-2}+O(s^{-1}),\quad s\downarrow0.
        \end{equation}
    \end{thm}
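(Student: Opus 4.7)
The strategy is to use Lemma~\ref{third commutator lemma}(iii) to trade the factor $\delta(a_p)|D|e^{-s^2D^2}$ appearing inside $\mathcal{W}_p(c)De^{-s^2D^2}$ for the commutator $-\tfrac{1}{2s^2}[e^{-s^2D^2},a_p]$ modulo a small error, and then to use the Hochschild cycle property of $c$ to collapse the resulting algebraic expression onto a multiple of $\mathrm{Tr}(\mathrm{ch}(c)e^{-s^2D^2})$, whose asymptotic is controlled by Lemma~\ref{initial convergence theorem}.

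\emph{Step 1 (commutator substitution).} Since $D=|D|F$ and $F$ commutes with $e^{-s^2D^2}$, one has
\[
2s^2\mathcal{W}_p(c)De^{-s^2D^2} = \Gamma a_0\Big(\prod_{k=1}^{p-1}[F,a_k]\Big)\cdot \bigl(2s^2\delta(a_p)|D|e^{-s^2D^2}\bigr)\cdot F.
\]
Take $f(t)=e^{-t^2}$, so that $sf'(s|D|)=-2s^2|D|e^{-s^2D^2}$. Lemma~\ref{third commutator lemma}(iii) then gives
\[
2s^2\delta(a_p)|D|e^{-s^2D^2} = -[e^{-s^2D^2},a_p] + R_s,\qquad \|R_s\|_{p,1}=O(s).
\]
By Proposition~\ref{f der def} and the weak-type H\"older inequality~\eqref{weak-type Holder}, the operator $\Gamma a_0\prod_{k=1}^{p-1}[F,a_k]$ lies in $\mathcal{L}_{p/(p-1),\infty}$, so by the H\"older bound~\eqref{another holder} the trace of the $R_s$-contribution is $O(s)$. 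Hence
\[
2s^2\mathrm{Tr}(\mathcal{W}_p(c)De^{-s^2D^2}) = -\mathrm{Tr}\Big(\Gamma a_0\prod_{k=1}^{p-1}[F,a_k][e^{-s^2D^2},a_p]F\Big) + O(s).
\]

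\emph{Step 2 (cycle-theoretic reduction).} Expand the commutator $[e^{-s^2D^2},a_p] = e^{-s^2D^2}a_p - a_pe^{-s^2D^2}$ and apply trace cyclicity together with the derivation rule $[F,ab]=[F,a]b+a[F,b]$ and the parity relation $F\Gamma=(-1)^{p+1}\Gamma F$ from Hypothesis~\ref{auxiliary assumption}. The target is an identity of the form
\[
-\mathrm{Tr}\Big(\Gamma a_0\prod_{k=1}^{p-1}[F,a_k][e^{-s^2D^2},a_p]F\Big) = \tfrac12\mathrm{Tr}(\mathrm{ch}(c)e^{-s^2D^2}) + (b\theta_s)(c) + O(s),
\]
where $\theta_s$ is an appropriately constructed $p$-cochain on $\mathcal{A}^{\otimes p}$. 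Since $c$ is a Hochschild cycle, $(b\theta_s)(c)=\theta_s(bc)=0$. The contribution from $\mathrm{Tr}(\mathcal{W}_\emptyset(c)D^2e^{-s^2D^2})$, which also arises in the reorganization, is absorbed as $O(s)$ thanks to Lemma~\ref{kogom5}.

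\emph{Step 3 (conclusion).} Lemma~\ref{initial convergence theorem} gives $\mathrm{Tr}(\mathrm{ch}(c)e^{-s^2D^2})=\mathrm{Tr}(\mathrm{ch}(c))+O(s) = 2\mathrm{Ch}(c)+O(s)$ by Definition~\ref{chern character zero kernel def}. Combining with Steps~1 and~2 yields
\[
2s^2\mathrm{Tr}(\mathcal{W}_p(c)De^{-s^2D^2}) = \mathrm{Ch}(c) + O(s),
\]
which is exactly~\eqref{wp heat estimate} after dividing by $2s^2$.

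\textbf{Main obstacle.} The delicate step is Step~2: identifying the right $p$-cochain $\theta_s$ so that the Leibniz expansion of the commutator inside the trace separates cleanly into a Hochschild coboundary plus $\tfrac12\mathrm{Tr}(\mathrm{ch}(c)e^{-s^2D^2})$. The non-commutativity of $F$ with $\mathcal{A}$ introduces extra terms when $F$ is transported past the factors $a_k$, and these must be matched against the symmetrized structure of $\mathrm{ch}(c)$ guaranteed by Lemma~\ref{W and ch link} ($\mathrm{ch}(c) = \mathcal{W}_\emptyset(c)+F\mathcal{W}_\emptyset(c)F$). Careful sign-tracking, exploiting the parity-matching hypothesis and the Jacobi identity, is what makes the reduction go through.
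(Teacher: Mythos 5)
Your overall strategy is the same as the paper's, just run in the opposite direction: the paper starts from $\mathrm{Tr}(\mathrm{ch}(c)e^{-s^2D^2})$ (Lemma \ref{initial convergence theorem} plus Lemma \ref{W and ch link}), writes $\mathrm{Tr}(\mathcal{W}_\emptyset(c)e^{-s^2D^2})=\mathcal{K}_s(c)-\mathcal{H}_s(c)$ with $\mathcal{K}_s(c)=\mathrm{Tr}(\Gamma a_0\prod_{k=1}^{p-1}[F,a_k][Fe^{-s^2D^2},a_p])$ and $\mathcal{H}_s(c)=\mathrm{Tr}(\Gamma a_0\prod_{k=1}^{p-1}[F,a_k]F[e^{-s^2D^2},a_p])$, shows in Appendix \ref{coboundary app} that $\mathcal{K}_s=b\mathcal{L}_s$ is a Hochschild coboundary (so it vanishes on cycles), then uses Lemma \ref{third commutator lemma}.(iii) to replace $\mathcal{H}_s$ by $-2s^2\mathcal{V}_s$, and finally compares $\mathcal{V}_s$ with $\mathrm{Tr}(\mathcal{W}_p(c)De^{-s^2D^2})$ up to $O(s^{-1})$ via Lemma \ref{first decay lemma}. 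Your Steps 1 and 3 are fine and coincide with pieces of that argument.

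The genuine gap is your Step 2, which is precisely the nontrivial content of the theorem and is only asserted, not proved: you neither construct the $p$-cochain $\theta_s$ nor verify the claimed identity
$-\mathrm{Tr}\bigl(\Gamma a_0\prod_{k=1}^{p-1}[F,a_k][e^{-s^2D^2},a_p]F\bigr)=\tfrac12\mathrm{Tr}(\mathrm{ch}(c)e^{-s^2D^2})+(b\theta_s)(c)+O(s)$. This is not a routine bookkeeping step. Your left-hand side has $F$ to the right of the commutator; to match it with the coboundary structure one must transport $F$ around the trace, and since $F$ does not commute with $a_0$ this produces an extra term of the shape $\mathrm{Tr}\bigl(\Gamma[F,a_0]\prod_{k=1}^{p-1}[F,a_k][e^{-s^2D^2},a_p]\bigr)$. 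That term cannot be discarded by naive estimates: the product of the $p$ commutators $[F,a_j]$ lies only in $\mathcal{L}_{1,\infty}$, and multiplying by an operator whose uniform norm is $O(s)$ does not make the trace $O(s)$; it must itself be organized into the Hochschild coboundary or estimated through the machinery the paper sets up (this is exactly why the paper fixes the position of $F$ in $\mathcal{K}_s$, $\mathcal{H}_s$, $\mathcal{V}_s$ and does the coboundary computation explicitly in the appendix). In addition, your appeal to Lemma \ref{kogom5} misstates its conclusion: that lemma gives $\mathrm{Tr}(\mathcal{W}_\emptyset(c)D^2e^{-s^2D^2})=O(s^{-1})$, not $O(s)$, so such a contribution is only harmless if it enters with an explicit $s^2$ prefactor, which your sketch does not exhibit (and the paper's proof of this theorem does not need Lemma \ref{kogom5} at all). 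Until the cochain $\theta_s$ is written down and the Leibniz/cyclicity expansion is carried out with the signs tracked — as the paper does for $\mathcal{L}_s$ and $\mathcal{K}_s$ — the proof is incomplete.
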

    \begin{proof}
        Let $c \in \mathcal{A}^{\otimes (p+1)}$ be a Hochschild cycle.
        By Lemma \ref{initial convergence theorem}, we have
        \begin{equation*}
            \mathrm{Tr}(\mathrm{ch}(c)e^{-s^2D^2}) = \mathrm{Tr}(\mathrm{ch}(c))+O(s),\quad s\downarrow0.
        \end{equation*}
        Since the spectral triple and $p$ both have the same parity, we may apply Lemma \ref{W and ch link} to get:
        \begin{equation}\label{restated initial convergence}
            2\mathrm{Tr}(\mathcal{W}_{\emptyset}(c)e^{-s^2D^2}) = \mathrm{Tr}(\mathrm{ch}(c))+O(s),\quad s\downarrow0,
        \end{equation}
        for all $c \in \mathcal{A}^{\otimes (p+1)}$

        Define the multilinear mappings $\mathcal{K}_s,\,\mathcal{H}_s:\mathcal{A}^{\otimes (p+1)}\to\mathbb{C}$ by setting
        \begin{align*}
            \mathcal{K}_s(a_0\otimes\cdots\otimes a_p) &= \mathrm{Tr}(\Gamma a_0\left(\prod_{k=1}^{p-1}[F,a_k]\right)[Fe^{-s^2D^2},a_p]),\\
            \mathcal{H}_s(a_0\otimes\cdots\otimes a_p) &= \mathrm{Tr}(\Gamma a_0\left(\prod_{k=1}^{p-1}[F,a_k]\right)F[e^{-s^2D^2},a_p]).
        \end{align*}

        By the Leibniz rule, we have
        \begin{equation*}
            [F,a_p]e^{-s^2D^2} = [Fe^{-s^2D^2},a_p]-F[e^{-s^2D^2},a_p].
        \end{equation*}
        Therefore:
        \begin{equation}\label{k minus h}
            \mathrm{Tr}(\mathcal{W}_{\emptyset}(c)e^{-s^2D^2}) = \mathcal{K}_s(c)-\mathcal{H}_s(c).
        \end{equation}

        Now combining \eqref{restated initial convergence} and \eqref{k minus h}, we arrive at
        \begin{equation}\label{k minus h convergence}
            2\mathcal{K}_s(c)-2\mathcal{H}_s(c) = \mathrm{Tr}(\mathrm{ch}(c))+O(s),\quad s\downarrow0,
        \end{equation}

        However it is shown in Appendix \ref{coboundary app} that $\mathcal{K}_s$ is a Hochschild coboundary, and thus since $c$ is a Hochschild cycle we have $\mathcal{K}_s(c) = 0$.
        
        Using \eqref{k minus h convergence}, we obtain
        \begin{equation}\label{minus h convergence}
            -2\mathcal{H}_s(c) = \mathrm{Tr}(\mathrm{ch}(c))+O(s),\quad s\downarrow0,
        \end{equation}

        Define the multilinear mapping $\mathcal{V}_s:\mathcal{A}^{\otimes (p+1)}\to\mathbb{C}$ by setting
        \begin{equation*}
            \mathcal{V}_s(a_0\otimes\cdots\otimes a_p) = \mathrm{Tr}(\Gamma a_0\left(\prod_{k=1}^{p-1}[F,a_k]\right)F\delta(a_p)|D|e^{-s^2D^2}).
        \end{equation*}

        Let $\frac{1}{q} = 1-\frac{1}{p}$. By the H\"older inequality in the form \eqref{another holder}:
        \begin{align*}
            |(\mathcal{H}_s+2s^2\mathcal{V}_s)&(a_0\otimes\cdots\otimes a_p)| \\
                    &= \left|\mathrm{Tr}\Big(\Gamma a_0\Big(\prod_{k=1}^{p-1}[F,a_k]\Big)F\cdot\Big([e^{-s^2D^2},a_p]+2s^2\delta(a_p)|D|e^{-s^2D^2}\Big)\Big)\right|\\
                    &\leq \Big\|\Gamma a_0\prod_{k=1}^{p-1}[F,a_k]F\Big\|_{q,\infty}\Big\|[e^{-s^2D^2},a_p]+2s^2\delta(a_p)|D|e^{-s^2D^2}\Big\|_{p,1}.
        \end{align*}

        The first factor on the above right hand side is finite by Proposition \ref{f der def}, and the second factor above is $O(s)$ by Lemma \ref{third commutator lemma}.\eqref{3com3}. Therefore, we have
        \begin{equation}\label{h minus v}
            (\mathcal{H}_s+2s^2\mathcal{V}_s)(c) = O(s),\quad s\downarrow0,
        \end{equation}

        Combining \eqref{h minus v} and \eqref{minus h convergence}, we arrive at
        \begin{equation}\label{minus v convergence}
            4s^2\mathcal{V}_s(c) = \mathrm{Tr}(\mathrm{ch}(c))+O(s),\quad s\downarrow0,
        \end{equation}
        for all Hochschild cycles $c\in\mathcal{A}^{\otimes (p+1)}.$

        From the definition of $\mathcal{W}_p$, if $a_0\otimes \cdots \otimes a_p \in \mathcal{A}^{\otimes (p+1)}$:
        \begin{align*}
            \mathrm{Tr}(\mathcal{W}_p(a_0\otimes\cdots\otimes a_p)De^{-s^2D^2}) &= \mathrm{Tr}(\Gamma a_0\left(\prod_{k=1}^{p-1}[F,a_k]\right)\delta(a_p)De^{-s^2D^2})\\
                                                                &= \mathrm{Tr}(\Gamma a_0\left(\prod_{k=1}^{p-1}[F,a_k]\right)\delta(a_p)F|D|e^{-s^2D^2}).
        \end{align*}
        Thus,
        \begin{align*}
            \Big|\mathcal{V}_s(a_0\otimes\cdots\otimes a_p)&-\mathrm{Tr}(\mathcal{W}_p(a_0\otimes\cdots\otimes a_p)De^{-s^2D^2})\Big| \\
                                                   &= \Big|\mathrm{Tr}(\Gamma a_0\left(\prod_{k=1}^{p-1}[F,a_k]\right)[F,\delta(a_p)]|D|e^{-s^2D^2})\Big|\\
                                                   &= \Big|\mathrm{Tr}(\Gamma \left(\prod_{k=1}^{p-1}[F,a_k]\right)[F,\delta(a_p)]|D|e^{-s^2D^2}a_0)\Big|\\
                                                   &\leq \Big\|\Gamma \left(\prod_{k=1}^{p-1}[F,a_k]\right)[F,\delta(a_p)]|D|^p\Big\|_{\infty}\cdot\Big\||D|^{1-p}e^{-s^2D^2}a_0\Big\|_1.
        \end{align*}
        By Lemma \ref{first decay lemma}, we have that this is $O(s^{-1})$ and therefore, we have
        \begin{equation}\label{v minus answer}
        \mathcal{V}_s(c)=\mathrm{Tr}(\mathcal{W}_p(c)De^{-s^2D^2})+O(s^{-1}),\quad s\downarrow0,
        \end{equation}
        for all $c\in\mathcal{A}^{\otimes (p+1)}.$

        Combining \eqref{minus v convergence} and \eqref{v minus answer}, we arrive at
        \begin{equation*}
            4s^2\mathrm{Tr}(\mathcal{W}_p(c)De^{-s^2D^2})=\mathrm{Tr}(\mathrm{ch}(c))+O(s),\quad s\downarrow0
        \end{equation*}
        for all Hochschild cycles $c\in\mathcal{A}^{\otimes (p+1)}.$ Dividing by $4s^2$,
        \begin{equation*}
            \mathrm{Tr}(\mathcal{W}_p(c)De^{-s^2D^2}) = \frac{1}{4}s^{-2}\mathrm{Tr}(\mathrm{ch}(c))+O(s^{-1}).
        \end{equation*}        
        Since $\mathrm{Ch}(c) = \frac{1}{2}\mathrm{Tr}(\mathrm{ch}(c))$, this formula coincides with \eqref{wp heat estimate}.
    \end{proof}
    
    We remark that \eqref{prelim heat asymptotic} follows as a simple combination of Theorems \ref{reduction} and \ref{first cycle thm}.

\section{Heat semigroup asymptotic: the proof of the first main result}\label{heat section}

    In this section, we finally complete the proof of Theorem \ref{heat thm}. We start by removing the assumption of Hypothesis \ref{auxiliary assumption}.
    
    The following two lemmas show that if the parity of $p$ does not match $(\mathcal{A},H,D)$, then the statement of \eqref{prelim heat asymptotic} becomes trivial.
    \begin{lem}\label{first opposite lemma}
        Let $(\mathcal{A},H,D)$ be a spectral triple satisfying Hypothesis \ref{main assumption}. Suppose that $D$ has a spectral gap at $0$. Suppose that the dimension $p$ is odd but $(\mathcal{A},H,D)$ is even. Then:
        \begin{enumerate}[{\rm (i)}]
            \item\label{first opposite 1} for every $c\in\mathcal{A}^{\otimes (p+1)},$ we have
                \begin{equation*}
                    \mathrm{Tr}(\Omega(c)|D|^{2-p}e^{-s^2D^2})=0,\quad s>0.
                \end{equation*}
            \item\label{first opposite 2} for every $c\in\mathcal{A}^{\otimes (p+1)},$ we have
                \begin{equation*}
                    \mathrm{Tr}(\mathrm{ch}(c))=0.
                \end{equation*}
        \end{enumerate}
    \end{lem}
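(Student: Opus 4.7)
The plan is to exploit the grading $\Gamma$ as a unitary symmetry under which both operators in question transform by a sign $(-1)^p$. When $p$ is odd this sign is $-1$, and unitary invariance of the trace then forces both traces to vanish.

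First I would record the relevant (anti)commutation relations that come for free from the even structure: $\Gamma = \Gamma^\ast = \Gamma^{-1}$, $[\Gamma,a]=0$ for every $a\in\mathcal{A}$, and $\{D,\Gamma\}=0$. Consequently $\Gamma$ commutes with every even function of $D$ (in particular with $|D|^{2-p}e^{-s^2D^2}$) but anticommutes with $F$. Direct expansion of the relevant commutators then gives $\Gamma\partial(a)\Gamma=-\partial(a)$ and $\Gamma[F,a]\Gamma=-[F,a]$ for all $a\in\mathcal{A}$.

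Next I would conjugate each operator by $\Gamma$ and count signs. For $\Omega(c)=\Gamma a_0\partial(a_1)\cdots\partial(a_p)$, cancelling the leading pair $\Gamma\Gamma=1$ and then commuting the trailing $\Gamma$ back through the $p$ anticommuting factors $\partial(a_k)$ yields $\Gamma\Omega(c)\Gamma=(-1)^p\Omega(c)$. An analogous computation for $\mathrm{ch}(c)=\Gamma F[F,a_0]\cdots[F,a_p]$, using the extra sign from $\Gamma F=-F\Gamma$ together with the $(-1)^{p+1}$ coming from the $p+1$ anticommuting commutators $[F,a_k]$, gives $\Gamma\,\mathrm{ch}(c)\,\Gamma=(-1)^p\mathrm{ch}(c)$.

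Finally, both $\Omega(c)|D|^{2-p}e^{-s^2D^2}$ and $\mathrm{ch}(c)$ are trace class: the former from Hypothesis \ref{main assumption} together with the heat-kernel $\mathcal{L}_1$ bounds already employed throughout this chapter, and the latter from Proposition \ref{f der def} and the H\"older inequality \eqref{weak-type Holder}, as observed just before Definition \ref{chern character zero kernel def}. Unitary invariance of $\mathrm{Tr}$ under conjugation by $\Gamma$, together with the fact that $\Gamma$ commutes with $|D|^{2-p}e^{-s^2D^2}$, then yields
\[
\mathrm{Tr}(\Omega(c)|D|^{2-p}e^{-s^2D^2})=(-1)^p\,\mathrm{Tr}(\Omega(c)|D|^{2-p}e^{-s^2D^2}),
\]
and similarly $\mathrm{Tr}(\mathrm{ch}(c))=(-1)^p\mathrm{Tr}(\mathrm{ch}(c))$. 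With $p$ odd both identities force the traces to vanish. I do not anticipate any substantive obstacle: the lemma is a purely algebraic parity check, and the only point requiring care is the trace-class membership that makes $\Gamma$-invariance of the trace applicable.
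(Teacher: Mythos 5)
Your proposal is correct and follows essentially the same route as the paper: the paper moves $\Gamma$ through the $p$ factors $\partial(a_k)$ (resp. the $p+1$ commutators $[F,a_k]$ and $F$) to pick up the sign $(-1)^p$ and then invokes cyclicity of the trace, which is exactly your conjugation-by-$\Gamma$ argument combined with unitary invariance. The sign bookkeeping and the trace-class justifications you give match what the paper uses.
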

    \begin{proof}
        Let us prove \eqref{first opposite 1}. Since $\Gamma D = -D\Gamma$ and $\Gamma$ commutes with $a \in \mathcal{A}$, we have $\Gamma [D,a]=-[D,a]\Gamma$ on $H_\infty$. Hence $\Gamma \partial(a) = -\partial(a)\Gamma$
        for all $a \in \mathcal{A}$.
        Thus for $c = a_0\otimes \cdots\otimes a_p \in \mathcal{A}^{\otimes (p+1)}$ since $p$ is odd we have:
        $$\Omega(c)=\Gamma a_0\prod_{k=1}^p\partial(a_k)=-a_0\left(\prod_{k=1}^p\partial(a_k)\right)\Gamma.$$
        However $\Gamma D^2 = D^2\Gamma$, so by the spectral theorem we have $\Gamma e^{-s^2D^2} = e^{-s^2D^2}$. So multiplying by $e^{-s^2D^2}$ on the right and taking the trace,
        \begin{align*}
        \mathrm{Tr}(\Omega(c)|D|^{2-p}e^{-s^2D^2}) &= -\mathrm{Tr}(a_0\left(\prod_{k=1}^p\partial(a_k)\right)|D|^{2-p}e^{-s^2D^2}\Gamma)\\
                                           &= -\mathrm{Tr}(\Gamma a_0\left(\prod_{k=1}^p\partial(a_k)\right)|D|^{2-p}e^{-s^2D^2}).
        \end{align*}
        This proves \eqref{first opposite 1}

        The argument for \eqref{first opposite 2} is similar. Note that we have $\Gamma[F,a]=-[F,a]\Gamma$ for every $a\in\mathcal{A}.$ Thus since $p+1$ is even:
        \begin{align*}
            \mathrm{ch}(c) &= \Gamma F\prod_{k=0}^p[F,a_k]\\
                   &= -F\Gamma \prod_{k=0}^p[F,a_k]\\
                   &= -F\cdot\prod_{k=0}^p[F,a_k]\Gamma.
        \end{align*}
        Thus,
        \begin{align*}
            \mathrm{Tr}(\mathrm{ch}(c)) &= -\mathrm{Tr}(\Gamma F\prod_{k=0}^p[F,a_k])\\
                        &= -\mathrm{Tr}(\mathrm{ch}(c)).
        \end{align*}
        This proves the second assertion.
    \end{proof}
    
    Now, we deal with the other case where the parity of $(\mathcal{A},H,D)$ does not match $p$.
    \begin{lem}\label{second opposite lemma} 
        Let $(\mathcal{A},H,D)$ be a spectral triple satisfying Hypothesis \ref{main assumption}. Suppose $D$ has a spectral gap at $0$ and $(\mathcal{A},H,D)$ is odd but $p$ is even.
        \begin{enumerate}[{\rm (i)}]
            \item\label{second opposite 1} for every Hochschild cycle $c\in\mathcal{A}^{\otimes (p+1)},$ we have
                \begin{equation*}
                    \mathrm{Tr}(\Omega(c)|D|^{2-p}e^{-s^2D^2}) = O(s^{-1}),\quad s\downarrow0.
                \end{equation*}
            \item\label{second opposite 2} for every $c\in\mathcal{A}^{\otimes (p+1)},$ we have
                \begin{equation*}
                    \mathrm{Tr}(\mathrm{ch}(c))=0.
                \end{equation*}
        \end{enumerate}
    \end{lem}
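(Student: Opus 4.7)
For part (ii), my plan is to exploit cyclicity of the trace together with the anticommutation of $F$ with each $[F,a_k]$. Since the triple is odd, $\Gamma=1$, and applying $F[F,a]=-[F,a]F$ iteratively to the formula $\mathrm{ch}(c)=F\prod_{k=0}^p[F,a_k]$ yields $F\prod_{k=0}^p[F,a_k]=(-1)^{p+1}\prod_{k=0}^p[F,a_k]F$. Cyclicity of the trace then gives $\mathrm{Tr}(\mathrm{ch}(c))=(-1)^{p+1}\mathrm{Tr}(\mathrm{ch}(c))$; since $p$ is even the prefactor is $-1$, forcing $\mathrm{Tr}(\mathrm{ch}(c))=0$.

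For part (i), I will retrace the arguments of Sections \ref{combinatorial section}--\ref{preliminary heat section} and verify that Hypothesis \ref{auxiliary assumption}.(i) (the parity match) is inessential to most of them. Theorem \ref{combinatorial theorem} and Lemmas \ref{kogom2}--\ref{kogom4} (which kill the terms with $|\mathscr{A}|\geq 2$) use only Hypothesis \ref{main assumption} and the spectral gap. For the $\mathscr{A}=\emptyset$ term, the $\|\cdot\|_1$ argument of Lemma \ref{kogom pre 5} -- decomposing $[F,a_{p-1}][F,a_p]|D|^2=L(a_{p-1})L(a_p)-[F,a_{p-1}]L(\delta(a_p))$, factoring out a bounded operator of the form $a_0\prod_{k=1}^{p-2}[F,a_k]|D|^{p-2}$, and applying Lemma \ref{first decay lemma} to the remainder -- gives $\|\mathcal{W}_\emptyset(c)D^2e^{-s^2D^2}\|_1=O(s^{-1})$ without appeal to parity. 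This reduces everything to showing $\mathrm{Tr}(\mathcal{W}_p(c)De^{-s^2D^2})=O(s^{-1})$.

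To establish the last estimate I will replay Theorem \ref{first cycle thm}. The Hochschild coboundary argument still gives $\mathcal{K}_s(c)=0$ on cycles, the commutator bound still gives $\mathcal{H}_s(c)+2s^2\mathcal{V}_s(c)=O(s)$, and Lemma \ref{first decay lemma} still gives $\mathcal{V}_s(c)=\mathrm{Tr}(\mathcal{W}_p(c)De^{-s^2D^2})+O(s^{-1})$. What differs is the use of Lemma \ref{W and ch link}: in the present setting the identity reads $\mathrm{ch}(c)=\mathcal{W}_\emptyset(c)-F\mathcal{W}_\emptyset(c)F$, with the sign flip coming from $(-1)^{p+1}=-1$, so cyclicity yields $\mathrm{Tr}(\mathrm{ch}(c)e^{-s^2D^2})=0$ identically rather than the matching-parity identity $2\mathrm{Tr}(\mathcal{W}_\emptyset(c)e^{-s^2D^2})=\mathrm{Tr}(\mathrm{ch}(c))+O(s)$.

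The hard part is therefore to show $\mathrm{Tr}(\mathcal{W}_\emptyset(c)e^{-s^2D^2})=O(s)$ directly; via $\mathcal{K}_s=0$ this is equivalent to $\mathcal{H}_s=O(s)$, and combined with $\mathcal{H}_s+2s^2\mathcal{V}_s=O(s)$ it gives $\mathcal{V}_s=O(s^{-1})$, hence the desired bound on $\mathrm{Tr}(\mathcal{W}_p(c)De^{-s^2D^2})$. My plan is to start from the Leibniz expansion $[F,a_p]e^{-s^2D^2}=[Fe^{-s^2D^2},a_p]-F[e^{-s^2D^2},a_p]$, recognise the first piece as the $a_p$-entry of $\mathcal{K}_s$ (a Hochschild coboundary vanishing on $c$), and bound the second piece via H\"older \eqref{another holder}: the weak-$\mathcal{L}_{p/(p-1),\infty}$-norm of $a_0\prod_{k=1}^{p-1}[F,a_k]F$ is controlled by Proposition \ref{f der def}, while the $\mathcal{L}_{p,1}$-bound on $[e^{-s^2D^2},a_p]$ from Lemma \ref{third commutator lemma}.(iii), after subtracting its leading $-2s^2\delta(a_p)|D|e^{-s^2D^2}$ contribution (which reassembles $\mathcal{V}_s$), leaves only an $O(s)$ remainder. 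Piecing this together will close the chain in the non-matching setting.
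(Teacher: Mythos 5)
Your part (ii) is correct and is exactly the paper's argument: $\Gamma=1$, $F$ anticommutes with each $[F,a_k]$, and with $p+1$ odd the cyclicity of the trace forces $\mathrm{Tr}(\mathrm{ch}(c))=-\mathrm{Tr}(\mathrm{ch}(c))=0$. Part (i), however, has genuine gaps. First, your claim that the argument of Lemma \ref{kogom pre 5} gives $\|\mathcal{W}_{\emptyset}(c)D^2e^{-s^2D^2}\|_1=O(s^{-1})$ ``without appeal to parity'' is not correct: $\mathcal{W}_{\emptyset}(c)$ contains only $p$ commutators $[F,a_1],\dots,[F,a_p]$ (the leading $a_0$ is bare), so after the decomposition $[F,a_{p-1}][F,a_p]|D|^2=L(a_{p-1})L(a_p)-[F,a_{p-1}]L(\delta(a_p))$ the bounded prefactor you can extract via Lemma \ref{c third alert lemma} is $a_0\bigl(\prod_{k=1}^{p-2}[F,a_k]\bigr)|D|^{p-2}$, and the remaining factor carries only $|D|^{2-p}$; Lemma \ref{first decay lemma} then yields $O(s^{(p-2)-p})=O(s^{-2})$, not $O(s^{-1})$ (the same happens however you regroup the factors). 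The $O(s^{-1})$ bound for the $\mathscr{A}=\emptyset$ term in Lemma \ref{kogom5} is a \emph{trace} estimate, obtained from the parity-matching identity $\mathrm{ch}(c)=\mathcal{W}_{\emptyset}(c)+F\mathcal{W}_{\emptyset}(c)F$; in your setting the identity becomes $\mathrm{ch}(c)=\mathcal{W}_{\emptyset}(c)-F\mathcal{W}_{\emptyset}(c)F$, whose trace against $D^2e^{-s^2D^2}$ is identically zero and gives no information. Second, your plan for the ``hard part'' is circular: expanding $[e^{-s^2D^2},a_p]$ as $-2s^2\delta(a_p)|D|e^{-s^2D^2}$ plus an $\mathcal{L}_{p,1}$-remainder of size $O(s)$ and applying H\"older merely reproduces the relation $\mathcal{H}_s(c)+2s^2\mathcal{V}_s(c)=O(s)$ that you already have, together with $\mathrm{Tr}(\mathcal{W}_{\emptyset}(c)e^{-s^2D^2})=-\mathcal{H}_s(c)$; it cannot separate $\mathcal{H}_s(c)$ from $s^2\mathcal{V}_s(c)$. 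In the matched-parity proof the independent second equation is $2\mathcal{K}_s(c)-2\mathcal{H}_s(c)=\mathrm{Tr}(\mathrm{ch}(c))+O(s)$, supplied by Lemma \ref{initial convergence theorem} through Lemma \ref{W and ch link}, and that input is unavailable when the parities disagree. So neither the $\mathscr{A}=\emptyset$ estimate nor $\mathrm{Tr}(\mathcal{W}_p(c)De^{-s^2D^2})=O(s^{-1})$ has been established, and part (i) remains unproved on your route.

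The paper's proof of (i) avoids the whole $\mathcal{W}_{\mathscr{A}}$ machinery. Since the triple is odd, $\Gamma=1$, and one sets $\theta_s(a_0\otimes\cdots\otimes a_{p-1})=\mathrm{Tr}\bigl(\bigl(\prod_{k=0}^{p-1}\partial(a_k)\bigr)|D|^{2-p}e^{-s^2D^2}\bigr)$. Because $p$ is even, the cyclic term in the coboundary adds rather than cancels, and the computation in Appendix \ref{coboundary app} gives
\begin{equation*}
(b\theta_s)(c)=2\,\mathrm{Tr}\bigl(\Omega(c)|D|^{2-p}e^{-s^2D^2}\bigr)+\mathrm{Tr}\Bigl(a_0\Bigl(\prod_{k=1}^{p-1}\partial(a_k)\Bigr)[|D|^{2-p}e^{-s^2D^2},\partial(a_p)]\Bigr)+\mathrm{Tr}\Bigl(\Bigl(\prod_{k=0}^{p-1}\partial(a_k)\Bigr)[|D|^{2-p}e^{-s^2D^2},a_p]\Bigr),
\end{equation*}
where the two commutator terms are $O(s^{-1})$ by H\"older and Lemma \ref{commutator 7}; vanishing of $(b\theta_s)(c)$ on a Hochschild cycle then gives the estimate at once. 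If you wish to salvage your approach, you would need an independent argument producing either $\mathcal{H}_s(c)=O(s)$ or the $\mathscr{A}=\emptyset$ trace bound in the mismatched case; the coboundary above built from $\partial$ rather than $[F,\cdot]$ is, in effect, that missing ingredient.
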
    
    \begin{proof} 
        First, we prove \eqref{second opposite 1}. Consider the multilinear mapping $\theta_s:\mathcal{A}^{\otimes p}\to\mathbb{C}$ defined by the formula
        \begin{equation*}
            \theta_s(a_0\otimes\cdots\otimes a_{p-1}) = \mathrm{Tr}(\left(\prod_{k=0}^{p-1}\partial(a_k)\right) |D|^{2-p}e^{-s^2D^2}).
        \end{equation*}
        The Hochschild coboundary $b\theta_s$ is computed in Section \ref{coboundary app} by the formula:
        \begin{align*}
            (b\theta_s)(a_0\otimes\cdots\otimes a_p) &= 2\mathrm{Tr}(a_0\left(\prod_{k=1}^p\partial(a_k)\right) |D|^{2-p}e^{-s^2D^2})\\
                                                     &\quad +\mathrm{Tr}(a_0\left(\prod_{k=1}^{p-1}\partial(a_k)\right)[|D|^{2-p}e^{-s^2D^2},\partial(a_p)])\\
                                                     &\quad +\mathrm{Tr}(\left(\prod_{k=0}^{p-1}\partial(a_k)\right) [|D|^{2-p}e^{-s^2D^2},a_p]).
        \end{align*}

        Using the H\"older inequality, we have
        \begin{align*}
            \Big|\mathrm{Tr}(a_0\left(\prod_{k=1}^{p-1}\partial(a_k)\right)&[|D|^{2-p}e^{-s^2D^2},\partial(a_p)])\Big|\\
                                                                   &\leq\|a_0\|_{\infty}\prod_{k=1}^{p-1}\|\partial(a_k)\|_{\infty}\cdot\Big\|[|D|^{2-p}e^{-s^2D^2},\partial a_p])\Big\|_1.
        \end{align*}
        By Lemma \ref{commutator 7}, we have
        \begin{equation}\label{sec op eq1}
            \mathrm{Tr}(a_0\left(\prod_{k=1}^{p-1}\partial(a_k)\right) [|D|^{2-p}e^{-s^2D^2},\partial(a_p)]) = O(s^{-1}),\quad s\downarrow0.
        \end{equation}
        Similarly, we have
        \begin{equation*}
            \Big|\mathrm{Tr}(\left(\prod_{k=0}^{p-1}\partial(a_k)\right) [|D|^{2-p}e^{-s^2D^2},a_p])\Big| \leq \prod_{k=0}^{p-1}\|\partial(a_k)\|_{\infty}\cdot\Big\|[|D|^{2-p}e^{-s^2D^2},a_p])\Big\|_1.
        \end{equation*}
        By Lemma \ref{commutator 7}, we have
        \begin{equation}\label{sec op eq2}
            \mathrm{Tr}(\left(\prod_{k=0}^{p-1}\partial(a_k)\right) [|D|^{2-p}e^{-s^2D^2},a_p])=O(s^{-1}),\quad s\downarrow0.
        \end{equation}

        For every $c\in\mathcal{A}^{\otimes (p+1)},$ it follows from \eqref{sec op eq1} and \eqref{sec op eq2} that
        \begin{equation*}
            (b\theta_s)(c) = 2\mathrm{Tr}(\Omega(c)|D|^{2-p}e^{-s^2D^2}) + O(s^{-1}),\quad s\downarrow0.
        \end{equation*}
        If $c$ is a Hochschild cycle, then $(b\theta_s)(c)=0.$ Thus,
        \begin{equation*}
            \mathrm{Tr}(\Omega(c)|D|^{2-p}e^{-s^2D^2}) = O(s^{-1}),\quad s\downarrow0.
        \end{equation*}
        for every Hochschild cycle $c.$ 
        This completes the proof of \eqref{second opposite 1}.

        The proof of \eqref{second opposite 2} is similar to Lemma \ref{first opposite lemma}.\eqref{first opposite 2}. 
        For all $a \in \mathcal{A}$, we have $F[F,a]=-[F,a]F.$ Since $p+1$ is odd,
        \begin{equation*}
            F\cdot \prod_{k=0}^p[F,a_k]=-\left(\prod_{k=0}^p[F,a_k]\right) F.
        \end{equation*}
        Hence,
        \begin{equation*}
            \mathrm{Tr}(F\prod_{k=0}^p[F,a_k])=-\mathrm{Tr}(F\prod_{k=0}^p[F,a_k]).
        \end{equation*}
        This proves \eqref{second opposite 2}.
    \end{proof}

    The preceding two lemmas show how to remove the assumption that the parities of $p$ and $(\mathcal{A},H,D)$ match. It remains to remove the assumption that $D$ has a spectral gap at $0.$ For this purpose, we use the doubling trick.
    The "doubling trick" in this form follows \cite[Definition 6]{CPRS1}.
    
    Let $\mu >0.$ We define another spectral triple $(\pi(\mathcal{A}),H_0,D_{\mu}),$ where
    \begin{align*}
          H_0 &= \mathbb{C}^2\otimes H,\\
        D_\mu &= \begin{pmatrix} D & \mu \\ \mu & -D\end{pmatrix}.
    \end{align*}
    and $\pi$ is the same representation of $\mathcal{A}$ as in Definition \ref{doubling definition}. That is:
    \begin{equation*}
        \pi(a) := \begin{pmatrix} a & 0 \\ 0 & 0\end{pmatrix}.
    \end{equation*}
    For a tensor $c \in \mathcal{A}^{\otimes (p+1)}$, we denote $\pi(c)$ for the corresponding element of $(\pi(\mathcal{A}))^{\otimes (p+1)}$ obtained by applying the map $\pi^{\otimes (p+1)}$ to $c$.
    The spectral triple $(\pi(\mathcal{A}),H_0,D_{\mu})$ is equipped with grading operator
    $$\Gamma_0=\begin{pmatrix} \Gamma & 0 \\ 0 & (-1)^{\rm deg}\Gamma\end{pmatrix}$$
    where $\Gamma$ is the (possibly trivial) grading of $(\mathcal{A},H,D)$ (see Definition \ref{doubling definition}).
        
    Let $\Omega_{\mu}$ and ${\rm ch}_{\mu}$ be the multilinear mappings $\Omega$ and ${\rm ch}$ (just as in Definition \ref{ch omega def}) as applied to the spectral triple $(\pi(\mathcal{A}),H_0,D_{\mu}).$
    
    \begin{lem}\label{first doubling lemma} 
        Let $(\mathcal{A},H,D)$ be a spectral triple satisfying Hypothesis \ref{main assumption}. Let $F_0$ be as in Definition \ref{doubling definition}. If $a\in\mathcal{A},$ then
        as $\mu\downarrow 0$ we have:
        $$[{\rm sgn}(D_{\mu}),\pi(a)]\to[F_0,\pi(a)]$$
        in $\mathcal{L}_{p+1}$.
    \end{lem}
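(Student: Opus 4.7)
The plan is to verify the convergence entrywise in the $2\times 2$ matrix representation of the difference $[\mathrm{sgn}(D_\mu),\pi(a)]-[F_0,\pi(a)]$. Since $D_\mu^2 = (D^2+\mu^2)\otimes 1_2$, functional calculus yields
\[
\mathrm{sgn}(D_\mu) = \begin{pmatrix} D(D^2+\mu^2)^{-1/2} & \mu(D^2+\mu^2)^{-1/2} \\ \mu(D^2+\mu^2)^{-1/2} & -D(D^2+\mu^2)^{-1/2}\end{pmatrix},
\]
so a direct computation shows that the nonzero entries of $[\mathrm{sgn}(D_\mu),\pi(a)]-[F_0,\pi(a)]$ are $[C_\mu,a]$ (top-left), $-aB_\mu$ (top-right) and $B_\mu a$ (bottom-left), where $C_\mu := D(D^2+\mu^2)^{-1/2} - F$ and $B_\mu := \mu(D^2+\mu^2)^{-1/2} - P$. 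Since the $\mathcal{L}_{p+1}$ quasi-norm on $H_0=\mathbb{C}^2\otimes H$ is equivalent to the sum of the $\mathcal{L}_{p+1}$ quasi-norms of the four matrix entries, it suffices to prove that $\|B_\mu a\|_{p+1}$, $\|aB_\mu\|_{p+1}$ and $\|[C_\mu,a]\|_{p+1}$ all tend to $0$ as $\mu\downarrow 0$.

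The argument rests on two ingredients. First, applying Proposition \ref{pass to spectral gap 1} and Lemma \ref{z<p lemma} (with $s=1$) to the associated triple $(\mathcal{A},H,D_0)$ yields $a(1+D^2)^{-1/2}\in\mathcal{L}_{p,\infty}\subset\mathcal{L}_{p+1}$, and the same reasoning applied to $a^*\in\mathcal{A}$ gives $(1+D^2)^{-1/2}a\in\mathcal{L}_{p+1}$ as well. Second, I use the following standard approximation fact: if $\{h_\mu\}_{\mu>0}$ is a uniformly bounded family of Borel functions on $\mathbb{R}$ with $h_\mu(t)\to 0$ pointwise as $\mu\downarrow 0$, and $T\in\mathcal{L}_{p+1}$, then $\|h_\mu(D)T\|_{p+1}\to 0$. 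This follows because spectral dominated convergence gives $h_\mu(D)\to 0$ in the strong operator topology, hence $\|h_\mu(D)R\|_{p+1}\to 0$ for any finite rank $R$, and the density of finite rank operators in $\mathcal{L}_{p+1}$ together with the uniform norm bound on $h_\mu(D)$ closes the argument.

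Both desired convergences then fit into this framework. Writing
\[
B_\mu a = h^B_\mu(D)\bigl[(1+D^2)^{-1/2}a\bigr],\qquad C_\mu a = h^C_\mu(D)\bigl[(1+D^2)^{-1/2}a\bigr],
\]
with $h^B_\mu(t) := \mu(1+t^2)^{1/2}(t^2+\mu^2)^{-1/2}$ (and $h^B_\mu(0):=0$) and $h^C_\mu(t) := [t(t^2+\mu^2)^{-1/2}-\mathrm{sgn}(t)](1+t^2)^{1/2}$, the identity
\[
\frac{t}{\sqrt{t^2+\mu^2}}-\mathrm{sgn}(t) = \frac{-\mathrm{sgn}(t)\,\mu^2}{\sqrt{t^2+\mu^2}\bigl(|t|+\sqrt{t^2+\mu^2}\bigr)}
\]
together with an elementary case split on $|t|\lessgtr 1$ yields uniform boundedness of both $h^B_\mu$ and $h^C_\mu$ for $\mu\le 1$, while pointwise convergence $h^B_\mu(t),h^C_\mu(t)\to 0$ as $\mu\downarrow 0$ is immediate. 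The auxiliary fact then delivers $\|B_\mu a\|_{p+1}, \|C_\mu a\|_{p+1}\to 0$. The right-multiplication terms $aB_\mu$ and $aC_\mu$ are handled identically after taking adjoints, using self-adjointness of $h^B_\mu(D)$ and $h^C_\mu(D)$ together with $(1+D^2)^{-1/2}a^*\in\mathcal{L}_{p+1}$. Splitting $[C_\mu,a]=C_\mu a - aC_\mu$ completes the reduction.

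The main subtlety is that $h^B_\mu$ and $h^C_\mu$ remain uniformly bounded at $t=0$ in the absence of a spectral gap for $D$: the subtracted terms $P=\chi_{\{0\}}(D)$ and $F$ in the definitions of $B_\mu$ and $C_\mu$ are precisely what cancels the would-be singular behaviour of $\mu(t^2+\mu^2)^{-1/2}$ and $t(t^2+\mu^2)^{-1/2}$ on $\ker D$. This cancellation is the design feature of the doubling $F_0$ that makes the strategy viable, and handling it carefully is the only nontrivial step.
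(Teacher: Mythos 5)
Your proof is correct, and your matrix-entry reduction (to the three terms $B_\mu a$, $aB_\mu$ and $[C_\mu,a]$) coincides with the paper's; where you genuinely diverge is in the limiting argument. The paper writes $\|X_\mu a\|_{p+1}=\|a^*X_\mu^2a\|_{\frac{p+1}{2}}^{\frac12}$, observes that $\bigl(\mathrm{sgn}(D)-D(D^2+\mu^2)^{-1/2}\bigr)^2$ and $\bigl(P-\mu(D^2+\mu^2)^{-1/2}\bigr)^2$ decrease monotonically to $0$ as $\mu\downarrow0$, and concludes via order continuity of the $\mathcal{L}_{\frac{p+1}{2}}$-norm. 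You instead factor each term as a uniformly bounded spectral multiplier $h_\mu(D)$, with $h_\mu\to0$ pointwise, acting on the fixed element $(1+D^2)^{-1/2}a\in\mathcal{L}_{p,\infty}\subset\mathcal{L}_{p+1}$, and pass to the limit by strong convergence of $h_\mu(D)$ together with density of finite-rank operators in $\mathcal{L}_{p+1}$. Both routes are valid. Yours makes explicit where the summability hypothesis enters (via Lemma \ref{z<p lemma} applied to $D_0$, giving $a(1+D^2)^{-1/2}\in\mathcal{L}_{p,\infty}$), which in the paper's argument is only implicit: it is what guarantees that $a^*X_\mu^2a$ actually lies in $\mathcal{L}_{\frac{p+1}{2}}$, so that order continuity can be invoked. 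Your argument also does not use monotonicity in $\mu$ at all, only uniform boundedness and pointwise decay of the symbols, so it is somewhat more robust; the price is the extra bookkeeping with the conventions $h_\mu(0)=0$ on $\ker D$ (the cancellation against $P$ and $F$), which you identify as the key point and handle correctly.
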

    \begin{proof} 
        We have
        \begin{equation*}
            \mathrm{sgn}(D_{\mu}) = \begin{pmatrix} 
                                            \frac{D}{(D^2+\mu^2)^{1/2}}   & \frac{\mu}{(D^2+\mu^2)^{1/2}} \\ 
                                            \frac{\mu}{(D^2+\mu^2)^{1/2}} & -\frac{D}{(D^2+\mu^2)^{1/2}}
                                    \end{pmatrix}.
        \end{equation*}
    Hence,
    \begin{equation*}
        [\mathrm{sgn}(D_\mu),\pi(a)]  = \begin{pmatrix}
                                            \left[\frac{D}{(D^2+\mu^2)^{1/2}},a\right] & -a\frac{\mu}{(D^2+\mu^2)^{1/2}}\\ \frac{\mu}{(D^2+\mu^2)^{1/2}}a & 0.
                                        \end{pmatrix}
    \end{equation*}
    On the other hand, we have
    \begin{equation*}
        [F_0,\pi(a)] = \begin{pmatrix} 
                          [\mathrm{sgn}(D),a] & -aP\\ Pa & 0
                       \end{pmatrix}.
    \end{equation*}
    Therefore:
    \begin{align*}
        \Big\|[{\rm sgn}(D_{\mu}),\pi(a)&]-[F_0,\pi(a)]\Big\|_{p+1}\\
                                        &\leq \Big\|\Big({\rm sgn}(D)-\frac{D}{(D^2+\mu^2)^{\frac12}}\Big)a\Big\|_{p+1}+\Big\|a\Big({\rm sgn}(D)-\frac{D}{(D^2+\mu^2)^{\frac12}}\Big)\Big\|_{p+1}\\
                                        &\quad + \Big\|\Big(P-\frac{\mu}{(D^2+\mu^2)^{\frac12}}\Big)a\Big\|_{p+1}+\Big\|a\Big(P-\frac{\mu}{(D^2+\mu^2)^{\frac12}}\Big)\Big\|_{p+1}\\
                                        &= \Big\|a^*\Big({\rm sgn}(D)-\frac{D}{(D^2+\mu^2)^{\frac12}}\Big)^2a\Big\|_{\frac{p+1}{2}}^{\frac12}+\Big\|a\Big({\rm sgn}(D)-\frac{D}{(D^2+\mu^2)^{\frac12}}\Big)^2a^*\Big\|_{\frac{p+1}{2}}^{\frac12}\\
                                        &\quad + \Big\|a^*\Big(P-\frac{\mu}{(D^2+\mu^2)^{\frac12}}\Big)^2a\Big\|_{\frac{p+1}{2}}^{\frac12}+\Big\|a\Big(P-\frac{\mu}{(D^2+\mu^2)^{\frac12}}\Big)^2a^*\Big\|_{\frac{p+1}{2}}^{\frac12}.
    \end{align*}
    By functional calculus, as $\mu\downarrow  0$ we have:
    \begin{align*}
        \Big({\rm sgn}(D)-\frac{D}{(D^2+\mu^2)^{\frac12}}\Big)^2 &\downarrow 0,\\
                 \Big(P-\frac{\mu}{(D^2+\mu^2)^{\frac12}}\Big)^2 &\downarrow 0
    \end{align*}
    in the weak operator topology.
    Hence,
    $$a^*\Big({\rm sgn}(D)-\frac{D}{(D^2+\mu^2)^{\frac12}}\Big)^2a\downarrow 0,\quad a\Big({\rm sgn}(D)-\frac{D}{(D^2+\mu^2)^{\frac12}}\Big)^2a^*\downarrow 0,\quad\mu\downarrow0,$$
    $$a^*\Big(P-\frac{\mu}{(D^2+\mu^2)^{\frac12}}\Big)^2a\downarrow 0,\quad a\Big(P-\frac{\mu}{(D^2+\mu^2)^{\frac12}}\Big)^2a^*\downarrow0,\quad\mu\downarrow0.$$
    also in the weak operator topology.
    The assertion follows now from the order continuity of the $\mathcal{L}_{\frac{p+1}{2}}$ norm.
    \end{proof}

    \begin{lem}\label{second doubling lemma} 
        Let $(\mathcal{A},H,D)$ be a spectral triple satisfying Hypothesis \ref{main assumption}. If $c\in\mathcal{A}^{\otimes (p+1)},$ then
        \begin{equation*}
            \lim_{\mu\to 0} {\rm ch}_{\mu}(\pi(c)) = {\rm ch}_0(c)
        \end{equation*}
        in the $\mathcal{L}_1$-norm.
    \end{lem}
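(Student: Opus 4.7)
The plan is to reduce the $\mathcal{L}_1$-convergence of $\mathrm{ch}_\mu(\pi(c))$ to $\mathrm{ch}_0(c)$ to a telescoping sum whose terms are controlled using Lemma \ref{first doubling lemma} and the H\"older inequality \eqref{weak-type Holder}. By linearity we may assume $c = a_0\otimes a_1\otimes\cdots\otimes a_p$ is an elementary tensor. Introduce the abbreviations
\begin{equation*}
    u_k^\mu := [\mathrm{sgn}(D_\mu),\pi(a_k)], \qquad v_k := [F_0,\pi(a_k)], \qquad 0\leq k\leq p.
\end{equation*}
By Lemma \ref{first doubling lemma} we have $u_k^\mu \to v_k$ in $\mathcal{L}_{p+1}$ as $\mu\downarrow 0$, so in particular $\sup_{\mu}\|u_k^\mu\|_{p+1} < \infty$ and $\|v_k\|_{p+1} < \infty$.

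The first step is to show that $\prod_{k=0}^p u_k^\mu \to \prod_{k=0}^p v_k$ in $\mathcal{L}_1$. Writing the difference as a telescoping sum
\begin{equation*}
    \prod_{k=0}^p u_k^\mu - \prod_{k=0}^p v_k = \sum_{j=0}^p \Bigl(\prod_{k<j} u_k^\mu\Bigr)(u_j^\mu - v_j)\Bigl(\prod_{k>j} v_k\Bigr),
\end{equation*}
and applying \eqref{weak-type Holder} with the $p+1$ exponents all equal to $p+1$ (so the product lies in $\mathcal{L}_1$), each summand on the right is bounded in $\mathcal{L}_1$-norm by a constant times $\|u_j^\mu - v_j\|_{p+1}$, which tends to $0$ by Lemma \ref{first doubling lemma}.

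The second step is to incorporate the leading factor $\Gamma_0\mathrm{sgn}(D_\mu)$. Since $\Gamma_0$ is a uniformly bounded multiplier, it suffices to show $\mathrm{sgn}(D_\mu)\prod_k u_k^\mu \to F_0 \prod_k v_k$ in $\mathcal{L}_1$. Split this as
\begin{equation*}
    \mathrm{sgn}(D_\mu)\Bigl(\prod_k u_k^\mu - \prod_k v_k\Bigr) + \bigl(\mathrm{sgn}(D_\mu) - F_0\bigr)\prod_k v_k.
\end{equation*}
The first term is bounded in $\mathcal{L}_1$ by $\|\mathrm{sgn}(D_\mu)\|_\infty\|\prod_k u_k^\mu - \prod_k v_k\|_1 = \|\prod_k u_k^\mu - \prod_k v_k\|_1$, which tends to $0$ by Step 1. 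For the second term, note that by functional calculus $\mathrm{sgn}(D_\mu) \to F_0$ in the strong operator topology (since $\tfrac{D}{(D^2+\mu^2)^{1/2}}\to F$ and $\tfrac{\mu}{(D^2+\mu^2)^{1/2}}\to P$ in the strong operator topology), with a uniform bound $\|\mathrm{sgn}(D_\mu)\|_\infty = 1$. The fixed operator $\prod_k v_k \in \mathcal{L}_1$ by Step 1's H\"older estimate applied with $\mu = 0$.

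The main technical obstacle is the last step: one must know that if $A_\mu \to A$ strongly and $\sup_\mu\|A_\mu\|_\infty < \infty$, then $A_\mu T \to AT$ in $\mathcal{L}_1$ for every $T\in \mathcal{L}_1$. This is a standard fact, obtained by approximating $T$ in $\mathcal{L}_1$-norm by finite rank operators $T_N$ and observing that on finite rank operators strong operator convergence of $A_\mu$ implies $\mathcal{L}_1$-convergence of $A_\mu T_N$, while the tail estimate is controlled by $\sup_\mu\|A_\mu\|_\infty \|T - T_N\|_1$. Applying this with $A_\mu = \mathrm{sgn}(D_\mu) - F_0$ (which converges to $0$ in SOT) and $T = \prod_k v_k$ yields that the second term also goes to $0$ in $\mathcal{L}_1$, completing the proof.
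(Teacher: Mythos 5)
Your proof is correct and follows essentially the same route as the paper: the same splitting of $\mathrm{ch}_\mu(\pi(c))-\mathrm{ch}_0(c)$ into $(\mathrm{sgn}(D_\mu)-F_0)\prod_k[F_0,\pi(a_k)]$ plus $\mathrm{sgn}(D_\mu)$ times the difference of products, with Lemma \ref{first doubling lemma} plus a telescoping/H\"older argument for the latter and strong convergence of $\mathrm{sgn}(D_\mu)\to F_0$ against a fixed trace-class operator for the former (the paper cites \cite[Chapter 2, Example 3]{Simon} where you prove the fact directly). One small slip: the inequality you want in the telescoping step is the classical H\"older inequality for Schatten norms ($\|A_0\cdots A_p\|_1\leq\prod_k\|A_k\|_{p+1}$), not the weak-type inequality \eqref{weak-type Holder}, which would only give control in the $\mathcal{L}_{1,\infty}$ quasi-norm.
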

    \begin{proof} 
        It suffices to prove the assertion for $c=a_0\otimes\cdots\otimes a_p.$ Then we have
        \begin{align*}
                 {\rm ch}_{0}(c)-{\rm ch}_\mu(\pi(c)) &= \Gamma_0(F_0-{\rm sgn}(D_{\mu}))\prod_{k=0}^p[F_0,\pi(a_k)]\\
                                                           & - \Gamma_0\mathrm{sgn}(D_\mu)\Big(\prod_{k=0}^p[{\rm sgn}(D_{\mu}),\pi(a_k)]-\prod_{k=0}^p[F_0,\pi(a_k)]\Big).
        \end{align*}
        
        Next we use the fact that if $q \geq 1$ and if $A_\mu\to A$ in the strong operator topology, $B_\mu\to B\in \mathcal{L}_q$ in the $\mathcal{L}_q$ norm, and $\sup_{\mu\downarrow 0}\|A_\mu\|_\infty < \infty$ then $A_\mu B_\mu\to AB$
        in $\mathcal{L}_1$ (see \cite[Chapter 2, Example 3]{Simon}).
        
        We have that $\mathrm{sgn}(D_\mu)- F_0\to 0$ in the strong operator topology, and since $\prod_{k=0}^p [F_0,\pi(a_k)] \in \mathcal{L}_1$, it follows that the first
        summand converges to $0$ in the $\mathcal{L}_1$ norm.
                
        For the second summand, we apply Lemma \ref{first doubling lemma}. Since for each $k$ we have that $[\mathrm{sgn}(D_\mu),\pi(a_k)] \to [F_0,\pi(a_k)]$
        in $\mathcal{L}_{p+1}$, it follows that the second summand converges to $0$ in $\mathcal{L}_1$.
    \end{proof}
    
    \begin{lem}\label{third doubling lemma} 
        Let $(\mathcal{A},H,D)$ be a spectral triple satisfying Hypothesis \ref{main assumption}. 
        For every $c\in\mathcal{A}^{\otimes (p+1)},$ we have
        $$\Big\|\Omega_{\mu}(\pi(c))(1\otimes (1+D^2)^{-\frac{p}{2}}e^{-s^2D^2})\Big\|_1=O(s^{-1}),\quad s\downarrow0.$$
    \end{lem}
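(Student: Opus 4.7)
The plan is to expand $\Omega_\mu(\pi(c))$ as an explicit $2\times 2$ matrix of operators on $H$ and estimate each entry individually. By multilinearity I reduce to an elementary tensor $c = a_0\otimes\cdots\otimes a_p$. A direct computation using $D_\mu = \begin{pmatrix} D & \mu \\ \mu & -D\end{pmatrix}$ yields
\begin{equation*}
\partial_\mu(\pi(a_k)) = \begin{pmatrix} \partial(a_k) & -\mu a_k \\ \mu a_k & 0\end{pmatrix},
\end{equation*}
and since $\pi(a_0) = \begin{pmatrix} a_0 & 0 \\ 0 & 0\end{pmatrix}$ has zero bottom row -- a property preserved under right multiplication -- the operator $\Omega_\mu(\pi(c)) = \Gamma_0\pi(a_0)\prod_{k=1}^p\partial_\mu(\pi(a_k))$ has block form $\begin{pmatrix} \Gamma X & \Gamma Y \\ 0 & 0\end{pmatrix}$ on $H_0 = H\oplus H$, where $X$ and $Y$ are finite sums of terms of the shape $\mu^j a_0 T_1 T_2\cdots T_p$ with each $T_k\in\{\partial(a_k), a_k\}$ and $j$ equal to the number of indices with $T_k = a_k$.

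Multiplying on the right by $1\otimes(1+D^2)^{-p/2}e^{-s^2D^2}$ preserves the zero bottom row, so the trace norm is controlled by the sum of the $\mathcal{L}_1$-norms on $H$ of the two top entries. It therefore suffices to show, for each constituent $W = a_0 T_1\cdots T_p$, the estimate $\|W(1+D^2)^{-p/2}e^{-s^2D^2}\|_1 = O(s^{-1})$ as $s\downarrow 0$. I would split
\begin{equation*}
(1+D^2)^{-p/2}e^{-s^2D^2} = (1+D^2)^{-(p+1)/2}\cdot(1+D^2)^{1/2}e^{-s^2D^2},
\end{equation*}
and note by functional calculus on the scalar function $t\mapsto (1+t^2)^{1/2}e^{-s^2t^2}$ (whose maximum over $t\in\mathbb{R}$, attained near $1+t^2 = (2s^2)^{-1}$, is of order $s^{-1}$) that $\|(1+D^2)^{1/2}e^{-s^2D^2}\|_\infty = O(s^{-1})$ as $s\downarrow 0$. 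Hence it remains to bound $\|W(1+D^2)^{-(p+1)/2}\|_1$ by a constant independent of $s$.

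For the final step I would factor $W = (a_0 T_1\cdots T_{p-1})\cdot T_p$: the leading bracket is a bounded product of elements of $\mathcal{B}$, while the polar decomposition $D+i = U(1+D^2)^{1/2}$ of the normal invertible operator $D+i$ (with $U$ a unitary function of $D$) yields $(1+D^2)^{-(p+1)/2} = U^{p+1}(D+i)^{-(p+1)}$, so that $\|T_p(1+D^2)^{-(p+1)/2}\|_1 = \|T_p(D+i)^{-(p+1)}\|_1$ by unitary invariance of the trace norm. Since $T_p$ equals $a_p$ or $\partial(a_p)$ (up to the scalar $\mu$), Hypothesis \ref{main assumption}.\eqref{ass2} gives $\|T_p(D+i\lambda)^{-(p+1)}\|_1 = O(\lambda^{-1})$ as $\lambda\to\infty$, and the standard bounded functional $(D+i\lambda)^{(p+1)}(D+i)^{-(p+1)}$ (a function of $D$ that commutes with the resolvent) converts this asymptotic into finiteness at $\lambda = 1$. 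Combining these bounds and summing over the finitely many terms in $X$ and $Y$ concludes the argument. The main obstacle is purely combinatorial bookkeeping of the matrix expansion together with the passage from the $(D+i)^{-(p+1)}$ resolvent to $(1+D^2)^{-(p+1)/2}$; the analytic content reduces entirely to Hypothesis \ref{main assumption} and elementary functional calculus.
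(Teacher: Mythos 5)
Your proposal is correct. The overall skeleton matches the paper's: reduce to elementary tensors, bound everything except the last factor in operator norm, and let the last factor $T_p\in\{a_p,\partial(a_p)\}$ absorb the trace-class weight via Hypothesis \ref{main assumption}. Where you diverge is in two places. First, the paper does not expand the full block matrix: it applies H\"older directly to $\Gamma_0\pi(a_0)\prod_{k=1}^{p-1}[D_\mu,\pi(a_k)]$ (bounded, as a single block operator) against $[D_\mu,\pi(a_p)](1\otimes(1+D^2)^{-p/2}e^{-s^2D^2})$, and only then expands the last commutator into the $[D,a_p]$ and $\mu a_p$ contributions; your explicit expansion into terms $\mu^j a_0T_1\cdots T_p$ with zero bottom row is more bookkeeping but equally valid (the reduction of the block trace norm to the sum of the two top entries is fine). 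Second, and more substantively, the paper finishes by citing the appendix Lemma \ref{first decay lemma} (with $m_1=0$, $m_2=p-1$) applied to the gapped triple $(\mathcal{A},H,F(1+D^2)^{1/2})$, i.e.\ the dyadic-decomposition machinery, whereas you prove the needed estimate directly: split $(1+D^2)^{-p/2}e^{-s^2D^2}=(1+D^2)^{-(p+1)/2}\cdot(1+D^2)^{1/2}e^{-s^2D^2}$, bound the second factor in operator norm by $O(s^{-1})$, and get $T_p(1+D^2)^{-(p+1)/2}\in\mathcal{L}_1$ from the $k=0$ case of Hypothesis \ref{main assumption}.\eqref{ass2} by the resolvent/unitary manipulation you describe (which is legitimate, since $(D+i\lambda_0)^{p+1}(D+i)^{-p-1}$ is a bounded function of $D$ for a fixed large $\lambda_0$). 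Your route is self-contained, avoids passing to the invertible $D_0$ and avoids Lemma \ref{first decay lemma}; in effect you re-derive a crude special case of the appendix Lemma \ref{schwartz lemma}, with the weight placed at scale $1$ rather than scale $s$, which costs sharpness ($O(s^{-1})$ instead of the $O(s^{-p})$-type bounds the appendix provides) but is exactly enough for this lemma. The paper's version buys brevity and uniformity with the rest of Chapter \ref{heat chapter}, where the sharper scaled estimates are genuinely needed.
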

    \begin{proof} 
        Once more it suffices to prove the assertion for an elementary tensor $c=a_0\otimes\cdots\otimes a_p.$ We have
        \begin{align*}
              \Big\|\Omega_{\mu}&(\pi(c))(1\otimes (1+D^2)^{-\frac{p}{2}}e^{-s^2D^2})\Big\|_1\\
                                &\leq \Big\|\Gamma_0 \pi(a_0)\prod_{k=1}^{p-1}[D_{\mu},\pi(a_k)]\Big\|_{\infty}\Big\|[D_{\mu},\pi(a_p)](1\otimes (1+D^2)^{-\frac{p}{2}}e^{-s^2D^2})\Big\|_1\\
                                &\leq \|a_0\|_{\infty}\prod_{k=1}^{p-1}\|[D_{\mu},\pi(a_k)]\|_{\infty}\Big\|[D,a_p](1+D^2)^{-\frac{p}{2}}e^{-s^2D^2}\Big\|_1\\
                                &\quad +2\mu\|a_0\|_{\infty}\prod_{k=1}^{p-1}\|[D_{\mu},\pi(a_k)]\|_{\infty}\Big\|a_p(1+D^2)^{-\frac{p}{2}}e^{-s^2D^2}\Big\|_1.
        \end{align*}
        The assertion follows by applying Lemma \ref{first decay lemma} (with $m_1=0$ and $m_2=p-1$) to the odd spectral triple $(\mathcal{A},H,F(1+D^2)^{\frac12}).$    
    \end{proof}

    We note that since $\pi$ is an algebra homomorphism, if $c \in \mathcal{A}^{\otimes (p+1)}$ is a Hochschild cycle then so is $\pi(c)$.
    \begin{lem}\label{fourth doubling lemma} 
        Let $(\mathcal{A},H,D)$ be a spectral triple satisfying Hypothesis \ref{main assumption}. Let $c\in\mathcal{A}^{\otimes (p+1)}$ be a Hochschild cycle. We have
        $$(\mathrm{Tr}_2\otimes\mathrm{Tr})(\Omega_{\mu}(\pi(c))(1\otimes (1+D^2)^{1-\frac{p}{2}}e^{-s^2D^2}))= \frac{p}{2}\mathrm{Ch}_{\mu}(\pi(c))s^{-2} + O(s^{-1}),\quad s\downarrow 0.$$
    \end{lem}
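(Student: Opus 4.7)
The plan is to apply the preliminary heat asymptotic (the combination of Theorems \ref{reduction} and \ref{first cycle thm}) to the doubled spectral triple $(\pi(\mathcal{A}), H_0, D_\mu)$, exploiting the fact that $D_\mu^2 = 1\otimes(D^2+\mu^2)\geq \mu^2>0$, so $D_\mu$ has a spectral gap at $0$. The doubled triple inherits Hypothesis \ref{main assumption} from $(\mathcal{A},H,D)$ routinely: $D_\mu$ differs from $D\oplus(-D)$ by a bounded off-diagonal perturbation of norm $\mu$, so smoothness, $p$-dimensionality, and the $\mathcal{L}_1$-decay bounds of Hypothesis \ref{main assumption}.\eqref{ass2} transfer directly (with constants depending on $\mu$), and the grading $\Gamma_0$ preserves the parity of $(\mathcal{A},H,D)$.

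When the parity of $(\mathcal{A},H,D)$ matches $p$, Hypothesis \ref{auxiliary assumption} holds for the doubled triple. Since $\pi$ is an algebra homomorphism, $\pi(c)$ is a Hochschild cycle, so combining Theorems \ref{reduction} and \ref{first cycle thm} applied to $(\pi(\mathcal{A}),H_0,D_\mu)$ yields
\[
(\mathrm{Tr}_2\otimes\mathrm{Tr})\bigl(\Omega_\mu(\pi(c))|D_\mu|^{2-p}e^{-s^2D_\mu^2}\bigr) = \tfrac{p}{2}\mathrm{Ch}_\mu(\pi(c))s^{-2}+O(s^{-1}).
\]
When parities do not match, Lemmas \ref{first opposite lemma} and \ref{second opposite lemma}, applied to the doubled triple (which has the required spectral gap), yield that the same quantity is $O(s^{-1})$ with $\mathrm{Ch}_\mu(\pi(c))=0$.

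It remains to convert $|D_\mu|^{2-p}e^{-s^2D_\mu^2}$ to $1\otimes(1+D^2)^{1-p/2}e^{-s^2D^2}$. Set $T:=|D_\mu|^{2-p}e^{-s^2D_\mu^2} = 1\otimes(D^2+\mu^2)^{1-p/2}e^{-s^2(D^2+\mu^2)}$ and $U:=1\otimes(1+D^2)^{1-p/2}e^{-s^2D^2}$; a direct computation gives $T = e^{-s^2\mu^2}U + A(D)$, where $A(D) := 1\otimes[(D^2+\mu^2)^{1-p/2}-(1+D^2)^{1-p/2}]e^{-s^2(D^2+\mu^2)}$. The identity $(D^2+\mu^2)^{1-p/2}-(1+D^2)^{1-p/2} = (1-p/2)\int_1^{\mu^2}(D^2+u)^{-p/2}\,du$ expresses $A(D)$ as a function of $D^2$ (uniformly bounded in $s$) times $1\otimes(1+D^2)^{-p/2}e^{-s^2D^2}$; since functions of $D^2$ mutually commute, Lemma \ref{third doubling lemma} yields $\mathrm{Tr}(\Omega_\mu(\pi(c))A(D)) = O(s^{-1})$. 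Solving $T = e^{-s^2\mu^2}U + A(D)$ for $U$ and taking the trace then gives
\[
\mathrm{Tr}(\Omega_\mu(\pi(c))U) = e^{s^2\mu^2}\bigl[\mathrm{Tr}(\Omega_\mu(\pi(c))T) - \mathrm{Tr}(\Omega_\mu(\pi(c))A(D))\bigr] = \tfrac{p}{2}\mathrm{Ch}_\mu(\pi(c))s^{-2} + O(s^{-1}),
\]
completing the proof. The main obstacle is this conversion step: direct trace-norm bounds on $T-U$ would be delicate (especially for $p=1$, where $(1+D^2)^{1/2}$ is unbounded), but the algebraic rearrangement above bypasses the need to bound the term $(e^{-s^2\mu^2}-1)U$ directly.
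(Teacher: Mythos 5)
Your proposal is correct and follows essentially the same route as the paper: verify Hypothesis \ref{main assumption} and the spectral gap for the doubled triple, apply Theorems \ref{reduction} and \ref{first cycle thm} (or Lemmas \ref{first opposite lemma}, \ref{second opposite lemma} in the mismatched-parity case) to $\pi(c)$, then pass from $|D_\mu|^{2-p}e^{-s^2D_\mu^2}$ to $1\otimes(1+D^2)^{1-\frac{p}{2}}e^{-s^2D^2}$ using $D_\mu^2=D_0^2+\mu^2$ and Lemma \ref{third doubling lemma}. Your rearrangement $T=e^{-s^2\mu^2}U+A(D)$ with the integral representation is just a more explicit justification of the operator-norm bound $\|(|D_\mu|^{2-p}-|D_1|^{2-p})|D_1|^{p-2}\|_\infty<\infty$ that the paper uses in the same H\"older estimate.
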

    \begin{proof}     
        For $\mu>0,$ the spectral triple $(\pi(\mathcal{A}),H_0,D_{\mu})$ satisfies Hypothesis \ref{main assumption} and the spectral gap assumption. This allows us to apply Theorems \ref{reduction} and \ref{first cycle thm} (or Lemmas \ref{first opposite lemma} and \ref{second opposite lemma}) to the Hochschild cycle $\pi(c)\in (\pi(\mathcal{A}))^{\otimes (p+1)}.$
    
        A combination of Theorems \ref{reduction} and \ref{first cycle thm} (if the parities of $p$ and $(\pi(\mathcal{A}),H_0,D_{\mu})$ match) or one of Lemmas \ref{first opposite lemma} and \ref{second opposite lemma} (if parities of $p$ and 
        $(\pi(A),H_0,D_{\mu})$ do not match) yields
        $$(\mathrm{Tr}_2\otimes\mathrm{Tr})(\Omega_{\mu}(\pi(c))|D_{\mu}|^{2-p}e^{-s^2D_{\mu}^2}) = \frac{p}{2}\mathrm{Ch}_{\mu}(\pi(c))s^{-2} + O(s^{-1}),\quad s\downarrow 0.$$
        Noting that $D_{\mu}^2=D_0^2+\mu^2,$ and $e^{-s^2\mu^2} = O(1)$ we obtain
        $$(\mathrm{Tr}_2\otimes\mathrm{Tr})(\Omega_{\mu}(\pi(c))|D_{\mu}|^{2-p}e^{-s^2D_0^2}) = \frac{p}{2}\mathrm{Ch}_{\mu}(\pi(c))s^{-2} + O(s^{-1}),\quad s\downarrow 0.$$
        By Lemma \ref{third doubling lemma}, we have
        \begin{align*}
            \Big\|\Omega_{\mu}(\pi(c))&|D_{\mu}|^{2-p}e^{-s^2D_0^2}-\Omega_{\mu}(\pi(c))|D_1|^{2-p}e^{-s^2D_0^2}\Big\|_1\\
                                      &\leq\Big\|\Omega_{\mu}(\pi(c))|D_1|^{-p}e^{-s^2D_0^2}\Big\|_1\Big\|(|D_{\mu}|^{2-p}-|D_1|^{2-p})\cdot |D_1|^{p-2}\Big\|_{\infty}\\
                                      &=O(s^{-1}).
        \end{align*}
    \end{proof}

    We are now ready to prove the main result of this chapter. 

    \begin{proof}[Proof of Theorem \ref{heat thm}] 
        For $\mathscr{A}\subseteq\{1,\cdots,p\},$ we define the multilinear functional on $a_0\otimes\cdots a_p\in \mathcal{A}^{\otimes(p+1)}$ by:
        $$\mathcal{T}_{\mathscr{A}}(a_0\otimes\cdots\otimes a_p)=\Gamma_0\pi(a_0)\prod_{k=1}^p y_k(a_k).$$
        Here,
        \begin{equation*}
            y_k(a) := \begin{cases}
                        \begin{pmatrix} \partial(a) & 0 \\ 0 & 0\end{pmatrix}, k\notin\mathscr{A}\\
                        \begin{pmatrix} 0 & -a \\ a& 0\end{pmatrix}         , k\in\mathscr{A}.
                      \end{cases}
        \end{equation*}
        In particular,
        \begin{equation*}
            \mathcal{T}_{\emptyset}(a_0\otimes\cdots a_p) = \begin{pmatrix} 
                                                            \Omega(a_0\otimes\cdots\otimes a_p) & 0\\
                                                                0 & 0
                                                            \end{pmatrix}.
        \end{equation*}       
        For $c\in\mathcal{A}^{\otimes(p+1)},$ we apply \eqref{combinatorial fact} to get
        $$\Omega_{\mu}(\pi(c)) = \sum_{\mathscr{A}\subseteq\{1,\cdots,p\}}\mu^{|\mathscr{A}|}\mathcal{T}_{\mathscr{A}}(c).$$
        For each $0\leq k\leq p,$ we set
        $$f_k(s)=\sum_{|\mathscr{A}|=k}(\mathrm{Tr}_2\otimes\mathrm{Tr})(\mathcal{T}_{\mathscr{A}}(c)\cdot (1\otimes (1+D^2)^{1-\frac{p}{2}}e^{-s^2D^2})).$$
        That is, $f_k(s)$ is the coefficient of $\mu^k$ in $(\mathrm{Tr}_2\otimes \mathrm{\mathrm{Tr}})(\Omega_\mu(\pi(c))(1\otimes (1+D^2)^{1-\frac{p}{2}}e^{-s^2D^2}))$.
        
        Now if $c\in\mathcal{A}^{\otimes (p+1)}$ is a Hochschild cycle, then Lemma \ref{fourth doubling lemma} yields
        \begin{equation}\label{heat main epsilon}
            \sum_{k=0}^p\mu^kf_k(s) = \frac{p}{2}\mathrm{Ch}_{\mu}(\pi(c))s^{-2} + O(s^{-1}),\quad s\downarrow 0.
        \end{equation}
        
        Select a set $\{\mu_0,\ldots,\mu_p\}$ of distinct positive numbers, and for each $0 \leq l \leq p$ we may take $\mu = \mu_l$ in \eqref{heat main epsilon}
        to arrive at:
        $$\sum_{k=0}^p\mu_l^kf_k(s) = \frac{p}{2}\mathrm{Ch}_{\mu_l}(\pi(c))s^{-2} + O(s^{-1}),\quad s\downarrow 0,\quad 0\leq l\leq p.$$
        Since the Vandermonde matrix $\{\mu_l^k\}_{0\leq l,k\leq p}$ is invertible, it follows that there exist $\{\alpha_0,\ldots, \alpha_k\}$
        such that:
        $$f_k(s)=\frac{\alpha_k}{s^2}+O(s^{-1}),\quad s\downarrow0,\quad 0\leq k\leq p.$$
        Substituting this back to \eqref{heat main epsilon}, we obtain
        $$\sum_{k=0}^p\mu^k\alpha_k=\frac{p}{2}\mathrm{Ch}_{\mu}(\pi(c)).$$
        In particular
        $$\alpha_0=\frac{p}{2}\lim_{\mu\to0}\mathrm{Ch}_{\mu}(\pi(c))$$
        So by Lemma \ref{second doubling lemma},
        \begin{equation*}
            \alpha_0 = \frac{p}{2}{\rm Ch}(c).
        \end{equation*}
        Hence,
        $$f_0(s)=\frac{p}{2}\mathrm{Ch}(c)s^{-2} + O(s^{-1}),\quad s\downarrow 0.$$
        The assertion follows now from the definition of $f_0.$
    \end{proof}

\chapter{Residue of the $\zeta-$function and the Connes character formula}\label{zeta chapter}
    In this chapter we complete the proofs of Theorem \ref{zeta thm} and Theorem \ref{main thm}. 
    
    For a spectral triple $(\mathcal{A},H,D)$ satisfying Hypothesis \ref{main assumption}, we define the zeta function of a Hochschild cycle $c \in \mathcal{A}^{\otimes (p+1)}$ by the formula
    \begin{equation*}
        \zeta_{c,D}(z) := \mathrm{Tr}(\Omega(c)(1+D^2)^{-z/2}),\quad \Re(z) > p+1.
    \end{equation*}
    Indeed, by Hypothesis \ref{main assumption}.\eqref{ass2} if $\Re(z) \geq p+1$ then the operator $\Omega(c)(1+D^2)^{-z/2}$ is trace class, and so $\zeta_{c,D}$
    is well defined when $\Re(z) > p+1$. In Section \ref{zeta section} we prove that $\zeta_{c,D}$ is holomorphic and has analytic continuation to the set
    \begin{equation*}
        \{z \in \mathbb{C}\;:\; \Re(z) > p-1\}\setminus \{p\}.
    \end{equation*}
    We also show that the point $p$ is a simple pole for $\zeta_{c,D}$, and that the corresponding residue of $\zeta_{c,D}$ at $p$ is equal to $p\mathrm{Ch}(c)$, thus completing the proof
    of Theorem \ref{zeta thm}.
    
    Then we undertake the more difficult task of proving Theorem \ref{main thm}. We achieve this by a new characterisation
    of universal measurability in Section \ref{subhankulov section}, which allows us to deduce Theorem \ref{main thm} as a corollary of Theorem \ref{zeta thm}.
    
    The most novel feature of this chapter, and of this manuscript as a whole, is a certain integral representation
    of the difference $B^zA^z-(A^{\frac{1}{2}}BA^{\frac{1}{2}})^z$ for two positive bounded operators $A$ and $B$
    and $z \in \mathbb{C}$ with $\Re(z) > 0$ (see Lemma \ref{csz key lemma}). This result first appeared in \cite[Lemma 5.2]{CSZ} {for the special case where $z$ is real and positive. }
    With this integral representation we are able to prove the analyticity
    of the function
    \begin{equation*}
        z \mapsto \mathrm{Tr}(XB^zA^z)-\mathrm{Tr}(X(A^{\frac{1}{2}}BA^{\frac{1}{2}})^z)
    \end{equation*}
    for a bounded operator $X$ and for $z$ in a certain domain in the complex plane, under certain assumptions on $A$ and $B$. This result is
    stated in full in Section \ref{difference section}.
    
    In sections \ref{main thm p>2} and \ref{main thm p=1,2} we complete the proof of Theorem \ref{main thm}.

\section{Analyticity of the $\zeta$-function for $\Re(z)>p-1$, $z \neq p$}\label{zeta section}
    This section contains the proof of Theorem \ref{zeta thm}. The proof is relatively short, since we are able to use Theorem \ref{heat thm}.
    
    \begin{lem}\label{simple lemma} 
        Let $h\in L_{\infty}(0,1)$ and $u \in L_\infty(1,\infty)$. Then,
        \begin{enumerate}[{\rm (i)}]
            \item\label{simple lemma 1} the function
                $$F(z) := \int_0^1s^{z-1}h(s)ds,\quad\Re(z)>0,$$
            is analytic.
            \item\label{simple lemma 2} the function
                $$G(z) := \int_1^{\infty}s^{z-1}u(s)e^{-s}ds,\quad z\in\mathbb{C},$$
            is analytic.
        \end{enumerate}
    \end{lem}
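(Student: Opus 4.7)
The plan is to treat both parts as standard applications of the principle that an integral depending holomorphically on a parameter and with uniform domination on compact subsets defines a holomorphic function. The only ingredient specific to each case is the choice of dominating bound.

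For \eqref{simple lemma 1}, I would first check that $F(z)$ is well defined for $\Re(z)>0$: since $h\in L_\infty(0,1)$, the pointwise bound $|s^{z-1}h(s)|\leq \|h\|_\infty\, s^{\Re(z)-1}$ gives an integrable majorant on $(0,1)$ provided $\Re(z)>0$. To establish analyticity on the half-plane $\{\Re(z)>0\}$, it suffices to show it on each strip $S_{\varepsilon,R}:=\{\varepsilon<\Re(z)<R\}$ with $0<\varepsilon<R<\infty$. On such a strip the majorant $\|h\|_\infty\, s^{\varepsilon-1}$ is independent of $z$ and integrable on $(0,1)$. For each fixed $s\in(0,1)$ the map $z\mapsto s^{z-1}h(s)$ is entire. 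Hence I would apply Morera's theorem: for any closed triangle $T\subset S_{\varepsilon,R}$, Fubini's theorem (justified by the uniform bound $\|h\|_\infty s^{\varepsilon-1}\cdot\ell(T)$) yields
\begin{equation*}
\oint_T F(z)\,dz = \int_0^1 h(s)\Bigl(\oint_T s^{z-1}\,dz\Bigr)ds = 0,
\end{equation*}
the inner contour integral vanishing by Cauchy's theorem. Thus $F$ is holomorphic on $S_{\varepsilon,R}$, and since $\varepsilon,R$ are arbitrary, on all of $\{\Re(z)>0\}$.

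For \eqref{simple lemma 2}, the same strategy applies, with the exponential factor $e^{-s}$ providing the necessary decay at infinity. I would fix $R>0$ and restrict attention to $\{|\Re(z)|<R\}$, where the pointwise bound is $|s^{z-1}u(s)e^{-s}|\leq \|u\|_\infty s^{R-1}e^{-s}$, which is integrable on $(1,\infty)$. The map $z\mapsto s^{z-1}u(s)e^{-s}$ is entire for each fixed $s\geq 1$, and Morera's theorem together with Fubini's theorem (again justified by the uniform majorant) shows that $G$ is holomorphic on $\{|\Re(z)|<R\}$. Letting $R\to\infty$ gives analyticity on all of $\mathbb{C}$.

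There is no substantive obstacle; the only points requiring a moment of care are the verification of the uniform $z$-independent majorant on each compact sub-region of the domain of analyticity and the invocation of Fubini's theorem to exchange the contour integral with the $s$-integral. Both are standard.
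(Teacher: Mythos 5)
Your proof is correct, but it follows a genuinely different route from the paper's. The paper truncates the integral near the problematic endpoint: it sets $F_n(z)=\int_{1/n}^1 s^{z-1}h(s)\,ds$ (resp.\ $G_n(z)=\int_1^n s^{z-1}u(s)e^{-s}\,ds$), shows each truncation is entire by expanding $s^{z-1}=\sum_k \frac{(\log s)^k}{k!}(z-1)^k$ and interchanging sum and integral, and then proves $F_n\to F$ uniformly on $\{\Re(z)>\varepsilon\}$ (resp.\ $G_n\to G$ uniformly on $\{\Re(z)<N\}$), concluding by the standard theorem on locally uniform limits of holomorphic functions. You instead prove holomorphy directly under the integral sign via Morera plus Fubini, with a $z$-independent integrable majorant on each strip $\{\varepsilon<\Re(z)<R\}$ (resp.\ $\{|\Re(z)|<R\}$); your majorants $\|h\|_\infty s^{\varepsilon-1}$ and $\|u\|_\infty s^{R-1}e^{-s}$ are the right ones, and this is in fact the same mechanism the paper uses later for Theorem \ref{computational analytic lemma}. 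Your argument is shorter and avoids the power-series computation; the paper's truncation argument is more elementary in that it only invokes uniform convergence of entire functions. One small point you should make explicit: Morera's theorem requires the continuity of $F$ (and $G$) on the relevant strip, which you did not state; it follows immediately from the dominated convergence theorem with the same majorant, so this is a one-line addition rather than a gap.
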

    \begin{proof} 
        Let us prove \eqref{simple lemma 1}. Define
        $$F_n(z)=\int_{\frac1n}^1s^{z-1}h(s)ds,\quad\Re(z)>0.$$
        Then for $\Re(z) > 0$ we have:
        \begin{align*}
            |F(z)-F_n(z)| &= \left|\int_0^{\frac1n}s^{z-1}h(s)ds\right|\\
                          &\leq \int_0^{\frac1n}s^{\Re(z)-1}|h(s)|ds\\
                          &\leq \|h\|_{\infty}\int_0^{\frac1n}s^{\Re(z)-1}ds\\
                          &= \frac{\|h\|_{\infty}}{\Re(z)} n^{-\Re(z)}.
        \end{align*}
        So for every $\varepsilon > 0$, we have that $F_n$ converges uniformly to $F$ on the set $\{z\;:\; \Re(z) > \varepsilon\}$.

        We now show that for each $n$ the function $F_n$ is entire. 
        Indeed, we have the power series expansion
        \begin{align*}
            s^{z-1} &= e^{(z-1)\log(s)}\\
                    &= \sum_{k\geq0}\frac{(\log(s))^k}{k!}(z-1)^k.
        \end{align*}
        which converges uniformly on compact subsets of $\mathbb{C}$.
        Therefore, interchanging the integral and summation, we have that for all $z \in \mathbb{C}$
        \begin{equation}\label{fn taylor expansion}
            F_n(z) = \sum_{k\geq0}\frac1{k!}\left(\int_{\frac1n}^1(\log(s))^kh(s)ds\right)(z-1)^k,\quad z\in\mathbb{C}.
        \end{equation}
        This power series has infinite radius of convergence, since
        \begin{equation*}
            \left|\int_{\frac1n}^1(\log(s))^kh(s)ds\right| \leq \|h\|_{\infty}(\log(n))^k.
        \end{equation*}
        So each $F_n$ is entire.        

        In summary, the sequence $\{F_n\}_{n\geq1}$ of entire functions converges to $F$ uniformly on the half-plane $\{z\;:\; \Re(z) > \varepsilon\}$. Since $\varepsilon$ is arbitrary, the sequence
        $\{F_n\}_{n\geq 0}$ converges uniformly to $F$ on compact subsets of the half plane $\{z \;:\;\Re(z) > 0\}$. Thus, $F$ is holomorphic on this half-plane.

        To prove \eqref{simple lemma 2}, we consider the functions
        \begin{equation*}
            G_n(z) := \int_1^ns^{z-1}u(s)e^{-s}ds,\quad z\in\mathbb{C}
        \end{equation*}
        
        Exactly the same argument as above shows that each $G_n$ is entire. 
        For all $n\geq 1$, we have:
        \begin{align*}
            |G(z)-G_n(z)| &\leq \int_n^\infty s^{\Re(z)-1}|u(s)|e^{-s}\,ds\\
                          &\leq \|u\|_{\infty} \int_n^\infty s^{\Re(z)-1}e^{-s}\,ds
        \end{align*}
        So for any $N > 0$, we have that $G_n$ converges uniformly to $G$ in the set $\{z \in \mathbb{C}\;:\;\Re(z) < N\}$, and therefore on compact subsets of the plane. Hence, $G$ is
        entire.
    \end{proof}
    
    We are now able to prove Theorem \ref{zeta thm}.
    \begin{thm*}
        Let $(\mathcal{A},H,D)$ be a spectral triple satisfying Hypothesis \ref{main assumption}, and let $c \in \mathcal{A}^{\otimes (p+1)}$ be a Hochschild cycle. Then
        the function
        \begin{equation*}
            \zeta_{c,D}(z) := \mathrm{Tr}(\Omega(c)(1+D^2)^{-z/2}),\quad \Re(z) > p+1
        \end{equation*}
        is analytic, and has analytic continuation to the set $\{z \in \mathbb{C}\;:\;\Re(z) > p-1\}\setminus \{p\}$ and a simple pole at $p$ with residue $p\mathrm{Ch}(c)$.
    \end{thm*}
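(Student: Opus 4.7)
The plan is to pass from $(1+D^2)^{-z/2}$ to the heat semigroup via a Mellin-type identity and then isolate the pole at $z=p$ using Theorem \ref{heat thm}. Functional calculus with $A = 1+D^2$ and $w = z-p+2$ gives, for $\Re(z) > p-2$, the weak-operator-topology identity
$$(1+D^2)^{-z/2} = \frac{2}{\Gamma((z-p+2)/2)}\,(1+D^2)^{1-\frac{p}{2}}\int_0^\infty s^{z-p+1}e^{-s^2(1+D^2)}\,ds$$
(this comes from $A^{-w/2} = \tfrac{2}{\Gamma(w/2)}\int_0^\infty s^{w-1}e^{-s^2A}\,ds$). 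Left-multiplying by $\Omega(c)$ and exchanging $\mathrm{Tr}$ with the $s$-integral—justified for $\Re(z)>p$ via Lemma \ref{peter lemma} (see the obstacle below)—yields
$$\zeta_{c,D}(z) = \frac{2}{\Gamma((z-p+2)/2)}\int_0^\infty s^{z-p+1}e^{-s^2}\varphi(s)\,ds,\quad\varphi(s) := \mathrm{Tr}\bigl(\Omega(c)(1+D^2)^{1-\frac{p}{2}}e^{-s^2D^2}\bigr).$$

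Next I split $\int_0^\infty = \int_0^1 + \int_1^\infty$ and analyse each piece via Lemma \ref{simple lemma}. The tail $\int_1^\infty s^{z-p+1}e^{-s^2}\varphi(s)\,ds$ is entire by Lemma \ref{simple lemma}.\eqref{simple lemma 2}: for $s\geq 1$, writing $e^{-s^2D^2} = e^{-D^2/2}\cdot e^{-(s^2-1/2)D^2}$ and noting that $\Omega(c)(1+D^2)^{1-\frac{p}{2}}e^{-D^2/2}$ is trace class under Hypothesis \ref{main assumption} while the second factor is a contraction, $\varphi$ grows at most polynomially, which $e^{-s^2}$ dominates. For the $[0,1]$ piece, Theorem \ref{heat thm} supplies the decomposition $\varphi(s) = \tfrac{p}{2}\mathrm{Ch}(c)s^{-2} + \psi(s)$ with $\psi(s) = O(s^{-1})$, so $s\mapsto se^{-s^2}\psi(s)$ lies in $L_\infty(0,1)$, and writing
$$\int_0^1 s^{z-p+1}e^{-s^2}\psi(s)\,ds = \int_0^1 s^{(z-p+1)-1}\bigl[se^{-s^2}\psi(s)\bigr]\,ds,$$
Lemma \ref{simple lemma}.\eqref{simple lemma 1} with $w = z-p+1$ gives holomorphicity on $\{\Re(z) > p-1\}$. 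For the singular $s^{-2}$ contribution I further split $e^{-s^2} = 1 + (e^{-s^2}-1)$ to obtain
$$\int_0^1 s^{z-p-1}e^{-s^2}\,ds = \frac{1}{z-p} + \int_0^1 s^{(z-p+2)-1}\cdot\frac{e^{-s^2}-1}{s^2}\,ds,$$
and the remainder is holomorphic on $\{\Re(z) > p-2\}$, again by Lemma \ref{simple lemma}.\eqref{simple lemma 1}, since $(e^{-s^2}-1)/s^2 \in L_\infty(0,1)$.

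Assembling the pieces gives
$$\zeta_{c,D}(z) = \frac{2}{\Gamma((z-p+2)/2)}\left[\frac{p\,\mathrm{Ch}(c)/2}{z-p} + G(z)\right]$$
with $G$ holomorphic on $\{\Re(z) > p-1\}$. Since $1/\Gamma$ is entire and $\Gamma((z-p+2)/2)\big|_{z=p} = \Gamma(1) = 1$, this is the claimed meromorphic continuation, with a simple pole at $z=p$ of residue $p\,\mathrm{Ch}(c)$. The main technical hurdle is the interchange of $\mathrm{Tr}$ with $\int_0^\infty$ in the Mellin step, which amounts to a trace-norm estimate of the form $\|\Omega(c)(1+D^2)^{1-p/2}e^{-s^2D^2}\|_1 = O(s^{-2})$ as $s\downarrow 0$—strictly stronger than the scalar statement of Theorem \ref{heat thm}. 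I plan to sidestep this by first verifying the Mellin identity for $\Re(z)$ very large, where absolute $\mathcal{L}_1$-integrability of the integrand on $(0,\infty)$ follows directly from Hypothesis \ref{main assumption}.\eqref{ass2} (letting one apply Lemma \ref{peter lemma} without further work), and then extending the identity to $\{\Re(z) > p\}$ by analytic continuation of both sides.
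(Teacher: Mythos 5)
Your proposal follows essentially the same route as the paper's proof: the same Laplace--Mellin identity converting $(1+D^2)^{-z/2}$ into the heat semigroup, the same splitting of the $s$-integral at $s=1$ with Lemma \ref{simple lemma} handling the two regular pieces, and the same extraction of the simple pole at $z=p$ with residue $p\,\mathrm{Ch}(c)$ from the $\tfrac{p}{2}\mathrm{Ch}(c)s^{-2}$ term supplied by Theorem \ref{heat thm}; your residue bookkeeping (the factor $2/\Gamma((z-p+2)/2)$ against $\Gamma(1)=1$) is correct. The one point where you deviate is the trace/integral interchange: the paper justifies it directly by a trace-norm estimate of the integrand obtained from Hypothesis \ref{main assumption}.\eqref{ass2} (via Lemma \ref{schwartz lemma}), whereas you establish the Mellin identity only for $\Re(z)$ large, where the cruder bound $\|\Omega(c)(1+D^2)^{1-\frac{p}{2}}e^{-s^2D^2}\|_1=O(s^{-p-1})$ from the same hypothesis already gives absolute $\mathcal{L}_1$-integrability, and then appeal to uniqueness of analytic continuation; this is legitimate and, if anything, more robust than chasing the sharp $O(s^{-2})$ bound. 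One small correction to your final step: you cannot literally continue ``both sides'' of the identity to $\{\Re(z)>p\}$, because in the non-unital setting $\Omega(c)(1+D^2)^{-z/2}$ is only known to be trace class for $\Re(z)\geq p+1$ (Hypothesis \ref{main assumption}.\eqref{ass2} at $\lambda=1$), so $\zeta_{c,D}$ is not even defined by the trace below that line. What you actually need is the easy observation that $\zeta_{c,D}$ is holomorphic on its initial domain $\{\Re(z)>p+1\}$: write $\Omega(c)(1+D^2)^{-z/2}=\bigl(\Omega(c)(1+D^2)^{-\frac{p+1}{2}}\bigr)(1+D^2)^{\frac{p+1-z}{2}}$, with the first factor in $\mathcal{L}_1$ and the second an $\mathcal{L}_\infty$-valued holomorphic function of $z$ as in Lemma \ref{exp fact}. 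With that in hand, the identity theorem identifies $\zeta_{c,D}$ on all of $\{\Re(z)>p+1\}$ with the function given by your Mellin formula, whose continuation to $\{\Re(z)>p-1\}\setminus\{p\}$ you have correctly constructed.
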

    \begin{proof}[Proof of Theorem \ref{zeta thm}] 
        Let $z \in \mathbb{C}$ with $\Re(z) > 1$. Then for all $x>0,$ we have
        \begin{align*}
            \int_0^{\infty}s^{z-1}e^{-s^2x^2}\,ds &= x^{-z}\int_0^\infty t^{z-1}e^{-t^2}\,dt\\
                                                  &= x^{-z}\int_0^\infty u^{\frac{z-1}{2}}e^{-u}\frac{u^{-\frac{1}{2}}}{2}\,du\\
                                                  &= \frac{x^{-z}}{2}\Gamma\left(\frac{z}{2}\right).
        \end{align*}
        Thus,
        \begin{equation*}
            x^{2-z} = \frac{2}{\Gamma\left(\frac{z}{2}\right)}\int_0^\infty s^{z-1}x^2e^{-x^2s^2}\,ds
        \end{equation*}
        So by the functional calculus, for $\Re(z) > 2$ we have an integral in the weak operator topology:
        \begin{equation*}
            (1+D^2)^{1-\frac{z}{2}} = \frac{2}{\Gamma\left(\frac{z}{2}\right)}\int_0^{\infty}s^{z-1}(1+D^2)e^{-s^2(1+D^2)}\,ds.
        \end{equation*}        
        
        We now multiply on the left by the bounded operator $\Omega(c)(1+D^2)^{-p/2}$ to arrive at:
        \begin{equation*}
            \Omega(c)(1+D^2)^{1-\frac{z+p}{2}} = \frac{2}{\Gamma\left(\frac{z}{2}\right)}\int_0^\infty s^{z-1}\Omega(c)(1+D^2)^{1-\frac{p}{2}}e^{-s^2(1+D^2)}\,ds.
        \end{equation*}
              
        We claim that this integral converges in $\mathcal{L}_1$. First, if $p=1$ then consider the function $h_1(t) = te^{-t^2}$. Applying Lemma \ref{schwartz lemma} with the function $h_1$,
        we have:
        \begin{align*}
            \|s^{\Re(z)-1}\Omega(c)(1+D^2)^{1-\frac{p}{2}}e^{-s^2(1+D^2)}\|_1 &= s^{\Re(z)-2}\|\Omega(c)h_1(s(1+D^2)^{1/2})\|_1\\
                                                                              &= O(s^{\Re(z)-3}),\quad s\downarrow 0.
        \end{align*}
        On the other hand, if $p > 1$ then define $h_p(t) = (1+t^2)^{1-p/2}e^{-t^2}.$ Now applying Lemma \ref{schwartz lemma} with the function $h_p$:
        \begin{align*}
            \|s^{z-1}\Omega(c)(1+D^2)^{1-\frac{p}{2}}e^{-s^2(1+D^2)}\|_1 &\leq \left\|\left(\frac{1+D^2}{1+s^2D^2}\right)^{1-\frac{p}{2}}\right\|_{\infty}\|s^{z-1}\Omega(c)h_p(s|D|)\|_1\\
                                                                         &= s^{\Re(z)+p-3}\|\Omega(c)h_p(s|D|)\|_1\\
                                                                         &= O(s^{\Re(z)-3}),\quad s\downarrow 0.
        \end{align*}
        So in both cases, since $\Re(z) > 2$, the function $s\mapsto \|s^{z-1}\Omega(c)(1+D^2)^{1-\frac{p}{2}}e^{-s^2(1+D^2)}\|_1$ is integrable on $[0,1].$
        
        For $s>1,$ we have
        \begin{equation*}
            e^{-s^2D^2}\leq e^{-D^2}\leq(1+D^2)^{-3/2}.
        \end{equation*}
        so we have that
        \begin{equation*}
            \|s^{z-1}\Omega(c)(1+D^2)^{1-\frac{p}{2}}e^{-s^2(1+D^2)}\|_{1} \leq s^{\Re(z)-1}\|\Omega(c)(1+D^2)^{-\frac{p+1}{2}}\|_1.
        \end{equation*}
        Hence, $s\mapsto \|s^{z-1}\Omega(c)(1+D^2)^{1-\frac{p}{2}}e^{-s^2(1+D^2)}\|_1$ is integrable on the interval $(1,\infty)$.

        By Lemma \ref{peter lemma}, for $\Re(z) > 2$ we therefore have:
        \begin{align*}
            \|\Omega(c)(1+D^2)^{1-\frac{z+p}{2}}\|_1 &\leq \frac{2}{\Gamma\left(\frac{z}{2}\right)}\int_0^\infty \|s^{z-1}\Omega(c)(1+D^2)^{1-\frac{p}{2}}e^{-s^2(1+D^2)}\|_1\,ds\\
                                                     &< \infty
        \end{align*}
        and
        \begin{equation}\label{heat mellin transform}
            \mathrm{Tr}(\Omega(c)(1+D^2)^{1-\frac{z+p}{2}}) = \frac{2}{\Gamma\left(\frac{z}{2}\right)}\int_0^\infty s^{z-1}\mathrm{Tr}(\Omega(c)(1+D^2)^{1-\frac{p}{2}}e^{-s^2(1+D^2)})\,ds.
        \end{equation}        
        We will now apply the result of Theorem \ref{heat thm} to the integrand. First we define a function $h$ on $(0,\infty)$ by:
        \begin{equation*}
            h(s) := \begin{cases}
                        e^s\mathrm{Tr}(\Omega(c)(1+D^2)^{1-\frac{p}{2}}e^{-s^2(1+D^2)}), \quad s \geq 1\\
                        s\mathrm{Tr}(\Omega(c)(1+D^2)^{1-\frac{p}{2}}e^{-s^2(1+D^2)})-\frac{p}{2}\mathrm{Ch}(c)s^{-1}, \quad 0 < s < 1.
                    \end{cases}
        \end{equation*}
        By Theorem \ref{heat thm}, the function $h$ is bounded on the interval $(0,1).$ For $s > 1$, we have a constant $c$ such that.
        \begin{equation*}
            |h(s)| \leq Ce^{s-s^2}
        \end{equation*}
        Hence, $h$ is bounded on $[0,\infty)$. Substituting $h$ in \eqref{heat mellin transform}:
        \begin{align*}
            \mathrm{Tr}(\Omega(c)(1+D^2)^{1-\frac{z+p}{2}}) &= \frac{2}{\Gamma\left(\frac{z}{2}\right)}\int_0^1 s^{z-1}(s^{-1}h(s)+\frac{p}{2}\mathrm{Ch}(c)s^{-2})\,ds\\
                                                    &\quad +\frac{2}{\Gamma\left(\frac{z}{2}\right)}\int_1^\infty s^{z-1}e^{-s}h(s)\,ds.
        \end{align*}
        By Lemma \ref{simple lemma}.\eqref{simple lemma 2}, the second term in the above sum has extension to an entire function, and so we focus on the first term.
        We have,
        \begin{align*}
            \frac{2}{\Gamma\left(\frac{z}{2}\right)}\int_0^1 s^{z-1}(s^{-1}h(s)+\frac{p}{2}\mathrm{Ch}(c)s^{-2})\,ds &= \frac{2}{\Gamma\left(\frac{z}{2}\right)} \int_0^1 s^{z-2}h(s)\,ds\\
                                                                                                             &\quad + \frac{p}{\Gamma\left(\frac{z}{2}\right)}\mathrm{Ch}(c)\int_0^1 s^{z-3}\,ds.
        \end{align*}
        Due to Lemma \ref{simple lemma}.\eqref{simple lemma 1}, the first term in the above sum has extension to an analytic function for $\Re(z-1) > 0$. That is, for $\Re(z) > 1$. 
        
        As for the second term, since we are still working with $\Re(z) > 2$ we may compute:
        \begin{equation*}
            \int_0^1 s^{z-3}\,ds = (z-2)^{-1}.
        \end{equation*}
        So in summary, the function initially defined for $\Re(z) > 2$ given by:
        \begin{equation*}
            z\mapsto \mathrm{Tr}(\Omega(c)(1+D^2)^{1-\frac{p+z}{2}}) - \frac{p}{\Gamma\left(\frac{z}{2}\right)}(z-2)^{-1}
        \end{equation*}
        is analytic on the set $\Re(z) > 2$, and has analytic continuation to the set $\Re(z) > 1$. Since the function $\frac{1}{\Gamma\left(\frac{z}{2}\right)}$ is entire, and $\Gamma(1) = 1$, we may equivalently say that
        the function
        \begin{equation*}
            z\mapsto \zeta_{c,D}(z+p-2) - p\mathrm{Ch}(c)(z-2)^{-1},\quad \Re(z) > 2
        \end{equation*}
        has analytic continuation to the set $\Re(z) > 1$.
        In other words, for $\Re(z) > p$
        \begin{equation*}
            \zeta_{c,D}(z) - p\mathrm{Ch}(c)(z-p)^{-1}
        \end{equation*}
        has analytic continuation to the set $\Re(z) > p-1$. This is exactly the statement of the theorem.
    \end{proof}

\section[Integral representation]{An Integral Representation for $B^zA^z-(A^{\frac12}BA^{\frac12})^z$}\label{representation section}

    In this section, we follow the convention that for all $s \in \mathbb{R}$ we have $0^{is}=0$, so in particular we have the unusual convention that $0^{i0} = 0$. 
    This section is devoted to the proof of Theorem \ref{csz key lemma} (stated below). Theorem \ref{csz key lemma} is a strengthening of \cite[Lemma 5.2]{CSZ} (which corresponds to the special
    case where $z$ is real and $B$ is compact). Theorem \ref{csz key lemma} also substantially strengthens \cite[Proposition 4.4]{CGRS1}.
    
    Here we work with abstract operators on a separable Hilbert space $H$. Given a positive bounded operator $A$ on $H$,
    and a complex number $z$ with $\Re(z) > 0$, the operator $A^z$ may be defined by continuous functional calculus.
    \begin{thm}\label{csz key lemma} 
        Let $A$ and $B$ be bounded, positive operators on $H$, and let $z \in \mathbb{C}$ with $\Re(z) > 1$. Let $Y := A^{1/2}BA^{1/2}$.
        We define the mapping $T_z:\mathbb{R}\to \mathcal{L}_\infty$ by,
        \begin{align*}
            T_z(0) &:= B^{z-1}[BA^{\frac{1}{2}},A^{z-\frac{1}{2}}]+[BA^{\frac{1}{2}},A^{\frac{1}{2}}]Y^{z-1},\\
            T_z(s) &:= B^{z-1+is}[BA^{\frac{1}{2}},A^{z-\frac{1}{2}+is}]Y^{-is}+B^{is}[BA^{\frac{1}{2}},A^{\frac{1}{2}+is}]Y^{z-1-is},\quad s \neq 0.
        \end{align*}
        We also define the function $g_z:\mathbb{R}\to \mathbb{C}$ by:
        \begin{align*}
            g_z(0) &:= 1-\frac{z}{2},\\
            g_z(t) &:= 1-\frac{e^{\frac{z}{2}t}-e^{-\frac{z}{2}t}}{(e^{\frac{t}{2}}-e^{-\frac{t}{2}})(e^{\left(\frac{z-1}{2}\right)t}+e^{-\left(\frac{z-1}{2}\right)t})}.
        \end{align*} 
        Then:
        \begin{enumerate}[{\rm (i)}]
            \item{} The mapping $T_z:\mathbb{R}\to \mathcal{L}_\infty$ is continuous in the weak operator topology.
            \item{} We have:
                \begin{equation*}
                    B^zA^z-(A^{\frac{1}{2}}BA^{\frac{1}{2}})^z = T_z(0)-\int_{\mathbb{R}} T_z(s)\widehat{g}_z(s)\,ds. 
                \end{equation*}
        \end{enumerate}
    \end{thm}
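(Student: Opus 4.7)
The continuity of $T_z$ is routine. Expanding both commutators in the definition produces the four-term expression
\[
T_z(s) = B^{z+is}A^{z+is}Y^{-is} - B^{z-1+is}A^{z-1/2+is}BA^{1/2}Y^{-is} + B^{1+is}A^{1+is}Y^{z-1-is} - B^{is}A^{1/2+is}BA^{1/2}Y^{z-1-is},
\]
in which each factor is uniformly norm-bounded and strongly operator continuous in $s$ by the spectral theorem applied to the positive operators $A$, $B$, $Y$. A product of uniformly norm-bounded, strongly continuous operator-valued functions is weakly operator continuous, so $T_z$ is weakly continuous. Evaluating this four-term formula at $s = 0$ also yields the useful algebraic simplification
\[
T_z(0) = (B^zA^z - Y^z) + \bigl(BAY^{z-1} - B^{z-1}A^{z-1/2}BA^{1/2}\bigr),
\]
which follows directly from $Y = A^{1/2}BA^{1/2}$.

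Subtracting this simplification from the asserted identity reduces the problem to showing
\[
\int_{\mathbb{R}} \hat{g}_z(s)\,T_z(s)\,ds = BAY^{z-1} - B^{z-1}A^{z-1/2}BA^{1/2}.
\]
As a preliminary step I would verify the integral is well defined. Rewriting $g_z(t) = \tfrac12\bigl(1 - \tanh((z-1)t/2)\coth(t/2)\bigr)$ via the addition formula $\sinh(zt/2) = \sinh((z-1)t/2)\cosh(t/2) + \cosh((z-1)t/2)\sinh(t/2)$ reveals that $g_z$ is smooth with rapid decay at infinity whenever $\Re(z) > 1$, so $g_z, g_z' \in L_2(\mathbb{R})$, hence $\hat{g}_z \in L_1(\mathbb{R})$ by the standard estimate invoked in \cite[Lemma 7]{PS-crelle}. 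Since $T_z(s)$ is uniformly norm-bounded in $s$, the integral converges in the weak operator topology via Lemma \ref{peter lemma}.

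The key observation for the reduced identity is the formal Fourier computation $\int_{\mathbb{R}}\hat{g}_z(s)\,\lambda^{is}\mu^{is}\nu^{-is}\,ds = g_z(\log\lambda + \log\mu - \log\nu)$ for positive scalars $\lambda,\mu,\nu$. Applied spectrally, this invites interpreting $\int\hat{g}_z(s)\,T_z(s)\,ds$ as a multiple operator integral against the spectral resolutions of $B$, $A$, and $Y$, with symbol $g_z(\log\lambda + \log\mu - \log\nu)$ acting on the fixed ``insertion'' operators $BA^{1/2}$ and $A^{1/2}$ sandwiched between the spectral projections of $B$, $A$, and $Y$. To make this rigorous I would first assume $A$ and $B$ are invertible with spectrum in a compact subinterval of $(0,\infty)$, so that $\log A$, $\log B$, $\log Y$ are bounded self-adjoint and the multiple operator integrals are unambiguously defined, and then pass to general $A, B$ by strong operator approximation. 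Under this reduction the identity becomes a scalar identity in three spectral parameters, which can be checked from the explicit form of $g_z$.

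The main obstacle will be the rigorous treatment of the three-variable multiple operator integral, which is substantially more delicate than the double operator integrals of Section \ref{doi}: it requires working with three non-commuting spectral resolutions and two insertion operators. One natural route is via Peller's theory of multilinear operator integrals. As a useful consistency check, at $z = 2$ one has $\tanh(t/2)\coth(t/2) = 1$, so $g_z \equiv 0$, the integral vanishes, and the identity collapses to the trivial algebraic fact $BA \cdot Y = BA^{3/2}BA^{1/2}$. A reasonable fallback plan, should the multilinear operator integral approach prove cumbersome, is to verify the identity for integer $z = n \geq 2$ directly (where $\hat{g}_n$ admits an explicit form enabling evaluation via residue calculus) and then extend to $\Re(z) > 1$ by analyticity, exploiting that both sides of the reduced identity depend analytically on $z$ under the invertibility assumption on $A$ and $B$.
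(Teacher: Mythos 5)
Your continuity argument and the algebraic reduction are fine: expanding the commutators and subtracting $T_z(0)$ does reduce the theorem to the identity $\int_{\mathbb{R}}\widehat{g}_z(s)T_z(s)\,ds = BAY^{z-1}-B^{z-1}A^{z-\frac12}BA^{\frac12}$, and the Schwartz property of $g_z$ together with Lemma \ref{peter lemma} makes the integral well defined. The gap is in the central step: the claim that this reduced identity ``becomes a scalar identity in three spectral parameters'' for a trilinear operator integral against the spectral resolutions of $B$, $A$ and $Y$. Any such symbol computation necessarily treats the spectral variable of $Y$ as independent of those of $A$ and $B$, i.e.\ it would prove the identity with $Y$ replaced by an \emph{arbitrary} positive operator. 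But the identity is false in that generality: take $A=B=1$ and $Y$ an arbitrary positive operator unrelated to $A,B$; then every commutator $[BA^{\frac12},A^{w}]$ vanishes, so the left-hand side of your reduced identity is $0$, while the right-hand side is $Y^{z-1}-1\neq 0$. Thus the relation $Y=A^{\frac12}BA^{\frac12}$ must be used in a way that no identity of three-variable symbols with independent variables (and fixed insertions $BA^{\frac12}$, $A^{\frac12}$) can capture, so the proposed multilinear-operator-integral route cannot close. Your fallback (integer $z$ plus analyticity) is not developed enough to assess, but it inherits the same difficulty of exploiting the constraint between $Y$ and $(A,B)$.

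The paper circumvents exactly this obstruction by never introducing a third spectral variable. It first proves a genuinely two-variable statement (Lemma \ref{first integral formula}): for \emph{arbitrary independent} positive $X,Y$ and $V_z=X^{z-1}(X-Y)+(X-Y)Y^{z-1}$ one has $X^z-Y^z=V_z-\int_{\mathbb{R}}X^{is}V_zY^{-is}\widehat{g}_z(s)\,ds$, which does reduce to a two-variable scalar identity, namely $\phi_{3,z}=\phi_{1,z}\phi_{2,z}$ with $\phi_{1,z}(\lambda,\mu)=g_z(\log(\lambda/\mu))$, handled by the double operator integral machinery of Section \ref{doi}. The operator $B$ is then dealt with algebraically rather than spectrally-integrally: for $B_n$ with finite spectrum, $B_n=\sum_k\lambda_kP_k$, the lemma is applied to each pair $(X,Y)=(\lambda_kA,\,A^{\frac12}B_nA^{\frac12})$, and the sums $\sum_kP_k(\lambda_kA)^{w+is}$ are resummed by functional calculus of $B_n$ into the commutator form of $T_z$ (Lemma \ref{second integral formula}); here the constraint $Y=A^{\frac12}B_nA^{\frac12}$ enters through the terms $\lambda_kA-Y$ in $V_{k,z}$. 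Finally general $B$ is recovered by approximating with step functions $B_n$ and passing to the limit in the weak operator topology, which requires the equal-support operator-Lipschitz argument of Lemma \ref{unitary group lemma} and dominated convergence. If you want to salvage your approach, you would need either this kind of two-variable formulation in which the free-variable identity is actually true, or an argument that explicitly uses $Y=A^{\frac12}BA^{\frac12}$ at the operator level rather than at the symbol level.
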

    
    \begin{rem}
        For $\Re(z) > 1$, the function $g_z$ is Schwartz, and hence so is the (rescaled) Fourier transform $\widehat{g}_z$. 
    \end{rem}
    \begin{proof}
        For $t \neq 0$, we may rewrite $g_z(t)$ as:
        \begin{equation*}
            g_z(t) = \frac{1}{2}\left(\frac{\tanh((z-1)t/2)}{\tanh(t/2)}-1\right).
        \end{equation*}
        Letting $s = t/2$ and $w = z-1$, it then suffices to show that for $\Re(w) > 0$ the function
        \begin{equation*}
            f_w(s) = \frac{\tanh(ws)}{\tanh(s)}-1,\quad s\neq 0
        \end{equation*}
        with $f_w(0) = w-1$ is Schwartz. Since $\lim_{s\to 0} \frac{\tanh(s)}{s} = 1$, it is evident that $f_w$ is continuous at $0$
        and that $f_w$ is smooth in $[-1,1]$. 
        
        It suffices now to show that the function $\tanh(ws)-\tanh(s)$ is Schwartz, since for $|s| > 1$ the function $\frac{1}{\tanh(s)}$ is 
        smooth and bounded with all derivatives bounded. For $s > 1$, we can write,
        \begin{equation*}
            \tanh(ws)-\tanh(s) = \tanh(ws)-1+(1-\tanh(s))
        \end{equation*}
        and then note that since $\Re(w) > 0$, the functions $\tanh(ws)-1$ and $1-\tanh(s)$ have rapid decay as $s\to\infty$, with all derivatives to all orders also of rapid decay as $s\to\infty$. Similarly, for $s < -1$, we
        can write $\tanh(ws)-\tanh(s) = \tanh(ws)+1-(\tanh(s)+1)$ and then use the fact that $\tanh(s)+1$ and $\tanh(ws)+1$ have rapid decay, with all derivatives of rapid decay, as $s\to-\infty$. 
    \end{proof}

    \begin{lem}\label{measurability lemma} 
        Let $A_k,X_k\in\mathcal{L}_{\infty},$ $1\leq k\leq n,$ and let $X_k\geq0,$ $1\leq k\leq n.$ The function from $\mathbb{R}$ to $\mathcal{L}_\infty$ given by:
        \begin{equation*}
            s\mapsto \prod_{k=1}^nA_kX_k^{is}, s \in \mathbb{R}
        \end{equation*}
        is continuous in the strong operator topology (and in particular in the weak operator topology).
    \end{lem}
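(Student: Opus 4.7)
The plan is to reduce the continuity of the product to the continuity of each factor, and then to establish strong continuity of $s \mapsto X^{is}$ for a single positive bounded operator by a dominated convergence argument on the spectral measure.

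First I would recall the standard fact that if $T_k : \mathbb{R} \to \mathcal{L}_\infty$, $1 \leq k \leq m$, are strong operator topology continuous and uniformly norm-bounded (i.e. $\sup_{s \in \mathbb{R}} \|T_k(s)\|_\infty < \infty$ for each $k$), then the pointwise product $s \mapsto \prod_{k=1}^m T_k(s)$ is also SOT continuous. This is a textbook exercise based on the identity $T_k(s) S_k - T_k(s_0) S_0 = T_k(s)(S_k - S_0) + (T_k(s) - T_k(s_0))S_0$ combined with uniform boundedness. Applied to the alternating product $A_1 X_1^{is} A_2 X_2^{is} \cdots A_n X_n^{is}$, where the constant factors $A_k$ (viewed as constant functions of $s$) are trivially SOT continuous and $\|X_k^{is}\|_\infty \leq 1$, this reduces the problem to proving that each $s \mapsto X_k^{is}$ is SOT continuous.

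The main step is therefore to show that for a positive $X \in \mathcal{L}_\infty$, the map $s \mapsto X^{is}$ (defined by Borel functional calculus with the convention $0^{is} := 0$) is continuous in the strong operator topology. Fix $\xi \in H$ and let $\mu_\xi$ denote the scalar-valued spectral measure of $X$ at $\xi$ supported in $[0, \|X\|]$. Then
\begin{equation*}
   \|X^{is}\xi - X^{is_0}\xi\|_H^2 = \int_0^{\|X\|} |t^{is} - t^{is_0}|^2 \, d\mu_\xi(t).
\end{equation*}
The integrand is uniformly bounded by $4$ on $[0,\|X\|]$. For each $t > 0$, the function $s \mapsto t^{is} = e^{is \log t}$ is continuous, so the integrand tends to $0$ pointwise as $s \to s_0$; at $t = 0$ the integrand is identically zero by the convention $0^{is} = 0$. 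Dominated convergence yields $\|X^{is}\xi - X^{is_0}\xi\|_H \to 0$, which is precisely SOT continuity of $s \mapsto X^{is}$.

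The only subtle point, which I would take care to highlight, is the discontinuity of the scalar function $t \mapsto t^{is}$ at $t = 0$ induced by the convention $0^{is} = 0$. This has no effect here because the spectral measure assigns a fixed (possibly zero) mass to $\{0\}$ on which the integrand vanishes identically in $s$. Continuity in the weak operator topology then follows automatically from SOT continuity, completing the proof.
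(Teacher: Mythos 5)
Your argument is correct. The reduction of the product to its individual factors (constant factors $A_k$ plus the uniformly bounded factors $X_k^{is}$, using the standard identity for differences of products together with $\|X_k^{is}\|_\infty\leq 1$) is exactly the reduction used in the paper, which cites the corresponding fact from Bratteli--Robinson. Where you genuinely diverge is in the key step, the strong continuity of $s\mapsto X^{is}$ for a single positive $X$ with the convention $0^{is}=0$. The paper handles the possible kernel by writing $t^{is}=\exp(is\log_0(t))(1-\chi_{\{0\}}(t))$ with $\log_0(0):=0$, so that $X^{is}=P\exp(is\log_0(X))$ with $P$ the support projection, and then invokes Stone's theorem for the one-parameter unitary group $s\mapsto\exp(is\log_0(X))$. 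You instead argue directly on the scalar spectral measure: $\|X^{is}\xi-X^{is_0}\xi\|_H^2=\int_0^{\|X\|}|t^{is}-t^{is_0}|^2\,d\mu_\xi(t)$, with the integrand bounded by $4$, converging pointwise for $t>0$ and vanishing identically at $t=0$ because of the convention, so dominated convergence gives the claim. Both treatments deal correctly with the same subtlety (the jump of $t\mapsto t^{is}$ at $t=0$); yours is slightly more elementary in that it avoids Stone's theorem and the support-projection factorisation, while the paper's factorisation makes the role of the kernel projection explicit and reuses a named theorem rather than an ad hoc estimate. Either route is complete; no gap.
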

    \begin{proof}
        If uniformly bounded nets $\{A_i\}_{i\in\mathbb{I}}$ and $\{B_i\}_{i\in\mathbb{I}}$ converge in the strong operator topology to $A$ and $B$ respectively, then the net $\{A_iB_i\}_{i\in\mathbb{I}}$ converges to $AB$ in strong operator topology. This fact is standard and can be found e.g. in \cite[Proposition 2.4.1]{Bratteli-Robinson1}. Therefore it suffices to show that for each $k = 1,\ldots,n$ that the function $s\mapsto X_k^{is}$
        is continuous in the strong operator topology.        
        
        We note if $X_k$ has a spectral gap at $0$, then $\log(X_k)$ is well defined by continuous functional calculus and $X_k^{is} = \exp(is\log(X_k))$
        is strongly continuous by the Stone-von Neumann theorem. 
        
        If $X_k$ does not necessarily have a spectral gap, then instead we use the Borel function:
        \begin{equation*}
            \log_0(t) := \begin{cases}
                            \log(t),\quad t > 0\\
                            0,\quad t = 0.
                         \end{cases}
        \end{equation*}
        Hence, for $s \in \mathbb{R}$ and $t \geq 0$,
        \begin{equation*}
            \exp(is\log_0(t)) = \begin{cases}
                                    t^{is},\quad t > 0,\\
                                    1,\quad t = 0.
                                \end{cases}
        \end{equation*}
        {Recalling our convention stated at the start of this section,} that $0^{is} = 0$ for all $t \geq 0$, we have:
        \begin{equation*}
            t^{is} = \exp(is\log_0(t))(1-\chi_{\{0\}}(t)).
        \end{equation*}
        
        Let $P_k$ be the support projection of $X_k$ (i.e., the projection onto the orthogonal complement of the kernel of $X_k$). Then since $P_k = 1-\chi_{\{0\}}(X_k)$
        by Borel functional calculus we have:
        \begin{equation*}
            X_k^{is} = P_k\exp(is\log_0(X_k)).
        \end{equation*}
        Since the operator $\log_0(X_k)$ is self-adjoint, by the Stone-von Neumann theorem it follows that $s\mapsto \exp(is\log_0(X_k))$ is strongly
        continuous. Hence, $s\mapsto X_k^{is}$ is strongly continuous and the proof is complete.
    \end{proof}

    \begin{lem}\label{first integral formula} 
        Let $X$ and $Y$ be positive bounded operators and $z \in \mathbb{C}$ with $\Re(z) > 1$. Set $V_z := X^{z-1}(X-Y)+(X-Y)Y^{z-1}$. Then,
        \begin{equation*}
            X^z-Y^z=V_z-\int_{\mathbb{R}}X^{is}V_zY^{-is}\widehat{g}_z(s)ds.
        \end{equation*}
        The integral is understood in the weak operator topology sense. { The function $g_z$ is the same as in the statement of Theorem \ref{csz key lemma}.}
    \end{lem}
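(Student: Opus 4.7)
My plan is to reduce the stated operator identity to a pointwise scalar identity on $\sigma(X)\times\sigma(Y)$ and then invoke the double operator integral (DOI) machinery of Section \ref{doi}. Define the bounded symbols
\begin{equation*}
    \Phi_0(\lambda,\mu) := \lambda^z - \mu^z,\qquad \Phi_V(\lambda,\mu) := \lambda^z - \lambda^{z-1}\mu + \lambda\mu^{z-1} - \mu^z,
\end{equation*}
and $\phi(\lambda,\mu) := g_z(\log\lambda - \log\mu)$ on $\sigma(X)\times\sigma(Y)\subseteq [0,\infty)^2$, with $\phi$ extended by $0$ on the boundary $\{\lambda=0\}\cup\{\mu=0\}$ (consistent with the section's convention $0^{is}=0$ and with the limits $g_z(\pm\infty)=0$, which hold because $\Re(z)>1$). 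Since $V_z = \Phi_V(X,Y)$ (i.e.\ $T^{X,Y}_{\Phi_V}(1)$), once the scalar identity $\Phi_0 = (1-\phi)\,\Phi_V$ on $[0,\infty)^2$ is in hand, the lemma will follow by applying $T^{X,Y}$ to both sides and evaluating at the input $1$.

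For the scalar identity, on $\lambda,\mu>0$ set $u:=\lambda/\mu$ and $t:=\log u$. Multiplying the numerator and denominator of the explicit fraction defining $1-g_z(t)$ by $u^{z/2}$ collapses it to
\begin{equation*}
    1 - g_z(t) \;=\; \frac{u^z - 1}{u^z - u^{z-1} + u - 1},
\end{equation*}
so $(1-g_z(t))\,\Phi_V(\lambda,\mu) = (u^z-1)\mu^z = \Phi_0(\lambda,\mu)$. The boundary cases $\lambda=0,\mu>0$ (then $\Phi_0=-\mu^z=\Phi_V$ and $\phi=0$), $\lambda>0,\mu=0$ (analogous), and $\lambda=\mu=0$ (all three symbols vanish) are handled by direct inspection. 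Since $g_z$ is Schwartz (as recorded at the start of this section), $\hat g_z\in L_1(\mathbb R)$, and Fourier inversion yields
\begin{equation*}
    \phi(\lambda,\mu) \;=\; \int_{\mathbb R} \lambda^{is}\mu^{-is}\,\widehat g_z(s)\,ds,
\end{equation*}
which is a decomposition of the form \eqref{integral tensor product} meeting \eqref{doi sufficient condition} because $|\lambda^{is}|\leq 1$ on $\sigma(X)$ and $|\mu^{-is}|\leq 1$ on $\sigma(Y)$. Consequently $T^{X,Y}_\phi$ is well defined, and by \eqref{doi definition},
\begin{equation*}
    T^{X,Y}_\phi(V_z) \;=\; \int_{\mathbb R} X^{is}V_z Y^{-is}\,\widehat g_z(s)\,ds,
\end{equation*}
where the weak-operator integral on the right is legitimate because $s\mapsto X^{is}V_z Y^{-is}$ is continuous in the weak operator topology (Lemma \ref{measurability lemma}) and is norm-dominated by $\|V_z\|_\infty|\widehat g_z(s)|$, which is integrable.

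Using \eqref{separated variables in doi} and linearity on the polynomial symbols gives $T^{X,Y}_{\Phi_0}(1)=X^z-Y^z$ and $T^{X,Y}_{\Phi_V}(1)=V_z$. Combining the scalar identity with the multiplicativity of DOI maps \eqref{doi algebraic} then yields
\begin{equation*}
    X^z - Y^z \;=\; T^{X,Y}_{\Phi_0}(1) \;=\; T^{X,Y}_{(1-\phi)\Phi_V}(1) \;=\; T^{X,Y}_{1-\phi}\!\bigl(T^{X,Y}_{\Phi_V}(1)\bigr) \;=\; V_z - \int_{\mathbb R} X^{is}V_z Y^{-is}\,\widehat g_z(s)\,ds,
\end{equation*}
which is the claim. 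The main technical point to watch throughout is the boundary behaviour: if $X$ or $Y$ has nontrivial kernel then $0$ lies in the spectrum, and one must confirm the scalar identity all the way down to $\lambda=0$ or $\mu=0$. This is precisely where the convention $0^{is}=0$ and the decay $g_z(\pm\infty)=0$ are needed, ensuring that $\phi$ extends consistently to the boundary and matches $\Phi_0/\Phi_V$ there; everything else is a mechanical application of the DOI calculus to a clean algebraic identity between scalar symbols.
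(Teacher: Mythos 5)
Your proposal is correct and follows essentially the same route as the paper's proof: your identity $\Phi_0=(1-\phi)\,\Phi_V$ is just a rearrangement of the paper's $\phi_{3,z}=\phi_{1,z}\phi_{2,z}$ (with $\phi_{2,z}=\Phi_V$ and $\phi=\phi_{1,z}$), and the Fourier representation of $\phi$ via $\widehat{g}_z$, the boundary checks using $0^{is}=0$, and the appeal to the DOI calculus \eqref{doi algebraic}, \eqref{doi definition}, \eqref{separated variables in doi} are all the same. The only cosmetic omission is the diagonal case $\lambda=\mu>0$, where your fraction formula for $1-g_z(t)$ does not literally apply (it concerns $t\neq 0$, while $g_z(0)=1-\tfrac{z}{2}$ is defined separately); since $\Phi_0$ and $\Phi_V$ both vanish there, the identity holds trivially, exactly as the paper notes explicitly.
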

    \begin{proof}
        We define the function $\phi_{1,z}$ on $[0,\infty)\times [0,\infty)$ by:
        \begin{align*}
            \phi_{1,z}(\lambda,\mu) &:= g_z(\log(\frac{\lambda}{\mu})) \quad  \lambda,\mu>0,\\
                  \phi_{1,z}(0,\mu) &:= 0,\quad \mu\geq 0,\\
              \phi_{1,z}(\lambda,0) &:= 0,\quad \lambda\geq 0.
        \end{align*}
        We caution the reader that $\phi_{1,z}$ is not continuous at $(0,0)$ unless $z=2$ (indeed, $\phi_{1,z}(\lambda,\lambda)=g_z(0)=1-\frac{z}{2}$ for all $\lambda>0$).
        If we rewrite the definition of $g_z$ in terms of exponentials,
        then for $t \neq 0$ we get
        \begin{equation*}
            g_z(t) = 1-\frac{e^{\frac{z}{2}t}-e^{-\frac{z}{2}t}}{(e^{\frac{t}{2}}-e^{-\frac{t}{2}})(e^{\left(\frac{z-1}{2}\right)t}+e^{-\left(\frac{z-1}{2}\right)t})},
        \end{equation*}        
        and therefore:
        \begin{equation}\label{phi1 alg}
            \phi_{1,z}(\lambda,\mu) = 1-\frac{\lambda^z-\mu^z}{(\lambda-\mu)(\lambda^{z-1}+\mu^{z-1})},\quad \lambda,\mu>0, \lambda\neq \mu.
        \end{equation}
        We claim that
        \begin{equation}\label{phi1 rep}
            \phi_{1z}(\lambda,\mu) = \int_{\mathbb{R}}\widehat{g}_z(s)\lambda^{is}\mu^{-is}ds,\quad\lambda,\mu\geq0.
        \end{equation}
        Indeed, since $g_z$ is Schwartz we can use the Fourier inversion theorem:
        \begin{equation*}
            g_z(t)=\int_{\mathbb{R}}\widehat{g}_z(s)e^{ist}ds,\quad t\in\mathbb{R}.
        \end{equation*}
        If $\lambda,\mu > 0$, we simply substitute $t = \log(\lambda/\mu)$. For $\lambda = 0$ or $\mu = 0$, then the right hand side of \eqref{phi1 rep} vanishes,
        as does $\phi_{1,z}$ by definition. Hence \eqref{phi1 rep} is valid for all $\lambda,\mu \geq 0$.
        

        Thus, by the definition of the double operator integral \eqref{doi definition}, we have:
        \begin{equation}\label{doi usual}
            T_{\phi_{1,z}}^{X,Y}(A)=\int_{\mathbb{R}}\widehat{g}_z(s)X^{is}AY^{-is}ds.
        \end{equation}
        Indeed, since $g_z$ is a Schwartz function, it follows that $\widehat{g}_z\in L_1(\mathbb{R})$ and so the condition \eqref{doi sufficient condition} holds. Therefore, \eqref{doi usual} follows as a consequence of \eqref{doi definition}.
        Here, the integral on the right hand side of \eqref{doi usual} is understood in the weak operator topology sense.
        
        Measurability of the function $s\mapsto X^{is}AY^{-is}$ in the weak operator topology is guaranteed by Lemma \ref{measurability lemma} and condition \eqref{necessary-condition} follows from the inequality
        $$\|\widehat{g}_z(s)X^{is}AY^{-is}\|_{\infty}\leq|\widehat{g}_z(s)|\cdot\|A\|_{\infty},\quad s\in\mathbb{R},$$
        and from the fact that $\widehat{g}_z$ is a Schwartz (and in particular integrable) function. So it follows that $T^{X,Y}_{\phi_{1,z}}$ is bounded in the operator norm from $\mathcal{L}_{\infty}$ to $\mathcal{L}_{\infty}.$
        
        We introduce two more functions on $[0,\infty)\times [0,\infty)$. First,
        \begin{equation*}
            \phi_{2,z}(\lambda,\mu) = (\lambda^{z-1}+\mu^{z-1})(\lambda-\mu),\quad\lambda,\mu\geq0
        \end{equation*}
        and secondly,
        \begin{equation*}
            \phi_{3,z}(\lambda,\mu) = (\lambda^{z-1}+\mu^{z-1})(\lambda-\mu)-(\lambda^z-\mu^z),\quad  \lambda,\mu\geq 0.
        \end{equation*}
        Both functions are bounded on compact subsets of $[0,\infty)^2$, and so in particular on $\mathrm{Spec}(X)\times \mathrm{Spec}(Y)$, since
        by assumption both $X$ and $Y$ are bounded.
        
        The equality $\phi_{3,z}=\phi_{1,z}\phi_{2,z}$ holds on $[0,\infty)\times [0,\infty)$. 
        Indeed this follows from \eqref{phi1 alg} for $\lambda,\mu>0$, $\lambda\neq \mu.$ For $\lambda=\mu>0$ one has $\phi_{1,z}(\lambda,\lambda)=1-\frac{z}{2},$  $\phi_{2,z}(\lambda,\lambda)=0$ and $\phi_{3,z}(\lambda,\lambda)=0.$ If $\lambda=0$ or $\mu=0$ one has $\phi_{1,z}(\lambda,\mu)=0$ and $\phi_{3,z}(\lambda,\mu)=0.$

        Using formulae \eqref{separated variables in doi} and \eqref{doi definition}, we obtain that $T^{X,Y}_{\phi_{2,z}}:\mathcal{L}_{\infty}\to\mathcal{L}_{\infty}$ and
        $$T^{X,Y}_{\phi_{2,z}}(A) = X^zA-X^{z-1}AY+XAY^{z-1}-AY^z.$$
        Since $\phi_{3,z}$ bounded on $\mathrm{Spec}(X) \times \mathrm{Spec}(Y)$, we also get that $T^{X,Y}_{\phi_{3,z}}$ is bounded in the operator norm from $\mathcal{L}_\infty$ to $\mathcal{L}_\infty$, and
        $$T^{X,Y}_{\phi_{3,z}}(A)=(X^zA-X^{z-1}AY+XAY^{z-1}-AY^z)-(X^zA-AY^z).$$
        
        We note at this point that $T^{X,Y}_{\phi_{3,z}}(1) = V_z$.

        We have $\phi_{3,z}=\phi_{1,z}\phi_{2,z}$ on $\mathrm{Spec}(X)\times \mathrm{Spec}(Y)$, and thus by \eqref{doi algebraic}:
        \begin{align*}
            T^{X,Y}_{\phi_{1,z}}(V_z) &= T^{X,Y}_{\phi_{1,z}}(T^{X,Y}_{\phi_{2,z}}(1))\\
                                      &= T^{X,Y}_{\phi_{3,z}}(1)\\
                                      &= V_z-(X^z-Y^z).
        \end{align*}
        The assertion follows now from \eqref{doi usual}.
    \end{proof}

    We are now able to prove Theorem \ref{csz key lemma} in the special case where the spectrum of $B$ is a finite set.
    \begin{lem}\label{second integral formula} 
        Theorem \ref{csz key lemma} holds under the additional assumption that the spectrum of $B$ consists of a finite set of points.
    \end{lem}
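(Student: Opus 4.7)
The plan is to decompose the desired identity into a straightforward algebraic reduction plus a double-operator-integral identity, and to prove each part separately. The finite spectrum hypothesis on $B$ will be used to legitimize algebraic manipulations with fractional powers of $B$ (including the splittings $B^{z-1+is}=B^{is}B^{z-1}$) that would require additional care in the general case.

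First I would verify continuity of $s\mapsto T_z(s)$ in the weak operator topology. For $s\ne 0$ this follows from Lemma \ref{measurability lemma} (which gives strong operator continuity of $s\mapsto B^{is}$, $s\mapsto A^{is}$ and $s\mapsto Y^{is}$) together with the fact that uniformly bounded products of SOT-continuous operator-valued functions are SOT-continuous; WOT continuity is then immediate. A short direct computation (matching the two defining formulae under the standing conventions on $0^{is}$) handles the point $s=0$.

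Second, using the two elementary identities
\begin{align*}
B^{z-1}[BA^{1/2},A^{z-1/2}] &= B^zA^z - B^{z-1}A^{z-1/2}BA^{1/2},\\
[BA^{1/2},A^{1/2}]Y^{z-1} &= BAY^{z-1} - Y^z
\end{align*}
(both immediate from $Y=A^{1/2}BA^{1/2}$), one obtains
\begin{equation*}
T_z(0) = (B^zA^z - Y^z) + \bigl(BAY^{z-1} - B^{z-1}A^{z-1/2}BA^{1/2}\bigr).
\end{equation*}
Hence the assertion of the lemma reduces to proving the integral identity
\begin{equation*}
\int_{\mathbb{R}} T_z(s)\,\widehat{g}_z(s)\,ds = BAY^{z-1} - B^{z-1}A^{z-1/2}BA^{1/2}.
\end{equation*}

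For the main step I would invoke Lemma \ref{first integral formula} applied to the positive operators $X=B$ and $Y'=Y$, which gives $B^z-Y^z = V_z - \int B^{is}V_zY^{-is}\widehat{g}_z(s)\,ds$ with $V_z=B^{z-1}(B-Y)+(B-Y)Y^{z-1}$; equivalently,
\begin{equation*}
\int_{\mathbb{R}} B^{is}V_zY^{-is}\widehat{g}_z(s)\,ds \;=\; V_z-(B^z-Y^z).
\end{equation*}
I would then apply the Leibniz decomposition $[BA^{1/2},A^{z-1/2+is}]=[BA^{1/2},A^{z-1/2}]A^{is}+A^{z-1/2}[BA^{1/2},A^{is}]$ (and analogously for $A^{1/2+is}$), together with the finite-spectrum splittings $B^{z-1+is}=B^{is}B^{z-1}$ and $Y^{z-1-is}=Y^{z-1}Y^{-is}$, to decompose $T_z(s)$ into a finite sum of pieces of the form $B^{is}M_jY^{-is}$ for $s$-independent bounded operators $M_j$ (built from $B^{z-1}$, $A^{z-1/2}$, $A^{1/2}$ and $Y^{z-1}$). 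Each such piece is then integrated against $\widehat{g}_z$ via the displayed DOI identity applied with $V_z$ replaced by $M_j$.

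The main obstacle will be the precise algebraic bookkeeping required to check that these individual DOI evaluations combine to yield exactly $BAY^{z-1}-B^{z-1}A^{z-1/2}BA^{1/2}$. The Leibniz split of each of the two commutators produces extra terms involving $[BA^{1/2},A^{is}]$ whose DOI contributions must cancel against one another; verifying this cancellation relies essentially on the same algebraic identities used in the reduction step, together with the fact that, because $B$ has finite spectrum, $B^{z-1}$ and $B^{is}$ are finite linear combinations of the spectral projections of $B$, so that all rearrangements of powers of $B$ are justified without any functional-calculus convergence subtleties.
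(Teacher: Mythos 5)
Your reduction is fine: the computation $T_z(0)=(B^zA^z-Y^z)+\bigl(BAY^{z-1}-B^{z-1}A^{z-\frac12}BA^{\frac12}\bigr)$ is correct, so the lemma is indeed equivalent to the identity $\int_{\mathbb{R}}T_z(s)\widehat{g}_z(s)\,ds=BAY^{z-1}-B^{z-1}A^{z-\frac12}BA^{\frac12}$. The gap is in your proof of that identity, in two places. First, after the Leibniz splittings the pieces of $T_z(s)$ have the form $B^{is}M A^{is}Y^{-is}$, or contain the $s$-dependent commutator $[BA^{\frac12},A^{is}]$ in the middle; they are never of the advertised form $B^{is}M_jY^{-is}$ with $M_j$ independent of $s$, because the factor $A^{is}$ sits between $M$ and $Y^{-is}$ and cannot be absorbed ($A$ and $Y$ do not commute in general). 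Thus the $s$-dependence involves the three groups $B^{is}$, $A^{is}$, $Y^{-is}$ simultaneously and the integral is not a double operator integral for the pair $(B,Y)$. Second, even for a term that genuinely were of the form $B^{is}M_jY^{-is}$, Lemma \ref{first integral formula} does not evaluate $\int_{\mathbb{R}}B^{is}M_jY^{-is}\widehat{g}_z(s)\,ds$ for arbitrary $M_j$: that lemma computes $T^{X,Y}_{\phi_{1,z}}$ only at the particular element $V_z=T^{X,Y}_{\phi_{2,z}}(1)$, via the factorisation $\phi_{3,z}=\phi_{1,z}\phi_{2,z}$, so ``applying it with $V_z$ replaced by $M_j$'' is not a legitimate use of the lemma, and no cancellation scheme among the $[BA^{\frac12},A^{is}]$ terms can repair the absence of such a formula.

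Note also that the splittings $B^{z-1+is}=B^{is}B^{z-1}$ and $Y^{z-1-is}=Y^{z-1}Y^{-is}$ hold for any positive bounded operator under the paper's convention $0^{is}=0$, so they are not where finite spectrum is needed. Its real role, in the paper's argument, is different: write $B=\sum_{k=1}^n\lambda_kP_k$, so that $B^zA^z-Y^z=\sum_kP_k\bigl((\lambda_kA)^z-Y^z\bigr)$, and apply Lemma \ref{first integral formula} separately to each pair $(\lambda_kA,Y)$ with $V_{k,z}=(\lambda_kA)^{z-1}(\lambda_kA-Y)+(\lambda_kA-Y)Y^{z-1}$. Summing over $k$, the weights $\lambda_k^{is}P_k$ reassemble by functional calculus into $B^{is}$ and $B^{z-1+is}$, and the integrands become exactly $T_z(s)$; in particular the double operator integrals are taken with respect to $(\lambda_kA,Y)$ rather than $(B,Y)$, which is precisely what absorbs the troublesome $A^{is}$ factors. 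If you reorganise your argument along these lines it goes through; as written it does not.
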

    \begin{proof} 
        Suppose that $\mathrm{Spec}(B) = \{\lambda_1,\ldots,\lambda_n\}$, where each $\lambda_k\geq 0$ is distinct. By the spectral theorem
        there exists $n$ mutually orthogonal projections $\{P_k\}_{k=1}^n$ such that
        \begin{equation*}
            B = \sum_{k=1}^n \lambda_kP_k
        \end{equation*}
        and $\sum_{k=1}^n P_k = 1$. We have,
        \begin{equation*}
            B^{z} = \sum_{k=1}^n \lambda_k^z P_k.
        \end{equation*}
        Therefore,
        \begin{align}
            B^zA^z-Y^z &= \sum_{k=1}^n (P_k\lambda_k^zA^z-P_kY^z)\nonumber\\
                       &= \sum_{k=1}^n P_k((\lambda_kA)^z-Y^z)\label{spectral algebra}.
        \end{align}
        Applying Lemma \ref{first integral formula} to each term in the above sum, with $X = \lambda_kA$, if 
        \begin{equation*}
            V_{k,z} = (\lambda_kA)^{z-1}(\lambda_kA-Y)+(\lambda_k A-Y)Y^{z-1}
        \end{equation*}
        then
        \begin{equation*}
            (\lambda_kA)^z-Y^z = V_{k,z}-\int_{\mathbb{R}} (\lambda_kA)^{is}V_{k,z}Y^{-is}\widehat{g}_z(s)\,ds.
        \end{equation*}
        Now substituting into \eqref{spectral algebra}, we have:
        \begin{align*}
            B^zA^z-Y^z &= \sum_{k=1}^n P_k\left(V_{k,z}-\int_{\mathbb{R}} (\lambda_kA)^{is}V_{k,z}Y^{-is}\widehat{g}_z(s)\,ds\right)\\
                       &= \sum_{k=1}^n P_kV_{k,z}-\int_{\mathbb{R}} \left(\sum_{k=1}^n P_k(\lambda_kA)^{is}V_{k,z}\right)Y^{-is}\,\widehat{g}_z(s)\,ds.
        \end{align*}
        By the definition of $V_{k,z}$:
        \begin{equation*}
            V_{k,z} = (\lambda_k A)^z-(\lambda_kA)^{z-1}Y+\lambda_kAY^{z-1}-Y^z
        \end{equation*} 
        and so
        \begin{align*}
            \sum_{k=1}^n P_kV_{k,z} &= B^zA^z-B^{z-1}A^{z-1}Y+BAY^{z-1}-Y^z\\
                                    &= B^{z-1}(BA^z-A^{z-1}Y)+(BA-Y)Y^{z-1}\\
                                    &= B^{z-1}(BA^z-A^{z-1}A^{1/2}BA^{1/2})+(BA-A^{1/2}BA^{1/2})Y^{z-1}\\
                                    &= [BA^{1/2},A^{z-1/2}]+[BA^{1/2},A^{1/2}]Y^{z-1}\\
                                    &= T_z(0).
        \end{align*}
        We may also compute the sum in the integrand:
        \begin{align*}
            \sum_{k=1}^nP_k(\lambda_kA)^{is}V_{k,z} &= \sum_{k=1}^n P_k\left((\lambda_k A)^{z+is}-(\lambda_k A)^{z-1+is}Y+(\lambda_k A)^{1+is}Y^{z-1}-(\lambda_k A)^{is}Y^z\right)\\
                                                    &= \sum_{k=1}^njP_k\lambda_k^{z+is}\cdot A^{p+is}-\sum_{k=1}^n P_k\lambda_k^{z-1+is}\cdot A^{z-1+is}Y\\
                                                    &\quad + \sum_{k=1}^n P_k\lambda_k^{1+is} A^{1+is}Y^{z-1}-\sum_{k=1}^n P_k\lambda_k^{is}\cdot A^{is}Y^z.
        \end{align*}
        
        By functional calculus, we have
        \begin{align*}
            \sum_{k=1}^n P_k(\lambda_kA)^{is}V_{k,z} &= B^{z+is}A^{z+is}-B^{z-1+is}A^{z-1+is}Y+B^{1+is}A^{1+is}Y^{z-1}-B^{is}A^{is}Y^z\\
                                                     &= B^{z-1+is}(BA^{z+is}-A^{z-1+is}Y)+B^{is}(BA^{1+is}-A^{is}Y)Y^{z-1}\\
                                                     &= B^{z-1+is}[BA^{\frac12},A^{z-\frac12+is}]+B^{is}[BA^{\frac12},A^{\frac12+is}]Y^{z-1}.
        \end{align*}
        So multiplying on the right by $Y^{-is}$,
        \begin{equation*}
            \left(\sum_{k=1}^n P_k(\lambda_kA)^{is}V_{k,z}\right)Y^{-is} = B^{z-1+is}[BA^{\frac12},A^{z-\frac12+is}]Y^{-is}+B^{is}[BA^{\frac12},A^{\frac12+is}]Y^{z-1-is}.
        \end{equation*}
        We recognise this right hand side as exactly $T_z(s)$, and so 
        \begin{equation*}
            B^zA^z-Y^z = T_z(0)-\int_{\mathbb{R}} T_z(s)\widehat{g}_z(s)\,ds
        \end{equation*}
        as required.
    \end{proof}
    
    We now explain how to deduce the general version of Lemma \ref{csz key lemma} from the special case of Lemma \ref{second integral formula} (i.e, when $\mathrm{Spec}(B)$ is a finite set).
    To do this we will select a sequence $\{B_n\}_{n=1}^\infty$ with $B_n\to B$ in the uniform norm and such that the spectrum of each $B_n$ is finite.
    
    The following lemma shows that under certain conditions, if $B_n\to B$ in the uniform norm, then $B_n^{is} \to B^{is}$ in the weak operator topology for each fixed $s \in \mathbb{R}$.    

    { For a bounded operator $T$ we denote $\mathrm{supp}(T)$ for the projection onto the orthogonal complement of $\ker(T)$ (this is the support projection of $T$).}
    \begin{lem}\label{unitary group lemma}
        Let $C$ be a positive bounded operator, and let $\{C_n\}_{n=1}^\infty$ be a sequence of positive bounded operators such that $C_n\to C$ in the operator norm,
        and for each $n$ we have $\mathrm{supp}(C_n) = \mathrm{supp}(C)$. Then for all $s \in \mathbb{R}$, we have that $C_n^{is}\to C^{is}$ in the weak operator topology.
    \end{lem}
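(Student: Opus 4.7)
The plan is to establish the (stronger) statement of strong operator convergence $C_n^{is}\xi\to C^{is}\xi$, which of course implies weak convergence. Let $P := \mathrm{supp}(C) = \mathrm{supp}(C_n)$. The convention $0^{is}=0$, together with Borel functional calculus, gives $C_n^{is} = C_n^{is}P$ and $C^{is} = C^{is}P$, and moreover $\|C_n^{is}\|_\infty \leq 1$ and $\|C^{is}\|_\infty \leq 1$ (since $\exp(is\log_0(C_n))$ is unitary and $C_n^{is} = P\exp(is\log_0(C_n))$). Thus for any $\xi\in H$ we have $C_n^{is}\xi = C_n^{is}P\xi$, and it suffices to prove the convergence for $\xi\in\mathrm{range}(P)$.

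The key observation is that although $t\mapsto t^{is}$ is discontinuous at $0$, the function $t\mapsto t^{1+is}$ (with value $0$ at $t=0$) \emph{is} continuous on $[0,\infty)$, because $|t^{1+is}| = t \to 0$ as $t\to 0^+$. Since all spectra $\sigma(C_n),\sigma(C)$ lie in a common compact interval $[0,M]$ and $C_n\to C$ in operator norm, continuous functional calculus yields $C_n^{1+is}\to C^{1+is}$ in operator norm. I will use this to bootstrap.

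First I prove norm convergence on the dense subspace $\mathrm{range}(C)\subseteq\mathrm{range}(P) = \overline{\mathrm{range}(C)}$. For $\xi = C\eta$, write
\begin{equation*}
C_n^{is}\xi - C^{is}\xi = C_n^{is}(C-C_n)\eta + \bigl(C_n^{1+is}\eta - C^{1+is}\eta\bigr),
\end{equation*}
where I have used $C_n^{is}C_n = C_n^{1+is}$ and $C^{is}C = C^{1+is}$ (which hold by functional calculus since $t^{is}\cdot t = t^{1+is}$ also under our convention at $t=0$). The first summand has norm at most $\|C-C_n\|_\infty\|\eta\|\to 0$, and the second tends to zero by the continuous functional calculus applied to $t\mapsto t^{1+is}$.

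Finally I extend to all $\xi\in\mathrm{range}(P)$ by a standard $3\varepsilon$ argument. Given $\xi\in\mathrm{range}(P)$ and $\varepsilon>0$, pick $\xi_k\in\mathrm{range}(C)$ with $\|\xi-\xi_k\| < \varepsilon$; then
\begin{equation*}
\|C_n^{is}\xi - C^{is}\xi\| \leq \|C_n^{is}\|_\infty\|\xi - \xi_k\| + \|C_n^{is}\xi_k - C^{is}\xi_k\| + \|C^{is}\|_\infty\|\xi_k - \xi\| \leq 2\varepsilon + o(1),
\end{equation*}
so $\limsup_n\|C_n^{is}\xi - C^{is}\xi\| \leq 2\varepsilon$, and letting $\varepsilon\downarrow 0$ completes the proof. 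There is no serious obstacle; the only subtle point is the interplay between the convention $0^{is}=0$ and the hypothesis of a common support projection, which is precisely what makes $C_n^{is}$ vanish on $\ker(C)$ so that the reduction to $\mathrm{range}(P)$ is legitimate.
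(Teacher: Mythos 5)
Your proof is correct, and it takes a genuinely different route from the paper's. The paper fixes $\xi$, cuts off the small-spectrum part of $C$ (choosing $m$ with $\|\chi_{[0,\frac1m]}(C)\xi\|<\varepsilon$), introduces smooth compactly supported cutoffs $\phi$ and $\psi(t)=t^{is}\phi(t)$, and invokes the operator-Lipschitz estimates of \cite{PS-crelle} to get $\|\phi(C_n)-\phi(C)\|_\infty\to0$ and $\|\psi(C_n)-\psi(C)\|_\infty\to0$, then splits $\langle C_n^{is}\xi,\eta\rangle-\langle C^{is}\xi,\eta\rangle$ into three terms each of size $O(\varepsilon)$. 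You instead regularize the discontinuity of $t\mapsto t^{is}$ at $0$ by multiplying by $C$: since $t\mapsto t^{1+is}$ is continuous on $[0,\infty)$, ordinary continuous functional calculus (polynomial approximation on a common spectral interval) gives $C_n^{1+is}\to C^{1+is}$ in norm, the identities $C_n^{is}C_n=C_n^{1+is}$ and $C^{is}C=C^{1+is}$ handle vectors in $\mathrm{range}(C)$, and the $3\varepsilon$ argument with the uniform bound $\|C_n^{is}\|_\infty\le1$ extends the convergence to $\overline{\mathrm{range}(C)}=\mathrm{range}(P)$, while the common-support hypothesis (used in the paper via its reduction to $\mathrm{supp}(C)=1$) makes both $C_n^{is}$ and $C^{is}$ vanish on $\ker(C)$, so the reduction to $\mathrm{range}(P)$ is legitimate. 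Your route avoids the operator-Lipschitz machinery entirely, is more elementary, and explicitly records strong operator convergence; the paper's estimates in fact also give strong convergence implicitly (all its bounds are uniform in $\eta$), so its heavier toolkit buys nothing extra for this particular lemma.
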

    \begin{proof} 
        By definition, we need to show that for all $s \in \mathbb{R}$ and $\xi,\eta \in H$ we have
        \begin{equation*}
            \lim_{n\to\infty} \langle C_n^{is}\xi,\eta\rangle = \langle C^{is}\xi,\eta\rangle.
        \end{equation*}
        By assumption, $\mathrm{supp}(C_n) = \mathrm{supp}(C)$ for every $n\geq0.$ 
        By taking a quotient by the closed subspace $\ker(C)$ if necessary, we may assume without loss of generality that $\mathrm{supp}(C_n)=\mathrm{supp}(C)=1$
        for every $n\geq0.$ Also without loss of generality, we assume that $\|C\|_{\infty}\leq 1$ and $\sup_{n\geq 0}\|C_n\|_{\infty}\leq 1$. Let $\xi ,\eta \in H$ be such
        that $\|\xi\| = \|\eta\| = 1$.

        Fix $\varepsilon>0$. { Since $\ker(C) = \{0\}$, we may} select $m > 1$ such that
        $$\|\chi_{[0,\frac1m]}(C))\xi\|<\varepsilon.$$
        Let $0\leq\phi\leq1$ be a smooth function supported on the interval $[\frac1{m+1},2]$ such that $\phi=1$ on the interval $[\frac1m,1].$
        We note that therefore 
        \begin{align*}
            \|(1-\phi(C))\xi\| &= \|(1-\phi(C))\chi_{[0,\frac{1}{m})}\xi\|\\
                               &\leq \|\chi_{[0,\frac{1}{m})}\xi\|\\
                               &< \varepsilon.
        \end{align*}
        Let $\psi(t) := t^{is}\phi(t)$. Since $\phi$ and $\psi$ are smooth and compactly supported, it follows that their first and second derivatives are in $L_2(\mathbb{R})$. 
        These conditions are sufficient for $\phi$ and $\psi$ to be operator Lipschitz (see \cite[Lemma 6, Lemma 7]{PS-crelle}): i.e.,
        there are constants $C_{\phi}$ and $C_{\psi}$ such that
        \begin{align*}
            \|\phi(C_n)-\phi(C)\|_\infty &\leq C_{\phi}\|C_n-C\|_\infty,\\
            \|\psi(C_n)-\psi(C)\|_\infty &\leq C_{\psi}\|C_n-C\|_\infty.
        \end{align*}
        
        Select $N> 0$ such that for all $n > N$ we thus have,
        \begin{align*}
                          \|\phi(C_n)-\phi(C)\|_{\infty} &\leq \epsilon \text{ and, }\\
            \|\phi(C_n)C_n^{is}-\phi(C)C^{is}\|_{\infty} &\leq \epsilon.
        \end{align*}

        For $n > N$ we have:
        \begin{align}
            \langle C_n^{is}\xi,\eta\rangle-\langle C^{is}\xi,\eta\rangle &= \langle C_n^{is}(1-\phi(C_n))\xi,\eta\rangle+\langle C^{is}(\phi(C)-1)\xi,\eta\rangle\nonumber\\
                                                                          &\quad +\langle (C_n^{is}\phi(C_n)-C^{is}\phi(C))\xi,\eta\rangle\label{big weak sum}.
        \end{align}
        
        For the first term in \eqref{big weak sum} above, we have:
        \begin{align*}
            |\langle C_n^{is}(1-\phi(C_n))\xi,\eta\rangle| &\leq \|(1-\phi(C_n))\xi\|\\
                                                           &\leq \|(1-\phi(C))\xi\|+\|\phi(C_n)-\phi(C)\|_{\infty}\\
                                                           &< 2\varepsilon.
        \end{align*}
        Next, for the second term in \eqref{big weak sum}, we have
        \begin{align*}
            |\langle C^{is}(\phi(C)-1)\xi,\eta\rangle| &\leq \|(1-\phi(C))\xi\|_\infty\\
                                                       &< \varepsilon.
        \end{align*}
        Finally, for the third term in \eqref{big weak sum},
        \begin{align*}
            |\langle (C_n^{is}\phi(C_n)-C^{is}\phi(C))\xi,\eta\rangle| &\leq \|\psi(C_n)-\psi(C)\|_\infty\leq \varepsilon.
        \end{align*}
        
        So in summary, for $n\geq N$ we have:
        $$|\langle C_n^{is}\xi,\eta\rangle-\langle C^{is}\xi,\eta\rangle|\leq 4\varepsilon.$$
        Since $\varepsilon>0$ is arbitrarily small, the assertion follows.
    \end{proof}

    We are now ready to prove Theorem \ref{csz key lemma}.
    \begin{proof}[Proof of Theorem \ref{csz key lemma}] 
        Without loss of generality, $\|B\|_{\infty}=1$ (if not, then we replace the couple $(A,B)$ with a couple $(cA,c^{-1}B)$ with a suitable constant $c > 0$). 
        In this case we have $\mathrm{Spec}(B) \subseteq [0,1]$ and $1 \in \mathrm{Spec}(B)$.
        For every $n\geq 1 ,$ set
        $$B_n=\sum_{m=1}^n\frac{m}{n}\chi_{(\frac{m-1}{n},\frac{m}{n}]}(B).$$
        Recall that $Y := A^{1/2}BA^{1/2}$, and let $Y_n := A^{1/2}B_nA^{1/2}$, and let $T_{n,z}(s)$ be defined as $T_z(s)$
        with the occurances of $B$ replaced with $B_n$ and $Y$ replaced with $Y_n$.
        
        By construction, the spectrum of $B$ consists of at most $n$ points, indeed by the spectral mapping theorem:
        \begin{equation*}
            \mathrm{Spec}(B_n) \subseteq \left\{\frac{m}{n}\right\}_{m=1}^n.
        \end{equation*}        
        Since $1 \in \mathrm{Spec}(B)$, we always have that $\chi_{(\frac{n-1}{n},1]}(B) \neq 0$, so $1 \in \mathrm{Spec}(B_n)$
        and $\|B_n\|_\infty = 1$.
        We also have that $\mathrm{supp}(B_n) = \mathrm{supp}(B)$, and $\mathrm{supp}(Y_n) = \mathrm{supp}(Y)$. 
        
        Moreover, $B_n$ converges in norm to $B$, since $\|B-B_n\|_\infty \leq \frac{1}{n}$. Thus by Lemma \ref{unitary group lemma}, 
        for any $s \in \mathbb{R}$ we also have that $B_n^{is}\to B^{is}$ in the weak operator topology. Similarly, $Y_n\to Y$ in the norm topology
        and $Y_n^{is}\to Y^{is}$ in the weak operator topology. 
        
        It follows now that for each $s \in \mathbb{R}$ and $z \in \mathbb{C}$ with $\Re(z) > 1$, we have that $T_{n,z}(s)\to T_z(s)$ in the weak operator topology. One can also see that $\sup_{s \in \mathbb{R}} \sup_{n\geq 1} \|T_{n,z}(s)\|_\infty < \infty$.
        
        In other words, for every $\xi,\eta\in H,$ we have
        $$\langle T_{n,z}(s)\xi,\eta\rangle\to\langle T_z(s)\xi,\eta\rangle.$$      
        
        Since $|\langle T_{n,z}(s)\xi,\eta\rangle| \leq \sup_{n\geq 1} \|T_{n,z}(s)\|_\infty$, and this is bounded in $s$, we may use the Dominated Convergence theorem
        to obtain
        $$\int_{\mathbb{R}}\langle T_n(s)\xi,\eta\rangle \widehat{g}_z(s)ds\to \int_{\mathbb{R}}\langle T(s)\xi,\eta\rangle \widehat{g}_z(s)ds$$
        
        By Lemma \ref{second integral formula}, for all $n\geq 1$ and $\xi,\eta \in H$ we have:
        \begin{equation}\label{weak second integral formula}
            \langle (B_n^zA^z - Y_n^z)\xi,\eta\rangle =\langle T_{n,z}(0)\xi,\eta\rangle-\int_{\mathbb{R}} \langle T_{n,z}(s)\xi,\eta\rangle \widehat{g}_z(s)\,ds.
        \end{equation}
        
        As already discussed, $B_n^z\to B^z$ in the weak operator topology, and similarly $Y_n^z\to Y^z$ in the weak operator topology. Hence
        both sides of \eqref{weak second integral formula} converge, and:
        \begin{equation*}
            \langle (B^zA^z-Y^z)\xi,\eta\rangle =\langle T_z(0)\xi,\eta\rangle-\int_{\mathbb{R}} \langle T_z(s)\xi,\eta\rangle \widehat{g}_z(s)\,ds.
        \end{equation*}
        Since $\xi$ and $\eta$ are arbitrary, this completes the proof.
    \end{proof}

\section{Analyticity of the mapping $z \mapsto g_z$}
    So far we have considered the function,
    \begin{equation*}
        g_z(t) := 1-\frac{e^{\frac{z}{2}t}-e^{-\frac{z}{2}t}}{(e^{\frac{t}{2}}-e^{-\frac{t}{2}})(e^{\left(\frac{z-1}{2}\right)t}+e^{-\left(\frac{z-1}{2}\right)t})}, t\neq 0
    \end{equation*}
    with $g_z(0) := 1-\frac{z}{2}$ as a Schwartz function of $t$ with a fixed parameter $z \in \mathbb{C}$, with $\Re(z) > 1$. 
    
    We may equally well consider $g$ as a function of $z$. That is, the mapping $z \mapsto g_z$
    defines a function:
    \begin{equation*}
        \{z \in \mathbb{C}\;:\; \Re(z)>1\} \to \mathcal{S}(\mathbb{R}).
    \end{equation*}
    
    As a matter of fact, the function $z\mapsto g_z$ is holomorphic with values in the Hilbert-Sobolev space:
    \begin{equation*}
        H^2(\mathbb{R}) := \{f \in L_2(\mathbb{R})\;:\; f',f'' \in L_2(\mathbb{R})\},
    \end{equation*}
    equipped with the Sobolev norm:
    \begin{equation*}
        \|f\|_{H^2(\mathbb{R})}^2 := \|f\|_{L_2(\mathbb{R})}^2 + \|f'\|_{L_2(\mathbb{R})}^2 + \|f''\|_{L_2(\mathbb{R})}^2.
    \end{equation*}

    We remind the reader of the meaning of Banach space valued holomorphy. If $D \subseteq \mathbb{C}$ is a domain, $X$ is a Banach space and $f:D\to X$
    then the following two conditions are equivalent:
    \begin{enumerate}[{\rm (a)}]
        \item{} For any continuous linear functional $\varpi \in X^*$, the function $\varpi\circ f:D\to \mathbb{C}$ is holomorphic.
        \item{} For any $z \in \mathbb{C}$, the limit in the norm topology of $X$
                \begin{equation*}
                    f'(z) = \lim_{\zeta\to z} \frac{f(z)-f(\zeta)}{z-\zeta}
                \end{equation*}
                exists.
    \end{enumerate}
    The equivalence of these conditions is well known, see e.g. \cite[Theorem 3.31]{rudin}.
    
    We work with the first condition. Since $H^2(\mathbb{R})$ is a Hilbert space, for any continuous linear functional $\varpi$ 
    on $H^2(\mathbb{R})$ there exists $h \in H^2(\mathbb{R})$ such that:
    \begin{equation*}
        \varpi(g_z) = \int_{\mathbb{R}} g_z(t)h(t)\,dt + \int_{\mathbb{R}} g_z'(t)h'(t)\,dt + \int_{\mathbb{R}}g_z''(t)h''(t)\,dt.
    \end{equation*}
    So we focus on proving that for each $h \in H^2(\mathbb{R})$, the mapping $z\mapsto \varpi(g_z)$ is holomorphic.

    \begin{lem}\label{computational continuity lemma} Let $G:\{z \in \mathbb{C}\;:\; \Re(z) > 1\}\to H^2(\mathbb{R})\}$ be the function given by $G(z) = g_z$. Then $G$ is continuous on its domain.    
    \end{lem}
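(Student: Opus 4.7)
The plan is to fix $z_0$ in the right half plane $\{\Re(z)>1\}$, choose a small closed disc $\overline{B(z_0,r)} \subset \{\Re(z)>1\}$ with $\Re(z) \geq 1+\varepsilon$ throughout, and then establish $g_z \to g_{z_0}$ in the $H^2(\mathbb{R})$-norm via dominated convergence applied separately to $g_z$, $g_z'$ and $g_z''$.

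First I would rewrite $g_z$ in hyperbolic form
\begin{equation*}
    g_z(t) = 1 - \frac{\sinh(zt/2)}{2\sinh(t/2)\cosh((z-1)t/2)},
\end{equation*}
from which it is clear that for each fixed $t \in \mathbb{R}$, the map $z \mapsto g_z(t)$ is holomorphic on $\{\Re(z) > 1\}$ (the apparent singularity at $t=0$ is removable, and the zeros of $\cosh((z-1)t/2)$ are complex, so the denominator has no real zeros when $\Re(z) > 0$). The same holds for the $t$-derivatives $g_z'(t)$ and $g_z''(t)$, since differentiating in $t$ commutes with the analytic dependence in $z$. In particular, pointwise continuity in $z$ of each of these three functions is automatic.

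The main step is to produce, for each of $g_z$, $g_z'$, $g_z''$, a single function in $L_2(\mathbb{R})$ that dominates the expression uniformly over $z \in \overline{B(z_0,r)}$. Near $t=0$, all three functions extend smoothly and depend jointly continuously on $(z,t)$, so they are uniformly bounded on $\overline{B(z_0,r)} \times [-1,1]$. For $|t|$ large, the asymptotics of the hyperbolic functions give
\begin{equation*}
    \frac{\sinh(zt/2)}{2\sinh(t/2)\cosh((z-1)t/2)} = \frac{1 - e^{-zt}}{(1-e^{-t})(1+e^{-(z-1)t})} = 1 + O(e^{-\varepsilon|t|}),
\end{equation*}
where the $O$-constant is uniform for $z \in \overline{B(z_0,r)}$ since $\Re(z), \Re(z-1) \geq \varepsilon > 0$ there. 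Hence $|g_z(t)| \leq C e^{-\varepsilon|t|}$ uniformly in $z$. Analogous computations, differentiating the right-hand side once and twice in $t$, yield uniform bounds $|g_z^{(k)}(t)| \leq C_k e^{-\varepsilon|t|}$ for $k=1,2$ (the $t$-derivatives only bring down polynomial factors in $t$ and in $z$, which are harmless on $\overline{B(z_0,r)}$). The function $t \mapsto Ce^{-\varepsilon |t|}$ lies in $L_2(\mathbb{R})$.

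With these uniform dominating functions in hand, the dominated convergence theorem applied to $\|g_z-g_{z_0}\|_2^2$, $\|g_z'-g_{z_0}'\|_2^2$, and $\|g_z''-g_{z_0}''\|_2^2$ as $z \to z_0$ yields $\|g_z - g_{z_0}\|_{H^2} \to 0$, which is the desired continuity. The main obstacle will be carrying out the decay estimates for the derivatives: one must expand $\sinh(zt/2)/(2\sinh(t/2)\cosh((z-1)t/2)) - 1$ as a convergent series in $e^{-\varepsilon t}$ (for $t > 0$, with the analogous series for $t < 0$), differentiate term by term, and verify that the resulting bounds on $g_z'$ and $g_z''$ really are uniform for $z$ in the compact disc $\overline{B(z_0,r)}$; this is a routine but careful calculation.
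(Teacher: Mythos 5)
Your proof is correct, but it takes a different route from the paper's. The paper reduces to the three intervals $(-1,1)$, $(1,\infty)$ and $(-\infty,-1)$ and on each one factorizes $g_z=\pm a_z\,b\,c_z$, where $z\mapsto a_z$ and $z\mapsto c_z$ are continuous with values in $C^2$ of the interval and $b$ is a fixed function (lying in $C^2[-1,1]$, respectively $H^2(1,\infty)$, and carrying all the decay at infinity); continuity of $z\mapsto g_z$ in $H^2$ then follows from the continuity of multiplication by $C^2$ functions on $H^2$, with no limit interchange at all. You instead work directly on the whole line: pointwise holomorphy of $z\mapsto g_z(t),g_z'(t),g_z''(t)$ gives pointwise convergence, and you build a single square-integrable envelope uniform over a compact disc $\overline{B(z_0,r)}\subset\{\Re(z)>1\}$ — joint continuity on $\overline{B(z_0,r)}\times[-1,1]$ after removing the singularity at $t=0$, plus the exact rewriting $\frac{\sinh(zt/2)}{2\sinh(t/2)\cosh((z-1)t/2)}=\frac{1-e^{-zt}}{(1-e^{-t})(1+e^{-(z-1)t})}$ for $t>0$ (and its mirror for $t<0$, or the evenness of $g_z$ in $t$) together with $\Re(z)\geq 1+\varepsilon$ — and then conclude by dominated convergence. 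The derivative estimates you defer are indeed routine: each of the three factors is $1$ plus a term that is $O(e^{-\varepsilon t})$ together with its first two $t$-derivatives, uniformly on the disc (differentiation only introduces bounded factors of $z$ or $z-1$, not powers of $t$), so the same holds for the product and hence for $g_z^{(k)}$, $k=0,1,2$. What the paper's factorization buys is that it bypasses the envelope estimates and any appeal to dominated convergence; what your argument buys is a single formula on all of $\mathbb{R}$ and no reliance on the (implicit) $C^2$--$H^2$ multiplication bound. One small imprecision: the nonvanishing of $\cosh((z-1)t/2)$ for real $t\neq 0$ requires $\Re(z)\neq 1$, not merely $\Re(z)>0$; this is harmless on your domain $\Re(z)>1$.
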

    \begin{proof} It suffices to prove that the mappings $G:\{z \in \mathbb{C}\;:\; \Re(z) > 1\}\to H^2(-1,1)\},$ $G:\{z \in \mathbb{C}\;:\; \Re(z) > 1\}\to H^2(1,\infty)\}$ and $G:\{z \in \mathbb{C}\;:\; \Re(z) > 1\}\to H^2(-\infty,-1)\}$ are continuous on their domains.
    
    We write the first function as follows:
    $$g_z=1-a_zbc_z,\quad\Re(z)>1,$$
    where
    $$a_z(t)=\frac{\sinh(\frac{zt}{2})}{t},\quad b(t)=\frac{t}{2\sinh(\frac{t}{2})},\quad c_z(t)=\frac1{\cosh(\left(\frac{z-1}{2}\right)t)}.$$
    Since $z\to a_z$ and $z\to c_z$ are continuous $C^2[-1,1]-$valued mappings, the first assertion follows.
    
    We rewrite our function as follows.
    $$g_z=-\frac{\sinh(\frac{(z-2)t}{2})}{2\sinh(\frac{t}{2})\cosh(\frac{(z-1)t}{2})},\quad t\in\mathbb{R}.$$
    Equivalently,
    $$g_z=-a_zbc_z,$$
    where
    $$a_z=e^{-\frac{(z+1)t}{2}}-e^{-\frac{(3z-3)t}{2}},\quad b=\frac{e^{-t}}{1-e^{-t}},\quad c_z=\frac1{1+e^{-(z-1)t}}.$$
    Since $z\to a_z$ and $z\to c_z$ are continuous $C^2(1,\infty)-$valued mappings and $b\in H^2(1,\infty),$ the second assertion follows.
    \end{proof}

    \begin{thm}\label{computational analytic lemma} 
        Let $G:\{z \in \mathbb{C}\;:\; \Re(z) > 1\}\to H^2(\mathbb{R})\}$ be the function given by $G(z) = g_z$. Then $G$ is holomorphic on its domain.
    \end{thm}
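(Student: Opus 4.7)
The plan is to upgrade the continuity already proved in Lemma \ref{computational continuity lemma} to holomorphy by invoking a standard weak-holomorphy criterion together with Morera's theorem. Recall (e.g.\ \cite[Theorem 3.31]{rudin}) that a continuous function $G:D\to X$ from a domain $D\subseteq\mathbb{C}$ into a Banach space $X$ is holomorphic if and only if $\varpi\circ G$ is holomorphic on $D$ for every continuous linear functional $\varpi\in X^*$. Since $H^2(\mathbb{R})$ is a Hilbert space, every such $\varpi$ is represented by some $h\in H^2(\mathbb{R})$ via
\[
\varpi(g_z) \;=\; \langle g_z,h\rangle_{H^2(\mathbb{R})} \;=\; \int_{\mathbb{R}} g_z(t)\overline{h(t)}\,dt + \int_{\mathbb{R}} g_z'(t)\overline{h'(t)}\,dt + \int_{\mathbb{R}} g_z''(t)\overline{h''(t)}\,dt,
\]
so it suffices to show each such scalar function is holomorphic on $\{\Re(z)>1\}$.

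The first key observation is pointwise-in-$t$ holomorphy: for each fixed $t\in\mathbb{R}$, the map $z\mapsto g_z(t)$ is a ratio of entire functions of $z$ whose denominator $2\sinh(t/2)\cdot 2\cosh((z-1)t/2)$ only vanishes where $(z-1)t/2\in \tfrac{i\pi}{2}+i\pi\mathbb{Z}$, i.e.\ on the line $\Re(z)=1$. Hence $z\mapsto g_z(t)$, and likewise the $t$-derivatives $z\mapsto\partial_t^k g_z(t)$ for $k=1,2$, are holomorphic on the open half-plane $\{\Re(z)>1\}$. Therefore, for any closed triangle $\Delta\subset\{\Re(z)>1\}$ and each fixed $t$, Cauchy's theorem yields
\[
\oint_{\partial\Delta} g_z(t)\,dz \;=\; \oint_{\partial\Delta} g_z'(t)\,dz \;=\; \oint_{\partial\Delta} g_z''(t)\,dz \;=\; 0.
\]

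The plan is now to interchange the contour integral $\oint_{\partial\Delta}$ with the three integrals over $\mathbb{R}$ appearing in $\varpi(g_z)$, and conclude $\oint_{\partial\Delta}\varpi(g_z)\,dz=0$; Morera's theorem then delivers holomorphy of $\varpi\circ G$. The main obstacle, and the only real work, is justifying this Fubini-type exchange. For each of the three summands one needs joint integrability of the respective integrand on $\partial\Delta\times\mathbb{R}$; by Cauchy--Schwarz this reduces to verifying
\[
\int_{\partial\Delta}\|g_z\|_{L_2(\mathbb{R})}\,|dz|\,<\,\infty,\qquad \int_{\partial\Delta}\|g_z^{(k)}\|_{L_2(\mathbb{R})}\,|dz|\,<\,\infty\ (k=1,2),
\]
which follows at once from Lemma \ref{computational continuity lemma}, since $\partial\Delta$ is compact and $z\mapsto g_z$ is continuous into $H^2(\mathbb{R})$. (If one prefers to avoid relying on continuity of the full $H^2$-valued map, one can re-use the decomposition from the proof of Lemma \ref{computational continuity lemma}: on $[-1,1]$ the factorisation $g_z=1-a_zbc_z$ shows $\|g_z\|_{C^2[-1,1]}$ is bounded uniformly for $z$ in the compact set $\partial\Delta$; on $[1,\infty)$ and $(-\infty,-1]$ the factor $1/\cosh((z-1)t/2)$ in $g_z$ provides exponential decay at rate $\tfrac{1}{2}(\min_{z\in\partial\Delta}\Re(z)-1)>0$, giving uniform $H^2$-majorants in $t$.)

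Granting the Fubini exchange, we compute
\[
\oint_{\partial\Delta}\varpi(g_z)\,dz \;=\; \sum_{k=0}^{2}\int_{\mathbb{R}}\overline{h^{(k)}(t)}\Bigl(\oint_{\partial\Delta} g_z^{(k)}(t)\,dz\Bigr)dt \;=\; 0,
\]
by the pointwise vanishing noted above. Since $\Delta\subset\{\Re(z)>1\}$ was arbitrary and $\varpi\circ G$ is continuous, Morera's theorem shows $\varpi\circ G$ is holomorphic on $\{\Re(z)>1\}$. As $\varpi$ ranges over all of $H^2(\mathbb{R})^*$, the criterion above yields holomorphy of $G$, completing the proof.
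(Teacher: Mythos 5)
Your proposal is correct and follows essentially the same route as the paper: reduce to weak holomorphy via \cite[Theorem 3.31]{rudin}, represent functionals on the Hilbert space $H^2(\mathbb{R})$ by pairing with some $h$, justify the Fubini exchange over a closed contour using the continuity established in Lemma \ref{computational continuity lemma} (compactness of the contour giving a uniform bound on $\|g_z\|_{H^2(\mathbb{R})}$), use pointwise-in-$t$ holomorphy of $g_z(t)$, $g_z'(t)$, $g_z''(t)$ to make the contour integral vanish, and conclude with Morera's theorem. The only (harmless) differences are cosmetic: you integrate over triangle boundaries rather than general simple closed curves, and you spell out the pointwise holomorphy via the zero set of $\cosh((z-1)t/2)$, which the paper merely asserts.
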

    \begin{proof}
        To show that $G$ is holomorphic with values in $H^2(\mathbb{R})$, it suffices to show for all continuous linear functionals $\varpi$ on $H^2(\mathbb{R})$ that $z\mapsto \varpi(G(z))$ is holomorphic.
        
        Since $H^2(\mathbb{R})$ is a Hilbert space, it suffices to show for all $h \in H^2(\mathbb{R})$ that:
        \begin{equation}\label{companal main eq}
            z \mapsto \int_{\mathbb{R}} g_z(t)h(t)+g_z'(t)h'(t)+g_z''(t)h''(t)\,dt,\quad \Re(z) > 1
        \end{equation}
        is holomorphic. 
        
        Let $\gamma$ be a simple closed curve contained in $\{z\;:\;\Re(z) > 1\}$. By Lemma \ref{computational continuity lemma}, the function 
        \begin{equation*}
            (z,t) \mapsto g_z(t)h(t) + g_z'(t)h'(t) + g_z''(t)h''(t),\quad t \in \mathbb{R}, z \in \gamma
        \end{equation*}
        is integrable on $\gamma\times \mathbb{R}.$ Indeed,
        $$\int_{\gamma}\Big(\int_{\mathbb{R}}|g_z(t)h(t) + g_z'(t)h'(t) + g_z''(t)h''(t)|dt\Big)|dz|\leq$$
        $$\leq\int_{\gamma}\|g_z\|_{H^2(\mathbb{R})}|dz|\leq {\rm length}(\gamma)\cdot\sup_{z\in\gamma}\|g_z\|_{H^2(\mathbb{R})}<\infty.$$
        
        We may apply Fubini's theorem to conclude that:
        \begin{equation*}
            \int_{\gamma} \varpi(G_z)\,dz = \int_{\mathbb{R}}\Big(\int_{\gamma}(g_z(t)h(t)+g_z'(t)h'(t)+g_z''(t)h''(t))dz\Big)dt.
        \end{equation*}
        For each fixed $t$ it follows from the definition that the functions $g_z(t)$, $g_z'(t)$ and $g_z''(t)$ are holomorphic in $z$. Hence $\int_{\gamma} \varpi(G(z))\,dz = 0$ for all simple closed curves $\gamma$ contained in $\{z\;:\;\Re(z) > 1\}$. Since $z \mapsto \varpi(G(z))$ is continuous, by Morera's theorem
        $z\mapsto \varpi(G(z))$ is holomorphic in the domain $\{z\;:\;\Re(z) > 1\}$. Since $\varpi$ is arbitrary, $G$ is $H^2(\mathbb{R})$-valued holomorphic.
    \end{proof} 
 
\section[The difference of two $\zeta-$functions admits analytic continuation]{The function $z\to{\rm Tr}(XB^zA^z)-{\rm Tr}(X(A^{\frac12}BA^{\frac12})^z)$ admits analytic continuation to $\{\Re(z)>p-1\}$}\label{difference section}
    As indicated in the title, in this section we prove (under certain assumptions on $A$ and $B$) that for all $X \in \mathcal{L}_\infty$ the mapping
    \begin{equation*}
        z \mapsto \mathrm{Tr}(XB^zA^z)-\mathrm{Tr}(X(A^{\frac{1}{2}}BA^{\frac{1}{2}})^z)
    \end{equation*}
    defined initially on $\{z\in \mathbb{C}\;:\; \Re(z) > p\}$ is holomorphic, and admits analytic continuation to the set $\{z\;:\;\Re(z) > p-1\}$.
    The precise assumptions on $A$ and $B$ are as follows:
    \begin{cond}\label{conditions for analyticity} 
        Let $p>2$ and let $0\leq A,B\in\mathcal{L}_{\infty}$ satisfy the conditions
        \begin{enumerate}[{\rm (i)}]
            \item\label{anacond1} $B^pA\in\mathcal{L}_{1,\infty}$
            \item\label{anacond2} $B^{q-2}[B,A]\in\mathcal{L}_1$ for every $q>p$
            \item\label{anacond3} $A^{\frac12}BA^{\frac12}\in\mathcal{L}_{p,\infty}$
            \item\label{anacond4} $[B,A^{\frac12}]\in\mathcal{L}_{\frac{p}{2},\infty}$.
        \end{enumerate}
    \end{cond}
    
    The main result of this section is the following:
    \begin{thm}\label{analyticity theorem I} 
        Let $p>2$ and let $A$ and $B$ satisfy Condition \ref{conditions for analyticity}. If $X\in\mathcal{L}_{\infty},$ then the mapping
        $$z\to{\rm Tr}(XB^zA^z)-{\rm Tr}(X(A^{\frac12}BA^{\frac12})^z),\quad \Re(z)>p,$$
        admits an analytic continuation to the half-plane $\{\Re(z)>p-1\}.$
    \end{thm}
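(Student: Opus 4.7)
The plan is to combine the integral representation from Theorem \ref{csz key lemma} with the $H^2(\mathbb{R})$-valued holomorphy of $z \mapsto g_z$ given by Theorem \ref{computational analytic lemma}. By Theorem \ref{csz key lemma}, for $\Re z > 1$ we have the identity
\begin{equation*}
B^z A^z - Y^z = T_z(0) - \int_{\mathbb{R}} T_z(s)\,\widehat{g}_z(s)\,ds,
\end{equation*}
where $Y = A^{1/2}BA^{1/2}$. Multiplying on the left by $X \in \mathcal{L}_\infty$ and taking the trace (justified for $\Re z > p$ by the standard summability), the plan is to show that both $\mathrm{Tr}(X T_z(0))$ and $\int_{\mathbb{R}} \mathrm{Tr}(X T_z(s))\,\widehat{g}_z(s)\,ds$ extend holomorphically to the region $\{\Re z > p-1\}$; the theorem then follows by Morera's theorem.

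The main task is to produce an $\mathcal{L}_1$-bound $\|T_z(s)\|_1 \leq C(z)(1+|s|)^{N(z)}$ for all $s \in \mathbb{R}$, locally uniformly on compact subsets of $\{\Re z > p-1\}$, together with holomorphy of $z \mapsto T_z(s)$ as an $\mathcal{L}_1$-valued function. Both summands of $T_z(s)$ feature a commutator $[BA^{1/2}, A^w] = [B, A^w] A^{1/2}$. For the ``second summand'' $B^{is}[B, A^{1/2+is}] A^{1/2} Y^{z-1-is}$, one writes $[B, A^{1/2+is}] = T^{A^{1/2},A^{1/2}}_{\phi_s}([B, A^{1/2}])$ for a Schur-multiplier symbol $\phi_s$ arising from the divided difference of $t \mapsto t^{1+2is}$. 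Combined with Condition \ref{anacond4} this gives $[B, A^{1/2+is}] \in \mathcal{L}_{p/2,\infty}$ (with at most polynomial growth in $s$), and combined with $Y^{z-1-is} \in \mathcal{L}_{p/(\Re z - 1),\infty}$ from Condition \ref{anacond3}, the H\"older inequality \eqref{weak-type Holder} places this summand in $\mathcal{L}_{p/(\Re z + 1),\infty} \subseteq \mathcal{L}_1$ as soon as $\Re z > p-1$. For the ``first summand'' $B^{z-1+is}[B, A^{z-1/2+is}] A^{1/2} Y^{-is}$ one proceeds similarly, this time invoking Conditions \ref{anacond1} and \ref{anacond2} to balance the factor $B^{z-1+is}$ against a double operator integral expansion of $[B, A^{z-1/2+is}]$ in terms of $[B,A]$. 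The term $T_z(0)$ is handled as the $s=0$ case of the same argument.

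Once these pointwise $\mathcal{L}_1$-estimates are in hand, holomorphy in $z$ of the integral follows by combining them with the decay of $\widehat{g}_z$: since $g_z$ is Schwartz with seminorms locally uniform in $z$ (by the remark following Theorem \ref{csz key lemma} together with Theorem \ref{computational analytic lemma}), its Fourier transform $\widehat{g}_z$ decays faster than any polynomial, dominating the polynomial-in-$s$ growth of $\|T_z(s)\|_1$. This yields absolute and locally uniform convergence of the integral, so Morera's theorem and the holomorphy of $z \mapsto T_z(s)$ deliver the analytic continuation. The main obstacle will be the ``first summand'' estimate: the DOI decomposition of $[B, A^{z-1/2+is}]$ must interact delicately with the factor $B^{z-1+is}$ so that Conditions \ref{anacond1} and \ref{anacond2} are exploited simultaneously rather than separately, and polynomial control in $s$ has to be maintained throughout the DOI manipulations.
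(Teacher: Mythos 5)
Your overall framework (start from Theorem \ref{csz key lemma}, take traces, and continue each piece analytically past $\Re(z)=p$ using the $H^2$-holomorphy of $z\mapsto g_z$ and operator-Lipschitz estimates) is the same as the paper's, and your treatment of the second summand $B^{is}[BA^{\frac12},A^{\frac12+is}]Y^{z-1-is}$ is essentially correct: the symbol $\phi_s(t)=|t|^{1+2is}$ together with Condition \ref{conditions for analyticity}.\eqref{anacond4} gives $[B,A^{\frac12+is}]\in\mathcal{L}_{\frac p2,\infty}$ with $O(1+|s|)$ growth, and pairing with $Y^{z-1-is}$ via Condition \ref{conditions for analyticity}.\eqref{anacond3} and H\"older lands in $\mathcal{L}_1$ precisely when $\Re(z)>p-1$.

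However, the step you flag as ``the main obstacle'' is a genuine gap, and it is exactly where the real work lies. For the first summand $B^{z-1+is}[BA^{\frac12},A^{z-\frac12+is}]Y^{-is}$, a double-operator-integral or operator-Lipschitz expansion of $[B,A^{z-\frac12+is}]$ ``in terms of $[B,A]$'' can only place that commutator in $\mathcal{L}_{\frac p2,\infty}$; since $B$ by itself carries no summability (in the intended application $B=(1+D^2)^{-\frac12}$ need not even be compact), multiplying by $B^{z-1+is}$ does not improve the ideal, and $\mathcal{L}_{\frac p2,\infty}\not\subset\mathcal{L}_1$ for $p>2$. So Conditions \ref{conditions for analyticity}.\eqref{anacond1} and \eqref{anacond2} cannot be ``balanced'' against the commutator by any purely DOI manipulation of $A^{z-\frac12+is}$; what is needed is the algebraic Leibniz splitting $A^{z-\frac12+is}=A^{z-1}\cdot A^{\frac12+is}$ inside the commutator, giving
\begin{equation*}
B^{z-1}[BA^{\frac12},A^{z-\frac12+is}]=B^{z-1}A^{z-1}\,[BA^{\frac12},A^{\frac12+is}]+B^{z-1}[BA,A^{z-1}]\,A^{is}.
\end{equation*}
The first term is then controlled because $B^{z-1}A^{z-1}\in\mathcal{L}_{\frac{p}{p-2},1}$ (Araki--Lieb--Thirring applied to Condition \ref{conditions for analyticity}.\eqref{anacond1}, as in Lemma \ref{1 to r lemma}), while the second requires a further expansion of $B^{z-1}[BA,A^{z-1}]$ into three pieces using \eqref{anacond1}, \eqref{anacond2} and \eqref{anacond4} together with $[B,A^{z-1}]\in\mathcal{L}_{\frac p2,\infty}$ (Lemma \ref{psacta lemma}); this is the content of Lemma \ref{trivial analytic lemma}.\eqref{triv2} and Lemma \ref{integrability lemma}. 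In addition, your plan asserts but does not establish the $\mathcal{L}_1$-valued holomorphy of $z\mapsto T_z(s)$; the paper sidesteps this by regrouping the $s$-integral into products $G_k(z)F_k(z)$ (Lemma \ref{splitting lemma}), where the $s$-integration acts only on uniformly bounded, $z$-independent-power factors (Lemma \ref{another analytic lemma}) and all $z$-powers sit in the ideal-valued holomorphic factors $F_k$ (Lemma \ref{trivial analytic lemma}); if you keep your pointwise-in-$s$ formulation you would have to prove the analogous ideal-valued holomorphy statements anyway. Until the displayed decomposition (or an equivalent device) is supplied and the $B^{z-1}[BA,A^{z-1}]$ term is analysed, the proposed proof does not go through.
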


    \begin{lem}\label{1 to r lemma} 
        Assume that $p\geq 1$ and that $A,B$ satisfy Condition \ref{conditions for analyticity}. Then for all $r \geq 1$ we have $B^{\frac{p}{r}}A\in\mathcal{L}_{r,\infty}$. More precisely, we have the following norm bound:
        \begin{equation*}
            \|B^{\frac{p}{r}}A\|_{r,\infty}\leq e\|A\|_{\infty}^{1-\frac1r}\|B^pA\|_{1,\infty}^{\frac1r}.
        \end{equation*}
    \end{lem}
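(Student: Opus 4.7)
The plan is to apply the Araki-Lieb-Thirring inequality \eqref{ALT inequality} to convert the weak-type bound on $B^p A$ into a weak-type bound on $B^{p/r}A$. Specifically, since $B^{p/r}$ and $A$ are positive bounded operators and $r \geq 1$, ALT gives
\begin{equation*}
    |B^{p/r}A|^r \prec\prec_{\log} (B^{p/r})^r A^r = B^p A^r.
\end{equation*}

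Next I would control the right-hand side in the quasi-norm $\|\cdot\|_{1,\infty}$. Writing $B^p A^r = (B^p A)\cdot A^{r-1}$ and using the ideal property gives
\begin{equation*}
    \|B^p A^r\|_{1,\infty} \leq \|B^p A\|_{1,\infty}\,\|A\|_\infty^{r-1},
\end{equation*}
which is finite by Condition \ref{conditions for analyticity}.\eqref{anacond1}. Combining this with the monotonicity of $\|\cdot\|_{1,\infty}$ under logarithmic submajorisation \eqref{log majorization monotone} yields
\begin{equation*}
    \bigl\||B^{p/r}A|^r\bigr\|_{1,\infty} \leq e\,\|B^p A^r\|_{1,\infty} \leq e\,\|A\|_\infty^{r-1}\,\|B^p A\|_{1,\infty}.
\end{equation*}

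Finally, since $\mu(k,|T|^r) = \mu(k,T)^r$ for any compact operator $T$, we have the identity $\||T|^r\|_{1,\infty} = \|T\|_{r,\infty}^r$. Applying this to $T = B^{p/r}A$ and taking $r$-th roots produces
\begin{equation*}
    \|B^{p/r}A\|_{r,\infty} \leq e^{1/r}\,\|A\|_\infty^{1-1/r}\,\|B^p A\|_{1,\infty}^{1/r},
\end{equation*}
from which the claimed bound follows by the elementary estimate $e^{1/r} \leq e$ (valid since $r \geq 1$). There is no real obstacle here: the argument is a routine combination of ALT with the log-submajorisation-to-$\|\cdot\|_{1,\infty}$ estimate established earlier in the preliminaries.
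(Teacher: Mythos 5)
Your proposal is correct and follows essentially the same route as the paper: Araki--Lieb--Thirring gives $|B^{p/r}A|^r \prec\prec_{\log} B^pA^r$, the ideal property bounds $\|B^pA^r\|_{1,\infty}$ by $\|A\|_\infty^{r-1}\|B^pA\|_{1,\infty}$, and monotonicity of $\|\cdot\|_{1,\infty}$ under logarithmic submajorisation finishes the argument. Your explicit use of the identity $\||T|^r\|_{1,\infty}=\|T\|_{r,\infty}^r$ before taking $r$-th roots is in fact slightly more careful than the paper's own write-up.
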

    \begin{proof} 
        We show this is a consequence of the Araki-Lieb-Thirring inequality \eqref{ALT inequality} and \eqref{log majorization monotone}.
        
        Fix $r \geq 1$. Then by the Araki-Lieb-Thirring inequality:
        \begin{equation*}
            |B^{p/r}A|^r \prec\prec_{\log} B^pA^r.
        \end{equation*}
        Now using \eqref{log majorization monotone}:
        \begin{align*}
            \|B^{p/r}A\|_{r,\infty} &= \||B^{p/r}A|^r\|_{1,\infty}\\
                                    &\leq e\|B^pA^r\|_{1,\infty}\\
                                    &\leq e\|A\|_\infty^{r-1}\|B^pA\|_{1,\infty}.
        \end{align*}
    \end{proof}
    
    The next lemma provides a sufficient condition for a function to be holomorphic with values in a Banach ideal of $\mathcal{L}_\infty$.
    \begin{lem}\label{banach holomorphic criteria} 
        Assume that $D \subseteq \mathbb{C}$ is a domain (i.e., a connected open set) and that $F:D\to \mathcal{L}_\infty$ is a holomorphic function.
        If $\mathcal{I}$ is a Banach-normed ideal of $\mathcal{L}_\infty$ such that $F$ takes values in $\mathcal{I}$ and $F:D\to \mathcal{I}$ is continuous, then $F$
        is also an $\mathcal{I}$-valued holomorphic function.
    \end{lem}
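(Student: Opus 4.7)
The plan is to prove this via a vector-valued Cauchy integral formula argument. The underlying idea is that $F$, being $\mathcal{L}_\infty$-valued holomorphic, satisfies the Cauchy integral formula with the integral interpreted in $\mathcal{L}_\infty$; and the continuity hypothesis allows the same integral to be interpreted in $\mathcal{I}$ as a Bochner integral, at which point a power series expansion reveals $\mathcal{I}$-valued holomorphy.

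Fix $z_0 \in D$ and choose $r > 0$ so small that the closed disk $\overline{B(z_0,r)} \subseteq D$. Let $\gamma$ be the positively oriented boundary circle. By assumption the map $\zeta \mapsto F(\zeta)$ is continuous from $\gamma$ to $(\mathcal{I},\|\cdot\|_\mathcal{I})$, so for each $z$ with $|z-z_0|<r$ the integrand $\zeta \mapsto F(\zeta)/(\zeta - z)$ is $\|\cdot\|_\mathcal{I}$-continuous on $\gamma$. Since $\gamma$ is compact and $\mathcal{I}$ is a Banach space, the Bochner integral
\begin{equation*}
G(z) := \frac{1}{2\pi i}\oint_\gamma \frac{F(\zeta)}{\zeta - z}\,d\zeta \in \mathcal{I}
\end{equation*}
is well defined. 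Since $\mathcal{I}$ embeds continuously into $\mathcal{L}_\infty$, the Bochner integral defining $G(z)$ in $\mathcal{I}$ coincides with the corresponding Bochner integral taken in $\mathcal{L}_\infty$. But the $\mathcal{L}_\infty$-valued Cauchy integral formula (which holds for any Banach-space-valued holomorphic function) yields $F(z) = \frac{1}{2\pi i}\oint_\gamma F(\zeta)/(\zeta-z)\,d\zeta$ in $\mathcal{L}_\infty$. Hence $F(z) = G(z)$ as elements of $\mathcal{L}_\infty$, and both lie in $\mathcal{I}$, so $F(z) = G(z)$ in $\mathcal{I}$ for every $z \in B(z_0,r)$.

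It remains to show that $G$ is $\mathcal{I}$-valued holomorphic on $B(z_0,r)$. For $|z-z_0| < r$, the Neumann-type expansion
\begin{equation*}
\frac{1}{\zeta - z} = \sum_{n=0}^\infty \frac{(z-z_0)^n}{(\zeta - z_0)^{n+1}}
\end{equation*}
converges uniformly in $\zeta \in \gamma$. Multiplying by $F(\zeta)$ and using the fact that $\sup_{\zeta \in \gamma}\|F(\zeta)\|_\mathcal{I} < \infty$ (by continuity on the compact set $\gamma$), the series converges uniformly in $\mathcal{I}$ for $\zeta \in \gamma$, and so may be integrated term by term to give
\begin{equation*}
G(z) = \sum_{n=0}^\infty (z-z_0)^n c_n, \qquad c_n := \frac{1}{2\pi i}\oint_\gamma \frac{F(\zeta)}{(\zeta - z_0)^{n+1}}\,d\zeta \in \mathcal{I}.
\end{equation*}
The coefficients satisfy $\|c_n\|_\mathcal{I} \leq r^{-n}\sup_{\zeta \in \gamma}\|F(\zeta)\|_\mathcal{I}$, so the series has radius of convergence at least $r$ in $\mathcal{I}$. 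This exhibits $G$ (and hence $F$) locally as a norm-convergent power series in $\mathcal{I}$, which is equivalent to $\mathcal{I}$-valued holomorphy on $B(z_0,r)$. Since $z_0 \in D$ was arbitrary, $F$ is $\mathcal{I}$-valued holomorphic on all of $D$.

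I do not anticipate a serious obstacle: the only non-routine point is justifying that the two Bochner integrals (in $\mathcal{I}$ and in $\mathcal{L}_\infty$) agree, which is immediate from the continuity of the inclusion $\mathcal{I} \hookrightarrow \mathcal{L}_\infty$ applied to the definition of the Bochner integral as a limit of Riemann-type sums.
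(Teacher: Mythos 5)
Your proof is correct and follows essentially the same route as the paper: both rest on the Cauchy integral representation of $F$ on a contour in $D$, upgraded to an $\mathcal{I}$-valued Bochner integral by the assumed $\|\cdot\|_{\mathcal{I}}$-continuity of $F$ and the continuity of the embedding $\mathcal{I}\hookrightarrow\mathcal{L}_\infty$. The only (routine) difference is the final step: you conclude holomorphy by expanding the Cauchy kernel into a locally uniformly convergent power series in $\mathcal{I}$, whereas the paper defines the candidate derivative by the Cauchy formula $G(z)=\frac{1}{2\pi i}\int_\gamma \frac{F(w)}{(w-z)^2}\,dw$ and verifies directly that the difference quotient converges to it in $\|\cdot\|_{\mathcal{I}}$.
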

    \begin{proof} 
        Fix a contour $\gamma\subset D.$ such that the interior of $\gamma$ is contained in $D$. Then for all $z$ in the interior of $\gamma$, we have
        $$F(z)=\frac1{2\pi i}\int_{\gamma}\frac{F(w)}{w-z}\,dw$$
        A priori, the integral is a weak integral. However, $F:D\to\mathcal{I}$ is continuous and, therefore, the integral is Bochner in $\mathcal{I}.$

        In order to show that $F$ is holomorphic, it suffices to show that it is differentiable. Let
        $$G(z)=\frac1{2\pi i}\int_{\gamma}\frac{F(w)}{(w-z)^2}\,dw$$
        Once more, since $F:D\to \mathcal{I}$ is continuous, then this integral is defined as an $\mathcal{I}$-valued Bochner integral.
        If $F$ were holomorphic, then $G$ would be the derivative of $F$. The proof will be completed upon showing that
        $G$ is indeed the derivative of $F$ considered as an $\mathcal{I}$-valued mapping.
        
        For every $z$ and $z_0$ in the interior of $\gamma$, we have:
        $$\frac{F(z)-F(z_0)}{z-z_0}-G(z_0)=\frac{z-z_0}{2\pi i}\int_{\gamma}\frac{F(w)}{(w-z)(w-z_0)^2}\,dw$$
        Again, this integral is an $\mathcal{I}$-valued Bochner integral.

        Thus,
        \begin{align*}
            \Big\|\frac{F(z)-F(z_0)}{z-z_0}-G(z_0)\Big\|_{\mathcal{I}} &\leq \frac{|z-z_0|}{2\pi}\int_{\gamma}\|F(w)\|_{\mathcal{I}}\frac{1}{|w-z|\cdot|w-z_0|^2}\,|dw|\\
                                                                        &\leq \sup_{w\in\gamma}\|F(w)\|_{\mathcal{I}}\cdot \frac{|z-z_0|}{2\pi}\int_{\gamma}\frac{1}{|w-z|\cdot|w-z_0|^2}\,|dw|.
        \end{align*}
        Since $f$ is continuous, the right hand side tends to $0$ as $z\to z_0$. Hence, $G = F'$ and so $F$ is holomorphic.
    \end{proof}

    \begin{lem}\label{exp fact} 
        Let $0\leq A\in\mathcal{L}_{\infty}.$ The function $z\to A^z$ is $\mathcal{L}_{\infty}-$valued holomorphic on the half-plane $\{z\;:\;\Re(z)>0\}.$
    \end{lem}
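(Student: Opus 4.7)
The plan is to establish norm-continuity of $z \mapsto A^z$ on the half-plane $\{\Re(z)>0\}$ and then deduce holomorphy via Morera's theorem combined with Fubini, in the same spirit as Lemma \ref{computational analytic lemma}.

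First I would note that $A^z$ is defined via continuous functional calculus from the function $f_z(t) := t^z$ on $[0,\|A\|_\infty]$ (with $f_z(0)=0$), which is continuous for $\Re(z)>0$; this already gives the uniform bound $\|A^z\|_\infty \leq \max(1,\|A\|_\infty^{\Re(z)})$, so $z \mapsto A^z$ is locally bounded. The first main step is to show that for $z_0$ in the half-plane and $z \to z_0$, we have $\sup_{t \in [0,\|A\|_\infty]} |t^z - t^{z_0}| \to 0$. The delicate point is behaviour near $t=0$: for any $\varepsilon>0$, split the interval into $[0,\varepsilon]$ and $[\varepsilon,\|A\|_\infty]$. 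On $[\varepsilon,\|A\|_\infty]$ the function $\log t$ is bounded, so the Mean Value Theorem applied to $w \mapsto t^w$ gives $|t^z - t^{z_0}| \leq C(\varepsilon)|z-z_0|$. On $[0,\varepsilon]$ one has the crude bound $|t^z - t^{z_0}| \leq 2\varepsilon^{\Re(z_0)/2}$ for $z$ close enough to $z_0$. Choosing $\varepsilon$ small first and then $|z-z_0|$ small yields the desired uniform convergence, hence norm continuity of $z \mapsto A^z$ by continuous functional calculus.

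For holomorphy, I would imitate the Morera/Fubini strategy of Lemma \ref{computational analytic lemma}. For each $\xi,\eta \in H$, the spectral theorem provides a complex regular Borel measure $\mu_{\xi,\eta}$ on $[0,\|A\|_\infty]$ with total variation at most $\|\xi\|\|\eta\|$ such that
\begin{equation*}
    \langle A^z \xi,\eta\rangle = \int_{[0,\|A\|_\infty]} t^z \, d\mu_{\xi,\eta}(t).
\end{equation*}
For any triangle $\Delta \subset \{\Re(z)>0\}$, the estimate $|t^z| \leq \max(1,\|A\|_\infty^{\Re(z)})$ (uniform in $t$ and locally uniform in $z$) justifies Fubini's theorem, so that
\begin{equation*}
    \oint_{\partial\Delta} \langle A^z \xi,\eta\rangle \, dz = \int_{[0,\|A\|_\infty]} \Big( \oint_{\partial\Delta} t^z \, dz \Big) \, d\mu_{\xi,\eta}(t) = 0,
\end{equation*}
since $z \mapsto t^z$ is entire for each $t>0$ and identically zero when $t=0$. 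Since $z \mapsto A^z$ is norm continuous, the Bochner integral $\oint_{\partial\Delta} A^z \, dz$ exists in $\mathcal{L}_\infty$, and its matrix elements equal the scalar integrals just computed, hence vanish for all $\xi,\eta$. Therefore $\oint_{\partial\Delta} A^z \, dz = 0$ for every triangle $\Delta$, and Morera's theorem for Banach-space-valued functions gives that $z \mapsto A^z$ is $\mathcal{L}_\infty$-valued holomorphic on $\{\Re(z)>0\}$.

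The main obstacle is the uniform control of $t^z$ near $t=0$ in the continuity step, but this is handled by the two-region splitting described above; all other steps are standard applications of functional calculus, Fubini, and the Banach-valued Morera theorem.
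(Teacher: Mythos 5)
Your proof is correct, but it takes a genuinely different route from the paper. The paper argues directly: it defines the candidate derivative $A^{z_0}\log(A)$ by functional calculus (with the convention $0^{z_0}\log(0)=0$) and bounds the operator norm of the difference quotient minus this candidate by $\sup_{0\leq\lambda\leq\|A\|_\infty}\bigl|\frac{\lambda^z-\lambda^{z_0}}{z-z_0}-\lambda^{z_0}\log(\lambda)\bigr|$, which is $O(z-z_0)$; this gives complex differentiability in one step and, as a bonus, the explicit formula $\frac{d}{dz}A^z=A^z\log(A)$. You instead first prove norm continuity by splitting the spectrum into $[0,\varepsilon]$ and $[\varepsilon,\|A\|_\infty]$, and then reduce holomorphy to the scalar case via spectral measures, a Fubini argument over triangular contours, and the Banach-valued Morera theorem (equivalently, weak holomorphy plus the standard equivalence the paper itself cites from Rudin). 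Both arguments are sound; the paper's is shorter and yields the derivative explicitly, while yours avoids any quantitative second-order remainder estimate near $\lambda=0$ (a point the paper's one-line sup estimate quietly absorbs) at the cost of invoking the vector-valued Morera machinery, and it has the mild virtue of running parallel to the technique used later in Theorem \ref{computational analytic lemma}.
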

    \begin{proof} 
        For $z_0,z\in\mathbb{C}$ with $\Re(z_0)>0$ and $\Re(z)> 0,$ we define the operator $A^{z_0}\log(A)$ by means of functional calculus (the convention $0^{z_0}\log(0)=0$ is used). 
        
        Hence,
        \begin{align*}
            \Big\|\frac{A^z-A^{z_0}}{z-z_0}-A^{z_0}\log(A)\Big\|_{\infty} &\leq \sup_{0\leq \lambda\leq\|A\|_{\infty}}\Big|\frac{\lambda^z-\lambda^{z_0}}{z-z_0}-\lambda^{z_0}\log(\lambda)\Big|\\
                                                                          &= O(z-z_0),\quad z_0\to z.
        \end{align*}
        Hence, for all $z$ with $\Re(z) > 0$
        \begin{equation*}
            A^z-A^{z_0} = A^{z_0}\log(A)(z-z_0)+o(1),\quad z_0\to z
        \end{equation*}
        and so $A^z$ is $\mathcal{L}_\infty$-valued holomorphic with derivative $A^{z}\log(A)$.
    \end{proof}

    \begin{lem}\label{psacta lemma} 
        Let $p>2$ and assume $A$ and $B$ satisfy condition \ref{conditions for analyticity}. The mapping
        $$z\to [B,A^z],\quad\Re(z)>1,$$
        is a holomorphic $\mathcal{L}_{\frac{p}{2},\infty}-$valued function.
    \end{lem}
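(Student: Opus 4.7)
The plan is to apply Lemma \ref{banach holomorphic criteria}. By Lemma \ref{exp fact}, the mapping $z\mapsto A^z$ is $\mathcal{L}_\infty$-valued holomorphic on $\{\Re(z)>0\}$, and hence so is $z\mapsto [B,A^z]=BA^z-A^zB$ on $\{\Re(z)>1\}$. Since $p>2$, the ideal $\mathcal{L}_{p/2,\infty}$ admits an equivalent Banach ideal norm, so by Lemma \ref{banach holomorphic criteria} it suffices to show that $[B,A^z]\in\mathcal{L}_{p/2,\infty}$ for every such $z$ and that the map is continuous into $\mathcal{L}_{p/2,\infty}$. As a preliminary I would note that the Leibniz rule gives $[B,A]=[B,A^{1/2}]A^{1/2}+A^{1/2}[B,A^{1/2}]\in\mathcal{L}_{p/2,\infty}$ by Condition \ref{conditions for analyticity}.(iv).

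Fix $z_0$ with $\Re(z_0)>1$ and choose an integer $n\geq 1$ with $\Re(z_0)\in(n/2,\,n/2+1)$. On a small open disk $U$ about $z_0$ preserving this property, applying the Leibniz rule $n$ times to the factorisation $A^z=(A^{1/2})^n\cdot A^{z-n/2}$ gives
\[
    [B,A^z]=\sum_{k=0}^{n-1}A^{k/2}[B,A^{1/2}]A^{z-(k+1)/2}+A^{n/2}[B,A^{z-n/2}].
\]
Each summand in the finite sum is already $\mathcal{L}_{p/2,\infty}$-valued holomorphic in $z$: the middle factor $[B,A^{1/2}]$ is a fixed element of $\mathcal{L}_{p/2,\infty}$, while the bounded operators $A^{k/2}$ and $A^{z-(k+1)/2}$ depend $\mathcal{L}_\infty$-holomorphically on $z$ by Lemma \ref{exp fact}.

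For the remaining term, write $w:=z-n/2$, so $\Re(w)\in(0,1)$, and use the classical integral representation
\[
    A^w=\frac{\sin(\pi w)}{\pi}\int_0^\infty t^{w-1}\frac{A}{A+t}\,dt
\]
together with $[B,\tfrac{A}{A+t}]=t(A+t)^{-1}[B,A](A+t)^{-1}$ to obtain
\[
    A^{n/2}[B,A^w]=\frac{\sin(\pi w)}{\pi}\int_0^\infty t^w A^{n/2}(A+t)^{-1}[B,A](A+t)^{-1}\,dt.
\]
Using the spectral bounds $\|(A+t)^{-1}\|_\infty\leq 1/t$ and $\|A^{n/2}(A+t)^{-1}\|_\infty\leq\min(\|A\|_\infty^{n/2-1},\|A\|_\infty^{n/2}/t)$ (for $n\geq 2$), or $\|A^{1/2}(A+t)^{-1}\|_\infty\leq\min(Ct^{-1/2},\|A\|_\infty^{1/2}/t)$ when $n=1$, I would verify that the $\mathcal{L}_{p/2,\infty}$-norm of the integrand is dominated by an integrable function of $t$, locally uniformly in $z\in U$. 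Combining with the fact that $\|[B,A]\|_{p/2,\infty}<\infty$, the integral converges absolutely in $\mathcal{L}_{p/2,\infty}$-norm via Lemma \ref{peter norm lemma}.

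Since the integrand is $\mathcal{L}_{p/2,\infty}$-holomorphic in $z$ for each fixed $t$ (as $t^w=e^{w\log t}$), local uniform convergence of the integral in $\mathcal{L}_{p/2,\infty}$-norm yields an $\mathcal{L}_{p/2,\infty}$-holomorphic function of $z\in U$. Combining with the holomorphy of the finite sum gives the result on $U$, and since $z_0$ was arbitrary the full conclusion follows. The main technical obstacle is verifying integrability of the dominating function at the endpoints $t=0$ and $t=\infty$, which is ensured precisely by the constraints $\Re(w)\in(0,1)$ when $n\geq 2$ and the stronger $\Re(w)>1/2$ when $n=1$ (which is exactly what $\Re(z)>1$ forces in that case). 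The $n=1$ case is therefore the borderline computation requiring the most delicate estimate.
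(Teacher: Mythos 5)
Your argument is correct, but it takes a genuinely different route from the paper. The paper's proof is short: it writes $A^z=\phi_z(A)$ for a smooth compactly supported cut-off $\phi_z(t)=|t|^z\phi(t)$ with $\phi=1$ on $[0,\|A\|_\infty]$, and then invokes the operator-Lipschitz estimate of Potapov--Sukochev (\cite{PS-acta}, valid in $\mathcal{L}_{r,\infty}$ for $r>1$, here $r=p/2$) to get $\|[B,\phi_z(A)]\|_{\mathcal{L}_{p/2,\infty}}'\leq c_{p/2}\|\phi_z'\|_\infty\|[B,A]\|_{\mathcal{L}_{p/2,\infty}}'$ and the analogous bound for $\phi_{z_1}-\phi_{z_2}$; this gives $\mathcal{L}_{p/2,\infty}$-valuedness and norm continuity, and holomorphy then follows from Lemma \ref{banach holomorphic criteria} together with Lemma \ref{exp fact} (exactly the reduction you announce in your first paragraph but do not ultimately use). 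You instead prove holomorphy directly: a local Leibniz decomposition $[B,A^z]=\sum_{k=0}^{n-1}A^{k/2}[B,A^{1/2}]A^{z-(k+1)/2}+A^{n/2}[B,A^{z-n/2}]$, followed by the balanced resolvent representation $A^w=\frac{\sin(\pi w)}{\pi}\int_0^\infty t^{w-1}A(A+t)^{-1}\,dt$ and the identity $[B,A(A+t)^{-1}]=t(A+t)^{-1}[B,A](A+t)^{-1}$, with the convergence handled by the ideal property and Lemma \ref{peter norm lemma}. Both proofs rest on the same input from Condition \ref{conditions for analyticity}.\eqref{anacond4}, namely $[B,A]=[B,A^{1/2}]A^{1/2}+A^{1/2}[B,A^{1/2}]\in\mathcal{L}_{p/2,\infty}$, and both need $p>2$ to have a genuine norm (and, in your case, to apply Lemma \ref{peter norm lemma} with $r=p/2>1$). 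What the paper's route buys is brevity, at the price of importing the nontrivial commutator-Lipschitz theorem of \cite{PS-acta}; what your route buys is self-containedness and explicit constants, at the price of the longer resolvent estimates and the borderline $n=1$ computation, which you correctly identify as the place where $\Re(z)>1$ (i.e.\ $\Re(w)>\tfrac12$) is exactly what is needed. Your closing holomorphy argument for the integral (truncate, note each truncation is holomorphic, and use locally uniform convergence supplied by the dominating bound) is standard and sound, though in a final write-up you should spell out that the integrand is $\mathcal{L}_{p/2,\infty}$-norm continuous in $t$, so the truncated integrals are Bochner integrals in the renormed ideal.
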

    \begin{proof}
        We take care to note that since $p > 2$, the ideal $\mathcal{L}_{p/2,\infty}$ can be equipped with a norm generating the same topology as
        that of the canonical quasi-norm (see \cite[Chapter 4, Lemma 4.5]{Bennet-Sharpley-interpolation-1988}). Denote such a norm as $\|\cdot\|_{\mathcal{L}_{p/2,\infty}}'$.
        
        Denote 
        \begin{equation*}
            F(z) = [B,A^z].
        \end{equation*}
        Since $F(z) = BA^z-A^zB$ it now follows from Lemma \ref{exp fact} that $F$ is $\mathcal{L}_\infty$-valued holomorphic.
        Due to Lemma \ref{banach holomorphic criteria}, it now suffices to show that $F$ is $\mathcal{L}_{p/2,\infty}$-valued
        and $\mathcal{L}_{p/2,\infty}$-continuous.
        
        Let $\phi$ be a compactly supported smooth function on $\mathbb{R}$, such that $0 \leq \phi \leq 1$ and $\phi = 1$ on the interval
        $[0,\|A\|_\infty]$. Define
        \begin{equation*}
            \phi_z(t) = |t|^z\phi(t),\quad t \in \mathbb{R}, \Re(z) > 1.
        \end{equation*}
        Since $\phi = 1$ on the spectrum of $A$, we have that $\phi_z(A) = A^z$ and so for all $z,z_1,z_2$ with $\Re(z), \Re(z_1), \Re(z_2) > 1$,
        \begin{align*}
            F(z) &= [B,\phi_z(A)]\\
            F(z_1)-F(z_2) &= [B,(\phi_{z_1}-\phi_{z_2})(A)].
        \end{align*}
        Now we refer to \cite{PS-acta}, where it is proved that if $A$ and $B$ are self-adjoint operators and $r > 1$ is such that $[A,B] \in \mathcal{L}_{r,\infty}$,
        then for all Lipschitz functions $f$, there is a constant $c_r$ such that:
        \begin{equation*}
            \|[f(A),B]\|_{\mathcal{L}_{r,\infty}} \leq c_r\|f'\|_{L_\infty(\mathbb{R})}\|[A,B]\|_{\mathcal{L}_{r,\infty}}.
        \end{equation*}
        Since $p > 2$, we may apply this result with $r = p/2$ and since $\phi$ is smooth and compactly supported,
        we may take $f = \phi_{z}$. Thus,
        \begin{equation*}
            \|F(z)\|_{\mathcal{L}_{p/2,\infty}}' \leq c_{p/2}\|\phi_z'\|_{L_\infty(\mathbb{R})}\|[B,A]\|_{\mathcal{L}_{p/2,\infty}}'
        \end{equation*} 
        and similarly taking $f = \phi_{z_1}-\phi_{z_2}$,
        \begin{equation*}
            \|F(z_1)-F(z_2)\|_{\mathcal{L}_{p/2,\infty}}' \leq c_{p/2}\|\phi_{z_1}'-\phi_{z_2}'\|_{L_\infty(\mathbb{R})}\|[B,A]\|_{\mathcal{L}_{p/2,\infty}}'.
        \end{equation*}
        Note that for $z_1\to z_2$ we have:
        \begin{equation*}
            \|\phi_{z_1}'-\phi_{z_2}'\|_{L_\infty(\mathbb{R})}\to 0
        \end{equation*}
        and hence $F$ is $\mathcal{L}_{p/2,\infty}$-valued continuous. The result now follows.
    \end{proof}

    \begin{lem}\label{trivial analytic lemma} 
        Let $p>2$ and let $A$ and $B$ satisfy Condition \ref{conditions for analyticity}. Then:
        \begin{enumerate}[{\rm (i)}]
            \item\label{triv0} The mapping $F_0(z) := B^{z-1}[B,A^{z-\frac12}]A^{\frac12}+[B,A^{\frac12}]A^{\frac12}Y^{z-1}$ is an $\mathcal{L}_1-$valued 
                                holomorphic function for the domain $\Re(z)>p-1.$
            \item\label{triv1} The mapping $F_1(z) := B^{z-1}A^{z-1}$ is an $\mathcal{L}_{\frac{p}{p-2},1}-$valued holomorphic function for the domain $\Re(z)>p-1.$
            \item\label{triv2} The mapping $F_2(z) := B^{z-1}[BA,A^{z-1}]$ is an $\mathcal{L}_1-$valued holomorphic function for the domain $\Re(z)>p-1.$
            \item\label{triv3} The mapping $F_3(z) := Y^{z-1}$ is an $\mathcal{L}_{\frac{p}{p-2},1}-$valued holomorphic function for the domain $\Re(z)>p-1.$ (Recall that $Y = A^{1/2}BA^{1/2}$).
        \end{enumerate}
    \end{lem}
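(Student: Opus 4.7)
Plan. My strategy for all four parts is uniform: for each $F_i$ I verify (a) $\mathcal{L}_\infty$-valued holomorphy on $\{\Re z>p-1\}$, (b) membership of $F_i(z)$ in the stated ideal $\mathcal{I}_i$, and (c) continuity of $z\mapsto F_i(z)$ in the norm of $\mathcal{I}_i$; Lemma \ref{banach holomorphic criteria} then promotes operator-norm holomorphy to $\mathcal{I}_i$-valued holomorphy. Step (a) is immediate from Lemma \ref{exp fact}, Lemma \ref{psacta lemma}, and the fact that products and sums of $\mathcal{L}_\infty$-valued holomorphic functions are $\mathcal{L}_\infty$-valued holomorphic.

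For step (b) in part (iv), I use $Y\in\mathcal{L}_{p,\infty}$ from Condition \ref{conditions for analyticity}(iii); Borel functional calculus on the positive operator $Y$ gives $\mu_n(Y^{z-1})=\mu_n(Y)^{\Re z-1}$, so $Y^{z-1}\in\mathcal{L}_{p/(\Re z-1),\infty}\subseteq\mathcal{L}_{p/(p-2),1}$ for $\Re z>p-1$, via the standard Lorentz embedding $\mathcal{L}_{r,\infty}\subseteq\mathcal{L}_{r',1}$ when $r<r'$. For step (b) in part (ii), I will establish the identity $\mu(B^{z-1}A^{z-1})=\mu(B^xA^x)$ with $x=\Re z-1$: computing $|B^{z-1}A^{z-1}|^2=A^{\bar z-1}B^{2x}A^{z-1}=A^{-i\Im z}A^xB^{2x}A^xA^{i\Im z}$ and observing that $A^{i\Im z}$ is unitary on the support of $A$ (which is where $A^xB^{2x}A^x$ acts nontrivially), we see the singular values coincide. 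The Araki-Lieb-Thirring inequality \eqref{ALT inequality} then yields $|B^xA^x|^{p/x}\prec\prec_{\log}B^pA^p$, and $B^pA^p=(B^pA)A^{p-1}$ lies in $\mathcal{L}_{1,\infty}$ by Condition \ref{conditions for analyticity}(i); combining with \eqref{log majorization monotone} places $F_1(z)$ in $\mathcal{L}_{p/(\Re z-1),\infty}\subseteq\mathcal{L}_{p/(p-2),1}$.

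For parts (i) and (iii), the commutator structure is essential since $B^{z-1}$ alone need not lie in any Schatten ideal. For (iii), I use $[BA,A^{z-1}]=[B,A^{z-1}]A$, so that $F_2(z)=B^{z-1}[B,A^{z-1}]A$. I will exploit the resolvent integral representation $[B,A^\alpha]=\frac{\sin\pi\alpha}{\pi}\int_0^\infty t^\alpha(A+t)^{-1}[B,A](A+t)^{-1}\,dt$ (valid for $0<\Re\alpha<1$, extended to larger $\Re\alpha$ by iteration) to convert the commutator with $A^{z-1}$ into an operator integral over commutators with $A$. Commuting $B^{z-1}$ past the resolvents $(A+t)^{-1}$ using \eqref{favourite commutator identity} produces terms in which the product $B^{z-1}[B,A]$ appears, and this product lies in $\mathcal{L}_1$ directly from Condition \ref{conditions for analyticity}(ii) applied with $q=\Re z+1>p$; all remaining factors are bounded, and a dominated estimate of the $t$-integral in $\mathcal{L}_1$-norm finishes step (b). The same scheme handles the first summand $B^{z-1}[B,A^{z-1/2}]A^{1/2}$ of $F_0$; the second summand $[B,A^{1/2}]A^{1/2}Y^{z-1}$ is controlled by H\"older via \eqref{another holder}, using Condition \ref{conditions for analyticity}(iv) (which gives $[B,A^{1/2}]\in\mathcal{L}_{p/2,\infty}$) together with part (iv) already established for $Y^{z-1}$.

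The main obstacle will be step (c), continuity in the ideal norm: operator-norm continuity follows automatically from (a), but the Lorentz and trace-class norms are strictly stronger, and $\mathcal{L}_{p/(p-2),1}$ is not separable. My plan is to derive locally uniform versions of the norm bounds obtained in step (b) — showing that the constants produced by the Araki-Lieb-Thirring argument in part (ii) and by the $t$-integral estimates in parts (i) and (iii) depend continuously on $z$ over compact subsets of $\{\Re z>p-1\}$ — and then to combine this with the strong-operator continuity of the one-parameter groups $t\mapsto A^{it},B^{it},Y^{it}$ (via Lemma \ref{measurability lemma}) through a dominated-convergence argument at the level of singular values, thereby upgrading the evident SOT continuity of the defining products to norm continuity in the relevant ideal.
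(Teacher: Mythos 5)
Your parts (ii) and (iv) are essentially sound (modulo the small caveat that the Araki--Lieb--Thirring step needs $p/(\Re z-1)\geq 1$, so the range $\Re z>p+1$ needs the trivial separate factorization $B^{z-1}A^{z-1}=B^{z-1-p}\cdot B^{p}A\cdot A^{z-2}$), but there are two genuine gaps. The first is your treatment of $[B,A^{z-1}]$ in parts (i) and (iii) via the resolvent representation. When you commute $B^{z-1}$ past a resolvent $(A+t)^{-1}$ using \eqref{favourite commutator identity} you produce the error term $(A+t)^{-1}[B^{z-1},A](A+t)^{-1}$, and Condition \ref{conditions for analyticity} gives you no control whatsoever on $[B^{z-1},A]$ (only on $B^{q-2}[B,A]$ and on $[B,A^{1/2}]$); and even for the ``main'' term $(A+t)^{-1}B^{z-1}[B,A](A+t)^{-1}$, the only available $\mathcal{L}_1$-bound is $t^{-2}\|B^{z-1}[B,A]\|_1$, which is not integrable against $t^{\gamma}$ ($\Re\gamma<1$) near $t=0$: the cancellation that makes the operator-norm integral converge comes from pairing each $A$ in $[B,A]=BA-AB$ with a resolvent, and that pairing is destroyed once you insist on keeping the block $B^{z-1}[B,A]$ intact in trace norm. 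So ``a dominated estimate of the $t$-integral in $\mathcal{L}_1$-norm'' does not finish the argument. The paper avoids this entirely: it expands $F_2(z)=B^{z-1}A^{z-1}[A,B]+B^{z-1}[B,A]A^{z-1}+B^{z-1}A[B,A^{z-1}]$, controls the last commutator by the operator-Lipschitz estimate of Potapov--Sukochev (this is exactly Lemma \ref{psacta lemma}, which gives $[B,A^{z-1}]\in\mathcal{L}_{p/2,\infty}$ holomorphically, using $[A,B]=A^{1/2}[A^{1/2},B]+[A^{1/2},B]A^{1/2}\in\mathcal{L}_{p/2,\infty}$), and then closes with H\"older against $B^{z-q+1}\cdot B^{q-2}A\cdot A^{z-2}\in\mathcal{L}_{p/(p-2),1}$ and Condition \ref{conditions for analyticity}.\eqref{anacond2} for the middle term; part (i) is then a two-line reduction to (ii)--(iv) plus \eqref{anacond4}, rather than a second run of the integral scheme.

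The second gap is your step (c). Locally uniform ideal-norm bounds together with strong-operator continuity of $z\mapsto F_i(z)$ do \emph{not} imply continuity in the $\mathcal{L}_1$ or $\mathcal{L}_{p/(p-2),1}$ norm, and there is no ``dominated convergence at the level of singular values'': e.g.\ rank-one projections onto an orthonormal sequence converge to $0$ in the weak (hence strong on bounded sets) operator topology with trace norm identically $1$, so singular values are only Fatou-type lower semicontinuous under such limits. Since Lemma \ref{banach holomorphic criteria} requires genuine $\mathcal{I}$-norm continuity as an input, this step cannot be waved through. The paper obtains continuity structurally: in each part the ideal membership comes from a \emph{fixed} ideal element ($B^{q-2}A$, $B^{q-2}[B,A]$, $[B,A^{1/2}]$, or $Y^{z-1}$ via Lemma \ref{exp fact}) multiplied by factors that are holomorphic, hence norm-continuous, in $\mathcal{L}_\infty$ (the complex powers $B^{z-q+1}$, $A^{z-2}$, $A^{z-1}$), so ideal-norm continuity is immediate; the only term without such a factorization, $[B,A^{z-1}]$, is exactly where the quantitative Lipschitz bound $\|[B,\phi_{z_1}(A)]-[B,\phi_{z_2}(A)]\|_{p/2,\infty}\lesssim\|\phi_{z_1}'-\phi_{z_2}'\|_\infty\|[B,A]\|_{p/2,\infty}$ of Lemma \ref{psacta lemma} supplies the continuity. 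You should replace your resolvent scheme and your step (c) by these two devices (or supply equally quantitative substitutes); as written, both steps fail.
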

    \begin{proof}         
        We first prove \eqref{triv1}. Fix $q\in(p,p+2).$ If $\Re(z)>q-1,$ then
        $$B^{z-1}A^{z-1}=B^{z-q+1}B^{q-2}A\cdot A^{z-2}.$$
        By Lemma \ref{1 to r lemma}, we have that
        $$B^{q-2}A\in\mathcal{L}_{\frac{p}{q-2},\infty}\subset\mathcal{L}_{\frac{p}{p-2},1}$$
        and correspondingly by Lemma \ref{exp fact} the mapping $z\mapsto B^{z-1}A^{z-1}$ is continuous in the $\mathcal{L}_{p/(p-2),1}$ norm.
        
        Moreover Lemma \ref{exp fact} implies that the mappings $z\to B^{z-q+1}$ and $z\to A^{z-2}$ are $\mathcal{L}_{\infty}-$valued holomorphic for $\Re(z)>q-1.$ 
        Thus by Lemma \ref{banach holomorphic criteria} the mapping $z\to B^{z-1}A^{z-1}$ is $\mathcal{L}_{\frac{p}{p-2},1}-$valued holomorphic for $\Re(z)>q-1.$ 
        Since $q>p$ is arbitrary, \eqref{triv1} follows.

        Now we prove \eqref{triv2}. Again fix $q\in(p,p+2)$ and let $z$ satisfy $\Re(z)>q-1.$ We rewrite $F_2$ as:
        \begin{align}
            F_2(z) &= B^zA^z-B^{z-1}A^{z-1}BA\nonumber\\
                   &= B^{z-1}A^{z-1}\cdot [A,B]+[B,B^{z-1}A^z]\nonumber\\
                   &= B^{z-1}A^{z-1}\cdot[A,B]+B^{z-1}\cdot[B,A]\cdot A^{z-1}+B^{z-1}A\cdot[B,A^{z-1}].\label{F_2 expansion}
        \end{align}
        The first summand in \eqref{F_2 expansion} can be written as
        $$B^{z-q+1}\cdot B^{q-2}A\cdot A^{z-2}\cdot [A,B].$$
        Due to Lemma \ref{exp fact}, the mappings $z\to B^{z-q+1}$ and $z\to A^{z-2}$ are $\mathcal{L}_{\infty}-$valued holomorphic for $\Re(z)>q-1.$ By Lemma \ref{1 to r lemma}, we have that
        $$B^{q-2}A\in\mathcal{L}_{\frac{p}{q-2},\infty}\subset\mathcal{L}_{\frac{p}{p-2},1}.$$
        The element $[A,B] = A^{1/2}[A^{1/2},B]+[A^{1/2},B]A^{1/2}$ belongs to $\mathcal{L}_{\frac{p}{2},\infty}.$ Hence, the first summand is $\mathcal{L}_1$-valued holomorphic for $\Re(z)>q-1.$

        The second summand in \eqref{F_2 expansion} can be written as
        $$z\to B^{z-q+1}\cdot B^{q-2}[B,A]\cdot A^{z-1}.$$
        By our assumption of Condition \ref{conditions for analyticity}, the operator $B^{q-2}[B,A]$ belongs to $\mathcal{L}_1$ and accordingly the map $z\to B^{z-1}\cdot[B,A]\cdot A^{z-1}$ is $\mathcal{L}_1$-continuous. Once again due to Lemma \ref{exp fact}, the mappings $z\to B^{z-q+1}$ and $z\to A^{z-1}$ are $\mathcal{L}_{\infty}-$valued holomorphic for $\Re(z)>q-1.$ Hence, the second summand of \eqref{F_2 expansion} is $\mathcal{L}_1$-valued holomorphic for $\Re(z)>q-1.$

        We now treat the third summand of \eqref{F_2 expansion}. 
        By Lemma \ref{psacta lemma}, the mapping
        $$z\to [B,A^{z-1}],\quad\Re(z)>q-1,$$
        is a holomorphic $\mathcal{L}_{\frac{p}{2},\infty}-$valued function. Due to Condition \ref{conditions for analyticity}, we have that
        $$B^{q-2}A\in\mathcal{L}_{\frac{p}{q-2},\infty}\subset\mathcal{L}_{\frac{p}{p-2},1}.$$
        Hence
        $$z\to B^{z-q+1}\cdot B^{q-2}A\cdot [B,A^{z-1}],\quad\Re(z)>q-1,$$
        is an $\mathcal{L}_{1}$-valued holomorphic function.

        So, all three summands in \eqref{F_2 expansion} are holomorphic for $\Re(z)>q-1.$ Thus, $z\to F_2(z)$ is holomorphic for $\Re(z)>q-1.$ Since $q>p$ is arbitrary, it follows that $z\to 
        F_2(z)$ is holomorphic for $\Re(z)>p-1.$ This proves \eqref{triv2}.

        For \eqref{triv3}, we note that this is an immediate consequence of the assumption of Condition \ref{conditions for analyticity}.\eqref{anacond3}
        and Lemma \ref{exp fact}.
        
        Finally we prove \eqref{triv0}. We write $F_0(z)$ as:
        \begin{align*}
            F_0(z) &= B^{z-1}[B,A^{z-1}A^{1/2}]A^{1/2}+[B,A^{1/2}]A^{1/2}Y^{z-1}\\
                   &= F_2(z)+B^{z-1}A^{z-1}[B,A^{1/2}]A^{1/2}+[B,A^{1/2}]A^{1/2}F_3(z)\\
                   &= F_2(z)+F_1(z)[B,A^{1/2}]A^{1/2}+[B,A^{1/2}]F_3(z).
        \end{align*}
        Hence, by \eqref{triv1}, \eqref{triv2} and \eqref{triv3} and Condition \ref{conditions for analyticity}.\eqref{anacond4},
        \begin{equation*}
            F_0(z) \in \mathcal{L}_1+\mathcal{L}_{p/(p-2),1}\cdot \mathcal{L}_{p/2,\infty}+\mathcal{L}_{p/2,\infty}\mathcal{L}_{p/(p-2),\infty}.
        \end{equation*}
        so by the H\"older-type inequality \eqref{another holder}, $F_0(z) \in \mathcal{L}_1$, and it continuous in the $\mathcal{L}_1$-norm. So by Lemma \ref{banach holomorphic criteria}, $F_0$ is $\mathcal{L}_1$-valued holomorphic.
    \end{proof}

    \begin{lem}\label{g abstract analytic lemma} 
        Let $s\to H(s)$ be a bounded $\mathcal{L}_{\infty}-$valued function measurable in the weak operator topology (see Definition \ref{weak meas def}). { Let $g_z$ be as in Theorem \ref{csz key lemma}}. Define
        $$G(z) := \int_{\mathbb{R}}H(s)\widehat{g}_z(s)ds,\quad \Re(z)>1$$
        { as a weak operator topology integral.}
        \begin{enumerate}[{\rm (i)}]
            \item\label{gaba} $G$ is an $\mathcal{L}_{\infty}-$valued holomorphic function for the domain $\Re(z)>1.$
            \item\label{gabb} if there is $r > 1$ such that for all $s \in \mathbb{R}$ we have $\|H(s)\|_{r,\infty}\leq 1+|s|$, then $G$ is an $\mathcal{L}_{r,\infty}-$valued holomorphic function 
                            for the domain $\Re(z)>1.$
        \end{enumerate}
    \end{lem}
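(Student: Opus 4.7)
The plan is to reduce both assertions to the holomorphy already established for the scalar family $z \mapsto g_z$ in Theorem \ref{computational analytic lemma}. That result gives that $z \mapsto g_z$ is $H^2(\mathbb{R})$-valued holomorphic on $\{\Re(z) > 1\}$. The first step will be to push this through the Fourier transform. Since $\mathcal{F}$ intertwines multiplication by $s^k$ on one side with differentiation on the other, the operator $f \mapsto \widehat{f}$ is bounded from $H^2(\mathbb{R})$ into the weighted $L_2$-space $L_2((1+s^2)^2\,ds)$. Composing this bounded linear map with the holomorphic function $z \mapsto g_z$ yields that $z \mapsto \widehat{g}_z$ is holomorphic with values in $L_2((1+s^2)^2\,ds)$, and then Cauchy--Schwarz (using $\int (1+|s|)^2(1+s^2)^{-2}\,ds < \infty$) gives that it is actually holomorphic with values in the weighted space $L_1((1+|s|)\,ds)$, hence in particular in $L_1(\mathbb{R})$.

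For part \eqref{gaba}, I would consider the linear map $K : L_1(\mathbb{R}) \to \mathcal{L}_\infty$ defined by the weak operator topology integral $K(\phi) := \int_{\mathbb{R}} H(s)\phi(s)\,ds$. The bound $\|H(s)\phi(s)\|_\infty \leq \|H\|_\infty |\phi(s)|$ ensures this integral exists in the sense of Section \ref{weak int} and satisfies $\|K(\phi)\|_\infty \leq \|H\|_\infty \|\phi\|_{L_1}$, so $K$ is bounded linear. Since $G(z) = K(\widehat{g}_z)$, and composition of a bounded linear operator with a holomorphic Banach-space-valued function is holomorphic, \eqref{gaba} follows immediately.

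For part \eqref{gabb}, the hypothesis $\|H(s)\|_{r,\infty} \leq 1 + |s|$ together with $\widehat{g}_z \in L_1((1+|s|)\,ds)$ gives
\[
\int_{\mathbb{R}} \|H(s)\widehat{g}_z(s)\|_{r,\infty}\,ds \leq \int_{\mathbb{R}} (1+|s|)|\widehat{g}_z(s)|\,ds < \infty.
\]
By Proposition \ref{peter norm lemma} applied with this weighted bound, $G(z) \in \mathcal{L}_{r,\infty}$ with $\|G(z)\|_{r,\infty} \leq \tfrac{r}{r-1}\int(1+|s|)|\widehat{g}_z(s)|\,ds$. I would then realise this as composition of the holomorphic map $z \mapsto \widehat{g}_z \in L_1((1+|s|)\,ds)$ with the bounded linear operator $K_r : L_1((1+|s|)\,ds) \to \mathcal{L}_{r,\infty}$ given by the same formula as $K$; the same bound shows $K_r$ is continuous, so $G = K_r \circ (z \mapsto \widehat{g}_z)$ is $\mathcal{L}_{r,\infty}$-valued holomorphic.

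The main obstacle is the transfer of holomorphy through the Fourier transform with the correct weights, together with the minor issue that Proposition \ref{peter norm lemma} is stated under the hypothesis of weak operator topology continuity whereas $H$ is only assumed measurable in that topology. The first point is handled by the identifications above between $H^2$ and weighted $L_2$ spaces. For the second, one can approximate $H$ by continuous (in wot) functions or directly check that the proof of Proposition \ref{peter norm lemma}, which only uses that the integrand is wot-integrable and that finite-rank compressions admit classical integration, extends to weak-operator-measurable $H$ without modification. Once these points are clear, everything else is a formal composition argument.
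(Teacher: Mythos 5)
Your argument is correct, and it rests on exactly the same two ingredients as the paper's proof -- the $H^2(\mathbb{R})$-valued holomorphy of $z\mapsto g_z$ (Theorem \ref{computational analytic lemma}) and the estimate $\int_{\mathbb{R}}(1+|s|)\,|\widehat{f}(s)|\,ds\lesssim \|f\|_{H^2}$ together with the weak-integral norm bounds of Section \ref{weak int} and Proposition \ref{peter norm lemma} -- but it packages them differently. The paper proves holomorphy by hand: it writes down the candidate derivative $G_1(z)=\int_{\mathbb{R}}H(s)\,\widehat{\partial_z g_z}(s)\,ds$ and shows the difference quotient converges to it in $\mathcal{L}_\infty$ (resp. $\mathcal{L}_{r,\infty}$), each time reducing to $W^{2,2}$-convergence of $g_z$ via \cite[Lemma 7]{PS-crelle}. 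You instead factor $G$ as the composition of the holomorphic map $z\mapsto g_z\in H^2(\mathbb{R})$ with fixed bounded linear operators: the Fourier transform into the weighted space $L_2((1+s^2)^2\,ds)$, the Cauchy--Schwarz inclusion into $L_1((1+|s|)\,ds)$, and the integration operators $K$, $K_r$; holomorphy is then inherited automatically, with no need to identify $G'$ or compute $\partial_z g_z$. The estimates one must verify are the same in both routes, so neither is shorter in substance, but your formulation is structurally cleaner, and you correctly flag (and correctly resolve) a point the paper passes over silently: Proposition \ref{peter norm lemma} is stated for integrands continuous in the weak operator topology, whereas here $H(s)\widehat{g}_z(s)$ is only weak-operator measurable; as you note, its proof uses nothing beyond measurability, the bound \eqref{necessary-condition}, and finite-rank compressions, so it applies verbatim.
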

    \begin{proof} 
        Define:
        $$g_{1,z}=\frac{\partial}{\partial z}g_z,\quad z\in\mathbb{C},\quad \Re(z)>1.$$
        Define the $\mathcal{L}_\infty$-valued function $G_1$ by:
        \begin{equation*}
            G_1(z)=\int_{\mathbb{R}}H(s)\hat{g}_{1,z}(s)ds.
        \end{equation*}
        We will show that $G$ is $\mathcal{L}_\infty$-valued holomorphic by showing that $G_1$ is the derivative of $G$.

        Let $z,z_0 \in \mathbb{C}$ have real part greater than $1$. Then we have
        \begin{equation*}
            \frac{G(z)-G(z_0)}{z-z_0}-G_1(z_0) = \int_{\mathbb{R}}H(s)\Big(\frac{\hat{g}_z(s)-\hat{g}_{z_0}(s)}{z-z_0}-\hat{g}_{1,z_0}(s)\Big)ds.
        \end{equation*}
        So by the triangle inequality,
        $$\Big\|\frac{G(z)-G(z_0)}{z-z_0}-G'(z_0)\Big\|_{\infty}\leq\esssup_{s \in \mathbb{R}}\|H(s)\|_{\mathcal{L}_\infty}\int_{\mathbb{R}}\Big|\frac{\hat{g}_z(s)-\hat{g}_{z_0}(s)}{z-z_0}-\hat{g}_{1,z_0}(s)\Big|ds.$$
        By \cite[Lemma 7]{PS-crelle}, we have an absolute constant $c_{\mathrm{abs}}$ such that:
        $$\int_{\mathbb{R}}\Big|\frac{\hat{g}_z(s)-\hat{g}_{z_0}(s)}{z-z_0}-\hat{g}_{1,z_0}(s)\Big|ds\leq c_{abs}\Big(\Big\|\frac{g_z-g_{z_0}}{z-z_0}-g_{1,z_0}\Big\|_2+\Big\|\frac{g_z'-g_{z_0}'}{z-z_0}-g_{1,z_0}'\Big\|_2\Big).$$
        The assertion \eqref{gaba} follows now from Theorem \ref{computational analytic lemma}, and hence moreover we have that $G_1 = G'$.

        Let us now establish \eqref{gabb}. Indeed, by Lemma \ref{peter norm lemma}, we have
        \begin{align*}
                                                \|G(z)\|_{r,\infty}  &\leq \frac{r}{r-1}\int_{\mathbb{R}}(1+|s|)|\hat{g}_z(s)|ds,\\
                                                \|G'(z)\|_{r,\infty} &\leq \frac{r}{r-1}\int_{\mathbb{R}}(1+|s|)|\hat{g}_{1,z}(s)|ds,\\
            \Big\|\frac{G(z)-G(z_0)}{z-z_0}-G'(z_0)\Big\|_{r,\infty} &\leq \frac{r}{r-1}\int_{\mathbb{R}}(1+|s|)\Big|\frac{\hat{g}_z(s)-\hat{g}_{z_0}(s)}{z-z_0}-\hat{g}_{1,z_0}(s)\Big|ds.
        \end{align*}

        Once more using \cite[Lemma 7]{PS-crelle}, we have
        $$\int_{\mathbb{R}}(1+|s|)|\hat{g}_z(s)|ds=\|\hat{g}_z\|_1+\|\hat{g'}_z\|_1\leq c_{\mathrm{abs}}\|g_z\|_{W^{2,2}}.$$
        Similarly,
        $$\int_{\mathbb{R}}(1+|s|)|\hat{g}_{1,z}(s)|ds=\|\hat{g}_{1,z}\|_1+\|\hat{g'}_{1,z}\|_1\leq c_{abs}\|g_{1,z}\|_{W^{2,2}}$$
        and
        $$\int_{\mathbb{R}}(1+|s|)\Big|\frac{\hat{g}_z(s)-\hat{g}_{z_0}(s)}{z-z_0}-\hat{g}_{1,z_0}(s)\Big|ds=$$
        $$=\Big\|\frac{\hat{g}_z-\hat{g}_{z_0}}{z-z_0}-\hat{g}_{1,z_0}\Big\|_1+\Big\|\frac{\hat{g'}_z-\hat{g'}_{z_0}}{z-z_0}-\hat{g'}_{1,z_0}\Big\|_1\leq$$
        $$\leq c_{abs}\Big\|\frac{g_z-g_{z_0}}{z-z_0}-g_{1,z_0}\Big\|_{W^{2,2}}.$$
        We now deduce \eqref{gabb} from Theorem \ref{computational analytic lemma}.   
    \end{proof}

    \begin{lem}\label{another analytic lemma} 
        Let $p>2$ and let $A$ and $B$ satisfy Condition \ref{conditions for analyticity}. If $X\in\mathcal{L}_{\infty},$ then
        \begin{enumerate}[{\rm (i)}]
            \item\label{ano1} the mapping
                $$G_1(z) := \int_{\mathbb{R}}[BA^{\frac12},A^{\frac12+is}]Y^{-is}XB^{is}\hat{g}_z(s)ds,$$
                is an $\mathcal{L}_{\frac{p}{2},\infty}-$valued holomorphic function for $\Re(z)>1.$
            \item\label{ano2} the mapping
                $$G_2(z) := \int_{\mathbb{R}}A^{is}Y^{-is}XB^{is}\hat{g}_z(s)ds,$$
                is $\mathcal{L}_{\infty}-$valued holomorphic for $\Re(z)>1.$
            \item\label{ano3} the mapping
                $$G_3(z) := \int_{\mathbb{R}}Y^{-is}XB^{is}[BA^{\frac12},A^{\frac12+is}]\hat{g}_z(s)ds,$$
                is $\mathcal{L}_{\frac{p}{2},\infty}-$valued holomorphic for $\Re(z)>1.$
        \end{enumerate}
    \end{lem}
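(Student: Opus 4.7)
The plan is to verify each part by applying Lemma \ref{g abstract analytic lemma}, which reduces the holomorphy of $G_j$ to strong measurability (in the weak operator topology) plus an appropriate norm bound on the integrand $H_j(s)$. Measurability in every case follows from Lemma \ref{measurability lemma}, since the operators appearing in each integrand are products of the form $A_k X_k^{is}$ with $X_k \geq 0$. So the whole task reduces to proving norm bounds.

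Part \eqref{ano2} is the easy case: the integrand $H_2(s) = A^{is}Y^{-is}XB^{is}$ is a product of $X$ with three partial isometries, hence $\|H_2(s)\|_\infty \leq \|X\|_\infty$ uniformly in $s$. An application of Lemma \ref{g abstract analytic lemma}.\eqref{gaba} yields the claim.

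For parts \eqref{ano1} and \eqref{ano3}, since $Y^{-is}$, $B^{is}$ and $X$ together contribute a factor bounded by $\|X\|_\infty$ in operator norm, it suffices to show
\begin{equation*}
\bigl\|[BA^{\frac12}, A^{\frac12+is}]\bigr\|_{\frac{p}{2},\infty} \leq C(1+|s|), \qquad s \in \mathbb{R},
\end{equation*}
after which Lemma \ref{g abstract analytic lemma}.\eqref{gabb} with $r=p/2>1$ (using $p>2$) finishes the proof. The key decomposition uses that $A^{1/2}$ and $A^{1/2+is}$ are both Borel functions of $A$ and hence commute, together with the Leibniz rule twice:
\begin{equation*}
[BA^{\frac12}, A^{\frac12+is}] = [B, A^{\frac12+is}] A^{\frac12} = [B, A^{\frac12}]A^{\frac12+is} + A^{\frac12}[B, A^{is}]A^{\frac12} = [B, A^{\frac12}]A^{\frac12+is} + [Y, A^{is}],
\end{equation*}
using the identity $A^{\frac12}[B, A^{is}]A^{\frac12} = YA^{is} - A^{is}Y = [Y, A^{is}]$. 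The first summand is bounded in $\mathcal{L}_{p/2,\infty}$ uniformly in $s$ by Condition \ref{conditions for analyticity}.\eqref{anacond4}.

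The main obstacle is bounding $\|[Y,A^{is}]\|_{p/2,\infty}$ by $O(1+|s|)$. Note that Lemma \ref{psacta lemma} is not directly applicable here because $\Re(is)=0<1$. The plan is to observe that $[Y,A] = A^{1/2}[B,A]A^{1/2} \in \mathcal{L}_{p/2,\infty}$ (expanding $[B,A]=[B,A^{1/2}]A^{1/2}+A^{1/2}[B,A^{1/2}]$ and invoking Condition \ref{conditions for analyticity}.\eqref{anacond4}), and then to derive the commutator bound via a Duhamel representation
\begin{equation*}
[Y, A^{is}] = i\int_0^s A^{i(s-u)}\,[Y,\log A]\,A^{iu}\,du,
\end{equation*}
where $[Y,\log A]$ is rewritten as $[A^{1/2}B, A^{1/2}\log A]$ so that it is realized as a bounded operator in $\mathcal{L}_{p/2,\infty}$ (the bounded function $\lambda \mapsto \lambda^{1/2}\log\lambda$ absorbs the singularity at $0$, and the remaining commutator is controlled by Condition \ref{conditions for analyticity}.\eqref{anacond4} via the double operator integration machinery of Section \ref{doi}). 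A regularization $A \rightsquigarrow A+\varepsilon$ handles the case where $A$ is not bounded below. Integrating then yields the desired $O(|s|)$ bound. Part \eqref{ano3} is handled by the same estimate applied after the cyclic relocation of the commutator to the right.
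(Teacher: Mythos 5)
Your overall frame coincides with the paper's: part \eqref{ano2} and the reduction of \eqref{ano1}, \eqref{ano3} to the single estimate $\|[BA^{\frac12},A^{\frac12+is}]\|_{\frac p2,\infty}\leq C(1+|s|)$, followed by Lemma \ref{g abstract analytic lemma}.\eqref{gaba}--\eqref{gabb} and Lemma \ref{measurability lemma}, is exactly what the paper does. The gap is in how you prove that estimate. Your decomposition $[BA^{\frac12},A^{\frac12+is}]=[B,A^{\frac12}]A^{\frac12+is}+[Y,A^{is}]$ is correct, and the first summand is indeed handled by Condition \ref{conditions for analyticity}.\eqref{anacond4}; but the bound $\|[Y,A^{is}]\|_{\frac p2,\infty}=O(1+|s|)$ is not established. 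The identity you propose, $[Y,\log A]=[A^{\frac12}B,A^{\frac12}\log A]$, is false: the right-hand side equals $Y\log A-A\log(A)B$, not $[Y,\log A]$; the correct factorisation is $[Y,\log A]=A^{\frac12}[B,\log A]A^{\frac12}$. More seriously, nothing in Condition \ref{conditions for analyticity} controls a commutator with $\log A$: the function $\log$ is not Lipschitz on $[0,\|A\|_\infty]$, in the intended applications $A=a^2$ (or $a^4$) is not bounded below and has nontrivial kernel, so $\log_0(A)$ is unbounded, the Duhamel formula for $s\mapsto A^{is}$ (which, with the convention $0^{is}=0$, is not even a unitary group) needs justification, and the double operator integral you invoke has as symbol the divided difference of $\log$, which does not satisfy the factorisation hypothesis \eqref{doi sufficient condition}. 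The regularisation $A\rightsquigarrow A+\varepsilon$ does not repair this as stated: the natural bound on $[B,\log(A+\varepsilon)]$ degenerates like $\varepsilon^{-1}$, and you give no argument that the flanking factors $(A+\varepsilon)^{\frac12}$ restore uniformity. (A uniform estimate would require showing that the symbol $\sqrt{\lambda\mu}\,\frac{\log\lambda-\log\mu}{\lambda-\mu}=h(\log\lambda-\log\mu)$, $h(t)=t/(2\sinh(t/2))$, is an admissible multiplier on $\mathcal{L}_{\frac p2,\infty}$ via the same transference used for $g_z$; that is plausible, but it is precisely the content you would need to supply and have not.)

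The paper closes the same estimate with a one-line substitute for your $\log$-detour: write $[BA^{\frac12},A^{\frac12+is}]=[BA^{\frac12},\phi_s(A^{\frac12})]$ with $\phi_s(t)=|t|^{1+2is}$, which \emph{is} Lipschitz with constant $O(1+|s|)$, and apply the commutator Lipschitz estimate of \cite{PS-acta} in $\mathcal{L}_{\frac p2,\infty}$ (this is where $p>2$ enters), the base commutator being $[BA^{\frac12},A^{\frac12}]=[B,A^{\frac12}]A^{\frac12}\in\mathcal{L}_{\frac p2,\infty}$ by Condition \ref{conditions for analyticity}.\eqref{anacond4}. The extra power of $t$ in $\phi_s$ is exactly what removes the non-Lipschitz singularity at $0$ that your $[Y,A^{is}]$/$\log A$ route runs into; with that replacement your argument goes through, but as written it has a genuine gap.
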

    \begin{proof} 
        By Lemma \ref{g abstract analytic lemma} \eqref{gaba}, the functions $G_1,$ $G_2$ and $G_3$ are $\mathcal{L}_{\infty}-$valued holomorphic. 
        In particular, this proves \eqref{ano2}. We now prove the first and third assertions.

        Set
        $$H_1(s)=[BA^{\frac12},A^{\frac12+is}]Y^{-is}XB^{is},\quad s\in\mathbb{R},$$
        and
        $$H_2(s)=Y^{-is}XB^{is}[BA^{\frac12},A^{\frac12+is}],\quad s\in\mathbb{R}.$$

        For every $s\in\mathbb{R},$ we have for $j = 1,2$,
        $$\Big\|H_j(s)\Big\|_{\frac{p}{2},\infty}\leq\|X\|_{\infty}\cdot\Big\|[BA^{\frac12},A^{\frac12+is}]\Big\|_{\frac{p}{2},\infty},$$
        Let $\phi_s(t) := |t|^{1+2is},$ $t\in\mathbb{R}.$ We now write
        $$[BA^{\frac12},A^{\frac12+is}]=[BA^{\frac12},\phi_s(A^{\frac12})].$$
        We again refer to \cite{PS-acta}. Since $p > 2$ and $\phi_s$ is a Lipschitz function, we have
        \begin{align*}
            \|[BA^{\frac12},A^{\frac12+is}]\|_{\frac{p}{2},\infty} &\leq c_p\|\phi_s'\|_{\infty}\|[BA,A^{\frac12}]\|_{\frac{p}{2},\infty}\\
                                                                   &\leq c_p(1+|s|)\|[BA,A^{\frac12}]\|_{\frac{p}{2},\infty}.
        \end{align*}
        Hence for $j = 1,2$ we have:
        $$\|H_j(s)\|_{\frac{p}{2},\infty}\leq c_p(1+|s|)\|[BA,A^{\frac12}]\|_{\frac{p}{2},\infty}\|X\|_{\infty}.$$
        The assertions \eqref{ano1} and \eqref{ano3} now follow from Lemma \ref{g abstract analytic lemma}.\eqref{gabb}
        upon taking $j=1$ for \eqref{ano1} and $j=2$ for \eqref{ano3}.
    \end{proof}

    \begin{lem}\label{integrability lemma} 
        Let $p>2$ and let $A$ and $B$ satisfy Condition \ref{conditions for analyticity}. Let $T_z(s)$, $s \in \mathbb{R}$ be defined as in Theorem \ref{csz key lemma}. Then if $\Re(z) > p-1$ we have:
        $$\int_{\mathbb{R}}\|T_z(s)\|_1\cdot |\hat{g}_z(s)|ds<\infty.$$
    \end{lem}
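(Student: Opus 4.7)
Since $\widehat{g}_z$ is the Fourier transform of a Schwartz function (it is $H^{2}(\mathbb{R})$-valued holomorphic in $z$ by Theorem \ref{computational analytic lemma}, but in fact one can see directly that $g_z$ is Schwartz), it decays faster than any polynomial, so it suffices to produce a polynomial bound $\|T_z(s)\|_1 \leq C_z(1+|s|)$ for all $s \in \mathbb{R}$.

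The first reduction is to factor $B^{z-1+is} = B^{is}\,B^{z-1}$, since both are functions of the positive operator $B$ and hence commute; this gives $T_z(s) = B^{is}\,S_z(s)$ with
$$S_z(s) := B^{z-1}[BA^{1/2},\, A^{z-1/2+is}]Y^{-is} + [BA^{1/2},\, A^{1/2+is}]Y^{z-1-is}.$$
Because $\|B^{is}\|_\infty \leq 1$ under the convention $0^{is}=0$, it suffices to bound $\|S_z(s)\|_1$. Since $A^{1/2}$ commutes with both $A^{z-1/2}$ and $A^{is}$, the Leibniz rule yields the decompositions
\begin{align*}
[BA^{1/2},\, A^{z-1/2+is}] &= [BA^{1/2},\, A^{z-1/2}]\,A^{is} + A^{z-1/2}\,[BA^{1/2},\, A^{is}],\\
[BA^{1/2},\, A^{1/2+is}]   &= [BA^{1/2},\, A^{1/2}]\,A^{is}   + A^{1/2}\,[BA^{1/2},\, A^{is}],
\end{align*}
exhibiting $S_z(s)$ as a sum of four pieces.

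I will estimate each piece in $\mathcal{L}_1$ with norm of order $1+|s|$, using: (a) the $s$-independent operator $B^{z-1}[BA^{1/2}, A^{z-1/2}] = B^{z-1}[B, A^{z-1/2}]A^{1/2}$ is precisely the first summand of $F_0(z)$ from Lemma \ref{trivial analytic lemma}.\eqref{triv0}, hence belongs to $\mathcal{L}_1$; (b) the Araki-Lieb-Thirring inequality applied to Conditions \ref{conditions for analyticity}.\eqref{anacond1} and \eqref{anacond3} (together with the fact that $B^{i\Im z}$, $A^{i(\Im z+s)}$, and $Y^{-i(\Im z+s)}$ are partial isometries commuting with the positive operators $B^{\Re(z-1)}$, $A^{\Re(z-1)}$, and $Y^{\Re(z-1)}$ respectively) places $B^{z-1}A^{z-1/2}$ and $Y^{z-1-is}$ into $\mathcal{L}_{p/\Re(z-1),\infty} \subset \mathcal{L}_{p/(p-2),1}$ with norms uniform in $s$; (c) $[BA^{1/2},A^{1/2}] = [B, A^{1/2}]A^{1/2} \in \mathcal{L}_{p/2,\infty}$ by Condition \ref{conditions for analyticity}.\eqref{anacond4}; and (d) the only $s$-dependent commutator is $[BA^{1/2}, A^{is}]$, which I rewrite via a further Leibniz identity as $[BA^{1/2}, A^{is}] = [B, A^{1/2+is}] - A^{is}[B, A^{1/2}]$, and bound using the Potapov-Sukochev commutator inequality \cite{PS-acta} applied to $A^{1/2+is} = \phi_s(A^{1/2})$ with $\phi_s(t) = |t|^{1+2is}$ Lipschitz of constant $|1+2is|\leq 1+2|s|$, giving $\|[BA^{1/2}, A^{is}]\|_{p/2,\infty} \leq C(1+|s|)$.

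Combining these ingredients via the H\"older-type inequality \eqref{another holder} (using $2/p + (p-2)/p = 1$ to multiply an $\mathcal{L}_{p/2,\infty}$ factor with an $\mathcal{L}_{p/(p-2),1}$ factor), every one of the four pieces lies in $\mathcal{L}_1$ with norm $\leq C(1+|s|)$; hence $\|T_z(s)\|_1 \leq \|S_z(s)\|_1 \leq C(1+|s|)$, and the required integral is bounded by $C\int_{\mathbb{R}}(1+|s|)|\widehat{g}_z(s)|\,ds < \infty$. The principal technical hurdle is step (d): while $A^{1/2+is}$ is a Lipschitz function of $A^{1/2}$ (with Lipschitz constant growing only linearly in $|s|$), the bare operator $A^{is}$ is not, because $|t|^{2is}$ oscillates wildly near $t = 0$; the Leibniz trick above exploits the extra $A^{1/2}$ factor appearing naturally in the expansion to convert the problematic commutator into the well-behaved $[B, A^{1/2+is}]$.
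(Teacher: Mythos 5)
Your proposal is correct and is essentially the paper's own argument: both reduce the statement to the linear bound $\|T_z(s)\|_1\leq C_{A,B,z}(1+|s|)$ obtained from a Leibniz splitting of the commutators, the H\"older pairing $\mathcal{L}_{\frac{p}{2},\infty}\cdot\mathcal{L}_{\frac{p}{p-2},1}\subset\mathcal{L}_1$, and the Potapov--Sukochev estimate \cite{PS-acta} applied to $\phi_s(t)=|t|^{1+2is}$, followed by integrability of $(1+|s|)|\widehat{g}_z(s)|$ (the paper isolates $[BA^{\frac12},A^{\frac12+is}]$ and the piece $F_2(z)=B^{z-1}[BA,A^{z-1}]$, while you isolate $[BA^{\frac12},A^{is}]$ and the first summand of $F_0(z)$, which is only a bookkeeping difference). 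The one small point to patch is your step (b): the Araki--Lieb--Thirring argument for $B^{z-1}A^{z-\frac12}$ uses the exponent $p/\Re(z-1)\geq1$ and so as written only covers $\Re(z)\leq p+1$; it is cleaner to factor $B^{z-1}A^{z-\frac12}=(B^{z-1}A^{z-1})A^{\frac12}$ and quote Lemma \ref{trivial analytic lemma}.\eqref{triv1}, which handles all $\Re(z)>p-1$.
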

    \begin{proof} 
        We recall the definition of $T_z(s)$:
        $$T_z(s)= B^{z-1+is}[BA^{\frac{1}{2}},A^{z-\frac{1}{2}+is}]Y^{-is}+B^{is}[BA^{\frac{1}{2}},A^{\frac{1}{2}+is}]Y^{z-1-is},\quad s\in\mathbb{R}.$$
                
        Consider the first summand in the definition of $T_z(s)$. Using the Leibniz rule:
        \begin{align*}
            B^{z-1+is}[BA^{\frac12},A^{z-\frac12+is}]Y^{-is} &= B^{z-1+is}[BA^{\frac12},A^{z-1} A^{\frac12+is}]Y^{-is}\\
                                                             &= B^{is} B^{z-1}A^{z-1}[BA^{\frac12},A^{\frac12+is}] Y^{-is}\\
                                                             &\quad+B^{is} B^{z-1}[BA,A^{z-1}] A^{is}Y^{-is}.
        \end{align*}
        
        By the $\mathcal{L}_1$-triangle inequality, we have
        \begin{align*}
            \|B^{z-1+is}[BA^{\frac12}&,A^{z-\frac12+is}]Y^{-is}\|_1\\
                                     &\leq \|B^{z-1}A^{z-1}[BA^{\frac12},A^{\frac12+is}]\|_1+\|B^{z-1}[BA,A^{z-1}]\|_1.
        \end{align*}
        We now apply the H\"older-type inequality \eqref{another holder},
        \begin{align}
            \|B^{z-1+is}[BA^{\frac12}&,A^{z-\frac12+is}]Y^{-is}\|_1\nonumber\\
                                     &\leq\|B^{z-1}A^{z-1}\|_{\frac{p}{p-2},1}\|[BA^{\frac12},A^{\frac12+is}]\|_{\frac{p}{2},\infty}+\|B^{z-1}[BA,A^{z-1}]\|_1.\label{first big holder estimate}
        \end{align}
        
        Consider the function $\phi_s(t) = |t|^{1+2is}$, $t \in \mathbb{R}$. Immediately, $\phi_s$ is Lipschitz and $\|\phi_s'\|_{L_\infty} \leq 2(1+|s|).$ Since $p > 2$, we may apply the result of \cite{PS-acta} to obtain:
        \begin{equation*}
            \|[BA^{1/2},\phi(A^{1/2})]\|_{p/2,\infty} \leq C_p(1+|s|)\|[BA^{1/2},A^{1/2}]\|_{p/2,\infty}.
        \end{equation*}
        
        Therefore,
        \begin{equation}\label{introduction of s dependence}
            \|[BA^{\frac12},A^{\frac12+is}]\|_{\frac{p}{2},\infty} \leq C_p(1+|s|)\|[BA^{\frac12},A^{\frac12}]\|_{\frac{p}{2},\infty}.
        \end{equation}
        Combining \eqref{first big holder estimate} and \eqref{introduction of s dependence}, we have
        \begin{align}
            \|B^{z-1+is}[BA^{\frac12}&,A^{z-\frac12+is}]Y^{-is}\|_1\nonumber\\
                                     &\leq C_p(1+|s|)\|[BA^{\frac12},A^{\frac12}]\|_{\frac{p}{2},\infty}\|B^{z-1}A^{z-1}\|_{\frac{p}{p-2},1}+\|B^{z-1}[BA,A^{z-1}]\|_1.\label{step 1 result}
        \end{align}

        Let us now consider the second summand in $T_z(s).$ Using the H\"older inequality in the form of \eqref{another holder}, we obtain
        \begin{align}
            \|B^{is}[BA^{\frac12},A^{\frac12+is}]Y^{z-1-is}\|_1&\leq\|[BA^{\frac12},A^{\frac12+is}]Y^{z-1}\|_1\nonumber\\
                                                               &\leq\|[BA^{\frac12},A^{\frac12+is}]\|_{\frac{p}{2},\infty}\|Y^{z-1}\|_{\frac{p}{p-2},1}.\label{second big holder estimate}
        \end{align}
        Now combining \eqref{second big holder estimate} with \eqref{introduction of s dependence}, we arrive at:
        \begin{align}
            \|B^{is}[BA^{\frac12}&,A^{\frac12+is}]Y^{z-1-is}\|_1 \nonumber\\
                                 &\leq C_p(1+|s|)\|[BA^{\frac12},A^{\frac12}]\|_{\frac{p}{2},\infty}\|Y^{z-1}\|_{\frac{p}{p-2},1}.\label{step 2 result}
        \end{align}

        Now we may combine \eqref{step 1 result} and \eqref{step 2 result}:
        \begin{align*}
            \|T_z(s)\|_1 &\leq \|B^{z-1}[BA,A^{z-1}]\|_1\\
                         &\quad +c_{abs}(1+|s|)\|[BA^{\frac12},A^{\frac12}]\|_{\frac{p}{2},\infty}\cdot\Big(\|B^{z-1}A^{z-1}\|_{\frac{p}{p-2},1}+\|Y^{z-1}\|_{\frac{p}{p-2},1}\Big).
        \end{align*}
        By Lemma \ref{trivial analytic lemma},
        \begin{equation*}
            \|B^{z-1}[BA,A^{z-1}]\|_1,\|B^{z-1}A^{z-1}\|_{\frac{p}{p-2},1},\|Y^{z-1}\|_{\frac{p}{p-2},1}<\infty,\quad\Re(z)>p-1.
        \end{equation*}
        Hence,
        $$\|T_z(s)\|_1\leq C_{A,B,z}\cdot (1+|s|).$$
        We now have
        $$\int_{\mathbb{R}}\|T_z(s)\|_1\cdot |\hat{g}_z(s)|ds\leq C_{A,B,z}\int_{\mathbb{R}}(1+|s|)|\hat{g}_z(s)|ds.$$
        By \cite[Lemma 7]{PS-crelle}, we have
        $$\int_{\mathbb{R}}(1+|s|)|\hat{g}_z(s)|ds=\|g_z\|_2+\|g_z'\|_2\leq c_{abs}\|g_z\|_{W^{2,2}}.$$
        Thus,
        $$\int_{\mathbb{R}}\|T_z(s)\|_1 |\hat{g}_z(s)|ds\leq C_{A,B,z}\|g_z\|_{W^{2,2}}.$$
        The assertion follows now from Theorem \ref{computational analytic lemma}.
    \end{proof}
    
    \begin{cor}\label{difference is trace class}
        If $p > 2$ and $A$ and $B$ satisfy condition \ref{conditions for analyticity}, then for all $\Re(z) > p-1$ we have:
        \begin{equation*}
            B^zA^z-(A^{1/2}BA^{1/2})^z \in \mathcal{L}_1.
        \end{equation*}
    \end{cor}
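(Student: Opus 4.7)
The proof of this corollary is essentially a direct assembly of the integral representation from Theorem \ref{csz key lemma} together with the $\mathcal{L}_1$-estimates already established in Lemma \ref{trivial analytic lemma} and Lemma \ref{integrability lemma}. The plan is to apply Theorem \ref{csz key lemma} to write
\begin{equation*}
    B^zA^z-(A^{1/2}BA^{1/2})^z = T_z(0)-\int_{\mathbb{R}} T_z(s)\widehat{g}_z(s)\,ds,
\end{equation*}
and then show that both the boundary term $T_z(0)$ and the integral lie in $\mathcal{L}_1$ for $\Re(z) > p-1$.

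For the boundary term, I would first observe the identities $[BA^{1/2},A^{z-1/2}] = [B,A^{z-1/2}]A^{1/2}$ and $[BA^{1/2},A^{1/2}] = [B,A^{1/2}]A^{1/2}$, which follow from a one-line expansion of both sides. Substituting these into the definition of $T_z(0)$ shows that $T_z(0)$ is precisely the operator $F_0(z)$ of Lemma \ref{trivial analytic lemma}.\eqref{triv0}, which is already proved to be $\mathcal{L}_1$-valued (indeed, $\mathcal{L}_1$-valued holomorphic) on the half-plane $\Re(z) > p-1$.

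For the integral term, Lemma \ref{integrability lemma} gives exactly the required absolute $\mathcal{L}_1$-integrability bound
\begin{equation*}
    \int_{\mathbb{R}} \|T_z(s)\|_1\,|\widehat{g}_z(s)|\,ds < \infty
\end{equation*}
whenever $\Re(z) > p-1$. Combined with the weak operator topology continuity of $s\mapsto T_z(s)$ stated in Theorem \ref{csz key lemma} (and hence of $s\mapsto T_z(s)\widehat{g}_z(s)$, since $\widehat{g}_z$ is a continuous scalar-valued function), Lemma \ref{peter lemma} applies and yields that $\int_{\mathbb{R}} T_z(s)\widehat{g}_z(s)\,ds$ is an element of $\mathcal{L}_1$. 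Adding the two $\mathcal{L}_1$ contributions then gives $B^zA^z-(A^{1/2}BA^{1/2})^z \in \mathcal{L}_1$, completing the argument.

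There is no real obstacle here; all the analytic work has been done in the preceding lemmas, and the corollary amounts to recognising that the right-hand side of the Theorem \ref{csz key lemma} integral representation has already been bounded term-by-term in $\mathcal{L}_1$.
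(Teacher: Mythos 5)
Your proposal is correct and follows essentially the same route as the paper: the integral representation of Theorem \ref{csz key lemma} (valid since $\Re(z)>p-1>1$), Lemma \ref{integrability lemma} plus Lemma \ref{peter lemma} for the integral term, and an $\mathcal{L}_1$ bound on the boundary term. The only cosmetic difference is that you handle $T_z(0)$ by identifying it with $F_0(z)$ and invoking Lemma \ref{trivial analytic lemma}.\eqref{triv0} (an identification the paper itself makes in Lemma \ref{splitting lemma}), whereas the paper cites the H\"older estimates \eqref{first big holder estimate} and \eqref{second big holder estimate} directly; these are equivalent.
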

    \begin{proof}
        The integral formula from Lemma \ref{csz key lemma} is valid for $\Re(z) > 1$. Since $p > 2$, we therefore have a weak operator topology integral:
        \begin{equation*}
            B^zA^z-(A^{1/2}BA^{1/2})^z = T_z(0)-\int_{\mathbb{R}} T_z(s)\widehat{g}_z(s)\,ds.
        \end{equation*}
        From Lemma \ref{integrability lemma}, we have that $$\int_{\mathbb{R}}\|T_z(s)\|_1\cdot |\hat{g}_z(s)|ds<\infty.$$
        Thus by Lemma \ref{peter lemma}, we have that $\int_{\mathbb{R}} T_z(s)\widehat{g}_z(s) \in \mathcal{L}_1$. 
        
        Recalling the definition of $T_z(0)$:
        \begin{equation*}
            T_z(0) := B^{z-1}[BA^{\frac{1}{2}},A^{z-\frac{1}{2}}]+[BA^{\frac{1}{2}},A^{\frac{1}{2}}]Y^{z-1}
        \end{equation*}
        So by \eqref{first big holder estimate} and \eqref{second big holder estimate}, $T_z(0) \in \mathcal{L}_1$. Therefore, $B^zA^z-(A^{1/2}BA^{1/2})^z \in \mathcal{L}_1$.
    \end{proof}

    \begin{lem}\label{splitting lemma} 
        Assume that $p>2$ and let $A$ and $B$ satisfy Condition \ref{conditions for analyticity}. If $X\in\mathcal{L}_{\infty},$ then for $\Re(z) > p$:
        $${\rm Tr}\Big(X\Big(B^zA^z-(A^{\frac12}BA^{\frac12})^z\Big)\Big)={\rm Tr}(XF_0(z))-\sum_{k=1}^3{\rm Tr}(G_k(z)F_k(z))$$
        Here, the functions $F_k$ are as in Lemma \ref{trivial analytic lemma} and the functions $G_k$ are as in Lemma \ref{another analytic lemma}.
    \end{lem}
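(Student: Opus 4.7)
The plan is to combine the integral representation of Theorem~\ref{csz key lemma} with an elementary Leibniz expansion and trace cyclicity. Since $\Re(z)>p$ ensures by Corollary~\ref{difference is trace class} (applied to $\Re(z)>p-1$) that $B^zA^z-(A^{\frac12}BA^{\frac12})^z\in\mathcal{L}_1$, both sides of the claimed identity make sense, so it is a matter of matching them term by term.

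The starting point is Theorem~\ref{csz key lemma}, which gives
\begin{equation*}
B^zA^z-(A^{\frac12}BA^{\frac12})^z = T_z(0)-\int_{\mathbb{R}}T_z(s)\widehat{g}_z(s)\,ds
\end{equation*}
as an identity of bounded operators, where the integral is in the weak operator topology. I would first verify that $T_z(0)$ coincides with $F_0(z)$: since $A^{\frac12}$ commutes with both $A^{z-\frac12}$ and with itself, the Leibniz rule gives $[BA^{\frac12},A^{z-\frac12}]=[B,A^{z-\frac12}]A^{\frac12}$ and $[BA^{\frac12},A^{\frac12}]=[B,A^{\frac12}]A^{\frac12}$, so the two definitions agree.

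Next I would expand $T_z(s)$ for $s\ne0$ into three pieces that mirror the three factors $F_1,F_2,F_3$. For the first summand of $T_z(s)$, write $A^{z-\frac12+is}=A^{z-1}\cdot A^{\frac12+is}$ and apply the Leibniz rule to obtain
\begin{equation*}
[BA^{\frac12},A^{z-\frac12+is}] = A^{z-1}[BA^{\frac12},A^{\frac12+is}]+[BA^{\frac12},A^{z-1}]A^{\frac12+is}.
\end{equation*}
Combined with the elementary identity $[BA^{\frac12},A^{z-1}]A^{\frac12}=[BA,A^{z-1}]$ (valid because $A^{\frac12}$ commutes with $A^{z-1}$) and the fact that all functional-calculus powers of $B$ commute, this yields
\begin{equation*}
B^{z-1+is}[BA^{\frac12},A^{z-\frac12+is}]Y^{-is} = B^{is}F_1(z)[BA^{\frac12},A^{\frac12+is}]Y^{-is}+B^{is}F_2(z)\,A^{is}Y^{-is}.
\end{equation*}
The second summand of $T_z(s)$, written as $B^{is}[BA^{\frac12},A^{\frac12+is}]F_3(z)Y^{-is}$, is already in the desired form.

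Multiplying $T_z(s)$ by $X$ on the left, taking the trace, and cyclically moving $B^{is}$ past $X$ (trivially) and the factors $F_k(z)$ to the left of each term, one obtains
\begin{equation*}
\mathrm{Tr}(XT_z(s))=\sum_{k=1}^3\mathrm{Tr}\bigl(F_k(z)\,h_k(s)\bigr),
\end{equation*}
where $h_k(s)$ is exactly the integrand of $G_k(z)$ divided by $\widehat{g}_z(s)$. I would then integrate in $s$ and interchange the trace with the $s$-integral; this interchange is the one genuinely technical step, and it is justified by Lemma~\ref{integrability lemma}, which provides $\int_{\mathbb{R}}\|T_z(s)\|_1|\widehat{g}_z(s)|\,ds<\infty$, so Lemma~\ref{peter lemma} applies to give both $\mathrm{Tr}$-integrability and the Fubini-type swap. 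Since $T_z(0)=F_0(z)$, this yields
\begin{equation*}
\mathrm{Tr}\Big(X\bigl(B^zA^z-(A^{\frac12}BA^{\frac12})^z\bigr)\Big)=\mathrm{Tr}(XF_0(z))-\sum_{k=1}^3\mathrm{Tr}(F_k(z)G_k(z)),
\end{equation*}
which is the claim. The only real obstacle is bookkeeping in the Leibniz expansion and ensuring the trace-integral exchange is licit, both of which are covered by Lemma~\ref{integrability lemma}; the algebraic identifications $T_z(0)=F_0(z)$ and $[BA^{\frac12},A^{z-1}]A^{\frac12}=[BA,A^{z-1}]$ are the small combinatorial keys that make the three $G_k$'s line up with the three $F_k$'s.
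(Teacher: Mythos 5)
Your proposal is correct and follows essentially the same route as the paper: the identity $T_z(0)=F_0(z)$, the Leibniz split of $T_z(s)$ into the three pieces matching $F_1,F_2,F_3$, trace cyclicity to produce the integrands of $G_1,G_2,G_3$, and the trace/integral interchange via Lemma \ref{integrability lemma} together with Lemma \ref{peter lemma} are exactly the steps of the paper's proof.
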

    \begin{proof} 
        {By Lemma \ref{integrability lemma},
        \begin{equation*}
            \int_{\mathbb{R}} \|T_z(s)\|_1|\widehat{g}_z(s)|\,ds < \infty.
        \end{equation*}}
        So by Lemma \ref{peter lemma}, we have:
        $$\int_{\mathbb{R}}T_z(s)\widehat{g}_z(s)ds\in\mathcal{L}_1$$
        and
        $${\rm Tr}\Big(X\int_{\mathbb{R}}T_z(s)\widehat{g}_z(s)ds\Big)=\int_{\mathbb{R}}{\rm Tr}(XT_z(s))\widehat{g}_z(s)ds.$$
                
        By Theorem \ref{csz key lemma}, we have:
        \begin{equation}
            {\rm Tr}\Big(X\Big(B^zA^z-(A^{\frac12}BA^{\frac12})^z\Big)\Big) = {\rm Tr}(XT_z(0))-\int_{\mathbb{R}}{\rm Tr}(XT_z(s))\hat{g}_z(s)ds
        \end{equation}
        for $\Re(z)>p.$ 
        
        Observing that $F_0(z) = T_z(0)$, we have:
        \begin{equation}\label{csz with traces}
            {\rm Tr}\Big(X\Big(B^zA^z-(A^{\frac12}BA^{\frac12})^z\Big)\Big) = {\rm Tr}(X\cdot F_0(z))-\int_{\mathbb{R}}{\rm Tr}(XT_z(s))\hat{g}_z(s)ds.
        \end{equation}

        By \eqref{first big holder estimate} and \eqref{second big holder estimate}, we have
        \begin{align*}
             B^{z-1}[BA^{\frac{1}{2}},A^{z-\frac{1}{2}+is}] &\in \mathcal{L}_1,\\
              [BA^{\frac{1}{2}},A^{\frac{1}{2}+is}]Y^{z-1} &\in \mathcal{L}_1. 
        \end{align*}
        Now by the definition of $T_z(s),$ we have
        \begin{align*}
            \mathrm{Tr}(XT_z(s)) &= \mathrm{Tr}(Y^{-is}XB^{is}B^{z-1}[BA^{\frac12},A^{z-\frac12+is}])\\
                                 &\quad+ \mathrm{Tr}(Y^{-is}XB^{is}[BA^{\frac12},A^{\frac12+is}]Y^{z-1}).
        \end{align*}
        By an application of the Leibniz rule, we have
        $$B^{z-1}[BA^{\frac12},A^{z-\frac12+is}] = B^{z-1}A^{z-1}[BA^{\frac12},A^{\frac12+is}]+B^{z-1}[BA,A^{z-1}] A^{is}.$$
        Each of the above terms is $\mathcal{L}_1$, by \eqref{first big holder estimate} and Lemma \ref{trivial analytic lemma}.\eqref{triv2} respectively.
        Therefore,
        \begin{align*}
            {\rm Tr}(X\cdot T_z(s)) &= {\rm Tr}([BA^{\frac12},A^{\frac12+is}]Y^{-is}XB^{is} B^{z-1}A^{z-1})\\
                                    &+ {\rm Tr}(A^{is}Y^{-is}XB^{is} B^{z-1}[BA,A^{z-1}])+{\rm Tr}(Y^{-is}XB^{is}[BA^{\frac12},A^{\frac12+is}]Y^{z-1}).
        \end{align*}
        Thus,
        \begin{align*}
            \int_{\mathbb{R}}{\rm Tr}&(XT_z(s))\hat{g}_z(s)ds\\
                                     &={\rm Tr}\Big(\int_{\mathbb{R}}[BA^{\frac12},A^{\frac12+is}]Y^{-is}XB^{is}\hat{g}_z(s)ds\cdot B^{z-1}A^{z-1}\Big)\\
                                     &\quad +{\rm Tr}\Big(\int_{\mathbb{R}}A^{is}Y^{-is}XB^{is}\hat{g}_z(s)ds\cdot B^{z-1}[BA,A^{z-1}]\Big)\\
                                     &\quad +{\rm Tr}\Big(\int_{\mathbb{R}}Y^{-is}XB^{is}[BA^{\frac12},A^{\frac12+is}]\hat{g}_z(s)ds\cdot Y^{z-1}\Big).
        \end{align*}
        Using the notations from Lemma \ref{trivial analytic lemma} and Lemma \ref{another analytic lemma}, we may summarise the above equality as:
        \begin{equation}\label{integral as a sum of three traces}
            \int_{\mathbb{R}}{\rm Tr}(XT_z(s))\hat{g}_z(s)ds=\sum_{k=1}^3{\rm Tr}(G_k(z)F_k(z)).
        \end{equation}
        Combining \eqref{csz with traces} and \eqref{integral as a sum of three traces} completes the proof.
    \end{proof}
    
    We are now ready to complete the proof of Theorem \ref{analyticity theorem I}.

    \begin{proof}[Proof of Theorem \ref{analyticity theorem I}] 
        We will show that the function
        \begin{equation*}
            A(z) := {\rm Tr}(X\cdot F_0(z))-\sum_{k=1}^3{\rm Tr}(G_k(z)F_k(z))        
        \end{equation*}
        is analytic for $\Re(z) > p-1$.
        
        For the $F_0$ term, we use Lemma \ref{trivial analytic lemma}.\eqref{triv0}: the mapping $z\to XF_0(z)$ is $\mathcal{L}_1-$valued analytic for $\Re(z)>p-1.$ 
        For the $G_1F_1$ term, we use Lemma \ref{trivial analytic lemma}.\eqref{triv1} and Lemma \ref{another analytic lemma}.\eqref{ano1} to see that the mapping $z\to G_1(z)F_1(z)$ is $\mathcal{L}_1-$valued analytic for $\Re(z)>p-1.$ 
        For the $G_2F_2$ term, we use Lemma \ref{trivial analytic lemma}.\eqref{triv2} 
        and Lemma \ref{another analytic lemma}.\eqref{ano2} to see that the mapping $z\to G_2(z)F_2(z)$ is $\mathcal{L}_1-$valued analytic for $\Re(z)>p-1.$ 
        Finally, for the $G_3F_3$ term, we use Lemma \ref{trivial analytic lemma}.
        \eqref{triv3} and Lemma \ref{another analytic lemma}.\eqref{ano3} to see that mapping $z\to G_3(z)F_3(z)$ is $\mathcal{L}_1-$valued analytic for $\Re(z)>p-1.$
        
        Hence, $A$ is holomorphic in the set $\Re(z) > p-1$. By Lemma \ref{splitting lemma},
        \begin{equation*}
            A(z) = {\rm Tr}\Big(X\Big(B^zA^z-(A^{\frac12}BA^{\frac12})^z\Big)\Big)
        \end{equation*}
        and so the proof is complete.
    \end{proof}

\section{Criterion for universal measurability in terms of a $\zeta-$function}\label{subhankulov section}
    In this section we provide a sufficient condition for universal measurability of operators in $\mathcal{L}_{1,\infty}$. 
    
    We recall that a linear functional $\varphi$ on the weak Schatten ideal $\mathcal{L}_{1,\infty}$ is called
    a trace if for all unitary operators $U$ and $T \in \mathcal{L}_{1,\infty}$, we have $\varphi(U^*TU) = \varphi(T)$. Equivalently,
    for all bounded operators $A$ we have $\varphi(AT) = \varphi(TA)$. We say that $\varphi$ is normalised if 
    \begin{equation*}
        \varphi\left(\mathrm{diag}\left\{\frac{1}{n+1}\right\}_{n\geq 0}\right) = 1.
    \end{equation*}
    An operator $T \in \mathcal{L}_{1,\infty}$ is called \emph{universally measurable} if all normalised traces take the same value on $T$.
    
    In this section we prove Theorem \ref{zeta measurability theorem}, which provides a sufficient condition for operators of the form $AV$, $A \in \mathcal{L}_\infty$, $V \in \mathcal{L}_{1,\infty}$
    to be universally measurable. This result is new, and is sufficiently powerful to allow us to prove Theorem \ref{main thm}. A similar characterisation is provided in \cite[Theorem 4.13]{SUZ-indiana}, but the
    result provided here is stronger. A previously known characterisation of universal measurability
    in terms of a heat trace can be found in \cite[Proposition 6]{CRSZ}.
    
    Let $b$ be a signed Borel measure on $[0,\infty)$. Recall that $b$ can be written as a difference of two positive measures, $b = b_+-b_-$
    such that $b_+$ and $b_-$ are mutually singular to each other. Given a Borel set $S$, the total variation of $b$ on $S$ is defined to be $\mathrm{Var}_S(b) := b_+(S)+b_-(S)$.
    
    We will consider measures $b$ which satisfy,
    \begin{equation}\label{main measure condition}
        \sup_{x \geq 0} \mathrm{Var}_{[x,x+1]}(b) = c_b < \infty.
    \end{equation}
    Clearly any measure of finite total variation will satisfy this condition, as will some measures with infinite total variation such as Lebesgue measure and the measure $d\nu(t) = \sin(t)dt$.
    
    \begin{lem} 
        Let $b$ be a signed Borel measure satisfying the condition \eqref{main measure condition} and let $f$ be the Laplace transform of $b$, that is,
        \begin{equation*}
            f(z) := \int_0^\infty e^{-tz}\,db(t),\quad \Re(z) > 0.
        \end{equation*}
        The function $f$ is analytic on the half-plane $\{\Re(z)>0\}.$    
    \end{lem}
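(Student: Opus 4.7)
The plan is first to establish absolute convergence of the defining integral for each $z$ with $\Re(z)>0$, and then to verify holomorphy by differentiation under the integral sign, justified by dominated convergence using the total-variation decomposition.

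First, I would show that for any $z$ with $\Re(z)=\sigma>0$ the integral converges absolutely against the total variation measure $|b|$. Decompose $[0,\infty)=\bigsqcup_{n\geq 0}[n,n+1)$ and use the hypothesis \eqref{main measure condition} to estimate
\begin{equation*}
    \int_0^\infty e^{-t\sigma}\,d|b|(t) = \sum_{n=0}^\infty \int_n^{n+1} e^{-t\sigma}\,d|b|(t) \leq \sum_{n=0}^\infty e^{-n\sigma}\,\mathrm{Var}_{[n,n+1]}(b) \leq \frac{c_b}{1-e^{-\sigma}}.
\end{equation*}
In particular, this bound is uniform on compact subsets of the half-plane $\{\Re(z)>0\}$, so $f$ is well defined there, and it also immediately gives a similar bound for $\int_0^\infty t e^{-t\sigma}\,d|b|(t) \leq c_b\sum_{n\geq 0}(n+1)e^{-n\sigma}<\infty$ (this stronger bound will be needed in the next step).

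Next, fix $z_0$ with $\Re(z_0)=\sigma_0>0$, and set $\sigma_1=\sigma_0/2$. For $z$ with $|z-z_0|<\sigma_1$ one has $\Re(z)\geq \sigma_1$, and by writing
\begin{equation*}
    \frac{e^{-tz}-e^{-tz_0}}{z-z_0} = -t\int_0^1 e^{-t(z_0+s(z-z_0))}\,ds
\end{equation*}
one obtains the pointwise bound $\left|\frac{e^{-tz}-e^{-tz_0}}{z-z_0}\right|\leq t e^{-t\sigma_1}$ for all $t\geq 0$. The dominating function $t\mapsto te^{-t\sigma_1}$ is $|b|$-integrable by the preceding paragraph, and the difference quotient converges pointwise to $-te^{-tz_0}$ as $z\to z_0$. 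The dominated convergence theorem for signed measures (applied to the Jordan decomposition $b=b_+-b_-$) then yields
\begin{equation*}
    \lim_{z\to z_0}\frac{f(z)-f(z_0)}{z-z_0} = -\int_0^\infty t e^{-tz_0}\,db(t),
\end{equation*}
showing that $f$ is complex-differentiable at $z_0$, hence holomorphic on $\{\Re(z)>0\}$.

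The only delicate point is the passage to the limit for a signed measure of possibly infinite total variation: this is handled by splitting $b=b_+-b_-$ and applying dominated convergence to each piece separately, with the same $|b|$-integrable dominating function $t\mapsto te^{-t\sigma_1}$ controlling both. Morera's theorem plus Fubini could equally well be used in place of the dominated-convergence argument, with the interchange justified by the same uniform bound; I expect the dominated-convergence route to be marginally cleaner because it simultaneously identifies $f'(z_0)$.
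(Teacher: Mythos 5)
Your proof is correct, but it takes a genuinely different route from the paper's. The paper argues by truncation: it sets $f_n(z)=\int_0^n e^{-tz}\,db(t)$, notes that each $f_n$ is entire, and uses \eqref{main measure condition} to get the tail estimate $|f(z)-f_n(z)|\leq \sum_{k\geq n}e^{-k\varepsilon}\mathrm{Var}_{[k,k+1]}(b)\leq c_b\,e^{-n\varepsilon}/(1-e^{-\varepsilon})$ on $\{\Re(z)>\varepsilon\}$, so $f_n\to f$ uniformly on compact subsets of $\{\Re(z)>0\}$ and analyticity follows from the Weierstrass convergence theorem. You instead verify complex differentiability directly, dominating the difference quotient by $t e^{-t\sigma_1}$ and checking $|b|$-integrability of this dominant via the same unit-interval decomposition. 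Both arguments rest on the identical geometric-series estimate coming from \eqref{main measure condition}; yours additionally requires the first-moment bound $\int_0^\infty t e^{-t\sigma}\,d|b|(t)<\infty$ (which you supply) and buys the explicit formula $f'(z_0)=-\int_0^\infty t e^{-tz_0}\,db(t)$, whereas the paper's route avoids the difference-quotient estimate at the cost of needing (and only asserting) that each truncated integral is entire. Your handling of the possibly infinite total variation via the Jordan decomposition is also the right one: $b_\pm$ are locally finite, the dominant is integrable against $b_++b_-$, and dominated convergence applies to each piece separately, which matches how the paper treats such measures.
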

    \begin{proof} 
        For every $n\geq0,$ let
        $$f_n(z)=\int_0^ne^{-tz}db(t),\quad z\in\mathbb{C}.$$
        
        Each function $f_n,$ $n\geq0,$ is entire. Let $\varepsilon > 0.$ For $\Re(z)>\varepsilon,$ we have
        $$|f(z)-f_n(z)|=|\int_n^{\infty}e^{-tz}db(t)|\leq\sum_{k\geq n}e^{-k\varepsilon}\mathrm{Var}_{[k,k+1]}(b)\leq c_b\frac{e^{-n\varepsilon}}{1-e^{-\varepsilon}}.$$
        Therefore, $f_n\to f$ uniformly on the half-plane $\{\Re(z)>\varepsilon\}.$ Since $\varepsilon>0$ is arbitrary, it follows that $f_n\to f$ uniformly on the compact subsets of the half-plane $\{\Re(z)>0\}.$ Thus, $f$ is analytic on the half-plane $\{\Re(z)>0\}.$
    \end{proof}
    
    For $x \geq 0$, we denote
    \begin{equation*}
        b(x) := b([0,x]).
    \end{equation*}
    We caution the reader that $b(x)$ does not denote $b(\{x\})$ nor the Radon-Nikodym derivative of $b$ at $x$.
    
    The following lemma is very similar to \cite[Lemma 2.1.3]{subhankulov}. However we require a slightly different formulation of that result and we were unable to find an English-language
    version of its proof, and so we include a self-contained proof here.
    \begin{lem}\label{subhankulov key estimate} 
        Let $b$ be a signed Borel measure on $[0,\infty)$ satisfying \eqref{main measure condition}, and and let $f$ be the Laplace transform of $b.$
        For $x\geq 1$, $x \to\infty$, we have
        \begin{equation*}
            b(x) = \frac1{2\pi}\int_{-1}^1\frac{(1-t^2)^2}{\frac1x+it}f(\frac1x+it)e^{(\frac1x+it)x}dt+O(1),\quad x\geq1.
        \end{equation*}
    \end{lem}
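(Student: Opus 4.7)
The plan is a standard Tauberian contour argument in the spirit of Subhankulov's book. Substituting the Laplace representation $f(1/x+it)=\int_0^\infty e^{-(1/x+it)s}\,db(s)$ into the right-hand side, one obtains a double integral that is absolutely convergent, since $\int_{-1}^1 (1-t^2)^2/|1/x+it|\,dt$ is finite (of order $\log x$) while $\int_0^\infty e^{-(s-x)/x}\,d|b|(s)\leq e\cdot c_b/(1-e^{-1/x})\leq ec_b(x+1)$ by the hypothesis \eqref{main measure condition}. Fubini then reduces the identity to be proved to
$$\int_0^\infty e^{-(s-x)/x}K_x(s-x)\,db(s)=b(x)+O(1),\qquad K_x(u):=\frac{1}{2\pi}\int_{-1}^1\frac{(1-t^2)^2}{1/x+it}e^{-itu}\,dt,$$
after using $e^{-(1/x+it)(s-x)}=e^{-(s-x)/x}e^{-it(s-x)}$ to isolate the factor depending on $s$.

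I would evaluate $K_x$ by closing the contour $[-1,1]$ with a unit semicircle. The integrand is meromorphic in $\mathbb{C}$ with a single simple pole at $z=i/x$ arising from the factorisation $1/x+iz=i(z-i/x)$, and its residue is $(1+1/x^2)^2e^{u/x}/i$. For $u<0$ one closes through the upper semicircle $\sigma^+$, the pole is enclosed for $x>1$, and the residue theorem gives
$$K_x(u)=\left(1+\tfrac{1}{x^2}\right)^2 e^{u/x}-\frac{1}{2\pi}\int_{\sigma^+}\frac{(1-z^2)^2}{1/x+iz}e^{-izu}\,dz.$$
For $u>0$ one closes through the lower semicircle and no residue is picked up. On either semicircle, writing $z=e^{i\theta}$ one has $|(1-z^2)^2|=4\sin^2\theta$, the denominator satisfies $|1/x+ie^{i\theta}|\geq 1-1/x\geq 1/2$ for $x\geq 2$, and $|e^{-izu}|=e^{u\sin\theta}\leq 1$ on the semicircle adapted to the sign of $u$. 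Splitting the $\theta$-range into small neighbourhoods of $\{0,\pi\}$ (where $\sin^2\theta$ vanishes quadratically and one uses $\sin\theta\geq 2\theta/\pi$) and the complement (where $e^{u\sin\theta}$ decays exponentially in $|u|$) produces the uniform bound
$$\left|K_x(u)-\left(1+\tfrac{1}{x^2}\right)^2 e^{u/x}\chi_{(-\infty,0)}(u)\right|\leq\frac{C}{(1+|u|)^3},\qquad x\geq 2,\;u\in\mathbb{R},$$
with $C$ an absolute constant.

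Plugging this decomposition back, the principal term contributes
$$\int_0^x e^{-(s-x)/x}\left(1+\tfrac{1}{x^2}\right)^2 e^{(s-x)/x}\,db(s)=\left(1+\tfrac{1}{x^2}\right)^2 b(x),$$
and since \eqref{main measure condition} yields $|b(x)|\leq c_b(\lfloor x\rfloor+1)=O(x)$, this differs from $b(x)$ by $O(1/x)$. For the remainder, the crucial elementary bound $e^{-(s-x)/x}\leq e$ for $s\geq 0$ (because $(x-s)/x\leq 1$) gives
$$\left|\int_0^\infty e^{-(s-x)/x}\bigl[K_x(s-x)-(1+1/x^2)^2 e^{(s-x)/x}\chi_{[0,x]}(s)\bigr]\,db(s)\right|\leq eC\int_0^\infty\frac{d|b|(s)}{(1+|s-x|)^3},$$
and partitioning $[0,\infty)=\bigsqcup_{n\geq 0}[n,n+1]$ dominates the right-hand side by $eCc_b\sum_{k\in\mathbb{Z}}(1+|k|)^{-3}=O(1)$ uniformly in $x$. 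Adding the two contributions gives $b(x)+O(1)$.

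The main technical obstacle is maintaining uniformity in $x$ of the remainder: a priori $e^{-(s-x)/x}$ could be exponentially large, but the restriction $s\in[0,\infty)$ caps it at $e$, and this is precisely what lets the cubic decay of the kernel error control both the near-diagonal singularity at $s=x$ and the long tails $s\to\infty$. The small range $x\in[1,2]$, where the pole $i/x$ approaches the contour and the semicircle bound degenerates, can be absorbed into the $O(1)$ constant.
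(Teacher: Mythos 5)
Your argument is correct and follows essentially the same route as the paper: substitute the Laplace transform, justify Fubini with a bound of the form $exe^{-s/x}$, evaluate the inner kernel integral by a contour shift picking up the simple pole, and then sum the kernel error over unit intervals using \eqref{main measure condition}. The only deviation is internal to the kernel estimate: you close $[-1,1]$ with the unit semicircle and bound it directly via the quadratic vanishing of $\sin^2\theta$ at the endpoints (yielding $(1+|u|)^{-3}$ decay), whereas the paper's Proposition \ref{subhankulov compute} uses a contour around $[-1,0]$ together with two integrations by parts to obtain $\min\{1,v^{-2}\}$ decay; either version suffices for the $O(1)$ conclusion.
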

    \begin{proof} 
        Let $x \geq 1$. By definition we have:
        \begin{equation*}
            \int_{-1}^1\frac{(1-t^2)^2}{\frac1x+it}f(\frac1x+it)e^{(\frac1x+it)x}dt = \int_{-1}^1\int_0^{\infty}\frac{(1-t^2)^2}{\frac1x+it}e^{(\frac1x+it)(x-s)}db(s)dt.
        \end{equation*}
        Examining the integrand, we see that the function
        \begin{equation*}
            (s,t) \mapsto \frac{(1-t^2)^2}{x^{-1}+t}e^{(\frac{1}{x}+it)(x-s)}
        \end{equation*}
        is bounded above in absolute value by
        \begin{equation*}
            (s,t)\mapsto exe^{-s/x}.
        \end{equation*}
        Since $x \geq 0$ the function $s\to e^{-s/x}$ is in $L_1([0,\infty),b)$ we may interchange the integrals to get:
        \begin{equation*}
            \int_{-1}^1 \frac{(1-t^2)^2}{\frac{1}{x}+it}f(\frac{1}{x}+it)e^{(\frac{1}{x}+it)x}\,dt = \int_0^\infty \left(\int_{-1}^1 \frac{(1-t^2)^2}{\frac{1}{x}+it}e^{(\frac{1}{x}+it)(x-s)}\,dt\right)\,db(s)
        \end{equation*}
        
        Now we refer to Proposition \ref{subhankulov compute}, where it is proved that:
        $$\frac1{2\pi}\int_{-1}^1\frac{(1-t^2)^2}{\frac1x+it}e^{(\frac1x+it)(x-s)}dt=(1+x^{-2})^2\chi_{[0,x]}(s)+\min\{1,(x-s)^{-2}\}\cdot O(1).$$
        Thus,
        \begin{align*}
            \frac1{2\pi}\int_{-1}^1\frac{(1-t^2)^2}{\frac1x+it}f(\frac1x+it)e^{(\frac1x+it)x}dt &= \int_0^x(1+x^{-2})^2db(s)\\
                                                                                                &+ \int_0^{\infty}\min\{1,(x-s)^{-2}\}\cdot O(1)\,db(s).
        \end{align*}
        We let $h(s)$ be the total variation of $b$ on $[0,s]$. That is, $h(s) := \mathrm{Var}_{[0,s]}(b)$. Then by the triangle inequality, there is a positive constant $c_{\mathrm{abs}}$ such that
        \begin{align*}
            \Big|\int_0^{\infty}\min\{1,(x-s)^{-2}\}\cdot O(1)\cdot db(s)\Big|
                            &\leq c_{abs}\cdot \int_0^{\infty}\min\{1,(x-s)^{-2}\}dh(s)\\
                            &=c_{abs}\cdot\sum_{k\geq0}\int_k^{k+1}\min\{1,(x-s)^{-2}\}dh(s)\\   
                            &\leq c_{abs}\cdot\sum_{k\geq0}\sup_{s\in[k,k+1]}\min\{1,(x-s)^{-2}\}\cdot\int_k^{k+1}dh(s)\\
                            &\leq c_{abs}\cdot c_b\cdot\sum_{k\geq0}\sup_{s\in[k,k+1]}\min\{1,(x-s)^{-2}\}\\
                            &= O(1)
        \end{align*}
        Thus,
        \begin{equation}\label{subhankulov partial computation}
            \frac1{2\pi}\int_{-1}^1\frac{(1-t^2)^2}{\frac1x+it}f(\frac1x+it)e^{(\frac1x+it)x}dt=\int_0^x(1+x^{-2})^2db(s)+O(1).
        \end{equation}
        
        By the definition of $b(x)$,
        $$|b(x)-b(0)|\leq\mathrm{Var}_{[0,x]}(b)\leq c_b(1+x),\quad x > 0.$$
        Therefore,
        $$\int_0^x(1+x^{-2})^2db(s)=(1+x^{-2})^2\cdot (b(x)-b(0))=b(x)+O(1),\quad x\geq1.$$
        A combination of the latter inequality with \eqref{subhankulov partial computation} completes the proof.
    \end{proof}
    
    The following Lemma is similar in spirit to (but much stronger than) the well-known Wiener-Ikehara Tauberian theorem \cite[Theorem 14.1]{Shubin-pseudo-2001}.
    \begin{lem}\label{subhankulov main lemma} 
        Let $b$ be a signed Borel measure on $[0,\infty)$ satisfying \eqref{main measure condition}, and let $f$
        be the Laplace transform of $b.$ If there exists $\epsilon>0$ such that $f$ has analytic continuation to a half-plane $\{z \;:\; \Re(z)>-\epsilon\},$ then
        for $x \geq 1$
        $$b(x)=O(1),\quad x\to\infty.$$
    \end{lem}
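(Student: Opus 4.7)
The plan is to combine Lemma \ref{subhankulov key estimate} with a contour deformation, pushing the integration path into the region of analyticity of $f$ and extracting the residue at the origin. First I would perform the substitution $w = \tfrac{1}{x} + it$ in the integral from Lemma \ref{subhankulov key estimate}. Noting that $(1-t^2)^2 = (1+(w-\tfrac{1}{x})^2)^2$ and $dt = dw/i$, and setting
\begin{equation*}
    g(w) := \frac{(1+(w-\tfrac{1}{x})^2)^2}{w}\, f(w)\, e^{wx},
\end{equation*}
the formula of Lemma \ref{subhankulov key estimate} becomes $b(x) = \frac{1}{2\pi i}\int_{\gamma_1} g(w)\,dw + O(1)$, where $\gamma_1$ is the vertical segment from $\tfrac{1}{x}-i$ to $\tfrac{1}{x}+i$.

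Next I would fix $\delta \in (0,\min(\varepsilon,1))$ and deform $\gamma_1$ to the piecewise-linear contour $\gamma_2$ with the same endpoints, consisting of a bottom horizontal segment from $\tfrac{1}{x}-i$ to $-\delta-i$, a middle vertical segment from $-\delta-i$ to $-\delta+i$, and a top horizontal segment from $-\delta+i$ to $\tfrac{1}{x}+i$. The hypothesized analytic continuation of $f$ ensures that $g$ is meromorphic in the strip bounded by $\gamma_1$ and $\gamma_2$ with a unique singularity, a simple pole at $w=0$ from the factor $1/w$. The closed contour $\gamma_1 - \gamma_2$ is counterclockwise around $0$, so by the residue theorem
\begin{equation*}
    \frac{1}{2\pi i}\int_{\gamma_1} g(w)\,dw = \frac{1}{2\pi i}\int_{\gamma_2} g(w)\,dw + \mathrm{Res}_{w=0}\,g(w),
\end{equation*}
and a direct computation gives $\mathrm{Res}_{w=0}\,g(w) = (1+\tfrac{1}{x^2})^2 f(0) = O(1)$.

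The remaining task is to estimate the three pieces of $\int_{\gamma_2} g(w)\,dw$. The factor $\frac{1}{w}$ is bounded by $\tfrac{1}{\delta}$ on $\gamma_2$, and since $f$ is analytic and $(1+(w-\tfrac{1}{x})^2)^2$ is entire, both are uniformly bounded on the compact region $\{-\delta\le\Re(w)\le 1,\ |\Im(w)|\le 1\}$ that contains $\gamma_2$ for all $x\ge 1$. On the middle vertical segment, $|e^{wx}| = e^{-\delta x}$, so its contribution is $O(e^{-\delta x})$. On the top and bottom horizontal segments, parametrised by $\sigma \in [-\delta, \tfrac{1}{x}]$, one has $|e^{wx}| = e^{\sigma x} \le e$ and length $\tfrac{1}{x}+\delta \le 1 + \delta$, so each contributes $O(1)$. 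Putting the pieces together yields $\int_{\gamma_2} g(w)\,dw = O(1)$, and combining with the residue term gives $b(x) = O(1)$ as $x \to \infty$.

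The main obstacle is purely bookkeeping: verifying that the deformation contour lies inside the domain of analyticity of $f$, orienting the closed contour correctly so that the residue enters with the right sign, and checking that the factor $(1+(w-\tfrac{1}{x})^2)^2$ — while vanishing at the endpoints $w = \tfrac{1}{x}\pm i$ of $\gamma_1$ — remains uniformly bounded along $\gamma_2$ so that no loss of the original endpoint cancellation invalidates the estimate.
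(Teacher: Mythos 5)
Your argument is correct. The substitution $w=\tfrac1x+it$ turns the integral of Lemma \ref{subhankulov key estimate} into $\frac{1}{2\pi i}\int_{\gamma_1}g(w)\,dw$ with $g(w)=\frac{(1+(w-1/x)^2)^2}{w}f(w)e^{wx}$, the closed contour $\gamma_1-\gamma_2$ is indeed positively oriented around the only pole $w=0$ (which lies strictly inside the rectangle since $-\delta<0<\tfrac1x$), the residue is $(1+x^{-2})^2f(0)=O(1)$, and your bounds on the three pieces of $\gamma_2$ (using $|1/w|\le 1/\delta$, uniform boundedness of $f$ and of $(1+(w-\tfrac1x)^2)^2$ on the fixed compact rectangle, $|e^{wx}|=e^{-\delta x}$ on the vertical piece and $|e^{wx}|\le e$ on the horizontal pieces of length at most $1+\delta$) are all valid and uniform in $x\ge 1$, since $\delta<\varepsilon$ keeps the whole deformation region inside the domain of analyticity of $f$.

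Your route differs from the paper's. The paper does not deform the contour at this stage: it writes $f(z)=f(0)+zf_0(z)$ with $f_0$ analytic on $\{\Re(z)>-\epsilon\}$, observes that $f_0$ is uniformly bounded on the closure of $\{\tfrac1x+it:x\ge1,\,t\in[-1,1]\}$ so that the $f_0$-part of the integral is $O(1)$ directly on the original segment (using only $|e^{(\frac1x+it)x}|=e$), and evaluates the remaining $f(0)$-part by the explicit, $f$-independent computation of Proposition \ref{subhankulov compute} with $u=\tfrac1x$, $v=x$, giving $1+O(x^{-2})$. Your approach instead pushes the whole integrand, $f$ included, across the imaginary axis and extracts $f(0)$ as a residue; this bypasses the decomposition $f=f(0)+zf_0$ and does not reuse Proposition \ref{subhankulov compute}, at the price of needing analyticity of $f$ on the full rectangle $\{-\delta\le\Re(w)\le\tfrac1x,\,|\Im(w)|\le1\}$ rather than only near the segment $[-i,i]$ and at $0$ — a difference that is immaterial under the stated hypothesis. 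Both arguments use the same analytic continuation hypothesis and the same starting estimate, so the gain of your version is self-containedness (within this lemma), while the paper's version isolates the contour work in the appendix and keeps the Tauberian step itself purely an estimate on the original segment. Your final remark is also accurate: the vanishing of $(1-t^2)^2$ at the endpoints, which the appendix exploits via integration by parts to get $O(v^{-2})$, is not needed here because you only require an $O(1)$ bound.
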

    \begin{proof} 
        We write $f(z)=f(0)+zf_0(z),$ where $f_0$ is an analytic function on the half-plane $\{z\;:\;\Re(z)>-\epsilon\}.$ In particular, since the (closure of the) set
        $$\Big\{\frac1x+it:\ x\geq1,\ t\in[-1,1]\Big\},$$
        is a compact subset in $\{\Re(z)>-\epsilon\},$ it follows that
        \begin{equation}\label{the integrand is bounded}
            \sup_{x\geq 1}\sup_{t\in[-1,1]}|f_0(\frac1x+it)|<\infty.
        \end{equation}
        The assertion of Lemma \ref{subhankulov key estimate} is now written as follows.
        \begin{align}
            \frac1{2\pi}\int_{-1}^1(1-t^2)^2f_0(\frac1x+it)&e^{(\frac1x+it)x}dt+\frac1{2\pi}\int_{-1}^1\frac{(1-t^2)^2}{\frac1x+it}f(0)e^{(\frac1x+it)x}dt\nonumber\\
                                                           &=b(x)+O(1),\quad x\to\infty.\label{subhankulov main lemma 1}
        \end{align}

        The first summand in the left hand side is bounded for $x\geq 1.$ due to \eqref{the integrand is bounded}. By Proposition \ref{subhankulov compute}, we have
        \begin{equation}\label{subhankulov main lemma 2}
            \frac1{2\pi}\int_{-1}^1\frac{(1-t^2)^2}{\frac1x+it}e^{(\frac1x+it)x}dt=1+O(x^{-2}),\quad x\to \infty.
        \end{equation}
        Combining \eqref{subhankulov main lemma 1} and \eqref{subhankulov main lemma 2}, we get:
        \begin{equation*}
            b(x)+O(1) = f(0)+O(1),\quad x\to\infty.
        \end{equation*}
        So $b(x) = O(1)$ as $x\to\infty$.
    \end{proof}
    
    In order to continue our discussion of measurability we refer to the concept of a modulated operator. This
    theory was introduced in \cite{KLPS} and is developed extensively in \cite[Section 11.2]{LSZ}. If $V \in \mathcal{L}_{1,\infty}$ is positive,
    and $T \in \mathcal{L}_\infty$, we say that $T$ is $V$-modulated if
    \begin{equation*}
        \sup_{t > 0} t^{1/2}\|T(1+tV)^{-1}\|_{\mathcal{L}_2} < \infty.
    \end{equation*}
    It can be easily seen that if $T$ is $V$-modulated, and $A \in \mathcal{L}_\infty$, then $AT$ is $V$-modulated (this is also \cite[Proposition 11.2.2]{LSZ}).
    It is proved in \cite[Lemma 11.2.8]{LSZ} that $V$ is $V$-modulated, and therefore that $AV$ is $V$-modulated.
    
    The relevance of the notion of a $V$-modulated operator to measurability comes from \cite[Theorem 11.2.3]{LSZ}, which states
    that if $V \geq 0$ is in $\mathcal{L}_{1,\infty}$, $\ker(V) = 0$, $T$ is $V$-modulated and $\{e_n\}_{n\geq 0}$ is an eigenbasis for $V$ ordered
    so that $Ve_n = \mu(n,V)e_n$ for $n\geq 0$, then
    \begin{enumerate}[{\rm (a)}]
        \item $T \in \mathcal{L}_{1,\infty}$ and $\mathrm{diag}\{\langle Te_n,e_n\rangle\}_{n=0}^\infty \in \mathcal{L}_{1,\infty}$
        \item We have:
            \begin{equation*}
                \sum_{k=0}^n \lambda(k,T) - \sum_{k=0}^n \langle Te_k,e_k\rangle = O(1),\quad n\to\infty.
            \end{equation*}
    \end{enumerate}
    Recall that $\{\lambda(k,T)\}_{k= 0}^\infty$ denotes an eigenvalue sequence for $T$, ordered with non-increasing absolute value.

    \begin{lem}\label{simple klps lemma} 
        Let $0\leq V\in\mathcal{L}_{1,\infty}$ satisfy $\ker(V) = 0$ and let $A\in\mathcal{L}_{\infty}.$ Let $\{e_k\}_{k=0}^\infty$ be an eigenbasis for $V$ ordered
        such that $Ve_k = \mu(k,V)e_k$. We have        
        \begin{equation*}
            \sum_{k=0}^n\lambda(k,AV) = \sum_{\mu(k,V)>\frac{\|V\|_{1,\infty}}{n}}\langle Ae_k,e_k\rangle\mu(k,V)+O(1).
        \end{equation*}
        Here, $e_k$ is the eigenvector of $V$ corresponding to the eigenvalue $\mu(k,V).$
    \end{lem}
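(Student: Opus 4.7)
The plan is to combine the modulated operator machinery from \cite[Chapter 11]{LSZ} that was just recalled in the text with a simple tail estimate based on the definition of the $\mathcal{L}_{1,\infty}$ quasi-norm.

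First, I would observe that since $V$ is $V$-modulated (by \cite[Lemma 11.2.8]{LSZ}, cited in the excerpt) and multiplying on the left by a bounded operator preserves $V$-modulation, the operator $AV$ is $V$-modulated. Applying \cite[Theorem 11.2.3]{LSZ} to $AV$ with respect to the eigenbasis $\{e_k\}_{k=0}^\infty$ of $V$, and using $AVe_k=\mu(k,V)Ae_k$, I obtain
\begin{equation*}
    \sum_{k=0}^n \lambda(k,AV) = \sum_{k=0}^n \mu(k,V)\langle Ae_k,e_k\rangle + O(1),\quad n\to\infty.
\end{equation*}

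The remaining task is to show that truncating the right-hand sum at the threshold $\mu(k,V)>\|V\|_{1,\infty}/n$ introduces only an $O(1)$ error. To this end, I would split the sum according to whether $\mu(k,V)$ exceeds $\|V\|_{1,\infty}/n$ or not. Since $\mu(k,V)\le\|V\|_{1,\infty}/(k+1)$ by the very definition of the quasi-norm, the indices with $\mu(k,V)>\|V\|_{1,\infty}/n$ automatically satisfy $k<n-1$, so they all lie inside $\{0,1,\ldots,n\}$ and the truncated sum is a genuine subsum. The complementary piece is bounded by
\begin{equation*}
    \Big|\sum_{\substack{0\le k\le n\\ \mu(k,V)\le\|V\|_{1,\infty}/n}} \mu(k,V)\langle Ae_k,e_k\rangle\Big| \le (n+1)\cdot\|A\|_\infty\cdot\frac{\|V\|_{1,\infty}}{n} = O(1),
\end{equation*}
where I used $|\langle Ae_k,e_k\rangle|\le\|A\|_\infty$ and the bound $\mu(k,V)\le\|V\|_{1,\infty}/n$ available in this regime. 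Combining this with the displayed identity from the first step yields the claim.

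There is no substantial obstacle here: the only slightly delicate point is checking that the threshold $\|V\|_{1,\infty}/n$ truly gives a subsum of $\{0,\ldots,n\}$, which is immediate from the weak-$\ell_1$ bound on $\mu(k,V)$; everything else is a direct application of the already-cited modulated operator theory.
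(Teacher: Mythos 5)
Your proposal is correct and follows essentially the same route as the paper: apply the modulated-operator expansion from \cite[Theorem 11.2.3]{LSZ} to the $V$-modulated operator $AV$, note via $\mu(k,V)\le\|V\|_{1,\infty}/(k+1)$ that the threshold set $\{k:\mu(k,V)>\|V\|_{1,\infty}/n\}$ is an initial segment contained in $\{0,\dots,n\}$, and absorb the remaining at most $n+1$ terms, each of size at most $\|A\|_\infty\|V\|_{1,\infty}/n$, into $O(1)$. The paper merely phrases the tail estimate via the explicit index $m(n)=\max\{k:\mu(k,V)>\|V\|_{1,\infty}/n\}$, which is the same argument.
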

    \begin{proof} 
        We have that $AV$ is $V-$modulated. \cite[Theorem 11.2.3]{LSZ} now states that as $n\to\infty$
        \begin{align*}
            \sum_{k=0}^n\lambda(k,AV) &= \sum_{k=0}^n\langle AVe_k,e_k\rangle+O(1)\\
                                      &=\sum_{k=0}^n\langle Ae_k,e_k\rangle\mu(k,V)+O(1).
        \end{align*}
        For $n\geq 1$, let
        $$m(n)=\max\{k\in\mathbb{N}:\ \mu(k,V)>\frac{\|V\|_{1,\infty}}{n}\}.$$
        Using this notation, we write
        $$\sum_{\mu(k,V)>\frac{\|V\|_{1,\infty}}{n}}\langle Ae_k,e_k\rangle\mu(k,V)=\sum_{k=0}^{m(n)}\langle Ae_k,e_k\rangle\mu(k,V).$$

        We have $\mu(k,V)\leq\frac{\|V\|_{1,\infty}}{k+1}$ for every $k\geq0$ and, therefore,
        $$m(n)\leq \max\Big\{k\in\mathbb{Z}_+:\ \frac{\|V\|_{1,\infty}}{k+1}>\frac{\|V\|_{1,\infty}}{n}\Big\}=n-2<n.$$
        On the other hand, we have $\mu(k,V)\leq\frac{\|V\|_{1,\infty}}n$ for all $k>m(n)$. Thus,
        \begin{align*}
            \Big|\sum_{k=m(n)+1}^n\langle Ae_k,e_k\rangle\mu(k,V)\Big| &\leq \|A\|_{\infty} \sum_{k=m(n)+1}^n\mu(k,V)\\
                                                                    &\leq \frac{\|A\|_{\infty}\|V\|_{1,\infty}}{n}\sum_{k=m(n)+1}^n1\\
                                                                    &= O(1).
        \end{align*}
        Finally, we have
        \begin{align*}
            \sum_{k=0}^n\lambda(k,AV) &= \sum_{k=0}^n\langle Ae_k,e_k\rangle\mu(k,V)+O(1)\\
                                      &= \sum_{k=0}^{m(n)}\langle Ae_k,e_k\rangle\mu(k,V)+O(1)\\
                                      &= \sum_{\mu(k,V)>\frac{\|V\|_{1,\infty}}{n}}\langle Ae_k,e_k\rangle\mu(k,V)+O(1).
        \end{align*}
    \end{proof}
    
    We now conclude with the proof of Theorem \ref{zeta measurability theorem}, a sufficient condition for universal measurability in terms of a $\zeta$-function.
    {Recall that if $f$ is a meromorphic function of a complex variable $z$ with a simple pole at $z = 0$, then $\mathrm{Res}_{z=0} f(z)$ denotes the coefficient of $z^{-1}$ in the Laurent
    expansion of $f$, or equivalently the value of $zf(z)$ at $z=0$.}
    \begin{thm*}
        Let $0\leq V\in\mathcal{L}_{1,\infty}$ and let $A\in\mathcal{L}_{\infty}.$ 
        Define the $\zeta$-function: 
        \begin{equation*}
            \zeta_{A,V}(z) := \mathrm{Tr}(AV^{1+z}),\quad \Re(z) > 0.
        \end{equation*}
        If there exists $\varepsilon > 0$ such that $\zeta_{A,V}$ admits an analytic continuation to the set $\{z\;:\; \Re(z) > -\varepsilon\}\setminus \{0\}$
        with a simple pole at $0$, then for every normalised trace $\varphi$ on $\mathcal{L}_{1,\infty}$ we have:
        \begin{equation*}
            \varphi(AV)= \mathrm{Res}_{z=0}\zeta_{A,V}(z).
        \end{equation*}
        In particular, $AV$ is universally measurable.
    \end{thm*}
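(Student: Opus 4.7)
The plan is to use the criterion of Theorem \ref{universal measurability criterion}: it suffices to produce a constant $c$ such that $\sum_{k=0}^n \lambda(k,AV) = c\log n + O(1)$, in which case $c$ is the common value of every normalised trace on $AV$, and then to show $c = \mathrm{Res}_{z=0}\zeta_{A,V}(z)$. The strategy is to convert the hypothesis on $\zeta_{A,V}$ into a bound on a partial sum through a Laplace-transform / Tauberian argument using Lemma \ref{subhankulov main lemma}, and then to relate that partial sum to $\sum_k \lambda(k,AV)$ via the modulated operator technology packaged in Lemma \ref{simple klps lemma}.

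More concretely, after reducing to the case $\ker V = 0$ and $\|V\|_\infty \le 1$ (the kernel contributes nothing to $AV$ and a positive rescaling of $V$ changes $\zeta_{A,V}$ in a harmless way), fix an eigenbasis $\{e_k\}$ of $V$ with $Ve_k = \mu(k,V)e_k$ and define a signed Borel measure on $[0,\infty)$ by
\begin{equation*}
    b := \sum_k \langle Ae_k,e_k\rangle\, \mu(k,V)\, \delta_{t_k},\qquad t_k := -\log\mu(k,V)\ge 0.
\end{equation*}
A direct computation using the spectral decomposition of $V$ identifies the Laplace transform of $b$ with $\zeta_{A,V}(z)$ on $\{\Re(z)>0\}$. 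Since $c/z$ is itself the Laplace transform of $c$ times Lebesgue measure on $[0,\infty)$, the hypothesis ``simple pole at $0$ with residue $c$'' says precisely that the Laplace transform of the signed measure $b - c\,dt$ admits analytic continuation to the half-plane $\{\Re(z) > -\varepsilon\}$.

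To apply Lemma \ref{subhankulov main lemma} to $b - c\,dt$ I need the uniform local variation bound \eqref{main measure condition}. This is where the subtlety lies, and is the step I expect to be most delicate: I must estimate $\mathrm{Var}_{[x,x+1]}(b) = \sum_{t_k\in[x,x+1]}|\langle Ae_k,e_k\rangle|\mu(k,V)$ uniformly in $x$. The key observation is that $t_k\in[x,x+1]$ means $\mu(k,V)\in[e^{-x-1},e^{-x}]$, and since $\mu(k,V)\le \|V\|_{1,\infty}/(k+1)$ the number of such $k$ is at most $\|V\|_{1,\infty}e^{x+1}$, while each contributing $\mu(k,V)$ is at most $e^{-x}$. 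Multiplying gives a bound $e\|V\|_{1,\infty}\|A\|_\infty$ independent of $x$, and adding $|c|$ for the Lebesgue part yields the required uniform bound.

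With \eqref{main measure condition} verified, Lemma \ref{subhankulov main lemma} gives $(b-c\,dt)(x) = b([0,x]) - cx = O(1)$, i.e.\
\begin{equation*}
    \sum_{\mu(k,V)\ge e^{-x}}\langle Ae_k,e_k\rangle\, \mu(k,V) = cx + O(1),\qquad x\to\infty.
\end{equation*}
Substituting $x = \log(n/\|V\|_{1,\infty})$ and noting that a possible single boundary atom contributes at most $\|A\|_\infty\|V\|_{1,\infty}/n = O(1)$, this becomes
\begin{equation*}
    \sum_{\mu(k,V)>\|V\|_{1,\infty}/n}\langle Ae_k,e_k\rangle\, \mu(k,V) = c\log n + O(1).
\end{equation*}
Combining with Lemma \ref{simple klps lemma} (which is applicable because $AV$ is $V$-modulated) yields $\sum_{k=0}^n\lambda(k,AV) = c\log n + O(1)$, and Theorem \ref{universal measurability criterion} then concludes that $AV$ is universally measurable with $\varphi(AV) = c = \mathrm{Res}_{z=0}\zeta_{A,V}(z)$ for every normalised trace $\varphi$.
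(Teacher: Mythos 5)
Your proposal is correct and follows essentially the same route as the paper's proof: the same signed measure (the paper's $b$ is exactly your $b-c\,dt$), the same verification of the uniform local variation bound via $\mu(k,V)\leq\|V\|_{1,\infty}/(k+1)$, and the same chain Lemma \ref{subhankulov main lemma} $\to$ Lemma \ref{simple klps lemma} $\to$ Theorem \ref{universal measurability criterion}. The only cosmetic differences are your harmless normalisation $\|V\|_\infty\leq 1$ and the explicit treatment of a boundary atom, which the paper absorbs into the $O(1)$ term.
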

    \begin{proof} 
        Assume without loss of generality that $\ker(V) = 0$. Select an orthonormal basis $\{e_k\}_{k=0}^\infty$
        such that $Ve_k = \mu(k,V)e_k$.
    
        We define:
        $$b(t):=-t\mathrm{Res}_{w=0}\zeta_{A,V}(w)+\sum_{\mu(k,V)>e^{-t}}\langle Ae_k,e_k\rangle\mu(k,V),\quad t \geq 0.$$
        Since $b$ is a linear combination of monotone functions, it is of locally bounded variation. Hence there is a signed Borel measure $b$
        such that $b([0,x]) = b(x)$, $x \geq 0$.
        
        We first prove that $b$ satisfies \eqref{main measure condition}. If $x \geq 0$, then:        
        \begin{align*}
            \mathrm{Var}_{[x,x+1]}b &\leq |\mathrm{Res}_{w=0}\zeta_{A,V}(w)|+\sum_{\mu(k,V)\in(e^{-1-x},e^{-x}]}|\langle Ae_k,e_k\rangle|\mu(k,V)\\
                            &\leq |\mathrm{Res}_{w=0}\zeta_{A,V}(w)|+2\|A\|_{\infty}e^{-x}\sum_{\mu(k,V)\in(e^{-1-x},e^{-x}]}1\\
                            &\leq |\mathrm{Res}_{w=0}\zeta_{A,V}(w)|+2\|A\|_{\infty}e^{-x}\sum_{\mu(k,V)\in(e^{-1-x},\infty]}1.
        \end{align*}
        Since $V\in\mathcal{L}_{1,\infty},$ we have that
        $$\sum_{\mu(k,V)\in(e^{-1-x},\infty]}1\leq e^{1+x}\|V\|_{1,\infty},\quad x\in\mathbb{R}.$$
        Therefore,
        $$\mathrm{Var}_{[x,x+1]}b\leq |\mathrm{Res}_{w=0}\zeta_{A,V}(w)|+2e\|A\|_{\infty}\|V\|_{1,\infty}.$$
        So $b$ indeed satisfies \eqref{main measure condition}.

        Let $\alpha_k:= \log(\frac1{\mu(k,V)})$ and $b_k:=\langle Ae_k,e_k\rangle\mu(k,V),$ then
        the function $b$ has a jump discontinuity at the point $\alpha_k$ of magnitude $b_k.$ Let $\Re(z) > 0$. Using the identity $e^{-\alpha_k z} = \mu(k,V)^z$, we have:
        \begin{align*}
        \int_0^{\infty}e^{-zt}db(t) &= \sum_{k\geq0}e^{-\alpha_k z}\cdot b_k-\mathrm{Res}_{w=0}\zeta_{A,V}(w)\int_0^{\infty}e^{-zt}dt\\
                                    &= \sum_{k\geq0}\mu(k,V)^z\cdot\langle Ae_k,e_k\rangle\mu(k,V)-\mathrm{Res}_{w=0}\zeta_{A,V}(w)\int_0^{\infty}e^{-zt}dt\\
                                    &= \sum_{k\geq0}\langle AV^{1+z}e_k,e_k\rangle-\mathrm{Res}_{w=0}\zeta_{A,V}(w)\int_0^{\infty}e^{-zt}dt\\
                                    &= \mathrm{Tr}(AV^{1+z})-\frac1z\mathrm{Res}_{w=0}\zeta_{A,V}(w).
        \end{align*}
        By assumption, the above right hand side has analytic continuation to the set $\{z\;:\;\Re(z) > -\varepsilon\}$.

        Thus, since $b$ satisfies \eqref{main measure condition} the assumptions of Lemma \ref{subhankulov main lemma} are satisfied, and we may then conclude that $b(t)=O(1),$ for $t\to\infty$. Thus
        by the definition of $b$:,
        $$\sum_{\mu(k,V)>e^{-t}}\langle Ae_k,e_k\rangle\mu(k,V)=t\cdot\mathrm{Res}_{w=0}\zeta_{A,V}(w)+O(1),\quad t \to\infty.$$
        Setting $e^{-t}=\frac{\|V\|_{1,\infty}}{n},$ we obtain
        $$\sum_{\mu(k,V)>\frac{\|V\|_{1,\infty}}{n}}\langle Ae_k,e_k\rangle\mu(k,V)=\log(n)\cdot \mathrm{Res}_{w=0}\zeta_{A,V}(w)+O(1),\quad t\to\infty.$$
        By Lemma \ref{simple klps lemma}, we have
        $$\sum_{k=0}^n\lambda(k,AV)=\log(n)\cdot \mathrm{Res}_{w=0}\zeta_{A,V}(w)+O(1),\quad t\to\infty.$$
        The assertion follows now from Theorem \ref{universal measurability criterion}.
    \end{proof}

\section[Proof of the main result, $p>2$]{Proof of Theorem \ref{main thm}, $p>2$}\label{main thm p>2}

    In this section we complete the proof of Theorem \ref{main thm} under the restriction that $p > 2$. We require this restriction
    in order to directly apply the results of Section \ref{difference section}. We will handle the $p = 1$ and $p=2$ cases separately in the next section.

    \begin{lem}\label{verification p>2} 
        Let $(\mathcal{A},H,D)$ be a spectral triple satisfying Hypothesis \ref{main assumption}. 
        Let $0\leq a\in\mathcal{A}.$ If $p>2,$ then the operators $A=a^2$ and $B=(1+D^2)^{-\frac12}$ satisfy Condition \ref{conditions for analyticity}.
    \end{lem}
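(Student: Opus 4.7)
My plan is to verify each of conditions (i)--(iv) in Condition \ref{conditions for analyticity}, working throughout with the doubled spectral triple $(\mathcal{A},H,D_0)$ of Proposition \ref{pass to spectral gap 1}. That triple inherits Hypothesis \ref{main assumption} and has $|D_0|=(1+D^2)^{1/2}=B^{-1}$ invertible, so all of Remark \ref{sp gap fact} and Lemmas \ref{lambda lemma}, \ref{z<p lemma}, and \ref{main replacement lemma} are available. Note also that $A=a^2\in\mathcal{A}$ (so Hypothesis \ref{main assumption} applies to $A$) and $A^{1/2}=a$. Condition (i) is then immediate: Hypothesis \ref{main assumption}.\eqref{ass1} gives $a(D+i)^{-p}\in\mathcal{L}_{1,\infty}$, and after taking adjoints we have $(D-i)^{-p}a\in\mathcal{L}_{1,\infty}$, so multiplying on the left by the (unitary) operator $(1+D^2)^{-p/2}(D-i)^{p}$ yields $(1+D^2)^{-p/2}a\in\mathcal{L}_{1,\infty}$, whence $B^pA=((1+D^2)^{-p/2}a)\cdot a\in\mathcal{L}_{1,\infty}$. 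Condition (iii) follows from the Araki--Lieb--Thirring inequality \eqref{ALT inequality}: since $A^{1/2}BA^{1/2}$ and $|AB|$ have identical singular values for positive $A,B$, we have $|A^{1/2}BA^{1/2}|^p\prec\prec_{\log}A^pB^p$, and $A^pB^p=a^{2p-1}\cdot(aB^p)\in\mathcal{L}_\infty\cdot\mathcal{L}_{1,\infty}=\mathcal{L}_{1,\infty}$, so the monotonicity \eqref{log majorization monotone} delivers $A^{1/2}BA^{1/2}\in\mathcal{L}_{p,\infty}$.

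For (iv), I would use the identity $[B,a]=-B\delta_0(a)B$ (obtained on $H_\infty$ from $[B,a]=-B[B^{-1},a]B$ and extended by boundedness), together with a single commutator iteration $[\delta_0(a),B]=B\delta_0^2(a)B$ (the same identity applied to $\delta_0(a)$) to obtain $B\delta_0(a)B=B^2\delta_0(a)+B^2\delta_0^2(a)B$. The first summand lies in $\mathcal{L}_{p/2,\infty}$ by Lemma \ref{z<p lemma} applied to the doubled triple with $s=2$, after an adjoint (using $\delta_0(a)^*=-\delta_0(a)$ for $a=a^*$). For the second summand, the key step is an extension of Lemma \ref{z<p lemma} from $\mathcal{A}$ to the ambient algebra $\mathcal{B}$: by induction on $k$, the proof of Lemma \ref{lambda lemma} runs verbatim with $\delta_0^{k-1}(a)\in\mathcal{B}$ in place of $a$, the required inputs $\delta_0^{j+k}(a)B^{p+1},\,\partial(\delta_0^{j+k}(a))B^{p+1}\in\mathcal{L}_1$ being supplied by Lemma \ref{main replacement lemma}. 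This yields $\delta_0^2(a)B^2\in\mathcal{L}_{p/2,\infty}$, hence $B^2\delta_0^2(a)B\in\mathcal{L}_{p/2,\infty}$, completing (iv).

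For (ii), the analogous identity $[B,A]=-B\delta_0(A)B$ and one iteration give $B^{q-2}[B,A]=-B^q\delta_0(A)-B^q\delta_0^2(A)B$, so the task reduces to showing $\delta_0^k(A)B^r\in\mathcal{L}_1$ for $k\geq 1$ and $r>p$ (whence $B^r\delta_0^k(A)\in\mathcal{L}_1$ since the $\mathcal{L}_1$-norm is invariant under adjoints). The easy case $r\geq p+1$ follows from $\delta_0^k(A)B^{p+1}\in\mathcal{L}_1$ (Lemma \ref{main replacement lemma} at $\lambda=1$, transferred from $(D_0+i)^{-(p+1)}$ to $B^{p+1}$ via bounded functional calculus) together with boundedness of $B^{r-p-1}$. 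The main obstacle is the window $p<r<p+1$, where both endpoints of the relevant $\lambda$-integral are marginal. Here I would use the integral representation
\begin{equation*}
B^r=\frac{1}{C_{p,r}}\int_0^\infty \lambda^{p-r}(1+\lambda^2+D^2)^{-(p+1)/2}\,d\lambda,
\end{equation*}
derived from the substitution $\lambda=(1+D^2)^{1/2}u$ and valid precisely for $0<r<p+1$. Because $(1+\lambda^2+D^2)^{-(p+1)/2}$ and $(D_0+i\lambda)^{-(p+1)}$ differ only by a unitary factor commuting with both (both being functions of $D_0$), one has
\begin{equation*}
\|\delta_0^k(A)(1+\lambda^2+D^2)^{-(p+1)/2}\|_1=\|\delta_0^k(A)(D_0+i\lambda)^{-(p+1)}\|_1=O(\lambda^{-1}),\quad\lambda\to\infty,
\end{equation*}
by Lemma \ref{main replacement lemma}. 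The integrand $\lambda^{p-r}\cdot O(\lambda^{-1})$ is integrable at $\infty$ precisely because $r>p$ and at $0$ precisely because $r<p+1$, so Lemma \ref{peter lemma} delivers $\delta_0^k(A)B^r\in\mathcal{L}_1$. The precise $O(\lambda^{-1})$ rate in Hypothesis \ref{main assumption}.\eqref{ass2} is essential here: any slower decay would destroy integrability at infinity.
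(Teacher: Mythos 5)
Your verifications of conditions (i) and (ii) are sound — in particular your treatment of (ii) via the integral representation of $B^{r}$ for $p<r<p+1$, combined with Lemma \ref{main replacement lemma} and Lemma \ref{peter lemma}, is a legitimate alternative to the paper's route (the paper instead factors $B^{q-2}[B,A]$ and applies Lemma \ref{z<p lemma} with the H\"older inequality). The genuine gap is in (iv). Having written $[B,a]=-\bigl(B^2\delta_0(a)+B^2\delta_0^2(a)B\bigr)$, you need $\delta_0^2(a)B^2\in\mathcal{L}_{\frac p2,\infty}$, and you claim this by asserting that the proof of Lemma \ref{lambda lemma} "runs verbatim" with $\delta_0(a)$ in place of $a$, with all inputs supplied by Lemma \ref{main replacement lemma}. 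But Lemma \ref{main replacement lemma} only supplies the trace-class bounds at the power $p+1$; the proof of Lemma \ref{lambda lemma} also uses, in an essential way, the weak bound $\partial(x)|D_0|^{-p}\in\mathcal{L}_{1,\infty}$ for the term $F\partial(x)|D_0|^{-p}+\partial(x)|D_0|^{-p}F$. For $x=a$ this is Hypothesis \ref{main assumption}.\eqref{ass1}; for $x=\delta_0(a)$ you would need $\partial_0(\delta_0(a))|D_0|^{-p}=\delta_0(\partial_0(a))|D_0|^{-p}\in\mathcal{L}_{1,\infty}$, which is not among the hypotheses (they give only $\mathcal{L}_1$ bounds against $|D_0|^{-p-1}$ for higher derivatives), and the natural way to produce it — running Lemma \ref{lambda lemma} on $\partial_0(a)$ — requires $\partial_0^2(a)$ to be bounded, which is not assumed and fails already for the torus example in Subsection \ref{smoothness discussion}. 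So the induction does not close, and ALT applied to what you actually have ($\delta_0^2(a)B^{p+1}\in\mathcal{L}_1$) only yields $\delta_0^2(a)B^2\in\mathcal{L}_{\frac{p+1}{2},\infty}$, which is too weak. The paper sidesteps $\delta_0^2$ entirely: it writes $[B,A^{1/2}]=-|D_0|^{-1}\delta_0(a)|D_0|^{-1}$, invokes the Hardy--Littlewood submajorisation $|D_0|^{-1}\delta_0(a)|D_0|^{-1}\prec\prec\delta_0(a)|D_0|^{-2}$ (Theorem 9 of \cite{sbik}), and uses that the $\mathcal{L}_{\frac p2,\infty}$ quasi-norm is monotone under submajorisation because $p>2$; this is the device your argument is missing.

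Two smaller points. In (iii) your justification rests on the claim that $A^{1/2}BA^{1/2}$ and $|AB|$ have identical singular values; this is false in general (already for $2\times2$ positive matrices) — what is true is that the eigenvalues of the positive operator $A^{1/2}BA^{1/2}$ coincide with those of $AB$ and are therefore logarithmically submajorised by $\mu(AB)$ by Weyl's inequality, after which your ALT step goes through; alternatively the paper's direct estimate $\mu\bigl(a\cdot(|D_0|^{-1}a)\bigr)\le\|a\|_\infty\,\mu(a|D_0|^{-1})$ with Lemma \ref{z<p lemma} avoids ALT altogether. Your (i) is correct and essentially the paper's argument.
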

    \begin{proof} 
        Let $D_0=F(1+D^2)^{\frac12}.$ By Lemma \ref{pass to spectral gap 1}, the spectral triple $(\mathcal{A},H,D_0)$ satisfies Hypothesis \ref{main assumption}. 
        Since $|D_0| \geq 1$, we have that $\||D_0|^{-1}\|_\infty \leq 1$.

        Let us establish Condition \ref{conditions for analyticity}.\eqref{anacond1}. 
        We have
        $$B^pA=|D_0|^{-p}a^2.$$
        Since $(\mathcal{A},H,D)$ is $p$-dimensional, we have that $|D_0|^{-p}a \in \mathcal{L}_{1,\infty}$, and so $B^pA \in \mathcal{L}_{1,\infty}$.

        Next let us prove Condition \ref{conditions for analyticity}.\eqref{anacond2}. Let $q>p.$ We have
        \begin{align*}
            B^{q-2}[B,A] &= |D_0|^{2-q}[|D_0|^{-1},a^2]\\
                         &= -|D_0|^{1-q}\delta_0(a^2)|D_0|^{-1}\\
                         &= -|D_0|^{1-q}\delta_0(a)a|D_0|^{-1}-|D_0|^{1-q}a\delta_0(a)|D_0|^{-1}.
        \end{align*}
        Referring to Lemma \ref{z<p lemma}, we have
        $$|D_0|^{1-q}\delta_0(a) \in \mathcal{L}_{p/(q-1),\infty},\quad |D_0|^{1-q}a \in \mathcal{L}_{p/(q-1),\infty},$$
        $$\delta_0(a)|D_0|^{-1} \in \mathcal{L}_{p,\infty},\quad a|D_0|^{-1} \in \mathcal{L}_{p,\infty}.$$
        Therefore,
        \begin{align*}
            B^{q-2}[B,A] &\in \mathcal{L}_{p/(q-1),\infty}\cdot \mathcal{L}_{p,\infty} + \mathcal{L}_{p/(q-1),\infty}\cdot \mathcal{L}_{p,\infty}
        \end{align*}
        So by the H\"older inequality, $B^{q-2}[B,A] \in \mathcal{L}_{p/q,\infty}$. Since $q > p$, it then follows that $B^{q-2}[B,A] \in \mathcal{L}_1$.
        
        Now we establish Condition \ref{conditions for analyticity}.\eqref{anacond3}. We have
        $$A^{\frac12}BA^{\frac12}=a|D_0|^{-1}a.$$
        Thus,
        $$\|A^{\frac12}BA^{\frac12}\|_{p,\infty}\leq\|a\|_{\infty}\|a|D_0|^{-1}\|_{p,\infty}.$$
        The above right hand side is finite by Lemma \ref{z<p lemma}.

        Finally we prove Condition \ref{conditions for analyticity}.\eqref{anacond4}. We recall the notation that $\delta_0(a)$ denotes the bounded extension of $[|D_0|,a]$. 
        Using \eqref{favourite commutator identity}, we have:
        $$[B,A^{\frac12}]=[|D_0|^{-1},a]=-|D_0|^{-1}\delta_0(a)|D_0|^{-1}.$$
        By Theorem 9 in \cite{sbik}, we have
        $$|D_0|^{-1}\delta_0(a)|D_0|^{-1}\prec\prec \delta_0(a)|D_0|^{-2}.$$
        By Lemma \ref{z<p lemma}, we have 
        $$\delta_0(a)|D_0|^{-2}\in\mathcal{L}_{\frac{p}{2},\infty}.$$
        Since the norm in the space $\mathcal{L}_{\frac{p}{2},\infty}$ is monotone with respect to the Hardy-Littlewood submajorisation (recall that $p>2$), it follows that also
        $$[B,A^{\frac12}]=-|D_0|^{-1}\delta_0(a)|D_0|^{-1}\in\mathcal{L}_{\frac{p}{2},\infty}.$$
    \end{proof}
    
    Now we may prove Theorem \ref{main thm} for the case $p > 2$.
    \begin{thm*}[\ref{main thm}, $p > 2$ case]
        Assume $p > 2$ and let $(\mathcal{A},H,D)$ be a spectral triple satisfying Hypothesis \ref{main assumption}. If $c\in\mathcal{A}^{\otimes (p+1)}$ is a local Hochschild cycle, then
        for every normalised trace $\varphi$ on $\mathcal{L}_{1,\infty}$ we have:
        \begin{equation*}
            \varphi(\Omega(c)(1+D^2)^{-\frac{p}{2}})=\mathrm{Ch}(c).
        \end{equation*}
    \end{thm*}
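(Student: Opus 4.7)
The plan is to combine Theorems \ref{zeta thm}, \ref{analyticity theorem I} and \ref{zeta measurability theorem} with the locality of $c$. Let $\phi \in \mathcal{A}$ be the positive element provided by locality, set $a := \phi$, and define
\begin{equation*}
    A := a^2, \qquad B := (1+D^2)^{-1/2}, \qquad V := A^{1/2} B A^{1/2} = aBa.
\end{equation*}
By Lemma \ref{verification p>2} the pair $(A,B)$ satisfies Condition \ref{conditions for analyticity}, so the machinery of Section \ref{difference section} applies. Since $\Gamma$ commutes with $\mathcal{A}$, the locality condition $\phi\, a_0^j = a_0^j$ propagates to $a\,\Omega(c) = \Omega(c)$. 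Consequently $\mathrm{range}(\Omega(c)) \subseteq \ker(1-a)$, and with $Q := \chi_{\{1\}}(a)$ (the spectral projection of $a$ at $\{1\}$) one has $Q\,\Omega(c) = \Omega(c)$; by Borel functional calculus $a^{2w}Q = Q$ for every $w$ with $\Re(w) > 0$, yielding the key identity
\begin{equation*}
    a^{2w}\,\Omega(c) = \Omega(c), \qquad \Re(w) > 0.
\end{equation*}

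The first reduction is algebraic. Using the cyclicity of the trace $\varphi$ together with $(a^{2p}-1)\,\Omega(c) = 0$, one obtains $\varphi(\Omega(c)B^p) = \varphi(\Omega(c)B^p A^p)$. Corollary \ref{difference is trace class} yields $B^pA^p - V^p \in \mathcal{L}_1$, so $\Omega(c)(B^pA^p - V^p) \in \mathcal{L}_1$; since every trace on $\mathcal{L}_{1,\infty}$ vanishes on $\mathcal{L}_1$, this gives $\varphi(\Omega(c)B^pA^p) = \varphi(\Omega(c) V^p)$. Altogether,
\begin{equation*}
    \varphi\bigl(\Omega(c)(1+D^2)^{-p/2}\bigr) = \varphi(\Omega(c)V^p).
\end{equation*}

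The second reduction evaluates the right-hand side via Theorem \ref{zeta measurability theorem} applied to $V^p \in \mathcal{L}_{1,\infty}$ and $\Omega(c) \in \mathcal{L}_\infty$. This requires that $z \mapsto \mathrm{Tr}(\Omega(c)\,V^{p(1+z)})$ admit analytic continuation to a punctured neighbourhood of $0$ with a simple pole at $0$. Setting $w = p(1+z)$, it suffices to show that $w \mapsto \mathrm{Tr}(\Omega(c)V^w)$ continues meromorphically to $\{\Re(w) > p-1\}$ with a single simple pole at $w = p$ of residue $p\,\mathrm{Ch}(c)$. For $\Re(w) > p$, cyclicity of $\mathrm{Tr}$ (applicable since $\Omega(c)B^w$ is then trace class by Theorem \ref{zeta thm}) combined with the identity $a^{2w}\,\Omega(c) = \Omega(c)$ gives
\begin{equation*}
    \mathrm{Tr}(\Omega(c)B^wA^w) = \mathrm{Tr}(a^{2w}\,\Omega(c)B^w) = \zeta_{c,D}(w).
\end{equation*}
Theorem \ref{analyticity theorem I} then shows that $\mathrm{Tr}(\Omega(c)V^w) = \zeta_{c,D}(w) - \mathrm{Tr}\bigl(\Omega(c)(B^wA^w - V^w)\bigr)$, where the correction term is analytic in $\{\Re(w) > p-1\}$. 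Combined with Theorem \ref{zeta thm} this supplies the desired meromorphic continuation. Under the substitution $w = p(1+z)$ the pole at $w=p$ becomes a simple pole at $z=0$ with residue $\mathrm{Ch}(c)$, and Theorem \ref{zeta measurability theorem} delivers $\varphi(\Omega(c)V^p) = \mathrm{Ch}(c)$, completing the proof.

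The chief technical obstacle is the Borel-functional-calculus step: the locality hypothesis only ensures $a^n\Omega(c) = \Omega(c)$ for integer $n$, but the analytic-continuation argument forces one to handle non-integer exponents $a^{2w}$. Promoting this identity to all $\Re(w) > 0$ via the spectral projection $Q = \chi_{\{1\}}(a)$ is precisely what makes the chain of reductions $\varphi(\Omega(c)B^p) \to \varphi(\Omega(c)V^p) \to \mathrm{Ch}(c)$ close; without it, Theorem \ref{analyticity theorem I} would only let one compare $\mathrm{Tr}(\Omega(c)B^wA^w)$ and $\mathrm{Tr}(\Omega(c)V^w)$, with no bridge from the former to $\zeta_{c,D}$.
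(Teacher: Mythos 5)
Your proposal is correct and follows essentially the same route as the paper: the same choice $A=a^2$, $B=(1+D^2)^{-1/2}$ verified via Lemma \ref{verification p>2}, the same functional-calculus locality identity $a^{2w}\Omega(c)=\Omega(c)$ through $\chi_{\{1\}}(a)$, and the same chain Theorem \ref{zeta thm} $\to$ Theorem \ref{analyticity theorem I} $\to$ Theorem \ref{zeta measurability theorem} $\to$ Corollary \ref{difference is trace class}. The only differences are cosmetic (your $V$ is the paper's $V^{1/p}$, and you permute the order of the final trace-class and cyclicity reductions), so nothing further is needed.
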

    \begin{proof}
        By Theorem \ref{zeta thm}, the function
        \begin{equation*}
            \zeta_{c,D}(z) := \mathrm{Tr}(\Omega(c)(1+D^2)^{-z/2}),\quad \Re(z) > p
        \end{equation*}
        admits an analytic continuation to the set $\{z\;:\;\Re(z) > p-1\}\setminus \{p\}$, and $p$ is a simple pole for $\zeta_{c,D}$
        with residue $p\mathrm{Ch}(c)$.

        Let $c = \sum_{j=1}^m a^{j}_0\otimes a^j_1\otimes\cdots\otimes a^j_p$. We assume that $c$ is local, i.e. that there exists $0 \leq a \in \mathcal{A}$ such that 
        for all $j$ we have $aa^j_0 = a^j_0$. Equivalently, $(1-a)a^j_0 = 0$. So $\mathrm{im}(a^j_0) \subseteq \ker(1-a)$. Since the support projection $\mathrm{supp}(1-a)$
        is the projection onto the orthogonal complement of the kernel, we have:
        \begin{equation*}
            \mathrm{supp}(1-a)a^j_0 = 0.
        \end{equation*}
        By functional calculus, $\mathrm{supp}(1-a) = 1-\chi_{\{1\}}(a)$, so moreover we have that $\chi_{\{1\}}(a)a^j_0 = a^j_0$. Therefore, for all $z \in \mathbb{C}$ with $\Re(z) > 0$:
        $$a^{2z}a_0^j=a^{2z}\chi_{\{1\}}(a)a_0^j=1^{2z}a_0^j=a_0^j.$$
        Recall that $\Omega(c) = \sum_{j=0}^m \Gamma a_0^j\partial(a_1^j)\cdots \partial(a_p^j)$. Since $a$ commutes with $\Gamma$, we have for all $\Re(z) > 0$,
        \begin{equation}\label{omega is local}
            a^{2z}\Omega(c) = \Omega(c).
        \end{equation}
        Let $A=a^2$ and $B=(1+D^2)^{-\frac12}$, as in Lemma \ref{verification p>2}. Then $B^zA^z = (1+D^2)^{-z/2}a^{2z}$, and hence
        \begin{align*}
            \mathrm{Tr}(\Omega(c)B^zA^z) &= \mathrm{Tr}(\Omega(c)(1+D^2)^{-\frac{z}{2}}a^{2z})\\
                                 &= \mathrm{Tr}(a^{2z}\Omega(c)(1+D^2)^{-\frac{z}{2}})\\
                                 &= \mathrm{Tr}(\Omega(c)(1+D^2)^{-\frac{z}{2}}).
        \end{align*}
        By Theorem \ref{zeta thm}, it then follows that $z\mapsto \mathrm{Tr}(\Omega(c)B^zA^z)$        
        admits an analytic continuation to the set $\{\Re(z)>p-1\}\setminus \{p\}$ with a simple pole at $z = p$
        and residue $p\mathrm{Ch}(c)$.
        
        Condition \ref{conditions for analyticity} holds for $A$ and $B$ by Lemma \ref{verification p>2}. Hence we may apply Theorem \ref{analyticity theorem I} to conclude that
        $$z\to{\rm Tr}\Big(\Omega(c)\Big(B^zA^z-(A^{\frac12}BA^{\frac12})^z\Big)\Big)$$
        admits an analytic continuation to the set $\{\Re(z)>p-1\}.$

        By Lemma \ref{verification p>2}, $A^{\frac12}BA^{\frac12}\in\mathcal{L}_{p,\infty}.$ Hence, the function (defined {\it a priori} for $\Re(z)>p$)
        $$z\to{\rm Tr}(\Omega(c)(A^{\frac12}BA^{\frac12})^z)$$
        admits an analytic continuation to the set $\{\Re(z)>p-1\}\setminus \{p\}$, with $z=p$ being a simple pole with residue $p\mathrm{Ch}(c).$ 
        Consider $V=(A^{\frac12}BA^{\frac12})^p\in\mathcal{L}_{1,\infty}.$ 
        It has just been demonstrated that
        $$z\to{\rm Tr}(\Omega(c)V^z)$$
        admits an analytic continuation to the set $\{\Re(z)>1-\frac1p\}\setminus \{1\},$ with a simple pole at $z=1$ and the corresponding residue being $\mathrm{Ch}(c).$

        By Theorem \ref{zeta measurability theorem}, we therefore have
        $$\varphi(\Omega(c)V)=\mathrm{Ch}(c)$$
        for every normalised trace $\varphi$ on $\mathcal{L}_{1,\infty}.$

        By Corollary \ref{difference is trace class} we have:
        $$V-B^pA^p=(A^{\frac12}BA^{\frac12})^p-B^pA^p\in\mathcal{L}_1.$$
        Since $\varphi$ vanishes on $\mathcal{L}_1,$ it follows that
        $$\varphi(\Omega(c)B^pA^p)=\varphi(\Omega(c)V)-\varphi(\Omega(c)(V-B^pA^p))=\varphi(\Omega(c)V)$$
        for every normalised trace $\varphi$ on $\mathcal{L}_{1,\infty}.$ Now using \eqref{omega is local} with $z = p$:
        $$\varphi(\Omega(c)(1+D^2)^{-\frac{p}{2}})=\varphi(a^{2p}\Omega(c)(1+D^2)^{-\frac{p}{2}})=\varphi(\Omega(c)B^pA^p)=\mathrm{Ch}(c)$$
        for every normalised trace $\varphi$ on $\mathcal{L}_{1,\infty}.$
    \end{proof}

\section[Proof of the main result, $p=1,2$]{Proof of Theorem \ref{main thm}, $p=1,2$}\label{main thm p=1,2}

    In this final section we complete the proof of Theorem \ref{main thm} by dealing with the remaining cases of $p = 1$ and $p=2$.
    { We require adjustment for these cases since Theorem \ref{analyticity theorem I} is inapplicable for $p \leq 2$.}

    \begin{lem}\label{13 lemma} 
        Let $(\mathcal{A},H,D)$ satisfy Hypothesis \ref{main assumption}. 
        Suppose that spectrum of the operator $D$ does not intersect the interval $(-1,1).$ Then for all $x \in \mathrm{dom}(\delta)$ we have an absolute constant $c_{\mathrm{abs}}$ such that
        \begin{equation*}
            \|[|D|^{\frac13},x]\|_1\leq c_{\mathrm{abs}}\|[|D|,x]\|_1,
        \end{equation*}
        and for all $r \in (1,\infty)$ a constant $c_r > 0$ such that
        \begin{equation*}
            \|[|D|^{\frac13},x]\|_{r,\infty}\leq c_r\|[|D|,x]\|_{r,\infty}.
        \end{equation*}
        These inequalities are understood to be trivially true if the right hand side is infinite.
    \end{lem}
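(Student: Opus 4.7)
The plan is to exploit the integral representation of the fractional power: for all $t > 0$ the standard formula
\begin{equation*}
    t^{1/3} = \frac{\sin(\pi/3)}{\pi} \int_0^\infty \frac{t}{t+\lambda} \lambda^{-2/3}\,d\lambda
\end{equation*}
together with the assumption that $\sigma(|D|) \subseteq [1,\infty)$ yields, by Borel functional calculus,
\begin{equation*}
    |D|^{1/3} = \frac{\sin(\pi/3)}{\pi} \int_0^\infty \frac{|D|}{|D|+\lambda}\,\lambda^{-2/3}\,d\lambda
\end{equation*}
as a weak-operator-topology integral. The point of the spectral gap assumption is that $\|(|D|+\lambda)^{-1}\|_\infty \leq (1+\lambda)^{-1}$, which will guarantee convergence of all integrals to follow.

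First I would take the commutator with $x$ and move it under the integral sign. Writing $\frac{|D|}{|D|+\lambda} = 1 - \frac{\lambda}{|D|+\lambda}$ and applying the identity \eqref{favourite commutator identity} gives
\begin{equation*}
    [|D|^{1/3},x] = \frac{\sin(\pi/3)}{\pi}\int_0^\infty \lambda^{1/3}\,\frac{1}{|D|+\lambda}\,[|D|,x]\,\frac{1}{|D|+\lambda}\,d\lambda.
\end{equation*}
Using $\|(|D|+\lambda)^{-1}\|_\infty \leq (1+\lambda)^{-1}$ together with the Hölder-type inequality \eqref{weak-type Holder}, I obtain the pointwise (in $\lambda$) estimate
\begin{equation*}
    \Big\|\frac{1}{|D|+\lambda}[|D|,x]\frac{1}{|D|+\lambda}\Big\|_{\mathcal{E}} \leq \frac{\|[|D|,x]\|_{\mathcal{E}}}{(1+\lambda)^2}
\end{equation*}
valid uniformly for $\mathcal{E} = \mathcal{L}_1$ or $\mathcal{L}_{r,\infty}$.

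The final step is to pass the norm under the integral. For the $\mathcal{L}_1$ bound I would invoke Lemma \ref{peter lemma}; for the $\mathcal{L}_{r,\infty}$ bound I would invoke Proposition \ref{peter norm lemma} (which requires $r > 1$, matching the hypothesis and producing the dependence $c_r \sim r/(r-1)$). Either bound then reduces to the scalar estimate
\begin{equation*}
    \int_0^\infty \frac{\lambda^{1/3}}{(1+\lambda)^2}\,d\lambda = B(4/3,2/3) < \infty,
\end{equation*}
from which the two inequalities follow with $c_{\mathrm{abs}} = \pi^{-1}\sin(\pi/3)B(4/3,2/3)$ and $c_r = \frac{r}{r-1}c_{\mathrm{abs}}$.

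There is no real obstacle here, but two small technical points require attention. First, one must verify weak-operator continuity of $\lambda \mapsto (|D|+\lambda)^{-1}[|D|,x](|D|+\lambda)^{-1}$ in order to apply Proposition \ref{peter norm lemma}; this follows from norm continuity of $\lambda \mapsto (|D|+\lambda)^{-1}$ in $\mathcal{L}_\infty$, immediate from functional calculus and the spectral gap. Second, one must justify pulling the commutator $[\,\cdot\,,x]$ inside the integral, which is legitimate because the integrand is Bochner integrable in operator norm (thanks to the uniform bound $\lambda^{-2/3}\|(|D|+\lambda)^{-1}\|_\infty \|x\|_\infty \leq \lambda^{-2/3}(1+\lambda)^{-1}\|x\|_\infty$).
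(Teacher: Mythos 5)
Your resolvent-integral route is sound in outline and is genuinely different from the paper's proof. The paper argues Fourier-analytically: it picks a smooth $f$ with $f(t)=|t|^{1/3}$ for $|t|>1$, damps it to $f_\varepsilon(t)=f(t)e^{-\varepsilon^2t^2}$, bounds $\|\widehat{f_\varepsilon'}\|_1$ uniformly in $\varepsilon$, applies the commutator representation of Lemma \ref{first commutator rep lemma} to get $\|[f_\varepsilon(|D|),x]\|_1\leq c_{\mathrm{abs}}\|\delta(x)\|_1$, and then removes the damping via spectral cut-offs $\chi_{[0,N]}(|D|)$ and two applications of the Fatou property. Your Balakrishnan-type formula replaces the approximation and the double limiting procedure by a single absolutely convergent integral, uses only \eqref{favourite commutator identity} together with Lemma \ref{peter lemma} and Proposition \ref{peter norm lemma}, and produces explicit constants (in fact $c_{\mathrm{abs}}=\frac{\sin(\pi/3)}{\pi}B(4/3,2/3)=\frac13$ and $c_r=\frac{r}{r-1}c_{\mathrm{abs}}$), whereas the paper's method recycles machinery it needs elsewhere anyway.

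One technical point in your write-up is wrong as stated and must be repaired, though the repair is routine. The identity $|D|^{1/3}=\frac{\sin(\pi/3)}{\pi}\int_0^\infty\frac{|D|}{|D|+\lambda}\lambda^{-2/3}\,d\lambda$ cannot hold as a weak-operator-topology integral in the sense of Subsection \ref{weak int}: the integrand has operator norm of order $\lambda^{-2/3}$, which is not integrable at infinity, and the left-hand side is unbounded while the paper's weak integral always yields a bounded operator. For the same reason your justification for moving the commutator inside the integral fails: the quantity you bound, $\lambda^{-2/3}\|(|D|+\lambda)^{-1}\|_\infty\|x\|_\infty$, is not the norm of the relevant integrand, and before the commutator identity is applied the integrand is only $O(\lambda^{-2/3})$ in norm, hence not Bochner integrable. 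The fix is exactly the device the paper uses in Lemma \ref{delta to lambda}: interpret the scalar formula strongly. For $\xi\in\mathrm{dom}(|D|)$ the integral $\int_0^\infty\frac{|D|}{|D|+\lambda}\xi\,\lambda^{-2/3}\,d\lambda$ converges as a Bochner integral in $H$, since $\bigl\|\frac{|D|}{|D|+\lambda}\xi\bigr\|\leq\min\{\|\xi\|,(1+\lambda)^{-1}\||D|\xi\|\}$; apply this to $\xi$ and to $x\xi$ (which lies in $\mathrm{dom}(|D|)$ because $x\in\mathrm{dom}(\delta)$), subtract, and only then use \eqref{favourite commutator identity} under the integral sign. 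After that step the integrand $\lambda^{1/3}(|D|+\lambda)^{-1}\delta(x)(|D|+\lambda)^{-1}$ has operator norm at most $\lambda^{1/3}(1+\lambda)^{-2}\|\delta(x)\|_\infty$, so your commutator formula holds as a genuine weak-operator-topology integral, and the boundedness of $\delta(x)$ — not merely of $x$ — is what makes everything absolutely convergent. With this correction, your applications of Lemma \ref{peter lemma}, Proposition \ref{peter norm lemma} and the Beta integral $\int_0^\infty\lambda^{1/3}(1+\lambda)^{-2}\,d\lambda=B(4/3,2/3)$ go through and give both inequalities as claimed.
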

    \begin{proof} 
        We only prove the first assertion. One can prove the second inequality by an identical argument, with the $\mathcal{L}_{r,\infty}$ quasi-norm in place of the $\mathcal{L}_1$ norm.

        Let $f$ be a smooth function on $\mathbb{R}$ such that $f(t)=|t|^{\frac13}$ for $|t|>1.$ For $\varepsilon > 0$, set $f_{\varepsilon}(t)=f(t)e^{-\varepsilon^2t^2},$
        $t\in\mathbb{R}.$ Then,
        \begin{align*}
               f_\varepsilon'(t) &= (f'(t)-2\varepsilon^2tf(t))e^{-\varepsilon^2t^2},\\
            f_{\varepsilon}''(t) &= (f''(t)-4\varepsilon^2tf'(t)+(4t\varepsilon^4-2\varepsilon^2)f(t))e^{-\varepsilon^2t^2}
        \end{align*}
        Since for $|t| > 1$ we have $f'(t) = \frac{1}{3}|t|^{-2/3}$, we have that as $\varepsilon \to 0$ the $L_2$-norm
        $\|f_\varepsilon'\|_{L_2(\mathbb{R})}$ is uniformly bounded. Similarly, since for $|t| > 1$, $f''(t) = -\frac{2}{9}|t|^{-5/3}$, we
        also have that $\|f_{\varepsilon}''(t)\|_{L_2(\mathbb{R})}$ is uniformly bounded.
        
        We have that (see e.g. \cite[Lemma 7]{PS-crelle}):
        $$\|\widehat{f_{\epsilon}'}\|_1\leq c_{abs}\Big(\|f_{\epsilon}'\|_2+\|f_{\epsilon}''\|_2\Big).$$
        and so if $\varepsilon \in(0,1)$, $\|\widehat{f_{\varepsilon}'}\|_1$ is uniformly bounded.

        By Lemma \ref{first commutator rep lemma} (taken with $s=1$), we have the identity:
        $$[f_{\epsilon}(|D|),x]=\int_{-\infty}^{\infty}\Big(\int_0^1\hat{f_{\epsilon}'}(u)e^{iu(1-v)|D|}\delta(x)e^{iuv|D|}dv\Big)du.$$
        So taking the $\mathcal{L}_1$-norm, we conclude from Lemma \ref{peter norm lemma} that
        \begin{align*}
            \Big\|[f_{\epsilon}(|D|),x]\Big\|_{1} &\leq \|\hat{f_{\epsilon}'}\|_1\|\delta(x)\|_{1}\\
                                                  &\leq c_{\mathrm{abs}}\|\delta(x)\|_{1}.
        \end{align*}

        Fix $N>0.$ We have
        $$\Big\|\chi_{[0,N]}(|D|)[f_{\epsilon}(|D|),x]\chi_{[0,N]}(|D|)\Big\|_{r,\infty}\leq\frac{c_{abs}r}{r-1}\|\delta(x)\|_{r,\infty}.$$
        Since as $\varepsilon \to 0$, we have that $f_{\varepsilon}$ converges uniformly to $f$ on the set $[0,N]$, we have that:
        As $\epsilon\to0,$ we have
        $$\chi_{[0,N]}(|D|)[f_{\epsilon}(|D|),x]\chi_{[0,N]}(|D|)\to \chi_{[0,N]}(|D|)[|D|^{\frac13},x]\chi_{[0,N]}(|D|)$$
        in the operator norm. By the Fatou property of the $\mathcal{L}_1$-norm:
        $$\Big\|\chi_{[0,N]}(|D|)\cdot[|D|^{\frac13},x]\cdot\chi_{[0,N]}(|D|)\Big\|_{1} \leq c_{\mathrm{abs}}\|\delta(x)\|_{1}.$$

        Since the above inequality is true for arbitrary $N>0,$ we may take the limit $N\to\infty$ and again using the Fatou property of the $\mathcal{L}_1$ norm, we arrive at
        $$\Big\|[|D|^{\frac13},x]\Big\|_{1}\leq c_{\mathrm{abs}}\|\delta(x)\|_{1}.$$
    \end{proof}

    As a replacement for Lemma \ref{verification p>2} in the $p=1$ case we use the following:
    \begin{lem}\label{verification p=1} 
        Let $(\mathcal{A},H,D)$ be a $1-$dimensional spectral triple satisfying Hypothesis \ref{main assumption}. 
        If $0\leq a \in\mathcal{A},$ then the operators $A=a^4$ and $B=(1+D^2)^{-\frac16}$ satisfy Condition \ref{conditions for analyticity} with $p=3.$
    \end{lem}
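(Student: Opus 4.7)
The plan is to verify each of the four conditions of Condition \ref{conditions for analyticity} (with ``$p$'' there equal to $3$) for $A=a^4$ and $B=|D_0|^{-1/3}$, where $D_0=F(1+D^2)^{1/2}$ as in Proposition \ref{pass to spectral gap 1}. Passing to $D_0$ has the advantage that $|D_0|\geq 1$ is boundedly invertible, so all fractional powers $|D_0|^{-s}$ are bounded; moreover $(\mathcal{A},H,D_0)$ still satisfies Hypothesis \ref{main assumption}. I let $\delta_0$ denote the commutator with $|D_0|$, and recall from Remark \ref{sp gap fact} and Lemma \ref{z<p lemma} that both $a|D_0|^{-s}$ and $\delta_0(a)|D_0|^{-s}$ lie in $\mathcal{L}_{1/s,\infty}$ for $0<s\leq 1$.

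Conditions (i) and (iii) are straightforward: $B^3A=(|D_0|^{-1}a)a^3\in\mathcal{L}_{1,\infty}$, and the Araki-Lieb-Thirring inequality \eqref{ALT inequality} combined with \eqref{log majorization monotone} gives $|D_0|^{-1/3}a\in\mathcal{L}_{3,\infty}$, hence $A^{1/2}BA^{1/2}=a^2(|D_0|^{-1/3}a)a\in\mathcal{L}_{3,\infty}$.

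The heart of the argument is condition (iv): $[|D_0|^{-1/3},a^2]\in\mathcal{L}_{3/2,\infty}$. Naively applying \eqref{favourite commutator identity} and H\"older factor-by-factor only yields $\mathcal{L}_{3,\infty}$, because $|D_0|^{-1/3}$ is not compact. The key refinement is that one of the outer copies of $|D_0|^{-1/3}$ can be \emph{absorbed into} the commutator with $|D_0|^{1/3}$: using the integral representation from Lemma \ref{first commutator rep lemma} with a smoothed version of $t\mapsto|t|^{1/3}$ (in the style of Lemma \ref{13 lemma}) and commuting $|D_0|^{-1/3}$ past the unitary $e^{iu(1-v)|D_0|}$, one obtains
\begin{equation*}
|D_0|^{-1/3}[|D_0|^{1/3},a]=\int_{\mathbb{R}}\int_0^1 \widehat{f'}(u)\,e^{iu(1-v)|D_0|}\bigl(|D_0|^{-1/3}\delta_0(a)\bigr)e^{iuv|D_0|}\,dv\,du.
\end{equation*}
Since $|D_0|^{-1/3}\delta_0(a)\in\mathcal{L}_{3,\infty}$ by Lemma \ref{z<p lemma}, Proposition \ref{peter norm lemma} (applicable because $3>1$) shows $|D_0|^{-1/3}[|D_0|^{1/3},a]\in\mathcal{L}_{3,\infty}$, and symmetrically $[|D_0|^{1/3},a]|D_0|^{-1/3}\in\mathcal{L}_{3,\infty}$. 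Decomposing $[|D_0|^{-1/3},a^2]=[|D_0|^{-1/3},a]a+a[|D_0|^{-1/3},a]$ and grouping
\begin{equation*}
[|D_0|^{-1/3},a]a=-\bigl(|D_0|^{-1/3}[|D_0|^{1/3},a]\bigr)\bigl(|D_0|^{-1/3}a\bigr),
\end{equation*}
the H\"older inequality \eqref{weak-type Holder} puts the summand into $\mathcal{L}_{3,\infty}\cdot\mathcal{L}_{3,\infty}\subseteq\mathcal{L}_{3/2,\infty}$, and the mirror term is handled analogously.

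Condition (ii) uses the same mechanism. For any $q\in(3,4]$, writing
\begin{equation*}
B^{q-2}[B,A]=-\sum_{k=0}^{3}|D_0|^{-(q-1)/3}a^k\,[|D_0|^{1/3},a]\,a^{3-k}|D_0|^{-1/3}
\end{equation*}
(via the identity $[|D_0|^{-1/3},\cdot]=-|D_0|^{-1/3}[|D_0|^{1/3},\cdot]|D_0|^{-1/3}$ and Leibniz), each summand factors into a product where $|D_0|^{-(q-1)/3}$ is paired either with an adjacent copy of $a$ (giving $\mathcal{L}_{3/(q-1),\infty}$ by Lemma \ref{z<p lemma}) or is absorbed into $[|D_0|^{1/3},a]$ via the same integral-representation argument (again giving $\mathcal{L}_{3/(q-1),\infty}$), while the trailing $|D_0|^{-1/3}$ is paired with an adjacent $a$ to produce $\mathcal{L}_{3,\infty}$. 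The H\"older inequality then places each summand in $\mathcal{L}_{3/q,\infty}\subseteq\mathcal{L}_1$ (since $q>3$). For $q>4$ one factors out a bounded power of $|D_0|^{-1/3}$ to reduce to the $q=4$ case.

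The main obstacle is condition (iv): the $\mathcal{L}_{3/2,\infty}$ target is tight and is reached only by recognising that the integral representation behind Lemma \ref{13 lemma} allows one to shift $|D_0|^{-1/3}$ \emph{inside} the commutator with $|D_0|^{1/3}$ at no cost in the ideal, producing the same $\mathcal{L}_{3,\infty}$ estimate as $|D_0|^{-1/3}a$. Once this symmetric estimate is available, all four conditions follow by routine H\"older and Leibniz arguments.
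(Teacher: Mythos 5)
Your verification is correct, and it rests on the same two pillars as the paper's own proof: passing to $D_0=F(1+D^2)^{1/2}$ via Proposition \ref{pass to spectral gap 1}, and transferring commutator estimates from $|D_0|$ to $|D_0|^{1/3}$. Where you differ is in the organisation of conditions \ref{conditions for analyticity}.(ii) and (iv). The paper never splits $[|D_0|^{1/3},\cdot]$ by Leibniz: it folds \emph{both} weights into a single commutator, e.g. $[B,A^{1/2}]=-[|D_0|^{1/3},\,|D_0|^{-1/3}a^2|D_0|^{-1/3}]$ and $B^{q-2}[B,A]=-[|D_0|^{1/3},\,|D_0|^{(1-q)/3}a^4|D_0|^{-1/3}]$, applies Lemma \ref{13 lemma} once (the $\mathcal{L}_{3/2,\infty}$ bound for (iv), the $\mathcal{L}_1$ bound for (ii)), and only then uses Leibniz on $\delta_0(a^2)$, resp.\ $\delta_0(a^4)$, together with Lemma \ref{z<p lemma} and H\"older. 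You instead Leibniz-split first and absorb one weight at a time into the commutator through the representation of Lemma \ref{first commutator rep lemma} plus Proposition \ref{peter norm lemma}; since $|D_0|^{-s}[|D_0|^{1/3},a]=[|D_0|^{1/3},|D_0|^{-s}a]$, what you are doing is re-deriving a one-sidedly weighted instance of Lemma \ref{13 lemma}, so conceptually the two proofs coincide. Your arrangement leaves three small, fixable loose ends: the middle factors $a^{k-1}[|D_0|^{1/3},a]a^{2-k}$ in your condition (ii) require boundedness of $[|D_0|^{1/3},a]$, which you should either extract from the same representation (operator-norm version, $\|[f(|D_0|),a]\|_\infty\leq\|\widehat{f'}\|_1\|\delta_0(a)\|_\infty$) or avoid by splitting $a^4$ into $a^2$-blocks as the paper does; at the endpoint $q=4$ your absorption would call on Proposition \ref{peter norm lemma} with $r=3/(q-1)=1$, which that proposition excludes, so the reduction for $q\geq 4$ should factor out the bounded operator $B^{q-q'}$ with $q'\in(3,4)$ rather than reduce to $q=4$ (the paper likewise only treats $q\in(3,4)$); and Lemma \ref{z<p lemma} states the memberships with the weight on the right, so the left-sided versions $|D_0|^{-s}a$, $|D_0|^{-s}\delta_0(a)$ you use should be justified by taking adjoints, using $a=a^*$ and $\delta_0(a)^*=-\delta_0(a)$.
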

    \begin{proof} 
        This proof is similar to that of Lemma \ref{verification p>2}.
    
        Let $D_0=F(1+D^2)^{\frac12}.$ By Lemma \ref{pass to spectral gap 1}, the $1-$dimensional spectral triple $(\mathcal{A},H,D_0)$ satisfies Hypothesis \ref{main assumption}. Since $|D_0| \geq 1$,
        we have that $\||D_0|^{-1}\|_\infty \leq 1$.

        Let us establish Condition \ref{conditions for analyticity}.\eqref{anacond1}. We have
        $$B^pA=|D_0|^{-1}a^4\in\mathcal{L}_{1,\infty}$$
        since by assumption $(\mathcal{A},H,D)$ is $1$-dimensional.

        Next we establish Condition \ref{conditions for analyticity}.\eqref{anacond2}. Let $q\in(3,4).$ Using \eqref{favourite commutator identity}, we have on $H_\infty$:
        \begin{align*}
            B^{q-2}[B,A] &= |D_0|^{\frac{2-q}{3}}[|D_0|^{-\frac13},a^4]\\
                         &= -|D_0|^{\frac{1-q}{3}}[|D_0|^{\frac13},a^4]|D_0|^{-\frac13}\\
                         &= -[|D_0|^{\frac13},|D_0|^{\frac{1-q}3}a^4|D_0|^{-\frac13}].
        \end{align*}
        By Lemma \ref{13 lemma}, we have
        $$\|B^{q-2}[B,A]\|_1\leq c_{abs}\|[|D_0|,|D_0|^{\frac{1-q}3}a^4|D_0|^{-\frac13}]\|_1.$$
        Still working on $H_\infty$, we also have:
        \begin{align*}
            [|D_0|,|D_0|^{\frac{1-q}3}a^4|D_0|^{-\frac13}] &= |D_0|^{\frac{1-q}3}\delta_0(a^4)|D_0|^{-\frac13}\\
                                                           &= |D_0|^{\frac{1-q}3}\delta_0(a^2)a^2|D_0|^{-\frac13}+|D_0|^{\frac{1-q}3}a^2\delta_0(a^2)|D_0|^{-\frac13}.
        \end{align*}
        Applying Lemma \ref{z<p lemma}, we have
        $$[|D_0|,|D_0|^{\frac{1-q}3}a^4|D_0|^{-\frac13}]\in\mathcal{L}_{\frac{3}{q-1},\infty}\cdot\mathcal{L}_{3,\infty}\subset\mathcal{L}_{\frac{3}{q},\infty}$$
        by the Holder inequality, since $q > 3$, $\mathcal{L}_{3/q,\infty} \subset \mathcal{L}_1$, and so $B^{q-2}[B,A] \in \mathcal{L}_1$.

        Now we establish Condition \ref{conditions for analyticity}.\eqref{anacond3}. We may compute:
        $$A^{\frac12}BA^{\frac12}=a^2|D_0|^{-\frac13}a^2.$$
        Thus,
        $$\|A^{\frac12}BA^{\frac12}\|_{3,\infty}\leq\|a\|_{\infty}^3\|a|D_0|^{-\frac13}\|_{3,\infty}.$$
        The right hand side is finite by Lemma \ref{z<p lemma}.

        Finally we verify Condition \ref{conditions for analyticity}.\eqref{anacond4}. We have:
        \begin{align*}
            [B,A^{\frac12}] &= [|D_0|^{-\frac13},a^2]\\
                            &= -|D_0|^{-\frac13}[|D_0|^{\frac13},a^2]|D_0|^{-\frac13}\\
                            &= -[|D_0|^{\frac13},|D_0|^{-\frac13}a^2|D_0|^{-\frac13}].
        \end{align*}
        By Lemma \ref{13 lemma}, we have
        $$\|[B,A^{\frac12}]\|_{\frac{3}{2},\infty}\leq c_{\mathrm{abs}}\|[|D_0|,|D_0|^{-\frac13}a^2|D_0|^{-\frac13}]\|_{\frac{3}{2},\infty}.$$
        Moreover, by the Leibniz rule
        \begin{align*}
            [|D_0|,|D_0|^{-\frac13}a^2|D|^{-\frac13}] &= |D_0|^{-\frac13}\delta_0(a^2)|D_0|^{-\frac13}\\
                                                      &= |D_0|^{-\frac13}\delta_0(a)a|D_0|^{-\frac13}+|D_0|^{-\frac13}a\cdot\delta_0(a)|D_0|^{-\frac13}.
        \end{align*}
        By Lemma \ref{z<p lemma} and the H\"older inequality, we have
        $$[|D_0|,|D_0|^{-\frac13}a^2|D_0|^{-\frac13}]\in\mathcal{L}_{3,\infty}\cdot\mathcal{L}_{3,\infty}\subset\mathcal{L}_{\frac{3}{2},\infty}.$$
    \end{proof}
    
    For the case $p=2$, we instead use:
    \begin{lem}\label{verification p=2} 
        Let $(\mathcal{A},H,D)$ be a $2-$dimensional spectral triple satisfying Hypothesis \ref{main assumption}. If $0\leq a\in\mathcal{A},$ then the operators $A=a^4$ and $B=(1+D^2)^{-\frac16}$ satisfy Condition \ref{conditions for analyticity} with $p=6.$
    \end{lem}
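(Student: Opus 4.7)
The plan is to parallel the proof of Lemma \ref{verification p=1} almost verbatim, with numerical exponents adjusted so that $p$ is replaced by $p=6$, $A = a^4$, and $B = (1+D^2)^{-1/6}$. As in the $p=1$ case, I first pass to $D_0 = F(1+D^2)^{1/2}$ so that $|D_0|\geq 1$ and $B = |D_0|^{-1/3}$, $B^6 = |D_0|^{-2}$; by Proposition \ref{pass to spectral gap 1}, the $2$-dimensional spectral triple $(\mathcal{A},H,D_0)$ still satisfies Hypothesis \ref{main assumption}. Condition \ref{conditions for analyticity}.\eqref{anacond1} is the easiest: since the triple is $2$-dimensional, Remark \ref{sp gap fact} gives $a|D_0|^{-2}\in\mathcal{L}_{1,\infty}$, and then $B^6 A = |D_0|^{-2}a^4 = (|D_0|^{-2}a)\cdot a^3 \in \mathcal{L}_{1,\infty}\cdot\mathcal{L}_\infty$. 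For \eqref{anacond3}, write $A^{1/2}BA^{1/2} = a^2|D_0|^{-1/3}a^2$ and apply Lemma \ref{z<p lemma} with $s=1/3$ (which is allowed since $1/3 \leq p = 2$), yielding $a|D_0|^{-1/3}\in\mathcal{L}_{6,\infty}$, hence $\|a^2|D_0|^{-1/3}a^2\|_{6,\infty}\leq \|a\|_\infty^3\|a|D_0|^{-1/3}\|_{6,\infty}<\infty$.

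For condition \eqref{anacond4}, I would use the same commutator-rewriting trick as in Lemma \ref{verification p=1}: using \eqref{favourite commutator identity},
\begin{equation*}
    [B, A^{1/2}] = [|D_0|^{-1/3}, a^2] = -|D_0|^{-1/3}[|D_0|^{1/3}, a^2]|D_0|^{-1/3} = -[|D_0|^{1/3},|D_0|^{-1/3}a^2|D_0|^{-1/3}].
\end{equation*}
Applying Lemma \ref{13 lemma} (the $\mathcal{L}_{r,\infty}$ version with $r=3>1$) to bound this by a constant times $\|[|D_0|,|D_0|^{-1/3}a^2|D_0|^{-1/3}]\|_{3,\infty}$, Leibniz gives
\begin{equation*}
    [|D_0|,|D_0|^{-1/3}a^2|D_0|^{-1/3}] = |D_0|^{-1/3}\delta_0(a)\cdot a|D_0|^{-1/3} + |D_0|^{-1/3}a\cdot\delta_0(a)|D_0|^{-1/3},
\end{equation*}
and Lemma \ref{z<p lemma} with $s=1/3$ applied to both $a$ and $\delta_0(a)$ (plus adjoints, using that $a$ is self-adjoint) places each factor in $\mathcal{L}_{6,\infty}$; the H\"older inequality then yields $\mathcal{L}_{6,\infty}\cdot\mathcal{L}_{6,\infty}\subset\mathcal{L}_{3,\infty} = \mathcal{L}_{p/2,\infty}$.

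Condition \eqref{anacond2} will be the only step that requires additional care, since it must be verified for every $q>6$ and the natural rewriting works most transparently only in a window like $q\in(6,7]$. For such $q$, I would again write
\begin{equation*}
    B^{q-2}[B,A] = -[|D_0|^{1/3},\,|D_0|^{-(q-1)/3}a^4|D_0|^{-1/3}]
\end{equation*}
and apply the $\mathcal{L}_1$ version of Lemma \ref{13 lemma} to reduce to bounding $\|[|D_0|,\,|D_0|^{-(q-1)/3}a^4|D_0|^{-1/3}]\|_1$. Expanding $\delta_0(a^4) = \sum_{k=0}^3 a^k\delta_0(a)a^{3-k}$ by Leibniz, every term has the form $|D_0|^{-(q-1)/3}a^k\delta_0(a)a^{3-k}|D_0|^{-1/3}$. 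Since $(q-1)/3\leq 2 = p$ for $q\in(6,7]$, Lemma \ref{z<p lemma} (with adjoints) places $|D_0|^{-(q-1)/3}a \in \mathcal{L}_{6/(q-1),\infty}$, $|D_0|^{-(q-1)/3}\delta_0(a)\in\mathcal{L}_{6/(q-1),\infty}$, and $a|D_0|^{-1/3},\delta_0(a)|D_0|^{-1/3}\in\mathcal{L}_{6,\infty}$; the H\"older inequality then yields each summand in $\mathcal{L}_{6/q,\infty}\subset\mathcal{L}_1$ since $q>6$. Finally, to extend to arbitrary $q>6$, I would fix $q_0\in(6,7]$ and observe that for $q>q_0$ we have $B^{q-2}[B,A] = B^{q-q_0}\cdot B^{q_0-2}[B,A]\in\mathcal{L}_\infty\cdot\mathcal{L}_1=\mathcal{L}_1$, since $B^{q-q_0} = (1+D^2)^{-(q-q_0)/6}$ is bounded. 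The main bookkeeping obstacle is simply verifying that the Schatten exponents in the Leibniz expansion for \eqref{anacond2} combine to give something strictly below $\mathcal{L}_{1,\infty}$; everything else is essentially dictated by the $p=1$ analogue.
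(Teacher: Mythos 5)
Your proposal is correct and follows essentially the same route as the paper's proof: pass to $D_0=F(1+D^2)^{1/2}$, verify \eqref{anacond1} and \eqref{anacond3} directly from $p$-dimensionality and Lemma \ref{z<p lemma} with $s=\tfrac13$, and handle \eqref{anacond2} and \eqref{anacond4} by rewriting the commutator as $-[|D_0|^{1/3},\,|D_0|^{\cdot}a^{\cdot}|D_0|^{-1/3}]$, invoking Lemma \ref{13 lemma}, Leibniz, Lemma \ref{z<p lemma} and H\"older, with the same exponent bookkeeping ($\mathcal{L}_{6/(q-1),\infty}\cdot\mathcal{L}_{6,\infty}\subset\mathcal{L}_{6/q,\infty}\subset\mathcal{L}_1$ and $\mathcal{L}_{6,\infty}\cdot\mathcal{L}_{6,\infty}\subset\mathcal{L}_{3,\infty}$). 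Your only deviations are cosmetic: a finer Leibniz expansion of $\delta_0(a^4)$ into four terms rather than the paper's two, and an explicit reduction of arbitrary $q>6$ to the window $(6,7]$ via boundedness of $B^{q-q_0}$, a step the paper leaves implicit.
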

    \begin{proof} 
        This proof is similar to those of Lemmas \ref{verification p>2} and \ref{verification p=1}.
    
        Let $D_0=F(1+D^2)^{\frac12}.$ By Lemma \ref{pass to spectral gap 1}, the $2-$dimensional spectral triple $(\mathcal{A},H,D_0)$ satisfies Hypothesis \ref{main assumption}. By rescaling $D$ if necessary,
        we may assume without loss of generality that the spectrum of $D_0$ does not intersect the interval $(-1,1)$.

        Let us establish Condition \ref{conditions for analyticity}.\eqref{anacond1}. We have
        $$B^pA=|D_0|^{-2}a^4\in\mathcal{L}_{1,\infty}$$
        by Hypothesis \ref{main assumption}.

        Next we establish Condition \ref{conditions for analyticity}.\eqref{anacond2}. Let $q\in(6,7).$ We have
        \begin{align*}
            B^{q-2}[B,A] &= |D_0|^{\frac{2-q}{3}}[|D_0|^{-\frac13},a^4]\\
                         &= -|D_0|^{\frac{1-q}{3}}[|D_0|^{\frac13},a^4]|D_0|^{-\frac13}\\
                         &= -[|D_0|^{\frac13},|D_0|^{\frac{1-q}3}a^4|D_0|^{-\frac13}].
        \end{align*}
        By Lemma \ref{13 lemma}, we have
        $$\|B^{q-2}[B,A]\|_1\leq c_{abs}\|[|D_0|,|D_0|^{\frac{1-q}3}a^4|D_0|^{-\frac13}]\|_1.$$
        However,
        \begin{align*}
            [|D_0|,|D_0|^{\frac{1-q}3}a^4|D_0|^{-\frac13}] &= |D_0|^{\frac{1-q}3}\delta_0(a^4)|D_0|^{-\frac13}\\
                                                           &= |D_0|^{\frac{1-q}3}\delta_0(a^2)a^2|D_0|^{-\frac13}+|D_0|^{\frac{1-q}3}a^2\delta_0(a^2)|D_0|^{-\frac13}.
        \end{align*}
        By Lemma \ref{z<p lemma}, we have by the H\"older inequality:
        $$[|D_0|,|D_0|^{\frac{1-q}3}a^4|D|^{-\frac13}]\in\mathcal{L}_{\frac{6}{q-1},\infty}\cdot\mathcal{L}_{6,\infty}\subset\mathcal{L}_{\frac{6}{q},\infty}$$
        Since $q > 6$, we have that $\mathcal{L}_{6/q,\infty} \subset \mathcal{L}_1$, and so $B^{q-2}[B,A] \in \mathcal{L}_1$.

        Now we prove Condition \ref{conditions for analyticity}.\eqref{anacond3}. We have
        $$A^{\frac12}BA^{\frac12} = a^2|D_0|^{-\frac13}a^2.$$
        Thus,
        $$\|A^{\frac12}BA^{\frac12}\|_{6,\infty}\leq\|a\|_{\infty}^3\|a|D_0|^{-\frac13}\|_{6,\infty}.$$
        The above right hand side is finite by Lemma \ref{z<p lemma}.

        Finally, let us establish Condition \ref{conditions for analyticity}.\eqref{anacond4}. We may compute on $H_\infty$:
        \begin{align*}
            [B,A^{\frac12}] &= [|D_0|^{-\frac13},a^2]\\
                            &= -|D_0|^{-\frac13}[|D_0|^{\frac13},a^2]|D_0|^{-\frac13}\\
                            &= -[|D_0|^{\frac13},|D_0|^{-\frac13}a^2|D_0|^{-\frac13}].
        \end{align*}
        Therefore using Lemma \ref{13 lemma}, we have
        $$\|[B,A^{\frac12}]\|_{3,\infty}\leq c_{abs}\|[|D_0|,|D_0|^{-\frac13}a^2|D_0|^{-\frac13}]\|_{3,\infty}.$$
        Applying the Leibniz rule,
        \begin{align*}
            [|D_0|,|D_0|^{-\frac13}a^2|D_0|^{-\frac13}] &= |D_0|^{-\frac13}\delta_0(a^2)|D_0|^{-\frac13}\\
                                                        &= |D_0|^{-\frac13}\delta_0(a)a|D_0|^{-\frac13}+|D_0|^{-\frac13}a\delta_0(a)|D_0|^{-\frac13}.
        \end{align*}
        By Lemma \ref{z<p lemma}, we then have from the H\"older inequality:
        $$[D_0,|D_0|^{-\frac13}a^2|D_0|^{-\frac13}]\in\mathcal{L}_{6,\infty}\cdot\mathcal{L}_{6,\infty}\subset\mathcal{L}_{3,\infty}.$$
    \end{proof}
    
    We may now at last complete the proof of Theorem \ref{main thm}.
    
    \begin{thm*}
        Assume $p =1$ or $p=2$ and let $(\mathcal{A},H,D)$ be a spectral triple satisfying Hypothesis \ref{main assumption}. If $c\in\mathcal{A}^{\otimes (p+1)}$ is a local Hochschild cycle, then
        for every normalised trace $\varphi$ on $\mathcal{L}_{1,\infty}$ we have:
        \begin{equation*}
            \varphi(\Omega(c)(1+D^2)^{-\frac{p}{2}})=\mathrm{Ch}(c).
        \end{equation*}
    \end{thm*}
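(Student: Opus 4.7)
The plan is to follow the same overall scheme as the $p>2$ case, but to circumvent the restriction ``Condition's $p > 2$'' in Theorem \ref{analyticity theorem I} by working with a carefully chosen pair $(A,B)$ for which the \emph{role-playing} dimension in Condition \ref{conditions for analyticity} is $3$ (when the spectral triple has $p=1$) or $6$ (when $p=2$). Concretely, with $a\in\mathcal{A}$ the positive element supplied by locality, I will take $A=a^4$ and $B=(1+D^2)^{-1/6}$, so that Lemma \ref{verification p=1} (resp. Lemma \ref{verification p=2}) gives Condition \ref{conditions for analyticity} with parameter $3$ (resp. $6$). This choice is designed so that $B^{3p}A^{3p}=(1+D^2)^{-p/2}a^{12p}$, which under cyclicity and locality will recover the operator $\Omega(c)(1+D^2)^{-p/2}$ appearing in the statement.

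Next, I will transport the analytic information on $\zeta_{c,D}$ coming from Theorem \ref{zeta thm} across this change of exponent. Since $aa_0^j=a_0^j$ implies $a^wa_0^j=a_0^j$ for every $w$ with $\Re(w)>0$, the identity $a^{4w}\Omega(c)=\Omega(c)$ together with cyclicity of the trace gives
\[
\operatorname{Tr}\bigl(\Omega(c)\,B^{w}A^{w}\bigr)=\operatorname{Tr}\bigl(\Omega(c)(1+D^2)^{-w/6}\bigr)=\zeta_{c,D}(w/3),\qquad \Re(w)>3p.
\]
By Theorem \ref{zeta thm}, the right-hand side admits an analytic continuation to $\{\Re(w)>3(p-1)\}\setminus\{3p\}$ with a simple pole at $w=3p$ of residue $3p\operatorname{Ch}(c)$.

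Now I apply Theorem \ref{analyticity theorem I}: since Condition \ref{conditions for analyticity} holds with parameter $3$ or $6$ (both $>2$), the function
\[
w\mapsto \operatorname{Tr}\bigl(\Omega(c)B^{w}A^{w}\bigr)-\operatorname{Tr}\bigl(\Omega(c)(A^{1/2}BA^{1/2})^{w}\bigr)
\]
extends analytically past the line $\Re(w)=3p$. Consequently $w\mapsto \operatorname{Tr}\bigl(\Omega(c)(A^{1/2}BA^{1/2})^{w}\bigr)$ also admits a meromorphic continuation to $\{\Re(w)>3p-1\}\setminus\{3p\}$ with a simple pole at $w=3p$ of residue $3p\operatorname{Ch}(c)$. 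Setting $V:=(A^{1/2}BA^{1/2})^{3p}\in\mathcal{L}_{1,\infty}$ and rescaling $z=w/(3p)-1$, I obtain an analytic continuation of $z\mapsto\operatorname{Tr}(\Omega(c)V^{1+z})$ to a punctured neighbourhood of $0$, with a simple pole at $0$ of residue $\operatorname{Ch}(c)$. Theorem \ref{zeta measurability theorem} then yields $\varphi(\Omega(c)V)=\operatorname{Ch}(c)$ for every normalised trace $\varphi$ on $\mathcal{L}_{1,\infty}$.

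The final step is to replace $V$ by $B^{3p}A^{3p}$. By Corollary \ref{difference is trace class}, applied at the admissible exponent $3p$ (which exceeds the Condition-parameter, i.e.\ $3>2$ or $6>2$), the difference $V-B^{3p}A^{3p}\in\mathcal{L}_1$, so vanishes under any $\varphi$. Cyclicity and the identity $a^{12p}\Omega(c)=\Omega(c)$ then convert $\varphi(\Omega(c)B^{3p}A^{3p})$ into $\varphi(\Omega(c)(1+D^2)^{-p/2})$, completing the argument. The only delicate point, and the step I expect to require the most care, is verifying that all the analytic-continuation regions and residue computations pass cleanly through the scaling $w\leftrightarrow w/3$; in particular, one must check that the hypothesis ``$V\in\mathcal{L}_{1,\infty}$'' of Theorem \ref{zeta measurability theorem} is indeed available, which is precisely why we must raise $A^{1/2}BA^{1/2}$ (only known to be in $\mathcal{L}_{3p,\infty}$) to the power $3p$. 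The rest is essentially bookkeeping already done in the $p>2$ proof.
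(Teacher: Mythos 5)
Your proposal is correct and follows essentially the same route as the paper's own proof: the same pair $A=a^4$, $B=(1+D^2)^{-1/6}$ verified via Lemmas \ref{verification p=1} and \ref{verification p=2}, the same rescaling of the $\zeta$-function through locality, and the same combination of Theorem \ref{analyticity theorem I}, Theorem \ref{zeta measurability theorem} and Corollary \ref{difference is trace class}. The only slip is the parenthetical justification for the last corollary (the needed inequality is $\Re(z)>\text{parameter}-1$, i.e.\ $3>2$ and $6>5$, rather than the exponent $3p$ exceeding the parameter), which is immaterial since the inequality holds in both cases.
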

    \begin{proof}
        By Theorem \ref{zeta thm}, the function
        \begin{equation*}
            \zeta_{c,D}(z) = \mathrm{Tr}(\Omega(c)(1+D^2)^{-z/2}),\quad \Re(z) > p
        \end{equation*}
        admits an analytic continuation to the set $\{z\;:\;\Re(z)>p-1\}\setminus \{p\}$, and the point $p$ is a simple
        pole with corresponding residue $p\mathrm{Ch}(c)$.

        Let $c = \sum_{j=1}^m a_0^j\otimes \cdots\otimes a_p^j$. Since $c$ is local, we may choose $0 \leq a \in \mathcal{A}$ such that $aa_0^j = a_0^j$ for all $j$.
        
        By exactly the same argument as in the $p>2$ case, we can show that for all $\Re(z) > 0$:
        \begin{equation}\label{omega is still local}
            a^{4z}\Omega(c)=\Omega(c).
        \end{equation}

        We let $A=a^4$ and $B=(1+D^2)^{-\frac16}$ as in Lemmas \ref{verification p=1} and \ref{verification p=2}.
        
        We have that:
        \begin{align*}
            \mathrm{Tr}(\Omega(c)B^zA^z) &= \mathrm{Tr}(a^{4z}\Omega(c)(1+D^2)^{-\frac{z}{6}})\\
                                 &= \mathrm{Tr}(\Omega(c)(1+D^2)^{-\frac{z}{6}}),\quad \Re(z) > 3p.
        \end{align*}
        We recognise the above function as being precisely $z\mapsto \zeta_{c,D}(z/3)$.
        Hence by Theorem \ref{zeta thm}, the function $z\mapsto \mathrm{Tr}(\Omega(c)B^zA^z)$ admits an analytic continuation to the set $\{z \;:\; \Re(z) > 3(p-1)\}\setminus \{3p\}$,
        with a simple pole at $3p$ with corresponding residue $3p\mathrm{Ch}(c)$.
        
        Assume now that $p=1.$ By Lemma \ref{verification p=1}, Condition \ref{conditions for analyticity} holds for $A$ and $B$ (with $p=3$). By Theorem \ref{analyticity theorem I} the function
        $$z\to{\rm Tr}\Big(\Omega(c)\Big(B^zA^z-(A^{\frac12}BA^{\frac12})^z\Big)\Big)$$
        admits an analytic continuation to the set $\{\Re(z)>2\}.$
        
        By Lemma \ref{verification p=1}, $A^{\frac12}BA^{\frac12}\in\mathcal{L}_{3,\infty}.$ Hence, the function (defined {\it a priori} for $\Re(z)>3$)
        $$z\to{\rm Tr}(\Omega(c)(A^{\frac12}BA^{\frac12})^z)$$
        admits an analytic continuation to the set $\{\Re(z)>2\}\setminus \{3\}.$ with a pole at $z=3$ and corresponding residue $3\mathrm{Ch}(c).$ 
        Define $V_1:=(A^{\frac12}BA^{\frac12})^3\in\mathcal{L}_{1,\infty}.$ 
        Then,
        \begin{equation*}
            \mathrm{Tr}(\Omega(c)(A^{1/2}BA^{1/2})^{z}) = \mathrm{Tr}(\Omega(c)V_1^{z/3}).
        \end{equation*}
        We now know that function
        $$z\mapsto{\rm Tr}(\Omega(c)V_1^{z/3})$$
        admits an analytic continuation to the set $\{\Re(z)>2\}\setminus \{3\}$ with a simple pole at $3$ and corresponding residue $3\mathrm{Ch}(c)$. So by rescaling the argument, 
        we can equivalently say that the function $$z\mapsto \mathrm{Tr}(\Omega(c)V_1^z)$$ has analytic continuation to the set $\{z\;:\;\Re(z) > 2/3\}\setminus \{1\}$ with a simple
        pole at $1$ with corresponding residue $\mathrm{Ch}(c)$.
        
        Thus by Theorem \ref{zeta measurability theorem}, for any continuous normalised trace $\varphi$ on $\mathcal{L}_{1,\infty}$ we have
        \begin{equation*}
            \varphi(\Omega(c)V_1) = \mathrm{Ch}(c).
        \end{equation*}
        Due to Lemma \ref{verification p=1}, we have that $V_1-B^3A^3 \in \mathcal{L}_1$, and since $\varphi$ vanishes on $\mathcal{L}_1$ it follows that
        \begin{equation*}
            \varphi(\Omega(c)B^3A^3) =\mathrm{Ch}(c).
        \end{equation*}
        So
        $$\varphi(a^{12}\Omega(c)(1+D^2)^{-1/2}) = \mathrm{Ch}(c).$$
        By taking $z = 3$ in \eqref{omega is still local}, we have that $a^{12}\Omega(c) = \Omega(c)$, this completes the proof in the case $p=1$.
        
        Now assume that $p=2$.      
        By Lemma \ref{verification p=2}, Condition \ref{conditions for analyticity} holds for $A$ and $B$ (with $p=6$). 
        By Theorem \ref{analyticity theorem I} the function
        $$z\to{\rm Tr}\Big(\Omega(c)\Big(B^zA^z-(A^{\frac12}BA^{\frac12})^z\Big)\Big)$$
        admits an analytic extension to the set $\{\Re(z)>5\}.$

        By Lemma \ref{verification p=2}, $A^{\frac12}BA^{\frac12}\in\mathcal{L}_{6,\infty}.$ Hence, the function (defined {\it a priori} for $\Re(z)>6$)
        $$z\to{\rm Tr}(\Omega(c)(A^{\frac12}BA^{\frac12})^z)$$
        admits an analytic extension to the set $\{\Re(z)>5\}\setminus\{6\}.$ The point $z=6$ is a simple pole with corresponding residue $6\mathrm{Ch}(c).$ Consider $V_2=(A^{\frac12}BA^{\frac12})^6\in\mathcal{L}_{1,\infty}.$ 
        We have so far shown that the function
        $$z\to{\rm Tr}(\Omega(c)V_2^z)$$
        admits an analytic extension to the set $\{z\;:\;\Re(z)>\frac56\}\setminus \{1\}.$ The point $z=1$ is a simple pole with corresponding residue $\mathrm{Ch}(c)$.
        
        Hence, by Theorem \ref{zeta measurability theorem}, for any continuous normalised trace $\varphi$ on $\mathcal{L}_{1,\infty}$, we have
        \begin{equation*}
            \varphi(\Omega(c)V_2) = \mathrm{Ch}(c).
        \end{equation*}
        
        By Lemma \ref{verification p=2}, the operator $V_2-B^6A^6$ is trace class. Thus,
        \begin{equation*}
            \varphi(\Omega(c)B^6A^6) = \mathrm{Ch}(c).
        \end{equation*}
        So $\varphi(a^{12}\Omega(c)(1+D^2)^{-1}) = \mathrm{Ch}(c)$. Since $a^{12}\Omega(c) = \Omega(c)$, this completes the proof for the case $p=2$.
    \end{proof}

\appendix
\chapter{Appendix}

\section{Properties of the algebra $\mathcal{B}$}\label{b prop app}
    For this section, $(\mathcal{A},H,D)$ is a smooth spectral triple.
    Recall from Definition \ref{smoothness definition} that $\mathcal{B}$ is the $*-$algebra generated by all elements of the form $\delta^k(a)$ or $\partial(\delta^k(a)),$ $k\geq0,$ $a\in\mathcal{A}.$
    Recall that we define $H_\infty := \bigcap_{k\geq 1} \mathrm{dom}(D^k)$, and that for all $T \in \mathcal{B}$, we have $T:H_\infty\to H_\infty$, and for all $k\geq 0$ we have $D^k,|D|^k:H_\infty\to H_\infty$.
    
    The following should be compared with \cite[Lemma 6.2]{CPRS2}. See also the discussion following \cite[Lemma 10.22]{GVF}.
    \begin{lem}\label{left to right lemma}
        For every $x\in\mathcal{B}$ and for every $m\geq0,$ we have the following equalities
        of linear (potentially unbounded) operators on $H_\infty$:
        \begin{align*}
            |D|^mx &= \sum_{k=0}^m \binom{m}{k}\delta^{m-k}(x)|D|^k\text{ and, }\\
            x|D|^m &= \sum_{k=0}^m (-1)^{m-k}\binom{m}{k}|D|^k\delta^{m-k}(x).
        \end{align*}
    \end{lem}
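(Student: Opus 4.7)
The plan is to prove both identities simultaneously by induction on $m$, starting from the base case $m=0$ where both sides reduce to $x$ and the case $m=1$ which is essentially the definition of $\delta$. Since $x \in \mathcal{B} \subseteq \mathrm{dom}_\infty(\delta)$, every $\delta^{j}(x)$ is a bounded operator that preserves $H_\infty$, and each $|D|^k$ preserves $H_\infty$ as well, so all products appearing in the identities are well-defined linear operators on $H_\infty$ and the manipulations below require no further justification of domains.

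For the base case of the first identity, note that on $H_\infty$ we have $|D| x = x |D| + [|D|, x] = x|D| + \delta(x)$, which is the $m=1$ instance. For the inductive step, assume $|D|^m x = \sum_{k=0}^m \binom{m}{k} \delta^{m-k}(x) |D|^k$ on $H_\infty$. Multiplying on the left by $|D|$ and applying the $m=1$ identity to each summand (with $\delta^{m-k}(x)$ in place of $x$, which is permitted since $\delta^{m-k}(x) \in \mathcal{B}$) yields
\begin{equation*}
|D|^{m+1} x = \sum_{k=0}^m \binom{m}{k} \delta^{m-k}(x) |D|^{k+1} + \sum_{k=0}^m \binom{m}{k} \delta^{m-k+1}(x) |D|^k.
\end{equation*}
Reindexing the first sum by $j = k+1$, combining with the second, and applying Pascal's identity $\binom{m}{k-1}+\binom{m}{k} = \binom{m+1}{k}$ produces $\sum_{k=0}^{m+1} \binom{m+1}{k} \delta^{m+1-k}(x)|D|^k$, completing the induction.

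The second identity is proved in exactly the same way, using the base case $x|D| = |D|x - \delta(x)$ on $H_\infty$, and observing that the sign $(-1)^{m-k}$ is produced automatically by the recursion. Alternatively, one may deduce the second identity from the first by solving the (finite, upper-triangular) linear system for $x|D|^m$ in terms of the $|D|^k \delta^{m-k}(x)$, or simply by running the same induction with the roles of the two sides of $|D|x = x|D|+\delta(x)$ reversed.

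There is no substantive obstacle here — the only point requiring mild care is that all identities are asserted as equalities of unbounded operators on the common invariant dense subspace $H_\infty$, so before beginning one must record that $\mathcal{B} \subseteq \mathrm{dom}_\infty(\delta)$ together with the fact that elements of $\mathrm{dom}_\infty(\delta)$ preserve $H_\infty$ (already established in the discussion following Definition \ref{H_infty definition}), which legitimises the bare algebraic manipulations above.
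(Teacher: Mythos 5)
Your proof is correct and follows essentially the same route as the paper: induction on $m$, using the $m=1$ relation $|D|x = x|D|+\delta(x)$ on $H_\infty$ (valid since $x$, and hence each $\delta^{j}(x)$, lies in $\mathrm{dom}_\infty(\delta)$ and preserves $H_\infty$), followed by reindexing and Pascal's identity, with the second identity handled by the same argument.
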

    \begin{proof}
        We prove only the first equality as the proof of the second one follows by an identical argument.

        This formula can be seen by induction on $m.$ Indeed, for $m=1$, this is simply the claim
        that since $\mathcal{B} \subseteq \mathrm{dom}_\infty(\delta)$ we have an equality of operators on $H_\infty$:
        \begin{equation*}
            |D|x = \delta(x)+x|D|.
        \end{equation*}
        
        Now suppose that the claim is true for $m-1$. Then on $H_\infty$ we have
        \begin{align*}
            |D|^mx &= |D|\cdot|D|^{m-1}x \\
                   &= |D|\cdot\sum_{k=0}^{m-1}\binom{m-1}{k}\delta^{m-1-k}(x)|D|^k\\
                   &= \sum_{k=0}^{m-1}\binom{m-1}{k}|D|\delta^{m-1-k}(x)\cdot|D|^k\\
                   &= \sum_{k=0}^{m-1}\binom{m-1}{k}\delta^{m-1-k}(x)|D|^{k+1}+\sum_{k=0}^{m-1}\binom{m-1}{k}\delta^{m-k}(x)|D|^k\\
                   &= \sum_{k=1}^m\binom{m-1}{k-1}\delta^{m-k}(x)|D|^k+\sum_{k=0}^{m-1}\binom{m-1}{k}\delta^{m-k}(x)|D|^k\\
                   &= \sum_{k=0}^m\binom{m}{k}\delta^{m-k}(x)|D|^k.
        \end{align*} 
        and so the statement follows for $m$.
    \end{proof}

    \begin{lem}\label{left to right corollary} 
        Let $(\mathcal{A},H,D)$ be a smooth spectral triple, and assume that $D$ has a spectral gap at $0.$ Then for all $x \in \mathcal{B}$ and $m\geq 0$ we have
        \begin{enumerate}[{\rm (i)}]
            \item\label{left to right 1} The operators $|D|^{-m}x|D|^m,|D|^mx|D|^{-m}:H_\infty\to H_\infty$ have bounded extension
            \item\label{left to right 2} $|D|^{1-m}[|D|^m,x]:H_\infty\to H_\infty$ has bounded extension.
        \end{enumerate}
    \end{lem}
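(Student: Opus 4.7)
The plan is to derive both assertions as direct consequences of the commutation identities from Lemma \ref{left to right lemma}, exploiting the key fact that the spectral gap assumption on $D$ makes $|D|^{-1}$ a bounded operator, so that $|D|^{j}$ is bounded whenever $j \leq 0$. Throughout, elements of $\mathcal{B}$ are bounded and $\mathcal{B} \subseteq \mathrm{dom}_\infty(\delta)$, so applying $\delta$ any number of times to an element of $\mathcal{B}$ yields a bounded operator which again lies in $\mathcal{B}$.

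For (i), I would apply the first identity in Lemma \ref{left to right lemma} to $x \in \mathcal{B}$ and postmultiply by $|D|^{-m}$ (acting on $H_\infty$) to obtain
\begin{equation*}
|D|^m x |D|^{-m} = \sum_{k=0}^m \binom{m}{k}\delta^{m-k}(x)|D|^{k-m}.
\end{equation*}
Each summand is a product of the bounded operator $\delta^{m-k}(x) \in \mathcal{B}$ with $|D|^{k-m}$, which is bounded since $k-m \leq 0$ and $|D|^{-1}$ is bounded. Hence the finite sum extends to a bounded operator on $H$. A symmetric argument using the second identity of Lemma \ref{left to right lemma} (premultiplying by $|D|^{-m}$) handles $|D|^{-m}x|D|^m$.

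For (ii), I would extract the commutator from the first identity of Lemma \ref{left to right lemma}: the $k=m$ term equals $x|D|^m$, so
\begin{equation*}
[|D|^m, x] = \sum_{k=0}^{m-1}\binom{m}{k}\delta^{m-k}(x)|D|^k,
\end{equation*}
and hence
\begin{equation*}
|D|^{1-m}[|D|^m, x] = \sum_{k=0}^{m-1}\binom{m}{k}\bigl(|D|^{1-m}\delta^{m-k}(x)|D|^{m-1}\bigr)\,|D|^{k+1-m}.
\end{equation*}
By part (i) applied to $\delta^{m-k}(x) \in \mathcal{B}$ with exponent $m-1$ in place of $m$, the factor in parentheses extends to a bounded operator; and $|D|^{k+1-m}$ is bounded because $k+1-m \leq 0$. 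Thus each term has bounded extension, and so does the finite sum.

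There is no substantial obstacle: the proof is essentially bookkeeping once Lemma \ref{left to right lemma} is available. The only minor point worth checking is that all manipulations are valid as identities of operators on the common invariant domain $H_\infty$ (noting that $F$ and all powers $|D|^j$ preserve $H_\infty$, and so do all elements of $\mathcal{B}$), so that multiplication and rearrangement on $H_\infty$ are unambiguous before passing to bounded extensions.
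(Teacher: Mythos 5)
Your proof is correct and follows essentially the same route as the paper: both parts are immediate consequences of Lemma \ref{left to right lemma} together with the boundedness of negative powers of $|D|$ granted by the spectral gap. The only cosmetic difference is in (ii), where the paper expands $[|D|^m,x]$ via the second identity of Lemma \ref{left to right lemma} (powers of $|D|$ to the left of the $\delta^{m-k}(x)$), so that premultiplying by $|D|^{1-m}$ yields bounded terms directly without having to invoke part (i), whereas you use the first identity and then appeal to (i) to commute the power across.
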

    \begin{proof} 
        By Lemma \ref{left to right lemma}, on $H_\infty$ we have:
        \begin{align*}
            |D|^mx|D|^{-m} &= \sum_{k=0}^m\binom{m}{k}\delta^{m-k}(x)|D|^{k-m},\\
            |D|^{-m}x|D|^m &= \sum_{k=0}^m(-1)^{m-k}\binom{m}{k}|D|^{k-m}\delta^{m-k}(x).  
        \end{align*}
        Clearly, the expressions on the right hand side have bounded extension. This proves the first assertion.

        By Lemma \ref{left to right lemma}, we have
        \begin{align*}
            [|D|^m,x] &= |D|^mx-\sum_{k=0}^m(-1)^{m-k}\binom{m}{k}|D|^k\delta^{m-k}(x)\\
                      &= \sum_{k=0}^{m-1}(-1)^{m-k-1}\binom{m}{k}|D|^k\delta^{m-k}(x).
        \end{align*}
        Therefore,
        $$|D|^{1-m}[|D|^m,x]=\sum_{k=0}^{m-1}(-1)^{m-k-1}\binom{m}{k}|D|^{k+1-m}\delta^{m-k}(x).$$
        Since $x \in \mathrm{dom}_\infty(\delta)$, for each $0 \leq k \leq m-1$ the operator $|D|^{k+1-m}\delta^{m-k}(x)$ has bounded extension. Hence, $|D|^{1-m}[|D|^m,x]$
        has bounded extension.
    \end{proof}
    
    \begin{lem}\label{schwartz lemma} 
        Assume that $(\mathcal{A},H,D)$ satisfies Hypothesis \ref{main assumption}. Let $h$ be a Borel function on $\mathbb{R}$ such that
        \begin{equation*}
            t \mapsto (1+t^2)^{\frac{p+1}{2}}h(t), \quad t \in \mathbb{R}.
        \end{equation*}
        is bounded.
        Then for all $x \in \mathcal{B}$ and $s > 0$ the operator $xh(sD)$ is in $\mathcal{L}_1$, and:
        \begin{equation*}
            \|xh(sD)\|_1=O(s^{-p}),\quad s\downarrow0.
        \end{equation*}
    \end{lem}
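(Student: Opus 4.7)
The plan is to separate the function $h$ into a bounded piece times a fixed decay factor controlled by Hypothesis \ref{main assumption}.\eqref{ass2}, rescale to turn a heat‑type weight into a resolvent weight, and then decompose $x\in\mathcal{B}$ into products of generators in order to apply the hypothesis directly.

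First I would define $\phi(t):=(1+t^{2})^{(p+1)/2}h(t)$, which is bounded by assumption, and factor $h(sD)=\phi(sD)(1+s^{2}D^{2})^{-(p+1)/2}$ via Borel functional calculus. Since $\phi(sD)$ and $(1+s^{2}D^{2})^{-(p+1)/2}$ are functions of $D$ they commute, so we can move $\phi(sD)$ past $(1+s^2D^2)^{-(p+1)/2}$. Combining this with invariance of $\|\cdot\|_{1}$ under right multiplication by a bounded operator, one obtains
\begin{equation*}
\|xh(sD)\|_{1} \leq \|\phi\|_{\infty}\,\|x(1+s^{2}D^{2})^{-(p+1)/2}\|_{1}.
\end{equation*}

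Next I would rescale. Writing $\lambda=s^{-1}$, functional calculus gives
\begin{equation*}
(1+s^{2}D^{2})^{-(p+1)/2} = \lambda^{p+1}(D^{2}+\lambda^{2})^{-(p+1)/2} = \lambda^{p+1}\bigl|(D+i\lambda)^{-(p+1)}\bigr|,
\end{equation*}
using that the normal operator $(D+i\lambda)^{-(p+1)}$ has modulus $(D^{2}+\lambda^{2})^{-(p+1)/2}$. Because $(D+i\lambda)^{-(p+1)}$ is normal with unitary partial isometry in its polar decomposition, for any bounded $T$ one has $\|T|(D+i\lambda)^{-(p+1)}|\|_{1}=\|T(D+i\lambda)^{-(p+1)}\|_{1}$. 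Therefore
\begin{equation*}
\|xh(sD)\|_{1} \leq \|\phi\|_{\infty}\,s^{-(p+1)}\,\|x(D+i\lambda)^{-(p+1)}\|_{1}.
\end{equation*}

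It then suffices to establish $\|x(D+i\lambda)^{-(p+1)}\|_{1}=O(\lambda^{-1})$ for every $x\in\mathcal{B}$. By definition $\mathcal{B}$ is the algebra generated by the elements $\delta^{k}(a)$ and $\delta^{k}(\partial(a))=\partial(\delta^{k}(a))$, so by linearity I can reduce to the case where $x$ is a finite product $x_{1}\cdots x_{n}$ of such generators. Writing
\begin{equation*}
\|x_{1}\cdots x_{n}(D+i\lambda)^{-(p+1)}\|_{1} \leq \Bigl(\prod_{j=1}^{n-1}\|x_{j}\|_{\infty}\Bigr)\cdot\|x_{n}(D+i\lambda)^{-(p+1)}\|_{1},
\end{equation*}
each $\|x_{j}\|_{\infty}$ is finite since $\mathcal{B}\subseteq\mathrm{dom}_{\infty}(\delta)$, and the last factor is exactly $O(\lambda^{-1})$ by Hypothesis \ref{main assumption}.\eqref{ass2}. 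Combining these inputs yields
\begin{equation*}
\|xh(sD)\|_{1} = O(s^{-(p+1)})\cdot O(s) = O(s^{-p}),\quad s\downarrow 0,
\end{equation*}
as claimed.

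The only non-routine point is the polar decomposition identity $\|T|B|\|_{1}=\|TB\|_{1}$ for normal $B$, which requires care because the partial isometry in the polar decomposition of a normal operator is unitary on the closure of its range; for $B=(D+i\lambda)^{-(p+1)}$ this operator is invertible, so the identity is immediate. Apart from that, the argument is a straightforward reduction to Hypothesis \ref{main assumption}.\eqref{ass2}.
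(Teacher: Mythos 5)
Your proof is correct and follows essentially the same route as the paper: factor $h(sD)=\phi(sD)(1+s^2D^2)^{-(p+1)/2}$ with $\phi$ bounded, identify $(1+s^2D^2)^{-(p+1)/2}$ with the modulus of $(D+i/s)^{-p-1}$ up to the factor $s^{-p-1}$, and invoke Hypothesis \ref{main assumption}.\eqref{ass2}. The only difference is that you spell out the reduction from a general $x\in\mathcal{B}$ to products of generators (absorbing all but the last factor in operator norm), a step the paper leaves implicit, which is a welcome addition rather than a deviation.
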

    \begin{proof} 
        Let $s > 0$. Clearly,
        $$(1+s^2D^2)^{-\frac{p+1}{2}}=|(1-isD)^{-p-1}|.$$
        Setting $\lambda=\frac1s,$ we obtain from Hypothesis \ref{main assumption} that:
        \begin{align*}
            \|x(1+s^2D^2)^{-\frac{p+1}{2}}\|_1 &= s^{-p-1}\|x(D+\frac{i}{s})^{-p-1}\|_1\\
                                               &= s^{-p-1}\cdot O(s)\\
                                               &= O(s^{-p}),\quad s\downarrow0.
        \end{align*}

        Since the operator $(1+s^2D^2)^{\frac{p+1}{2}}h(sD)$ is bounded, with
        \begin{equation*}
            \|(1+s^2D^2)^{\frac{p+1}{2}}h(sD)\|_\infty \leq \sup_{t \in \mathbb{R}} (1+t^2)^{\frac{p+1}{2}}|h(t)|
        \end{equation*}
        We can conclude that:
        \begin{align*}
            \|xh(sD)\|_1 &\leq \|x(1+s^2D^2)^{-\frac{p+1}{2}}\|_1\|(1+s^2D^2)^{\frac{p+1}{2}}h(sD)\|_\infty\\
                         &= O(s^{-p}),\quad s\downarrow 0.
        \end{align*}
    \end{proof}
    We note in particular that the assumption on $h$ in Lemma \ref{schwartz lemma} is satisfied if $h$ is a Schwartz function.

    \begin{lem}\label{first decay lemma} 
        Let $(\mathcal{A},H,D)$ satisfy Hypothesis \ref{main assumption}, assume that $D$ has a spectral gap at $0$ and let $x\in\mathcal{B}$. For all non-negative integers $m_1,m_2 \geq 0$ with $m_1+m_2<p,$ we have
        \begin{equation*}
            \||D|^{-m_1}x|D|^{-m_2}e^{-s^2D^2}\|_1 = O(s^{m_1+m_2-p}),\quad s\downarrow0.
        \end{equation*}
    \end{lem}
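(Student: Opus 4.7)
The plan is to first reduce the problem to estimating $\|y|D|^{-n}e^{-s^2D^2}\|_1$ for $y\in\mathcal{B}$ and $n = m_1+m_2 < p$, and then exploit the integral representation $|D|^{-n}=\frac{1}{\Gamma(n/2)}\int_0^\infty\tau^{n/2-1}e^{-\tau D^2}d\tau$ together with Lemma \ref{schwartz lemma} applied to the Gaussian $h(u)=e^{-u^2}$.

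The reduction step uses Lemma \ref{left to right lemma}: multiplying the identity $x|D|^{m_1}=\sum_{k=0}^{m_1}(-1)^{m_1-k}\binom{m_1}{k}|D|^k\delta^{m_1-k}(x)$ on the right by $|D|^{-m_1}$ and on the left by $|D|^{-m_1}$, I obtain
\begin{equation*}
    |D|^{-m_1}x=\sum_{k=0}^{m_1}(-1)^{m_1-k}\binom{m_1}{k}|D|^{k-m_1}\delta^{m_1-k}(x)|D|^{-m_1}.
\end{equation*}
Since $D$ has a spectral gap at $0$, each factor $|D|^{k-m_1}$ has bounded extension with norm independent of $s$. Multiplying by $|D|^{-m_2}e^{-s^2D^2}$ on the right and taking trace norm, the claim reduces to showing $\|\delta^{m_1-k}(x)|D|^{-n}e^{-s^2D^2}\|_1=O(s^{n-p})$ for $y=\delta^{m_1-k}(x)\in\mathcal{B}$ and $n=m_1+m_2<p$.

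The case $n=0$ is exactly Lemma \ref{schwartz lemma} applied to the Gaussian $h(u)=e^{-u^2}$. For $n\geq 1$, since $D$ has spectral gap, the Mellin identity $|D|^{-n}=\frac{1}{\Gamma(n/2)}\int_0^\infty \tau^{n/2-1}e^{-\tau D^2}d\tau$ converges in operator norm, yielding
\begin{equation*}
    |D|^{-n}e^{-s^2D^2}=\frac{1}{\Gamma(n/2)}\int_0^\infty\tau^{n/2-1}e^{-(\tau+s^2)D^2}d\tau.
\end{equation*}
Multiplying by $y$ and applying Lemma \ref{peter lemma} (after verifying absolute $\mathcal{L}_1$-integrability, which follows from the next step), I obtain
\begin{equation*}
    \|y|D|^{-n}e^{-s^2D^2}\|_1\leq\frac{1}{\Gamma(n/2)}\int_0^\infty\tau^{n/2-1}\|ye^{-(\tau+s^2)D^2}\|_1\,d\tau.
\end{equation*}
Lemma \ref{schwartz lemma} with $h(u)=e^{-u^2}$ gives $\|ye^{-t^2D^2}\|_1\leq C\,t^{-p}$ for $0<t\leq 1$, while for $t\geq 1$ the spectral gap of $D$ forces exponential decay. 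Writing $t=\sqrt{\tau+s^2}$, splitting the integral at $\tau+s^2=1$, and substituting $\tau=s^2u$ in the main piece, I arrive at
\begin{equation*}
    s^{n-p}\int_0^\infty u^{n/2-1}(u+1)^{-p/2}du+O(1)=s^{n-p}\,B(n/2,(p-n)/2)+O(1),
\end{equation*}
which is $O(s^{n-p})$ as $s\downarrow 0$ since the Beta function is finite for $0<n<p$.

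The main technical obstacle will be ensuring the bound on $\|ye^{-t^2D^2}\|_1$ holds in a form suitable for integration over all $t>0$: the Hypothesis and Lemma \ref{schwartz lemma} give the behaviour $t^{-p}$ as $t\downarrow 0$, but the integral also runs over $t\geq 1$ where one must use the exponential decay of $\|(1+t^2D^2)^{(p+1)/2}e^{-t^2D^2}\|_\infty$ combined with $y|D|^{-(p+1)}\in\mathcal{L}_1$. Verifying the interchange of trace norm and integral via Lemma \ref{peter lemma} requires both measurability (weak operator topology continuity, clear from the spectral theorem) and absolute convergence of $\int\tau^{n/2-1}\|ye^{-(\tau+s^2)D^2}\|_1\,d\tau$, which is precisely what the preceding bounds furnish.
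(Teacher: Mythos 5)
Your proof is correct, but it takes a genuinely different route from the paper's. The paper never moves $|D|^{-m_1}$ past $x$: it inserts the dyadic spectral projections $\chi_{[2^k,2^{k+1}]}(|D|)$ on both sides, bounds each block $\|\chi_{[0,2^{k+1}]}(|D|)\,x\,\chi_{[0,2^{l+1}]}(|D|)\|_1$ by $O(2^{p\min\{k,l\}})$ using the heat-trace estimate of Lemma \ref{schwartz lemma} applied to $x$ and $x^*$, and then sums the resulting double geometric series, treating the cases $m_1>0$ and $m_1=0$ separately. You instead first commute $|D|^{-m_1}$ through $x$ via Lemma \ref{left to right lemma}, concentrating the negative power on one side, and then estimate $\|y|D|^{-n}e^{-s^2D^2}\|_1$ through the subordination formula $|D|^{-n}=\Gamma(n/2)^{-1}\int_0^\infty\tau^{n/2-1}e^{-\tau D^2}\,d\tau$, which reduces everything to a single Beta-function integral. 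Your route buys a cleaner computation (one scalar integral, no double sums, no case split on $m_1$), at the cost of using the smoothness of $x$: you need $\delta^{m_1-k}(x)\in\mathcal{B}$, which does hold since $\mathcal{B}$ is a $*$-algebra stable under the derivation $\delta$ (a closure property the paper also uses implicitly, e.g. in Lemma \ref{commutator 6}), whereas the paper's argument uses only the heat-trace bound for $x$ and $x^*$ together with positivity of spectral projections. The details you flag at the end are indeed routine: the $O(t^{-p})$ bound of Lemma \ref{schwartz lemma} extends from $t\downarrow 0$ to all $t\in(0,1]$ since $t\mapsto\|ye^{-t^2D^2}\|_1$ is non-increasing, the spectral gap gives exponential decay of $\|ye^{-t^2D^2}\|_1$ for $t\geq 1$, and the interchange of trace norm and $\tau$-integral is justified by Lemma \ref{peter lemma} exactly as you indicate, with the weak-integral identification commuting with left multiplication by $y$ and right multiplication by $e^{-s^2D^2}$.
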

    \begin{proof} 
        Suppose first that $m_1>0.$ Using the triangle inequality:
        \begin{align*}
            \||D|^{-m_1}x|D|^{-m_2}e^{-s^2D^2}\|_1 &\leq \sum_{k,l=0}^{\infty}\|\chi_{[2^k,2^{k+1}]}(|D|)|D|^{-m_1}x|D|^{-m_2}e^{-s^2D^2}\chi_{[2^l,2^{l+1}]}(|D|)\|_1\\
                                                   &\leq \sum_{k,l=0}^{\infty}2^{-km_1-lm_2}\cdot e^{-2^{2l}s^2}\cdot\|\chi_{[2^k,2^{k+1}]}(|D|)x\chi_{[2^l,2^{l+1}]}(|D|)\|_1\\
                                                   &\leq \sum_{k,l=0}^{\infty}2^{-km_1-lm_2}\cdot e^{-2^{2l}s^2}\cdot\|\chi_{[0,2^{k+1}]}(|D|)x\chi_{[0,2^{l+1}]}(|D|)\|_1.
        \end{align*}

        If $m=\min\{k,l\},$ then
        \begin{align*}
            \|\chi_{[0,2^{k+1}]}(|D|)x\chi_{[0,2^{l+1}]}(|D|)\Big\|_1 &\leq \min\Big\{\Big\|x\chi_{[0,2^{m+1}]}(|D|)\Big\|_1,\Big\|\chi_{[0,2^{m+1}]}(|D|)x\Big\|_1\Big\}\\
                                                                      &\leq e\min\Big\{\Big\|xe^{-2^{-2(m+1)}|D|^2}\Big\|_1,\Big\|x^*e^{-2^{-2(m+1)}|D|^2}\Big\|_1\Big\}\\
                                                                      &= O(2^{mp}).
        \end{align*}

        Thus,
        \begin{equation*}
            \||D|^{-m_1}x|D|^{-m_2}e^{-s^2D^2}\|_1 \leq \Big(\sum_{k,l=0}^{\infty}2^{-km_1-lm_2}\cdot e^{-2^{2l}s^2}\cdot 2^{p\cdot\min\{k,l\}}\Big)\cdot O(1).
        \end{equation*}

        Since $p-m_1>0,$ it follows that
        \begin{align*}
            \sum_{\substack{k,l\geq0\\ k\leq l}}2^{-km_1-lm_2}\cdot e^{-2^{2l}s^2}\cdot 2^{p\cdot\min\{k,l\}} &= \sum_{\substack{k,l\geq0\\ k\leq l}}2^{(p-m_1)k}\cdot 2^{-lm_2}\cdot e^{-2^{2l}s^2}\\
                                                                                                              &\leq 2\cdot \sum_{l=0}^{\infty}2^{(p-m_1)l}\cdot 2^{-lm_2}\cdot e^{-2^{2l}s^2}.
        \end{align*}
        Now due to our assumption that $m_1>0,$ it follows that
        \begin{align*}
            \sum_{\substack{k,l\geq0\\ k\geq l}}2^{-km_1-lm_2}\cdot e^{-2^{2l}s^2}\cdot 2^{p\cdot\min\{k,l\}} &= \sum_{\substack{k,l\geq0\\ k\geq l}}2^{-km_1}\cdot 2^{(p-m_2)l}\cdot e^{-2^{2l}s^2}\\
                                                                                                              &\leq 2\cdot \sum_{l=0}^{\infty}2^{-lm_1}\cdot 2^{(p-m_2)l}\cdot e^{-2^{2l}s^2}.
        \end{align*}
        Note that for $m > 0$, we have
        \begin{equation*}
            \sum_{l=0}^\infty 2^{lm}e^{-2^{2l}s^2} = O(s^{-m}), \quad s\downarrow 0.
        \end{equation*}

        Therefore,
        \begin{align*}
            \||D|^{-m_1}x|D|^{-m_2}e^{-s^2D^2}\|_1 &\leq \Big(\sum_{l=0}^{\infty}2^{l(p-m_1-m_2)}\cdot e^{-2^{2l}s^2}\Big)\cdot O(1)\\
                                                   &= O(s^{m_1+m_2-p}).
        \end{align*}
        This completes the proof for the case $m_1>0.$

        To complete the proof we now deal with the case where $m_1=0.$ We have
        \begin{align*}
               \||D|^{-m_1}x|D|^{-m_2}e^{-s^2D^2}\|_1 &\leq \sum_{l=0}^{\infty}\Big\|x|D|^{-m_2}e^{-s^2D^2}\chi_{[2^l,2^{l+1}]}(|D|)\Big\|_1\\
                                                      &\leq \sum_{l=0}^{\infty}2^{-lm_2}\cdot e^{-2^{2l}s^2}\cdot\Big\|x\chi_{[2^l,2^{l+1}]}(|D|)\Big\|_1\\
                                                      &\leq \sum_{l=0}^{\infty}2^{-lm_2}\cdot e^{-2^{2l}s^2}\cdot\Big\|x\chi_{[0,2^{l+1}]}(|D|)\Big\|_1.
        \end{align*}
        Thus,
        \begin{align*}
            \||D|^{-m_1}x|D|^{m_2}e^{-s^2D^2}\|_1 &\leq \Big(\sum_{l=0}^{\infty}2^{l(p-m_2)}\cdot e^{-2^{2l}s^2}\Big)\cdot O(1)\\
                                                  &= O(s^{m_2-p}).
        \end{align*}
        This completes the proof for the case $m_1=0.$
    \end{proof}

    \begin{lem}\label{left convergence estimate} 
        Let $(\mathcal{A},H,D)$ be a spectral triple satisfying Hypothesis \ref{main assumption} and assume that $D$ has spectral gap at $0.$ For all $x\in\mathcal{B}$ we have
        \begin{equation*}
            \|x|D|^{-p-1}(1-e^{-s^2D^2})\|_1 = O(s),\quad s\downarrow0.
        \end{equation*}
    \end{lem}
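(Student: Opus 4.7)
The plan is to extract the small factor of $s$ via the elementary identity $1-e^{-t}=t\int_0^1 e^{-ut}\,du$, valid for $t\geq 0$. Applied spectrally at $t=s^2D^2$ via functional calculus, this gives
\begin{equation*}
    1-e^{-s^2D^2} = s^2 D^2\int_0^1 e^{-us^2D^2}\,du.
\end{equation*}
Since $|D|^{-p-1}$ and $D^2 e^{-us^2D^2}$ are both functions of $D$, they commute, and $|D|^{-p-1}D^2 = |D|^{1-p}$ (using the spectral gap at $0$ to make sense of $|D|^{1-p}$ when $p=1$ as the identity, and as a bounded operator for $p\geq 2$). Multiplying on the left by $x$ yields the key factorization
\begin{equation*}
    x|D|^{-p-1}(1-e^{-s^2D^2}) = s^2\int_0^1 x|D|^{1-p}e^{-us^2D^2}\,du.
\end{equation*}

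Next I would apply Lemma \ref{first decay lemma} with $m_1=0$ and $m_2=p-1$; the required condition $m_1+m_2=p-1<p$ is satisfied. This furnishes a constant $C>0$ such that $\|x|D|^{1-p}e^{-t^2D^2}\|_1 \leq Ct^{-1}$ for all $t\in(0,1]$. Substituting $t=\sqrt{u}\,s$, valid whenever $u\in(0,1]$ and $s\in(0,1]$, gives
\begin{equation*}
    \|x|D|^{1-p}e^{-us^2D^2}\|_1 \leq \frac{C}{\sqrt{u}\,s},\quad u\in(0,1],\ s\in(0,1].
\end{equation*}

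The integrand $u\mapsto x|D|^{1-p}e^{-us^2D^2}$ is continuous in $u$ in the weak operator topology (indeed, in the operator norm), and the preceding estimate together with $\int_0^1 u^{-1/2}\,du=2$ shows that its $\mathcal{L}_1$-norm is integrable on $(0,1]$. Lemma \ref{peter lemma} therefore legitimises the integral as an $\mathcal{L}_1$-valued Bochner integral and provides the triangle inequality
\begin{equation*}
    \|x|D|^{-p-1}(1-e^{-s^2D^2})\|_1 \leq s^2\int_0^1\|x|D|^{1-p}e^{-us^2D^2}\|_1\,du \leq 2Cs,
\end{equation*}
which is the desired bound $O(s)$ as $s\downarrow 0$.

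There is no substantial obstacle here: once the integral representation of $1-e^{-s^2D^2}$ is in hand, the argument is essentially a bookkeeping exercise. The only points requiring attention are the applicability of Lemma \ref{first decay lemma} (which needs $x\in\mathcal{B}$ and $m_1+m_2<p$, both of which hold) and of Lemma \ref{peter lemma} (which needs the integrability of the $\mathcal{L}_1$-norm, supplied by the $u^{-1/2}$ integrability at the origin).
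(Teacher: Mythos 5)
Your proof is correct, but it takes a genuinely different route from the paper's. The paper splits the operator spectrally at $|D|=\frac1s$: on the range of $\chi_{(\frac1s,\infty)}(|D|)$ it bounds $t^{-p-1}(1-e^{-t^2})$ by a multiple of $(1+t^2)^{-\frac{p+1}{2}}$ and invokes Hypothesis \ref{main assumption}.\eqref{ass2} directly, in the form $\|x(D+\frac{i}{s})^{-p-1}\|_1=O(s)$; on the range of $\chi_{[0,\frac1s]}(|D|)$ it bounds $t^{-p-1}(1-e^{-t^2})\leq e\,t^{1-p}e^{-t^2}$ and applies Lemma \ref{first decay lemma}, which is essentially your core estimate. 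Your Duhamel-type representation $1-e^{-s^2D^2}=s^2\int_0^1 D^2e^{-us^2D^2}\,du$ merges the two regimes into one estimate, so the lemma is reduced to a single application of Lemma \ref{first decay lemma} (at the rescaled time $\sqrt{u}\,s$) together with Lemma \ref{peter lemma}; Hypothesis \ref{main assumption}.\eqref{ass2} enters only indirectly, through the proof of Lemma \ref{first decay lemma}. The paper's argument needs no operator-valued integration and no uniformity discussion; yours is a cleaner one-line reduction to the decay lemma. Two points you should make explicit, though neither is a genuine obstacle: (i) the $O(t^{-1})$ bound of Lemma \ref{first decay lemma} is a priori only valid for $t$ near $0$, so to get a single constant on all of $(0,1]$ observe that $t\mapsto\|x|D|^{1-p}e^{-t^2D^2}\|_1$ is non-increasing (write $e^{-t^2D^2}=e^{-t_0^2D^2}e^{-(t^2-t_0^2)D^2}$ and use $\|e^{-(t^2-t_0^2)D^2}\|_\infty\leq1$); (ii) the intermediate identity with the unbounded factor $D^2$ standing outside the integral should be read only after multiplication by $|D|^{-p-1}$ — equivalently, apply the scalar identity $\lambda^{-p-1}(1-e^{-s^2\lambda^2})=s^2\lambda^{1-p}\int_0^1 e^{-us^2\lambda^2}\,du$ by functional calculus, using the spectral gap to make $|D|^{1-p}$ bounded — and Lemma \ref{peter lemma} is applied on $(0,1]$, since at $u=0$ the integrand $x|D|^{1-p}$ is bounded but not trace class; restricting to $(0,1]$ (or integrating over $[\varepsilon,1]$ and letting $\varepsilon\downarrow0$) disposes of this.
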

    \begin{proof}
        By the triangle inequality, we have
        \begin{align}
            \|x|D|^{-p-1}(1-e^{-s^2D^2})\|_1 &\leq \|x|D|^{-p-1}(1-e^{-s^2D^2})\chi_{(\frac1s,\infty)}(|D|)\|_1\nonumber\\
                                             &\quad +\|x|D|^{-p-1}(1-e^{-s^2D^2})\chi_{[0,\frac1s]}(|D|)\|_1\label{split half line}.
        \end{align}

        Let us estimate the first summand of \eqref{split half line}. Since for $t > 1$ we have:
        $$t^{-p-1}(1-e^{-t^2})\leq t^{-p-1}\leq 2^{\frac{p+1}{2}}\cdot (t^2+1)^{-\frac{p+1}{2}},$$
        it follows that
        \begin{equation*}
            |D|^{-p-1}(1-e^{-s^2D^2})\chi_{(\frac1s,\infty)}(|D|) \leq s^{p+1}\cdot 2^{\frac{p+1}{2}}\cdot (1+s^2D^2)^{-\frac{p+1}{2}}.
        \end{equation*}
        So we may estimate the first summand by:
        \begin{align*}
            \|x|D|^{p-1}(1-e^{-s^2D^2})\chi_{(\frac{1}{s},\infty)}(|D|)\|_1 &\leq 2^{\frac{p+1}{2}}\Big\|x(D^2+s^{-2})^{-\frac{p+1}{2}}\Big\|_1 \\
                                                                            &= 2^{\frac{p+1}{2}}\Big\|x(D+\frac{i}{s})^{-p-1}\Big\|_1\\
                                                                            &= O(s)
        \end{align*}
        Where the final step follows from Hypothesis \ref{main assumption}.\eqref{ass2}.

        Let us estimate the second summand of \eqref{split half line}. We have
        $$t^{-p-1}(1-e^{-t^2})\leq t^{1-p}\leq et^{1-p}\cdot e^{-t^2},\quad t\in[0,1],$$
        so it follows that
        \begin{equation*}
            |D|^{-p-1}(1-e^{-s^2D^2})\chi_{[0,\frac1s]}(|D|) \leq es^{p+1}\cdot (s|D|)^{1-p}\cdot e^{-s^2D^2}.
        \end{equation*}
        So, the second summand in \eqref{split half line} can be estimated by:
        \begin{equation*}
            \|x|D|^{-p-1}(1-e^{-s^2D^2})\chi_{[0,\frac{1}{s})]}(|D|)\|_1 \leq es^2\Big\|x|D|^{1-p}e^{-s^2D^2}\Big\|_1.
        \end{equation*}
        From Lemma \ref{first decay lemma}, this is $O(s)$ as $s\downarrow 0$.
    \end{proof}

    \begin{lem}\label{right convergence estimate}
        Let $(\mathcal{A},H,D)$ be a spectral triple satisfying Hypothesis \ref{main assumption} and assume that $D$ has a spectral gap at $0.$ For every $x\in\mathcal{B},$ we have
        \begin{equation*}
            \||D|^{-p-1}x(1-e^{-s^2D^2})\|_1 = O(s),\quad s\downarrow0.
        \end{equation*}
    \end{lem}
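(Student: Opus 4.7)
My plan is to reduce this to Lemma \ref{left convergence estimate} via an adjoint trick, picking up one commutator that can be controlled by the integral representation of Lemma \ref{first commutator rep lemma}. Since $\mathcal{B}$ is a $*$-algebra we have $x^* \in \mathcal{B}$, and since $\|T\|_1 = \|T^*\|_1$ and $(1-e^{-s^2D^2})$ commutes with $|D|^{-p-1}$ by functional calculus:
\begin{equation*}
    \||D|^{-p-1}x(1-e^{-s^2D^2})\|_1 = \|(1-e^{-s^2D^2})x^*|D|^{-p-1}\|_1.
\end{equation*}
Writing $(1-e^{-s^2D^2})x^* = x^*(1-e^{-s^2D^2}) - [e^{-s^2D^2},x^*]$ and then moving $|D|^{-p-1}$ across $(1-e^{-s^2D^2})$, the triangle inequality gives
\begin{equation*}
    \||D|^{-p-1}x(1-e^{-s^2D^2})\|_1 \leq \|x^*|D|^{-p-1}(1-e^{-s^2D^2})\|_1 + \|[e^{-s^2D^2},x^*]|D|^{-p-1}\|_1.
\end{equation*}

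For the first summand, I would apply Lemma \ref{left convergence estimate} directly with $x^* \in \mathcal{B}$ in place of $x$, obtaining $O(s)$. For the commutator term, I would apply Lemma \ref{first commutator rep lemma} with the Schwartz function $h(t) = e^{-t^2}$ to write
\begin{equation*}
    [e^{-s^2D^2},x^*] = s\int_{\mathbb{R}}\int_0^1 \widehat{h'}(u)\, e^{ius(1-v)|D|}\,\delta(x^*)\,e^{iusv|D|}\,dv\,du.
\end{equation*}
The key observation is that $|D|^{-p-1}$ commutes with the right-hand unitary $e^{iusv|D|}$, so when we right-multiply by $|D|^{-p-1}$ and push it past that factor, it lands directly adjacent to $\delta(x^*)$. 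Combining with Lemma \ref{peter lemma} and the fact that the other unitary factor is an isometry of $\mathcal{L}_1$,
\begin{equation*}
    \|[e^{-s^2D^2},x^*]|D|^{-p-1}\|_1 \leq s\,\|\widehat{h'}\|_1\,\|\delta(x^*)|D|^{-p-1}\|_1.
\end{equation*}

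It remains to verify that $\|\delta(x^*)|D|^{-p-1}\|_1 < \infty$. Hypothesis \ref{main assumption}.\eqref{ass2} supplies $\|\delta^k(a)(D+i)^{-p-1}\|_1 < \infty$ for $a \in \mathcal{A}$ and $k \geq 0$; the Leibniz rule $\delta^n(y_1 y_2) = \sum_j \binom{n}{j}\delta^j(y_1)\delta^{n-j}(y_2)$ together with the fact that $\delta^j(y_i)$ is bounded extends this bound to arbitrary products, and hence to all elements of $\mathcal{B}$. The spectral gap assumption makes $(D+i)^{p+1}|D|^{-p-1}$ a bounded multiplier, so the $(D+i)^{-p-1}$-norm bound transfers to the $|D|^{-p-1}$-norm, giving the desired finiteness. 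Putting everything together yields the bound $O(s)$. The main technical obstacle is the Leibniz extension step just described, but this is routine bookkeeping rather than a genuine difficulty; an essentially identical extension is tacit in the proof of Lemma \ref{left convergence estimate}.
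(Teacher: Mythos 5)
Your proof is correct, but it takes a genuinely different route from the paper's. The paper commutes $|D|^{-p-1}$ through $x$ algebraically: on $H_\infty$ it writes $|D|^{-p-1}x = x|D|^{-p-1}-|D|^{-p-1}[|D|^{p+1},x]|D|^{-p-1}$, expands $[|D|^{p+1},x]=\sum_{k=0}^{p}\binom{p+1}{k}\delta^{p+1-k}(x)|D|^{k}$ via Lemma \ref{left to right lemma}, handles the leading term by Lemma \ref{left convergence estimate}, and bounds each of the $p+1$ remainder terms by $\||D|^{-p-1}\delta^{p+1-k}(x)\|_1\,\||D|^{k-p}\|_\infty\, s\,\|(s|D|)^{-1}(1-e^{-s^2D^2})\|_\infty$ using Remark \ref{sp gap fact} and functional calculus. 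You instead take adjoints to move $x$ to the left of $|D|^{-p-1}$, reduce the main term to Lemma \ref{left convergence estimate} applied to $x^*$, and control the single remainder $[e^{-s^2D^2},x^*]|D|^{-p-1}$ with the Duhamel/Fourier representation of Lemma \ref{first commutator rep lemma} together with Lemma \ref{peter lemma}. What your route buys is that it only consumes first-order smoothness data, namely $\delta(x^*)|D|^{-p-1}\in\mathcal{L}_1$, and it never touches the unbounded operator $|D|^{p+1}$; what the paper's route buys is that it is purely algebraic, with no operator-valued integrals, at the price of using $\delta^{j}(x)$ up to order $p+1$. Two small remarks on your final step: the extension of the trace-class estimate from generators to all of $\mathcal{B}$ also needs the second estimate in Hypothesis \ref{main assumption}.\eqref{ass2} (for $\delta^k(\partial(a))$), since those operators are generators of $\mathcal{B}$ as well; and the binomial Leibniz rule is more than you need --- it suffices that $\delta(x^*)\in\mathcal{B}$ and that every word in the generators ends on the right in a generator $g$ with $g|D|^{-p-1}\in\mathcal{L}_1$, the remaining factors being bounded. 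This tacit extension to $\mathcal{B}$ is exactly the one the paper itself makes when it invokes Remark \ref{sp gap fact} for $x\in\mathcal{B}$, so your level of detail matches the paper's.
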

    \begin{proof}
        On the subspace $H_\infty$, we have:
        \begin{align*}
            |D|^{-p-1}x &= x|D|^{-p-1}+[|D|^{-p-1},x]\\
                        &= x|D|^{-p-1}-|D|^{-p-1}[|D|^{p+1},x]|D|^{-p-1}.
        \end{align*}
        By Lemma \ref{left to right lemma}, we have (again on $H_\infty$):
        \begin{align*}
            [|D|^{p+1},x] &= |D|^{p+1}x-x|D|^{p+1}\\
                          &= \left(\sum_{k=0}^{p+1}\binom{p+1}{k}\delta^{p+1-k}(x)|D|^k\right)-x|D|^{p+1}\\
                          &= \sum_{k=0}^p\binom{p+1}{k}\delta^{p+1-k}(x)|D|^k.
        \end{align*}
        Thus on $H_\infty$:
        \begin{equation*}
            |D|^{-p-1}x = x|D|^{-p-1}-\sum_{k=0}^p\binom{p+1}{k}|D|^{-p-1}\delta^{p+1-k}(x)|D|^{k-p-1}.
        \end{equation*}
        Multiplying on the right by $(1-e^{-s^2D^2})$, on $H_\infty$ we have:
        \begin{equation}\label{left to right expansion in trace class}
            |D|^{-p-1}x(1-e^{-s^2D^2}) = x|D|^{-p-1}(1-e^{-s^2D^2})-\sum_{k=0}^p \binom{p+1}{k}|D|^{-p-1}\delta^{p+1-k}(x)|D|^{k-p-1}(1-e^{-s^2D^2})
        \end{equation}
        From Lemma \ref{left convergence estimate}, we have that $x|D|^{-p-1}(1-e^{-s^2D^2})$ has bounded extension to an operator in $\mathcal{L}_1$ with norm bounded by $O(s)$.

        For $0\leq k\leq p,$ we have
        \begin{align*}
            \Big\||D|^{-p-1}\delta^{p+1-k}&(x)|D|^{k-p-1}(1-e^{-s^2D^2})\Big\|_1\\
                                          &\leq\Big\||D|^{-p-1}\delta^{p+1-k}(x)\Big\|_1\cdot\Big\||D|^{k-p}\Big\|_{\infty}\\
                                          &\quad\cdot s\Big\|(s|D|)^{-1}(1-e^{-s^2D^2})\Big\|_{\infty}.
        \end{align*}
        The first factor here is finite by Remark \ref{sp gap fact}. The second factor is finite because $k\leq p.$ The third factor is $O(s)$ by the functional calculus. Thus,
        \begin{equation*}
            \||D|^{-p-1}\delta^{p+1-k}(x)|D|^{k-p-1}(1-e^{-s^2D^2})\|_1=O(s),\quad s\downarrow0.
        \end{equation*}        
        Hence, each summand in \eqref{left to right expansion in trace class} extends to a trace class operator with norm bounded by $O(s)$, $s\downarrow 0$. By the triangle
        inequality, this completes the proof.        
    \end{proof}

\section{Integral formulae for commutators}\label{integral app}
    In this section of the appendix, we collect results concerning formulae for commutators with functions of $D$. Many of the results of this section
    will be known to the expert reader, but since they are scattered around various sources we provide them here with short and self-contained proofs.

    In this section, $(\mathcal{A},H,D)$ is a smooth $p$-dimensional spectral triple. 
    
    The following is essentially a consequence of the classical Duhamel formula.
    \begin{lem}\label{exp commutator lemma} 
        Let $(\mathcal{A},H,D)$ be a smooth spectral triple. If $x\in\mathcal{B}$ then for all $t \in \mathbb{R}$:
        \begin{equation*}
            [e^{it|D|},x] = it\int_0^1 e^{it(1-v)|D|}\delta(x)e^{itv|D|}\,dv.
        \end{equation*}
        Here, the integral is understood in the weak operator topology sense.
    \end{lem}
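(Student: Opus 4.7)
The strategy is a Duhamel-type argument: differentiate the one-parameter family $v \mapsto e^{it(1-v)|D|}\,x\,e^{itv|D|}$ on $[0,1]$ and integrate back using the fundamental theorem of calculus. Formally, since $\delta(x) = |D|x - x|D|$ on $H_\infty$, the derivative in $v$ ought to be
\begin{equation*}
    \tfrac{d}{dv}\bigl(e^{it(1-v)|D|}\,x\,e^{itv|D|}\bigr) \;=\; -it\,e^{it(1-v)|D|}\,\delta(x)\,e^{itv|D|},
\end{equation*}
and integrating from $v=0$ to $v=1$ gives $xe^{it|D|} - e^{it|D|}x$ on the left, which upon moving the sign is precisely the desired formula.

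To make this rigorous I would test the identity weakly. Fix $\xi,\eta \in H_\infty$ and set $\phi(v) := \langle e^{it(1-v)|D|}\,x\,e^{itv|D|}\xi,\eta\rangle$ for $v \in [0,1]$. The dense subspace $H_\infty$ is invariant under $e^{is|D|}$ for all $s$ (by functional calculus, as in the lemma preceding Definition \ref{H_infty definition}) and under $x$ and $\delta(x)$ (since $x \in \mathcal{B} \subseteq \mathrm{dom}_\infty(\delta)$). For $\zeta \in H_\infty \subseteq \mathrm{dom}(|D|)$, Stone's theorem gives that $v\mapsto e^{itv|D|}\zeta$ is norm-differentiable in $H$ with derivative $it|D|e^{itv|D|}\zeta$. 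Applying this with $\zeta = \xi$ and with $\zeta = x e^{itv|D|}\xi \in H_\infty$ (together with the fact that $e^{it(1-v)|D|}$ is unitary and $x$ is bounded) yields that $\phi$ is $C^1$ on $[0,1]$ with
\begin{equation*}
    \phi'(v) \;=\; \langle e^{it(1-v)|D|}\bigl(-it|D|x + it\, x|D|\bigr)e^{itv|D|}\xi,\eta\rangle \;=\; -it\,\langle e^{it(1-v)|D|}\,\delta(x)\,e^{itv|D|}\xi,\eta\rangle,
\end{equation*}
where the middle equality uses $\delta(x) = |D|x - x|D|$ on $H_\infty$. The fundamental theorem of calculus for the scalar function $\phi$ then gives $\phi(1) - \phi(0) = -it\int_0^1 \langle e^{it(1-v)|D|}\delta(x)e^{itv|D|}\xi,\eta\rangle\,dv$, i.e.\
\begin{equation*}
    \langle (xe^{it|D|} - e^{it|D|}x)\xi,\eta\rangle \;=\; -it\int_0^1 \langle e^{it(1-v)|D|}\delta(x)e^{itv|D|}\xi,\eta\rangle\,dv.
\end{equation*}

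Finally I would verify that the right-hand side defines a bona fide weak operator topology integral in the sense of Section \ref{weak int}: the integrand $v \mapsto e^{it(1-v)|D|}\delta(x)e^{itv|D|}$ is uniformly bounded in $\mathcal{L}_\infty$ by $\|\delta(x)\|_\infty$ and is continuous in the strong operator topology (by strong continuity of $v \mapsto e^{itv|D|}$ together with Lemma \ref{measurability lemma}-style reasoning), so \eqref{necessary-condition} holds trivially and the weak integral is defined. Since the identity has been verified on the dense subspace $H_\infty \times H_\infty$ and both sides are bounded, they agree as bounded operators on $H$. The only real subtlety is the differentiability of $\phi$ at the endpoints $v = 0,1$, which is handled by noting that the Stone-theorem derivative formula holds on $\mathrm{dom}(|D|)$ and we have arranged for the vectors being differentiated to lie in $H_\infty \subseteq \mathrm{dom}(|D|)$ throughout; no substantive obstacle arises.
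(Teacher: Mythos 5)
Your argument is correct and is essentially the paper's own Duhamel argument: both proofs differentiate $v\mapsto e^{it(1-v)|D|}\,x\,e^{itv|D|}$ and integrate back via the fundamental theorem of calculus, the only difference being the regularization device — the paper cuts off with the spectral projections $p_n=\chi_{[0,n]}(|D|)$ so that the operator-valued functions are differentiable in the uniform norm and then lets $p_n\to 1$ strongly, whereas you test against vectors in $H_\infty$ and use Stone's theorem together with the scalar fundamental theorem of calculus. Both routes are sound; your weak formulation is cleanest if you move the left unitary onto $\eta$ (writing $\phi(v)=\langle x e^{itv|D|}\xi, e^{it(v-1)|D|}\eta\rangle$) before differentiating, so that only norm-differentiable vector-valued maps are paired and the slightly delicate vector-valued product rule is avoided; the final density/boundedness step is exactly as needed.
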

    \begin{proof}
        For $n \geq 0$, we define the projection
        \begin{equation*}
            p_n := \chi_{[0,n]}(|D|).
        \end{equation*}
        Since $|D|$ is non-negative, as $n\to\infty$ the sequence of projections $\{p_n\}_{n\geq 0}$ converges in the strong operator topology to the identity.
        We define the functions $\xi$ and $\eta$ by:
        \begin{align*}
             \xi(v) &:= p_n\exp(it(1-v)|D|),\\
            \eta(v) &:= x\exp(itv|D|)p_n, \quad v \in \mathbb{R}.
        \end{align*}
        Since $p_n|D| \leq n$, the operator valued functions $\xi$ and $\eta$ are continuous and differentiable in the uniform norm. 
        
        Since $\xi$ and $\eta$ are continuous and differentiable, we have:
        \begin{equation*}
            \xi(1)\eta(1)-\xi(0)\eta(0) = \int_0^1 \xi'(v)\eta(v)+\xi(v)\eta'(v)\,dv.
        \end{equation*}
        where since $\xi,$ $\xi',$ $\eta$ and $\eta'$ are continuous, this may be considered as a Bochner integral. Therefore in particular,
        this is an integral in the weak operator topology.
        
        We can compute the terms in the integrand as:
        \begin{align*}
            \xi'(v)\eta(v) &= -itp_n\exp(it(1-v)|D|)|D|x\exp(itv|D|)p_n,\\
            \xi(v)\eta'(v) &= itp_n\exp(it(1-v)|D|)x|D|\exp(itv|D|)p_n.
        \end{align*}
        Thus, 
        \begin{equation*}
            \xi(1)\eta(1)-\xi(0)\eta(0) = -it\int_0^1 p_n\exp(it(1-v)|D|)\delta(x)\exp(itv|D|)p_n\,dv. 
        \end{equation*}
        Since the operator $\exp(it(1-v)|D|)\delta(x)\exp(itv|D|)$ is a continuous function of $v$ (in weak operator topology), the weak operator
        topology integral
        \begin{equation*}
            \int_{0}^1 \exp(it(1-v)|D|)\delta(x)\exp(itv|D|)\,dv
        \end{equation*}
        exists, and we have:
        \begin{equation*}
            p_n \int_{0}^1 \exp(it(1-v)|D|)\delta(x)\exp(itv|D|)\,dv\cdot p_n = \int_{0}^1 p_n \exp(it(1-v)|D|)\delta(x)\exp(itv|D|)p_n\,dv.
        \end{equation*}
        Therefore:
        \begin{equation*}
            \xi(1)\eta(1)-\xi(0)\eta(0) = p_n\int_{0}^1 \exp(it(1-v)|D|)\delta(x)\exp(itv|D|)\,dv\cdot p_n.
        \end{equation*}
        
        On the other hand, we can compute $\xi(1), \eta(1), \xi(0)$ and $\eta(0)$ directly:
        \begin{align*}
            \xi(1)\eta(1)-\xi(0)\eta(0) &= p_nx\exp(it|D|)p_n-p_n\exp(it|D|)xp_n\\
                                        &= p_n[x,\exp(it|D|)]p_n.
        \end{align*}
        Thus,
        \begin{equation*}
            p_n[\exp(it|D|),x]p_n = itp_n\int_0^1 \exp(it(1-v)|D|)\delta(x)\exp(itv|D|)\,dv\cdot p_n.
        \end{equation*}
        Since $n \geq 0$ is arbitrary, we may take $n\to\infty$ and since $p_n$ converges in the strong operator topology to the identity
        we get the desired equality.
    \end{proof}
    
    Combining Lemma \ref{exp commutator lemma} with the Fourier inversion theorem yields a formula for $[f(s|D|),x]$ for quite general functions $f$. The following formula is well known and appears in many places, for example \cite[Theorem 3.2.32]{Bratteli-Robinson1}.
    \begin{lem}\label{first commutator rep lemma} 
        If $\widehat{f},\widehat{f'}\in L_1(\mathbb{R}),$ then for all $x \in \mathcal{B}$, and $s > 0$,
        \begin{equation*}
            [f(s|D|),x] = s\int_{-\infty}^{\infty}\left(\int_0^1\widehat{f'}(u)e^{ius(1-v)|D|}\delta(x)e^{iusv|D|}dv\right)\,du.
        \end{equation*}
        Here, the integral is understood in a weak operator topology sense.
    \end{lem}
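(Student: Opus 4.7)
The plan is to derive this formula from Lemma \ref{exp commutator lemma} via Fourier inversion. By the Fourier conventions of the paper, the hypothesis $\widehat{f}\in L_1(\mathbb{R})$ together with boundedness of $f$ on the spectrum of $s|D|$ gives the functional-calculus identity
\begin{equation*}
f(s|D|) = \int_{\mathbb{R}} \widehat{f}(u)\, e^{ius|D|}\, du
\end{equation*}
understood as a weak operator topology integral (the pointwise Fourier inversion $f(\lambda)=\int \widehat{f}(u)e^{iu\lambda}du$ holds on the spectrum of $|D|$, and the integrand is uniformly bounded in operator norm by $|\widehat{f}(u)|$, which is in $L_1$).

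Next, since $x$ is bounded and $u \mapsto \widehat{f}(u)e^{ius|D|}$ is weakly measurable with integrable operator-norm, one can push the commutator with $x$ inside the integral:
\begin{equation*}
[f(s|D|),x] = \int_{\mathbb{R}} \widehat{f}(u)\, [e^{ius|D|},x]\, du.
\end{equation*}
Applying Lemma \ref{exp commutator lemma} with $t=us$ yields
\begin{equation*}
[e^{ius|D|},x] = ius \int_0^1 e^{ius(1-v)|D|}\delta(x)e^{iusv|D|}\,dv.
\end{equation*}
Substituting and using the standard Fourier-transform identity $\widehat{f'}(u) = iu\,\widehat{f}(u)$, which is valid under our hypotheses by integration by parts (and is precisely where the assumption $\widehat{f'}\in L_1$ enters naturally), converts the factor $iu\,\widehat{f}(u)$ into $\widehat{f'}(u)$ and gives
\begin{equation*}
[f(s|D|),x] = s\int_{\mathbb{R}}\widehat{f'}(u)\int_0^1 e^{ius(1-v)|D|}\delta(x)e^{iusv|D|}\,dv\,du.
\end{equation*}

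The main technical step, and the only point requiring care, is justifying the interchange of the commutator with the outer Fourier-inversion integral and the resulting iterated weak integral. This I would handle by approximating with the spectral cutoffs $p_n := \chi_{[0,n]}(|D|)$ as in the proof of Lemma \ref{exp commutator lemma}: the operator $p_nf(s|D|)p_n$ is a norm-convergent Bochner integral (since $p_n$ kills the tail of $|D|$ and the integrand becomes uniformly norm-bounded), commutation with $x$ passes through trivially, and Lemma \ref{exp commutator lemma} applies to give the desired identity sandwiched between $p_n$'s. The overall absolute convergence in operator norm of the right-hand side is guaranteed by the uniform estimate
\begin{equation*}
\Big\|\widehat{f'}(u)\, e^{ius(1-v)|D|}\delta(x)e^{iusv|D|}\Big\|_\infty \leq |\widehat{f'}(u)|\,\|\delta(x)\|_\infty,
\end{equation*}
so sending $n\to\infty$ and invoking strong convergence $p_n\to 1$ together with dominated convergence in the weak operator topology completes the argument.
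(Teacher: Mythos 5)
Your proposal is correct and follows essentially the same route as the paper: Fourier inversion of $f(s|D|)$ in the weak operator topology, pushing the commutator with the bounded operator $x$ inside the integral, applying Lemma \ref{exp commutator lemma} with $t=us$, and converting $iu\widehat{f}(u)$ to $\widehat{f'}(u)$. The extra justification you give via the spectral cutoffs $p_n$ and dominated convergence is a more detailed (and harmless) elaboration of an interchange the paper simply asserts, not a different argument.
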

    \begin{proof} 
        Indeed, by the Fourier inversion formula, by functional calculus we have a weak operator topology integral:
        $$f(s|D|)=\int_{\mathbb{R}}\widehat{f}(u)e^{ius|D|}du.$$
        Therefore,
        $$[f(s|D|),x]=\int_{-\infty}^{\infty}\widehat{f}(u)[e^{ius|D|},x]du.$$
        By Lemma \ref{exp commutator lemma}, we have a weak operator topology integral:
        $$[e^{ius|D|},x] = ius\int_0^1e^{ius(1-v)|D|}\delta(x)e^{iusv|D|}dv.$$
        Thus,
        $$[f(s|D|),x]=s\int_{-\infty}^{\infty}iu\widehat{f}(u)\Big(\int_0^1e^{ius(1-v)|D|}\delta(x)e^{iusv|D|}dv\Big)du.$$
        Since $iu\widehat{f}(u)=\widehat{f'}(u),$ the assertion follows.
    \end{proof}

    \begin{lem}\label{second commutator rep lemma} 
        If $\widehat{f},\widehat{f'},\widehat{f''}\in L_1(\mathbb{R}),$ then for all $x \in \mathcal{B}$ and $s > 0$ we have:
        $$[f(s|D|),x]-sf'(s|D|)\delta(x) = -s^2\int_{-\infty}^{\infty}\left(\int_0^1\widehat{f''}(u)(1-v)e^{ius(1-v)|D|}\delta^2(x)e^{iusv|D|}dv\right)du.$$
        $$[f(s|D|),x]-s\delta(x)f'(s|D|) = -s^2\int_{-\infty}^{\infty}\left(\int_0^1\widehat{f''}(u)(1-v)e^{iusv|D|}\delta^2(x)e^{ius(1-v)|D|}dv\right)du.$$
        Here once again the integrals are understood in the weak operator topology.
    \end{lem}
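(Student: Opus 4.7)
The plan is to derive both identities by combining the single-commutator formula of Lemma \ref{first commutator rep lemma} with a second application of Lemma \ref{exp commutator lemma}, and then performing a change of variables in the resulting triple integral. I focus on the first identity; the second follows by a symmetric argument (or by taking formal adjoints and conjugating).

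First I would rewrite the subtracted term in a convenient form. By Fourier inversion,
\begin{equation*}
    sf'(s|D|)\delta(x) = s\int_{\mathbb{R}} \widehat{f'}(u) e^{ius|D|}\delta(x)\,du,
\end{equation*}
and using $\int_0^1 dv = 1$ together with the semigroup identity $e^{ius|D|} = e^{ius(1-v)|D|}e^{iusv|D|}$, this can be written as the double integral
\begin{equation*}
    sf'(s|D|)\delta(x) = s\int_{\mathbb{R}}\int_0^1 \widehat{f'}(u) e^{ius(1-v)|D|}e^{iusv|D|}\delta(x)\,dv\,du.
\end{equation*}
Subtracting this from Lemma \ref{first commutator rep lemma} yields
\begin{equation*}
    [f(s|D|),x]-sf'(s|D|)\delta(x) = -s\int_{\mathbb{R}}\int_0^1 \widehat{f'}(u) e^{ius(1-v)|D|}[e^{iusv|D|},\delta(x)]\,dv\,du.
\end{equation*}

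Next I would apply Lemma \ref{exp commutator lemma} to the inner commutator (with $t=usv$ and $x$ replaced by $\delta(x)\in\mathcal{B}$):
\begin{equation*}
    [e^{iusv|D|},\delta(x)] = iusv\int_0^1 e^{iusv(1-w)|D|}\delta^2(x) e^{iusvw|D|}\,dw.
\end{equation*}
Substituting and using the Fourier rule $iu\widehat{f'}(u)=\widehat{f''}(u)$ absorbs the factor $iu$ and produces
\begin{equation*}
    [f(s|D|),x]-sf'(s|D|)\delta(x) = -s^2\int_{\mathbb{R}}\widehat{f''}(u)\int_0^1\int_0^1 v\, e^{ius(1-vw)|D|}\delta^2(x)e^{iusvw|D|}\,dw\,dv\,du,
\end{equation*}
where I have used the semigroup identity $e^{ius(1-v)|D|}e^{iusv(1-w)|D|}=e^{ius(1-vw)|D|}$.

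Finally, for fixed $v\in(0,1]$ I would change variables $v'=vw$, $dv'=v\,dw$, so $w\in[0,1]$ becomes $v'\in[0,v]$ and the factor $v$ cancels with the Jacobian. A Fubini swap of the $v$ and $v'$ integrals over the region $\{0\leq v'\leq v\leq 1\}$ produces
\begin{equation*}
    \int_0^1\int_0^1 v\, e^{ius(1-vw)|D|}\delta^2(x) e^{iusvw|D|}\,dw\,dv = \int_0^1 (1-v')e^{ius(1-v')|D|}\delta^2(x)e^{iusv'|D|}\,dv',
\end{equation*}
which gives the claimed formula after relabelling $v'$ as $v$. The second identity is obtained by the same procedure, starting instead from the form of Lemma \ref{first commutator rep lemma} obtained by the substitution $v\mapsto 1-v$, then writing $s\delta(x)f'(s|D|)$ as a double integral by inserting $e^{ius|D|}=e^{iusv|D|}e^{ius(1-v)|D|}$ on the right of $\delta(x)$.

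The main obstacle is justifying the manipulations (Fubini, change of variables, interchange of Bochner/weak integrals with commutators) in the setting of integrals in the weak operator topology. The integrands are uniformly bounded in operator norm by a constant multiple of $|\widehat{f'}(u)|$ or $|\widehat{f''}(u)|$ (with the bounded operators $\delta(x)$, $\delta^2(x)\in\mathcal{B}$, and unitaries $e^{it|D|}$), and both $\widehat{f'},\widehat{f''}\in L_1(\mathbb{R})$ by hypothesis. Testing against arbitrary vectors $\xi,\eta\in H$ reduces every step to a bounded scalar integral, so the classical Fubini theorem and change-of-variables formula apply and all operator-valued integrals can be unambiguously interpreted in the weak operator topology as defined in Section \ref{weak int}.
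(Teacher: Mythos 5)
Your treatment of the first identity is correct and is essentially the paper's own proof: Fourier inversion for $f'(s|D|)$, subtraction from Lemma \ref{first commutator rep lemma}, a second application of Lemma \ref{exp commutator lemma} to $\delta(x)\in\mathcal{B}$ together with $iu\widehat{f'}(u)=\widehat{f''}(u)$, and then the change of variables $w_0=vw$ plus Fubini; the domination of the integrands in operator norm by the integrable functions $|\widehat{f'}|$ and $|\widehat{f''}|$ is exactly the justification needed for the weak-operator-topology manipulations, and the paper likewise proves only the first identity, declaring the second ``similar''.

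One caveat concerning the second identity. If you carry out your sketched mirror argument (substitute $v\mapsto 1-v$ in Lemma \ref{first commutator rep lemma}, insert $e^{ius|D|}=e^{iusv|D|}e^{ius(1-v)|D|}$ to the right of $\delta(x)$, and commute), the commutator now appears with the opposite orientation, namely $[e^{iusv|D|},\delta(x)]$, and the computation yields
\begin{equation*}
[f(s|D|),x]-s\delta(x)f'(s|D|)=+s^2\int_{\mathbb{R}}\int_0^1\widehat{f''}(u)(1-v)\,e^{iusv|D|}\delta^2(x)e^{ius(1-v)|D|}\,dv\,du,
\end{equation*}
i.e.\ the sign opposite to the one printed in the statement. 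A quick check with $|D|=\mathrm{diag}(\lambda,\mu)$ and $x$ a matrix unit (Taylor's formula with integral remainder), or formally with $f(t)=t^2$, confirms that the $+$ sign is the correct one, so the discrepancy is a sign slip in the printed second identity rather than a fault of your method; the same conclusion follows from your alternative suggestion of taking adjoints of the first identity (with $x$ replaced by $x^*$), since $\delta(x)^*=-\delta(x^*)$. This is harmless for the rest of the paper, because Lemmas \ref{first commutator lemma} and \ref{third commutator lemma} use the identity only through operator-norm and $\mathcal{L}_1$-norm bounds, but if you write out the second identity you should record it with the corrected sign.
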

    \begin{proof} We only prove the first equality as the proof of the second one is similar. 
    
        By the Fourier inversion theorem and functional calculus we have a weak operator topology integral representation:
        $$f'(s|D|)=\int_{\mathbb{R}}\widehat{f'}(u)e^{ius|D|}du.$$
        So multiplying on the left by the bounded operator $\delta(x)$,
        \begin{align*}
            sf'(s|D|)\delta(x)  &= s\int_{\mathbb{R}}\widehat{f'}(u)e^{ius|D|}\delta(x)du\\
                                &= s\int_{-\infty}^{\infty}\left(\int_0^1\widehat{f'}(u)e^{ius|D|}\delta(x)dv\right)du.
        \end{align*}

        Now representing $[f(s|D|),x]$ by the integral representation given by Lemma \ref{first commutator rep lemma}, we infer that
        \begin{align}
            [f(s|D|),x]-sf'(s|D|)\delta(x) &= s\int_{-\infty}^{\infty}\Big(\int_0^1\widehat{f'}(u)\Big(e^{ius(1-v)|D|}\delta(x)e^{iusv|D|}-e^{ius|D|}\delta(x)\Big)dv\Big)du\nonumber\\
                                           &= s\int_{-\infty}^{\infty}\Big(\int_0^1\widehat{f'}(u)\Big(e^{ius(1-v)|D|}[\delta(x),e^{iusv|D|}]\Big)dv\Big)du\label{taylor remainder formula}.
        \end{align}

        Applying Lemma \ref{exp commutator lemma} to $\delta(x) \in \mathcal{B}$, we have:
        \begin{equation}\label{delta(x) commutator formula}
            [\delta(x),e^{iusv|D|}] = -iusv\int_0^1e^{iusv(1-w)|D|}\delta^2(x)e^{iusvw|D|}dw.
        \end{equation}
        Combining \eqref{taylor remainder formula} and \eqref{delta(x) commutator formula}, we obtain
        \begin{equation}\label{taylor expansion triple integral}
            [f(s|D|),x]-sf'(s|D|)\delta(x) =-s^2\int_{-\infty}^{\infty}\Big(\int_0^1\Big(\int_0^1\widehat{f''}(u)ve^{ius(1-vw)|D|}\delta^2(x)e^{iusvw|D|}dw\Big)dv\Big)du.
        \end{equation}
        
        We focus on the inner integral. Performing a linear change of variables, $w_0 = vw$, we get:
        \begin{equation*}
            \int_0^1 ve^{ius(1-vw)|D|}\delta^2(x)e^{iusvw|D|}\,dw = \int_0^v e^{ius(1-w_0)|D|}\delta^2(x)e^{iusw_0|D|}\,dw_0,
        \end{equation*}
        and therefore:
        \begin{equation*}
            \int_0^1 \int_0^1 ve^{ius(1-vw)|D|}\delta^2(x)e^{iusvw|D|}\,dwdv = \int_0^1 \int_{0}^v e^{ius(1-w)|D|}\delta^2(x)e^{iusw|D|}\,dwdv.
        \end{equation*}
        Since the integrand is continuous, we may apply Fubini's theorem to interchange the integrals: 
        \begin{align*}
            \int_0^1 \int_0^1 ve^{ius(1-vw)|D|}\delta^2(x)e^{iusvw|D|}\,dwdv &= \int_0^1 \int_{w}^1 e^{ius(1-w)|D|}\delta^2(x)e^{iusw|D|}\,dvdw\\
                                                                             &= \int_0^1 (1-w)e^{ius(1-w)|D|}\delta^2(x)e^{iusw|D|}\,dw.
        \end{align*}         
        So from \eqref{taylor expansion triple integral}:
        \begin{equation*}
            [f(s|D|),x]-sf'(s|D|)\delta(x) = -s^2\int_{-\infty}^\infty \int_0^1 (1-w)e^{ius(1-w)|D|}\delta^2(x)e^{iusw|D|}\,dwdu.
        \end{equation*}
    \end{proof}

\section{Hochschild coboundary computations}\label{coboundary app}

    In this part of the appendix we include some of the lengthy algebraic computations required for Sections \ref{cohomology section} and \ref{preliminary heat section}.    
    Recall that for a multilinear functional $\theta:\mathcal{A}^{\otimes p}\to\mathbb{C}$ the Hochschild coboundary $b\theta:\mathcal{A}^{\otimes (p+1)}\to\mathbb{C}$ is defined in terms of the Hochschild boundary $b$ by $b\theta(c) = \theta(bc)$.
    
    Explicitly, for an elementary tensor $a_0\otimes\cdots \otimes a_p \in \mathcal{A}^{\otimes (p+1)}$ we have:
    \begin{align*}
        (b\theta)(a_0\otimes\cdots a_p) &= \theta(a_0a_1\otimes a_2\otimes\cdots \otimes a_p)+\sum_{k=1}^{p-1} (-1)^k \theta(a_0\otimes\cdots \otimes a_ka_{k+1}\otimes\cdots\otimes a_p)\\
                                        &\quad + (-1)^p\theta(a_pa_0\otimes a_1\otimes\cdots\otimes a_{p-1}).
    \end{align*}


\subsection{Coboundaries in Section \ref{cohomology section}}
    Let $\mathscr{A} \subseteq \{1,\ldots,p\}$. Let $T :=D^{2-|\mathscr{A}|}|D|^{p+1}e^{-s^2D^2}.$
    Following the notation of Definition \ref{W definition}, we define
    for $a \in \mathcal{A}$,
    \begin{equation*}
        b_k(a) := \begin{cases}
                      \delta(a),\quad k \in \mathscr{A},\\
                      [F,a],\quad k\notin \mathscr{A}.
                  \end{cases}
    \end{equation*}
    Fix $1\leq m\leq p-1$. We introduce a pair of multilinear mappings, $\theta_s^1$ and $\theta_s^2$, defined on $a_0\otimes\cdots\otimes a_{p-1} \in \mathcal{A}^{\otimes p}$ by:
    \begin{equation*}
        \theta_s^1(a_0\otimes\cdots\otimes a_{p-1}) := \mathrm{Tr}\left(\Gamma a_0\left(\prod_{k=1}^{m-2}b_k(a_k)\right)\delta^2(a_{m-1})\left(\prod_{k=m}^{p-1}b_{k+1}(a_k)\right)T\right).
    \end{equation*}
    and
    \begin{equation*}
        \theta_s^2(a_0\otimes\cdots\otimes a_{p-1}) := \mathrm{Tr}\left(\Gamma a_0\left(\prod_{k=1}^{m-2}b_k(a_k)\right)[F,\delta(a_{m-1})]\left(\prod_{k=m}^{p-1}b_{k+1}(a_k)\right)T\right).
    \end{equation*}
    
    The mapping that here is denoted $\theta_s^1$ is exactly the multilinear mapping $\theta_s$ introduced in Lemma \ref{kogom2}, and similarly
    the multilinear mapping $\theta_s^2$ is the multilinear mapping $\theta_s$ introduced in Lemma \ref{kogom3}. For $1\leq k < m$ we also introduce $X_k^1$ and $X_k^2$ defined by:
    \begin{align*}
        X_k^1 &:= {\rm Tr}\left(\Gamma a_0\left(\prod_{l=1}^{k-1}b_l(a_l)\right)a_k\left(\prod_{l=k}^{m-2}b_l(a_{l+1})\right)\delta^2(a_m)\left(\prod_{l=m+1}^pb_l(a_l)\right)\cdot T\right),\\
        X_k^2 &:= {\rm Tr}\left(\Gamma a_0\left(\prod_{l=1}^{k-1}b_l(a_l)\right)a_k\left(\prod_{l=k}^{m-2}b_l(a_{l+1})\right) [F,\delta(a_m)]\left(\prod_{l=m+1}^pb_l(a_l)\right)\cdot T\right).
    \end{align*}
    
    Now if $m\leq k \leq p$, we define $Y^1_k$ and $Y^2_k$ by:
    \begin{align*}
          Y_k^1 &:= {\rm Tr}\left(\Gamma a_0\left(\prod_{l=1}^{m-2}b_l(a_l)\right)\delta^2(a_{m-1})\left(\prod_{l=m}^{k-1}b_{l+1}(a_l)\right) a_k\left(\prod_{l=k+1}^pb_l(a_l)\right)\cdot T\right),\\
          Y_k^2 &:= {\rm Tr}\left(\Gamma a_0\left(\prod_{l=1}^{m-2}b_l(a_l)\right)[F,\delta(a_{m-1})]\left(\prod_{l=m}^{k-1}b_{l+1}(a_l)\right)a_k\left(\prod_{l=k+1}^pb_l(a_l)\right)\cdot T\right).
    \end{align*}

    \begin{lem}\label{kogom app 1}
        For $j = 1,2$ and $a_0\otimes\cdots\otimes a_p\in \mathcal{A}^{\otimes (p+1)}$. we have:
        \begin{equation*}
            \theta^j_s(a_0a_1\otimes a_2\otimes\cdots\otimes a_p) = X_1^j
        \end{equation*}
    \end{lem}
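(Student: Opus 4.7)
The plan is to verify the identity by unpacking both sides of the equation and matching them term-by-term, since this is essentially a bookkeeping/reindexing statement rather than a substantive algebraic claim.

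First I would substitute into the definition of $\theta_s^j$. Recall $\theta_s^j$ is a $p$-linear functional on $\mathcal{A}^{\otimes p}$, so $\theta_s^j(a_0a_1\otimes a_2\otimes\cdots\otimes a_p)$ is obtained by putting the first slot equal to $a_0a_1$ and the $k$-th slot (for $1\leq k \leq p-1$) equal to $a_{k+1}$. Writing out $j=1$ explicitly gives
\begin{equation*}
\theta_s^1(a_0a_1\otimes a_2\otimes\cdots\otimes a_p) = \mathrm{Tr}\!\left(\Gamma\, (a_0a_1)\!\left(\prod_{k=1}^{m-2}b_k(a_{k+1})\right)\!\delta^2(a_m)\!\left(\prod_{k=m}^{p-1}b_{k+1}(a_{k+1})\right)\!T\right),
\end{equation*}
and similarly for $j=2$ with $\delta^2(a_m)$ replaced by $[F,\delta(a_m)]$.

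Next I would reindex the second product by setting $l=k+1$, turning $\prod_{k=m}^{p-1}b_{k+1}(a_{k+1})$ into $\prod_{l=m+1}^{p}b_l(a_l)$; the first product $\prod_{k=1}^{m-2}b_k(a_{k+1})$ already matches $\prod_{l=1}^{m-2}b_l(a_{l+1})$ verbatim. Finally I would note that in the definition of $X_1^j$ the product $\prod_{l=1}^{k-1}b_l(a_l)$ with $k=1$ is empty (equal to $1$), so
\begin{equation*}
X_1^1 = \mathrm{Tr}\!\left(\Gamma\, a_0\cdot a_1\cdot\!\left(\prod_{l=1}^{m-2}b_l(a_{l+1})\right)\!\delta^2(a_m)\!\left(\prod_{l=m+1}^{p}b_l(a_l)\right)\!T\right),
\end{equation*}
and combining $\Gamma a_0 \cdot a_1 = \Gamma(a_0a_1)$ gives exactly the reindexed expression for $\theta_s^1(a_0a_1\otimes a_2\otimes\cdots\otimes a_p)$. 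The same argument with $\delta^2(a_m)$ replaced by $[F,\delta(a_m)]$ handles the $j=2$ case.

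There is no real obstacle here: the lemma is a definitional identity that the authors are presumably stating as the base case of a longer computation of $b\theta_s^j$ in terms of the $X_k^j$ and $Y_k^j$. The only thing to be careful about is the bookkeeping when the products $\prod_{l=1}^{k-1}$ degenerate to the empty product at $k=1$, which is exactly what makes the initial term of the Hochschild coboundary take this simple form.
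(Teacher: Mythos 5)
Your proposal is correct and follows exactly the same route as the paper, which simply remarks that the identity "follows immediately from the definition"; your write-up just makes the substitution, the reindexing $l=k+1$, and the empty-product observation at $k=1$ explicit. Nothing is missing.
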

    \begin{proof}
        This follows immediately from the definition.
    \end{proof}

    \begin{lem}\label{kogom app 2}
        For $j = 1,2$, $1 \leq k < m-1$ and $a_0\otimes\cdots\otimes a_p \in \mathcal{A}^{\otimes (p+1)}$ we have
        \begin{equation*}
            \theta_s^j(a_0\otimes \cdots\otimes a_{k}a_{k+1}\otimes\cdots\otimes a_p) = X_k^j+X_{k+1}^j.
        \end{equation*}
    \end{lem}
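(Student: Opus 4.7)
The plan is to unfold the definition of $\theta_s^j$ with the substitution $a_k a_{k+1}$ at position $k$, apply the Leibniz rule for $b_k$, and match the two resulting summands to $X_k^j$ and $X_{k+1}^j$. This is pure bookkeeping; the only conceptual ingredient is that the operators $b_k$ (namely $\delta$ for $k\in\mathscr{A}$ and $[F,\cdot]$ for $k\notin\mathscr{A}$) are both derivations, so that $b_k(a_k a_{k+1}) = b_k(a_k) a_{k+1} + a_k b_k(a_{k+1})$.

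First I would fix $j=1$ and note that $j=2$ follows by an identical argument, because the central factor $\delta^2(a_{m-1})$ (resp.\ $[F,\delta(a_{m-1})]$) plays no role in the positions near $k$. Substituting $a_0 \otimes \cdots \otimes a_k a_{k+1} \otimes \cdots \otimes a_p$ into the definition of $\theta_s^1$ places the product $a_k a_{k+1}$ at the $k$-th slot of a tensor in $\mathcal{A}^{\otimes p}$, with every subsequent factor shifted down by one. Since the hypothesis $1 \leq k < m-1$ forces $k \leq m-2$, the slot $k$ lies inside the range $\{1,\ldots,m-2\}$ where the operator $b_k$ is applied; correspondingly the slots $l > k$ in the first product (resp.\ the second product) hold $a_{l+1}$ (resp.\ $a_{l+1}$ under the reindexation $l\mapsto l+1$ inherent in $b_{l+1}(a_l')$).

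Applying the Leibniz rule then splits $\theta_s^1(a_0 \otimes \cdots \otimes a_k a_{k+1} \otimes \cdots \otimes a_p)$ as a sum of two traces. The summand containing $a_k b_k(a_{k+1})$ is precisely $X_k^1$ after recognising $a_k$ as the bare factor and using the collapse $b_k(a_{k+1})\cdot\prod_{l=k+1}^{m-2} b_l(a_{l+1}) = \prod_{l=k}^{m-2} b_l(a_{l+1})$, together with the reindexation $\prod_{l=m}^{p-1} b_{l+1}(a_{l+1}) = \prod_{l'=m+1}^{p} b_{l'}(a_{l'})$. The summand containing $b_k(a_k) a_{k+1}$ similarly matches $X_{k+1}^1$, with $a_{k+1}$ as the bare factor inserted between $\prod_{l=1}^{k} b_l(a_l)$ and $\prod_{l=k+1}^{m-2} b_l(a_{l+1})$ followed by $\delta^2(a_m)$ and the same shifted tail.

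The only mild subtlety is the boundary case $k = m-2$, where $k+1 = m-1$ coincides with the position of $\delta^2$ in $\theta_s^1$ and the range $\{k+1,\ldots,m-2\}$ is empty. Here the identification still goes through cleanly: $X_{m-1}^1$ has an empty product $\prod_{l=m-1}^{m-2} b_l(a_{l+1})$ and reduces to the expression with $a_{m-1}$ sitting bare immediately before $\delta^2(a_m)$, which is exactly what the Leibniz expansion produces from the summand $b_{m-2}(a_{m-2})\cdot a_{m-1}\cdot \delta^2(a_m)$. This confirms the identity in all cases, and the argument for $j=2$ is verbatim the same with $[F,\delta(a_m)]$ in place of $\delta^2(a_m)$.
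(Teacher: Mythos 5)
Your proposal is correct and follows essentially the same route as the paper: unfold the definition of $\theta_s^j$ with $a_ka_{k+1}$ in slot $k$, apply the Leibniz rule to $b_k(a_ka_{k+1})$ (both $\delta$ and $[F,\cdot]$ being derivations), and reindex the two summands to recognise $X_k^j$ and $X_{k+1}^j$. Your extra check of the boundary case $k=m-2$, where the product $\prod_{l=k+1}^{m-2}b_l(a_{l+1})$ is empty, is consistent with the paper's definitions and requires no further argument.
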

    \begin{proof} 
        We will only describe the $j=1$ case, since the $j=2$ case is identical. By the definition of $\theta_s^j,$ we have
        \begin{align*}
            \theta_s^1(a_0\otimes &a_1\otimes\cdots\otimes a_{k-1}\otimes a_{k}a_{k+1}\otimes a_{k+2}\otimes\cdots\otimes a_p)\\
                                  &= {\rm Tr}\left(\Gamma a_0\left(\prod_{l=1}^{k-1}b_l(a_l)\right)b_k(a_ka_{k+1})\left(\prod_{l=k+1}^{m-2}b_l(a_{l+1})\right)\delta^2(a_m)\left(\prod_{l=m}^{p-1}b_{l+1}(a_{l+1})\right)\cdot T\right).
        \end{align*}
        Now applying the Leibniz rule to $b_k(a_ka_{k+1})$,
        \begin{align*}
            \theta_s^1(&a_0\otimes a_1\otimes\cdots\otimes a_{k-1}\otimes a_{k}a_{k+1}\otimes a_{k+2}\otimes\cdots\otimes a_p)\\
                                  &= {\rm Tr}\left(\Gamma a_0\left(\prod_{l=1}^{k-1}b_l(a_l)\right) a_k b_k(a_{k+1})\left(\prod_{l=k+1}^{m-2}b_l(a_{l+1})\right)\delta^2(a_m)\left(\prod_{l=m}^{p-1}b_{l+1}(a_{l+1})\right)\cdot T\right)\\
                                  &\quad + {\rm Tr}\left(\Gamma a_0\left(\prod_{l=1}^{k-1}b_l(a_l)\right)b_k(a_k)a_{k+1}\left(\prod_{l=k+1}^{m-2}b_l(a_{l+1})\right)\delta^2(a_m)\left(\prod_{l=m}^{p-1}b_{l+1}(a_{l+1})\right)\cdot T\right)\\
                                  &= {\rm Tr}\left(\Gamma a_0\left(\prod_{l=1}^{k-1}b_l(a_l)\right) a_k\left(\prod_{l=k}^{m-2}b_l(a_{l+1})\right)\delta^2(a_m)\left(\prod_{l=m+1}^p b_l(a_l)\right) T\right)\\
                                  &\quad + {\rm Tr}\left(\Gamma a_0\left(\prod_{l=1}^kb_l(a_l)\right) a_{k+1}\left(\prod_{l=k+1}^{m-2}b_l(a_{l+1})\right)\delta^2(a_m)\left(\prod_{l=m+1}^pb_l(a_l)\right)\cdot T\right)\\
                                  &= X_k^1+X_{k+1}^1
        \end{align*}
        as required.
    \end{proof}

    \begin{lem}\label{kogom app 3}
        For $m\leq k<p,$ $j=1,2$ and $a_0\otimes\cdots\otimes a_p \in \mathcal{A}^{\otimes (p+1)}$ we have
        \begin{equation*}
            \theta_s^j(a_0\otimes a_1\otimes\cdots\otimes a_{k-1}\otimes a_{k}a_{k+1}\otimes a_{k+2}\otimes\cdots\otimes a_p) = Y_k^j+Y_{k+1}^j.
        \end{equation*}
    \end{lem}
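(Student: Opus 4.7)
The plan is to imitate the proof of Lemma \ref{kogom app 2} exactly, the only difference being that the contracted position $k$ now lies to the right of the special position $m-1$ (where $\delta^2$ or $[F,\delta(\cdot)]$ appears), instead of to the left. I will restrict attention to $j=1$; the argument for $j=2$ is identical because $[F,\delta(\cdot)]$ is, like $\delta$, a derivation on $\mathcal{B}$.

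First I would spell out the substitution explicitly. Write $b = a_0\otimes\cdots\otimes a_{k-1}\otimes (a_ka_{k+1})\otimes a_{k+2}\otimes\cdots\otimes a_p\in\mathcal{A}^{\otimes p}$, so that its entries $b_0,\ldots,b_{p-1}$ satisfy $b_l=a_l$ for $l<k$, $b_k=a_ka_{k+1}$, and $b_l=a_{l+1}$ for $l>k$. Since $m\le k<p$, we have $m-1<k$, so $b_{m-1}=a_{m-1}$, and the factor $\delta^2(b_{m-1})$ appearing in $\theta^1_s(b)$ is just $\delta^2(a_{m-1})$ — unaffected by the contraction. The three remaining pieces of the expression are: the left block $\prod_{l=1}^{m-2}b_l(a_l)$, which is unchanged; the middle block $\prod_{l=m}^{k-1}b_{l+1}(a_l)$, which is unchanged; and the right block $\prod_{l=k}^{p-1}b_{l+1}(b_l)$, where only the $l=k$ factor $b_{k+1}(a_ka_{k+1})$ is affected.

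The main step is then to apply the Leibniz rule $b_{k+1}(a_ka_{k+1})=b_{k+1}(a_k)a_{k+1}+a_kb_{k+1}(a_{k+1})$, which splits $\theta^1_s(b)$ into a sum of two traces. In the $a_kb_{k+1}(a_{k+1})$ summand, the factor $a_k$ sits undifferentiated at position $k$, and after reindexing $l'=l+1$ in the tail $\prod_{l=k+1}^{p-1}b_{l+1}(a_{l+1})=\prod_{l'=k+2}^{p}b_{l'}(a_{l'})$ one recognises exactly the expression defining $Y_k^1$. In the $b_{k+1}(a_k)a_{k+1}$ summand, absorbing $b_{k+1}(a_k)$ into the middle block extends it to $\prod_{l=m}^{k}b_{l+1}(a_l)$, the factor $a_{k+1}$ sits undifferentiated at position $k+1$, and the same reindexing of the remaining tail reveals precisely $Y_{k+1}^1$. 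Summing gives $\theta^1_s(\cdots)=Y_k^1+Y_{k+1}^1$.

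Nothing here is genuinely hard; the only potential obstacle is the bookkeeping of index shifts, where the conventions in the definitions of $\theta^j_s$ and $Y_k^j$ differ (the middle block uses $b_{l+1}$ while the block following the inserted $a_k$ in $Y_k^j$ reverts to $b_l$, precisely because the inserted factor ``consumes'' an index). Once the reindexing $l'=l+1$ is performed carefully in each summand, the two multilinear expressions match the definitions of $Y_k^1$ and $Y_{k+1}^1$ term by term. Repeating the argument verbatim with $[F,\delta(a_{m-1})]$ in place of $\delta^2(a_{m-1})$ yields the $j=2$ case.
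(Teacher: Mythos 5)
Your proof is correct and follows the paper's argument exactly: write out $\theta_s^j$ on the contracted tensor, apply the Leibniz rule to $b_{k+1}(a_ka_{k+1})$, absorb $b_{k+1}(a_k)$ into the middle block, and reindex the tail to recognise $Y_k^j$ and $Y_{k+1}^j$. One small caveat: your opening aside that the $j=2$ case works because $[F,\delta(\cdot)]$ is a derivation is false (it obeys a four-term Leibniz rule, as exploited in Lemma \ref{kogom app 4}), but this is harmless here, since $m-1<k$ means that factor is never differentiated --- which is the correct reason you in fact use at the end.
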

    \begin{proof} 
        Again we demonstrate only the $j=1$ case since the $j=2$ case is identical. By definition we have:
        \begin{align*}
            \theta_s^1(&a_0\otimes a_1\otimes\cdots\otimes a_{k-1}\otimes a_{k}a_{k+1}\otimes a_{k+2}\otimes\cdots\otimes a_p)\\
                                &= {\rm Tr}\left(\Gamma a_0\left(\prod_{l=1}^{m-2}b_l(a_l)\right)\delta^2(a_{m-1})\left(\prod_{l=m}^{k-1}b_{l+1}(a_l)\right)b_{k+1}(a_ka_{k+1})\left(\prod_{l=m+1}^{p-1}b_{l+1}(a_{l+1})\right)\cdot T\right).
        \end{align*}
        Applying the Leibniz rule to $b_{k+1}(a_ka_{k+1})$ we have:
        \begin{align*}
        \theta_s^1(&a_0\otimes a_1\otimes\cdots\otimes a_{k-1}\otimes a_{k}a_{k+1}\otimes a_{k+2}\otimes\cdots\otimes a_p)\\
                              &={\rm Tr}\left(\Gamma a_0\left(\prod_{l=1}^{m-2}b_l(a_l)\right)\delta^2(a_{m-1})\left(\prod_{l=m}^{k-1}b_{l+1}(a_l)\right)a_k b_{k+1}(a_{k+1})\left(\prod_{l=k+1}^{p-1}b_{l+1}(a_{l+1})\right)\cdot T\right)\\
                              &\quad+{\rm Tr}\left(\Gamma a_0\left(\prod_{l=1}^{m-2}b_l(a_l)\right)\delta^2(a_{m-1})\left(\prod_{l=m}^{k-1}b_{l+1}(a_l)\right)b_{k+1}(a_k)a_{k+1}\left(\prod_{l=k+1}^{p-1}b_{l+1}(a_{l+1})\right)\cdot T\right)\\
                              &={\rm Tr}\left(\Gamma a_0\left(\prod_{l=1}^{m-2}b_l(a_l)\right)\delta^2(a_{m-1})\left(\prod_{l=m}^{k-1}b_{l+1}(a_l\right) a_k\left(\prod_{l=k+1}^pb_l(a_l)\right)\cdot T\right)\\
                              &\quad+{\rm Tr}\left(\Gamma a_0\left(\prod_{l=1}^{m-2}b_l(a_l)\right)\delta^2(a_{m-1})\left(\prod_{l=m}^kb_{l+1}(a_l)\right) a_{k+1}\left(\prod_{l=k+2}^pb_l(a_l)\right)\cdot T\right)\\
                              &=Y_k^1+Y_{k+1}^1.
        \end{align*}
    \end{proof}
    
    We recall also the multilinear maps $\mathcal{W}_{\mathscr{A}}$ from Definition \ref{W definition}. 

    \begin{lem}\label{kogom app 4}
        Let $c=a_0\otimes a_1\otimes\cdots\otimes a_p \in \mathcal{A}^{\otimes (p+1)}$. If we assume that $m-1,m \in \mathscr{A}$, then we have:
        \begin{align*}
            \theta_s^1(&a_0\otimes a_1\otimes\cdots\otimes a_{m-2}\otimes a_{m-1}a_m\otimes a_{m+1}\otimes\cdots\otimes a_p)\\
                       &=X_{m-1}^1+Y_m^1+2{\rm Tr}(\mathcal{W}_{\mathscr{A}}(c)\cdot T).
        \end{align*}
        Now if $\mathscr{B} \subseteq \{1,\ldots, p\}$ is such that $|\mathscr{B}|=|\mathscr{A}|$ and $\mathscr{A}\Delta \mathscr{B} = \{m-1,m\}$ (where $\Delta$ denotes the symmetric difference) then
        \begin{align*}
            \theta_s^2(&a_0\otimes a_1\otimes\cdots\otimes a_{m-2}\otimes a_{m-1}a_m\otimes a_{m+1}\otimes\cdots\otimes a_p)\\
                       &= X_{m-1}^2+Y_m^2+{\rm Tr}(\mathcal{W}_{\mathscr{A}}(c)\cdot T)+{\rm Tr}(\mathcal{W}_{\mathscr{B}}(c)\cdot T).
        \end{align*}
    \end{lem}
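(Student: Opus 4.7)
The proof plan is a direct algebraic computation based on substituting $a_{m-1}a_m$ into the critical position and applying the Leibniz rule.

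First I would treat the identity involving $\theta_s^1$. Substituting $a_{m-1}a_m$ at position $m-1$ (and relabelling the remaining arguments), the factor $\delta^2(a_{m-1})$ gets replaced by $\delta^2(a_{m-1}a_m)$. I would then apply the Leibniz rule for the second-order derivation $\delta^2$:
\begin{equation*}
\delta^2(a_{m-1}a_m) = \delta^2(a_{m-1})a_m + 2\delta(a_{m-1})\delta(a_m) + a_{m-1}\delta^2(a_m).
\end{equation*}
The three summands give three traces. Relabelling $k=l+1$ in the product $\prod_{k=m}^{p-1} b_{k+1}(a_{k+1})$ shows the first trace matches $Y_m^1$ exactly (the "$\prod_{l=m}^{m-1}$" slot in the definition of $Y_m^1$ is empty), while the third trace matches $X_{m-1}^1$ (with the analogous empty slot). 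The cross term $2\delta(a_{m-1})\delta(a_m)$ gives $2\mathrm{Tr}(\mathcal{W}_{\mathscr{A}}(c)\cdot T)$, where the assumption $m-1,m\in\mathscr{A}$ ensures $b_{m-1}(a_{m-1})=\delta(a_{m-1})$ and $b_m(a_m)=\delta(a_m)$ in the definition of $\mathcal{W}_{\mathscr{A}}$; the remaining factors already coincide since $b_k$ is applied with the same $\mathscr{A}$.

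Next I would handle $\theta_s^2$. Now $[F,\delta(a_{m-1})]$ is replaced by $[F,\delta(a_{m-1}a_m)]$. Expanding with the Leibniz rule once for $\delta$ and once for $[F,\cdot]$ yields four terms:
\begin{equation*}
[F,\delta(a_{m-1}a_m)] = [F,\delta(a_{m-1})]a_m + \delta(a_{m-1})[F,a_m] + [F,a_{m-1}]\delta(a_m) + a_{m-1}[F,\delta(a_m)].
\end{equation*}
The outer and inner terms are immediately recognised as $Y_m^2$ and $X_{m-1}^2$ respectively, by the same index-shift used in the first part. The two middle terms are $\delta(a_{m-1})[F,a_m]$ and $[F,a_{m-1}]\delta(a_m)$. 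Since $|\mathscr{A}|=|\mathscr{B}|$ and $\mathscr{A}\Delta\mathscr{B}=\{m-1,m\}$, exactly one of $m-1,m$ lies in $\mathscr{A}$ and the other in $\mathscr{B}$ (while $b_k^{\mathscr{A}}=b_k^{\mathscr{B}}$ for $k\notin\{m-1,m\}$). Thus one middle term reproduces the pattern defining $\mathcal{W}_{\mathscr{A}}(c)$ and the other that of $\mathcal{W}_{\mathscr{B}}(c)$, giving the claimed sum $\mathrm{Tr}(\mathcal{W}_{\mathscr{A}}(c)T)+\mathrm{Tr}(\mathcal{W}_{\mathscr{B}}(c)T)$.

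There is no serious obstacle here: the only subtlety is keeping the indexing straight when the products $\prod_{k=m}^{p-1} b_{k+1}(a_k)$ in the definition of $\theta_s^j$ (arguments) are reindexed to $\prod_{l=m+1}^p b_l(a_l)$ (after substituting $a_{m-1}a_m$ at slot $m-1$ and shifting). The empty-product conventions in the definitions of $X_{m-1}^j$ and $Y_m^j$ (namely $\prod_{l=m-1}^{m-2}$ and $\prod_{l=m}^{m-1}$ respectively) are precisely what is needed for the matching. Once this bookkeeping is done, both identities fall out immediately from the Leibniz rule.
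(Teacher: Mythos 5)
Your proposal is correct and matches the paper's proof: both expand $\delta^2(a_{m-1}a_m)$ (resp. $[F,\delta(a_{m-1}a_m)]$) by the Leibniz rule and identify the outer terms with $X_{m-1}^j$ and $Y_m^j$ via the empty-product conventions, with the cross term(s) yielding $2\mathrm{Tr}(\mathcal{W}_{\mathscr{A}}(c)T)$ and $\mathrm{Tr}(\mathcal{W}_{\mathscr{A}}(c)T)+\mathrm{Tr}(\mathcal{W}_{\mathscr{B}}(c)T)$ respectively. The bookkeeping you describe is exactly what the paper does.
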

    \begin{proof} 
        Using the repeated Leibniz rule, we obtain
        \begin{align*}
            \delta^2(a_{m-1}a_m)   &= \delta^2(a_{m-1})a_m+2\delta(a_{m-1})\delta(a_m)+a_{m-1}\delta^2(a_m),\\
            [F,\delta(a_{m-1}a_m)] &= [F,\delta(a_{m-1})]a_m+[F,a_{m-1}]\delta(a_m)+\delta(a_{m-1})[F,a_m]+a_{m-1}[F,\delta(a_m)].
        \end{align*}
        Let us focus on proving the assertion relating to $\theta^1_s$, since the other assertion is identical.

        By the definition of $\theta_s^1,$ we have
        \begin{align*}
            \theta_s^1(&a_0\otimes a_1\otimes\cdots\otimes a_{m-2}\otimes a_{m-1}a_m\otimes a_{m+1}\otimes\cdots\otimes a_p)\\
                     &= {\rm Tr}\left(\Gamma a_0\left(\prod_{l=1}^{m-2}b_l(a_l)\right)\delta^2(a_{m-1}a_m)\left(\prod_{l=m}^{p-1}b_{l+1}(a_{l+1})\right)\cdot T\right)\\
                     &= {\rm Tr}\left(\Gamma a_0\left(\prod_{l=1}^{m-2}b_l(a_l)\right)\delta^2(a_{m-1}a_m)\left(\prod_{l=m+1}^pb_l(a_l)\right)\cdot T\right).
        \end{align*}
        Applying the Leibniz rule to the $\delta^2(a_{m-1}a_m)$ term:
        \begin{align*}
            \theta_s^1(&a_0\otimes a_1\otimes\cdots\otimes a_{m-2}\otimes a_{m-1}a_m\otimes a_{m+1}\otimes\cdots\otimes a_p)\\
                     &= {\rm Tr}\left(\Gamma a_0\left(\prod_{l=1}^{m-2}b_l(a_l)\right) a_{m-1}\delta^2(a_m)\left(\prod_{l=m+1}^pb_l(a_l)\right)\cdot T\right)\\
                     &\quad + {\rm Tr}\left(\Gamma a_0\left(\prod_{l=1}^{m-2}b_l(a_l)\right) 2\delta(a_{m-1})\delta(a_m)\left(\prod_{l=m+1}^pb_l(a_l)\right)\cdot T\right)\\
                     &\quad + {\rm Tr}\left(\Gamma a_0\left(\prod_{l=1}^{m-2}b_l(a_l)\right)\delta^2(a_{m-1})a_m\left(\prod_{l=m+1}^pb_l(a_l)\right)\cdot T\right)\\
                     &=X_{m-1}+2{\rm Tr}(\mathcal{W}_{\mathscr{A}}(c)\cdot T)+Y_m.
        \end{align*}
    \end{proof}

    \begin{lem}\label{kogom app 5}
        For $a_0\otimes \cdots\otimes a_p \in \mathcal{A}^{\otimes(p+1)}$, we have
        \begin{align*}
            \theta_s^1(&a_pa_0\otimes a_1\otimes a_2\otimes\cdots\otimes a_{p-1})\\
                     &= {\rm Tr}\left(\Gamma a_0\left(\prod_{l=1}^{m-2}b_l(a_l)\right)\delta^2(a_{m-1})\left(\prod_{l=m}^{p-1}b_{l+1}(a_l)\right)[T,a_p]\right)+Y^1_p.
        \end{align*}
        We also have:
        \begin{align*}
            \theta_s^2(&a_pa_0\otimes a_1\otimes a_2\otimes\cdots\otimes a_{p-1})\\
                       &={\rm Tr}\left(\Gamma a_0\left(\prod_{l=1}^{m-2}b_l(a_l)\right)[F,\delta(a_{m-1})]\left(\prod_{l=m}^{p-1}b_{l+1}(a_l)\right)[T,a_p]\right)+Y_p^2.
        \end{align*}
    \end{lem}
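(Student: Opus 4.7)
The plan is a direct computation using three ingredients: (i) the fact that the grading $\Gamma$ commutes with elements of $\mathcal{A}$ (since $[\Gamma, a] = 0$ for all $a \in \mathcal{A}$ by Definition \ref{oddeven}), (ii) the cyclicity of the trace, and (iii) the elementary identity $T a_p = a_p T + [T,a_p]$. I will only spell out the argument for $\theta_s^1$; the case of $\theta_s^2$ is identical after replacing $\delta^2(a_{m-1})$ by $[F,\delta(a_{m-1})]$.

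First I would expand the definition of $\theta_s^1$ on the tensor $a_pa_0 \otimes a_1 \otimes \cdots \otimes a_{p-1}$, obtaining
\begin{equation*}
\theta_s^1(a_pa_0\otimes \cdots \otimes a_{p-1}) = \mathrm{Tr}\Big(\Gamma (a_pa_0)\Big(\prod_{l=1}^{m-2}b_l(a_l)\Big)\delta^2(a_{m-1})\Big(\prod_{l=m}^{p-1}b_{l+1}(a_l)\Big)T\Big).
\end{equation*}
Because $a_p \in \mathcal{A}$ commutes with $\Gamma$, I can pull $a_p$ to the far left of the product, and then cyclicity of the trace moves it to the far right of $T$. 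This gives
\begin{equation*}
\theta_s^1(a_pa_0\otimes \cdots \otimes a_{p-1}) = \mathrm{Tr}\Big(\Gamma a_0\Big(\prod_{l=1}^{m-2}b_l(a_l)\Big)\delta^2(a_{m-1})\Big(\prod_{l=m}^{p-1}b_{l+1}(a_l)\Big)T a_p\Big).
\end{equation*}

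Next, writing $T a_p = a_p T + [T,a_p]$ and splitting the trace linearly into two pieces yields exactly
\begin{equation*}
\mathrm{Tr}\Big(\Gamma a_0\Big(\prod_{l=1}^{m-2}b_l(a_l)\Big)\delta^2(a_{m-1})\Big(\prod_{l=m}^{p-1}b_{l+1}(a_l)\Big)[T,a_p]\Big)
\end{equation*}
plus the term with $a_pT$ in place of $[T,a_p]$. The latter term matches the definition of $Y_p^1$ (specializing the general formula for $Y_k^1$ to $k=p$, where the right-hand product $\prod_{l=p+1}^p b_l(a_l)$ is empty). This proves the first identity; the second identity for $\theta_s^2$ follows by the same manipulation.

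The only subtlety, and the one point that merits care, is to justify that each individual trace that appears is well defined (so that the cyclicity step and the splitting of $Ta_p$ are legitimate). Here we rely on $T = D^{2-|\mathscr{A}|}|D|^{p+1}e^{-s^2D^2}$ being manageable: the standard heat-kernel decay estimates encapsulated in Lemma \ref{first decay lemma} and Lemma \ref{commutator 7} ensure that the operator inside each trace on both sides of the claimed identity lies in $\mathcal{L}_1$, with the commutator $[T,a_p]$ handled by Lemma \ref{commutator 7} and the $a_pT$ piece handled by the smoothness and $p$-dimensionality in Hypothesis \ref{main assumption}. Since no step beyond a single commutator expansion and one application of cyclicity is required, I expect no substantive obstacle.
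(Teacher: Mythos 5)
Your proposal is correct and follows essentially the same route as the paper's proof: commute $a_p$ past $\Gamma$, use cyclicity of the trace to move $a_p$ to the right of $T$, then split $Ta_p = a_pT + [T,a_p]$ and recognise the $a_pT$ term as $Y_p^1$ (with the empty product for $k=p$), the $\theta_s^2$ case being identical. The paper carries out this manipulation formally without the trace-class discussion you add, but that remark is harmless.
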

    \begin{proof} 
        We prove only the assertion involving $\theta_s^1$, since one can prove the other result by an identical argument.
        Since $\Gamma$ commutes with $a_p$, we have:
        \begin{align*}
            \theta_s^1(&a_pa_0\otimes a_1\otimes a_2\otimes\cdots\otimes a_{p-1})\\
                     &= {\rm Tr}\left(\Gamma a_pa_0\left(\prod_{l=1}^{m-2}b_l(a_l)\right)\delta^2(a_{m-1})\left(\prod_{l=m}^{p-1}b_{l+1}(a_l)\right)\cdot T\right)\\
                     &= {\rm Tr}\left(a_p\Gamma a_0\left(\prod_{l=1}^{m-2}b_l(a_l)\right)\delta^2(a_{m-1})\left(\prod_{l=m}^{p-1}b_{l+1}(a_l)\right)\cdot T\right).
        \end{align*}
        Using the cyclicity of the trace, we have
        \begin{align*}
            \theta_s^1(&a_pa_0\otimes a_1\otimes a_2\otimes\cdots\otimes a_{p-1})\\
                     &={\rm Tr}\left(\Gamma a_0\left(\prod_{l=1}^{m-2}b_l(a_l)\right)\delta^2(a_{m-1})\left(\prod_{l=m}^{p-1}b_{l+1}(a_l)\right)\cdot Ta_p\right)\\
                     &={\rm Tr}\left(\Gamma a_0\left(\prod_{l=1}^{m-2}b_l(a_l)\right)\delta^2(a_{m-1})\left(\prod_{l=m}^{p-1}b_{l+1}(a_l)\right)\cdot [T,a_p]\right)+Y_p^1.
        \end{align*}
    \end{proof}

    Note that for $j = 1,2$, the telescopic sum
    \begin{align*}
        X_1^j+&\sum_{k=1}^{m-1}(-1)^k(X_k^j+X_{k+1}^j)+(-1)^{m-1}(X_{m-1}^j+Y_m^j)\\
            &+\sum_{k=m}^{p-1}(-1)^k(Y_k^j+Y_{k+1}^j)+(-1)^pY_p^j.
    \end{align*}
    vanishes.

    Therefore, combining Lemmas \ref{kogom app 1}, \ref{kogom app 2}, \ref{kogom app 3}, \ref{kogom app 4} and \ref{kogom app 5} we have:
    \begin{align*}
        (b\theta_s^1)(a_0\otimes\cdots\otimes a_p) &= 2\cdot(-1)^{m-1}\cdot{\rm Tr}(\mathcal{W}_{\mathscr{A}}(c)\cdot T)\\
                                                   &\quad + (-1)^p\cdot {\rm Tr}\left(\Gamma a_0\left(\prod_{l=1}^{m-2}b_l(a_l)\right)\delta^2(a_{m-1})\left(\prod_{l=m}^{p-1}b_{l+1}(a_l)\right)\cdot[T,a_p]\right).
    \end{align*}
    Similarly, if $\mathscr{B}$ is such that $|\mathscr{A}| = |\mathscr{B}|$ and $\mathscr{A}\Delta \mathscr{B} = \{m-1,m\}$ then:
    \begin{align*}
        (b\theta_s^2)(a_0\otimes\cdots\otimes a_p) &= (-1)^{m-1}\cdot{\rm Tr}(\mathcal{W}_{\mathscr{A}}(c)\cdot T)+(-1)^{m-1}\cdot{\rm Tr}(\mathcal{W}_{\mathscr{B}}(c)\cdot T)\\
                                                   &\quad +(-1)^p\cdot {\rm Tr}\left(\Gamma a_0\left(\prod_{l=1}^{m-2}b_l(a_l)\right)[F,\delta(a_{m-1})]\left(\prod_{l=m}^{p-1}b_{l+1}(a_l)\right)\cdot [T,a_p]\right).
    \end{align*}
    This completes the computation of the coboundaries for $\theta^1_s$ and $\theta^2_s$.

\subsection{Coboundaries in Section \ref{preliminary heat section}}

    Again in this subsection, $(\mathcal{A},H,D)$ is a smooth spectral triple where $D$ has a spectral gap at $0$. Let $T = Fe^{-s^2D^2}$. Note that this is different
    to $T$ in the preceding section.

    We define the multilinear mapping $\mathcal{L}_s:\mathcal{A}^{\otimes p}\to\mathbb{C}$ on $a_0\otimes \cdots\otimes a_{p-1} \in \mathcal{A}^{\otimes p}$ by:
    \begin{equation*}
        \mathcal{L}_s(a_0\otimes\cdots\otimes a_{p-1}) := {\rm Tr}\left(\Gamma a_0\left(\prod_{k=1}^{p-1}[F,a_k]\right)\cdot T\right).
    \end{equation*}
    We also define the multilinear mapping $\mathcal{K}_s:\mathcal{A}^{\otimes (p+1)}\to\mathbb{C}$ by:
    \begin{equation*}
        \mathcal{K}_s(a_0\otimes\cdots\otimes a_p) := {\rm Tr}\left(\Gamma a_0\left(\prod_{k=1}^{p-1}[F,a_k]\right)\cdot [T,a_p]\right).
    \end{equation*}
    By definition, $\mathcal{K}_s$ is exactly the mapping from Theorem \ref{first cycle thm}.

    For $1\leq m\leq p$, we define $X_m$ by:
    \begin{equation*}
        X_m := {\rm Tr}\left(\Gamma a_0\left(\prod_{k=1}^{m-1}[F,a_k]\right)a_m\left(\prod_{k=m+1}^p[F,a_k]\right) T\right).
    \end{equation*}
    We have
    \begin{align*}
        \mathcal{L}_s(a_0a_1\otimes a_2\otimes\cdots\otimes a_p) &= {\rm Tr}(\Gamma a_0a_1\prod_{k=2}^p[F,a_k]\cdot T)\\
                                                         &=X_1.
    \end{align*}
    
    Applying the Leibniz rule to $[F,a_{m-1}a_m]$:
    \begin{align*}
        \mathcal{L}_s(&a_0\otimes a_1\otimes\cdots\otimes a_{m-2}\otimes a_{m-1}a_m\otimes a_{m+1}\otimes\cdots\otimes a_p)\\
              &= {\rm Tr}\left(\Gamma a_0\left(\prod_{k=1}^{m-2}[F,a_k]\right)[F,a_{m-1}a_m]\left(\prod_{k=m+1}^p[F,a_k]\right)\cdot T\right)\\
              &= {\rm Tr}\left(\Gamma a_0\left(\prod_{k=1}^{m-2}[F,a_k]\right) a_{m-1}\left(\prod_{k=m}^p[F,a_k]\right)\cdot T\right)\\
              &\quad +{\rm Tr}\left(\Gamma a_0\left(\prod_{k=1}^{m-1}[F,a_k]\right)a_m\left(\prod_{k=m+1}^p[F,a_k]\right)\cdot T\right)\\
              &= X_m+X_{m+1}.
    \end{align*}
    Finally,
    \begin{align*}
        \mathcal{L}_s(a_pa_0\otimes a_1\otimes\cdots\otimes a_{p-1}) &= {\rm Tr}(\Gamma a_pa_0\prod_{k=1}^{p-1}[F,a_k]\cdot T)\\
                                                             &= {\rm Tr}(\Gamma a_0\prod_{k=1}^{p-1}[F,a_k]\cdot Ta_p)\\
                                                             &= {\rm Tr}(\Gamma a_0\prod_{k=1}^{p-1}[F,a_k]\cdot [T,a_p])+{\rm Tr}(\Gamma a_0\prod_{k=1}^{p-1}[F,a_k]\cdot a_pT)\\
                                                             &= \mathcal{K}_s(a_0\otimes\cdots\otimes a_p)+X_p.
    \end{align*}

    Thus,
    \begin{align*}
        (b\mathcal{L}_s)(a_0\otimes\cdots\otimes a_p) &= \mathcal{K}_s(a_0\otimes\cdots\otimes a_p)\\
                                              &\quad+ X_1 +\Big(\sum_{m=1}^{p-1}(-1)^m(X_m+X_{m+1})\Big)+(-1)^pX_p.
    \end{align*}
    The latter sum telescopes and indeed vanishes, so it follows that $b\mathcal{L}_s=\mathcal{K}_s.$

\subsection{Coboundaries in Section \ref{heat section}}
    In this section, $(\mathcal{A},H,D)$ is a smooth spectral triple where $D$ has a spectral gap at $0$, and $T := |D|^{2-p}e^{-s^2D^2}$.
    We define the multilinear mapping $\theta_s:\mathcal{A}^{\otimes p}\to\mathbb{C}$ on $a_0\otimes\cdots a_{p-1} \in \mathcal{A}^{\otimes p}$ by:
    \begin{equation*}
        \theta_s(a_0\otimes\cdots\otimes a_{p-1}) = {\rm Tr}\left(\left(\prod_{k=0}^{p-1}\partial(a_k)\right) T\right).
    \end{equation*}

    For $0 \leq k \leq p$ we also define:
    \begin{equation*}
        X_k = {\rm Tr}\left(\left(\prod_{l=0}^{k-1}\partial(a_l)\right)a_k\left(\prod_{l=k+1}^p\partial(a_l)\right)T\right).
    \end{equation*}
    So in particular,
    \begin{equation*}
        X_0 = \mathrm{Tr}\left(a_0\left(\prod_{l=1}^{p}\partial(a_l)\right)T\right).
    \end{equation*}

    Applying the Leibniz rule to $\partial(a_ka_{a+1})$ we get:
    \begin{align*}
        \theta_s(&a_0\otimes\cdots\otimes a_{k-1}\otimes a_ka_{k+1}\otimes a_{k+2}\otimes\cdots\otimes a_p)\\
                 &= {\rm Tr}\left(\left(\prod_{l=0}^{k-1}\partial(a_l)\right)\partial(a_ka_{k+1})\left(\prod_{l=k+2}^p\partial(a_l)\right)\cdot T\right)\\
                 &= {\rm Tr}\left(\left(\prod_{l=0}^{k-1}\partial(a_l)\right) a_k\left(\prod_{l=k+1}^p\partial(a_l)\right)\cdot T\right)\\
                 &\quad +{\rm Tr}\left(\left(\prod_{l=0}^k\partial(a_l)\right) a_{k+1}\left(\prod_{l=k+2}^p\partial(a_l)\right)\cdot T\right)\\
                 &=X_k+X_{k+1}.
    \end{align*}

    We also have
    \begin{align*}
        \theta_s(&a_pa_0\otimes a_1\otimes\cdots\otimes a_{p-1})\\
                 &= {\rm Tr}\left(\left(\prod_{k=0}^{p-1}\partial(a_k)\right)\cdot Ta_p\right)\\
                 &\quad + {\rm Tr}\left(a_0\left(\prod_{l=1}^{p-1}\partial(a_k)\right)\cdot T\partial(a_p)\right)\\
                 &= X_p + {\rm Tr}\left(\left(\prod_{k=0}^{p-1}\partial(a_k)\right)\cdot [T,a_p]\right)\\
                 &\quad + X_0+{\rm Tr}\left(a_0\left(\prod_{k=1}^{p-1}\partial(a_k)\right)\cdot [T,\partial(a_p)]\right).
    \end{align*}

    If we assume that $p$ is even, then:
    \begin{align*}
        (b\theta_s)(&a_0\otimes\cdots\otimes a_p)\\
                    &= \left(\sum_{k=0}^{p-1}(-1)^k(X_k+X_{k+1})\right)+(X_p+X_0)\\
                    &\quad +{\rm Tr}\left(\left(\prod_{k=0}^{p-1}\partial(a_k)\right)\cdot [T,a_p]\right)+{\rm Tr}\left(a_0\left(\prod_{k=1}^{p-1}\partial(a_k)\right)\cdot [T,\partial(a_p)]\right)\\
                    &= 2X_0+{\rm Tr}\left(a_0\left(\prod_{k=1}^{p-1}\partial(a_k)\right) [T,\partial a_p]\right) +{\rm Tr}\left(\left(\prod_{k=0}^{p-1}\partial(a_k)\right)[T,a_p]\right)\\
                    &= 2{\rm Tr}\left(a_0\left(\prod_{k=1}^p\partial(a_k)\right)\cdot T\right)+{\rm Tr}\left(a_0\left(\prod_{k=1}^{p-1}\partial(a_k)\right) [T,\partial a_p]\right)\\
                    &\quad+{\rm Tr}\left(\left(\prod_{k=0}^{p-1}\partial(a_k)\right)[T,a_p]\right).
    \end{align*}
    This completes the computation of the coboundaries.

\section{Technical estimates for Section 4.4.1}\label{app 3 section}

    For this section, $(\mathcal{A},H,D)$ is assumed to be a spectral triple satisfying Hypothesis \ref{main assumption} and we
    assume that $D$ has a spectral gap at $0$.

    The results of this section are very similar to that of Lemma \ref{w bounded lemma}. However additional technicalities make the proofs more involved and therefore are included here in the appendix.
    We make use of the mappings $b_k$ from Definition \ref{W definition}, defined in terms of $m\geq 1$ and a subset $\mathscr{B} \subseteq \{1,\ldots,m\}$ on $a \in \mathcal{A}$
    \begin{equation*}
        b_k(a) := \begin{cases}
                      \delta(a),\quad k \in \mathscr{B},\\
                      [F,a],\quad k\notin \mathscr{B}.
                  \end{cases}
    \end{equation*}

    \begin{lem}\label{c induction lemma}    
        Let $m\geq 1$ and Let $n\in \mathbb{Z}$. Then for all $\mathscr{B} \subseteq \{1,\ldots,m\}$ the operator on $H_\infty$ given by:
        \begin{equation*}
            |D|^{-n}\left(\prod_{k=1}^m b_k(a_k)\right)|D|^{n+m-|\mathscr{B}|}
        \end{equation*}
        has bounded extension, where $b_k$ is defined in terms of $\mathscr{B}$.
    \end{lem}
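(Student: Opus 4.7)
The plan is to prove the lemma by induction on $m$, using the key observation that each factor $b_k(a_k)$ with $k \notin \mathscr{B}$ is a commutator $[F,a_k]$ whose product with $|D|$ on the right gives the bounded operator $L(a_k) = [F,a_k]|D|$. Since there are exactly $m - |\mathscr{B}|$ such factors, the power $n + m - |\mathscr{B}|$ provides precisely enough factors of $|D|$ on the right to absorb each of them, with $|D|^{-n}$ on the left ready to be conjugated through the bounded resulting expression.

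For the base case $m=1$: if $\mathscr{B} = \{1\}$, the operator is $|D|^{-n}\delta(a_1)|D|^n$, which is bounded by Lemma \ref{left to right corollary} since $\delta(a_1)\in\mathcal{B}$. If $\mathscr{B} = \emptyset$, we use that $D$ has a spectral gap at $0$ to write $[F,a_1] = L(a_1)|D|^{-1}$ on $H_\infty$, so the operator becomes $|D|^{-n}L(a_1)|D|^n$. Since $L(a_1) = \partial(a_1) - F\delta(a_1)$ and $F$ commutes with $|D|$ on $H_\infty$, this equals $|D|^{-n}\partial(a_1)|D|^n - F \cdot |D|^{-n}\delta(a_1)|D|^n$, and both terms are bounded by Lemma \ref{left to right corollary} (since $\partial(a_1), \delta(a_1) \in \mathcal{B}$).

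For the inductive step, split off the first factor. If $1 \in \mathscr{B}$, factor the expression as
\begin{equation*}
\bigl(|D|^{-n}\delta(a_1)|D|^n\bigr)\cdot \bigl(|D|^{-n}\prod_{k=2}^m b_k(a_k)\,|D|^{n+m-|\mathscr{B}|}\bigr).
\end{equation*}
The first factor is bounded by the base case. For the second, reindex $\{2,\ldots,m\}$ as $\{1,\ldots,m-1\}$ with subset $\mathscr{B}'' = \mathscr{B}\cap\{2,\ldots,m\}$ of cardinality $|\mathscr{B}|-1$; the right exponent satisfies $n + m - |\mathscr{B}| = n + (m-1) - |\mathscr{B}''|$, so the inductive hypothesis applies. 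If $1 \notin \mathscr{B}$, factor instead as
\begin{equation*}
\bigl(|D|^{-n}[F,a_1]|D|^{n+1}\bigr)\cdot\bigl(|D|^{-n-1}\prod_{k=2}^m b_k(a_k)\,|D|^{n+m-|\mathscr{B}|}\bigr).
\end{equation*}
The first factor coincides with the $m=1$, $\mathscr{B}=\emptyset$ case already handled. For the second, now $|\mathscr{B}''| = |\mathscr{B}|$ and $n + m - |\mathscr{B}| = (n+1) + (m-1) - |\mathscr{B}''|$, so the inductive hypothesis applies with shifted parameter $n' = n+1$.

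There is no deep obstacle here; the only care required is bookkeeping of the powers of $|D|$ and ensuring the formal manipulations with unbounded operators are justified. All operators in sight preserve $H_\infty$ (the elements of $\mathcal{B}$, as well as $F$ and each power $|D|^{\pm k}$), so the identity $[F,a_1] = L(a_1)|D|^{-1}$ and the commutativity of $F$ with $|D|$ hold on $H_\infty$, and the factorisations above are meaningful equalities of operators on $H_\infty$ whose bounded extensions exist by induction.
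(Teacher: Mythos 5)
Your proof is correct and follows essentially the same route as the paper: induction on $m$, the same base case via $L(a_1)=\partial(a_1)-F\delta(a_1)$ and Lemma \ref{left to right corollary}, and an inductive step that peels off one factor by inserting $|D|^{\pm j}$. The only cosmetic difference is that you split off the first factor (shifting $n\to n+1$ when $1\notin\mathscr{B}$, which is harmless since the lemma is quantified over all $n\in\mathbb{Z}$) whereas the paper splits off the last factor.
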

    \begin{proof} 
        We prove the assertion by induction on $m.$ If $m=1$ then there are two possible cases, $\mathscr{B} = \emptyset$ and $\mathscr{B} = \{1\}$. 
        
        If $\mathscr{B} = \emptyset$, then on $H_\infty$ we have:
        \begin{align*}
            |D|^{-n}\left(\prod_{k=1}^m b_k(a_k)\right)|D|^{n+m-|\mathscr{B}|} &= |D|^{-n}[F,a_1]|D|^{n+1}\\
                                                                    &= |D|^{-n}L(a_1)|D|^n\\
                                                                    &= |D|^{-n}\partial(a_1)|D|^n-F|D|^{-n}\delta(a_1)|D|^n.
        \end{align*}
        By Lemma \ref{left to right corollary}, the operators $|D|^{-n}\partial(a_1)|D|^n$ and $|D|^{-n}\delta(a_1)|D|^n$ have bounded extension, so this proves
        the $\mathscr{B} = \emptyset$ case. On the other hand, if $\mathscr{B} = \{1\},$ then on $H_\infty$:
        \begin{equation*}
            |D|^{-n}\left(\prod_{k=1}^m b_k(a_k) \right)|D|^{n+m-|\mathscr{B}|} = |D|^{-n}\delta(a_1)|D|^n.
        \end{equation*}
        Again by Lemma \ref{left to right corollary}, the operator $|D|^{-n}\delta(a_1)|D|^n$ has bounded extension. This completes the proof of the $\mathcal{B} = \{1\}$ case.

        Now assume that $m > 1$ and the assertion is true for $m-1$. Define $\mathscr{C} := \mathscr{B}\setminus \{m\}.$ and let $n_1 = n+(m-1)-|\mathscr{C}|$. Then on $H_\infty$:
        \begin{align} 
            |D|^{-n}&\left(\prod_{k=1}^mb_k(a_k)\right)|D|^{n+m-|\mathscr{B}|}\nonumber\\
                                                &=\left(|D|^{-n}\left(\prod_{k=1}^{m-1}b_k(a_k)\right)|D|^{n+(m-1)-|\mathscr{C}|}\right)\left(|D|^{-n_1}b_m(a_m)|D|^{n_1+1-|\mathscr{B}\cap\{m\}|}\right).\label{inductive expanded product}
        \end{align} 
        By the inductive assumption, the first factor has bounded extension.

        If $m\in\mathscr{B},$ then the second factor in \eqref{inductive expanded product} is:
        \begin{equation*}
            |D|^{-n_1}\delta(a_m)|D|^{n_1}
        \end{equation*}
        which has bounded extension by Lemma \ref{left to right corollary}.\eqref{left to right 1}. On the other hand, if $m\notin\mathscr{B},$ then the second factor in \eqref{inductive expanded product} is
        \begin{equation*}
            |D|^{-n_1}F(a_m)|D|^{n_1+1} = |D|^{-n_1}(\partial(a_m)-F\delta(a_m))|D|^{n_1}
        \end{equation*}
        which also has bounded extension by Lemma \ref{left to right corollary}.\eqref{left to right 1}. In either case, the second factor of \eqref{inductive expanded product} has bounded extension. So the assertion is proved
        for $m$, completing the proof by induction.
    \end{proof}

    \begin{lem}\label{c first alert lemma} 
        Let $\mathscr{A}\subseteq\{1,\cdots,p\}$ Assume that there is $m > 1$ be such that $m-1,m\in\mathscr{A}.$ 
        The operator on $H_\infty$ given by:
        \begin{equation*}
            \Gamma a_0\left(\prod_{k=1}^{m-2}b_k(a_k)\right)\delta^2(a_{m-1})\left(\prod_{k=m}^{p-1}b_{k+1}(a_k)\right)|D|^{p-|\mathscr{A}|}
        \end{equation*}
        has bounded extension.
    \end{lem}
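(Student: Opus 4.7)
The strategy is to insert cancelling powers of $|D|$ around the middle factor $\delta^2(a_{m-1})$ so as to split the operator into three factors, each of which is bounded by either Lemma \ref{c induction lemma} or Lemma \ref{left to right corollary}. Let $\mathscr{A}_1 := \mathscr{A}\cap\{1,\ldots,m-2\}$ and $\mathscr{A}_2 := \mathscr{A}\cap\{m+1,\ldots,p\}$. Since $m-1,m\in\mathscr{A}$ we have $|\mathscr{A}|=|\mathscr{A}_1|+2+|\mathscr{A}_2|$. Define $\alpha:=(m-2)-|\mathscr{A}_1|$.

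First I would verify by a direct arithmetic check that the exponents balance, that is:
\begin{equation*}
-\alpha+(p-m)-|\mathscr{A}_2|=p-|\mathscr{A}|-\alpha\cdot 0,
\end{equation*}
or equivalently $\alpha+(p-|\mathscr{A}|)=(p-m)-|\mathscr{A}_2|+2\alpha$, which reduces to the identity $|\mathscr{A}|=|\mathscr{A}_1|+2+|\mathscr{A}_2|$ already noted.

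Next I would write, as an identity of operators on $H_\infty$,
\begin{align*}
\Gamma a_0\Big(\prod_{k=1}^{m-2}b_k(a_k)\Big)&\delta^2(a_{m-1})\Big(\prod_{k=m}^{p-1}b_{k+1}(a_k)\Big)|D|^{p-|\mathscr{A}|}\\
&=\underbrace{\Gamma a_0\Big(\prod_{k=1}^{m-2}b_k(a_k)\Big)|D|^{\alpha}}_{(I)}\cdot\underbrace{|D|^{-\alpha}\delta^2(a_{m-1})|D|^{\alpha}}_{(II)}\cdot\underbrace{|D|^{-\alpha}\Big(\prod_{j=m+1}^{p}b_j(a_{j-1})\Big)|D|^{p-|\mathscr{A}|}}_{(III)},
\end{align*}
after the change of index $j=k+1$ in the right-hand product. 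Factor $(I)$ has bounded extension by Lemma \ref{c induction lemma} applied with $n=0$, $m\leadsto m-2$, and $\mathscr{B}\leadsto\mathscr{A}_1$, since then the right-hand power is exactly $(m-2)-|\mathscr{A}_1|=\alpha$. Factor $(II)$ has bounded extension by Lemma \ref{left to right corollary}.\eqref{left to right 1}, applied to $\delta^2(a_{m-1})\in\mathcal{B}$. Factor $(III)$ has bounded extension by Lemma \ref{c induction lemma} applied with $n=\alpha$, product length $p-m$, and the subset $\mathscr{A}_2$ (re-indexed), since then the right-hand power is $\alpha+(p-m)-|\mathscr{A}_2|$, and one checks that this equals $p-|\mathscr{A}|$ precisely because $\alpha=m-2-|\mathscr{A}_1|$ and $|\mathscr{A}|-|\mathscr{A}_2|=|\mathscr{A}_1|+2$.

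The main obstacle is purely bookkeeping: making sure the exponent $\alpha$ is chosen so that Lemma \ref{c induction lemma} gives exactly the required power $p-|\mathscr{A}|$ on the far right, and that the sandwiched factor $(II)$ has matching powers on both sides. No new analytic input is required beyond the two lemmas cited, and the $m=2$ case (where $\prod_{k=1}^{m-2}b_k(a_k)$ is empty and $\alpha=0$) is covered without modification.
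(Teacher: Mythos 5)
Your proposal is correct and is essentially the paper's own argument: the same three-factor splitting with inserted powers $|D|^{\pm\alpha}$ (the paper's $n_1=|\{1,\dots,m-2\}\setminus\mathscr{A}|$ is exactly your $\alpha$, and its right-hand exponent $n_1+n_2$ equals your $p-|\mathscr{A}|$), with Lemma \ref{c induction lemma} handling the two outer factors and Lemma \ref{left to right corollary}.\eqref{left to right 1} the sandwiched $|D|^{-\alpha}\delta^2(a_{m-1})|D|^{\alpha}$. The only blemish is a sign slip in your first displayed bookkeeping identity (it should read $\alpha+(p-m)-|\mathscr{A}_2|=p-|\mathscr{A}|$), but the equivalent identity you state next and the final verification are correct, so nothing substantive is affected.
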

    \begin{proof} 
        Let $n_1=|\{1,\cdots,m-2\}\setminus\mathscr{A}|$ and $n_2=|\{m+1,\cdots,p\}\setminus\mathscr{A}|$ so that immediately $n_1 \leq m-2-|\mathscr{A}|$ and $n_2 \leq p-m-|\mathscr{A}|$. On $H_\infty$ we have
        $$\Gamma a_0\left(\prod_{k=1}^{m-2}b_k(a_k)\right)\delta^2(a_{m-1})\left(\prod_{k=m}^{p-1}b_{k+1}(a_k)\right)|D|^{p-|\mathscr{A}|}= \mathrm{I}\cdot \mathrm{II}\cdot \mathrm{III}.$$
        Here,
        \begin{align*}
            \mathrm{I}   &= \Gamma a_0\left(\prod_{k=1}^{m-2}b_k(a_k)\right)|D|^{n_1},\\
            \mathrm{II}  &= |D|^{-n_1}\delta^2(a_{m-1})|D|^{n_1},\\
            \mathrm{III} &= |D|^{-n_1}\left(\prod_{k=m}^{p-1}b_{k+1}(a_k)\right)|D|^{n_1+n_2}.
        \end{align*}
        The operators $\mathrm{I}$ and $\mathrm{III}$ have bounded extension by Lemma \ref{c induction lemma}. On the other hand, $\mathrm{II}$ has bounded extension due to Lemma \ref{left to right corollary}.
    \end{proof}

    \begin{lem}\label{c second alert lemma} 
        Let $\mathscr{A}\subset\{1,\cdots,p\}$ and assume that there is $m > 1$ be such that $m-1\in\mathscr{A}$ and $m\notin\mathscr{A}.$ The operator on $H_\infty$ given by
        \begin{align*}
            \Gamma a_0\left(\prod_{k=1}^{m-2}b_k(a_k)\right)[F,\delta(a_{m-1})]\left(\prod_{k=m}^{p-1}b_{k+1}(a_k)\right)|D|^{p-|\mathscr{A}|}
        \end{align*}
        has bounded extension.
    \end{lem}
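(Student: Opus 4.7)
The plan is to mimic the proof of Lemma \ref{c first alert lemma}, splitting the operator into three factors so that Lemma \ref{c induction lemma} can be applied to the outer two and Lemma \ref{left to right corollary} to the middle. The one new ingredient is that the middle factor $[F,\delta(a_{m-1})]$ is not directly of the form covered by Lemma \ref{left to right corollary}, so I will first trade the commutator with $F$ for a factor of $|D|^{-1}$ via the identity $[F,x]|D|=L(x)=\partial(x)-F\delta(x)$, applied to $x=\delta(a_{m-1})$.

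Concretely, set $n_1:=|\{1,\dots,m-2\}\setminus \mathscr{A}|$ and $n_2:=|\{m+1,\dots,p\}\setminus \mathscr{A}|$. Since $m-1\in\mathscr{A}$ and $m\notin\mathscr{A}$, a direct count gives
\begin{equation*}
n_1+n_2+1 \;=\; (m-2)+(p-m)+1-(|\mathscr{A}|-1)\;=\;p-|\mathscr{A}|.
\end{equation*}
On $H_\infty$ we then factor the operator in question as $\mathrm{I}\cdot\mathrm{II}\cdot\mathrm{III}$ with
\begin{align*}
\mathrm{I}   &:= \Gamma a_0\Big(\prod_{k=1}^{m-2}b_k(a_k)\Big)|D|^{n_1},\\
\mathrm{II}  &:= |D|^{-n_1}[F,\delta(a_{m-1})]|D|^{n_1+1},\\
\mathrm{III} &:= |D|^{-(n_1+1)}\Big(\prod_{k=m}^{p-1}b_{k+1}(a_k)\Big)|D|^{(n_1+1)+n_2}.
\end{align*}
The exponents add up correctly so that the total right-hand power of $|D|$ equals $p-|\mathscr{A}|$ as required.

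Boundedness of $\mathrm{I}$ follows from Lemma \ref{c induction lemma} applied to $\mathscr{B}:=\mathscr{A}\cap\{1,\dots,m-2\}$ with $n=-n_1$ (so that the trailing power is $n+(m-2)-|\mathscr{B}|=n_1$), premultiplied by the bounded operator $\Gamma a_0$. Boundedness of $\mathrm{III}$ follows similarly from Lemma \ref{c induction lemma} applied to the re-indexed product with the subset $\mathscr{A}\cap\{m+1,\dots,p\}$ and $n=n_1+1$. For $\mathrm{II}$, since $F$ commutes with $|D|$ on $H_\infty$, and since $[F,\delta(a_{m-1})]|D|=L(\delta(a_{m-1}))=\partial(\delta(a_{m-1}))-F\delta^2(a_{m-1})$ on $H_\infty$, we get
\begin{equation*}
\mathrm{II}\;=\;|D|^{-n_1}\partial(\delta(a_{m-1}))|D|^{n_1}\;-\;F\cdot |D|^{-n_1}\delta^2(a_{m-1})|D|^{n_1}.
\end{equation*}
Both $\partial(\delta(a_{m-1}))$ and $\delta^2(a_{m-1})$ lie in $\mathcal{B}$, so Lemma \ref{left to right corollary}.\eqref{left to right 1} gives that each of the two conjugates has bounded extension, and hence so does $\mathrm{II}$.

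The main obstacle, such as it is, is bookkeeping: one must verify that the count $n_1+n_2+1=p-|\mathscr{A}|$ is correct and that the placement of the extra $|D|^{-1}$ (absorbed by the $[F,\delta(a_{m-1})]$ factor via the $L$-identity) shifts the exponents in $\mathrm{III}$ by exactly one, so that Lemma \ref{c induction lemma} still applies in the right range. No new analytic input is needed beyond what was already used for Lemma \ref{c first alert lemma}.
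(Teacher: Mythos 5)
Your proof is correct and takes essentially the same route as the paper: the identical three-factor splitting with the same $n_1,n_2$, Lemma \ref{c induction lemma} for the two outer factors, and Lemma \ref{left to right corollary} for the middle one after trading $[F,\delta(a_{m-1})]|D|$ for $\partial(\delta(a_{m-1}))-F\delta^2(a_{m-1})$. One small parameter slip: for factor $\mathrm{I}$ the right choice in Lemma \ref{c induction lemma} is $n=0$ (which already gives trailing power $(m-2)-|\mathscr{B}|=n_1$), whereas with $n=-n_1$ the lemma would instead bound $|D|^{n_1}\bigl(\prod_{k=1}^{m-2}b_k(a_k)\bigr)$; this does not affect the argument.
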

    \begin{proof} 
        Let $n_1=|\{1,\cdots,m-2\}\setminus\mathscr{A}|$ and $n_2=|\{m+1,\cdots,p\}\setminus\mathscr{A}|,$ so that we immediately have $n_1 \leq m-2-|\mathscr{A}|$ and $n_2\leq p-m-|\mathscr{A}|$ as in Lemma \ref{c first alert lemma}. We have
        $$\Gamma a_0\left(\prod_{k=1}^{m-2}b_k(a_k)\right)[F,\delta(a_{m-1})]\left(\prod_{k=m}^{p-1}b_{k+1}(a_k)\right)|D|^{p-|\mathscr{A}|}=\Gamma a_0\cdot \mathrm{I}\cdot \mathrm{II}\cdot \mathrm{III}.$$
        Here,
        \begin{align*}
              \mathrm{I} &= \left(\prod_{k=1}^{m-2}b_k(a_k)\right)|D|^{n_1},\\
             \mathrm{II} &= |D|^{-n_1}[F,\delta(a_{m-1})]|D|^{1+n_1},\\
            \mathrm{III} &= |D|^{-n_1-1}\left(\prod_{k=m}^{p-1}b_{k+1}(a_k)\right)|D|^{1+n_1+n_2}.
        \end{align*}
        The operators $\mathrm{I}$ and $\mathrm{III}$ have bounded extension by Lemma \ref{c induction lemma}. On the other hand, 
        \begin{equation*}
            \mathrm{II} = |D|^{-n_1}(\partial(\delta(a_{m-1}))-F\delta^2(a_{m-1}))|D|^{n_1}
        \end{equation*}
        which has bounded extension by Lemma \ref{left to right corollary}.\eqref{left to right 1}.
    \end{proof}

    \begin{lem}\label{c third alert lemma} 
        For every $m\geq1,$ the operator on $H_\infty$ given by
        \begin{equation*}
            \Big(\prod_{k=0}^m[F,a_k]\Big)|D|^{m+1}
        \end{equation*}
        has bounded extension.
    \end{lem}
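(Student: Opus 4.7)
The plan is to proceed by induction on $m$. The base case $m=0$ is immediate: on $H_\infty$ we have $[F,a_0]|D| = L(a_0) = \partial(a_0) - F\delta(a_0)$, and the right-hand side is bounded by definition of smoothness.

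For the inductive step, assume that for every choice of $a_0,\dots,a_{m-1}\in\mathcal{A}$ the operator $\bigl(\prod_{k=0}^{m-1}[F,a_k]\bigr)|D|^{m}$ extends to a bounded operator. Given $a_0,\dots,a_m\in\mathcal{A}$, use $[F,a_m]|D| = L(a_m)$ on $H_\infty$ to rewrite
$$\Bigl(\prod_{k=0}^m [F,a_k]\Bigr)|D|^{m+1} = \Bigl(\prod_{k=0}^{m-1}[F,a_k]\Bigr)\, L(a_m)\, |D|^m.$$
Next, apply Lemma \ref{left to right lemma} term-by-term to $L(a_m) = \partial(a_m) - F\delta(a_m)$, using that $\partial(a_m),\delta(a_m)\in\mathcal{B}$ and that $F$ commutes with $|D|^j$ on $H_\infty$. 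The result is the expansion
$$L(a_m)|D|^m = \sum_{j=0}^m (-1)^{m-j}\binom{m}{j}\,|D|^j\,\bigl(\delta^{m-j}(\partial(a_m)) - F\delta^{m-j+1}(a_m)\bigr),$$
in which the factor on the far right is a bounded operator for each $j$.

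It therefore suffices to show that $\bigl(\prod_{k=0}^{m-1}[F,a_k]\bigr)|D|^j$ has bounded extension for each $0\le j\le m$. The case $j=m$ is exactly the inductive hypothesis. For $0\le j<m$, exploit the spectral gap at $0$: since $|D|^{-1}$ is bounded and $|D|^j = |D|^m \cdot |D|^{j-m}$ on $H_\infty$, we may write
$$\Bigl(\prod_{k=0}^{m-1}[F,a_k]\Bigr)|D|^j = \Bigl(\prod_{k=0}^{m-1}[F,a_k]\Bigr)|D|^m \cdot |D|^{j-m},$$
and the right-hand side is the product of an operator bounded by the inductive hypothesis with the bounded operator $|D|^{j-m}$. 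Thus each summand in the expansion of $\bigl(\prod_{k=0}^m [F,a_k]\bigr)|D|^{m+1}$ has bounded extension, completing the induction.

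The proof is routine once the right reduction is found; the only subtlety is the handling of the intermediate powers $|D|^j$ for $0<j<m$, which is where the spectral gap assumption is essential (without it, one could not absorb $|D|^{-(m-j)}$ into a bounded operator, and a more delicate commutator expansion would be required).
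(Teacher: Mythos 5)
Your proof is correct and follows essentially the same route as the paper: induction on $m$, peeling off the last factor via $[F,a_m]|D|=L(a_m)$ on $H_\infty$, and moving powers of $|D|$ past elements of $\mathcal{B}$ using Lemma \ref{left to right lemma} together with the spectral gap. The paper merely packages your binomial expansion and absorption of the lower powers $|D|^{j-m}$ into a single step, inserting $|D|^m|D|^{-m}$ and citing Lemma \ref{left to right corollary}.\eqref{left to right 1} to bound the one factor $|D|^{-m}L(a_m)|D|^m$, which is exactly the content you re-derive.
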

    \begin{proof} 
        We prove the assertion by induction on $m.$ For $m=0,$ on $H_\infty$ we have
        $$[F,a_0]|D|=\partial(a_0)-F\delta(a_0)$$
        which has has bounded extension.

        Now let $m>1$ and assume that the assertion is true for $m-1$. On $H_\infty$ we write
        $$\Big(\prod_{k=0}^m[F,a_k]\Big)|D|^{m+1} = \Big(\Big(\prod_{k=0}^{m-1}[F,a_k]\Big)|D|^m\Big)\cdot\Big(|D|^{-m}[F,a_m]|D|^{m+1}\Big).$$
        The first factor has bounded extension by the induction assumption. The second factor is
        $$|D|^{-m}L(a_m)|D|^m=|D|^{-m}\partial(a_m)|D|^m-F\cdot|D|^{-m}\delta(a_m)|D|^m$$
        which has bounded extension by Lemma \ref{left to right corollary}.\eqref{left to right 1}. Hence, the assertion holds for $m$, completing the inductive proof.
    \end{proof}

\section{Subkhankulov's computation}\label{subhankulov app}

    The following assertion is identical to \cite[Lemma 2.1.1]{subhankulov}. However to the best of our knowledge there is no published proof in English,
    and \cite{subhankulov} is not easily accessible. For the convenience of the reader we include a proof here.
    
    \begin{prop}\label{subhankulov compute} 
        For all $u\in (0,1)$ and $v\in\mathbb{R},$ we have
        $$\frac1{2\pi}\int_{-1}^1\frac{(1-t^2)^2}{u+it}e^{(u+it)v}dt=(1+u^2)^2\chi_{[0,\infty]}(v)+e^{uv}\cdot\min\{1,v^{-2}\}\cdot O(1).$$
    \end{prop}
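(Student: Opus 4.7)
The plan is to apply the residue theorem to $f(t) := \frac{(1-t^2)^2}{u+it}e^{(u+it)v}$, which is meromorphic on $\mathbb{C}$ with a single simple pole at $t = iu$. Writing $u+it = i(t-iu)$, one finds $\mathrm{Res}_{t=iu} f = -i(1+u^2)^2$, and hence $2\pi i \cdot \mathrm{Res}_{t=iu}f = 2\pi(1+u^2)^2$. The pole lies in the upper half-plane (since $u > 0$), so closing the contour upward produces this main term, while closing downward produces none.

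For $v \geq 0$ I will close using the rectangle with vertices $\pm 1, \pm 1 + iT$ traversed counterclockwise; for any $T \geq 2$ the pole is enclosed, and the residue theorem yields $\int_{-1}^1 f\,dt = 2\pi(1+u^2)^2 - (\text{sum of the three added sides})$. For $v < 0$ the analogous rectangle in the lower half-plane encloses no pole and gives $\int_{-1}^1 f\,dt = -(\text{three added sides})$ with no main term. The height $T$ will be chosen as a function of $v$: fix $T = 2$ when $|v| \leq 1$, and let $T \to \infty$ when $|v| > 1$.

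When $|v| \leq 1$, on the $T = 2$ rectangle the uniform bounds $|u+it|^{-1} \leq 1$, $|(1-t^2)^2| \leq C$, and $|e^{(u+it)v}| \leq e^{uv}$ hold on each of the three non-base sides (each with length at most $2$), so every added side contributes $O(e^{uv})$; since $\min\{1,v^{-2}\} = 1$ in this range the claim follows. When $|v| > 1$, the horizontal side at height $\pm iT$ carries a factor $|e^{(u+it)v}| = e^{(u \mp T)v}$ that decays exponentially in $T$ while the remaining factors grow only polynomially, so this side vanishes as $T \to \infty$. Parametrising each vertical side by $t = \pm 1 + is$, $s \in [0,\infty)$ (or $t = \pm 1 - is$ in the lower-half-plane case), one uses $(1-t^2)^2 = s^2(s \mp 2i)^2$ and $|u+it| \geq 1$ together with the elementary identity
\begin{equation*}
\int_0^\infty s^2(s^2+4)\,e^{-s|v|}\,ds = \frac{24}{|v|^5} + \frac{8}{|v|^3} = O(|v|^{-2}) \qquad (|v| \geq 1)
\end{equation*}
to bound each vertical side by $O(e^{uv}/v^2)$, matching $\min\{1,v^{-2}\} = v^{-2}$ in this range. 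Assembling the two regimes, and noting that all implicit constants are uniform in $u \in (0,1)$ and $v \in \mathbb{R}$ (the only potential source of non-uniformity being the denominator $u+it$, which is controlled by $|u+it| \geq 1$ throughout), yields the stated formula. The main point of care is simply to verify that the same rectangle-of-height-$2$ argument handles the delicate borderline $|v|$ near $1$, which it does since the estimate $\int_0^\infty s^2(s^2+4)e^{-s|v|}ds = O(|v|^{-2})$ is uniform for $|v| \geq 1$.
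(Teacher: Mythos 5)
Your proof is correct, and at its core it is the same contour-shift-plus-residue argument as the paper's: the main term $(1+u^2)^2$ is the residue coming from the zero of $u+it$ (the paper first rotates via $z=it$, so its pole sits at $z=-u$ and the original integral becomes an integral over the segment from $-i$ to $i$), the half-plane in which one closes is dictated by the sign of $v$, and the $\min\{1,v^{-2}\}$ decay ultimately comes from the double zero of the numerator at the endpoints $t=\pm 1$ (equivalently $f(\pm i)=f'(\pm i)=0$ in the paper's variable). Where you genuinely differ is the treatment of the remainder: the paper closes with a single fixed auxiliary curve of bounded length staying at distance $\geq 1$ from the pole, gets the $O(e^{uv})$ bound trivially, and extracts the factor $v^{-2}$ by integrating by parts twice along that curve (hence needing a bound on $f''$ there); you instead use rectangles whose height depends on the regime --- height $2$ for $|v|\leq 1$, which yields the $O(e^{uv})$ bound, and height $T\to\infty$ for $|v|>1$, where the vertical sides are evaluated explicitly through $\int_0^\infty s^2(s^2+4)e^{-s|v|}\,ds=24|v|^{-5}+8|v|^{-3}=O(|v|^{-2})$ and the top edge vanishes in the limit. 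Your route trades the integration by parts for a limiting argument in $T$ and a two-regime case split, and is arguably more elementary and explicit; both arguments give constants uniform in $u\in(0,1)$ because the shifted contour pieces satisfy $|u+it|\geq 1$ (your vertical sides have real part $\pm 1$ and your horizontal sides have height at least $2>u$, mirroring the paper's distance condition on its auxiliary curve). The remaining details you gesture at --- that the pole $iu$ lies strictly inside the height-$2$ rectangle for all $u\in(0,1)$, the sign/orientation bookkeeping for the lower rectangle when $v<0$, and the residue value $-i(1+u^2)^2$ --- are all correct as stated.
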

    \begin{proof}
        We will deal separately with the $v \geq 0$ and $v < 0$ cases. First, assume that $v \geq 0$.
    
        Let $\gamma_0$ be a smooth curve in $\mathbb{C}$ without self-intersections such that
        \begin{enumerate}
            \item $\gamma_0$ starts at $i$ and ends at $-i.$
            \item $\gamma_0$ lies in the half-plane $\{\Re(z)\leq0\},$
            \item The distance between $\gamma_0$ and the interval $[-1,0]$ is greater than or equal to $1$,
            \item the length of $\gamma_0$ is at most $10.$
            \item $\gamma_0$ is contained in the disc $\{z\;:\;|z| \leq 10\}$.
        \end{enumerate}
        Let $\gamma_1$ be the interval starting at $-i$ and ending at $i$ and let the contour $\gamma$ be the concatenation of $\gamma_0$ and $\gamma_1.$
        
        Define 
        \begin{equation*}
            f(z) := \frac{(1+z^2)^2}{u+z},\quad z\in\mathbb{C}\setminus \{-u\}.
        \end{equation*}
        So that by definition:
        \begin{equation}\label{wick rotation}
            \frac{1}{2\pi} \int_{-1}^1 \frac{(1-t^2)^2}{u+it}e^{(u+it)v}\,dt = \frac{1}{2\pi i} e^{uv}\int_{\gamma_1} f(z)e^{zv}\,dz.
        \end{equation}
        Since $z\mapsto e^{zv}$ is entire, the function $z\to f(z)e^{zv}$ is holomorphic in the set $\mathbb{C}\setminus \{-u\}$ and has a simple pole at at $z=-u$
        with corresponding residue $(1+u^2)^2e^{-uv}.$ By construction,
        the point $-u$ is in the interior of the curve $\gamma$ and so         
        by the Cauchy integral formula we have:
        \begin{equation*}
            \frac{1}{2\pi i}\int_{\gamma}f(z)e^{zv}dz = (1+u^2)^2e^{-uv}.
        \end{equation*}
        Since $\gamma = \gamma_0\cup\gamma_1$:
        \begin{equation}\label{v>0 eq1}
            \frac1{2\pi i}\int_{\gamma_0}f(z)e^{zv}dz+\frac1{2\pi i}\int_{\gamma_1}f(z)e^{zv}dz = (1+u^2)^2e^{-uv}.
        \end{equation}
        
        By definition $\gamma_0$ has length at most $10$, so by the triangle inequality we have the bound:
        \begin{equation*}
            \left|\int_{\gamma_0}f(z)e^{zv}dz\right| \leq 10\sup_{z\in\gamma_0} |f(z)||e^{zv}|.
        \end{equation*}
        
        Since $\gamma_0$ is contained in the half-plane $\{z\;:\;\Re(z) \leq 0\}$ we also have that $\sup_{z \in \gamma_0} |e^{zv}| \leq 1$
        and therefore:
        \begin{equation*}
            \left|\int_{\gamma_0}f(z)e^{zv}dz\right| \leq 10\sup_{z\in\gamma_0} |f(z)|.
        \end{equation*}
        
        For $z \in \gamma$ we have that $|z|\leq 10$ and $|u+z|\geq1$ so it follows that
        $$\sup_{z\in\gamma_0}|f(z)|\leq (1+10^2)^2\leq 10^5.$$
        Therefore, we have
        \begin{equation}\label{v>0 eq2}
            \int_{\gamma_0} f(z)e^{zv}dz =O(1).
        \end{equation}

        Using integration by parts twice and taking into account that $f(\pm i)=f'(\pm i)=0,$ we obtain
        $$\int_{\gamma_0}f(z)e^{zv}dz=v^{-2}\int_{\gamma_0}f''(z)e^{vz}dz.$$
        Thus,
        $$v^2\left|\int_{\gamma_0}f(z)e^{zv}dz\right| \leq 10\sup_{z\in\gamma_0}|f''(z)|.$$
        We may compute $f''(z)$ directly as:
        $$f''(z) = (4+12z)\frac1{u+z}-8z(1+z^2)\frac1{(u+z)^2}+(1+z^2)^2\frac2{(u+z)^3}.$$
        Since $|z|\leq 10$ and $|u+z|\geq1$ for every $z\in\gamma_0,$ it follows that
        \begin{align*}
            \sup_{z\in\gamma_0}|f''(z)| &\leq (4+12\cdot 10)+8\cdot 10\cdot (1+10^2)+2\cdot(1+10^2)^2\\
                                        &\leq 10^5.
        \end{align*}
        Therefore, we have:
        \begin{equation}\label{v>0 eq3}
            \int_{\gamma_0}f(z)e^{zv}dz=O(v^{-2}),
        \end{equation}
        Hence,
        \begin{equation}\label{v>0 eq4}
            \int_{\gamma_0}f(z)e^{zv}dz = \min\{1,v^{-2}\}\cdot O(1).
        \end{equation}
        
        Combining \eqref{v>0 eq1} and \eqref{v>0 eq4}, we obtain
        $$\frac1{2\pi i}\int_{\gamma_1}f(z)e^{zv}dz=(1+u^2)^2e^{-uv}+\min\{1,v^{-2}\}\cdot O(1).$$
                
        So using \eqref{wick rotation} and \eqref{v>0 eq1}:
        $$\frac1{2\pi}\int_{-1}^1\frac{(1-t^2)^2}{u+it}e^{(u+it)v}dt=(1+u^2)^2+e^{uv}\cdot\min\{1,v^{-2}\}\cdot O(1).$$
        This completes the proof of the $v \geq 0$.
        
        Now assume that $v < 0$.
        This proof is similar, but instead we consider a contour in the half plane $\{z\;:\;\Re(z)\geq 0\}$. 
        Let $\gamma_2$ be a smooth curve without self-intersections such that
        \begin{enumerate}
            \item $\gamma_2$ starts at $-i$ and ends at $i.$
            \item $\gamma_2$ lies in the half-plane $\{\Re(z)\geq0\}.$
            \item the distance between $\gamma_2$ and $[-1,0]$ is greater than or equal to $1$.
            \item the length of $\gamma_2$ is at most $10.$
            \item $\gamma_2$ is contained in the disc $\{z\;:\;|z| \leq 10\}$.
        \end{enumerate}
        As in the $v \geq 0$ case, $\gamma_1$ denotes the interval joining $-i$ and $i$, and now write $\gamma'$ for the concatenation of $\gamma_1$ and $\gamma_2$.

        Since $f$ is holomorphic in the half-plane $\Re(z) \geq 0$, we have:
        $$\int_{\gamma'}f(z)e^{zv}dz=0.$$
        Since $\gamma' = \gamma_1\cup\gamma_2$ we have:
        \begin{equation}\label{v<0 eq1}
            \frac1{2\pi i}\int_{\gamma_1}f(z)e^{zv}dz+\frac1{2\pi i}\int_{\gamma_2}f(z)e^{zv}dz=0.
        \end{equation}
        
        Since by definition $\gamma_2$ has length at most $10$, and we are assuming $v < 0$ we have:,
        \begin{align*}
            \left|\int_{\gamma_2}f(z)e^{zv}dz\right| &\leq 10\sup_{z\in\gamma_2}|f(z)e^{zv}|\\
                                                     &\leq 10\sup_{z \in \gamma_2}|f(z)|\\
                                                     &\leq 10(1+10^2)^2.\\
                                                     &=O(1).
        \end{align*}

        Using integration by parts and taking into account once again that $f(\pm i)=f'(\pm i)=0,$ we obtain in also in the $v < 0$ case that:
        $$\int_{\gamma_2}f(z)e^{zv}dz=v^{-2}\int_{\gamma_2}f''(z)e^{vz}dz.$$
        Thus,
        \begin{align*}
            v^2\left|\int_{\gamma_2}f(z)e^{zv}dz\right| &\leq 10\sup_{z\in\gamma_2}|f''(z)||e^{zv}|\\
                                                        &\leq 10^6\\
                                                        &= O(1).
        \end{align*}
        Therefore,
        \begin{equation}\label{v<0 eq2}
            \int_{\gamma_2}f(z)e^{zv}dz=\min\{1,v^{-2}\}\cdot O(1).
        \end{equation}

        Combining \eqref{v<0 eq1} and \eqref{v<0 eq2}, we obtain
        \begin{equation*}
            \frac1{2\pi i}\int_{\gamma_1}f(z)e^{zv}dz=\min\{1,v^{-2}\}\cdot O(1).
        \end{equation*}
        Hence, by \eqref{wick rotation} we conclude the proof for the $v < 0$ case.
    \end{proof}

\backmatter


\begin{thebibliography}{99}
\bibitem{Baaj-Julg} Baaj, S., Julg, P. {\it Th\'eorie bivariante de Kasparov et op\'erateurs non born\'es dans les $C^*$-modules hilbertiens.} C. R. Acad. Sci. Paris S\'er. I Math. {\bf 296} (1983), no. 21, 875--878.
\bibitem{BF} Benameur M., Fack T. {\it Type II non-commutative geometry. I. Dixmier trace in von Neumann algebras.} Adv. Math. {\bf 199} (2006), no. 1, 29--87.
\bibitem{Bennet-Sharpley-interpolation-1988} Bennett C., Sharpley R. {\it Interpolation of operators.} volume 129 of {\it Pure and Applied Mathematics}, Academic Press, Inc., Boston, MA, 1988.
\bibitem{Birman-Solomyak-I} Birman M., Solomyak M. {\it Double Stieltjes operator integrals.} In {\it Problems of Mathematical Physics, No. I, Spectral Theory and Wave Processes} (Russian), pp. 33--67. Izdat. Leningrad. Univ., Leningrad, 1966.
\bibitem{Birman-Solomyak-II} Birman M., Solomyak M. {\it Double Stieltjes operator integrals. II} In {\em Problems of Mathematical Physics, No. 2, Spectral Theory, Diffraction Problems}  (Russian), pp. 26--60. Izdat. Leningrad. Univ., Leningrad, 1967.
\bibitem{Birman-Solomyak-III}  Birman M., Solomyak M. {\it Double Stieltjes operator integrals. III.} In {\it Problems of mathematical physics, No. 6} (Russian), pp. 27--53. Izdat. Leningrad. Univ., Leningrad, 1973. 
\bibitem{Birman-Solomyak-2003} Birman M., Solomyak M. {\it Double operator integrals in a Hilbert space.} Int. Eq. Oper. Th.  {\bf 47} (2) 131--168, 2003.
\bibitem{Bratteli-Robinson1} Bratteli O., Robinson D. {\it Operator algebras and quantum statistical mechanics. 1. $C^*-$ and $W^*-$algebras, symmetry groups, decomposition of states.} Texts and Monographs in Physics. Springer-Verlag, 1987.
\bibitem{Bratteli-Robinson2} Bratteli O., Robinson D. {\it Operator algebras and quantum statistical mechanics. 2}. Texts and Monographs in Physics. Springer-Verlag, 1997.
\newblock Equilibrium states. Models in quantum statistical mechanics.
\bibitem{CGRS1} Carey A., Gayral V., Rennie A., Sukochev F. {\it Integration on locally compact noncommutative spaces.} J. Funct. Anal. {\bf 263} (2012), no. 2, 383--414.
\bibitem{CGRS2} Carey A., Gayral V., Rennie A., Sukochev F. {\it Index theory for locally compact noncommutative geometries.} Mem. Amer. Math. Soc. {\bf 231} (2014), no. 1085, vi+130 pp.
\bibitem{CPRS1} Carey A., Phillips J., Rennie A., Sukochev F. {\it The Hochschild class of the Chern character for semifinite spectral triples.} J. Funct. Anal. {\bf 213} (2004), no. 1, 111--153.
\bibitem{CPRS2} Carey A., Phillips J., Rennie A., Sukochev F., {\it The local index formula in semifinite von Neumann algebras. I. Spectral flow.} Adv. Math. {\bf 202} (2006), no. 2, 451--516. 
\bibitem{CPRS4} Carey A., Phillips J., Rennie A., Sukochev F., {\it The Chern character of semifinite spectral triples.} J. Noncommut. Geom. {\bf 2} (2008), no. 2, 141--193.
\bibitem{CRSZ} Carey A., Rennie A., Sukochev F., Zanin D. {\it Universal measurability and the Hochschild class of the Chern character.} J. Spectr. Theory {\bf 6} (2016), no. 1, 1--41.
\bibitem{Connes-Chamseddine-Mukhanov-quanta-of-geometry-2015} Chamseddine, A., Connes, A., Mukhanov, V. {\it Quanta of geometry: noncommutative aspects.} Phys. Rev. Lett. {\bf 114} (2015), no. 9, 091302, 5pp.
\bibitem{Connes-differential-geometry} Connes, A. {\it Noncommutative differential geometry.} Inst. Hautes \'Etudes Sci. Publ. Math. {\bf 62} (1985), 247--360.
\bibitem{Connes-original-spectral-1995} Connes A. {\it Geometry from the spectral point of view.} Lett. Math. Phys., {\bf 34} (3) 203--238, 1995.
\bibitem{NCG-book} Connes A. {\it Noncommutative geometry.} Academic Press, Inc., San Diego, CA, 1994.
\bibitem{Connes-nuclear} Connes A., {\it Trace de Dixmier, modules de Fredholm et geometrie riemannienne.} Nuclear Phys. B Proc. Suppl. 5B (1988), 65--70.
\bibitem{Connes-course} Connes A., Cours au College de France, 1990.
\bibitem{Connes-reality} Connes A., {\it Noncommutative geometry and reality.} J. Math. Phys. {\bf 36} (1995), 6194–-6231 .
\bibitem{Connes-reconstruction} Connes A., {\it On the spectral characterization of manifolds.} J. Noncommut. Geom. {\bf 7} (2013), no. 1, 1--82.
\bibitem{Connes-Moscovici} Connes A., Moscovici H., {\it The local index formula in noncommutative geometry.} Geom. Funct. Anal. {\bf 5} (1995), no. 2, 174--243.
\bibitem{CSZ} Connes A., Sukochev F., Zanin D., {\it Trace theorem for quasi-Fuchsian groups.} Sb. Math. {\bf 208} (2017), no. 10, 1473--1502.
\bibitem{chernov}  Chernoff P. {\it Essential self-adjointness of powers of generators of hyperbolic equations.} J. Functional Analysis {\bf 12} (1973), 401--414.
\bibitem{cwikel} Cwikel M. {\it Weak type estimates for singular values and the number of bound states of Schr\"odinger operators.} Ann. of Math. (2) {\bf 106} (1977), no. 1, 93--100.
\bibitem{Dixmier} Dixmier J. {\it Existence de traces non normales.} (French)  C. R. Acad. Sci. Paris Ser. A-B  {\bf 262}  (1966) A1107--A1108.
\bibitem{Dunford-Schwartz-1} Dunford N., Schwartz J., {\it Linear Operators, Part I: General theory.} John Wiley and Sons, Inc., New York, 1988.
\bibitem{Frohlich-Grandjean-Recknagel} Fr\"ohlich J., Grandjean, O., Recknagel, A. {\it Supersymmetric quantum theory and non-commutative geometry.} Comm. Math. Phys. {\bf 203} (1999), no. 1, 119--184.
\bibitem{gayral-moyal} Gayral V., Gracia-Bondia J., Iochum B., Sch\"ucker T., Varilly J. {\it Moyal planes are spectral triples.} Comm. Math. Phys. {\bf 246} (2004), no. 3, 569--623.
\bibitem{GIV} Gayral V., Iochum B., Varilly J., {\it Dixmier traces on noncompact isospectral deformations.} J. Funct. Anal. {\bf 237} (2006), no. 2, 507--539.
\bibitem{GVF} Gracia-Bondia J., Varilly J., Figueroa H. {\it Elements of noncommutative geometry.} Birkhäuser Boston, Inc., Boston, MA, 2001.
\bibitem{higson}  Higson N. {\it The residue index theorem of Connes and Moscovici.} Surveys in noncommutative geometry, 71--126, Clay Math. Proc., {\bf 6}, Amer. Math. Soc., Providence, RI, 2006.
\bibitem{DFWW} Dykema K., Figiel T., Weiss G., Wodzicki M. {\it Commutator structure of operator ideals.}  Adv. Math. {\bf 185} (2004), no. 1, 1--79.
\bibitem{Echterhoff-2008} Echterhoff S. {\it The $K$-theory of twisted group algebras.} In {\it $C^\ast$-algebras and elliptic theory II}, Trends Math., pp 67--86. Birkh\"auser, Basel, 2008.
\bibitem{KLPS} Kalton N., Lord S., Potapov D., Sukochev F. {\it Traces of compact operators and the noncommutative residue.} Adv. Math. {\bf 235} (2013), 1--55.
\bibitem{krym} Kordyukov Y. {\it Differential operators on manifolds and their applications in geometry and topology.} Proceedings of Crimean Autumn School, 2009. 
\bibitem{Kosaki-alt-1992} Kosaki H. {\it An inequality of Araki-Lieb-Thirring (von Neumann algebra case).} Proc. Amer. Math. Soc., {\bf 114} (2) 477--481, 1992.
\bibitem{Landi} Landi G. {\it An introduction to noncommutative spaces and their geometries.} Lecture Notes in Physics. New Series m: Monographs, {\bf 51}. Springer-Verlag, Berlin, 1997.
\bibitem{Lawson-Michelsohn-1989} Lawson B., Michelsohn M., {\it Spin geometry.} Princeton Mathematical Series, {\bf 38}. Princeton University Press, Princeton, NJ, 1989.
\bibitem{LeSZ-cwikel} Levitina G., Sukochev F., Zanin D. {\it Cwikel estimates revisited.} submitted manuscript. arXiv:1703.04254
\bibitem{Loday-cyclic-homology} Loday J. {\it Cyclic homology.} volume 301 of Grundlehren der Mathematischen Wissenschaften, Springer-Verlag, 1998.
\bibitem{Lord-Rennie-Varilly} Lord S., Rennie A., V\'arilly J., {\it Riemannian manifolds in noncommutative geometry.} J. Geom. Phys, {\bf 62} (2012), no. 7, 1611--1638
\bibitem{LSZ} Lord S., Sukochev F., Zanin D. {\it Singular Traces: Theory and Applications.} volume 46 of Studies in Mathematics. De Gruyter, 2013.
\bibitem{PSW} de Pagter B., Sukochev F., Witvliet H. {\it Double operator integrals.} J. Funct.Anal. {\bf 192} (2002), no. 1, 52--111.
\bibitem{peller} Peller V. {\it Hankel operators in the theory of perturbations of unitary and selfadjoint operators.} Funktsional. Anal. i Prilozhen. {\bf 19} (1985), no. 2, 37--51, 96.
\bibitem{Peller-doi-2016} Peller V. {\it Multiple operator integrals in perturbation theory.} Bull. Math. Sci., {\bf 6} (1) 15--88, 2016.
\bibitem{Pflaum-continuous-hh-1998} Pflaum M. {\it On continuous Hochschild homology and cohomology groups.} Lett. Math. Phys., {\bf 44} (1) 43--51, 1998.
\bibitem{PS-acta} Potapov D., Sukochev F. {\it Operator-Lipschitz functions in Schatten-von Neumann classes.} Acta Math. {\bf 207} (2011), no. 2, 375--389.
\bibitem{PS-crelle} Potapov D., Sukochev F. {\it Unbounded Fredholm modules and double operator integrals.} J. Reine Angew. Math. {\bf 626} (2009), 159--185.
\bibitem{Quillen} Quillen D. {\it Algebra cochains and cyclic cohomology.} Inst. Hautes \'Etudes Sci. Publ. Math. {\bf 68} (1989), 139--174.        
\bibitem{Reed-Simon-I-1980} Reed M., Simon B. {\it Methods of modern mathematical physics. I.} Academic Press, New York, second edition, 1980.
\bibitem{Rennie-2003} Rennie, A. {\it Smoothness and locality for nonunital spectral triples.} K-Theory {\bf 28} (2003), no. 2, 127-–165. 
\bibitem{Rennie-2004} Rennie, A. {\it Summability for nonunital spectral triples. } K-Theory {\bf 31} (2004), no. 1, 71-–100.
\bibitem{rosenberg} Rosenberg S. {\it The Laplacian on a Riemannian manifold. An introduction to analysis on manifolds.} London Mathematical Society Student Texts, {\bf 31}. Cambridge University Press, Cambridge, 1997.
\bibitem{rudin} Rudin W. {\it Functional analysis.} Second edition. International Series in Pure and Applied Mathematics. McGraw-Hill, Inc., New York, 1991.
\bibitem{SSUZ-pietsch} Semenov E., Sukochev F., Usachev A., Zanin D. {\it Banach limits and traces on $\mathcal{L}_{1,\infty}.$} Adv. Math. {\bf 285} (2015), 568--628.
\bibitem{Shubin-pseudo-2001} Shubin M. {\it Pseudodifferential operators and spectral theory.} Springer-Verlag, Berlin, second edition, 2001.
\bibitem{Simon} Simon B. {\it Trace ideals and their applications.} Second edition. Mathematical Surveys and Monographs, {\bf 120}. American Mathematical Society, Providence, RI, 2005.
\bibitem{subhankulov} Subhankulov M. {\it Tauberian theorems with remainders.} Nauka, Moscow, 1976.
\bibitem{sbik} Sukochev F. {\it On a conjecture of A. Bikchentaev.} Spectral analysis, differential equations and mathematical physics: a festschrift in honor of Fritz Gesztesy's 60th birthday, 327--339, Proc. Sympos. Pure Math., {\bf 87}, Amer. Math. Soc., Providence, RI, 2013.
\bibitem{SUZ-indiana} Sukochev F., Usachev A., Zanin D. {\it Singular traces and residues of the $\zeta$-function.} Indiana U. Math. J., {\bf 66} (2017), no. 4, 1107--1144.
\end{thebibliography}
\end{document}